\keywords{bifibrations, double categories, free constructions, proof theory, combinatorics}
\definecolor{aureolin}{rgb}{0.99, 0.93, 0.0}
\definecolor{forestgreen}{rgb}{0.13, 0.54, 0.13}
\definecolor{bluebell}{rgb}{0.64, 0.64, 0.82}
\definecolor{brickred}{rgb}{0.8, 0.25, 0.33}
\definecolor{goldenpoppy}{rgb}{0.99, 0.76, 0.0}
\definecolor{grullo}{rgb}{0.66, 0.6, 0.53}
\definecolor{frenchblue}{rgb}{0.0, 0.45, 0.73}
\definecolor{persianred}{rgb}{0.8, 0.2, 0.2}
\definecolor{ginger}{rgb}{0.69, 0.4, 0.0}
\definecolor{goldenbrown}{rgb}{0.6, 0.4, 0.08}
\definecolor{nodeS}{rgb}{1.0, 0.6, 0.11}
\definecolor{nodeT}{rgb}{.58, 0.58, 0.58}
\newcommand{\proofseeappendix}[1]{%
  \begin{proof}
    See Appendix~\ref{#1}.
  \end{proof}
}
\definecolor{officegreen}{rgb}{0.0, 0.5, 0.0}
\newcommand\defeq{\mathrel{:=}}
\newcommand\eqdef{\mathrel{=:}}
\newcommand\defin[1]{\textbf{#1}}
\newcommand{\A}{\mathcal{A}}
\newcommand{\B}{\mathcal{B}}
\newcommand{\C}{\mathcal{C}}
\newcommand{\D}{\mathcal{D}}
\newcommand{\E}{\mathcal{E}}
\newcommand{\K}{\mathcal{K}}
\newcommand{\N}{\mathcal{N}}
\renewcommand{\P}{\mathcal{P}}
\newcommand{\W}{\mathcal{W}}
\newcommand{\Cat}{\mathrm{\mathcal{C}at}}
\newcommand{\Adj}{\mathrm{\mathcal{A}dj}}
\newcommand{\Set}{\mathrm{\mathcal{S}et}}
\newcommand{\BifCat}{\mathrm{\mathcal{B}ifCat}}
\newcommand{\Span}{\mathbb{S}\mathrm{pan}}
\newcommand{\op}{{\mathit{op}}}
\newcommand\vcat[1]{\mathcal{V}(#1)}
\newcommand\hcat[1]{\mathcal{H}(#1)}
\newcommand\ord[1]{{\mathsf{#1}}}
\newcommand{\One}{\ord{1}}
\newcommand{\Two}{\ord{2}}
\newcommand{\Z}{\mathcal{Z}}
\newcommand{\ZZ}{\mathbb{Z}}
\newcommand{\NN}{\mathbb{N}}
\newcommand{\DD}{\mathbb{D}}
\newcommand{\EE}{\mathbb{E}}
\DeclareMathOperator{\cod}{cod}
\DeclareMathOperator{\dom}{dom}
\DeclareMathOperator{\id}{id}
\DeclareMathOperator{\src}{src}
\DeclareMathOperator{\tgt}{tgt}
\newcommand{\hcomp}{\cdot}
\newcommand{\cut}{\mathrm{cut}}
\newcommand\set[1]{\{\,#1\,\}}
\newcommand\atom[1]{#1} % ^\eta}
\newcommand\pull[2][]{\mathop{{#2}^{\!-#1}}}
\newcommand\push[2][]{\mathop{{#2}^{\!+#1}}}
\newcommand{\opcart}[2]{{#1}_{#2}}
\newcommand{\cart}[2]{{\overline{#1}}_{#2}}
\newcommand{\cartcomp}[3]{#1 \mathbin{:} \cart{#2}{#3}}
\newcommand{\opcartcomp}[3]{\opcart{#1}{#2} \mathbin{:} {#3}}
\newcommand{\uLpush}[2]{\opcart{#1}{} \mathbin{\backslash_{#2}}}
\newcommand{\tRpush}[1]{. \mathord{\opcart{#1}{}}}
\newcommand{\tLpull}[1]{\mathord{\cart{#1}{}} .}
\newcommand{\uRpull}[2]{\mathbin{{}_{#1}{\slash}} \cart{#2}{}} % \prescript{}{#1}
\newcommand{\tLRpullpush}[3]{\tLpull {#1} {#2} \tRpush {#3}}
\newcommand{\uLRpushpull}[4]{\uLpush {#1} {} \underset{#2}{#3} \uRpull {} {#4}}
\newcommand\Id[1][]{\id_{#1}}
\newcommand\Bifib[1]{\mathcal{B}\mathrm{if}(#1)}
\newcommand\BifibFun[1]{\Lambda_{#1}}%{\tilde{#1}}
\newcommand\saBifib[1]{sa\mathcal{B}\mathrm{if}(#1)}
\newcommand\nsaBifib[1]{nsa\mathcal{B}\mathrm{if}(#1)}
\newcommand\permeq{\sim}
\newcommand\vdashf[1]{\mathchoice{\underset{#1}{\Longrightarrow}}{\Longrightarrow_{#1}}{}{}}
\newcommand\Rsym{\mathsf{R}}
\newcommand\Lsym{\mathsf{L}}
\newcommand\Rpull[1][f]{\Rsym{\pull{#1}}}
\newcommand\Rpush[1][f]{\Rsym{\push{#1}}}
\newcommand\Lpull[1][f]{\Lsym{\pull{#1}}}
\newcommand\Lpush[1][f]{\Lsym{\push{#1}}}
\newcommand\Rotimes{\Rsym{\otimes}}
\newcommand\Lotimes{\Lsym{\otimes}}
\newcommand\Rlolli{\Rsym{\multimap}}
\newcommand\Llolli{\Lsym{\multimap}}
\newcommand\lolli{\multimap}
\newcommand\Rplus{\Rsym{\oplus}}
\newcommand\Lplus{\Lsym{\oplus}}
\newcommand\Rwith{\Rsym{\&}}
\newcommand\Lwith{\Lsym{\&}}
\newcommand{\istate}{q_\mathsf{i}}
\newcommand{\fstate}{q_\mathsf{f}}
\newcommand{\lockL}{\Lsym}
\newcommand{\lockR}{\Rsym}
\newcommand{\lockNone}{\bot_\mathsf{i}}
\newcommand{\flockNone}{\bot_\mathsf{f}}
\newcommand{\ilockNone}{\lockNone}
\newcommand{\vdashl}[2]{\mathchoice{\mathrel{\underset{#1}{\overset{#2}{\Longrightarrow}}}}{\mathrel{\Longrightarrow_{#1}^{#2}}}{}{}}
\newcommand\Frm[1]{\mathrm{Frm}_{#1}}
\newcommand\Pf[1]{\mathrm{Pf}_{#1}}
\newcommand\Ob[1]{\mathrm{Ob}_{#1}}
\newcommand\Arr[1]{\mathrm{Arr}_{#1}}
\newcommand\zigzag{\leadsto}
\newcommand\refs{\sqsubset}
\newcommand\sem[2]{\left\llbracket#1\right\rrbracket_{#2}}
\newcommand{\PTree}{{\mathsf{PTree}}}
\newcommand{\PForest}{{\mathsf{PForest}}}
\newcommand\collapse[1]{\left[ #1 \right]}
\newcommand\floor[1]{\lfloor #1\rfloor}
\newcommand\ceil[1]{\lceil #1\rceil}
\newcommand{\Strengthensym}{\mathsf{S}}
\newcommand{\Strengthen}[1]{\Strengthensym(#1)}
\newcommand\Seqsym{\mathsf{Seq}}
\newcommand\Seq[1]{\Seqsym(#1)}
\newcommand\FocSeqsym{\mathsf{FocSeq}}
\newcommand\FocSeq[1]{\FocSeqsym(#1)}
\newcommand\InvSeqsym{\mathsf{InvSeq}}
\newcommand\InvSeq[1]{\InvSeqsym(#1)}
\newcommand\seqsym{\mathsf{seq}}
\newcommand\parsym{\mathsf{par}}
\newcommand\grasym{\mathsf{gra}}
\newcommand\rewtoseq{\mathrel{\rightarrow_\seqsym}}
\newcommand\rewtoseqRL{\mathrel{\rightarrow_{\seqsym^\Rsym_\Lsym}}}
\newcommand\rewtoseqLR{\mathrel{\rightarrow_{\seqsym^\Lsym_\Rsym}}}
\newcommand\oprewtoseq{\mathrel{\leftarrow_\seqsym}}
\newcommand\permeqseq{\mathrel{\sim_\seqsym}}
\newcommand\rewtopar{\mathrel{\rightarrow_\parsym}}
\newcommand\rewtoparRL{\mathrel{\rightarrow_{\parsym^\Rsym_\Lsym}}}
\newcommand\rewtoparLR{\mathrel{\rightarrow_{\parsym^\Lsym_\Rsym}}}
\newcommand{\biL}{\mathbb{L}}
\newcommand{\biR}{\mathbb{R}}
\newcommand{\biLR}{\mathbb{LR}}
\newcommand\Conf[1]{\mathsf{Conf}_{#1}}
\newcommand\Free{\mathcal{F}}
\newcommand\G{\mathbb{G}}
\newcommand\enc[1]{\ulcorner{#1}\urcorner}
\newcommand\pullback[2]{\mathbin{\mathstrut_{#1}\times_{#2}}}
\newcommand\rmono{\rightarrowtail}
\newcommand\repi{\twoheadrightarrow}
\newcommand\lepi{\twoheadleftarrow}
\newcommand{\mono}{\mathsf{mono}}
\newcommand{\epi}{\mathsf{epi}}
\newcommand{\fDelta}{\pmb{\Delta}}
\newcommand\BN{\mathbf{B}_\NN}
\newcommand{\FP}{$\mathsf{FP}$\xspace}
\tikzset{spanmap/.style={
            decoration={markings,
            mark= at position 0.5 with {
                  \node[transform shape] (tempnode) {$/$};
                  }
              },
              postaction={decorate}
}}
\tikzset{vertex/.style =
  {circle,fill=darkgray}}
\tikzset{oedge/.style =
  {line width=1mm,
   blend group=luminosity,
   decoration={
     markings,
     mark=at position 0.5 with {\arrow{Stealth[length=2.5mm, sep=-1.25mm,open]}}},
   postaction={decorate},
 }}
\tikzset{uedge/.style =
  {line width=1mm,
 }}
\tikzset{zedge/.style =
  {line width=1mm,decorate,decoration={zigzag}}}
\tikzset{delta/.style = {circle,fill=red,minimum size=4pt,inner sep=0pt,,font = \footnotesize\sffamily}}
\tikzset{sigma/.style = {circle,fill=blue,minimum size=4pt,inner sep=0pt,text=white,font = \footnotesize\sffamily}}
\tikzset{kappa/.style = {circle,fill=darkgray,minimum size=4pt,inner sep=0pt}}
\tikzstyle{oba}=[bluebell!25]
\tikzstyle{obap}=[green!25]
\tikzstyle{obb}=[orange!25]
\tikzstyle{obc}=[brickred!25]
\tikzstyle{obcp}=[frenchblue!25]
\tikzstyle{obd}=[violet!25]
\tikzstyle{obe}=[goldenpoppy!25]
\tikzstyle{obep}=[grullo!25]
\tikzstyle{arrf}=[bluebell]
\tikzstyle{arrg}=[brickred]
\tikzstyle{arrfp}=[forestgreen]
\tikzstyle{arrgp}=[frenchblue]
\tikzstyle{arrh}=[violet]
\tikzstyle{arrhp}=[ginger]
\tikzstyle{zigS}=[goldenpoppy]
\tikzstyle{zigT}=[persianred]
\tikzstyle{zigU}=[grullo]
\tikzstyle{height0}=[bluebell!25]
\tikzstyle{height1}=[goldenpoppy!25]
\tikzstyle{height2}=[forestgreen!25]
\tikzstyle{height3}=[goldenbrown!25]
\newcommand\Tetrahedron{
\begin{tikzcd}[ampersand replacement=\&]
  \D \arrow[ddrr,"p"']\arrow[rr,"\eta_p"]\arrow[rrrd,"\theta"'] \& \&\Bifib{p}\arrow[dd,"\BifibFun{p}"]\arrow[dr,"\sem{-}\theta"]\&  \\
  \& \&  \& \E\arrow[dl,"q\text{ bifibration}"]\\
  \& \& \C \& 
\end{tikzcd}
}
\begin{document}
%----------------------------------------------------------------------%

\title{The free bifibration on a functor}
\titlecomment{}
\thanks{}	%optional

% affiliations are numbered automatically with a, b, c (see below)
% use the optional argument to indicate the affiliation(s) of each author
% omit the argument if there is only one author, or only one affiliation
\author[B.~Clarke]{Bryce Clarke\lmcsorcid{0000-0003-0579-2804}}[a]
\author[G.~Scherer]{Gabriel Scherer\lmcsorcid{0000-0003-1758-3938}}[b]
\author[N.~Zeilberger]{Noam Zeilberger\lmcsorcid{0000-0002-5945-4184}}[c]

% affiliation 1 (automatically numbered a)
\address{Tallinn University of Technology, Tallinn, Estonia}
\email{bryce.clarke@taltech.ee}

\address{Inria, IRIF, Université Paris Cité, Paris, France}
\email{gabriel.scherer@inria.fr}

\address{LIX, CNRS, Inria, École Polytechnique, Institut Polytechnique de Paris, Palaiseau, France}	%optional
\email{noam.zeilberger@polytechnique.edu}  %optional

\begin{abstract}
  We consider the problem of constructing the free bifibration generated by a functor of categories $p : \D \to \C$.
This problem was previously considered by Lamarche, and is closely related to the problem, considered by Dawson, Paré, and Pronk, of ``freely adjoining adjoints'' to a category.
We develop a proof-theoretic approach to the problem, beginning with a construction of the free bifibration $\BifibFun{p} : \Bifib{p}\to\C$ in which objects of $\Bifib{p}$ are formulas of a primitive ``bifibrational logic'', and arrows are derivations in a cut-free sequent calculus modulo a notion of permutation equivalence.
We show that instantiating the construction to the identity functor generates a \emph{zigzag double category} $\ZZ(\C)$, which is also the free double category with companions and conjoints (or fibrant double category) on $\C$.
The approach adapts smoothly to the more general task of building $(\P,\N)$-fibrations, where one only asks for pushforwards along arrows in $\P$ and pullbacks along arrows in $\N$ for some subsets of arrows; this encompasses Kock and Joyal's notion of \emph{ambifibration} when $(\P,\N)$ form a factorization system.
We establish a series of progressively stronger normal forms, guided by ideas of \emph{focusing} from proof theory, and obtain a canonicity result under assumption that the base category is factorization preordered relative to $\P$ and~$\N$.
This canonicity result allows us to decide the word problem and to enumerate relative homsets without duplicates.
Finally, we describe several examples of a combinatorial nature, including a category of plane trees generated as a free bifibration over $\omega$, and a category of increasing forests generated as a free ambifibration over $\Delta$, which contains the lattices of noncrossing partitions as quotients of its fibers by the Beck-Chevalley condition for bicartesian squares.

\end{abstract}

\maketitle

%---------m-------------------------------------------------------------%
\section*{Introduction}
\label{sec:introduction}
%----------------------------------------------------------------------%

A functor $p : \D \to \C$ between two categories is a \emph{bifibration} when, roughly speaking, objects of $\D$ may be pushed and pulled along arrows of $\C$.
If we visualize the category $\D$ as living over the category $\C$ and mapping downwards via the functor, then the structure of a bifibration on $p$ allows the arrows of $\C$ to be lifted upwards into the category $\D$, in two different kinds of ways.
%\[
%\begin{tikzcd}\D\ar[d,"p"]\\\C\end{tikzcd}
%\]
Formally, for any arrow $f : A \to B$ in $\C$ and any object $S$ in $\D$ such that $p(S) = A$,
% \[
%   \begin{tikzcd}
%     S \\
%     A \ar[r,"f"] & B
%   \end{tikzcd}
% \]
there should be an object $\push{f}S$ and an arrow $\opcart{f}{S} : S \to \push{f}S$ of $\D$ such that $p(\opcart f S) = f$,
\begin{equation*}
  \begin{tikzcd}
    \D\ar[d,"p"'] & S \ar[r,"\opcart f S",dashed] & \pmb{\push{f}{S}} \\
    \C & A \ar[r,"f"] & B
  \end{tikzcd}
\end{equation*}
which are universal in the sense that
for any arrow $g : B \to C$ in $\C$ and arrow $\alpha : S \to T$ in $\D$ such that $p(\alpha) = fg$,\footnote{Here and throughout the paper we notate composition of arrows by juxtaposition in diagrammatic order, except in a few places where we explicitly write the composition symbol $\circ$ to mean $g \circ f = fg$.} % $f : A \to B$ and $g : B \to C$  $fg : A \to C$.
% \[
%   \begin{tikzcd}
%     S \ar[rr,"\gamma"] && T \\
%     A \ar[r,"f"] & B \ar[r,"g"] & C
%   \end{tikzcd}
% \]
there is a unique arrow $\beta : \push{f}S \to T$ such that $\alpha = \opcart f S \; \beta$. % \beta = \uLpush{f}{S}{g} \alpha
\begin{equation}\label{eq:unique-beta-push}
  \begin{tikzcd}
    S \ar[rr,"\alpha"] && T \\
    A \ar[r,"f"] & B \ar[r,"g"] & C
  \end{tikzcd}
  \quad=\quad
  \begin{tikzcd}
    S \ar[r,"\opcart f S"] &\push{f}S\ar[r,"{\beta}",dashed]  & T \\
    A \ar[r,"f"] & B \ar[r,"g"] & C
  \end{tikzcd}
\end{equation}
We refer to the object $\push f S$ as the \emph{pushforward} of $S$ along $f$, and to the arrow $\opcart f S : S \to \push f S$ as a \emph{$+$-cartesian lifting} of $f$ to $S$.
Note that the universal property of $+$-cartesian liftings ensures that the pushforward $\push f S$ is determined up to unique isomorphism lying over the identity arrow $\Id[B]$, which is what justifies speaking of ``the'' pushforward of $S$ along $f$.
Dually, for any arrow $g : B \to C$ in $\C$ and object $T$ in $\D$ such that $p(T) = C$,
% \[
%   \begin{tikzcd}
%      & T\ar[d,dash]\\
%     B \ar[r,"g"] & C
%   \end{tikzcd}
% \]
there should be an object $\pull{g}T$ and an arrow $\cart g T : \pull{g}T \to T$ of $\D$ such that $p(\cart g T) = g$,
\[
  \begin{tikzcd}
    \pmb{\pull{g}T} \ar[r,"\cart g T",dashed] & T \\
    B \ar[r,"g"] & C
  \end{tikzcd}
\]
again universal in the sense that for any arrow $f : A \to B$ in $\C$ and arrow $\alpha : S \to T$ in $\D$ such that $p(\alpha) = fg$, there is a unique arrow $\beta : S \to \pull{g}T$ such that $\alpha = \beta \; \cart g T$. %  \beta = alpha \uRpull{g}{T}{f}
\begin{equation}\label{eq:unique-beta-pull}
  \begin{tikzcd}
    S \ar[rr,"\alpha"] && T \\
    A \ar[r,"f"] & B \ar[r,"g"] & C
  \end{tikzcd}
  \quad=\quad
  \begin{tikzcd}
    S \ar[r,"\beta",dashed] &\pull{g}T\ar[r,"\cart g T"]  & T \\
    A \ar[r,"f"] & B \ar[r,"g"] & C
  \end{tikzcd}
\end{equation}
We refer to the object $\pull g T$ as the \emph{pullback} of $T$ along $g$, and to the arrow $\cart g T : \pull g T \to T$ as a \emph{$-$-cartesian lifting} of $g$ to $T$.
Again, the universal property of $-$-cartesian liftings ensures that the pullback $\pull g T$ is determined up to unique isomorphism lying over $\Id[B]$.

An immediate consequence of the definitions is that if $p : \D \to \C$ is a bifibration
then there is a one-to-one correspondence
\begin{equation}\label{eq:bif-equivs}
  \begin{tikzcd}
    S \ar[r] & \pull{f}T \\
    A \ar[r,"{\Id[A]}"] & A
  \end{tikzcd}
  \quad\iff\quad
  \begin{tikzcd}
    S \ar[r] & T \\
    A \ar[r,"f"] & B
  \end{tikzcd}
  \quad\iff\quad
  \begin{tikzcd}
    \push{f}S \ar[r] & T \\
    B \ar[r,"{\Id[B]}"] & B
  \end{tikzcd}
\end{equation}
of arrows in $\D$ living over the corresponding arrows in $\C$.
As a consequence, every arrow $f : A \to B$ of $\C$ induces an adjunction % pair of adjoint functors
\begin{equation*}
  \begin{tikzcd}[column sep=large]
    \D_A \ar[r,"\push{f}"{name=0},bend left] & \D_B \ar[l,"\pull{f}"{name=1},bend left]
    \ar[from=0,to=1,phantom,"\bot"]
  \end{tikzcd}
\end{equation*}
realized by the operations of pushing and pulling along $f$, where $\D_A$ and $\D_B$ are the \emph{fiber categories} of $A$ and $B$ relative to the functor $p$, defined as the subcategories of $\D$ spanned by the arrows living over the identities $\Id[A]$ and $\Id[B]$ in $\C$.
More precisely, the functors $\push{f}$ and $\pull{f}$ can be defined given any specific choice of cartesian liftings in $\D$, corresponding to a choice of bifibrational structure on $p : \D \to \C$.
Such a bifibrational structure is called a ``cleavage'' or ``cleaving'', and $p$ is said to be a \emph{cloven} bifibration.
We will always be working with bifibrational structures in this paper and so we usually omit the adjective ``cloven''. 

The categorical notion of bifibration was originally introduced by Grothendieck~\cite[VI]{SGA1}, %\citet*[VI]{SGA1}
together with \emph{fibration} and \emph{cofibration}, the latter nowadays more commonly called \emph{opfibration}.
A functor $p : \D \to \C$ is a fibration (respectively opfibration) when one can pull (respectively, push) objects of $\D$ along arrows of $\C$, so that $p$ is a bifibration iff it is both a fibration and an opfibration.
As established by Grothendieck and which motivated the original definition, the data of a cloven bifibration $p : \D \to \C$ is equivalent to the data of a pseudofunctor $F : \C \to \Adj$ into the 2-category of small categories and adjunctions.
A bifibration over $\C$ is recovered from $F$ by defining the \emph{category of elements} $\int_\C F$ (also called the \emph{Grothendieck construction}) to have objects given by pairs $(A,S)$ of an object $A \in \C$ an an object $S \in F(A)$, and arrows $(A,S) \to (B,T)$ given by pairs $(f,\alpha)$ of an arrow $A \to B \in \C$ and an arrow $\alpha : \push{F(f)}(S) \to T \in F(B)$ or equivalently an arrow $\alpha : S \to \pull{F(f)}(T) \in F(A)$, where we write $\push{F(f)} : F(A) \to F(B)$ and $\pull{F(f)} : F(B) \to F(A)$ respectively for the left and right adjoint associated to the adjunction $F(f)$.
The category of elements is equipped with an evident projection functor $\pi_F : \int_\C F \to \C$, which is a bifibration.

We are interested in the problem of constructing the free bifibration on a functor, meaning the following.
Let $p : \D \to \C$ be an arbitrary functor.
The \emph{free bifibration on $p$} is a category $\Bifib{p}$ equipped with a functor $\BifibFun{p} : \Bifib{p} \to \C$ that is a bifibration, together with a functor $\eta_p : \D \to \Bifib{p}$ commuting with the projections to $\C$:
\begin{equation*}\label{diagram:free-bifibration}
\begin{tikzcd}
  \D \arrow[dr,"p"']\arrow[rr,"\eta_p"] && \Bifib{p}\arrow[dl,"{\BifibFun{p}}"] \\
  &\C& 
\end{tikzcd}
\end{equation*}
Moreover, this data is universal in the sense that for any other bifibration $q : \E \to \C$ equipped with a functor $\theta : \D \to \E$ such that $q\circ\theta = p$,
% \begin{equation*}
% \begin{tikzcd}
%   \D \arrow[dr,"p"']\arrow[rr,"\theta "] && \E\arrow[dl,"q"] \\
%   &\C& 
% \end{tikzcd}
% \end{equation*}
there is a unique morphism of bifibrations making the diagram below commute:
\begin{equation}\label{eq:free-bifibration-tetra}
\Tetrahedron
\end{equation}
By \emph{morphism of bifibrations,} we mean a functor $\sem{-}{\theta} : \Bifib{p} \to \E$ sending chosen $\pm$-cartesian liftings to chosen $\pm$-cartesian liftings.
A standard argument implies the universal property of $\BifibFun{p} : \Bifib{p} \to \C$ extends to bifibrations over any base category into which $\C$ can be mapped, in the sense that $\sem{-}\theta$ below is uniquely determined from $\theta : \D \to \E$, $q : \E \to \B$, and $\delta : \C \to \B$:
\begin{equation}\label{eq:free-bifibration-pyramid}
\begin{tikzcd}[ampersand replacement=\&]
  \D \arrow[ddrr,"p"']\arrow[rr,"\eta_p"]\arrow[rrrd,"\theta"'] \& \&\Bifib{p}\arrow[dd,"\BifibFun{p}"]\arrow[dr,"\sem{-}\theta"]\&  \\
  \& \&  \& \E\arrow[dd,"q\text{ bifibration}"]\\
  \& \& \C\ar[rd,"\delta"'] \&  \\
  \& \& \&  \B
\end{tikzcd}
\end{equation}
In particular, \eqref{eq:free-bifibration-pyramid} can be reduced to \eqref{eq:free-bifibration-tetra} by first pulling back $q$ along $\delta$ to a bifibration over $\C$, using the fact that bifibrations are closed under pullback along arbitrary functors.

Whereas the free fibration $\C\downarrow p \to \C$ and the free opfibration $p\downarrow\C\to \C$ on a functor have simple and well-known descriptions, the free bifibration has been relatively little studied, and is a more complex object.
As far as we are aware there is only one direct construction in the literature, in an unpublished manuscript by François~Lamarche \cite{Lam13}.
This construction was motivated by his work on a model of type theory in which dependent types were Grothendieck bifibrations \cite{Lam14}, and identity types were modelled as bifibrations $\mathrm{P}(\C) \to \C\times\C$ from a certain ``path category'', of which a detailed analysis was given in the manuscript~\cite{Lam13}.
Otherwise, the problem of building the free bifibration on a functor is also closely related to the problem studied by Dawson, Paré, and Pronk \cite{DPP03,DPP03b} of freely adjoining right adjoints to a category, which may be seen as a weaker version of building the free groupoid over a category.
Their construction, which generalizes Schanuel and Street's definition of the \emph{free adjunction} \cite{SchanuelStreet1986}, takes a category $\C$ and embeds it into a 2-category $\Pi_2(\C)$ in which every arrow of the original category is equipped with a right adjoint.

The connection between these problems is revealed by taking the free bifibration on the identity functor $\Id[\C]$: as we will see, this defines a category $\Z(\C) := \Bifib{\Id[\C]}$ bifibered over $\C$ that may be equipped with the structure of a double category $\ZZ(\C) := \Z(\C) \rightrightarrows \C$.
We call $\ZZ(\C)$ the \emph{zigzag double category of $\C$}, since its vertical arrows (corresponding to the objects of $\Z(\C)$) are zigzags of arrows in $\C$.
In fact, $\Z(\C)$ is equivalent to Lamarche's path category $\mathrm{P}(\C)$, and as he observed \cite[p.22]{Lam13}, any free bifibration may be recovered from this double category by the formula $\BifibFun{p} \equiv \push{\tgt}\pull{\src}p$, or diagrammatically:
\begin{equation}\label{eq:bifibfun-as-pullback-of-zigzags}
\begin{tikzcd}
\D
\arrow[d, "p"']
& \Bifib{p}
\arrow[l]
\arrow[d]
\arrow[rd, "\BifibFun{p}"]
\arrow[ld, phantom, "\llcorner" very near start]
& \\
\C
& \Z(\C)
\arrow[l, "\src"]
\arrow[r, "\tgt"']
& \C
\end{tikzcd}
\end{equation}
In turn, taking the underlying vertical 2-category of $\ZZ(\C)$ recovers a 2-category equivalent to Dawson, Paré, and Pronk's $\Pi_2(\C)$, thus reducing the problem of freely adjoining right adjoints to the problem of understanding the free bifibration on the identity functor.

Our approach to the problem differs from prior work in that we study it from a proof-theoretic perspective.
It is natural to interpret the pushforward $\push{f}$ and pullback $\pull{g}$ operations as unary logical connectives, abstractions of existential and universal quantification as well as of diamond and box modalities.
The rules of this primitive ``bifibrational logic'' may be presented by a simple sequent calculus, leading us to a description of the free bifibration wherein the objects of $\Bifib{p}$ are formulas and its arrows are equivalence classes of sequent calculus derivations.
See Figure~\ref{fig:example-functor-and-derivation} for an example derivation in this sequent calculus parameterized by a specific functor $p : \D \to \C$, representing an arrow $\beta : \pull{f}\push{f} X \to \pull{h}\push{g}Y$ in $\Bifib{p}$ such that $\BifibFun{p}(\beta) = \Id[A]$.
The close connection with the zigzag double category also leads to an isomorphic representation of derivations as stacks of double cells in $\ZZ(\C)$, as well as to a string diagram calculus.

\begin{figure}  [b]
\begin{minipage}{.4\textwidth}
  \begin{tikzpicture}
  \node[draw, thick, inner sep=5pt] {    
  \begin{tikzcd}
    \D\ar[d,"p"] &    X \ar[r,"\alpha"] & Y &  \\
    \C &    A \ar[r,"f"]\ar[rr,"h"'{name=h},bend right] & B \ar[r,"g"] & C
    \ar[from=2-3,to=h,phantom,yshift=1pt,"="]
  \end{tikzcd}
  };
\end{tikzpicture}
\vspace*{1em}

\[
\inferrule*[fraction={---},Right={$\Rpull[h]$}]{
\inferrule*[fraction={===}]{
\inferrule*[fraction={---},Right={$\Lpull[f]$}]{
\inferrule*[Right={$\Lpush[f]$}]{
\inferrule*[Right={$\Rpush[g]$}]{
\inferrule*[Right={$\atom{\alpha}$}]{ }
{\atom X \vdashf{f} \atom Y}}
{\atom X \vdashf{fg} \push{g} \atom Y}}
{\push{f} \atom X \vdashf{g} \push{g} \atom Y}}
{\pull{f} \push{f} \atom X \vdashf{fg} \push{g} \atom Y}}
{\pull{f} \push{f} \atom X \vdashf{h} \push{g} \atom Y}}
{\pull{f} \push{f} \atom X \vdashf{\Id[A]} \pull{h}\push{g} \atom Y}
\]

\end{minipage}\hfill \vrule \hfill
\begin{minipage}{0.3\textwidth}
\begin{tikzcd}[
  execute at end picture = {
    \node (alphanode) [shape=circle,fill=grullo,scale=0.75,opacity=0.50] at (alpha) {};
    \node (eq) [shape=circle,fill=goldenpoppy,scale=0.75,opacity=0.50] at ($(m4.center)!0.65!(m5.center)$) {};
    \draw[arrh, uedge, draw opacity=0.40] ([yshift=-3pt]r6.west) to [out=180,in=-90] (eq);
    \draw[arrf, uedge, draw opacity=0.50] (eq) to [out=110,in=0] (l3.east);
    \draw[arrg, uedge, draw opacity=0.50] (eq) to [out=70,in=200] (r1.west);
    \draw[arrf, uedge, draw opacity=0.50] (l2.east) to [out=10,in=-95] (alphanode);
  }  ]
  X\ar[r,"\alpha"{name=alpha}] & Y \\[-15pt]
  A\ar[r,"f"{name=m1}]\ar[d,equals] & B\ar[d,"g"{name=r1}] \\
  A\ar[r,"fg"{name=m2}]\ar[d,"f"'{name=l2}] & C \ar[d,equals] \\
  B\ar[r,"g"{name=m3}] & C \\
  A\ar[u,"f"{name=l3}]\ar[r,"fg"{name=m4}] & C\ar[u,equals,""'{name=r3}] \\
  A\ar[r,"h"{name=m5}]\ar[u,equals,""{name=l4}] & C\ar[u,equals] \\
  A\ar[u,equals]\ar[r,equals,""{name=m6}] & A\ar[u,"h"'{name=r6}] \\
\end{tikzcd}
\end{minipage}
        \caption{Upper left: an example functor, depicting objects and arrows of the category above over their images below. Lower left: a sequent calculus proof representing an arrow of $\Bifib{p}$. Right: the same proof depicted as a stack of double cells in $\ZZ(\C)$ acting on an arrow in $\D$, overlaid with the corresponding string diagram.}
        \label{fig:example-functor-and-derivation}
\end{figure}

It should be emphasized that we do \emph{not} postulate the Beck-Chevalley condition.
Although bifibrations satisfying the BC condition are commonly used in categorical logic after Lawvere \cite{Lawvere1970} to model extensions of predicate logic with either substitution and existential quantification or substitution and universal quantification (see, e.g., Jacobs~\cite{Jacobs2001CLTT}), we are rather interested in free bifibrations where the BC condition fails.
These model more general situations including ones where the adjoint pair of pushforward and pullback operations $\push{f} \dashv \pull{f}$ themselves represent generalized existential and universal quantifiers $\exists_f \defeq \push{f}$, $\forall_f \defeq \pull{f}$, and hence by alternating these operations one can in some sense define objects of arbitrary quantifier complexity.

One of our original motivations for studying this problem from this perspective was the observation, made independently by Melliès and Zeilberger~\cite{MZ2015popl,MZ2016lawvere,MZ2018isbell} and by Licata, Shulman, and Riley~\cite{LSR2017}, that substructural and modal logics may be naturally modelled in certain bifibrations: specifically bifibrations of monoidal closed categories or multicategories in which the base moreover contains an object with some algebraic structure.
As a typical example, any monoid $(A,m,e)$ in the base of a bifibration of multicategories $p : \E \to \B$ generates a monoidal closed fiber category $\E_A$, with the tensor product and internal hom defined by pushing and pulling along the multiplication map $m : A,A \to A$. 
For more recent work in this spirit see Nicolas~Blanco's PhD thesis~\cite{Blanco2023phd} developing the theory of bifibrations of polycategories as models of classical linear logic, as well as Shulman~\cite{Shulman2023lnl} on LNL~doctrines, which axiomatize a wide variety of categorical structures and type theories.
The problem of constructing free bifibrations may therefore be seen as a natural problem in proof theory (cf.~\cite[\S8]{Shulman2023lnl}), and we were motivated to study the ``pure'' version of the problem for functors of categories as an eventual stepping stone to the more sophisticated case of monoidal closed categories or generalized multicategories and polycategories.
In the present paper we do already consider a simple but practically significant generalization of the problem, namely to free $(\P,\N)$-fibrations where one only asks for $+$-cartesian liftings (resp. $-$-cartesian liftings) of the arrows in $\P$ (resp. $\N$).
In particular when the pair $(\P,\N)$ form a factorization system for the base category this defines what Joachim Kock and André Joyal refer to as an \emph{ambifibration}.

As it turns out, even very simple functors can generate free bifibrations (and free ambifibrations) with surprisingly rich combinatorial structure.
For instance, let $p_\Two : \One \to \Two$ be the functor from the terminal category to the interval category sending the unique object of $\One$ to the initial object of $\Two$:
\[
\begin{tikzcd}
  \One \ar[d,"p_\Two"'] & \ast & \\
  \Two  & 0 \ar[r,"f"] & 1
\end{tikzcd}
\]
Now consider the free bifibration generated by $p_\Two$.
It is easy to see that any object of $\Bifib{p_\Two}$ must be isomorphic to an alternating sequence of pushes and pulls along the arrow $f$ starting at the point $\ast$, with even-length sequences yielding an object in the fiber over 0, and odd-length sequences an object in the fiber over 1.
Let us write $\ord{n} \defeq (\pull{f}\push{f})^n\atom\ast$ and $\ord{n}' \defeq \push{f}(\pull{f}\push{f})^n\atom\ast$ for the objects in the respective fibers (e.g., $\ord{3} = \pull{f}\push{f}\pull{f}\push{f}\pull{f}\push{f}\atom\ast$).
From the fact that the morphisms in the fibers are generated by the unit and counit of the push-pull adjunction $\push{f}\dashv \pull{f}$, with a bit of mental exercise (or familiarity with Schanuel and Street's paper~\cite{SchanuelStreet1986}, building on Auderset's earlier work~\cite{Auderset1974}) one can check that an arrow $\ord{m} \to \ord{n}$ in  $\Bifib{p_\Two}_0$ should correspond to an order-preserving map $\set{1,\dots,m} \to \set{1,\dots,n}$.
Indeed, $\Bifib{p_\Two}_0\cong \Delta$ is equivalent to the (augmented) simplex category  of finite ordinals and order-preserving maps, while $\Bifib{p_\Two}_1\cong \Delta_\bot$ is equivalent to the category  of non-empty finite ordinals and order-and-least-element-preserving maps.
The push-pull adjunction of the free bifibration instantiates to the free / forgetful adjunction
\begin{equation*}
  \begin{tikzcd}[column sep=4em]
   \Delta \ar[r,"L"{name=0},bend left] & \Delta_\bot \ar[l,"R"{name=1},bend left] \ar[from=0,to=1,phantom,"\bot"]
  \end{tikzcd}
\end{equation*}
where $L$ freely adds a least element $\bot$ to turn an order-preserving map $\ord{m} \to \ord{n}$ into an order-and-least-element-preserving map $\ord{m}'\to \ord{n}'$, while the right adjoint $R$ interprets $\ord{m}'$ as $\ord{1{+}m}$ and forgets that a map is $\bot$-preserving.
This much may be unshocking to readers familiar with the free adjunction~$\A$, which can be obtained by freely adjoining adjoints to the interval category, $\A = \Pi_2(\Two)$, as Dawson, Paré, and Pronk already observed~\cite[p.139]{DPP03}---although we emphasize the free bifibration is another way of organizing the data.

More strikingly, this example generalizes in the following way.
Replace the interval category $\Two$ by the ordinal $\omega$ (that is, the total order on the natural numbers) and consider the free bifibration generated by the functor $p_\omega : \One \to \omega$ mapping the point to 0, as depicted below:
\[
\begin{tikzcd}
  \One \ar[d,"p_\omega"'] & \ast & \\
  \omega & 0 \ar[r,"f_0"] & 1 \ar[r,"f_1"] & 2 \ar[r,"f_2"] & \cdots
\end{tikzcd}
\]
What is the fiber of $\Bifib{p_\omega}$ over 0?
Objects are isomorphic to sequences of pushes and pulls along $f_i$'s, such that, reading a formula from right to left, the running total of pulls never exceeds the running total of pushes, and the total numbers of pushes and pulls are equal.
%(e.g., $\pull{f}\pull{f}\push{f}\pull{f}\push{f}\push{f}\ast$ and $\pull{f}\pull{f}\push{f}\push{f}\pull{f}\push{f}\ast$).
In other words, up to isomorphism the objects of $\Bifib{p_\omega}_0$ may be read as \emph{Dyck words}.
Via a well-known bijection (see Stanley~\cite[V2:169--170]{StanleyEC1+2}), Dyck words are in one-to-one correspondence with rooted plane trees, and thus $\Bifib{p_\omega}_0$ may be interpreted as a category of trees.
Under a natural encoding of plane trees as functors $T : \omega^\op \to \Delta$, it turns out that this fiber category of the free bifibration is equivalent to a full subcategory of $[\omega^\op, \Delta]$, and embeds as a wide subcategory into a category of finite plane trees defined by Joyal \cite{Joyal1997}.

An example of a different nature is obtained by considering the free $(\Delta_\epi,\Delta_\mono)$-fibration generated by the inclusion $i : \NN \to \Delta$ of the set of natural numbers (seen as a discrete category) into the simplex category.
This is a free ambifibration, and its universal property ensures that it is equipped with a canonical morphism into the so-called ``fat~Delta'' category introduced by Kock~\cite{Kock2006weak}.
The category $\Bifib{i,\Delta_\epi,\Delta_\mono}$ appears to have a rich combinatorial structure that we only begin to analyze here.
For example, the \emph{lattices of noncrossing partitions} (see Stanley \cite[V1:515, V2:226]{StanleyEC1+2}) are recovered by quotienting this free ambifibration by the Beck-Chevalley condition for bicartesian squares.

\paragraph{Outline of the paper} After introducing the sequent calculus in Section~\ref{sec:sequent-calculus} and after defining and analyzing the double category of zigzags in Section~\ref{sec:double-categories}, the technical core of the paper is in Section~\ref{sec:focusing}, where we establish a series of progressively stronger normal form results, guided by ideas from proof theory.
The strongest of these (Theorem~\ref{thm:fp-unique-normal-forms}), which holds under the assumption that the base category $\C$ is \emph{factorization preordered} (\FP), leads to an inductive definition of the relative homsets of the free bifibration that does not use any kind of quotient by an equivalence relation, representing arrows by \emph{maximally multifocused} derivations.
The \FP condition also showed up in Dawson, Paré, and Pronk's work on the $\Pi_2(\C)$ construction, where they observed that equality of 2-cells is undecidable in general but is decidable when $\C$ is factorization preordered \cite{DPP03b}.
As a corollary of our canonicity theorem, we obtain both an analogous decidability result for equality of arrows in $\Bifib{p}$ as well as a procedure for enumerating relative homsets without duplicates.
%This procedure can be used, for example, to enumerate morphisms of plane trees seen as arrows in $\Bifib{p_\omega}$.
Section~\ref{sec:PN-fibrations} briefly describes how our constructions and results adapt to free $(\P,\N)$-fibrations.
This is technically straightforward but has the useful consequence that one can weaken the assumption needed for canonicity, only requiring that the base category be factorization preordered relative to each class $\P$ and $\N$ independently.
Finally, in Section~\ref{sec:examples} we use the tools developed in the paper to analyze the three examples described above.
Section~\ref{sec:conclusion} concludes and suggests some directions for further exploration.

\paragraph{Remark} This work mixes domains and techniques that will be familiar to different audiences, and we expect most readers to be unfamiliar with some of them. We tried to find a presentation that would be accessible to people only familiar but not expert in category theory, and people unfamiliar with sequent calculus and proof theory. We ask the reader for forgiveness if they find the exposition too basic in places, for the small doses of redundancy this introduces, and for the length of the resulting presentation.

%----------------------------------------------------------------------%
%\newpage %% Just temporary - delete in final version
\section{Sequent calculus}
\label{sec:sequent-calculus}
%----------------------------------------------------------------------%

In this section we will explain how the free bifibration $\BifibFun{p} : \Bifib{p} \to \C$
on an arbitrary functor $p : \D \to \C$
may be constructed proof-theoretically, as an inductively-defined sequent calculus for a primitive ``bifibrational logic'', starting from the data of $p$.
The idea will be to take the objects of $\Bifib{p}$ to be formulas of the sequent calculus and its arrows to be proofs modulo a notion of \emph{permutation equivalence} to be defined below.
This sequent calculus is cut-free, but the cut rule is admissible, giving a definition of composition of arrows in the category $\Bifib{p}$.
We will establish a series of properties leading to the main result of this section (Theorem~\ref{thm:bif-is-free}), that the sequent calculus indeed gives a presentation of the free bifibration on $p$.

\subsection{Formulas and unary sequents}
\label{sec:sequent-calculus:frm-seq}

To define the formulas of the logic, we find it helpful to introduce the judgment $S \refs A$ to mean that $S$ is a formula (= object of $\Bifib{p}$) lying over (or ``refining'') the object $A$ of $\C$.
We usually refer to formulas simply as ``formulas'', but sometimes for clarity we refer to them as \emph{bifibrational formulas,} being formulas of bifibrational logic.
Bifibrational formulas are defined inductively by the following rules:
\begin{mathpar}
  \inferrule {X \in \D \\ p(X) = A}
             {\atom{X} \refs A}

  \inferrule {S \refs A \\ f : A \to B}
             {\push{f}S \refs B}

  \inferrule {g : B \to C \\ T \refs C}
             {\pull{g}T \refs B}
\end{mathpar}
Here we are using standard notation for inference rules: the judgment below the horizontal bar is the \emph{conclusion} of the inference rule, and the judgments above the bar are its \emph{premises}. Each rule asserts that if its premises are valid, then its conclusion is valid.
We refer to formulas of the form $\atom{X}$ as \emph{atomic formulas}, and formulas of the form $\push f S$ or $\pull g T$ as \emph{push formulas} and \emph{pull formulas} respectively.
All of the formulas $S'$ appearing in the inductive construction of a bifibrational formula $S$ are said to be \emph{subformulas} of $S$.
We denote the subformula relation $S' \preceq S$, and the strict subformula relation $S' \prec S$.

In set-theoretic terms, each object $A$ of $\C$ determines a set $\Frm{A} = \set{S \mid S \refs A}$ of formulas lying over $A$, and the collection of sets of formulas $(\Frm{A})_{A \in \C}$ is the least family of sets closed under the following three conditions:
\begin{itemize}
\item if $X \in \D$ and $p(X) = A$ then $\atom{X} \in \Frm{A}$;
\item if $S \in \Frm{A}$ and $f : A \to B$ is an arrow of $\C$ then $\push{f}S \in \Frm{B}$;
\item if $T \in \Frm{C}$ and $g : B \to C$ is an arrow of $\C$ then $\pull{g}T \in \Frm{B}$.
\end{itemize}

Besides the judgment $S \refs A$ describing well-formedness of formulas, the main judgment of interest of the calculus is the \emph{unary sequent arrow} of the form
\begin{equation}\label{unary-sequent}
S \vdashf{h} T
\end{equation}
where $S \refs A$ and $T\refs C$ are formulas lying over objects $A$ and $C$ respectively, and where $h : A \to C$ is an arrow of $\C$.
For comparison, it is worth keeping in mind traditional sequents of intuitionistic sequent calculus, which take the form
\[ P_1, \dots, P_n \vdashf{} Q \]
(often written with $\vdash$ in place of an arrow) and which may be read as asserting an entailment from the conjunction of the formulas on the left-hand side to the formula on the right-hand side.
In the bifibrational calculus, unary sequents of the form \eqref{unary-sequent} may be read as asserting an entailment from $S$ to $T$ ``fibered over $h$''.
%This can be seen as a form of unary entailment from $S$ to $T$, which also indicates the underlying arrow $f$ in the base category. $S \vdash_f T$
We refer to $S$ as the \emph{left side,} to $T$ as the \emph{right side,} and to $h$ as the \emph{base} of the sequent.
The definition of the sequent calculus will ensure that a proof of the judgment $S \vdashf{h} T$ corresponds to an arrow $\alpha : S \to T$ in $\Bifib{p}$ such that $\BifibFun{p}(\alpha) = h$.

\subsection{Inference rules and derivations}
\label{sec:sequent-calculus:deriv}

The calculus includes four logical inference rules:
\begin{mathpar}
  \inferrule* [Right={$\Lpush$}]
        {S \vdashf{fg} T}
        {\push{f} S \vdashf{g} T}

  \inferrule* [Right={$\Rpush$}]
        {S' \vdashf{f'} S}
        {S' \vdashf{f'f} \push{f} S}

  \inferrule* [Right={$\Lpull[g]$}]
        {T \vdashf{g'} T'}
        {\pull{g} T \vdashf{gg'} T'}

  \inferrule* [Right={$\Rpull[g]$}]
        {S \vdashf{fg} T}
        {S \vdashf{f} \pull{g} T}
\end{mathpar}
together with an \emph{initial axiom} for every arrow of $\D$:
\[
  \inferrule*[Right={$\atom{\delta}$}]{\delta : X \to Y \in \D \\ p(\delta) = f}{\atom{X} \vdashf{f} \atom{Y}}
\]
In general in sequent calculus, one can define a \emph{derivation} to be a finite rooted tree whose edges are labelled by sequents, and whose nodes are labelled by valid instances of the inference rules.
Moreover, one can distinguish between \emph{open} and \emph{closed} derivations, where open derivations have premises corresponding to open edges of the tree.
Another name for a closed derivation is a \emph{proof}.
A special property of the bifibrational sequent calculus is that every inference rule has at most one premise, so that derivations are completely linear (i.e., they are just \emph{lists} rather than proper trees), and a derivation is closed just in case it is terminated by a unique initial axiom.

The overall format of the rules follows the tradition of sequent calculus with left rules and right rules for every connective, which in this case are the pushforward $\push{f}$ and pullback $\pull{g}$ connectives parameterized by arrows $f$ and $g$ of $\C$.

In set-theoretic terms, the rules may be seen as specifying closure conditions that inductively define a family of sets of proofs of judgments.
For example, the $\Lpush$ rule says that for any formulas $S$ and $T$ and arrows $f$ and $g$, given a proof of $S \vdashf{fg} T$ there should be a proof of $\push{f}S\vdashf{g} T$.
Note that the rules have implicit side-conditions that the judgments are well-formed.
For example, in the $\Lpush$ rule, the arrows $f$ and $g$ should be composable, with $f : A \to B$ and $g : B \to C$ for some objects $A,B,C$ such that $S \refs A$ and $T \refs C$.

We refer the reader back to Figure~\ref{fig:example-functor-and-derivation} from the Introduction, which gives an example derivation in the sequent calculus relative to a simple functor.

Finally, we emphasize that the underlying base arrows are more than a mere annotation on the sequent judgments:
they really do restrict the applicability of the inference rules.
For example, applying the $\Lpush$ rule to a derivation of a judgment $S \vdashf{h} T$ requires factoring the arrow as a composite $h = fg$.

We do not include explicit identity and cut rules in the definition of the sequent calculus, 
\begin{mathpar}
\inferrule*[Right={$\Id[S]$}]
        {S \refs A}
        {S \vdashf{\Id[A]} S}

\inferrule*[Right={$\cut$}]
        {S \vdashf{f} U \\ U \vdashf{g} T}
        {S \vdashf{fg} T}
\end{mathpar}
but these rules are \emph{admissible} in the proof-theoretic sense as we will explain below.
Since derivations are cut-free, they satisfy a variation of the standard subformula property:
\begin{prop}[Subformula property]\label{prop:subformula-property}
A derivation of $S \vdashf{f} T$ involves only sequents of the form $S' \vdashf{g} T'$ where $S' \preceq S$ and $T' \preceq T$ are subformulas of $S$ and $T$.
\end{prop}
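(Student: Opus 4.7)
The plan is a straightforward structural induction on derivations. The key local observation is that each of the five inference rules is ``subformula-friendly'' on both sides of the turnstile: comparing the conclusion sequent to its premise sequent (when there is one), neither side strictly shrinks. More precisely, if a rule has conclusion $S \vdashf{f} T$ and premise $S^{\ast} \vdashf{f^{\ast}} T^{\ast}$, then $S^{\ast} \preceq S$ and $T^{\ast} \preceq T$.

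First I would verify this local property by inspecting the five rules in turn. For $\Lpush$, the premise $S \vdashf{fg} T$ and conclusion $\push{f}S \vdashf{g} T$ give $S \prec \push{f}S$ on the left, with the right side unchanged. For $\Rpush$, the left side is unchanged, and on the right $S \prec \push{f}S$. The $\Lpull[g]$ and $\Rpull[g]$ cases are dual. The axiom $\atom{X} \vdashf{f} \atom{Y}$ has no premise, so the local condition is vacuous there.

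Then the induction itself. In the base case (an axiom $\atom{X} \vdashf{f} \atom{Y}$), the only sequent appearing is the conclusion, and trivially $\atom{X} \preceq \atom{X}$ and $\atom{Y} \preceq \atom{Y}$. In the inductive step, a derivation $\pi$ of $S \vdashf{f} T$ ends in an application of some rule with premise $S^{\ast} \vdashf{f^{\ast}} T^{\ast}$ satisfying $S^{\ast} \preceq S$ and $T^{\ast} \preceq T$ by the local analysis. The subderivation of this premise, by the inductive hypothesis, involves only sequents $S' \vdashf{g} T'$ with $S' \preceq S^{\ast}$ and $T' \preceq T^{\ast}$. Transitivity of the subformula relation yields $S' \preceq S$ and $T' \preceq T$, and the remaining sequent (the conclusion of $\pi$ itself) trivially satisfies the condition.

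I do not foresee any real obstacle: the statement holds essentially by inspection, since the calculus is cut-free and every logical rule only enlarges the endsequent on one side or the other. The argument also highlights why cut-freeness is essential --- a $\cut$ rule would introduce a ``middle formula'' $U$ with no subformula relationship to either $S$ or $T$, destroying the property.
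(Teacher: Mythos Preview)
Your proof is correct. The paper itself does not give an explicit proof of this proposition: it simply states it, prefaced by the remark ``Since derivations are cut-free, they satisfy a variation of the standard subformula property,'' and moves on. Your structural induction is exactly the routine argument the paper is implicitly invoking, so there is nothing to compare.
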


We now define closed derivations corresponding to the identity rule below, deferring the treatment of cut to Section~\ref{sec:sequent-calculus:cut}.

\begin{defi}[identity]\label{defi:identity} For any formula $S \refs A$, there is a proof $\id_S$ of $S \vdashf{\id_A} S$ constructed by induction on $S$ as follows:
% \begin{itemize}
% \item if $S = X \refs A$ is an object of $\D$, we take
% \item
% \item
% \end{itemize}
% \end{defi}
% For any $R \refs A$, the identity morphism $\Id R$ over $R$ is given
% by induction over the derivations of $R \refs A$:
\[
\inferrule {X \in \D \\ p(X) = A}
           {\atom{X} \refs A}
\qquad\implies\qquad
\Id[\atom{X}]
\quad\defeq\quad
\inferrule* [Right={$\atom{\Id[X]}$}] { } {\atom{X} \vdashf{\Id[A]} \atom{X}}
\]

\[
\inferrule {S \refs A \\ f : A \to B}
           {\push{f}S \refs B}
\qquad\implies\qquad
\Id[{\push{f} S}]
\quad\defeq\quad
\inferrule*[Right={$\Lpush$}]{
\inferrule*[Right={$\Rpush$}]{
\inferrule*[Right={$\id_S$}]{ }
{S \vdashf{\Id[A]} S}}
{S \vdashf{f} \push{f} S}}
{\push{f} S \vdashf{\Id[B]} \push{f} S}
\]

\[
\inferrule {g : B \to C \\ T \refs C}
           {\pull{g}T \refs B}
\qquad\implies\qquad
\Id[\pull{g} T]
\quad\defeq\quad
\inferrule*[Right={$\Rpull[g]$}]{
\inferrule*[Right={$\Lpull[g]$}]{
\inferrule*[Right={$\id_T$}]{ }
{T \vdashf{\Id[C]} T}}
{\pull{g} T \vdashf{g} T}}
{\pull{g} T \vdashf{\Id[B]} \pull{g} T}
\]\qed
\end{defi}

Observe that proofs corresponding to the unit and counit of the canonical family of adjunctions $\push{f} \dashv \pull{f}$ of a bifibration may be constructed as 
\begin{mathpar}
  \inferrule*[Right={$\Rpull$}]{
  \inferrule*[Right={$\Rpush$}]{
  \inferrule*[Right={$\id_S$}]{ }
  {S \vdashf{\Id[A]} S}}
  {S \vdashf{f} \push{f} S}}
  {S \vdashf{\Id[A]} \pull{f}\push{f} S}

  \inferrule*[Right={$\Lpush$}]{
  \inferrule*[Right={$\Lpull$}]{
  \inferrule*[Right={$\id_U$}]{ }
  {U \vdashf{\Id[B]} U}}
  {\pull{f}U \vdashf{f} U}}
  {\push{f}\pull{f}U \vdashf{\Id[B]} U}
\end{mathpar}
where we make use of the identity derivations from Definition~\ref{defi:identity} to terminate the proofs.

\subsection{A term syntax for proofs}
\label{sec:sequent-calculus:term-syntax}

A term syntax is a compact notation to describe derivations that can be convenient for defining some operations and equations.
We introduce here a syntax of terms generated by the following grammar:
\begin{mathpar}
  \begin{array}{rcl@{\qquad}r}
    \alpha & ::= &
      \atom \delta & \text{for $\delta : X \to Y \in \D$}
    \\
    & \mid & \uLpush f g \alpha
    \\
    & \mid & \alpha \tRpush f
    \\
    & \mid & \tLpull g \alpha
    \\
    & \mid & \alpha \uRpull f g
  \end{array}
\end{mathpar}
which are assigned to derivations as shown in Figure~\ref{fig:term-annotated-rules}.

\begin{figure}
\begin{mathpar}
  \inferrule%[\atom{\delta}]
        {\delta : X \to Y \in \D \\ p(\delta) = f}
        {\atom \delta : X \vdashf{f} Y}
\\
  \inferrule%[\Lpush]
        {\alpha : S \vdashf{fg} T}
        {\uLpush f g \alpha : \push{f} S \vdashf{g} T}

  \inferrule%[\Rpush]
        {\alpha : S' \vdashf{f'} S}
        {\alpha \tRpush{f} : S' \vdashf{f'f} \push{f} S}
\\
  \inferrule%[{\Lpull[g]}]
        {\alpha : T \vdashf{g'} T'}
        {\tLpull{g} \alpha : \pull{g} T \vdashf{gg'} T'}

  \inferrule%[{\Rpull[g]}]
        {\alpha : S \vdashf{fg} T}
        {\alpha \uRpull{f}{g} : S \vdashf{f} \pull{g} T}
\end{mathpar}
\caption{Term-annotated inference rules of the bifibrational calculus.}      
\label{fig:term-annotated-rules}
\end{figure}

Formally, this defines a new judgment $\alpha : S \vdashf{h} T$ whose derivations are in one-to-one correspondence with derivations of $S \vdashf{h} T$. In other words, for a judgment $S \vdashf{h} T$, a derivation of the judgment uniquely determines a term $\alpha$ and a derivation of $\alpha : S \vdashf{h} T$. We will only consider \emph{valid} terms, that do correspond to a derivation, and will use the term $\alpha$ and the underlying derivation interchangeably.

We sometimes call \emph{multiplications} the term-formers
$\alpha \tRpush f$ and $\tLpull g \alpha$ and \emph{divisions} the
term-formers $\uLpush f g \alpha$ and $\alpha \uRpull f g$. The reason why our division notation takes an explicit subscript $f$ or $g$ is that its meaning depends on a choice of decomposition of the arrow $h$ underlying $\alpha$ as $h = fg$; there is no such ambiguity for multiplications.

\paragraph{Examples} The terms for the identity derivations can be defined recursively as follows:
\begin{mathpar}
  \Id[\atom X] \defeq \atom {\Id [X]} \in \D

  \Id[\push f S] \defeq \uLpush f {\Id[B]} (\Id[S] \tRpush f)

  \Id[\pull g T] \defeq (\tLpull g \Id[T]) \uRpull {\Id[B]} g
\end{mathpar}
and the unit and co-unit derivations as follows:
\begin{mathpar}
  \eta_S \defeq (\Id[S] \tRpush f) \uRpull {\Id[A]} f

  \epsilon_T \defeq \uLpush f {\Id[B]} (\tLpull f \Id[U])
\end{mathpar}
These examples use subterms of the form $\Id[S] \tRpush f$ and $\tLpull f \Id[S]$, which we will later show correspond to cartesian liftings in the free bifibration:    
\begin{mathpar}
  \opcart f S : S \vdashf{f} \push f S \quad\defeq\quad \Id[S] \tRpush f

  \cart g T : \pull g T \vdashf{g} T \quad\defeq\quad \tLpull g \Id[T]
\end{mathpar}

\subsection{Interpreting formulas and proofs in any bifibration on $p$}
\label{sec:sequent-calculus:interpretation}

In this section we suppose given an arbitrary bifibration $q : \E \to \C$ equipped with a functor
$\theta : \D \to \E$ such that $q \circ \theta = p$.
As explained in the Introduction, we expect $\theta$ to factor uniquely via the free bifibration
in the diagram below:
\begin{equation*}
\Tetrahedron
\end{equation*}
We describe now how to interpret formulas and proofs of the sequent calculus as objects and arrows of $\E$,
which will determine the functor $\sem{-}\theta$ once we have completed the definition of the category $\Bifib{p}$.

We interpret any formula $S$ as an object $\sem S \theta$ of $\E$, and any proof $\alpha : S \vdashf{h} T$ as an arrow $\sem \alpha \theta : \sem S \theta \to \sem T \theta$ such that $q(\sem \alpha \theta) = h$.

\paragraph{Interpretation of formulas}
Defined by induction on the formula:
\begin{mathpar}
\sem{\atom X} \theta = \theta(X)

\sem{\push f S} \theta = \push {f} {\sem{S} \theta}

\sem{\pull g T} \theta = \pull {g} {\sem{T} \theta}
\end{mathpar}
That is, we interpret the pushforward and pullback connectives using pushforward and pullback in $\E$, and atomic formulas using the action of the functor $\theta : \D \to \E$ on objects.

\paragraph{Interpretation of terms}
By gentle abuse of notation, for any arrow $\alpha : X \to Y \in \E$ such that $q(\alpha) = fg$, let us write $\uLpush f g \alpha$ for the unique $\beta : \push f X \to Y$ such that $\alpha = \opcart f X \; \beta$, and $\alpha \uRpull f g$ for the unique $\beta : X \to \pull g Y$ such that $\alpha = \beta \; \cart g Y$, both of these derived using the respective universal properties \eqref{eq:unique-beta-push} and \eqref{eq:unique-beta-pull} of cartesian liftings.
We thus reuse the notation for division to denote arrows in an arbitrary bifibration.
Let us also allow ourselves to write $\opcart{f}{}$ for $\opcart{f}{X}$ and $\cart{g}{}$ for $\cart{g}{Y}$, when the object can be deduced from the context.

The interpretation of terms is then defined by induction as follows:
%\begin{defi}[Interpretation in a bifibration]\label{defi:interpretation}
\begin{mathpar}
    \sem{\atom \delta} \theta = \theta(\delta)
\\
    \sem{\alpha \tRpush f} \theta = \sem \alpha \theta \; \opcart{f}{}

    \sem{\uLpush f g \alpha} \theta = \uLpush {f} {g} {\sem \alpha \theta}
\\
    \sem{\tLpull g \alpha} \theta = \cart {g} {} \sem{\alpha} \theta

    \sem{\alpha \uRpull f g} \theta = {\sem \alpha \theta} \uRpull {f} {g}
\end{mathpar}
Initial axioms are interpreted using the action of $\theta$ on arrows of $\D$,
multiplications (corresponding to the inference rules $\Lpull[g]$ and $\Rpush$) are interpreted by pre- and post-composition with cartesian liftings in $\E$ (we remind the reader that we write composition by juxtaposition in diagrammatic order),
and divisions ($\Lpush[f]$ and $\Rpull[g]$) are interpreted using division in $\E$, i.e., using the universal properties of the cartesian liftings.

\subsection{Permutation equivalence}
\label{sec:sequent-calculus:permeq}

Let us begin by observing that some algebraic laws relating multiplication and division hold in any bifibration $q : \E \to \B$.

\begin{prop}
  \label{prop:bifib-equations}
  For $\alpha : X \to Y \in \E$ such that $q(\alpha) = fg$,
  and $\beta : X \to Y'$ such that $q(\beta) = fgg'$,
  the following equalities hold:
  \begin{align*}
    \cart {f'} {} \; (\alpha \uRpull f g)
    & =
    (\cart {f'} {} \; \alpha) \uRpull {f'f} g
    \\
    (\uLpush f g \alpha) \; \opcart {g'} {}
    & =
    \uLpush f {gg'} (\alpha \; \opcart {g'} {})
    \\
    (\uLpush f {gg'} \beta) \uRpull g {g'}
    & =
    \uLpush f g (\beta \uRpull {fg} {g'})
  \end{align*}
\end{prop}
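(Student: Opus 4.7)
The plan is to prove each of the three equations by invoking the uniqueness clause of the universal property that defines the relevant division operation, using the characterizing identities $\alpha = \opcart{f}{}\;(\uLpush{f}{g}\alpha)$ and $\alpha = (\alpha \uRpull{f}{g})\;\cart{g}{}$ from \eqref{eq:unique-beta-push} and \eqref{eq:unique-beta-pull}. In every case the task reduces by one or two applications of these universal properties to a routine calculation using associativity and the defining identities themselves.

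For the first equation, I would apply the defining universal property of the right-hand side $(\cart{f'}{}\;\alpha)\uRpull{f'f}{g}$: it is the unique arrow over $f'f$ whose post-composition with $\cart{g}{}$ equals $\cart{f'}{}\;\alpha$. Thus it suffices to check that the left-hand side $\cart{f'}{}\;(\alpha \uRpull{f}{g})$, which lies over $f'f$, satisfies the same equation. This is immediate by associativity: $(\cart{f'}{}\;(\alpha\uRpull{f}{g}))\;\cart{g}{} = \cart{f'}{}\;((\alpha\uRpull{f}{g})\;\cart{g}{}) = \cart{f'}{}\;\alpha$. The second equation is handled dually, applying the universal property of $\uLpush{f}{gg'}(\alpha\;\opcart{g'}{})$ and pre-composing the left-hand side with $\opcart{f}{}$.

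The third equation mixes both operations, so the reduction happens in two stages. First, apply the $\uRpull{g}{g'}$ universal property to the left-hand side $(\uLpush{f}{gg'}\beta)\uRpull{g}{g'}$: by uniqueness, the equation reduces to checking that the right-hand side $\uLpush{f}{g}(\beta\uRpull{fg}{g'})$, post-composed with $\cart{g'}{}$, equals $\uLpush{f}{gg'}\beta$. To verify this, apply the $\uLpush{f}{gg'}$ universal property in turn: it reduces to showing that pre-composing the candidate with $\opcart{f}{}$ recovers $\beta$. The computation $\opcart{f}{}\;\uLpush{f}{g}(\beta\uRpull{fg}{g'})\;\cart{g'}{} = (\beta\uRpull{fg}{g'})\;\cart{g'}{} = \beta$ follows by applying the two defining identities in sequence, completing the proof.

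No step poses a genuine technical obstacle; the real work is bookkeeping. The one place requiring care is tracking factorizations: each occurrence of $\uLpush{f}{g}(-)$ or $(-)\uRpull{f}{g}$ depends on a chosen factorization of the base arrow of its input, so at each step one must verify that the intermediate composites project to arrows of $\B$ factored as required for the next universal property to be applied. I expect the mixed third equation to demand the most attention in this respect, but it remains straightforward.
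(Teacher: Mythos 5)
Your proof is correct and follows essentially the same route as the paper: each equation is established by invoking the uniqueness clause of the universal property defining the relevant division operation, then verifying the resulting characterizing equation via associativity and the identities from \eqref{eq:unique-beta-push}--\eqref{eq:unique-beta-pull}. The paper writes this out as a chain of backward equivalences (peeling off divisions one at a time until a tautology remains), whereas you phrase it as forward reductions, with a slightly different order of peeling in the third equation, but these are stylistic rather than substantive differences.
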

\begin{proof}\label{prf:bifib-equations}
  The uniqueness condition on the arrows obtained by division can be expressed
  as an equivalence:
  $\beta = (\alpha \uRpull f g) \iff \beta \; \cart{g}{} = \alpha$
  and
  $\beta = (\uLpush f g \alpha) \iff \opcart{f}{} \; \beta = \alpha$
  . The proofs for the desired equalities are then as follows:

  \begin{mathpar}
    \begin{array}{cl}
    &
    \opcart {f'} {} \; (\alpha \uRpull f g)
    =
    (\opcart {f'} {} \alpha) \uRpull {f'f} g
    \\ \iff &
    \cart{f'}{} \; (\alpha \uRpull f g) \; \cart{g}{}
    =
    \cart{f'}{} \; \alpha
    \\ \Longleftarrow &
    (\alpha \uRpull f g) \; \cart{g}{}
    =
    \alpha
    \\ \iff &
      \alpha \uRpull f g
      =
      \alpha \uRpull f g
    \end{array}

    \begin{array}{cl}
    &
    (\uLpush f g \alpha) \; \opcart {g'} {}
     =
    \uLpush f {gg'} (\alpha \; \opcart {g'} {})
    \\ \iff &
    \opcart{f}{} \; (\uLpush f g \alpha) \; \opcart {g'} {}
    =
    \alpha \; \opcart {g'} {}
    \\ \Longleftarrow &
    \opcart{f}{} \; (\uLpush f g \alpha) {}
    =
    \alpha
    \\ \iff &
      \uLpush f g \alpha
      =
      \uLpush f g \alpha
    \end{array}

    \begin{array}{cl}
    &
    (\uLpush f {gg'} \beta) \uRpull g {g'}
    =
    \uLpush f g (\beta \uRpull {fg} {g'})
    \\ \iff &
    \opcart{f}{} \; (\uLpush f {gg'} \beta) \uRpull g {g'}
    =
    \beta \uRpull {fg} {g'}
    \\ \iff &
    \opcart{f}{} \; ((\uLpush f {gg'} \beta) \uRpull g {g'}) \; \cart{g'}{}
    =
    \beta
    \\ \iff &
    ((\uLpush f {gg'} \beta) \uRpull g {g'}) \; \cart{g'}{}
    =
    \uLpush f {gg'} \beta
    \\ \iff &
    ((\uLpush f {gg'} \beta) \uRpull g {g'})
    =
    (\uLpush f {gg'} \beta) \uRpull g {g'}
    \end{array}
  \end{mathpar}
\end{proof}
\noindent
These laws imply that different derivations of the sequent calculus $\alpha_1,\alpha_2 : S \vdashf{h} T$ will be interpreted by the same arrow $\sem{\alpha_1}\theta = \sem{\alpha_2}\theta : \sem S \theta \to \sem T \theta$ in $\E$ under the bifibrational interpretation described in the previous section.
In order for derivations to form a bifibration themselves, we must quotient them by a \emph{permutation equivalence} relation $(\permeq)$ that relates any such pair of derivations equated in all interpretations.
\begin{defi}
\label{defi:permeq}
We define \defin{permutation equivalence} $(\permeq)$ as the smallest congruence that contains the four generating equations below. A congruence is an equivalence relation that is also closed under the derivation term-formers: if $\alpha_1 \permeq \alpha_2$, then we also have $\alpha_1 \tRpush {g'} \permeq \alpha_2 \tRpush {g'}$, and $\uLpush f g \alpha_1 \permeq \uLpush f g \alpha_2$, etc., whenever both sides of the equation are valid terms. The generating equations of $(\permeq)$ are (whenever both sides of the equation are valid):
\begin{align}
  (\tLpull {f} \alpha) \tRpush {h}
  & \permeq 
  \tLpull {f} (\alpha \tRpush {h})
  & \text{for $\alpha$ over $g$} \label{eq:permgen1}
  \\
  (\uLpush f g \alpha) \tRpush {h}
  & \permeq 
  \uLpush f {gh} (\alpha \tRpush {h})
  & \text{for $\alpha$ over $fg$} \label{eq:permgen2}
  \\
  (\tLpull {f} \alpha) \uRpull {fg} h
  & \permeq 
  \tLpull {f} (\alpha \uRpull g h)
  & \text{for $\alpha$ over $gh$} \label{eq:permgen3}
  \\
  (\uLpush {f} {gh} \alpha) \uRpull g h
  & \permeq 
  \uLpush {f} {g} (\alpha \uRpull {fg} h)
  & \text{for $\alpha$ over $fgh$} \label{eq:permgen4}
\end{align}
\end{defi}
\noindent
The generators of the permutation equivalence relation may be defined equivalently (albeit less compactly) as the following relations on derivations:
\begin{mathpar}
   % generator 1
   \inferrule*[Right={$\Rpush[h]$}]{
   \inferrule*[Right={$\Lpull$}]
        {S \vdashf{g} T }
        {\pull{f}S \vdashf{fg} T} }
        {\pull{f}S \vdashf{fgh} \push{h}T}
   \qquad\raisebox{2em}{$\permeq$}\quad
   \inferrule*[Right={$\Lpull$}]{
   \inferrule*[Right={$\Rpush[h]$}]
        {S \vdashf{g} T }
        {S \vdashf{gh} \push{h}T} }
        {\pull{f}S \vdashf{fgh} \push{h}T}

   % generator 2
   \inferrule*[Right={$\Rpush[h]$}]{
   \inferrule*[Right={$\Lpush$}]
        {S \vdashf{fg} T }
        {\push{f}S \vdashf{g} T} }
        {\push{f}S \vdashf{gh} \push{h}T}
   \qquad\raisebox{2em}{$\permeq$}\quad
   \inferrule*[Right={$\Lpush$}]{
   \inferrule*[Right={$\Rpush[h]$}]
        {S \vdashf{fg} T }
        {S \vdashf{fgh} \push{h}T} }
        {\push{f}S \vdashf{gh} \push{h}T}

   % generator 4
   \inferrule*[Right={$\Rpull[h]$}]{
   \inferrule*[Right={$\Lpull$}]
        {S \vdashf{gh} T }
        {\pull{f}S \vdashf{fgh} T} }
        {\pull{f}S \vdashf{fg} \pull{h}T}
   \qquad\raisebox{2em}{$\permeq$}\quad
   \inferrule*[Right={$\Lpull$}]{
   \inferrule*[Right={$\Rpull[h]$}]
        {S \vdashf{gh} T }
        {S \vdashf{g} \pull{h}T} }
        {\pull{f}S \vdashf{fg} \pull{h}T}

   % generator 3
   \inferrule*[Right={$\Rpull[h]$}]{
   \inferrule*[Right={$\Lpush$}]
        {S \vdashf{fgh} T }
        {\push{f}S \vdashf{gh} T} }
        {\push{f}S \vdashf{g} \pull{h}T}
   \qquad\raisebox{2em}{$\permeq$}\quad
   \inferrule*[Right={$\Lpush$}]{
   \inferrule*[Right={$\Rpull[h]$}]
        {S \vdashf{fgh} T }
        {S \vdashf{fg} \pull{h}T} }
        {\push{f}S \vdashf{g} \pull{h}T}
\end{mathpar}
Observe that each of the relations permutes a left rule with a right rule.
\begin{restatable}{lem}{semrespectspermeq}
  \label{eq:sem-respects-permeq}
  Intepretation in any bifibration respects permutation
  equivalence: 
  if $\alpha_1 \permeq \alpha_2$, then
  $\sem {\alpha_1} \theta = \sem {\alpha_2} \theta$.
\end{restatable}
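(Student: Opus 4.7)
The plan is to proceed by induction on the derivation of $\alpha_1 \permeq \alpha_2$, exploiting that $(\permeq)$ is by definition the smallest congruence containing the four generating equations \eqref{eq:permgen1}--\eqref{eq:permgen4}. Reducing to a generating set in this way means that it is enough to verify two things: first, that semantic interpretation is itself a congruence in $\E$, and second, that each of the four generators is validated by the interpretation.

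The first point is essentially automatic. Since $\sem{-}\theta$ is defined by structural induction on terms, with each term-former $\alpha \mapsto \alpha\tRpush{f}$, $\alpha \mapsto \uLpush{f}{g}\alpha$, etc.\ interpreted by a specific operation on arrows in $\E$ (pre-/post-composition with a cartesian lifting, or a division arrow obtained from a universal property), we have $\sem{\alpha_1}\theta = \sem{\alpha_2}\theta$ immediately implies $\sem{C[\alpha_1]}\theta = \sem{C[\alpha_2]}\theta$ for any surrounding term-former $C[-]$. The reflexive, symmetric, and transitive closure cases are handled by the fact that equality of arrows in $\E$ is already an equivalence relation.

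For the second point, I would unfold the definition of $\sem{-}\theta$ on each generator and compare. The first generator \eqref{eq:permgen1} is interpreted on both sides as the triple composite $\cart{f}{}\,\sem{\alpha}\theta\,\opcart{h}{}$, so the equation amounts to associativity of composition in $\E$. The remaining three generators \eqref{eq:permgen2}, \eqref{eq:permgen3}, \eqref{eq:permgen4} translate, after unfolding divisions as the arrows produced by universal properties, precisely into the second, first, and third equations of Proposition~\ref{prop:bifib-equations}, respectively. So those cases reduce immediately to the proposition, whose proof has already been carried out.

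The only subtlety — and the one place where I would be careful rather than impressionistic — is checking that the subscripts decorating the divisions $\uLpush{-}{-}$ and $\uRpull{-}{-}$ on the two sides of each generator line up with the factorizations of base arrows assumed in Proposition~\ref{prop:bifib-equations}. These side conditions are implicit in the well-formedness constraints annotated after each generator (``for $\alpha$ over $fg$'', etc.), so this is routine bookkeeping. I do not anticipate any substantive obstacle: the algebraic content of the lemma is already isolated in Proposition~\ref{prop:bifib-equations}, and the proof is essentially a translation between the term syntax and the concrete arrow equations in $\E$.
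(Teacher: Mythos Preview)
Your proposal is correct and matches the paper's proof essentially line for line: the paper also dispatches the congruence and equivalence cases as ``obviously respected'', reduces generator \eqref{eq:permgen1} to associativity of composition in $\E$, and identifies the remaining three generators with the three equalities of Proposition~\ref{prop:bifib-equations}. Your explicit bookkeeping of which generator corresponds to which equation of the proposition is slightly more detailed than the paper's, but the argument is the same.
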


\begin{proof}
  The equivalence and congruence rules are obviously respected by the interpretation. It suffices to check that the four generating equations are also respected.
  The equation with two multiplications holds by associativity of composition:
  \begin{mathpar}
    \sem {(\tLpull {f'} \alpha) \tRpush {g'}} \theta
    \quad=\quad
    \cart {f'} {S'} \; \alpha \; \opcart {g'} {T}
    \quad=\quad
    \sem {\tLpull {f'} (\alpha \tRpush {g'})} \theta
  \end{mathpar}
  The three other equations exactly correspond to the equalities in
  Proposition~\ref{prop:bifib-equations}, which hold in any
  bifibration.
\end{proof}

\begin{rem}\label{rem:orienting-permeq-rules}
The first permutation rule \eqref{eq:permgen1} is really an associativity property for left and right multiplication: a term outline $\tLpull f \alpha \tRpush h$ can be read in two ways depending on the parenthesis placement, the two terms are valid (if $\alpha$ is) and are equivalent.

The second permutation rule \eqref{eq:permgen2} can be oriented from left to right: if the term $(\uLpush f g \alpha) \tRpush h$ is valid, then one can ``push the multiplication inside'', in the sense that the term $\uLpush{f}{gh} (\alpha \tRpush {h})$ is also valid and is equivalent. But going in the other direction, the validity of a term of the form $\uLpush{f}{k} (\alpha \tRpush {h})$ does not uniquely determine a corresponding term of the form $(\uLpush{f}{l} \alpha) \tRpush{h}$. This depends on the solutions $l$ of the equation $k = lh$: it may be that no such $l$ exists, and there is no valid term of this shape, or that several $l$ exist giving different possible rewrites.
For example, we saw that for any $S \refs A$ and $f : A \to B$ the identity derivation $\Id[\push f S] : \push{f} S \vdashf{\Id B} \push{f} S$ is of the form $\Id[\push f S] = \uLpush{f} {\Id B} ({\Id S} \tRpush{f})$, but there are not necessarily any derivations of the form $(\uLpush{f}{g} \Id S) \tRpush{f}$.
Indeed, $(\uLpush{f}{g} \Id S) \tRpush{f} \permeq \Id[\push f S]$ is an instance of permutation rule \eqref{eq:permgen2} just in case $g : B \to A$ exists and is inverse to $f$.

Similarly, the third permutation rule \eqref{eq:permgen3} can be oriented from right to left: one can always ``push the multiplication inside'' by moving from a term of the shape $\tLpull f (\alpha \uRpull {} h)$ to an equivalent term of the shape $(\tLpull f \alpha) \uRpull {} h$, but the other direction is not always possible or is non-deterministic.

Finally, the fourth permutation rule \eqref{eq:permgen4} is also an associativity-like property that can be read in both directions: the two terms of the form $\uLpush f {} \alpha \uRpull {} h$ are equally valid (not always valid, but in the same situations) and equivalent. For example, let us assume that a term of the form $(\uLpush f {} \alpha) \uRpull {} h$ is valid and is over some morphism $g$; then the term must be of the form $(\uLpush f {} \alpha) \uRpull g h$, so $\uLpush f {} \alpha$ must be over the morphism $gh$: our original term is uniquely determined to be $(\uLpush f {gh} \alpha) \uRpull g h$, and $\alpha$ must be over $fgh$. By the same reasoning, any valid term of the form $\uLpush f {} (\alpha \uRpull {} h)$ over $g$ must be $\uLpush f g (\alpha \uRpull {fg} h)$, this term is always valid (assuming that the other term is) and equivalent.
\end{rem}

\begin{rem}
This situation, where multiplications can be pushed inside divisions but not necessarily outside, is reminiscent of Lambek calculus~\cite{Lambek1958}, which inspires our notation.
Indeed, the generators of the permutation equivalence relation are analogous to the natural transformations
\begin{equation}\label{eq:lambek-isos}
  \begin{tikzcd}
  (A \bullet B) \bullet C \ar[r,"\sim"] & A \bullet (B \bullet C) &
  A\mathbin{\backslash} (B \mathbin{/} C) \ar[r,"\sim"] & (A \mathbin{\backslash} B) \mathbin{/} C
  \end{tikzcd}
\end{equation}
\begin{equation}\label{eq:lambek-maps}
  \begin{tikzcd}
  (A \mathbin{\backslash} B)\bullet C \ar[r] & A \mathbin{\backslash}(B \bullet C) &
  A\bullet (B \mathbin{/} C) \ar[r] & (A \bullet B)\mathbin{/} C
  \end{tikzcd}
\end{equation}
that exist in any monoidal biclosed category, where the transformations \eqref{eq:lambek-isos} are always invertible, but the transformations \eqref{eq:lambek-maps} in general are not.
For example, considering the special case of cartesian closed categories, the former correspond to the natural isomorphisms $(A \times B)\times C \cong A \times (B \times C)$ and $(B^C)^A \cong (B^A)^C$, while the latter correspond to the natural transformations 
$B^A \times C \longrightarrow (B\times C)^A$
and 
$A \times B^C \longrightarrow (A\times B)^C$
that may be derived in any ccc, but are very rarely invertible.
\end{rem}

\subsection{Cut}
\label{sec:sequent-calculus:cut}

From a logical perspective, cut is an expected rule for any logic, that ensures that sequences of reasoning can be chained as expected.
The surprising insight behind Gentzen's invention of sequent calculus was that the cut rule is in a sense logically redundant.
The original approach for demonstrating this was to include a cut rule directly in the inference system and then show \emph{cut elimination:} any closed proof can be rewritten without using the cut rule.
We use the alternative but nearly equivalent approach of first defining a cut-free system and then showing that the cut rule is admissible.
Recall that a reasoning rule is \emph{admissible} if, given closed derivations for its premises, there is a closed derivation for its conclusion.
% In other words, we are defining cut as a binary operation on derivations.

\begin{defi}
\label{defi:cut}
The following \defin{cut rule} is admissible
\[
\inferrule
        {\alpha : S \vdashf{g} T \\
         \beta : T \vdashf{h} U}
        {{\alpha \hcomp \beta} : S \vdashf{gh} U}
\]
via a composition function $\alpha \hcomp \beta$ defined up to permutation equivalence $(\permeq)$.
\end{defi}

% \begin{gabrielComment}
%   I kept this proof in the main body of the article because it feels
%   more important and illustrative than the other proofs in this
%   section. It is also longer, which would be an argument to move it to
%   the appendix.
% \end{gabrielComment}
\begin{proof}
The cut, or horizontal composition, written $\alpha \hcomp \beta$, is
defined by induction in the usual style of cut-elimination arguments: we reason by case analysis on the derivations $\alpha$ and $\beta$, and we may use instances of the cut rule on strictly smaller arguments (more precisely: in each case both arguments are at least as small and one of them is strictly smaller).

Principal cuts:
\[
\inferrule*
{\inferrule* {\delta : X \to Y \in \D}
             {\atom \delta : \atom X \vdashf{g} \atom Y}
 \\
 \inferrule* {\epsilon : X \to Y \in \D}
             {\atom \epsilon : \atom Y \vdashf{h} \atom Z}
}
{\atom \delta \hcomp \atom \epsilon : \atom X \vdashf{gh} \atom Z}
\qquad\defeq\qquad
\inferrule*
        {\delta \; \epsilon : X \to Z \in D}
        {\atom {\delta \; \epsilon} : \atom X \vdashf{gh} \atom Z}
\]

\[
\inferrule*
{\inferrule*
  {\alpha : S \vdashf{g} T}
  {\alpha \tRpush{f} : S \vdashf{gf} \push{f} T}
 \\
 \inferrule*
  {\beta : T \vdashf{fh} U}
  {\uLpush{f}{h} \beta : \push{f} T \vdashf{h} U}
}
{\alpha \tRpush{f} \hcomp \uLpush{f}{h} \beta : S \vdashf{gfh} U}
\qquad\defeq\qquad
\inferrule*
        {\alpha : S \vdashf{g} T \\
         \beta : T \vdashf{fh} U}
        {\alpha \hcomp \beta : S \vdashf{gfh} U}
\]

\[
\inferrule*
{\inferrule*
  {\alpha : S \vdashf{gf} T}
  {\alpha \uRpull{g}{f} : S \vdashf{g} \pull{f} T}
 \\
 \inferrule*
 {\beta : T \vdashf{h} U}
 {\tLpull{f} \beta : \pull{f} T \vdashf{fh} U}
}
{\alpha \uRpull{g}{f} \hcomp \tLpull{f} \beta : S \vdashf{gfh} U}
\qquad\defeq\qquad
\inferrule*
{\alpha : S \vdashf{gf} T
 \\
 \beta : T \vdashf{h} U}
{\alpha \hcomp \beta : S \vdashf{gfh} U}
\]

Commutative cuts:
\[
\inferrule*
{\inferrule*{
 \alpha : S \vdashf{g} T}{\tLpull{f} \alpha : \pull{f} S \vdashf{fg} T}
 \\
 \beta : T \vdashf{h} U
 }
{(\tLpull{f} \alpha) \hcomp \beta : \pull{f} S \vdashf{fgh} U}
\qquad\defeq\qquad
\inferrule*
{\inferrule*
 {\alpha : S \vdashf{g} T
  \\
  \beta : T \vdashf{h} U}
 {\alpha \hcomp \beta : S \vdashf{gh} U}}
{\tLpull{f} (\alpha \hcomp \beta) : \pull{f} S \vdashf{fgh} U}
\]

\[
\inferrule*
{
\inferrule*
 {\alpha : S \vdashf{fg} T}
 {\uLpush{f}{g} \alpha : \push{f} S \vdashf{g} T}
 \\
 \beta : T \vdashf{h} U
}
{(\uLpush{f}{g} \alpha) \hcomp \beta : \push{f} S \vdashf{gh} U}
\qquad\defeq\qquad
\inferrule*
{\inferrule*
 {\alpha : S \vdashf{fg} T
  \\
  \beta : T \vdashf{h} U}
 {\alpha \hcomp \beta : S \vdashf{fgh} U}}
{\uLpush{f}{gh} (\alpha \hcomp \beta) : \push{f} S \vdashf{gh} U}
\]

\[
\inferrule*
{
\alpha : S \vdashf{g} T
\\
\inferrule*
{\beta : T \vdashf{h} U}
{\beta \tRpush{f} : T \vdashf{hf} \push{f} U}
}
{\alpha \hcomp {\beta \tRpush{f}} : S \vdashf{ghf} \push{f} U}
\qquad\defeq\qquad
\inferrule*
{\inferrule*
 {\alpha : S \vdashf{g} T
  \\
  \beta : T \vdashf{h} U}
 {\alpha \hcomp \beta : S \vdashf{gh} U}}
{(\alpha \hcomp \beta) \tRpush{f} : S \vdashf{ghf} \push{f} U}
\]

\[
\inferrule*
{
\alpha : S \vdashf{g} T
\\
\inferrule*
{\beta : T \vdashf{hf} U}
{\beta \uRpull{h}{f} : T \vdashf{h} U}
}
{\alpha \hcomp {\beta \uRpull{h}{f}} : S \vdashf{gh} \pull{f} U}
\qquad\defeq\qquad
\inferrule*
{\inferrule*
 {\alpha : S \vdashf{g} T
  \\
  \beta : T \vdashf{hf} U}
 {\alpha \hcomp \beta : S \vdashf{ghf} U}}
{(\alpha \hcomp \beta) \uRpull{gh}{f} : S \vdashf{gh} \pull{f} U}
\]

These rules cover all possible cases for a pair of derivations $\alpha, \beta$ whose judgments are composable judgments---the codomain of $\alpha$ is the domain of $\beta$. In particular, notice that when $\alpha$ starts with a right rule, for example a multiplication $\alpha = \alpha' \tRpush f$, $\beta$ must start with the opposite rule on the \emph{same} arrow, here a division by $f$: $\beta = \uLpush f h \beta'$. This is the only possibility because we know that the two judgments must compose at a type of the form $\push f {T'}$, and this formula admits only one left (respectively right) rule.
% We would not have such a strong inversion principle if the syntax of formulas were less free, for example if we quotiented formulas by equations of the form $\push g {\push f S} = \push {(g\circ f)} S$ and $\push {\id} S = S$ to model \emph{split} bifibrations.

The definition exhibits some non-determinism on the raw
syntax of derivations, as it may be the case that two commutative
cut rules are applicable. We consider it well-defined as a function \emph{up to} permutation equivalence $(\permeq)$: whenever several of the cases above would apply, all derivations that can be defined from the definition are permutation equivalent.

Note that this happens only in the cases where the derivation on the left starts with the left rule, and the derivation on the right starts with a right rule. There are four such critical pairs (shapes of derivations that can be composed in two different ways), corresponding to all four permutations of generators. Two pairs are symmetric to each other, so we only need to check three.

\begin{mathpar}
  \begin{tikzcd}
      & (\tLpull f \alpha) \hcomp (\beta \tRpush i) &  \\
    \tLpull f (\alpha \hcomp (\beta \tRpush i))   &   & ((\tLpull f \alpha) \hcomp \beta) \tRpush i \\
    \tLpull f ((\alpha \hcomp \beta) \tRpush i)   &   & (\tLpull f (\alpha \hcomp \beta)) \tRpush i
    \arrow["\mathsf{def}"',from=1-2,to=2-1]
    \arrow["\mathsf{def}",from=1-2,to=2-3]
    \arrow["\mathsf{def}"',from=2-1,to=3-1]
    \arrow["\mathsf{def}",from=2-3,to=3-3]
    \arrow[-, "\sim", from=3-1,to=3-3]
  \end{tikzcd}
\end{mathpar}
\begin{mathpar}
  \begin{tikzcd}
      & (\tLpull f \alpha) \hcomp (\beta \uRpull h i) &  \\
    \tLpull f (\alpha \hcomp (\beta \uRpull h i))   &   & ((\tLpull f \alpha) \hcomp \beta) \uRpull {fgh} i \\
    \tLpull f ((\alpha \hcomp \beta) \uRpull {gh} i)   &   & (\tLpull f (\alpha \hcomp \beta)) \uRpull {fgh} i
    \arrow["\mathsf{def}"',from=1-2,to=2-1]
    \arrow["\mathsf{def}",from=1-2,to=2-3]
    \arrow["\mathsf{def}"',from=2-1,to=3-1]
    \arrow["\mathsf{def}",from=2-3,to=3-3]
    \arrow[-, "\sim", from=3-1,to=3-3]
  \end{tikzcd}
\end{mathpar}
\begin{mathpar}
  \begin{tikzcd}
      & (\uLpush f g \alpha) \hcomp (\beta \uRpull h i) &  \\
    \uLpush f {gh} (\alpha \hcomp (\beta \uRpull h i))   &   & ((\uLpush f g \alpha) \hcomp \beta) \uRpull {gh} i \\
    \uLpush f {gh} ((\alpha \hcomp \beta) \uRpull {fgh} i)   &   & (\uLpush f {ghi} (\alpha \hcomp \beta)) \uRpull {gh} i
    \arrow["\mathsf{def}"',from=1-2,to=2-1]
    \arrow["\mathsf{def}",from=1-2,to=2-3]
    \arrow["\mathsf{def}"',from=2-1,to=3-1]
    \arrow["\mathsf{def}",from=2-3,to=3-3]
    \arrow[-, "\sim", from=3-1,to=3-3]
  \end{tikzcd}
\end{mathpar}
\end{proof}
\noindent
We will refer to Definition~\ref{defi:cut} as a ``cut-elimination procedure'', even if technically speaking it is a proof of cut-admissibility.
\begin{restatable}{lem}{cutrespectspermeq}
  \label{lem:cut-respects-permeq}
  Cut is compatible with permutation equivalence.
  If $\alpha_1 \permeq \alpha_2$ then
  $\alpha_1 \hcomp \beta \permeq \alpha_2 \hcomp \beta$,
  and
  $\beta \hcomp \alpha_1 \permeq \beta \hcomp \alpha_2$,
\end{restatable}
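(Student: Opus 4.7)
The plan is to argue by induction on the derivation of $\alpha_1 \permeq \alpha_2$, with a secondary induction on the size of $\beta$ where needed, proving both halves of the lemma in parallel. The cases for reflexivity, symmetry, and transitivity follow directly from the corresponding properties of $\permeq$ and the induction hypothesis.

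For a congruence case, $\alpha_1$ and $\alpha_2$ agree on an outer term-former applied to sub-derivations $\gamma_1 \permeq \gamma_2$ coming from a smaller derivation. If that outer constructor unconditionally commutes with cut on the relevant side (e.g.\ $\alpha_i = \tLpull{f}\gamma_i$ or $\uLpush{f}{g}\gamma_i$ for the first half $\alpha_1 \hcomp \beta \permeq \alpha_2 \hcomp \beta$, or the dual right rules for the second half), the commutative cut equation rewrites $\alpha_i \hcomp \beta$ to $\tLpull{f}(\gamma_i \hcomp \beta)$ or its analogue; the primary induction hypothesis combined with the congruence of $\permeq$ under the outer constructor concludes. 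In the remaining ``cross'' congruence cases (e.g.\ $\alpha_i = \gamma_i \tRpush{f}$ on the first half), we case-split on the outer constructor of $\beta$: a matching left rule of $\beta$ gives a principal cut reducing to a cut on strictly smaller subterms handled by the primary IH; a right rule of $\beta$ gives a commutative cut on the right, peeling off a layer of $\beta$ to which the secondary IH applies; typing rules out the other configurations.

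The axiom cases require verifying each of the four generating equations compatibly with cut, again by case analysis on $\beta$. The central calculations use the principal and commutative cut rules together with associativity of composition in $\C$ to show that the two reductions of $\alpha_1 \hcomp \beta$ and $\alpha_2 \hcomp \beta$ collapse to terms that are either literally equal or equivalent by a single further application of the same generator. Each such case mirrors the three confluence diagrams displayed at the end of the cut-elimination proof.

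The main subtlety throughout is that the cut function is itself defined only up to $\permeq$, so every reduction step must be tracked modulo $\permeq$ rather than by strict equality. In effect, the present lemma and the well-definedness of cut are two facets of the same confluence argument for the cut-elimination rewriting system, and the expected main obstacle is keeping this book-keeping straight across the many nested cases in the axiom and ``cross'' congruence situations.
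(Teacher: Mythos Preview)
Your proposal is correct and matches the paper's approach: induction on the derivation of $\alpha_1 \permeq \alpha_2$, with the equivalence and congruence cases handled directly and the four generator cases each requiring a secondary induction (case analysis) on $\beta$. Your treatment is in fact slightly more explicit than the paper's, which dismisses the congruence cases as ``direct induction'' without spelling out the cross-congruence situation where the outer constructor does not immediately commute with cut and one must peel layers off $\beta$. One small inaccuracy: the generator cases do not literally mirror the three confluence diagrams at the end of Definition~\ref{defi:cut}---those diagrams resolve the non-determinism in the \emph{definition} of cut, whereas here you are checking compatibility of cut with each generating equation, which is a different (though structurally similar) set of calculations.
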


\begin{proof}[Proof sketch]
  The proof (detailed in Appendix~\ref{prf:cut-respects-permeq}) proceeds by equational
  reasoning on proof terms. The following piece of equational
  reasoning is representative of the rest of the proof, and many other
  proofs in this section:

  \begin{mathpar}
    \begin{array}{cl@{\qquad}l}
      & (\tLpull f (\alpha \tRpush h)) \hcomp \beta
      & \\ \permeq
      & \tLpull f ((\alpha \tRpush h) \hcomp \beta)
      & \\ =
      & \tLpull f ((\alpha \tRpush h) \hcomp (\uLpush h i \beta'))
      & \\ =
      & \tLpull f (\alpha \hcomp \beta')
      & \\ \permeq
      & (\tLpull f \alpha) \hcomp \beta'
      & \\ =
      & ((\tLpull f \alpha) \tRpush h) \hcomp (\uLpush h i \beta')
      & \\ =
      & ((\tLpull f \alpha) \tRpush h) \hcomp \beta
      &
    \end{array}
  \end{mathpar}

  Each reasoning step is by definition of cut-elimination. Principal cuts are defined unambiguously, for example we have $(\alpha \tRpush h) \hcomp \beta = (\alpha \tRpush h) \hcomp (\uLpush h i \beta')$ by definition. Commutative cuts (left rule in the left derivation, or right rule in the right derivation) are only defined up to permutation equivalence, for example we only have $\tLpull f (\alpha \hcomp \beta') \permeq (\tLpull f \alpha) \hcomp \beta'$.
\end{proof}

\begin{restatable}[identity is neutral]{lem}{idneutral}
  \label{lem:id-neutral}
  The identity derivations are neutral elements for cut:
  \begin{mathpar}
    \Id[S] \hcomp \alpha
    \quad
    \permeq
    \quad
    \alpha
    \quad
    \permeq
    \quad
    \alpha \hcomp \Id[S]
  \end{mathpar}
\end{restatable}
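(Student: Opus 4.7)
The plan is to prove each equation by structural induction on the derivation $\alpha$, with a case analysis on its root inference rule. The two directions are symmetric, so I focus on $\Id[S] \hcomp \alpha \permeq \alpha$. The key observation is that in every case, the cut-elimination procedure (Definition~\ref{defi:cut}) reduces $\Id[S] \hcomp \alpha$ to a term of the form $\kappa(\Id[U] \hcomp \alpha')$ for some term-constructor $\kappa$ and some strict subderivation $\alpha'$ of $\alpha$ with appropriate type $U$, so that the induction hypothesis applied to $\alpha'$ combined with congruence of $\permeq$ closes the case.

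Concretely: if $\alpha = \atom \delta$, then $S = \atom X$ and $\Id[S] = \atom{\Id[X]}$, and the principal cut for axioms yields $\atom{\Id[X] \cdot \delta} = \atom \delta$ since $\Id[X]$ is an identity in $\D$. If $\alpha = \uLpush f g \alpha'$, then necessarily $S = \push f S'$ and $\Id[S] = \uLpush f {\Id[B]} (\Id[S'] \tRpush f)$ by Definition~\ref{defi:identity}; a commutative cut on the left (since the root of $\Id[S]$ is $\uLpush$) followed by a principal cut (between $\tRpush f$ and the matching $\uLpush f$) contracts the composition to $\uLpush f g (\Id[S'] \hcomp \alpha')$, and the IH applied to the strictly smaller $\alpha'$ together with congruence closes the case. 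The case $\alpha = \tLpull f \alpha'$ is entirely dual, using $\Id[\pull f T'] = (\tLpull f \Id[T']) \uRpull {\Id[B]} f$ and the principal cut between $\uRpull f$ and $\tLpull f$. Finally, if the root of $\alpha$ is a right rule ($\tRpush$ or $\uRpull$), a single commutative cut on the right pulls the constructor outside the cut and the IH applied to the subderivation finishes the case immediately. The companion equation $\alpha \hcomp \Id[S] \permeq \alpha$ is proved with the same template, swapping the roles of left and right rules throughout: a right rule at the root of $\alpha$ triggers a principal cut against $\Id[\push f S']$ or $\Id[\pull f T']$, while a left rule at the root of $\alpha$ triggers a commutative cut on the left.

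The main subtlety is bookkeeping rather than mathematics: cut is only defined up to permutation equivalence, so each ``$=$'' step in the rewriting chain is really an equation of $\permeq$-classes, and some steps involve a choice between overlapping commutative cuts. Lemma~\ref{lem:cut-respects-permeq} guarantees that this is well-defined and that replacing a subderivation by a $\permeq$-equivalent one inside a cut preserves $\permeq$, which is precisely what justifies invoking the induction hypothesis inside a larger context. Inducting on $\alpha$ rather than on $S$ is essential: induction on $S$ would fail in the cases where $\alpha$'s root is a right rule, since then the reduction step exposes the same $\Id[S]$ again inside a smaller context, preventing any progress on the formula $S$.
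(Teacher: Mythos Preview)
Your proof is correct, but the induction scheme differs from the paper's. You run a single structural induction on the derivation $\alpha$; the paper instead performs an outer induction on the formula $S$ and, in the non-atomic cases $S = \push f S'$ and $S = \pull g T'$, an inner induction on $\alpha$. The inner induction establishes precisely the statements $\alpha \permeq \uLpush f g (\opcart f S' \hcomp \alpha)$ and $\beta \permeq (\beta \hcomp \cart g T') \uRpull f g$, which is why the paper can package Lemma~\ref{lem:eta-expansion-pushpull} ($\eta$-expansion) and Lemma~\ref{lem:id-neutral} into a single joint proof. Your approach is lighter for neutrality alone but does not yield $\eta$-expansion as a byproduct. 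Your closing remark that ``induction on $S$ would fail'' is therefore too strong: a \emph{pure} induction on $S$ fails for exactly the reason you give, but the paper's nested scheme---using the outer hypothesis on $S'$ when $\alpha$ begins with a left rule, and the inner hypothesis on $\alpha'$ when $\alpha$ begins with a right rule---works perfectly well.
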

\proofseeappendix{prf:id-neutral}

\begin{restatable}[$\eta$-expansion]{lem}{etaexpansionpushpull}~\\
  \label{lem:eta-expansion-pushpull}
  For any $\alpha : \push f S \vdashf{g} T$,
  for any $\beta : S \vdashf{f} \pull g T$,
  we have:
  \begin{mathpar}
    \alpha \permeq \uLpush f g (\opcart f S \hcomp \alpha)

    \beta \permeq (\beta \hcomp \cart g T) \uRpull f g
  \end{mathpar}
\end{restatable}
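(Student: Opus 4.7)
The plan is to prove each equation by structural induction on the input derivation; the two parts are dual, so I focus on the first, $\alpha \permeq \uLpush f g (\opcart f S \hcomp \alpha)$. Since the left-hand formula $\push f S$ is not atomic, $\alpha$ cannot be an initial axiom, and among left rules only $\Lpush$ on $\push f S$ can act. Thus the last rule of $\alpha$ must be either $\Lpush$ on $\push f S$, or a right rule acting on $T$ (forcing $T$ to be a push or a pull formula).

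In the principal case $\alpha = \uLpush f g \alpha'$ with $\alpha' : S \vdashf{fg} T$, the principal cut clause of Definition~\ref{defi:cut} reduces $\opcart f S \hcomp \alpha = (\Id[S] \tRpush f) \hcomp (\uLpush f g \alpha')$ to $\Id[S] \hcomp \alpha'$, which is $\permeq \alpha'$ by Lemma~\ref{lem:id-neutral}; hence $\uLpush f g(\opcart f S \hcomp \alpha) \permeq \uLpush f g \alpha' = \alpha$. In the right-rule cases, the commutative cut clauses push $\opcart f S$ past the trailing right rule; a permutation generator then restores the outer $\uLpush f g(\cdot)$ shape, and the induction hypothesis on the subderivation finishes the case. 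Concretely, when $T = \push h T'$ and $\alpha = \alpha' \tRpush h$ (so $g = g'h$), commutative cut yields $\opcart f S \hcomp \alpha = (\opcart f S \hcomp \alpha') \tRpush h$, and rule \eqref{eq:permgen2} rewrites $\uLpush f {g'h}((\opcart f S \hcomp \alpha') \tRpush h) \permeq (\uLpush f {g'} (\opcart f S \hcomp \alpha')) \tRpush h$, so IH on $\alpha'$ closes the case. When $T = \pull h T'$ and $\alpha = \alpha' \uRpull g h$, commutative cut yields $\opcart f S \hcomp \alpha = (\opcart f S \hcomp \alpha') \uRpull {fg} h$, and rule \eqref{eq:permgen4} rewrites $\uLpush f g((\opcart f S \hcomp \alpha') \uRpull{fg}{h}) \permeq (\uLpush f {gh} (\opcart f S \hcomp \alpha')) \uRpull g h$; IH on $\alpha'$ (which lives over $gh$) finishes.

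The second equation is the strict dual: induct on $\beta$, discard the impossible atomic case, handle the principal case $\beta = \beta' \uRpull f g$ via the principal cut clause for $\uRpull$ against $\tLpull$ followed by Lemma~\ref{lem:id-neutral}, and handle the two commutative cases (left rule $\uLpush$ on $S$, or $\tLpull$ on $S$) by commutative cut followed by the permutation generators \eqref{eq:permgen4} and \eqref{eq:permgen3} respectively. The main obstacle is the arithmetic of base arrows: in each right-rule case one must check that the decomposition produced by the commutative cut matches exactly the decomposition required by the targeted permutation rule. This is subtlest in the case invoking \eqref{eq:permgen4}, which relates two parenthesizations of a $\uLpush$/$\uRpull$ pattern around the same composite arrow; once the arrow bookkeeping is verified, each inductive step reduces to a two-line equational rewrite.
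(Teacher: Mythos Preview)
Your proof is correct and matches the paper's approach: the paper proves this lemma by the same induction on $\alpha$, nested inside the induction on $S$ that establishes Lemma~\ref{lem:id-neutral}, with the same case analysis and the same appeals to the permutation generators \eqref{eq:permgen2}--\eqref{eq:permgen4} in the right-rule cases. The only difference is cosmetic: where you invoke the full Lemma~\ref{lem:id-neutral} in the principal case, the paper uses its outer induction hypothesis on the smaller formula $S$, which amounts to the same thing once that lemma is in hand.
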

\proofseeappendix{prf:eta-expansion-pushpull}

\begin{restatable}{lem}{cutassociative}
  \label{lem:cut-associative}
  Cut is associative: $(\alpha \hcomp \beta) \hcomp \gamma \permeq \alpha \hcomp (\beta \hcomp \gamma)$.
\end{restatable}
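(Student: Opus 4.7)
The plan is to proceed by well-founded induction on the triple $(\alpha,\beta,\gamma)$, using the lexicographic measure that already makes Definition~\ref{defi:cut} terminate: the pair consisting of (the sum of the sizes of the two cut formulas $T$ and $T'$, the sum of the sizes of the three derivations). Commutative-cut reductions strictly decrease the second component while preserving the first; principal-cut reductions strictly decrease the first. The main work is then a case analysis on the outermost term-formers of $\alpha$, $\beta$, and $\gamma$, combined with permutation-equivalence bookkeeping via Lemma~\ref{lem:cut-respects-permeq}.

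First I would dispatch the \emph{outer} commutative cases. If $\alpha$ ends in a left rule, say $\alpha = \tLpull f \alpha'$, then both sides of the target equation commute the rule outwards: $(\alpha \hcomp \beta) \hcomp \gamma \permeq \tLpull f\bigl((\alpha' \hcomp \beta) \hcomp \gamma\bigr)$ and $\alpha \hcomp (\beta \hcomp \gamma) \permeq \tLpull f\bigl(\alpha' \hcomp (\beta \hcomp \gamma)\bigr)$, so the IH at the strictly smaller triple $(\alpha',\beta,\gamma)$ plus congruence of $\permeq$ finishes the case. The analogous cases for $\alpha = \uLpush{f}{g}\alpha'$, and for $\gamma$ starting with $\tRpush{f}$ or $\uRpull{f}{g}$, are identical \emph{mutatis mutandis}. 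After these, I may assume $\alpha$ ends in a right rule and $\gamma$ begins with a left rule, so that whatever cut-reduction happens is forced by the shape of $\beta$: either $\beta$ begins with a left rule whose connective matches $\alpha$'s ending right rule (a principal cut on the left), or $\beta$ ends with a right rule matching $\gamma$'s leading left rule (a principal cut on the right), or $\beta$ is an initial axiom $\atom\epsilon$. In the principal subcases the outermost cut unfolds, exposing a cut at a strictly smaller subformula of $T$ or $T'$, to which the IH at a smaller first component applies. When both ends of $\beta$ admit a principal reduction simultaneously (for instance $\beta = (\beta_0 \tRpush f) \hcomp (\tLpull{h}\beta_1)$-like shapes), the two reduction orders agree up to $\permeq$, and Lemma~\ref{lem:cut-respects-permeq} transports the IH through the remaining cuts. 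The axiom subcase reduces to associativity of composition in $\D$.

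The hard part will be the critical pairs inherent to the cut-elimination definition itself: whenever $\alpha$ ends with a left rule and $\beta$ begins with a right rule (or symmetrically between $\beta$ and $\gamma$), the intermediate cut $\alpha \hcomp \beta$ admits two reduction orders that differ by precisely one of the generators \eqref{eq:permgen1}--\eqref{eq:permgen4}. I would organise each case by fixing one reduction representative on each side of the target equation, reducing equivalence of the chosen representatives to an IH call on a smaller triple, and then invoking Lemma~\ref{lem:cut-respects-permeq} once more to extend the conclusion to every representative of the $\permeq$-class. Concretely, the equational-reasoning template displayed in the proof sketch of Lemma~\ref{lem:cut-respects-permeq} applies verbatim: each step is either a definitional unfolding of a principal cut (equality on the nose), a definitional unfolding of a commutative cut (equivalence up to $\permeq$), or a use of congruence. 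The number of cases is large but mechanical; the only genuine content is the triple coincidence between the measure that terminates cut-elimination, the measure that terminates this induction, and the critical pairs of $\permeq$.
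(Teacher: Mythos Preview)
Your first two paragraphs are essentially the paper's proof: induct on the derivations, dispatch the commutative cases where $\alpha$ starts with a left rule or $\gamma$ with a right rule, then case-split on the outermost constructor of $\beta$, obtaining a principal cut on one side and a commutative cut on the other, and apply the IH. Two minor points: the paper's measure is simpler (the sum of the sizes of $\alpha,\beta,\gamma$ already decreases at every recursive call, so the cut-formula component of your lexicographic order is unnecessary), and the atomic case needs slightly more care than ``all three are atomic'' (if $\alpha=\atom\delta$ but $\gamma$ starts with a left rule, then $\beta$ must start with a right rule, and the $\beta$-case-analysis still goes through).

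Your third paragraph, however, is confused and not needed. There is no ``hard part'' with critical pairs. Since $\beta$ is a single derivation, it has exactly one outermost term-former, so the scenario ``both ends of $\beta$ admit a principal reduction simultaneously'' cannot arise; your example ``$\beta=(\beta_0\tRpush f)\hcomp(\tLpull{h}\beta_1)$'' is not a term in this syntax. Once you are in the residual situation ($\alpha$ starts with a right rule, $\gamma$ with a left rule), the outermost constructor of $\beta$ uniquely determines which of the two adjacent cuts is principal; the other one is commutative, and the $\permeq$ steps you need are exactly the definitional ambiguity of commutative cuts already absorbed into Definition~\ref{defi:cut} and Lemma~\ref{lem:cut-respects-permeq}. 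The equational reasoning is the handful of lines shown in the paper's appendix, not a separate layer of critical-pair analysis. So: keep paragraphs one and two, drop paragraph three.
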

\proofseeappendix{prf:cut-associative}

\subsection{The free bifibration}
\label{sec:sequent-calculus:free-bifibration}

We can now formally define $\Bifib p$ as a category where:
\begin{itemize}
\item The objects are the valid formulas $S \refs A$,
  (Section~\ref{sec:sequent-calculus:frm-seq}).
\item The arrows between $S \refs A$ and $T \refs B$ are the valid (closed) derivations of the judgment $S \vdashf{f} T$ for any $f : A \to B \in \C$ (Section~\ref{sec:sequent-calculus:deriv}), quotiented by the permutation equivalence relation $(\permeq)$ (Section~\ref{sec:sequent-calculus:permeq}).
\item The identity arrow for $S \refs A$ is (the equivalence class of) the derivation $\Id[S]$ (Defn.~\ref{defi:identity}).
\item The composition of $\alpha : S \vdashf{g} T$ and
  $\beta : T \vdashf{h} U$
  is the cut $\alpha \hcomp \beta : S \vdashf{gh} U$
  (Defn.~\ref{defi:cut}).
\end{itemize}

\begin{restatable}{lem}{bifibisacategory}
  \label{lem:bifib-is-a-category}
  $\Bifib p$ is a category.
\end{restatable}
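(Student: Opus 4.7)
The plan is to verify that the proposed data satisfies the three axioms of a category: composition is well-defined on equivalence classes, identity is a two-sided neutral element, and composition is associative. Fortunately, each of these is precisely the content of a lemma already established in this section, so the proof amounts to assembling the pieces.

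First I would address well-definedness of composition. Since Definition~\ref{defi:cut} only defines the cut operation $\alpha \hcomp \beta$ up to permutation equivalence, we must check that if $\alpha_1 \permeq \alpha_2$ and $\beta_1 \permeq \beta_2$, then $\alpha_1 \hcomp \beta_1 \permeq \alpha_2 \hcomp \beta_2$. By Lemma~\ref{lem:cut-respects-permeq}, cut is compatible with $(\permeq)$ in each argument separately, so combining the two congruences
\[
\alpha_1 \hcomp \beta_1 \;\permeq\; \alpha_2 \hcomp \beta_1 \;\permeq\; \alpha_2 \hcomp \beta_2
\]
gives the result. This shows that composition descends to a well-defined operation on equivalence classes of derivations.

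Next I would verify the unit laws and associativity. Neutrality of $\Id[S]$ on both sides is precisely Lemma~\ref{lem:id-neutral}, which states $\Id[S] \hcomp \alpha \permeq \alpha \permeq \alpha \hcomp \Id[S]$. Associativity $(\alpha \hcomp \beta) \hcomp \gamma \permeq \alpha \hcomp (\beta \hcomp \gamma)$ is exactly Lemma~\ref{lem:cut-associative}. It remains to note that the types match: if $\alpha : S \vdashf{f} T$ and $\beta : T \vdashf{g} U$, then by Definition~\ref{defi:cut} the composite $\alpha \hcomp \beta$ has type $S \vdashf{fg} U$, which coincides with the codomain of $\alpha$'s source and target objects in $\Bifib{p}$, and the base arrow $fg$ is the composition in $\C$; the identity $\Id[S] : S \vdashf{\Id[A]} S$ from Definition~\ref{defi:identity} correspondingly lies over the identity of $A$ in $\C$.

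There is essentially no main obstacle here — the hard work has already been done in Lemmas~\ref{lem:cut-respects-permeq}, \ref{lem:id-neutral}, and \ref{lem:cut-associative}. The only care required is bookkeeping: verifying that the well-definedness argument above really does use both sides of Lemma~\ref{lem:cut-respects-permeq}, and observing that the base arrows compose correctly so that $\Bifib{p}$ is not merely a category but naturally comes equipped with a functor to $\C$ sending a (class of a) derivation $\alpha : S \vdashf{f} T$ to $f$ — a fact that will be used subsequently when establishing that this functor is the announced free bifibration $\BifibFun{p}$.
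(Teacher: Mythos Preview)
Your proposal is correct and takes essentially the same approach as the paper: both proofs simply cite Lemmas~\ref{lem:cut-respects-permeq}, \ref{lem:id-neutral}, and \ref{lem:cut-associative} to discharge well-definedness, unitality, and associativity respectively. Your version is more detailed (spelling out the two-step congruence chain and the bookkeeping on base arrows), but the underlying argument is identical.
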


\begin{proof}
  We proved that composition is well-defined on derivations quotiented
  by permutation equivalence (Lemma~\ref{lem:cut-respects-permeq}),
  and that identity and cut satisfy neutrality
  (Lemma~\ref{lem:id-neutral}) and associativity
  (Lemma~\ref{lem:cut-associative}).
\end{proof}
Next we show that $\Bifib{p}$ is the domain of a bifibration $\BifibFun p : \Bifib{p} \to \C$ equipped with a
functor $\eta_p : \D \to \Bifib{p}$ such that $\BifibFun p \circ \eta_p = p$.
We define $\BifibFun p$ by
  \begin{enumerate}
  \item $\BifibFun p(S \refs A) \defeq A$
  \item
    $\BifibFun p(\alpha : S \vdashf{f} T)
    \defeq
    f$
  \end{enumerate}

\begin{restatable}{lem}{bifibfunbifibration}
  \label{lem:bifibfun-bifibration}
  $\BifibFun p : \Bifib{p} \to \C $ is a bifibration.
\end{restatable}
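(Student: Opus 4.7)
The plan is to exhibit explicit $\pm$-cartesian liftings in $\Bifib{p}$ and verify their universal properties. For any formula $S \refs A$ and any arrow $f : A \to B$ of $\C$, I would take the $+$-cartesian lifting of $f$ to $S$ to be the derivation
\[
\opcart{f}{S} \;\defeq\; \Id[S] \tRpush f \;:\; S \vdashf{f} \push{f}S,
\]
as was already anticipated at the end of Section~\ref{sec:sequent-calculus:term-syntax}. Dually, for $g : B \to C$ and $T \refs C$ the $-$-cartesian lifting is
\[
\cart{g}{T} \;\defeq\; \tLpull{g} \Id[T] \;:\; \pull{g}T \vdashf{g} T.
\]
By construction, $\BifibFun{p}$ sends these to $f$ and $g$ respectively, and for any formula $S \refs A$ we have $\BifibFun{p}(S) = A$, so the candidate liftings live over the right arrows.

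The substance of the proof is to verify the universal property \eqref{eq:unique-beta-push}. Given $\alpha : S \vdashf{fg} T$ with $\BifibFun{p}(\alpha) = fg$, existence of a factorization through $\opcart{f}{S}$ is witnessed by the division $\beta \defeq \uLpush{f}{g} \alpha$: indeed, by the principal cut case in the proof of Definition~\ref{defi:cut} followed by Lemma~\ref{lem:id-neutral},
\[
\opcart{f}{S} \hcomp \beta
\;=\; (\Id[S] \tRpush f) \hcomp (\uLpush{f}{g} \alpha)
\;=\; \Id[S] \hcomp \alpha
\;\permeq\; \alpha.
\]
Uniqueness modulo $(\permeq)$ is then an immediate consequence of Lemma~\ref{lem:eta-expansion-pushpull}: if $\beta' : \push{f}S \vdashf{g} T$ also satisfies $\opcart{f}{S} \hcomp \beta' \permeq \alpha$, then $\eta$-expansion gives
\[
\beta' \;\permeq\; \uLpush{f}{g}(\opcart{f}{S} \hcomp \beta') \;\permeq\; \uLpush{f}{g}\alpha \;=\; \beta,
\]
where the second step uses Lemma~\ref{lem:cut-respects-permeq} to lift $\opcart{f}{S}\hcomp\beta' \permeq \alpha$ under the division term-former. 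The argument for $-$-cartesian liftings is entirely dual, taking $\beta \defeq \alpha \uRpull{f}{g}$, invoking the principal cut reduction $(\alpha \uRpull{g}{f}) \hcomp (\tLpull{f} \Id[T]) = \alpha \hcomp \Id[T]$, and applying the second half of Lemma~\ref{lem:eta-expansion-pushpull}.

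Since arrows of $\Bifib{p}$ are equivalence classes under $(\permeq)$, the uniqueness statements above are precisely the uniqueness required of cartesian liftings in a category. There is no genuine obstacle here: the work lies entirely in the preceding lemmas (well-definedness of cut, identity neutrality, $\eta$-expansion), and at this point the verification amounts to chaining them together. The cleanest presentation will emphasize the duality between push and pull so the two halves of the argument do not need to be written out in full.
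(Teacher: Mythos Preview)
Your proposal is correct and follows essentially the same argument as the paper: define $\opcart{f}{S} = \Id[S]\tRpush f$, take $\beta = \uLpush{f}{g}\alpha$, reduce the principal cut and apply identity neutrality for existence, and invoke $\eta$-expansion (Lemma~\ref{lem:eta-expansion-pushpull}) for uniqueness, with the pull case dual. One small quibble: the step lifting $\opcart{f}{S}\hcomp\beta' \permeq \alpha$ under $\uLpush{f}{g}(-)$ is justified directly by the congruence clause in Definition~\ref{defi:permeq}, not by Lemma~\ref{lem:cut-respects-permeq}; and in your dual sketch the subscripts should read $(\alpha\uRpull{f}{g})\hcomp(\tLpull{g}\Id[T])$.
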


\begin{proof}
  For any arrow $f : A \to B$ in $\C$ and formula $S \refs A$, the pushforward
  of $S$ along $f$ is precisely the formula $\push f S \refs B$, and we have previously defined
  the arrow $\opcart f S : S \to \push f S$ as (the equivalence class of)
  $\Id[S] \tRpush f$. Let us show universality, i.e., that $\opcart f S$ is $+$-cartesian.

  For any $g : B \to C$ and
  $\alpha : S \vdashf{fg} T$, we can take
  $\beta \defeq \uLpush f g \alpha : \push f S \vdashf{g} T$. We have to show $\alpha \permeq \opcart f S \hcomp \beta$:
  \begin{mathpar}
    \begin{array}{cl@{\qquad}l}
      & \opcart f S \hcomp \beta
      & \\ =
      & (\Id[S] \tRpush f) \hcomp (\uLpush f g \alpha)
      & \\ =
      & \Id[S] \hcomp \alpha
      & \\ \permeq
      & \alpha
    \end{array}
  \end{mathpar}
  as well as unicity: if $\alpha \permeq \opcart f S \hcomp \beta'$ then
  \begin{mathpar}
    \begin{array}{cl@{\qquad}l}
      & \beta'
      & \\ \permeq
      & \uLpush f g (\opcart f S \hcomp \beta')
      & \text{by $\eta$-expansion (Lemma~\ref{lem:eta-expansion-pushpull})}
      \\ \permeq
      & \uLpush f g \alpha
      & \\ =
      & \beta
    \end{array}
  \end{mathpar}

  The case for $\cart g T : \pull g T \to T$ is symmetrical.
\end{proof}

\noindent
We define the functor $\eta_p  : \D \to \Bifib{p}$ by interpreting every object $X$ of $\D$ as an atomic formula and every arrow $\delta : X \to Y$ as an initial axiom, and it is immediate that $\BifibFun p \circ \eta_p = p$.

Lastly, we need to show that $\BifibFun{p} : \Bifib{p} \to \C$ is universal in the sense explained before, that for any bifibration $q : \E \to \C$ equipped with a functor $\theta : \D \to \E$ such that $q \circ \theta = p$, there exists a unique morphism of bifibrations $\sem{-}\theta : \BifibFun{p} \to q$ making the diagram below commute:
\begin{equation*}
\Tetrahedron
\end{equation*}
We already defined the interpretation $\sem - \theta$ of formulas and proof terms in Section~\ref{sec:sequent-calculus:interpretation}, and showed that it respects permutation equivalence (Lemma~\ref{eq:sem-respects-permeq}).
We now establish that it is a functor $\sem - \theta : \Bifib{p} \to \E$, and that it is is the unique morphism of bifibrations making the diagram above commute.

\begin{lem}
  \label{lem:sem-is-a-functor}
  $\sem \_ \theta : \Bifib p \to \E$ is a functor.
\end{lem}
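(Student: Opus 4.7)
The plan is to verify the two functoriality axioms: preservation of identities and preservation of composition. Well-definedness on equivalence classes is already ensured by Lemma~\ref{eq:sem-respects-permeq}, so $\sem{-}\theta$ gives a genuine function on the homsets of $\Bifib p$, and the remaining work is entirely about matching the proof-theoretic constructions of $\Id[S]$ and $\alpha \hcomp \beta$ against their interpretations in $\E$.

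For identity preservation, I would proceed by induction on the formula $S$, mirroring Definition~\ref{defi:identity}. The atomic case $\sem{\Id[\atom X]}\theta = \theta(\Id[X]) = \Id[\theta(X)]$ is immediate from functoriality of $\theta$. In the push case, unfolding gives $\sem{\Id[\push f S]}\theta = \uLpush{f}{\Id[B]}(\sem{\Id[S]}\theta \; \opcart{f}{})$; by the induction hypothesis this equals $\uLpush{f}{\Id[B]}(\opcart{f}{})$, which by the universal property defining division is exactly $\Id[\push{f}\sem{S}\theta]$. The pull case is dual.

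For composition, I would proceed by the same induction on derivations used in Definition~\ref{defi:cut}, checking that each clause in the cut procedure translates to a valid equality in $\E$. The three principal cut cases are discharged directly by the universal properties of cartesian liftings used in the interpretation of division: for instance, $\sem{(\alpha \tRpush f) \hcomp (\uLpush f h \beta)}\theta = \sem\alpha\theta \; \opcart{f}{} \; \uLpush f h \sem\beta\theta = \sem\alpha\theta \; \sem\beta\theta$ by the defining equation $\opcart{f}{} \; \uLpush f h \gamma = \gamma$, matching $\sem{\alpha \hcomp \beta}\theta$ via the induction hypothesis. The six commutative cut cases reduce to associativity of composition in $\E$ together with the three algebraic equalities from Proposition~\ref{prop:bifib-equations}; these are precisely the equalities that were needed in Lemma~\ref{eq:sem-respects-permeq} to justify the generating permutations, so nothing new has to be proved.

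The only subtlety, rather than a true obstacle, is that cut is defined only up to permutation equivalence, so several commutative clauses could in principle apply to a single pair of derivations and yield syntactically different terms. However, any two such outputs are related by $(\permeq)$ and therefore sent to the same arrow of $\E$ by Lemma~\ref{eq:sem-respects-permeq}, so the inductive step gives a well-defined equality $\sem{\alpha \hcomp \beta}\theta = \sem\alpha\theta \; \sem\beta\theta$ regardless of the branch chosen. Combining the two parts yields the functoriality of $\sem{-}\theta$.
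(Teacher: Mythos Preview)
Your proposal is correct and follows essentially the same approach as the paper: induction on $S$ for identity preservation (with the same unfolding in the push case), and induction along the cut-elimination clauses for composition, appealing to the universal properties of cartesian liftings for the principal cuts and to associativity together with the equalities of Proposition~\ref{prop:bifib-equations} for the commutative ones. One small slip: there are four commutative cut clauses in Definition~\ref{defi:cut}, not six, and two of them (the pure multiplications $\tLpull f$ and $\tRpush f$) need only associativity, while the two divisions need the mild generalization of Proposition~\ref{prop:bifib-equations} where an arbitrary morphism replaces the cartesian lifting on one side---exactly what the paper invokes as well.
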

\begin{proof}
  \label{prf:sem-is-a-functor}
  We prove the interpretation respects identities $\sem {\Id [S]} \theta = \Id [\sem S \theta]$
  by induction on $S$:

  \begin{mathpar}
    \sem {\Id [\atom X]} \theta
    = \theta(\Id [X])
    = \Id [\theta (X)]
    = \Id [\sem {\atom X} \theta]
    \\
    \begin{array}{cl@{\qquad}l}
      & \sem {\Id [\push f S]} \theta
      & \\ =
      & \sem {\uLpush f {\Id [B]} {(\Id[S] \tRpush f)}} \theta
      & \\ =
      & \uLpush {f} {\Id [B]}
        \left(\sem {\Id[S]} \theta \; \opcart {f} {\sem S \theta}\right)
      & \\ =
      & \uLpush {f} {\Id [B]}
        \left(\Id [\sem S \theta] \; \opcart {f} {\sem S \theta}\right)
      & \text{by induction on $S$}
      \\ =
      & \uLpush {f} {\Id [B]} \opcart {f} {\sem S \theta}
      & \\ =
      & \Id [\sem S \theta]
      & \text{property of cartesian liftings}
    \end{array}
  \end{mathpar}
  (The case of the identity on $\pull g T$ is symmetrical.)

  We prove that the interpretation respects composition $\sem {\alpha \hcomp \beta} \theta = \sem \alpha \theta \; \sem \beta \theta$
  by simultaneous induction on the derivations $\alpha$ and $\beta$.
  Representative cases follow:
  \begin{mathpar}
     \sem {\atom \delta \hcomp \atom \epsilon} \theta
     = \sem {\atom {\delta \; \epsilon}} \theta
     = \theta (\delta \; \epsilon)
     = \sem \delta \theta \; \sem \epsilon \theta

    \begin{array}{cl@{\qquad}l}
      & \sem {\alpha \tRpush f} \theta \; \sem {\uLpush f h \beta} \theta
      & \\ =
      & \sem \alpha \theta \; \opcart {f} {\sem S \theta} \; (\uLpush {f} {h} \sem \beta \theta)
      & \\ =
      & \sem \alpha \theta \; \sem \beta \theta
      & \text{by }\eqref{eq:unique-beta-push} % \text{(property of bifibrations)}
      \\ =
      & \sem {\alpha \hcomp \beta} \theta
      & \\ =
      & \sem {(\alpha \tRpush f) \hcomp (\uLpush f h \beta)} \theta
    \end{array}

    \begin{array}{cl@{\qquad}l}
      & \sem {\alpha \hcomp (\beta \tRpush f)} \theta
      & \\ =
      & \sem {(\alpha \hcomp \beta) \tRpush f} \theta
      & \\ =
      & \sem {\alpha \hcomp \beta} \theta \opcart {f} {\sem U \theta}
      & \\ =
      & \sem {\alpha} \theta \; \sem \beta \theta \opcart {f} {\sem U \theta}
      & \\ =
      & \sem {\alpha} \theta \; \sem {\beta \tRpush f} \theta
    \end{array}

    \begin{array}{cl@{\qquad}l}
      & \sem {\alpha \hcomp (\beta \uRpull h f)} \theta
      & \\ =
      & \sem {(\alpha \hcomp \beta) \uRpull {gh} f)} \theta
      & \\ =
      & \sem {\alpha \hcomp \beta} \theta \uRpull {gh} {f}
      & \\ =
      & (\sem \alpha \theta \; \sem \beta \theta) \uRpull {gh} {f}
      & \\ =
      & \sem \alpha \theta \; (\sem \beta \theta \uRpull {h} {f})
      & \text{by Proposition~\ref{prop:bifib-equations}}
      \\ =
      & \sem \alpha \theta \; \sem {\beta \uRpull h f} \theta
    \end{array}
  \end{mathpar}
\end{proof}

\begin{lem}\label{lem:free-bifib-unicity}
$\sem{-} \theta : \BifibFun{p} \to q$ is the unique morphism of bifibrations such that $\sem{-}\theta \circ \eta_p = \theta$.
\end{lem}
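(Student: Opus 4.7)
The plan is to verify three things in sequence: that $\sem{-}\theta$ is a morphism of bifibrations from $\BifibFun{p}$ to $q$, that the triangle $\sem{-}\theta \circ \eta_p = \theta$ commutes, and that any $F : \Bifib{p} \to \E$ satisfying these two conditions must coincide with $\sem{-}\theta$. Functoriality of $\sem{-}\theta$ is Lemma~\ref{lem:sem-is-a-functor}, and the equation $q \circ \sem{-}\theta = \BifibFun{p}$ is direct from its inductive definition. Preservation of chosen cartesian liftings reduces to the calculation
\[
\sem{\opcart{f}{S}}\theta = \sem{\Id[S] \tRpush f}\theta = \sem{\Id[S]}\theta \; \opcart{f}{\sem{S}\theta} = \opcart{f}{\sem{S}\theta},
\]
together with its dual for $\cart{g}{T}$, where the last step uses the identity clause from the proof of Lemma~\ref{lem:sem-is-a-functor}. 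The triangle commutes by the defining clauses $\sem{\atom{X}}\theta = \theta(X)$ and $\sem{\atom{\delta}}\theta = \theta(\delta)$.

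For uniqueness, let $F$ be any such morphism. On objects I induct on formulas: $F(\atom{X}) = F(\eta_p(X)) = \theta(X) = \sem{\atom{X}}\theta$, while $F(\push{f}S)$ is forced to be the codomain of the chosen $+$-cartesian lifting of $f$ at $F(S)$, namely $\push{f}F(S) = \push{f}\sem{S}\theta = \sem{\push{f}S}\theta$ by the inductive hypothesis, and the pull case is dual. On arrows I induct on proof terms. The base case $F(\atom{\delta}) = \theta(\delta) = \sem{\atom{\delta}}\theta$ is direct. For a right multiplication $\alpha \tRpush{f}$, I use that $\alpha \tRpush{f} \permeq \alpha \hcomp \opcart{f}{T}$ (a commutative cut together with identity neutrality), so that functoriality, preservation of chosen liftings, and the inductive hypothesis yield
\[
F(\alpha \tRpush{f}) = F(\alpha) \; \opcart{f}{F(T)} = \sem{\alpha}\theta \; \opcart{f}{\sem{T}\theta} = \sem{\alpha \tRpush{f}}\theta,
\]
and $\tLpull{g}\alpha$ is dual. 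For a division $\uLpush{f}{g}\alpha$, I invoke the principal-cut identity $\alpha \permeq \opcart{f}{S} \hcomp (\uLpush{f}{g}\alpha)$: applying $F$ gives $F(\alpha) = \opcart{f}{F(S)} \; F(\uLpush{f}{g}\alpha)$, and since $q \circ F = \BifibFun{p}$ forces $F(\uLpush{f}{g}\alpha)$ to lie over $g$, the universal property of the chosen $+$-cartesian lifting $\opcart{f}{F(S)}$ pins it down to $\uLpush{f}{g}F(\alpha) = \uLpush{f}{g}\sem{\alpha}\theta = \sem{\uLpush{f}{g}\alpha}\theta$. The division $\alpha \uRpull{f}{g}$ is dual.

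The only delicate point is that arrows in $\Bifib{p}$ are $\permeq$-classes rather than raw derivations, so every rewrite used above must respect permutation equivalence; this is already guaranteed by Lemmas~\ref{lem:cut-respects-permeq} and~\ref{eq:sem-respects-permeq}. Beyond that I do not anticipate any serious obstacle: the argument is a routine application of the universal properties of chosen cartesian liftings, pinned down at each inductive step by the principal-cut identities for multiplications and divisions.
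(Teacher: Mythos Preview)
Your proposal is correct and follows essentially the same approach as the paper. The paper's proof is terser on uniqueness---it simply remarks that ``each of the five equations defining the interpretation $\sem{-}\theta$ must hold for any morphism of bifibrations $\tau$ such that $\tau \circ \eta_p = \theta$''---whereas you spell out the inductive verification of those five equations explicitly, using the principal-cut identities and the universal properties of cartesian liftings; this is exactly the argument the paper is gesturing at.
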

\begin{proof}
Recall that the interpretation was defined as follows on terms:
\begin{mathpar}
    \sem{\atom \delta} \theta = \theta(\delta)
\\
    \sem{\alpha \tRpush f} \theta = \sem \alpha \theta \; \opcart{f}{}

    \sem{\uLpush f g \alpha} \theta = \uLpush {f} {g} {\sem \alpha \theta}
\\
    \sem{\tLpull g \alpha} \theta = \cart {g} {} \sem{\alpha} \theta

    \sem{\alpha \uRpull f g} \theta = {\sem \alpha \theta} \uRpull {f} {g}
\end{mathpar}
The equation $\sem{-}\theta \circ \eta_p = \theta$ holds immediately by construction.

To be a morphism of bifibrations, the interpretation must send cartesian arrows to cartesian arrows
of the same type.
This follows from the definition and the previous lemma.
For $+$-cartesian arrows for example, we have:
\begin{mathpar}
\sem{\opcart f S}\theta
 = \sem{\Id[S] \tRpush f}\theta
 = \sem{\Id[S]} \theta \; \opcart {f} {\sem{S}{\theta}}
 = \Id[\sem S \theta] \; \opcart {f} {\sem{S}\theta}
 = \opcart {f} {\sem{S}\theta}
\end{mathpar}
Finally, for unicity we remark that each of the five equations defining the interpretation $\sem{-}\theta$ must hold for any morphism of bifibrations $\tau : \BifibFun{p} \to q$ such that $\tau \circ \eta_p = \theta$, so necessarily $\tau = \sem{-}\theta$.
%if $\theta$ is a morphism of bifibrations on $p$, (then $\theta(\alpha / f) = \theta(\alpha) / f$.)
\end{proof}
\noindent
Putting it all together, we conclude with the main theorem of this section.
\begin{thm}\label{thm:bif-is-free}$\BifibFun{p} : \Bifib{p} \to \C$ is the free bifibration on $p : \D \to \C$.
\end{thm}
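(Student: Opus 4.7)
The plan is to assemble the theorem directly from the lemmas of this section, since each component of the universal property of the free bifibration has already been established separately. First I would invoke Lemma~\ref{lem:bifib-is-a-category} confirming that $\Bifib{p}$ is a well-defined category, and Lemma~\ref{lem:bifibfun-bifibration} confirming that $\BifibFun{p} : \Bifib{p} \to \C$ is a bifibration, with the chosen cartesian liftings $\opcart{f}{S} \defeq \Id[S] \tRpush f$ and $\cart{g}{T} \defeq \tLpull g \Id[T]$. That the functor $\eta_p : \D \to \Bifib{p}$, sending each object $X$ of $\D$ to the atomic formula $\atom{X}$ and each arrow $\delta$ to the initial axiom $\atom{\delta}$, satisfies $\BifibFun{p} \circ \eta_p = p$ is immediate from the definitions.

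Next, given any bifibration $q : \E \to \C$ and functor $\theta : \D \to \E$ with $q \circ \theta = p$, I appeal to the interpretation $\sem{-}\theta$ defined in Section~\ref{sec:sequent-calculus:interpretation}. Lemma~\ref{eq:sem-respects-permeq} shows that this interpretation respects permutation equivalence and hence descends to a well-defined function on the $\permeq$-equivalence classes that constitute the arrows of $\Bifib{p}$; Lemma~\ref{lem:sem-is-a-functor} shows that it is functorial; and Lemma~\ref{lem:free-bifib-unicity} shows that it is the unique morphism of bifibrations $\Bifib{p} \to \E$ satisfying $\sem{-}\theta \circ \eta_p = \theta$. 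Taken together, these facts are precisely the four conditions required by the definition of free bifibration from the Introduction, namely that $\Bifib{p}$ carries a bifibrational structure over $\C$, that it is equipped with a canonical map out of $\D$, and that it satisfies the expected universal property with respect to arbitrary bifibrations over $\C$.

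There is no real obstacle at this stage; the theorem is a bookkeeping statement collecting the pieces built over Sections~\ref{sec:sequent-calculus:frm-seq}--\ref{sec:sequent-calculus:free-bifibration}. All the substantive technical content---cut admissibility (Definition~\ref{defi:cut}), compatibility of cut with $\permeq$ (Lemma~\ref{lem:cut-respects-permeq}), identity and associativity (Lemmas~\ref{lem:id-neutral} and~\ref{lem:cut-associative}), and the $\eta$-expansion laws (Lemma~\ref{lem:eta-expansion-pushpull}) used to verify the universal property of cartesian liftings in $\Bifib{p}$---has already been discharged. The proof itself is therefore a one-line invocation of the four assembling lemmas.
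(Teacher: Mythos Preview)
Your proposal is correct and matches the paper's approach exactly: the theorem is stated immediately after the phrase ``Putting it all together,'' with no further proof, precisely because it is meant as a summary assembling Lemmas~\ref{lem:bifib-is-a-category}, \ref{lem:bifibfun-bifibration}, \ref{eq:sem-respects-permeq}, \ref{lem:sem-is-a-functor}, and \ref{lem:free-bifib-unicity}. Your identification of the relevant pieces and the order in which they fit together is spot on.
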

\noindent
An immediate consequence---applying one of the equivalent characterizations of adjunctions \cite[Theorem IV.1.2(ii), p.83]{MacLaneCWM}---is that we can construct a functor
\[ \BifibFun{} : \Cat/\C \longrightarrow \BifCat(\C) \]
from the slice of the category of categories over $\C$ into the category of cloven bifibrations over $\C$, which is left adjoint to the evident forgetful functor:
\begin{equation}\label{eq:free-bifibration-adjunction}
\begin{tikzcd}
\Cat/\C \ar[rr,bend left,"\BifibFun{}"{name=L}] && \BifCat(\C)\ar[ll,bend left,"U"{name=R}]
\ar[phantom,from = L, to = R,"\dashv" rotate = -90]
\end{tikzcd}
\end{equation}
This induces a monad on $\Cat/\C$ whose algebras are the cloven bifibrations over $\C$.

\subsection{Some proof-theoretic consequences}
\label{sec:sequent-calculus:consequences}

% \begin{proof}
%   The data of the free fibration $\BifibFun p$ for every functor $p$
%   is equivalent to the defining data of a left adjoint to the
%   forgetful functor, see \citet*[Theorem IV.2(ii)]{MacLaneCWM}.
% \end{proof}

Initiality of the $\Bifib{p}$ construction has as another direct consequence that the sequent calculus can be used to reason about
both existence and equality of morphisms in arbitrary bifibrations.
We state this as a corollary of Theorem~\ref{thm:bif-is-free}.

\begin{cor}
Suppose given a functor $p : \D \to \C$, an arrow $f : A \to B$ of $\C$, and a pair of bifibrational formulas $S \refs A$ and $T \refs B$.
Then:
\begin{enumerate}
\item the sequent $S \vdashf{f} T$ is derivable iff for every bifibration $q : \E \to \C$ and functor $\theta : \D \to \E$ such that $q \circ \theta = p$, there is a morphism $\alpha : \sem S \theta \to \sem T \theta$ in $\E$ such that $q(\alpha) = f$ ;
\item two derivations $\alpha_1,\alpha_2 : S \vdashf{f} T$ are permutation equivalent $\alpha_1 \permeq \alpha_2$ iff for every bifibration $q : \E \to \C$ and functor $\theta : \D \to \E$ such that $q \circ \theta = p$, they have the same interpretation $\sem {\alpha_1} \theta = \sem {\alpha_2} \theta$ as morphisms in $\E$.
\end{enumerate}
\end{cor}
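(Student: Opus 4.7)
The plan is to deduce both parts of the corollary from Theorem~\ref{thm:bif-is-free} by instantiating the universal property at the free bifibration itself. The forward directions of both equivalences are almost immediate from the machinery already developed. For part (1), any derivation of $S \vdashf{f} T$ is by definition an arrow of $\Bifib{p}$, and Lemma~\ref{lem:sem-is-a-functor} ensures that applying $\sem{-}\theta$ yields an arrow of $\E$ over $f$. For part (2), Lemma~\ref{eq:sem-respects-permeq} directly says that permutation-equivalent derivations are sent to equal morphisms under every interpretation.

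The backward directions are the substantive part. The key observation I would establish first is that the interpretation functor $\sem{-}{\eta_p} : \Bifib{p} \to \Bifib{p}$, obtained by instantiating the universal property at $\E = \Bifib{p}$, $q = \BifibFun{p}$, $\theta = \eta_p$, must be the identity functor. This follows from the uniqueness clause of Lemma~\ref{lem:free-bifib-unicity}: the identity $\Id_{\Bifib{p}}$ is trivially a morphism of bifibrations over $\C$ satisfying $\Id_{\Bifib{p}} \circ \eta_p = \eta_p$, so by uniqueness it coincides with $\sem{-}{\eta_p}$. In particular, $\sem{S}{\eta_p} = S$ as objects of $\Bifib{p}$ and $\sem{\alpha}{\eta_p} = [\alpha]$ as equivalence classes of derivations.

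With this in hand, both backward directions follow by specializing the hypothesis to the free bifibration. For (1), the hypothesis gives a morphism $\alpha : S \to T$ in $\Bifib{p}$ with $\BifibFun{p}(\alpha) = f$, which by construction of $\Bifib{p}$ is represented by a derivation of $S \vdashf{f} T$. For (2), the hypothesis specialized at $\eta_p$ yields $[\alpha_1] = [\alpha_2]$ in $\Bifib{p}$, which is precisely the statement $\alpha_1 \permeq \alpha_2$.

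I do not expect any genuine obstacle here: the corollary is a direct instantiation of initiality. The only subtlety worth stating carefully is the identification $\sem{-}{\eta_p} = \Id_{\Bifib{p}}$, since one might otherwise worry that the interpretation could introduce unexpected identifications. This identification is exactly what the uniqueness part of the universal property rules out, and it is the conceptual reason why the sequent calculus serves as a sound and complete reasoning tool for morphisms in arbitrary bifibrations extending $p$.
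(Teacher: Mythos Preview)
Your proposal is correct and is exactly the standard argument for deriving soundness and completeness from initiality. The paper itself does not spell out a proof of this corollary, simply stating it as a direct consequence of Theorem~\ref{thm:bif-is-free}, and your fleshing-out via the identification $\sem{-}{\eta_p} = \Id_{\Bifib{p}}$ (from the uniqueness clause of Lemma~\ref{lem:free-bifib-unicity}) is precisely how one unpacks that.
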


As recalled in the Introduction, any bifibration $\E \to \C$ determines and is determined by the data of a pseudofunctor $\C \to \Adj$ into the category of categories and adjunctions.
This pseudofunctor sends every object $A$ of $\C$ to its fiber category $\E_A$ consisting of the arrows lying over $\Id[A]$, and every arrow $f : A \to B$ of $\C$ to the push/pull adjunction $\push f : \E_A \to \E_B \dashv \pull{f} : \E_B \to \E_A$ between fiber categories.
In sequent calculus terms, the action of the push/pull functors on arrows is described by the following open derivations:
\begin{equation}\label{eq:functor-derivations}
\inferrule*[Right={$\Lpush$}]{\inferrule*[Right={$\Rpush$}]{S_1 \vdashf{\Id[A]} S_2}{S_1 \vdashf{f} \push f {S_2}}}
{\push f {S_1} \vdashf{\Id[B]} \push f {S_2}}
\qquad\qquad
\inferrule*[Right={$\Rpull[g]$}]{\inferrule*[Right={$\Lpull[g]$}]{T_1 \vdashf{\Id[B]} T_2}{\pull g {T_1} \vdashf{g} T_2}}
{\pull g {T_1} \vdashf{\Id[A]} \pull g {T_2}}
\end{equation}
The fact that the fiber functor $\C \to \Adj$ associated to a bifibration $\E \to \C$ is only a \emph{pseudo-}functor in general means that the identities $\push {(g \circ f)} \equiv \push g \circ \push f$ and $\pull {(g \circ f)} \equiv \pull f \circ \pull g$ hold only up to natural isomorphism, as do the identities $\push {\Id[A]} \equiv \Id[{\E_A}] \equiv \pull {\Id[A]}$.
To describe this in the sequent calculus, let us first define \emph{logical equivalence} of formulas (corresponding to what is often called ``vertical isomorphism'' in the categorical literature on fibrations).
\begin{defi}\label{defi:logical-equivalence} Let $S_1,S_2 \refs A$ be two bifibrational formulas over the same object $A$.
We say that $S_1$ and $S_2$ are \defin{logically equivalent,} written $S_1 \equiv_A S_2$ or simply $S_1 \equiv S_2$, if there are a pair of derivations $\alpha : S_1 \vdashf{\Id[A]} S_2$ and $\beta : S_2 \vdashf{\Id[A]} S_1$ such that $\alpha \hcomp \beta \permeq \Id[S_1]$ and $\beta\hcomp \alpha \permeq \Id[S_2]$.
\end{defi}
% \noindent
% By definition, two formulas $S_1,S_2 \refs A$ are equivalent $S_1 \equiv_A S_2$ iff they are isomorphic in the fiber category $\Bifib{p}_A$.
\begin{prop}\label{prop:equivalence-is-vertical-iso}
Two formulas $S_1,S_2 \refs A$ are logically equivalent $S_1 \equiv_A S_2$ iff they are isomorphic in the fiber category $\Bifib{p}_A$.
\end{prop}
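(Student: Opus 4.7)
The plan is to observe that the proposition is essentially a matter of unpacking definitions once we have identified arrows of $\Bifib{p}_A$ with the right class of derivations. I would organize the argument as a single direct verification that the data witnessing logical equivalence is exactly the data of a vertical isomorphism, expressed in different language.

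First, I would recall the relevant definitions explicitly. By the construction of $\Bifib{p}$ (Section~\ref{sec:sequent-calculus:free-bifibration}) and of the functor $\BifibFun{p}$ (whose action on arrows is extraction of the base), an arrow $S_1 \to S_2$ of $\Bifib{p}$ lies over $\Id[A]$ precisely when it is represented by a derivation of $S_1 \vdashf{\Id[A]} S_2$. Hence the homset $\Bifib{p}_A(S_1, S_2)$ is exactly the quotient by $(\permeq)$ of the set of derivations $S_1 \vdashf{\Id[A]} S_2$. Moreover, since composition in $\Bifib{p}$ is cut and the identity on $S$ is $[\Id[S]]$, and since the cut of two derivations whose bases are $\Id[A]$ yields a derivation whose base is $\Id[A] \circ \Id[A] = \Id[A]$, both composition and identities in $\Bifib{p}_A$ are inherited from those of $\Bifib{p}$ without modification.

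For the forward direction, given $\alpha$ and $\beta$ witnessing $S_1 \equiv_A S_2$, the equivalence classes $[\alpha] : S_1 \to S_2$ and $[\beta] : S_2 \to S_1$ are arrows in $\Bifib{p}_A$ by the observation above. The conditions $\alpha \hcomp \beta \permeq \Id[S_1]$ and $\beta \hcomp \alpha \permeq \Id[S_2]$ translate directly into $[\alpha] \circ [\beta] = \Id_{S_1}$ and $[\beta] \circ [\alpha] = \Id_{S_2}$ in $\Bifib{p}_A$, exhibiting a vertical isomorphism. Conversely, given an isomorphism $[\alpha]$ with inverse $[\beta]$ in $\Bifib{p}_A$, any representatives $\alpha : S_1 \vdashf{\Id[A]} S_2$ and $\beta : S_2 \vdashf{\Id[A]} S_1$ satisfy $\alpha \hcomp \beta \permeq \Id[S_1]$ and $\beta \hcomp \alpha \permeq \Id[S_2]$ by definition of composition and identities in $\Bifib{p}_A$, hence witness $S_1 \equiv_A S_2$.

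There is no real obstacle here: the only thing that requires attention is making sure the reader sees that ``arrow of $\Bifib{p}$ over $\Id[A]$'' and ``equivalence class of derivations with base $\Id[A]$'' are literally the same data, and that no reindexing or coherence is hidden inside the composition of the fiber category. Once this is spelled out, the two conditions are verbatim the same, so the proposition follows immediately.
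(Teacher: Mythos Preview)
Your proposal is correct. The paper states this proposition without proof, treating it as immediate from the definitions; your careful unpacking of why arrows in $\Bifib{p}_A$ are exactly $\permeq$-classes of derivations over $\Id[A]$, with composition and identities inherited from $\Bifib{p}$, is precisely the verification that justifies this.
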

\noindent
Logical equivalence should be distinguished from the weaker notion of isomorphism $S \cong T$ given by isomorphism in the total category $\Bifib{p}$, i.e., a pair of derivations
$\alpha : S \vdashf{f} T$ and $\beta : T \vdashf{g} S$ such that $\alpha \hcomp \beta \permeq \Id[S]$ and $\beta\hcomp \alpha \permeq \Id[T]$ (where one does not require $f$ and $g$ to be identity arrows or that $S \refs A$ and $T \refs B$ lie over the same object).
\begin{prop}[Pseudofunctoriality] \label{prop:pseudofunctoriality}
The following logical equivalences hold
\begin{align}
\push{(g\circ f)}S &\equiv \push g \push f S \label{eq:pushpush} \\
\pull {(f \; g)} T & \equiv \pull f \pull g T \label{eq:pullpull}  \\
\push {\Id} U & \equiv U \equiv \pull {\Id} U \label{eq:pushidpull} 
\end{align}
for all $f, g, S, T, U$ of the appropriate type.
\end{prop}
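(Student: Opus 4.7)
The plan is to establish each of the three logical equivalences by directly exhibiting pairs of derivations in the sequent calculus and then verifying that their cuts are permutation equivalent to the appropriate identity derivations. In each case, the required derivations will involve only identity derivations (Definition~\ref{defi:identity}) together with alternating applications of the push and pull rules with carefully chosen decompositions of the base arrow.

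For $\push{(g\circ f)} S \equiv \push g \push f S$, with $S \refs A$, $f : A \to B$, $g : B \to C$, I would construct the two arrows over $\Id[C]$ as follows. In one direction, start with $\Id[S] : S \vdashf{\Id[A]} S$, apply $\Rpush$ along $f$ and then $\Rpush$ along $g$ to obtain $S \vdashf{fg} \push g \push f S$, and finally apply $\Lpush$ along $fg$ with the trivial decomposition $fg = fg \cdot \Id[C]$. In the opposite direction, start with $\Id[S]$, apply $\Rpush$ along $fg$, then apply $\Lpush$ along $f$ with decomposition $fg = f\cdot g$, followed by $\Lpush$ along $g$ with decomposition $g = g\cdot \Id[C]$. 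The round-trip cut is then a cascade of principal cuts of the form $\tRpush{\,}/\uLpush{\,}{\,}$, each of which discharges by Definition~\ref{defi:cut}, reducing the composite to an identity modulo the neutrality law (Lemma~\ref{lem:id-neutral}). The case of $\pull{(f\;g)} T \equiv \pull f \pull g T$ is entirely dual, with the roles of push and pull, left and right, swapped.

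For the unit equivalences $\push{\Id} U \equiv U$ and $U \equiv \pull{\Id} U$, the derivations are even simpler. Over $\Id[A]$, the derivation $\push{\Id[A]} U \vdashf{\Id[A]} U$ is obtained by applying $\Lpush$ along $\Id[A]$ to $\Id[U] : U \vdashf{\Id[A]} U$ with the trivial decomposition $\Id[A] = \Id[A]\cdot \Id[A]$, and the reverse direction $U \vdashf{\Id[A]} \push{\Id[A]} U$ comes from applying $\Rpush$ along $\Id[A]$ to $\Id[U]$. A single principal cut followed by the neutrality of $\Id[U]$ collapses each composition to an identity derivation. The pull case is symmetric.

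An alternative, more conceptual route is to invoke Lemma~\ref{lem:bifibfun-bifibration}, which establishes that $\BifibFun p : \Bifib p \to \C$ is itself a bifibration, and then appeal to the standard fact that in any bifibration the composite of two consecutive $+$-cartesian liftings is a $+$-cartesian lifting of the composite base arrow, and that a $+$-cartesian lifting of an identity arrow is itself invertible. By uniqueness of pushforwards up to vertical isomorphism, this yields fiber isomorphisms $\push{fg} S \cong \push g \push f S$ and $\push{\Id[A]} U \cong U$, and dually for pullbacks; Proposition~\ref{prop:equivalence-is-vertical-iso} then translates these into the claimed logical equivalences. The main point of care, in either approach, is keeping track of which decomposition of the base arrow is used in each $\Lpush$ or $\Rpull$ application, since this is what makes the resulting derivations well-formed; once those are pinned down, every cut that arises is principal, so the verification is mechanical.
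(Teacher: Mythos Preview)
Your proposal is correct and matches the paper's approach closely: the paper first invokes Lemma~\ref{lem:bifibfun-bifibration} together with Proposition~\ref{prop:equivalence-is-vertical-iso} (your ``alternative, more conceptual route''), and then spells out the explicit derivations for the $\push{(g\circ f)}S \equiv \push g \push f S$ case, which are exactly the ones you describe. The only difference is emphasis---the paper leads with the conceptual argument and treats the explicit derivations as illustration, whereas you lead with the explicit construction---but the content is the same.
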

\begin{proof}
By Proposition~\ref{prop:equivalence-is-vertical-iso}, this is a corollary of Lemma~\ref{lem:bifibfun-bifibration}, since these isomorphisms hold in the fiber categories of any bifibration.
Working out the derivation explicitly, the equivalence \eqref{eq:pushpush}, for instance, is witnessed by the following derivations (parameterized by $S \refs A$, $f : A \to B$, and $g : B \to C$), whose compositions are permutation equivalent to the identity derivations on $\push{(g\circ f)}S$ and $\push g \push f S$:
\begin{mathpar}
\inferrule*{\inferrule*{\inferrule*{\inferrule*{ }{S \vdashf{\Id[A]} S}}{S \vdashf{f} \push f S}}{S \vdashf{f\; g} \push g \push f S}}{\push{(g\circ f)}S \vdashf{\Id[C]} \push g \push f S}

\inferrule*{\inferrule*{\inferrule*{\inferrule*{ }{S \vdashf{\Id[A]} S}}{S \vdashf{f\; g} \push {(g\circ f)} S}}{\push f S \vdashf{g} \push{(g\circ f)}S}}{\push g \push f S \vdashf{\Id[C]} \push{(g\circ f)}S}
\end{mathpar}
\end{proof}
\noindent
More generally, by Theorem~\ref{thm:bif-is-free} we have
\begin{cor}
\label{cor:equiv-sound-and-complete}
Given a functor $p : \D \to \C$, two bifibrational formulas $S_1,S_2 \refs A$ are logically equivalent $S_1 \equiv S_2$ iff their interpretations are isomorphic $\sem {S_1} \theta \cong \sem {S_2} \theta$ in every fiber category $\E_A$ for every bifibration $q : \E \to \C$ and functor $\theta : \D \to \E$ such that $q \circ \theta = p$.
\end{cor}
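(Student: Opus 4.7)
The plan is to prove the two directions separately, with the forward direction following directly from the functoriality of interpretation and the backward direction following from a universal instantiation using the free bifibration itself.

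For the forward direction ($\Rightarrow$), I would unfold the definition of logical equivalence: $S_1 \equiv S_2$ gives derivations $\alpha : S_1 \vdashf{\Id[A]} S_2$ and $\beta : S_2 \vdashf{\Id[A]} S_1$ with $\alpha \hcomp \beta \permeq \Id[S_1]$ and $\beta \hcomp \alpha \permeq \Id[S_2]$. Then apply $\sem{-}\theta$: by Lemma~\ref{lem:sem-is-a-functor} this preserves identities and composition, and by Lemma~\ref{eq:sem-respects-permeq} it respects $(\permeq)$, so $\sem\alpha\theta$ and $\sem\beta\theta$ are mutually inverse arrows between $\sem{S_1}\theta$ and $\sem{S_2}\theta$ in $\E$. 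Since $\sem{-}\theta$ is a morphism of bifibrations, $q(\sem\alpha\theta) = \BifibFun{p}(\alpha) = \Id[A]$, so this isomorphism lives in the fiber $\E_A$.

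For the backward direction ($\Leftarrow$), the key move is to instantiate the universal hypothesis on the canonical choice $q \defeq \BifibFun{p}$ and $\theta \defeq \eta_p$ (which satisfies $\BifibFun{p} \circ \eta_p = p$ by construction). First I would observe, by a short induction on formulas, that $\sem{S}{\eta_p} = S$ on the nose: indeed $\sem{\atom{X}}{\eta_p} = \eta_p(X) = \atom{X}$, while $\sem{\push f S}{\eta_p} = \push{f}{\sem{S}{\eta_p}} = \push f S$ by induction (using that the chosen pushforward of $S$ along $f$ in $\Bifib{p}$ is literally the formula $\push f S$, per the proof of Lemma~\ref{lem:bifibfun-bifibration}), and symmetrically for $\pull g T$. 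Hence the assumed fiberwise isomorphism $\sem{S_1}{\eta_p} \cong \sem{S_2}{\eta_p}$ in $\Bifib{p}_A$ becomes an honest isomorphism $S_1 \cong S_2$ in the fiber of $\Bifib{p}$ over $A$, which by Proposition~\ref{prop:equivalence-is-vertical-iso} is exactly $S_1 \equiv S_2$.

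The whole argument is essentially a corollary of Theorem~\ref{thm:bif-is-free}: the backward direction really just says that the universal bifibration is \emph{a} bifibration on $p$, and the forward direction says that \emph{every} bifibration on $p$ is probed by $\Bifib{p}$. The only mildly delicate step is confirming $\sem{-}{\eta_p}$ acts as the identity on objects (and in fact is the identity functor, by the uniqueness clause of Theorem~\ref{thm:bif-is-free} applied to the trivial diagram over $\BifibFun{p}$), and I anticipate this is where a careful reader would want the most detail; everything else is mechanical bookkeeping on already-established lemmas.
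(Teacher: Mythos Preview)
Your proposal is correct and matches the paper's intended argument: the paper simply cites Theorem~\ref{thm:bif-is-free} without further elaboration, and you have unpacked exactly the standard soundness/completeness reasoning that this entails. The only point worth noting is that your observation $\sem{S}{\eta_p} = S$ (equivalently, $\sem{-}{\eta_p} = \Id[\Bifib{p}]$ by the uniqueness clause of Theorem~\ref{thm:bif-is-free}) is precisely what makes the backward direction go through, and the paper takes this for granted.
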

\noindent
Finally, let us observe that the free bifibration construction is \emph{conservative}.
\begin{prop}[Conservativity]
\label{prop:conservativity} The functor $\eta_p : \D \to \Bifib{p}$ is full and faithful.
\end{prop}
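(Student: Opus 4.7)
The plan is to show directly that for each pair of objects $X, Y \in \D$ the function
\[
\eta_p : \D(X,Y) \longrightarrow \Bifib{p}(\atom X, \atom Y)
\]
is a bijection. Unfolding the construction of $\Bifib{p}$, the target homset is the disjoint union, over all arrows $f : p(X) \to p(Y)$ of $\C$, of the set of closed derivations of the sequent $\atom X \vdashf{f} \atom Y$ quotiented by permutation equivalence; the map $\eta_p$ sends $\delta : X \to Y \in \D$ to the initial axiom $\atom\delta : \atom X \vdashf{p(\delta)} \atom Y$.

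The key step is to observe that each derivation of $\atom X \vdashf{f} \atom Y$ must consist of a single initial axiom step. Indeed, by the subformula property (Proposition~\ref{prop:subformula-property}) every sequent occurring in such a derivation is of the form $S' \vdashf{g} T'$ with $S' \preceq \atom X$ and $T' \preceq \atom Y$. Since atomic formulas have no proper subformulas, the only admissible sequents are of the form $\atom X \vdashf{g} \atom Y$. Inspecting the four logical rules ($\Lpush$, $\Rpush$, $\Lpull$, $\Rpull$), each one introduces a push or pull connective in its conclusion, so none of them can produce a sequent with atomic formulas on both sides. Hence the only applicable rule is the initial axiom $\atom\delta$, for some $\delta : X \to Y$ of $\D$ with $p(\delta) = f$.

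Consequently, every derivation of $\atom X \vdashf{f} \atom Y$ is syntactically equal to $\atom\delta$ for a unique $\delta$, and distinct choices of $\delta$ yield distinct (and in particular non-permutation-equivalent) derivations, since the generating permutation equations of Definition~\ref{defi:permeq} only relate terms built with the logical term-formers, never two atomic axioms. Summing over $f$, this identifies $\Bifib{p}(\atom X, \atom Y)$ with the set $\{\atom\delta \mid \delta : X \to Y \in \D\}$, and $\eta_p$ is the inverse bijection on the nose. No serious obstacle is expected: the argument is entirely a routine application of the subformula property together with an inspection of the inference rules, with the only mild subtlety being the observation that permutation equivalence cannot collapse distinct atomic axioms.
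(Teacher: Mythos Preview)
Your proof is correct and follows essentially the same approach as the paper, which simply remarks that conservativity is ``obvious by inspection of the sequent calculus, since the only way to prove an atomic sequent $\atom X \vdashf{f} \atom Y$ is by providing an arrow $\delta : X \to Y$ in $\D$ such that $p(\delta) = f$.'' Your write-up spells out this observation in more detail via the subformula property and the check that permutation equivalence cannot identify distinct atomic axioms, but the underlying argument is the same.
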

\noindent
Conservativity is obvious by inspection of the sequent calculus, since the only way to prove an atomic sequent $\atom X \vdashf{f} \atom Y$ is by providing an arrow $\delta : X \to Y$ in $\D$ such that $p(\delta) = f$.
Faithfulness of $\eta_p$ is also a corollary of the following stronger statement, whose proof only uses the universal property of the free bifibration, and which will serve us in the next section.
\begin{prop}
  \label{prop:axiom-functor} The functor $\eta_p : \D \to \Bifib{p}$ is equipped with a left inverse $\chi_p : \Bifib{p} \to \D$, such that the pairing $(\chi_p,\BifibFun{p}) : \Bifib{p} \to \D\times \C$ is a morphism of bifibrations from $\BifibFun{p}$ to the trivial bifibration $\pi_2 : \D\times \C \to \C$.
\end{prop}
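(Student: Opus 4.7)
The strategy is to obtain $\chi_p$ by applying the universal property of the free bifibration (Theorem~\ref{thm:bif-is-free}) to a carefully chosen target bifibration. Specifically, I would take $\E = \D \times \C$ with $q = \pi_2 : \D \times \C \to \C$ the second projection, and $\theta = (\Id_\D, p) : \D \to \D \times \C$ sending $X \mapsto (X, p(X))$ and $\delta \mapsto (\delta, p(\delta))$. By construction $\pi_2 \circ \theta = p$, so the universal property produces a unique morphism of bifibrations $\sem{-}\theta : \Bifib{p} \to \D \times \C$ satisfying $\sem{-}\theta \circ \eta_p = \theta$ and $\pi_2 \circ \sem{-}\theta = \BifibFun{p}$. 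I would then define $\chi_p \defeq \pi_1 \circ \sem{-}\theta$.

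Before invoking the universal property, I need to verify that $\pi_2 : \D \times \C \to \C$ is indeed a bifibration. This is the \emph{trivial bifibration}: for any $f : A \to B$ in $\C$ and any object $(X, A)$ in the fiber over $A$, the arrow $(\Id_X, f) : (X, A) \to (X, B)$ is both $+$-cartesian and $-$-cartesian over $f$, with pushforward and pullback simply equal to $(X, B)$ and $(X, A)$ respectively. Verifying the universal properties \eqref{eq:unique-beta-push} and \eqref{eq:unique-beta-pull} is immediate since any arrow of $\D \times \C$ over a composite $fg$ factors uniquely through these identity-in-the-first-component liftings.

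With $\sem{-}\theta$ in hand, the two required properties follow directly. For the left inverse property, composing $\chi_p \circ \eta_p = \pi_1 \circ \sem{-}\theta \circ \eta_p = \pi_1 \circ \theta = \pi_1 \circ (\Id_\D, p) = \Id_\D$. For the pairing property, since $\sem{-}\theta$ is already a morphism of bifibrations over $\C$, we have $\pi_2 \circ \sem{-}\theta = \BifibFun{p}$, hence $(\pi_1 \circ \sem{-}\theta, \pi_2 \circ \sem{-}\theta) = (\chi_p, \BifibFun{p})$ coincides with $\sem{-}\theta$ itself; so $(\chi_p, \BifibFun{p})$ is a morphism of bifibrations because $\sem{-}\theta$ is.

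There is essentially no serious obstacle here: the entire content of the proposition is packaged inside the universal property and the triviality of $\pi_2$. The only thing to be careful about is the cartesian-lifting-preservation condition for $(\chi_p, \BifibFun{p})$, which would be tedious to check by hand from the definition of $\chi_p$ alone; bundling it as a consequence of $\sem{-}\theta$ being a morphism of bifibrations is what makes the argument clean.
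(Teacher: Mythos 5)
Your proposal is correct and follows essentially the same route as the paper's proof: apply the universal property of the free bifibration to the trivial bifibration $\pi_2 : \D \times \C \to \C$ with $\theta = (\Id_\D, p)$, take $\chi_p := \pi_1 \circ \sem{-}\theta$, and observe that $(\chi_p, \BifibFun{p}) = \sem{-}\theta$ by the universal property of the product. The extra explicit checks you carry out (the cartesian liftings of $\pi_2$, the computation $\chi_p \circ \eta_p = \Id_\D$) are the same ingredients the paper leaves implicit.
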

\begin{proof}
  Projection functors $\pi_2 : \D \times \C \to \C$ are always bifibrations, with cartesian liftings defined by
  \[
    \begin{tikzcd}
      (X,A) \ar[r,"{(\Id[X],f)}"] & (X,B) \eqdef \push{f}(X,A) & \pull{g}(X,C) \defeq (X,B) \ar[r,"{(\Id[X],g)}"] & (X,C) \\
      A \ar[r,"f"] & B & B \ar[r,"g"] & C
    \end{tikzcd}
  \]
  By the universal property of the free bifibration,  we therefore obtain a unique morphism of bifibrations $\sem{-}\theta : \BifibFun{p} \to \pi_2$
  \[
\begin{tikzcd}[ampersand replacement=\&]
  \D \arrow[ddrr,"p"']\arrow[rr,"\eta_p"]\arrow[rrrd,"{\theta}"'] \& \&\Bifib{p}\arrow[dd,"\BifibFun{p}"]\arrow[dr,"\sem{-}\theta"]\&  \\
  \& \&  \& \D\times\C\arrow[dl,"\pi_2"]\\
  \& \& \C \& 
\end{tikzcd}
  \]
where $\theta \defeq (\Id[\D],p)$.
This proves the proposition, taking $\chi_p \defeq \pi_1 \circ \sem{-}\theta$ and using the universal property of the product of categories. 
\end{proof}
\noindent
In terms of our concrete description of the free bifibration, $\chi_p : \Bifib{p} \to \D$ is the functor sending a bifibrational formula to its atomic source and sending an equivalence class of derivations to the necessarily unique initial axiom with which all of the derivations begin.

\subsection{Related work}

There is a well-known construction, originally due to Gray \cite{Gray1966}, of the free fibration on a functor (or dually the free opfibration) as a comma category.
The standard construction actually defines a \emph{split} (op)fibration, in the sense that the pseudofunctoriality laws \eqref{eq:pushpush}--\eqref{eq:pushidpull} collapse to strict equalities, although one can represent non-split fibrations and opfibrations as pseudo-algebras for the corresponding pseudomonads (see the account by Anders Kock~\cite{Kock2013fibrations}, as well as recent work of Emmenegger, Mesiti, Rosolini, and Streicher on an alternative construction of fibrations~\cite{EMRStreicher2024}).
Conversely, our construction produces a non-split bifibration, but one can recover a split bifibration by restricting to \emph{nonidentity strictly alternating formulas} (see Section~\ref{sec:focusing:weak-focusing} below, and in particular Theorem~\ref{thm:splitting}).
It is worth mentioning that since a functor is a bifibration just in case it is both a fibration and an opfibration, the free bifibration monad is formally the coproduct of the free fibration and the free opfibration monads---but this characterization does not give much help in constructing it explicitly.

We will discuss Lamarche's construction of the free bifibration on a functor along with Dawson, Paré, and Pronk's closely related $\Pi_2(\C)$ construction, both already referenced in the Introduction, further in Section~\ref{sec:double-categories}.

The notation $S \refs A$ to denote an object $S \in \Bifib{p}$ such that $\BifibFun{p}(S) = A$ is taken from Melliès and Zeilberger's work on functors as type refinement systems \cite{MZ2015popl}, and more broadly our proof-theoretic construction is inspired in part from their suggestive type-theoretic reading of bifibrations \cite{MZ2016lawvere,MZ2018isbell}, as well as from the bifibrational calculus of Licata, Shulman, and Riley~\cite{LSR2017}.
This approach of course owes much to the tradition of categorical proof theory as pioneered by Lambek~\cite{Lambek1968}.

%----------------------------------------------------------------------%
%\newpage %% Just temporary - delete in final version
\section{The double category of zigzags}
\label{sec:double-categories}
%----------------------------------------------------------------------%

As mentioned in the Introduction, Lamarche~\cite{Lam13,Lam14} defined a \emph{path category} $\mathrm{P}(\C)$ for each category $\C$, showed that it admits the structure of a double category, and explained how to use this structure to construct the free bifibration on a functor.
In this section we will start the other way around, defining a category $\Z(\C)$ equivalent to Lamarche's path category as the free bifibration on the identity functor, and showing that it admits a strict double category structure.
The objects and tight morphisms of $\ZZ(\C)$
are the objects and arrows in~$\C$, while the loose morphisms are zigzags (or paths) of arrows in $\C$,
and thus $\ZZ(\C)$ is called the \emph{zigzag double category}.
Its underlying 2-category of loose morphisms is equivalent to $\Pi_2(\C)$, the 2-category studied by Dawson, Paré, and Pronk~\cite{DPP03} that is obtained by freely adjoining right adjoints to $\C$.
To prove this equivalence, we establish that $\ZZ(\C)$ has another characterization as the free \emph{fibrant double category} on $\C$.

Whereas Lamarche and Dawson, Paré, and Pronk gave rather intricate
combinatorial descriptions of what it means to be a morphism of
zigzags (i.e., to be a double cell in $\ZZ(\C)$, or a 2-cell in $\Pi_2(\C)$), we will extract a simple presentation of $\ZZ(\C)$ by translating the sequent calculus definition to double categorical syntax.
In turn we will see how that presentation can be translated backwards to obtain a topologically meaningful string diagrammatic representation of proofs in the bifibrational calculus.

\subsection{Deriving a double category structure from a right action}
\label{sec:double-categories:double-cat}

For the rest of the paper we define $\Z(\C) \defeq \Bifib{\Id[\C]}$ to be the total category of the free bifibration on the identity functor for $\C$.
Observe that by Proposition~\ref{prop:axiom-functor}, $\Z(\C)$ is automatically equipped with the structure of a reflexive graph object in $\Cat$
\[
\begin{tikzcd}
  \Z(\C) \ar[d,shift left=1em,"\tgt"] \ar[d,shift right=1em,"\src"'] \\
  \C\ar[u,"U" description]
\end{tikzcd}
\]
taking $\src \defeq \chi_p$, $U \defeq \eta_p$, and $\tgt \defeq \BifibFun{p}$, for $p = \Id[\C]$.
We use the letter $Z$ to range over the objects of $\Z(\C)$, which we refer to as \emph{zigzag formulas} or simply \emph{zigzags,} and write $Z : A \zigzag B$ to indicate that $\src(Z) = A$ and $\tgt(Z) = B$.
We use the letter $\zeta$ to range over arrows $Z \to Z'$ of $\Z(\C)$.
Such an arrow may be depicted as a \emph{cell}
\[
  \begin{tikzcd}A \ar[d,rightsquigarrow,"Z"']\ar[r,"f"]    \arrow[rd, phantom, "\zeta"]
 & B\ar[d,rightsquigarrow,"Z'"] \\ C \ar[r,"g"'] & D \end{tikzcd}
\]
where $f = \src(\zeta)$ and $g = \tgt(\zeta)$.
We always depict the tight morphisms (corresponding to the underlying arrows of $\C$) horizontally and the loose morphisms (corresponding to objects of $\Z(\C)$, i.e.,~zigzags of arrows in $\C$) vertically.
We will develop this diagrammatic picture further in Section~\ref{sec:double-categories:inference-rules-double-cells}, but for now we work with the purely formal definition building on the results of Section~\ref{sec:sequent-calculus}.

To extend this reflexive graph structure on $\Z(\C)$ to a double category (i.e., to an internal category in $\Cat$), we need to exhibit a vertical composition functor 
\[
\odot : \Z(\C) \pullback{\tgt}{\src} \Z(\C) \to \Z(\C)
\]
and prove that it is associative and neutral relative to $U$.
We will recover vertical composition $\odot$ as the special case $p = \Id[\C]$ of a more general action 
\[
\oast_p : \Bifib{p} \pullback{\BifibFun{p}}{\src} \Z(\C) \to \Bifib{p}
\]
of $\Z(\C)$ on any free bifibration $\Bifib{p}$.

To define this action, let us begin by observing that the sequent calculus for $\Z(\C)$ shares the same logical inference rules as any free bifibration $\Bifib{p}$ over the same base category: their sequent calculi only differ in the choice of atomic formulas and initial axioms.
Moreover, both atomic formulas and initial axioms of $\Z(\C)$ 
carry no information, since they are uniquely determined by the underlying object/arrow of the base category.
These observations make clear that we can define the action of $\Z(\C)$ on $\Bifib{p}$ as follows:
\begin{itemize}
\item at the level of formulas, given a formula $S \refs A$ and a zigzag $Z : A \zigzag B$, the formula $S \oast Z \refs B$ is obtained by substituting $S$ for the unique atom $\atom{A}$ in $Z$;
\item at the level of proofs (i.e., representatives of arrows), given $\alpha \in \Bifib{p}$ such that $\BifibFun{p}(\alpha) = f$ and $\zeta \in \Z(\C)$ such that $\src(\zeta) = f$ and $\tgt(\zeta) = g$, the proof $\alpha \oast_p \zeta \in \Bifib{p}$ such that $\BifibFun{p}(\alpha\oast_p\zeta) = g$ is obtained by substituting $\alpha$ for the unique initial axiom $\atom{f}$ in $\zeta$.
\end{itemize}
As a visual aid, let us write ``$\Box$'' for the uniquely determined atoms and axioms in a zigzag formula/proof.
Formally, the action can then be expressed inductively as follows:
\begin{mathpar}
  S \oast_p \Box = S

  S \oast_p (\push{f} Z) = \push{f} (S \oast_p Z)

  S \oast_p (\pull{g} Z) = \pull{g} (S \oast_p Z) \\

  \alpha \oast_p \Box = \alpha 

  \alpha \oast_p (\zeta \tRpush f) = (\alpha \oast_p \zeta) \tRpush f

  \alpha \oast_p (\uLpush f g \zeta) = \uLpush {f} {g} {(\alpha \oast_p \zeta)}
\\
  \alpha \oast_p (\tLpull g \zeta) = \tLpull g (\alpha \oast_p \zeta)

  \alpha \oast_p (\zeta \uRpull f g) = {(\alpha\oast_p \zeta)} \uRpull {f} {g}
\end{mathpar}
\begin{prop}
  The action extends to a functor $\oast_p : \Bifib{p} \pullback{\BifibFun{p}}{\src} \Z(\C) \to \Bifib{p}$.
\end{prop}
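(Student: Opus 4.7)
The proposition asserts that $\oast_p$ is a functor on the pullback category $\Bifib{p} \pullback{\BifibFun{p}}{\src} \Z(\C)$, so four checks are needed: (i) that $\oast_p$ is well-typed, (ii) that it descends to the permutation-equivalence quotient on both arguments, (iii) that it preserves identities, and (iv) that it preserves composition. A composable pair in the pullback is $(\alpha : S_1 \to S_2, \zeta : Z_1 \to Z_2)$ with $\BifibFun{p}(\alpha) = \src(\zeta) = f$, $\tgt(\zeta) = g$, and the goal is to show $\alpha \oast_p \zeta : S_1 \oast_p Z_1 \vdashf{g} S_2 \oast_p Z_2$ in $\Bifib{p}$.

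Steps (i) and (ii) are straightforward inductions. For (i), a routine induction on $Z$ shows that $S \oast_p Z \refs \tgt(Z)$ whenever $S \refs \src(Z)$, and an analogous induction on $\zeta$ (using the term-annotated rules of Figure~\ref{fig:term-annotated-rules}) shows that $\alpha \oast_p \zeta$ is a well-formed derivation of the claimed sequent. For (ii), I would observe that the four generators of $(\permeq)$ in Definition~\ref{defi:permeq} rearrange only logical inferences (the term-formers $\tRpush, \uLpush, \tLpull, \uRpull$) and never touch initial axioms. Since $\oast_p$ is literally substitution at the unique axiom $\Box$, each generator instance in $\zeta$ lifts unchanged through the substitution to a generator instance in $\alpha \oast_p \zeta$; symmetrically for the left argument.

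For (iii), I would prove $\Id[S] \oast_p \Id[Z] \permeq \Id[S \oast_p Z]$ by induction on $Z$, appealing to Definition~\ref{defi:identity}. The base case $Z = \Box$ reduces to $\Id[S] \oast_p \atom{\Id[A]} = \Id[S] = \Id[S \oast_p \Box]$ by definition. The inductive cases for $\push f Z'$ and $\pull g Z'$ follow by unfolding the identity derivations on both sides and applying the inductive hypothesis; the outer $\Lpush$--$\Rpush$ or $\Rpull$--$\Lpull$ layers are identical on both sides.

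The main work is step (iv), namely $(\alpha \hcomp \alpha') \oast_p (\zeta \hcomp \zeta') \permeq (\alpha \oast_p \zeta) \hcomp (\alpha' \oast_p \zeta')$. I would proceed by mirroring the case analysis in the cut-elimination procedure (Definition~\ref{defi:cut}) applied to $\zeta \hcomp \zeta'$. In every commutative-cut case, the outermost connective of $\zeta$ or $\zeta'$ is carried outside the cut, and the substitution of $\alpha, \alpha'$ at the atoms commutes past this rearrangement verbatim, so the equation follows from the inductive hypothesis on the strictly smaller pair. The principal-cut cases on $\push f$ and $\pull g$ eliminate a connective that does not mention the atomic axiom, and again substitution passes through unchanged. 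The only case meriting explicit attention is the principal atomic cut $\Box \hcomp \Box = \Box$ in $\Z(\C)$, where both sides of the target equation reduce immediately to $\alpha \hcomp \alpha'$ by definition of $\oast_p$.

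The main obstacle is the bookkeeping of step~(iv): because cut-elimination in $\Z(\C)$ is only well-defined up to $(\permeq)$, each commutative case supplies only a $(\permeq)$-equation rather than a syntactic identity, so one must repeatedly invoke Lemma~\ref{lem:cut-respects-permeq} together with step~(ii) to transport the equivalences across the substitution. Once this pattern has been set up once, however, the remaining cases are entirely parallel, and the four lemmas together give the desired functoriality.
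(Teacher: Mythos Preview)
Your proposal is correct and follows essentially the same approach as the paper: the paper's proof is much terser, simply noting that well-definedness on the quotient is immediate because $\oast_p$ is a substitution operator, and that preservation of composition \eqref{eq:action-functorial} follows by a straightforward induction on $\zeta$ and $\zeta'$ along the cases of the cut-elimination procedure. Your steps (i)--(iv) unpack exactly this argument with more care, including the preservation of identities which the paper leaves implicit.
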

\begin{proof}
  Since $\oast_p$ is defined as a substitution operator, it is immediate that it respects permutation equivalence of proofs in the sense that we can define $[\alpha] \oast_p [\zeta] \defeq [\alpha \oast_p \zeta]$ without ambiguity.
  To show that it is functorial, we need to verify the permutation equivalence
  \begin{equation}
    \label{eq:action-functorial}
    (\alpha \hcomp \alpha')\oast_p (\zeta\hcomp \zeta') \permeq (\alpha\oast_p \zeta) \hcomp (\alpha'\oast_p \zeta')
  \end{equation}
  for all $\alpha,\alpha' \in \Bifib{p}$ and $\zeta,\zeta' \in \Z(\C)$ for which these operations are defined.
  This follows from the definition of cut (Definition~\ref{defi:cut}) by a straightforward induction on $\zeta$ and $\zeta'$.
\end{proof}

We can now define the vertical composition functor $\odot : \Z(\C) \pullback{\tgt}{\src} \Z(\C) \to \Z(\C)$ as the action of $\Z(\C)$ on itself, $\odot \defeq \oast_{\Id[\C]}$, and use that to define the double category of zigzags.
\begin{prop}
  Vertical composition $\odot$ is strictly associative and neutral relative to $U$.
\end{prop}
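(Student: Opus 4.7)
The plan is to reduce both assertions to the fact that the action $\oast_p$ is defined as a purely syntactic substitution of the left argument into the unique atom/initial axiom at the leaf of the right argument. Because this substitution leaves the surrounding connective/constructor scaffolding of the right argument untouched, it will enjoy the usual associativity and identity laws of ``context plugging'' at the level of raw formulas and raw proof terms, so strictness will be automatic (no appeal to permutation equivalence required).

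Concretely, I would first establish a generalized associativity lemma for the action: for all $\alpha \in \Bifib{p}$ and $\zeta, \zeta' \in \Z(\C)$ for which the operations are defined,
\[
  (\alpha \oast_p \zeta) \oast_p \zeta' \;=\; \alpha \oast_p (\zeta \odot \zeta'),
\]
and likewise at the formula level $(S \oast_p Z) \oast_p Z' = S \oast_p (Z \odot Z')$. Strict associativity of $\odot$ then falls out as the special case $p = \Id[\C]$ and $\alpha \in \Z(\C)$. The proof is a straightforward structural induction on $\zeta'$ (resp.~$Z'$), which plays the role of the outermost zigzag context. The base case $\zeta' = \Box$ is handled by the defining equation $\alpha \oast_p \Box = \alpha$, and the four step cases for $\zeta'$ of the form $\zeta'' \tRpush f$, $\uLpush{f}{g} \zeta''$, $\tLpull{g} \zeta''$, or $\zeta'' \uRpull{f}{g}$ each unfold one layer of $\oast_p$ on both sides using the defining equations and then apply the inductive hypothesis. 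The analogous formula-level induction has just the three cases $\Box$, $\push{f} Z''$, and $\pull{g} Z''$.

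For neutrality, one direction is immediate: for $\zeta$ with $\tgt(\zeta) = g$, the equation $\zeta \odot U(g) = \zeta$ is a direct instance of the base case $\alpha \oast_p \Box = \alpha$, since $U(g) = \atom{g}$. The other direction, $U(f) \odot \zeta = \zeta$ for $\src(\zeta) = f$, amounts to the observation that substituting the initial axiom $\atom{f}$ back in for the (unique, necessarily equal) initial axiom $\atom{f}$ of $\zeta$ recovers $\zeta$; this can be verified formally by a short induction on $\zeta$, again parallel at the formula level.

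I do not anticipate a real obstacle: the only thing to watch is that substitution is well-defined on permutation-equivalence classes (already established in the preceding proposition showing that $\oast_p$ is a functor), but the equalities themselves hold before quotienting, so strictness is free rather than something to be fought for. In effect, the whole proposition is the statement that ``plugging into a context'' on proof terms is a monoid action with strict unit and associativity, which is exactly the sort of property that syntactic substitution makes transparent.
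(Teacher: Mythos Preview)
Your proposal is correct and takes essentially the same approach as the paper, which dispatches the proposition in a single sentence: ``Immediate from the definition of $\oast_p$ as a substitution operator.'' Your writeup is a faithful unpacking of that one line, spelling out the induction on the outer zigzag and the two unit cases; nothing more is needed.
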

\begin{proof}
  Immediate from the definition of $\oast_p$ as a substitution operator.
\end{proof}
\begin{defi}\label{defi:zigzag-double-category}
  The \defin{zigzag double category} of $\C$, notated $\ZZ(\C)$, is defined as a strict double category by the data $\ZZ(\C) \defeq (\C, \Z(\C),\src,\tgt,U,\odot)$, where
\[
  \Z(\C) \defeq \Bifib{p}\quad
  \src \defeq \chi_p\quad
  \tgt \defeq \BifibFun{p}\quad
  U \defeq \eta_p\quad
  \odot = \oast_p
\]
as explained above, for $p = \Id[\C]$.
\end{defi}
\noindent
Notice that with this definition, equation \eqref{eq:action-functorial} specializes to the interchange axiom for the double category $\ZZ(\C)$.
We already saw in Section~\ref{sec:sequent-calculus:permeq} that the permutation equivalence relations were necessary for obtaining a bifibration, but it is worth mentioning that this axiom also forces the definition of permutation equivalence.
For example, the permutation relation \eqref{eq:permgen2} may be rederived as follows:
\begin{align*}
  (\uLpush f g \alpha) \tRpush h
  &= (\alpha \oast (\uLpush f g \atom{fg})) \hcomp (\Id[T] \tRpush h \oast \atom{h}) & \text{defn. of }\oast\\
  &\permeq (\alpha \hcomp (\Id[T] \tRpush h)) \oast ((\uLpush f g \atom{fg}) \hcomp \atom{h}) & \text{by } \eqref{eq:action-functorial}\\
  &\permeq (\alpha \tRpush h) \oast (\uLpush f {gh} \atom{fgh}) & \text{properties of cut and id}\\
  &= \uLpush f {gh} {(\alpha \tRpush h)}& \text{defn. of }\oast
\end{align*}
Similar observations were made by Dawson, Paré, and Pronk~\cite[\S1.5]{DPP03} and by Lamarche~\cite[p.12]{Lam13}.

\subsection{From inference rules to double cells}
\label{sec:double-categories:inference-rules-double-cells}

Having established that the free bifibration on the identity functor $\Id[\C] : \C \to \C$ generates a double category of zigzags in $\C$, at this point let us develop a more diagrammatic presentation of $\ZZ(\C)$ that is isomorphic to Definition~\ref{defi:zigzag-double-category} but will help us to better visualize it as a double category.

An easy but important observation is that what we have called ``zigzag formulas'' (that is, objects of the free bifibration $\Z(\C) \defeq \Bifib{\Id[\C]}$) really are in one-to-one correspondence with zigzags of arrows in the category $\C$.
Of course we have not defined what a ``zigzag of arrows'' is formally, but one possible definition is, say, as a graph homomorphism $Z : \tilde L \to \C$ from some orientation $\tilde L$ of the line graph $L_n$ with $n$ edges and $n+1$ vertices into the underlying graph of $\C$, for some $n\ge 0$.
It is clear that such a graph homomorphism can be represented as a zigzag formula with $n$ total pushes and pulls, and vice versa.
For example, if $\C$ is the category on the left and $Z$ is the (preformally defined!) zigzag on the right, 
\[
  \begin{tikzcd}[row sep=1em]
    && C\\
    A \ar[r,"f"] & B \ar[ur,"g"]\ar[dr,"h"] & && A \ar[r,"f"] & B\ar[r,"h"] & D\ar[r,<-,"h"] & B\ar[r,"g"] & C\\
    && D
  \end{tikzcd}
\]
then $Z$ can be encoded isomorphically either as a graph homomorphism $Z : \tilde L \to \C$ where
\begin{mathpar}
\tilde L =  \begin{tikzcd}[row sep=1em, ampersand replacement=\&]
v_0 \ar[r,"e_1"] \& v_1\ar[r,"e_2"] \& v_2\ar[r,<-,"e_3"] \& v_3\ar[r,"e_4"] \& v_4
\end{tikzcd}

\begin{array}{cccc}
Z(v_0) = A
&
Z(v_1) = Z(v_3) = B
&
Z(v_2) = D
&
Z(v_4) = C
\end{array}

\begin{array}{cccc}
Z(e_1) = f
&
Z(e_2) = h
&
Z(e_3) = g
&
Z(e_4) = g
\end{array}
\end{mathpar}
or by the zigzag formula $Z = \push{g}\pull{h}\push{h}\push{f}A$.
One important point is that both of these representations allow us to represent the \emph{empty zigzag,} which corresponds to a choice of object in $\C$, and should not be confused with a one-step zigzag along an identity arrow.

Now turning to double cells, observe that each logical inference rule of the sequent calculus, recalled below for reference, may be interpreted as a generating double cell of $\ZZ(\C)$ (to make the notation lighter, we omit arrows from cell names):
\begin{equation}
\label{eq:generating-cells}
\begin{array}{c}
  \inferrule* [Right={$\Lpush$}]
        {S \vdashf{fg} T}
        {\push{f} S \vdashf{g} T} \\\rotatebox{-90}{$\leadsto$} \\[1.5em]
\begin{tikzcd}
    A
    \arrow[d, "f"']
    \arrow[r, "fg"]
    \arrow[rd, phantom, "\push \Lsym"]
    &
    C
    \arrow[d, equal]
    \\
    B
    \arrow[r, "g"']
    &
    C
  \end{tikzcd}
\end{array}
\quad\qquad
\begin{array}{c}
  \inferrule* [Right={$\Rpush$}]
        {S' \vdashf{f'} S}
        {S' \vdashf{f'f} \push{f} S} \\ \rotatebox{-90}{$\leadsto$} \\[1.5em]
	\begin{tikzcd}
    A'
    \arrow[d, equal]
    \arrow[r, "f'"]
    \arrow[rd, phantom, "\push \Rsym"]
    &
    A
    \arrow[d, "f"]
    \\
    A'
    \arrow[r, "f'f"']
    &
    B
    \end{tikzcd}
\end{array}
\quad\qquad
\begin{array}{c}
  \inferrule* [Right={$\Lpull[g]$}]
        {T \vdashf{g'} T'}
        {\pull{g} T \vdashf{gg'} T'} \\ \rotatebox{-90}{$\leadsto$} \\[1.5em]
	\begin{tikzcd}
    C
    \arrow[from=d, "g"]
    \arrow[r, "g'"]
    \arrow[rd, phantom, "\pull \Lsym"]
    &
    C'
    \arrow[d, equal]
    \\
    B
    \arrow[r, "gg'"']
    &
    C'
    \end{tikzcd}
\end{array}
\quad\qquad
\begin{array}{c}
  \inferrule* [Right={$\Rpull[g]$}]
        {S \vdashf{fg} T}
        {S \vdashf{f} \pull{g} T} \\ \rotatebox{-90}{$\leadsto$} \\[1.5em]
	\begin{tikzcd}
    A
    \arrow[d, equal]
    \arrow[r, "fg"]
    \arrow[rd, phantom, "\pull \Rsym"]
    &
    C
    \arrow[from=d, "g"']
    \\
    A
    \arrow[r, "f"']
    &
    B
    \end{tikzcd}
\end{array}
\end{equation}
Formally, since we defined $\ZZ(\C)$ as an internal category, each of these cells corresponds to an arrow in $\Z(\C)$ with given source and target arrows in $\C$.
For example, the cell we have named $\push\Lsym$ corresponds to an arrow $\gamma : \push{f}\atom{A} \to \atom{C}$ with $\src(\gamma) = fg$ and $\tgt(\gamma) = g$.
These four cells (really, a family of cells parameterized by the arrows $f, g, f', g'$) \emph{generate} the double category $\ZZ(\C)$ in the sense that every representative $\zeta$ of an arrow $\Z(\C)$ may be uniquely decomposed as a vertical composition of generators $\zeta = \gamma_1 \odot \dots \odot \gamma_n$.
Indeed, it is immediate from the inductive definition of the sequent calculus that any proof in any free bifibration over $\C$ may be uniquely decomposed as the action of a list of generators on an initial axiom.
\begin{prop}\label{prop:unique-generator-decomposition}
  For any derivation $\alpha : S \vdashf{f} T$ there exists a unique series of generators $\gamma_1,\dots,\gamma_n \in \ZZ(\C)$ of the form \eqref{eq:generating-cells} such that $\alpha = \atom \delta \oast_p \gamma_1 \oast_p \cdots \oast_p \gamma_n$, where $\delta = \chi_p(\alpha)$.
\end{prop}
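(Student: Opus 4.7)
The plan is to proceed by structural induction on the derivation $\alpha$, which by the term syntax of Section~\ref{sec:sequent-calculus:term-syntax} is either an initial axiom $\atom\delta$ or built from a smaller subderivation by exactly one of the four term-formers $\uLpush f g (-)$, $(-) \tRpush f$, $\tLpull g (-)$, $(-) \uRpull f g$. The key observation is that these four term-formers are in bijective correspondence with the four families of generating cells displayed in~\eqref{eq:generating-cells}: for instance, the outermost application of $\uLpush f g (-)$ corresponds to acting by the $\push\Lsym$ generator, which as a term in $\Z(\C)$ is simply $\uLpush f g \atom{fg}$.

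For existence, I would argue by a direct induction on $\alpha$. In the base case $\alpha = \atom\delta$, take $n = 0$ and note that $\chi_p(\atom\delta) = \delta$ by definition of $\chi_p$. In the inductive cases, if for example $\alpha = \uLpush f g \alpha'$, I invoke the induction hypothesis on $\alpha'$ to obtain a decomposition $\alpha' = \atom\delta \oast_p \gamma_1 \oast_p \cdots \oast_p \gamma_{n-1}$, and then set $\gamma_n$ to be the $\push\Lsym$ generator with parameters $f,g$. By the inductive clauses defining the action, we have
\[
\alpha' \oast_p \gamma_n \;=\; \alpha' \oast_p (\uLpush f g \atom{fg}) \;=\; \uLpush f g (\alpha' \oast_p \atom{fg}) \;=\; \uLpush f g \alpha' \;=\; \alpha.
\]
The three other term-former cases are entirely symmetric, using the analogous generators from \eqref{eq:generating-cells}.

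For uniqueness, the atomic source $\delta$ is forced by functoriality of $\chi_p$ (Proposition~\ref{prop:axiom-functor}), since $\chi_p$ sends $\alpha$ to the unique initial axiom in any of its decompositions. The outermost generator $\gamma_n$ is then forced by the outermost term-former of $\alpha$: a glance at the clauses of the action shows that if $\gamma_n$ is of one of the four shapes in \eqref{eq:generating-cells} and $\alpha' \oast_p \gamma_n$ is formed, the resulting term starts with exactly that matching term-former, and its arrow parameters are recoverable from the source/target arrows of $\alpha$ together with the outermost connective of the left- or right-hand formula. Stripping $\gamma_n$ off then yields a strictly smaller derivation to which the induction hypothesis applies, delivering uniqueness of $\gamma_1, \ldots, \gamma_{n-1}$.

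I do not expect a genuine obstacle here: the proposition is essentially a repackaging of the fact that derivation terms are finite trees freely generated by a five-symbol signature (one axiom and four logical term-formers), where each term-former is now reinterpreted as substitution into the corresponding generating cell of $\ZZ(\C)$. The only point requiring a touch of care is checking that the arrow parameters decorating each generator are unambiguously recoverable from the sequent data, but this is immediate from the explicit shapes in~\eqref{eq:generating-cells}.
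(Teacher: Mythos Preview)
Your proposal is correct and matches the paper's approach. The paper does not give a detailed proof, stating only that the proposition ``is immediate from the inductive definition of the sequent calculus''; your structural induction on derivation terms, pairing each of the four term-formers with the corresponding generating cell of~\eqref{eq:generating-cells}, is exactly the elaboration the paper has in mind.
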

\noindent
It is possible to go in the other direction and extract a direct presentation of $\ZZ(\C)$, considering its double cells as formal vertical compositions of generators---we will refer to such lists of generators as ``stacks''---modulo an equivalence relation.
Our definition of permutation equivalence for the sequent calculus (Definition~\ref{defi:permeq}) translates directly to the following relations on stacks:
\begin{equation}\label{eq:permeq-on-double-cells}
  \begin{aligned}
    \begin{tikzcd}[sep = 1cm]
	\cdot 
	\arrow[r, "g"]
	\arrow[rd, phantom, "\pull\Lsym"]
	& \cdot
	\arrow[d, equals]
	\\
	\cdot
	\arrow[u, "f"]
	\arrow[r, "fg" description]
	\arrow[d, equals]
	\arrow[rd, phantom, "\push\Rsym"]
	& \cdot
	\arrow[d, "h"]
	\\
	\cdot
	\arrow[r, "fgh"']
	& \cdot
	\end{tikzcd}
\quad
\permeq
\quad
	\begin{tikzcd}[sep = 1cm]
	\cdot
	\arrow[d, equals]
	\arrow[r, "g"]
	\arrow[rd, phantom, "\push\Rsym"]
	& \cdot
	\arrow[d, "h"]
	\\
	\cdot
	\arrow[r, "gh" description]
	\arrow[rd, phantom, "\pull\Lsym"]
	& \cdot
	\arrow[d, equals]
	\\
	\cdot
	\arrow[u, "f"]
	\arrow[r, "fgh"']
	& \cdot
      \end{tikzcd} 
\qquad\qquad
    \begin{tikzcd}[sep = 1cm]
    \cdot 
    \arrow[r,"fg"]
    \arrow[d,"f"']
    \arrow[phantom,dr,"\push\Lsym"] 
    & \cdot 
    \arrow[d,equals] 
    \\
    \cdot 
    \arrow[r,"g" description]
    \arrow[d,equals]
    \arrow[phantom,dr,"\push\Rsym"] 
    & \cdot 
    \arrow[d,"h"] 
    \\
    \cdot 
    \arrow[r,"gh"'] 
    & \cdot
    \end{tikzcd}
\quad
\permeq
\quad
    \begin{tikzcd}[sep = 1cm]
    \cdot 
    \arrow[r,"fg"]
    \arrow[d,equals]
    \arrow[phantom,dr,"\push\Rsym"] 
    & \cdot 
    \arrow[d,"h"] 
    \\
    \cdot 
    \arrow[r,"fgh" description]
    \arrow[d,"f"']
    \arrow[phantom,dr,"\push\Lsym"] 
    & \cdot 
    \arrow[d,equals] 
    \\
    \cdot \arrow[r,"gh"'] 
    & \cdot
  \end{tikzcd}
\\[1em]
	\begin{tikzcd}[sep = 1cm]
	\cdot
	\arrow[r, "gh"]
	\arrow[rd, phantom, "\pull\Lsym"]
	& \cdot
	\arrow[d, equals]
	\\
	\cdot
	\arrow[u, "f"]
	\arrow[r, "fgh" description]
	\arrow[d, equals]
	\arrow[rd, phantom, "\pull\Rsym"]
	& \cdot
	\\
	\cdot
	\arrow[r, "fg"']
	& \cdot
	\arrow[u, "h"']
	\end{tikzcd}
\quad
\permeq
\quad
	\begin{tikzcd}[sep = 1cm]
	\cdot
	\arrow[r, "gh"]
	\arrow[d, equals]
	\arrow[rd, phantom, "\pull\Rsym"]
	& \cdot
	\\
	\cdot
	\arrow[r, "g" description]
	\arrow[rd, phantom, "\pull\Lsym"]
	& \cdot
	\arrow[u, "h"']
	\arrow[d, equals]
	\\
	\cdot
	\arrow[u, "f"]
	\arrow[r, "fg"']
	& \cdot
      \end{tikzcd}
\qquad \qquad
	\begin{tikzcd}[sep = 1cm]
	\cdot
	\arrow[r, "fgh"]
	\arrow[d, "f"']
	\arrow[rd, phantom, "\push\Lsym"]
	& \cdot
	\arrow[d, equals]
	\\
	\cdot
	\arrow[d, equals]
	\arrow[r, "gh" description]
	\arrow[rd, phantom, "\pull\Rsym"]
	& \cdot
	\\
	\cdot
	\arrow[r, "g"']
	& \cdot
	\arrow[u, "h"']
	\end{tikzcd}
\quad
\permeq
\quad
	\begin{tikzcd}[sep = 1cm]
	\cdot 
	\arrow[r, "fgh"]
	\arrow[d, equals]
	\arrow[rd, phantom, "\pull\Rsym"]
	& \cdot
	\\
	\cdot
	\arrow[d, "f"']
	\arrow[r, "fg" description]
	\arrow[rd, phantom, "\push\Lsym"]
	& \cdot
	\arrow[u, "h"']
	\arrow[d, equals]
	\\
	\cdot
	\arrow[r, "g"']
	& \cdot
      \end{tikzcd}
\end{aligned}
\end{equation}
Our inductive definition of the identity derivation $\Id[S] : S \vdashf{\Id[A]} S$ for every bifibrational formula $S\refs A$ (Definition~\ref{defi:identity}) likewise translates to the following definition of identity double cells for every one-step zigzag, which can be vertically stacked to define the identity cell for an arbitrary length zigzag $Z : A \zigzag B$:
\begin{equation}
  \label{eq:id-double-cells}
\begin{tikzcd}
  A \arrow[r,equals]\arrow[phantom,dr,"\push\Rsym"]\arrow[d,equals] & A \arrow[d,"f"] \\
  A \arrow[r,"f" description]\arrow[phantom,dr,"\push\Lsym"]\arrow[d,"f"'] & B\arrow[d,equals] \\
  B \arrow[r,equals] & B
\end{tikzcd}
\qquad\qquad
\begin{tikzcd}
  B \arrow[r,equals]\arrow[phantom,dr,"\pull\Lsym"]\arrow[d,<-,"g"'] & B \arrow[d,equals] \\
  A \arrow[r,"g" description]\arrow[phantom,dr,"\pull\Rsym"]\arrow[d,equals] & B\arrow[d,<-,"g"] \\
  A \arrow[r,equals] & A
\end{tikzcd}
\end{equation}
We elide the explicit definition of horizontal composition for $\ZZ(\C)$ in terms of stacks, but suffice it to say that our cut-elimination procedure (Definition~\ref{defi:cut}) can be repeated.
We should mention that horizontal composition $\alpha \hcomp \beta$ is an operation taking two stacks with matching zigzags along their boundaries $\tgt(\alpha) = \src(\beta)$, where the left and right boundaries of a stack can be computed by concatenating all of the one-step zigzags (i.e., up or down arrows) that appear on that side of the generators, ignoring empty zigzags.

Now, a corollary of Proposition~\ref{prop:unique-generator-decomposition} is that any free bifibration may be easily reconstructed from the double category of zigzags, as observed by Lamarche~\cite[p.22]{Lam13}.
\begin{cor}\label{cor:bifib-as-pullback}
$\Bifib{p} \cong \D \pullback{p}{\src} \Z(\C)$ and $\BifibFun{p} \equiv \push{\tgt}\pull{\src}p$, or diagrammatically:
\begin{equation*}
\begin{tikzcd}
\D
\arrow[d, "p"']
& \Bifib{p}
\arrow[l]
\arrow[d]
\arrow[rd, "\BifibFun{p}"]
\arrow[ld, phantom, "\llcorner" very near start]
& \\
\C
& \Z(\C)
\arrow[l, "\src"]
\arrow[r, "\tgt"']
& \C
\end{tikzcd}
\end{equation*}  
\end{cor}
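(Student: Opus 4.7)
The plan is to construct a functor $\Phi : \D \pullback{p}{\src} \Z(\C) \to \Bifib{p}$ by restricting the action $\oast_p$ to atoms and initial axioms, and show that it is an isomorphism of categories over $\C$. Explicitly, the pullback $\D \pullback{p}{\src} \Z(\C)$ has as objects pairs $(X, Z)$ with $X \in \D$, $Z : A \zigzag B$ in $\Z(\C)$, and $p(X) = A$, and as morphisms pairs $(\delta, \zeta)$ with $p(\delta) = \src(\zeta)$. I would define $\Phi(X, Z) \defeq \atom X \oast_p Z$ on objects and $\Phi(\delta, \zeta) \defeq \atom\delta \oast_p \zeta$ on morphisms; functoriality follows immediately from the interchange law \eqref{eq:action-functorial}, together with the principal-cut identity $\atom\delta \hcomp \atom{\epsilon} = \atom{\delta\;\epsilon}$ built into the definition of cut.

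Next I would construct the inverse $\Psi : \Bifib{p} \to \D \pullback{p}{\src} \Z(\C)$. On a formula $S \refs A$, note that $S$ is built inductively from a unique atom $\atom X$ by push and pull constructors; replacing that atom by the placeholder $\Box$ yields a canonical zigzag $Z_S \in \Z(\C)$ with $\src(Z_S) = p(X)$ and $\tgt(Z_S) = A$, and I would set $\Psi(S) \defeq (X, Z_S)$. On a derivation $\alpha : S \vdashf{f} T$, Proposition~\ref{prop:unique-generator-decomposition} gives a unique decomposition $\alpha = \atom\delta \oast_p \gamma_1 \oast_p \cdots \oast_p \gamma_n$ with $\delta = \chi_p(\alpha)$ and each $\gamma_i$ a generator from \eqref{eq:generating-cells}; I would set $\Psi(\alpha) \defeq (\delta, \gamma_1 \odot \cdots \odot \gamma_n)$. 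Mutual inverseness on objects is a direct induction on the structure of $Z$ (equivalently, of $S$), and on morphisms reduces again to Proposition~\ref{prop:unique-generator-decomposition} together with the definition of $\oast_p$ as substitution of the unique atom/axiom.

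For the second claim, I would verify by induction on $Z$ that $\BifibFun{p}(\atom X \oast_p Z) = \tgt(Z)$: the atom $\atom X$ sits over $p(X) = \src(Z)$, and each push or pull constructor shifts the $\C$-index along its parameter exactly as the corresponding step in the computation of $\tgt(Z)$. Transferred through $\Phi$, this identifies $\BifibFun{p}$ with $\tgt$ composed with the projection $\D \pullback{p}{\src} \Z(\C) \to \Z(\C)$, which is precisely the pullback description compactly written $\push{\tgt}\pull{\src}p$.

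The main obstacle is the well-definedness of $\Psi$ on permutation equivalence classes: one must check that if $\alpha \permeq \alpha'$ in $\Bifib{p}$ then the corresponding stacks of generators are related by $\permeq$ in $\Z(\C)$. The point is that the four generating permutations \eqref{eq:permgen1}--\eqref{eq:permgen4} only rearrange pairs of adjacent generators in the decomposition and never touch the initial axiom, so they translate verbatim to the permutation relations on double cells of $\Z(\C)$ displayed in \eqref{eq:permeq-on-double-cells}; since $\Z(\C)$ is itself a free bifibration over $\C$ with the same permutation generators, $\Bifib{p}$ and $\Z(\C)$ share identical congruence structure above their atoms, and the translation is compatible on the nose.
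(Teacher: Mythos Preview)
Your proposal is correct and follows essentially the same approach as the paper, which simply states the result as an immediate corollary of Proposition~\ref{prop:unique-generator-decomposition} without further argument. Your write-up usefully spells out what the paper leaves implicit: that $\oast_p$ gives the forward functor, the unique generator decomposition gives the inverse, and the only real thing to check is compatibility with permutation equivalence on both sides, which holds because the generating relations of $\Bifib{p}$ and $\Z(\C)$ are literally the same (they never touch the axiom).
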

\noindent
This suggests an isomorphic representation of sequent calculus proofs, as a stack of generators in $\ZZ(\C)$ paired with a single compatible arrow of $\D$.
We refer the reader again back to the Introduction for an example of this representation on the right side of Figure~\ref{fig:example-functor-and-derivation} (we will explain the overlaid string diagram further below).

\subsection{Aside: displayed bifibrations}
\label{sec:displayed-bifibrations}

To better understand the relationship between the zigzag double category and the bifibrational sequent calculus, we find it revealing to pursue a more conceptual analysis of the action defined in Section~\ref{sec:double-categories:double-cat}.
This analysis will not be needed in the rest of the paper and so may be safely skipped, but could interest readers familiar with Bénabou's work \cite[\S7]{Benabou2000} and so-called \emph{displayed categories} \cite{AhrensLumsdaine2019}.

Recall that we wrote $\Frm{A}$ for the set of bifibrational formulas (that is, objects of $\Bifib{p}$) lying over an object $A\in\C$.
Let us similarly write
\[
 \Pf{h} \defeq \set{ \alpha : S \vdashf{h} T \mid S \in \Frm{A}, T \in \Frm{C}} / \permeq
\]
for the set of equivalence classes of proofs (that is, arrows of $\Bifib{p}$) lying over an arrow $h : A \to C$.
This set is equipped with evident projection functions 
$\Frm{A} \leftarrow \Pf{h} \rightarrow \Frm{C}$
returning the domain and codomain of a proof.
The action $\oast_p : \Bifib{p} \pullback{\BifibFun{p}}{\src} \Z(\C) \to \Bifib{p}$ now permits the definition of a \emph{double functor}
\[ \partial {\BifibFun{p}} : \begin{tikzcd}\ZZ(\C)^\top\ar[r] & \Span(\Set)\end{tikzcd} \]
from the transpose of the zigzag double category into the double category of spans of sets, by the following mappings:
\begin{eqnarray*} %\ZZ(\C) &\longrightarrow & \Span(\Set)  \\
  A &\mapsto & \Frm{A} \\
  f : A \to B &\mapsto & \Frm{A} \leftarrow \Pf{f}\rightarrow \Frm{B} \\
  Z : A \zigzag B &\mapsto & -\oast Z : \Frm{A} \to \Frm{B} \\ \\
  \begin{tikzcd}[ampersand replacement=\&]A\ar[r,"f"]\ar[d,rightsquigarrow,"Z"']\ar[rd,phantom,"\zeta"]  \& B\ar[d,rightsquigarrow,"Z'"] \\ C\ar[r,"g"'] \& D\end{tikzcd} & \mapsto & \begin{tikzcd}[ampersand replacement=\&]\Frm{A}\ar[d,"-\oast Z"']  \& \Pf{f}\ar[l]\ar[r]\ar[d,"-\oast\zeta"] \& \Frm{B}\ar[d,"-\oast Z'"] \\ \Frm{C} \& \Pf{g}\ar[l]\ar[r] \& \Frm{D}\end{tikzcd}
\end{eqnarray*}
Note the transpose is necessary because the tight morphisms of $\ZZ(\C)$ are sent to loose morphisms of $\Span(\Set)$.

In particular, $\partial\BifibFun{p}$ maps each generator of $\ZZ(\C)$ to a double cell of spans:
\begin{equation}\label{eq:rules-as-span-cells}
\begin{split}
\begin{tikzcd}[column sep=1.5em,ampersand replacement=\&]
    \Frm{A}\ar[d,"\push{f}"'] \& \Pf{fg}\ar[l]\ar[r]\ar[d,"\push\Lsym"] \& \Frm{C}\ar[d,equals] \\
    \Frm{B} \& \Pf{g}\ar[l]\ar[r] \& \Frm{C} 
\end{tikzcd}
\qquad
\begin{tikzcd}[column sep=1.5em,ampersand replacement=\&]
    \Frm{A'}\ar[d,equals] \& \Pf{f'}\ar[l]\ar[r]\ar[d,"\push\Rsym"] \& \Frm{A}\ar[d,"\push{f}"] \\
    \Frm{A'} \& \Pf{f'f}\ar[l]\ar[r] \& \Frm{B} 
\end{tikzcd}
\\
\begin{tikzcd}[column sep=1.5em,ampersand replacement=\&]
    \Frm{C}\ar[d,"\pull{g}"'] \& \Pf{g'}\ar[l]\ar[r]\ar[d,"\pull\Lsym"] \& \Frm{C'}\ar[d,equals] \\
    \Frm{B} \& \Pf{gg'}\ar[l]\ar[r] \& \Frm{C'} 
\end{tikzcd}
\qquad
\begin{tikzcd}[column sep=1.5em,ampersand replacement=\&]
    \Frm{A}\ar[d,equals] \& \Pf{fg}\ar[l]\ar[r]\ar[d,"\pull\Rsym"] \& \Frm{C}\ar[d,"\pull{g}"] \\
    \Frm{A} \& \Pf{f}\ar[l]\ar[r] \& \Frm{B} 
\end{tikzcd}
\end{split}
\end{equation}
One might think of this interpretation as internalizing the idea of an inference rule as a transformation from a proof of the premise(s) into a proof of the conclusion.
Notice that this double functor $\partial\BifibFun{p} : \ZZ(\C)^\top \to \Span(\Set)$
\begin{itemize}
\item \emph{strictly preserves composition of zigzags:} since $\oast_p$ is a strict action, with the equation $x \oast (a \odot b) = (x \oast a) \oast b$ corresponding to simple chaining of logical connectives and inference rules;
\item \emph{laxly preserves composition of arrows in $\C$:} since the span morphisms $\Pf{f} \hcomp \Pf{g} \to \Pf{fg}$, which correspond to cut-elimination, are typically not invertible.
\end{itemize}
In fact, this construction is very general and can be applied to \emph{any} bifibration $q : \E \to \C$ to derive a lax double functor $\partial q : \ZZ(\C)^\top \to \Span(\Set)$.
On the horizontal category $\hcat{\ZZ(\C)} = \C$, this functor sends every object $A \in \C$ to the set $\Ob A = \set{S \in \E \mid q(S) = A}$ of objects lying over it, and similarly every arrow $f : A \to B$ to the set $\Arr f = \set{\alpha \in \E \mid q(\alpha) = f}$ of arrows lying over it, which is equipped with evident projections $\Ob A \leftarrow \Arr f \rightarrow \Ob B$.
This extends to the generators of $\ZZ(\C)$ by interpreting the span morphisms
\begin{equation}\label{eq:bifibrations-as-span-cells}
\begin{split}
\begin{tikzcd}[column sep=1.5em,ampersand replacement=\&]
    \Ob{A}\ar[d,"\push{f}"'] \& \Arr{fg}\ar[l]\ar[r]\ar[d,"\push\Lsym"] \& \Ob{C}\ar[d,equals] \\
    \Ob{B} \& \Arr{g}\ar[l]\ar[r] \& \Ob{C} 
\end{tikzcd}
\qquad
\begin{tikzcd}[column sep=1.5em,ampersand replacement=\&]
    \Ob{A'}\ar[d,equals] \& \Arr{f'}\ar[l]\ar[r]\ar[d,"\push\Rsym"] \& \Ob{A}\ar[d,"\push{f}"] \\
    \Ob{A'} \& \Arr{f'f}\ar[l]\ar[r] \& \Ob{B} 
\end{tikzcd}
\\
\begin{tikzcd}[column sep=1.5em,ampersand replacement=\&]
    \Ob{C}\ar[d,"\pull{g}"'] \& \Arr{g'}\ar[l]\ar[r]\ar[d,"\pull\Lsym"] \& \Ob{C'}\ar[d,equals] \\
    \Ob{B} \& \Arr{gg'}\ar[l]\ar[r] \& \Ob{C'} 
\end{tikzcd}
\qquad
\begin{tikzcd}[column sep=1.5em,ampersand replacement=\&]
    \Ob{A}\ar[d,equals] \& \Arr{fg}\ar[l]\ar[r]\ar[d,"\pull\Rsym"] \& \Ob{C}\ar[d,"\pull{g}"] \\
    \Ob{A} \& \Arr{f}\ar[l]\ar[r] \& \Ob{B} 
\end{tikzcd}
\end{split}
\end{equation}
using the bifibrational structure of $q$, very similarly to how we defined the interpretation functor $\sem{-}{\theta} : \BifibFun{p} \to \E$ from the free bifibration in Section~\ref{sec:sequent-calculus:interpretation}.
For example, the function $\push{f} : \Ob{A} \to \Ob{B}$ maps any object $S\in \E$ lying over $A$ to (the object component of) its pushforward $\push{f}S$, the span morphism $\push\Lsym : \Arr{fg} \to \Arr{g}$ is interpreted using the universal property of this pushforward, and $\push\Rsym : \Arr{f'} \to \Arr{f'f}$ by postcomposition with the associated $+$-cartesian arrow $S \to \push{f}S$.

Conversely, any such lax double functor $F : \ZZ(\C)^\top \to \Span(\Set)$ determines a category $\int_\C F$ together with a functor $\pi_\C : \int_\C F \to \C$ equipped with the structure of a bifibration.
Indeed, the category $\int_\C F$ is constructed in the usual way as a displayed category: objects of $\int_\C F$ are given by pairs $(A,S)$ of an object $A \in \C$ and an element $S \in F(A)$, arrows are given by pairs $(f,\alpha)$ of an arrow $f \in \C$ and an element $\alpha \in F(f)$ (meaning an element of the apex of the span $F(A) \leftarrow F(f) \to F(B)$), and composition and identity arrows are defined using the lax structure of $F$.

In turn, the tight component of the lax double functor $F : \ZZ(\C)^\top \to \Span(\Set)$ enables us to go further and equip the projection functor $\pi_\C : \int_\C F \to \C$ with the structure of a bifibration.
For example, the pushforward along an arrow $f : A \to B$ is defined using the functions $F(\push{f}A) : F(A) \to F(B)$ to compute pushforward objects, and applying $F(\push\Rsym) : F(\Id[A]) \to F(f)$ to an identity arrow to compute the associated cartesian arrow.
The universal property of this pushforward is derived from the interpretation of $F(\push\Lsym)$ and the defining equations of $\ZZ(\C)$.

The constructions $\partial$ and $\int$ witness an equivalence
\[
  \begin{tikzcd}
    \D \ar[d, "\text{bifibration}"] \\ \C
  \end{tikzcd}
  \qquad\iff\qquad
  \begin{tikzcd}
    \ZZ(\C)^\top \ar[r,"\text{lax}"] & \Span(\Set)
  \end{tikzcd}
\]
and one might therefore reasonably refer to a lax double functor $F : \ZZ(\C)^\top \to \Span(\Set)$ as a ``displayed bifibration''.

We will not pursue these reflections further in this paper, although we think they could eventually be helpful for developing a more algebraic perspective on the sequent calculus.

\subsection{A dagger structure on zigzags}
\label{sec:double-categories:dagger}

One advantage of considering free bifibrations in terms of the double category of zigzags is that the latter clearly has a large degree of symmetry.
Of particular relevance is a dagger-type involution
\[ (-)^\dagger : \ZZ(\C) \to \ZZ(\C)^{v}\]
that reverses the direction of vertical arrows and double cells.
Formally, this corresponds to an endofunctor on $\Z(\C)$ swapping source and target while leaving the underlying arrows in $\C$ invariant:
\[
\begin{tikzcd}[column sep=1em,row sep=1.2cm]
\Z(\C)\ar[rd,"\src"']\ar[rr,bend left,"(-)^\dagger"{name=0}] & & \Z(\C)\ar[ld,"\tgt"]\ar[ll,bend left,"(-)^\dagger"{name=1}] \\
&\C&
\ar[from=0,to=1,phantom,"\sim"]
\end{tikzcd}
\]
At the level of formulas, $(-)^\dagger$ turns pushforwards into pullbacks and vice versa while reversing the sequence of connectives applied (e.g., $(\push{h}\pull{g}\push{f}\Box)^\dagger = \pull{f}\push{g}\pull{h}\Box$).
More geometrically, the transformation may be visualized as reflecting the generating cells \eqref{eq:generating-cells} across the $x$-axis (or equivalently reading them from bottom to top), which swaps $\Lpush[f]$ with $\Lpull[f]$ and $\Rpush[g]$ with $\Rpull[g]$.
The $(-)^\dagger$ transformation extends to arbitrary stacks of double cells by $(\gamma_1 \odot \dots \odot \gamma_n)^\dagger = \gamma_n^\dagger \odot \dots \odot \gamma_1^\dagger$.
The following is immediate.
\begin{prop}\label{prop:dagger-involution}
  $(-)^\dagger$ is an involutive isomorphism.
\end{prop}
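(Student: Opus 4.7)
The plan is to decompose the claim into two parts: $(-)^\dagger$ is a double functor, and it is involutive; together these immediately yield that it is an isomorphism with itself as two-sided inverse. The dagger is already described pointwise on formulas and on generators, so the work lies in checking that it respects the equivalences and the composition operations defining $\ZZ(\C)$.

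First I would verify that the formula-level action is involutive: by a short induction on a zigzag formula, swapping every push connective with a pull connective and reversing the order of connectives are two involutions that commute, so applying both twice recovers the original formula; source and target are exchanged, confirming that $(-)^\dagger$ lands in $\ZZ(\C)^v$.

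Next I would show that $(-)^\dagger$ is well-defined on equivalence classes of stacks by checking that it sends each of the four generators of the permutation equivalence relation \eqref{eq:permeq-on-double-cells} to another generator of the same relation. Reflecting the four generating cells of \eqref{eq:generating-cells} across the $x$-axis swaps $\push\Lsym \leftrightarrow \pull\Lsym$ and $\push\Rsym \leftrightarrow \pull\Rsym$, and a brief inspection shows this turns each of the four permutation diagrams into another of them, with the boundary arrows relabelled in the evident way. This is the main obstacle I anticipate: no individual step is hard, but one must carefully track how the boundaries of each generator rearrange in order to confirm that each image is actually an instance of one of the four rule schemas rather than merely looking formally similar.

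Finally, I would verify functoriality and involution on the structured operations. Vertical composition is preserved essentially by construction via $(\gamma_1 \odot \cdots \odot \gamma_n)^\dagger = \gamma_n^\dagger \odot \cdots \odot \gamma_1^\dagger$; the two identity generators in \eqref{eq:id-double-cells} are exchanged with one another, so the identity on a one-step zigzag goes to the identity on its dagger, and hence so does the identity on any zigzag by stacking. Horizontal composition (cut) is preserved by induction on Definition~\ref{defi:cut}, with each principal cut sent to its opposite-polarity principal cut and each commutative case to the symmetric commutative case; the symmetry between push and pull in the definition makes each check mechanical. Involution on generators holds because reflecting a cell twice across the $x$-axis restores the original, and this lifts to stacks via the compositional formula above. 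Combining these, $(-)^\dagger$ is a double functor satisfying $(-)^{\dagger\dagger} = \Id$, hence an isomorphism.
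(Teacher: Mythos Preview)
Your proposal is correct and is precisely the detailed verification one would carry out to justify the claim; the paper itself provides no proof, simply stating that the result ``is immediate'' from the explicit description of $(-)^\dagger$ on generators and stacks given in the preceding paragraph. Your breakdown into formula-level involution, preservation of the permutation relations \eqref{eq:permeq-on-double-cells}, and compatibility with identities and cut is exactly the content implicit in that claim, and the observation that the four permutation generators are permuted among themselves under the swap $\push\Lsym\leftrightarrow\pull\Lsym$, $\push\Rsym\leftrightarrow\pull\Rsym$ is the key point.
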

\noindent
As a corollary, we can derive that the pairing of the source and target functors is a bifibration. 
\begin{thm}[{Lamarche~\cite[Theorem 3]{Lam14}}]\label{thm:srctgt-bifibration}
$(\src,\tgt) : \Z(\C) \to \C\times \C$ is a bifibration.
\end{thm}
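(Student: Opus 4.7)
The plan is to reduce the bifibrational structure of $(\src, \tgt) : \Z(\C) \to \C \times \C$ to the already-established bifibrational structures of $\tgt$ and $\src$ individually, by composing a $\tgt$-cartesian lifting with an $\src$-cartesian lifting. The functor $\tgt = \BifibFun{\Id[\C]}$ is a bifibration by Theorem~\ref{thm:bif-is-free} applied to $p = \Id[\C]$, and $\src = \tgt \circ (-)^\dagger$ is also a bifibration by Proposition~\ref{prop:dagger-involution}, since the dagger is an isomorphism of categories that swaps $\src$ and $\tgt$.

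The key additional observation is that every $\tgt$-cartesian lifting $\opcart{f}{Z}$ or $\cart{g}{Z}$ has identity $\src$-component. This follows immediately from Proposition~\ref{prop:axiom-functor}: the pairing $(\src,\tgt) : \Z(\C) \to \C\times\C$ is a morphism of bifibrations from $\tgt$ to the trivial bifibration $\pi_2 : \C\times\C \to \C$, whose $\pm$-cartesian liftings of $f : A_2 \to B_2$ at $(A_1, A_2)$ are the pairs $(\Id_{A_1}, f)$. Applying $(-)^\dagger$, dually every $\src$-cartesian lifting has identity $\tgt$-component.

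Given $(f_1, f_2) : (A_1, A_2) \to (B_1, B_2)$ in $\C\times\C$ and $Z : A_1 \zigzag A_2$ in $\Z(\C)$, I would then define the $+$-cartesian lifting of $(f_1, f_2)$ at $Z$ as the composite $\opcart{f_2}{Z} \hcomp \sigma$, where $\opcart{f_2}{Z} : Z \to \push{f_2}Z$ is the $\tgt$-$+$-cartesian lifting (lying over $(\Id_{A_1}, f_2)$) and $\sigma : \push{f_2}Z \to W$ is the $\src$-$+$-cartesian lifting of $f_1$ at $\push{f_2}Z$, obtained by daggering the $\tgt$-$+$-cartesian lifting of $f_1$ at $(\push{f_2}Z)^\dagger$ (so that $\sigma$ lies over $(f_1, \Id_{B_2})$). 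The $-$-cartesian lifting of $(f_1, f_2)$ at any $W : B_1 \zigzag B_2$ is constructed symmetrically.

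To verify that the composite is $(\src, \tgt)$-$+$-cartesian, I would use the following general pattern: a $\tgt$-$+$-cartesian morphism with identity $\src$-component is automatically $(\src, \tgt)$-$+$-cartesian for the arrow $(\Id, \tgt(-))$, because given any $\zeta$ with $(\src, \tgt)(\zeta) = (g_1, f_2 g_2)$, the $\tgt$-universal property yields a unique factorization by some $\zeta'$ with $\tgt(\zeta') = g_2$, and functoriality of $\src$ (together with $\src(\opcart{f_2}{Z}) = \Id$) forces $\src(\zeta') = g_1$; dually for $\sigma$. Since composites of $+$-cartesian morphisms are $+$-cartesian, the composite $\opcart{f_2}{Z} \hcomp \sigma$ is $+$-cartesian for $(\src, \tgt)$, and the $-$-cartesian case is entirely analogous. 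The only substantive ingredient is the triviality of the side-components of cartesian liftings, which flows directly from Proposition~\ref{prop:axiom-functor}; the rest is routine bookkeeping with universal properties.
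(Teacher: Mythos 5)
Your proof is correct and takes essentially the same approach as the paper's: both derive the $\src$-bifibration from the $\tgt$-bifibration via $(-)^\dagger$, both use Proposition~\ref{prop:axiom-functor} to see that the two cleavages have trivial cross-components, and both conclude by composing the two liftings. You simply spell out what the paper compresses into the phrase ``the two bifibrational structures act independently,'' namely that a $\tgt$-cartesian arrow with identity $\src$-component is automatically $(\src,\tgt)$-cartesian and that cartesian arrows compose.
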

\begin{proof}
  By Proposition~\ref{prop:dagger-involution}, $\src : \Z(\C) \to \C$ is a bifibration since it inherits a bifibrational structure from $\tgt$ via the $(-)^\dagger$ involution.
  As we saw earlier by Proposition~\ref{prop:axiom-functor}, the pairing $(\src,\tgt) : \Z(\C) \to \C\times \C$ is a morphism of bifibrations from $\tgt$ to $\pi_2$, which means that $\tgt$-cartesian arrows in $\Z(\C)$ are mapped by $\src$ to identity arrows in $\C$.
  Symmetrically, applying the $(-)^\dagger$ involution, it is also a morphism of bifibrations from $\src$ to $\pi_1$.
  This ensures that the pairing is a bifibration $(\src,\tgt) : \Z(\C) \to \C \times \C$, since the two bifibrational structures act independently.  
\end{proof}

\subsection{Recovering the free adjoint construction}

Given a double category $\EE$ seen as an internal category $\EE = (\C,\E,\src_\E,\tgt_\E,U_\E,\odot_\E)$, we write $\hcat{\EE} = \C$ for its underlying \emph{horizontal category}.
Any double category $\EE$ also induces a \emph{vertical 2-category} denoted $\vcat{\EE}$, with same set of objects, 1-cells given by vertical arrows $F : \begin{tikzcd}A \ar[r,"\circ" marking] & B\end{tikzcd}$, and 2-cells $\alpha : F \Longrightarrow G$ given by globular double cells
\[
  \begin{tikzcd}
    A \ar[r,equals]\ar[d,"F"', "\circ" marking]
    \arrow[rd, phantom, "\alpha"]
    & A\ar[d,"G", "\circ" marking] \\ B \ar[r,equals] & B\end{tikzcd}
\]
that is, cells whose source and target are identities.

A double category $\EE$ is said to be \emph{fibrant} (or to be a \emph{framed bicategory} \cite{Shu08}) just in case the pairing of its source and target functors $(\src_\E,\tgt_\E) : \E \to \C\times\C$ is a fibration, or equivalently an opfibration, and therefore a bifibration \cite[Theorem~4.1]{Shu08}.
This is equivalent to asking that the double category has all \emph{companions and conjoints} meaning that for every horizontal arrow $f : A \to B$ there is both a vertical arrow $F_f: \begin{tikzcd}A \ar[r,"\circ" marking] & B\end{tikzcd}$ and a vertical arrow $G_f: \begin{tikzcd}B \ar[r,"\circ" marking] & A\end{tikzcd}$ equipped with a pair of cells
\[
  	\begin{tikzcd}
    A
    \arrow[d, equal]
    \arrow[r, equal]
    \arrow[rd, phantom, "\alpha"]
    &
    A
    \arrow[d, "F_f", "\circ" marking]
    \\
    A
    \arrow[r, "f"']
    &
    B
    \end{tikzcd}
    \qquad
	\begin{tikzcd}
    B
    \arrow[d, "G_f"', "\circ" marking]
    \arrow[r, equals]
    \arrow[rd, phantom, "\beta"]
    &
    B
    \arrow[d, equal]
    \\
    A
    \arrow[r, "f"']
    &
    B
    \end{tikzcd}
  \]
satisfying a few equations (see \cite{DPP2010Span} and \cite[Theorem~A.2]{Shu08}; note that the conventions for the roles of horizontal and vertical arrows are often swapped in the literature).

Fibrant double categories are moreover essentially equivalent to \emph{proarrow equipments} \cite[Appendix~C]{Shu08}.
In particular, any fibrant double category $\EE$ canonically determines an identity-on-objects pseudofunctor $\push{(-)} : \C \to \K$ from its underlying horizontal category $\C = \hcat{\EE}$ to its vertical 2-category $\K = \vcat{\EE}$ that sends every horizontal arrow $f$ to its vertical companion $F_f$, which has a right adjoint $F_f \dashv G_f$ given by the vertical conjoint.
Conversely, given a 2-category $\K$ equipped with a pseudofunctor $I : \C \to \K$ such that every arrow $f \in \C$ is mapped to an arrow $If \in \K$ with a right adjoint, one can define an associated double category $\EE_\K$ whose objects and horizontal arrows are the objects and arrows of $\C$, whose vertical arrows $M : A \to B$ are 1-cells $M : IA \to IB$ in $\K$, and whose double cells
\[
\begin{tikzcd}
    A
    \arrow[d, "M"', "\circ" marking]
    \arrow[r, "f"]
    \arrow[rd, phantom, "\alpha"]
    &
    C
    \arrow[d, "N", "\circ" marking]
    \\
    B
    \arrow[r, "g"']
    &
    D
    \end{tikzcd}
\]
are 2-cells $\alpha : M \cdot Ig \Rightarrow If \cdot N$ in $\K$.
Then $\EE_\K$ is a fibrant double category \cite[Prop.~C.3]{Shu08}.

The correspondence between fibrant double categories and proarrow equipments means that we can recover Dawson, Paré, and Pronk's free adjoint construction $\Pi_2(\C)$ up to equivalence as the vertical 2-category of $\ZZ(\C)$.

\begin{thm}\label{thm:free-fibrant-double-category}
  $\ZZ(\C)$ is the free fibrant double category over $\C$.
\end{thm}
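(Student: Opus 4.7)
The plan is to verify that $\ZZ(\C)$ satisfies the universal property of the free fibrant double category on $\C$: for every fibrant double category $\EE$ and functor $F : \C \to \hcat{\EE}$, there exists a unique companion-and-conjoint-preserving double functor $\hat F : \ZZ(\C) \to \EE$ whose restriction to horizontal categories is $F$. The construction of $\hat F$ will parallel the interpretation of the bifibrational sequent calculus in an arbitrary bifibration developed in Section~\ref{sec:sequent-calculus:interpretation}, using the fibrancy of $\EE$ in place of an explicit bifibrational structure on some $q : \E \to \C$.

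Concretely, I would first define $\hat F$ recursively on zigzag formulas by sending $\atom A$ to $U_\E(F(A))$, $\push f S$ to the vertical composite of $\hat F(S)$ with the companion of $F(f)$ in $\EE$, and $\pull g T$ to the vertical composite of $\hat F(T)$ with the conjoint of $F(g)$ in $\EE$ (both supplied by fibrancy). On proofs of sequents, I would interpret each of the four generating double cells shown in \eqref{eq:generating-cells} by the corresponding defining $2$-cell of a companion or conjoint in $\EE$, and extend to arbitrary stacks of cells by vertical composition.

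The central technical step, and the step I expect to be the main obstacle, is verifying that $\hat F$ is well defined on permutation equivalence classes \eqref{eq:permeq-on-double-cells}. This reduces to checking that the four algebraic identities of Proposition~\ref{prop:bifib-equations} hold for the bifibration $(\src_\E, \tgt_\E) : \E \to \hcat{\EE} \times \hcat{\EE}$ pulled back along $F \times F$; these identities hold in any bifibration, so the check parallels Lemma~\ref{eq:sem-respects-permeq}, but some care is required to match the cartesian and opcartesian liftings for $(\src_\E, \tgt_\E)$ with the defining cells of the companion/conjoint pairs in $\EE$. Once this is in place, functoriality of $\hat F$ with respect to horizontal composition follows just as in Lemma~\ref{lem:sem-is-a-functor}, and strict preservation of vertical composition is immediate from the recursive definition.

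Finally, for uniqueness, any companion-and-conjoint-preserving double functor $\tilde F : \ZZ(\C) \to \EE$ extending $F$ is forced to send each one-step zigzag $\push f \atom A$ to a companion of $F(f)$ and each $\pull g \atom B$ to a conjoint of $F(g)$, and to send the generators of \eqref{eq:generating-cells} to the defining cells of these companions and conjoints. By Proposition~\ref{prop:unique-generator-decomposition}, every double cell of $\ZZ(\C)$ decomposes uniquely as a vertical stack of such generators acting on a $\src$-identity, so $\tilde F$ is determined everywhere and must coincide with $\hat F$.
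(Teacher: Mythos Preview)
Your proposal is correct and follows essentially the same approach as the paper's proof: both define the comparison double functor by sending the four generating cells \eqref{eq:generating-cells} to the companion/conjoint cells supplied by fibrancy, extend to arbitrary stacks via vertical composition and Proposition~\ref{prop:unique-generator-decomposition}, and observe that well-definedness and uniqueness are forced by the bifibration equations (Proposition~\ref{prop:bifib-equations}). The paper's version is much terser---it simply notes that generators determine everything and that their images are fixed by the bifibrational structure of $\src_\E$ and $\tgt_\E$---and it restricts to the case $\hcat{\EE} = \C$ with $F$ the identity, whereas you handle an arbitrary base change $F : \C \to \hcat{\EE}$; this is a harmless generalization obtainable from the restricted version by pulling back along $F$.
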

\begin{proof}
By Theorem~\ref{thm:srctgt-bifibration}, $\ZZ(\C)$ is a fibrant double category.
Moreover, given any other fibrant double category $\EE = (\C,\E,\src_\E,\tgt_\E,U_\E)$ with underlying horizontal category $\hcat{\EE} = \C$, there exists a unique double functor $\ZZ(\C) \to \EE$ whose underlying functor $\Z(\C) \to \E$ is a morphism of bifibrations from $(\src,\tgt)$ to $(\src_\E,\tgt_\E)$.
Indeed, any double functor $\ZZ(\C) \to \EE$ is determined by the image of the generators \eqref{eq:generating-cells}, and the image of the generators is uniquely determined by the bifibrational structure of $\src_\E : \E \to \C$ and $\tgt_\E : \E \to \C$.
\end{proof}
\noindent
In particular, there is an identity-on-objects pseudofunctor $\push{(-)} : \C \to \vcat{\ZZ(\C)}$ mapping every arrow $f : A \to B$ of $\C$ to the forward zigzag $\push{f} A : A \zigzag B$, which has a right adjoint given by the backward zigzag $\pull{f} B : B \zigzag A$.
Observe that $\push{(-)}$ is only a pseudofunctor rather than a strict functor because, as we already observed, the free bifibration is \emph{not} split in the sense that the canonical identities $\push{(g\circ f)} S \equiv \push g \push f S$ and $\push {\Id} U \equiv U$ only hold as isomorphisms in the fiber categories rather than as strict equalities.
In contrast, Dawson, Paré, and Pronk constructed $\Pi_2(\C)$ in a way that admits a strict embedding functor $\push{(-)} : \C \to \Pi_2(\C)$.
Nevertheless, up to splitting of the free bifibration (see Theorem~\ref{thm:splitting} below), the two constructions are equivalent.
\begin{cor}\label{cor:Pi2C-is-VZZC}
  $\vcat{\ZZ(\C)} \simeq \Pi_2(\C)$.
\end{cor}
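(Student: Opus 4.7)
The plan is to combine the universal property of $\ZZ(\C)$ as the free fibrant double category (Theorem~\ref{thm:free-fibrant-double-category}) with the correspondence---recalled just before that theorem---between fibrant double categories and 2-categories equipped with a pseudofunctor from $\C$ under which every arrow acquires a right adjoint.

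First, since $\ZZ(\C)$ is a fibrant double category (by Theorem~\ref{thm:srctgt-bifibration} via the preceding discussion), its vertical 2-category $\vcat{\ZZ(\C)}$ comes canonically with an identity-on-objects pseudofunctor $\push{(-)} : \C \to \vcat{\ZZ(\C)}$ under which every $\push{f}$ has right adjoint $\pull{f}$. By the universal property of $\Pi_2(\C)$ as the free 2-category obtained by adjoining right adjoints to the arrows of $\C$, this data induces a 2-functor $F : \Pi_2(\C) \to \vcat{\ZZ(\C)}$ that is the identity on objects and sends the formal right adjoint of each $f$ to $\pull{f}$.

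To construct a quasi-inverse, I would use the equipment-double-category correspondence in the other direction: the 2-category $\Pi_2(\C)$ equipped with its strict embedding of $\C$ determines a fibrant double category $\EE_{\Pi_2}$ with $\hcat{\EE_{\Pi_2}} = \C$ and $\vcat{\EE_{\Pi_2}} = \Pi_2(\C)$. Applying Theorem~\ref{thm:free-fibrant-double-category} then yields a unique double functor $\ZZ(\C) \to \EE_{\Pi_2}$ extending the identity on horizontal arrows, and passing to vertical 2-categories gives a 2-functor $G : \vcat{\ZZ(\C)} \to \Pi_2(\C)$. An induction on zigzag length---respectively on the generation of 1-cells in $\Pi_2(\C)$ as composites of arrows and their formal right adjoints---shows that $F$ and $G$ act as mutual inverses on generating 1-cells; on 2-cells, uniqueness in each universal property forces both composites to agree with the identity 2-functor up to the canonical comparison isomorphisms.

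The main obstacle---flagged explicitly in the paragraph preceding the corollary---is the strictness mismatch: $\push{(-)}$ is only a pseudofunctor, by Proposition~\ref{prop:pseudofunctoriality}, whereas the embedding $\C \hookrightarrow \Pi_2(\C)$ is strict. Consequently $G \circ F$ can be the identity only up to canonical isomorphism, which is why one obtains an equivalence rather than an isomorphism of 2-categories. Rigorously controlling this comparison is precisely what Theorem~\ref{thm:splitting} is designed for: by restricting $\Z(\C)$ to its split subcategory of non-identity strictly alternating formulas one obtains a strict model for the same pseudofunctor, matching Dawson, Paré, and Pronk's strict embedding and yielding the desired equivalence $\vcat{\ZZ(\C)} \simeq \Pi_2(\C)$.
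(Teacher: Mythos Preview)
Your proposal is correct and matches the paper's approach. The paper does not give an explicit proof of this corollary, leaving it as a consequence of the discussion immediately preceding it: the correspondence between fibrant double categories and proarrow equipments, Theorem~\ref{thm:free-fibrant-double-category} establishing $\ZZ(\C)$ as the free fibrant double category, and the forward reference to Theorem~\ref{thm:splitting} to reconcile the strictness mismatch between the pseudofunctor $\push{(-)} : \C \to \vcat{\ZZ(\C)}$ and DPP's strict embedding $\C \hookrightarrow \Pi_2(\C)$. Your write-up simply makes these steps explicit, including the construction of the quasi-inverse via the equipment $\EE_{\Pi_2}$ and a second application of the universal property.
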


\subsection{From double cells to string diagrams}
\label{sec:double-categories:string-diagrams}

Another reward of the double categorical analysis is that it leads to a natural graphical calculus, derived by applying standard conventions of string diagrams to the generators of $\ZZ(\C)$.
The idea is simple: we begin by dualizing the generators \eqref{eq:generating-cells} geometrically, so that cells become vertices and objects become regions, while arrows become edges separating the regions which meet at a vertex:
% WARNING: the arrows on the oriented edges render differently (appear smaller and hence almost invisible) under texpresso than in the output pdf
\begin{equation*}
\begin{tikzpicture}[scale=0.5,baseline=(tl)]
\path coordinate (tl) 
+(0,-4) coordinate (bl)
+(0,-2) coordinate[label=left:$f$] (lf)
+(.5,-1.5) coordinate[label=$A$] (A)
+(1.75,0) coordinate[label=above:$f$] (tf)
+(0.75,-3.5) coordinate[label=$B$] (B)
+(2,-2) coordinate (middle)
+(2.5,0) coordinate[label=above:$g$] (tg)
+(3.5,-2.5) coordinate[label=$C$] (C)
+(2.5,-4) coordinate[label=below:$g$] (bg)
+(4,-4) coordinate (lr);
\node (Lpush) [vertex] at (middle.center) [text=white] {$\pmb{\push\Lsym}$};
\draw[oedge,arrf] (lf) to (Lpush);
\draw[oedge,arrf] (Lpush) to (tf);
\draw[oedge,arrg] (bg) to (Lpush);
\draw[oedge,arrg] (Lpush) to (tg);
\begin{scope}[on background layer]
\fill[obc] (tl) rectangle (lr);
\fill[obb] (bl) to (lf) to (Lpush.center) to (bg) -- cycle;
\fill[obb] (tf) to (Lpush.center) to (tg) -- cycle;
\fill[oba] (tl) to (lf) to (Lpush.center) to (tf) -- cycle;
\end{scope}
\end{tikzpicture}
\quad \qquad
\begin{tikzpicture}[scale=0.5,baseline=(tl)]
\path coordinate (tl) 
+(0,-4) coordinate (bl)
+(1.5,0) coordinate[label=above:$f'$] (tfp)
+(.5,-2.5) coordinate[label=$A'$] (Ap)
+(4,-2) coordinate[label=right:$f$] (rf)
+(2,-2) coordinate (middle)
+(3.25,-1.5) coordinate[label=$A$] (A)
+(1.5,-4) coordinate[label={[yshift=1pt]below:$f'$}] (bfp)
+(3.5,-3.5) coordinate[label=$B$] (C)
+(2.5,-4) coordinate[label=below:$f$] (bf)
+(4,-4) coordinate (lr);
\node (Rpush) [vertex] at (middle.center) [text=white] {$\pmb{\push\Rsym}$};
\draw[oedge,arrfp] (bfp) to (Rpush);
\draw[oedge,arrfp] (Rpush) to (tfp);
\draw[oedge,arrf] (bf) to (Rpush);
\draw[oedge,arrf] (Rpush) to (rf);
\begin{scope}[on background layer]
\fill[oba] (tl) rectangle (lr);
\fill[obap] (tl) to (tfp) to (Rpush.center) to (bfp) to (bl) to (tl) -- cycle;
\fill[obb] (bf) to (Rpush.center) to (rf) to (lr) -- cycle;
\end{scope}
\end{tikzpicture}
\quad
\begin{tikzpicture}[scale=0.5,baseline=(tl)]
\path coordinate (tl) 
+(0,-4) coordinate (bl)
+(-1.5,0) coordinate[label=above:$g'$] (tgp)
+(-.5,-2.5) coordinate[label=$C'$] (Cp)
+(-2,-2) coordinate (middle)
+(-4,-2) coordinate[label=left:$g$] (rg)
+(-3.25,-1.5) coordinate[label=$C$] (C)
+(-1.5,-4) coordinate[label={[yshift=3pt]below:$g'$}] (bgp)
+(-3.5,-3.5) coordinate[label=$B$] (C)
+(-2.5,-4) coordinate[label=below:$g$] (bg)
+(-4,-4) coordinate (lr);
\node (Lpull) [vertex] at (middle.center) [text=white] {$\pmb{\pull\Lsym}$};
\draw[oedge,arrgp] (bgp) to (Lpull);
\draw[oedge,arrgp] (Lpull) to (tgp);
\draw[oedge,arrg] (bg) to (Lpull);
\draw[oedge,arrg] (Lpull) to (rg);
\begin{scope}[on background layer]
\fill[obc] (tl) rectangle (lr);
\fill[obb] (bg) to (Lpull.center) to (rg) to (lr) -- cycle;
\fill[obcp] (bgp) to (Lpull.center) to (tgp) to (tl) to (bl);
\end{scope}
\end{tikzpicture}
\quad\qquad
\begin{tikzpicture}[scale=0.5,baseline=(tl)]
\path coordinate (tl) 
+(0,-4) coordinate (bl)
+(0,-2) coordinate[label=right:$g$] (rg)
+(-.5,-1.5) coordinate[label=$C$] (C)
+(-1.75,0) coordinate[label=above:$g$] (tg)
+(-0.75,-3.5) coordinate[label=$B$] (B)
+(-2,-2) coordinate (middle)
+(-2.5,0) coordinate[label=above:$f$] (tf)
+(-3.5,-2.5) coordinate[label=$A$] (A)
+(-2.5,-4) coordinate[label=below:$f$] (bf)
+(-4,-4) coordinate (lr);
\node (Rpull) [vertex] at (middle.center) [text=white] {$\pmb{\pull\Rsym}$};
\draw[oedge,arrg] (rg) to (Rpull);
\draw[oedge,arrg] (Rpull) to (tg);
\draw[oedge,arrf] (bf) to (Rpull);
\draw[oedge,arrf] (Rpull) to (tf);
\begin{scope}[on background layer]
\fill[oba] (tl) rectangle (lr);
\fill[obb] (bl) to (rg) to (Rpull.center) to (bf) -- cycle;
\fill[obb] (tf) to (Rpull.center) to (tg) -- cycle;
\fill[obc] (tl) to (rg) to (Rpull.center) to (tg) -- cycle;
\end{scope}
\end{tikzpicture}
\end{equation*}
Then we remove the vertices, depicting the cells by simply bending edges appropriately:
\begin{equation*}
\begin{tikzpicture}[scale=0.5,baseline=(tl)]
\path coordinate (tl) 
+(0,-2) coordinate[label=left:$f$] (lf)
+(.5,-1.5) coordinate[label=$A$] (A)
+(1.75,0) coordinate[label=above:$f$] (tf)
+(1.75,-2.5) coordinate[label=$B$] (B)
+(2.5,0) coordinate[label=above:$g$] (tg)
+(3.5,-2.5) coordinate[label=$C$] (C)
+(2.5,-4) coordinate[label=below:$g$] (bg)
+(4,-4) coordinate (lr);
\draw[oedge,arrf] (lf) to[out=0, in=-90] (tf);
\draw[oedge,arrg] (bg) to (tg);
\begin{scope}[on background layer]
\fill[obc] (tl) rectangle (lr);
\fill[obb] (tl) rectangle (bg);
\fill[oba] (tl) to (lf) to[out=0,in=-90] (tf) --
cycle;
\end{scope}
\end{tikzpicture}
\quad \qquad
\begin{tikzpicture}[scale=0.5,baseline=(tl)]
\path coordinate (tl) 
+(1.5,0) coordinate[label=above:$f'$] (tfp)
+(.75,-2.5) coordinate[label=$A'$] (Ap)
+(4,-2) coordinate[label=right:$f$] (rf)
+(2.25,-2.5) coordinate[label=$A$] (A)
+(1.5,-4) coordinate[label={[yshift=1pt]below:$f'$}] (bfp)
+(3.5,-3.5) coordinate[label=$B$] (C)
+(2.5,-4) coordinate[label=below:$f$] (bf)
+(4,-4) coordinate (lr);
\draw[oedge,arrfp] (bfp) to (tfp);
\draw[oedge,arrf] (bf) to[out=90, in=180] (rf);
\begin{scope}[on background layer]
\fill[oba] (tl) rectangle (lr);
\fill[obap] (tl) rectangle (bfp);
\fill[obb] (bf) to[out=90,in=180] (rf) to (lr) --
cycle;
\end{scope}
\end{tikzpicture}
\quad
\begin{tikzpicture}[scale=0.5,baseline=(tl)]
\path coordinate (tl) 
+(-1.5,0) coordinate[label=above:$g'$] (tfp)
+(-.75,-2.5) coordinate[label=$C'$] (Ap)
+(-4,-2) coordinate[label=left:$g$] (rg)
+(-2.25,-2.5) coordinate[label=$C$] (A)
+(-1.5,-4) coordinate[label={[yshift=3pt]below:$g'$}] (bgp)
+(-3.5,-3.5) coordinate[label=$B$] (C)
+(-2.5,-4) coordinate[label=below:$g$] (bg)
+(-4,-4) coordinate (lr);
\draw[oedge,arrgp] (bgp) to (tgp);
\draw[oedge,arrg] (bg) to[out=90, in=0] (rg);
\begin{scope}[on background layer]
\fill[obc] (tl) rectangle (lr);
\fill[obcp] (tl) rectangle (bgp);
\fill[obb] (bg) to[out=90,in=0] (rg) to (lr) --
cycle;
\end{scope}
\end{tikzpicture}
\quad\qquad
\begin{tikzpicture}[scale=0.5,baseline=(tl)]
\path coordinate (tl) 
+(0,-2) coordinate[label=right:$g$] (lf)
+(-.5,-1.5) coordinate[label=$C$] (C)
+(-1.75,0) coordinate[label=above:$g$] (tf)
+(-1.75,-2.5) coordinate[label=$B$] (B)
+(-2.5,0) coordinate[label=above:$f$] (tg)
+(-3.5,-2.5) coordinate[label=$A$] (A)
+(-2.5,-4) coordinate[label=below:$f$] (bg)
+(-4,-4) coordinate (lr);
\draw[oedge,arrg] (lf) to[out=180, in=-90] (tf);
\draw[oedge,arrf] (bg) to (tg);
\begin{scope}[on background layer]
\fill[oba] (tl) rectangle (lr);
\fill[obb] (tl) rectangle (bg);
\fill[obc] (tl) to (lf) to[out=180,in=-90] (tf) --
cycle;
\end{scope}
\end{tikzpicture}
\end{equation*}
The permutation equivalences \eqref{eq:permgen1}--\eqref{eq:permgen4} corresponding to the stack relations \eqref{eq:permeq-on-double-cells} now have natural interpretations as planar isotopies.
For example, \eqref{eq:permgen1} and \eqref{eq:permgen2} (which permute $\push\Rsym$ with $\pull\Lsym$ and $\push\Lsym$ respectively) may be depicted as follows:
\begin{equation*}
\begin{tikzpicture}[scale=0.5,baseline=($(tl)!0.5!(lr)$)]
\path coordinate (tl) 
+(5,0) coordinate (tr)
+(0,-2) coordinate (lf)
+(1.75,-8) coordinate (bf)
+(2.5,0) coordinate (tg)
+(2.5,-8) coordinate (bg)
+(3.25,-8) coordinate (bh)
+(0,-8) coordinate (bl)
+(5,-8) coordinate (lr)
+(5,-6) coordinate (rh);
\draw[oedge,arrf] (bf) to[out=90, in=0] (lf);
\draw[oedge,arrh] (bh) to[out=90, in=180] (rh);
\draw[oedge,arrg] (bg) to (tg);
\begin{scope}[on background layer]
\fill[obd] (tg) rectangle (lr);
\fill[obb] (tl) rectangle (bg);
\fill[oba] (bl) to (bf) to[out=90,in=0] (lf) --
cycle;
\fill[obc] (bg) to (tg) to (tr) to (rh) to[out=180,in=90] (lr) -- cycle;
\fill[obd] (bh) to[out=90,in=190] (rh) to (lr) --
cycle;
\end{scope}
\end{tikzpicture}
\quad \permeq \quad
\begin{tikzpicture}[scale=0.5,baseline=($(tl)!0.5!(lr)$)]
\path coordinate (tl) 
+(5,0) coordinate (tr)
+(0,-6) coordinate (lf)
+(1.75,-8) coordinate (bf)
+(2.5,0) coordinate (tg)
+(2.5,-8) coordinate (bg)
+(3.25,-8) coordinate (bh)
+(0,-8) coordinate (bl)
+(5,-8) coordinate (lr)
+(5,-2) coordinate (rh);
\draw[oedge,arrf] (bf) to[out=90, in=0] (lf);
\draw[oedge,arrh] (bh) to[out=90, in=180] (rh);
\draw[oedge,arrg] (bg) to (tg);
\begin{scope}[on background layer]
\fill[obd] (tg) rectangle (lr);
\fill[obb] (tl) rectangle (bg);
\fill[oba] (bl) to (bf) to[out=90,in=0] (lf) --
cycle;
\fill[obc] (bg) to (tg) to (tr) to (rh) to[out=180,in=90] (lr) -- cycle;
\fill[obd] (bh) to[out=90,in=190] (rh) to (lr) --
cycle;
\end{scope}
\end{tikzpicture}
\qquad\qquad
\begin{tikzpicture}[scale=0.5,baseline=($(tl)!0.5!(lr)$)]
\path coordinate (tl) 
+(0,-2) coordinate (lf)
+(1.75,0) coordinate (tf)
+(2.5,0) coordinate (tg)
+(2.5,-8) coordinate (bg)
+(3.25,-8) coordinate (bh)
+(5,-8) coordinate (lr)
+(5,-6) coordinate (rh);
\draw[oedge,arrf] (lf) to[out=0, in=-90] (tf);
\draw[oedge,arrh] (bh) to[out=90, in=180] (rh);
\draw[oedge,arrg] (bg) to (tg);
\begin{scope}[on background layer]
\fill[obc] (tl) rectangle (lr);
\fill[obb] (tl) rectangle (bg);
\fill[oba] (tl) to (lf) to[out=0,in=-90] (tf) --
cycle;
\fill[obd] (bh) to[out=80,in=190] (rh) to (lr) --
cycle;
\end{scope}
\end{tikzpicture}
\quad \permeq \quad
\begin{tikzpicture}[scale=0.5,baseline=($(tl)!0.5!(lr)$)]
\path coordinate (tl) 
+(0,-6) coordinate (lf)
+(1.75,0) coordinate (tf)
+(2.5,0) coordinate (tg)
+(2.5,-8) coordinate (bg)
+(3.25,-8) coordinate (bh)
+(5,-8) coordinate (lr)
+(5,-2) coordinate (rh);
\draw[oedge,arrf] (lf) to[out=0, in=-90] (tf);
\draw[oedge,arrh] (bh) to[out=90, in=180] (rh);
\draw[oedge,arrg] (bg) to (tg);
\begin{scope}[on background layer]
\fill[obc] (tl) rectangle (lr);
\fill[obb] (tl) rectangle (bg);
\fill[oba] (tl) to (lf) to[out=0,in=-90] (tf) --
cycle;
\fill[obd] (bh) to[out=90,in=190] (rh) to (lr) --
cycle;
\end{scope}
\end{tikzpicture}
\end{equation*}
The equations defining cut-elimination (Definition~\ref{defi:cut}) also have natural topological interpretations.
Below we illustrate one principal cut case and one commutative cut case:
\begin{mathpar}
\begin{tikzpicture}[scale=0.38,baseline=($(tl)!0.5!(lr)$)]
\path coordinate (tl) 
+(1.5,0) coordinate (tfp)
+(.75,-2.5) coordinate (Ap)
+(4,-2) coordinate (rf)
+(2.25,-2.5) coordinate (A)
+(1.5,-4) coordinate (bfp)
+(3.5,-3.5) coordinate (C)
+(2.5,-4) coordinate (bf)
+(4,-4) coordinate (lr);
\draw[oedge,arrfp] (bfp) to (tfp);
\draw[oedge,arrf] (bf) to[out=90, in=180] (rf);
\begin{scope}[on background layer]
\fill[oba] (tl) rectangle (lr);
\fill[obap] (tl) rectangle (bfp);
\fill[obb] (bf) to[out=90,in=180] (rf) to (lr) --
cycle;
\end{scope}
\end{tikzpicture}
  \ \hcomp\ 
\begin{tikzpicture}[scale=0.38,baseline=($(tl)!0.5!(lr)$)]
\path coordinate (tl) 
+(0,-2) coordinate (lf)
+(1.75,0) coordinate (tf)
+(2.5,0) coordinate (tg)
+(3.5,-2.5) coordinate (C)
+(2.5,-4) coordinate (bg)
+(4,-4) coordinate (lr);
\draw[oedge,arrf] (lf) to[out=0, in=-90] (tf);
\draw[oedge,arrg] (bg) to (tg);
\begin{scope}[on background layer]
\fill[obc] (tl) rectangle (lr);
\fill[obb] (tl) rectangle (bg);
\fill[oba] (tl) to (lf) to[out=0,in=-90] (tf) --
cycle;
\end{scope}
\end{tikzpicture}
\ = \
\begin{tikzpicture}[scale=0.38,baseline=($(tl)!0.5!(lr)$)]
\path coordinate (tl)
+(1,0) coordinate (tf)
+(2,0) coordinate (tg)
+(3,0) coordinate (th)
+(1,-4) coordinate (bf)
+(2,-4) coordinate (bg)
+(3,-4) coordinate (bh)
+(4,-4) coordinate (lr);
\draw[oedge,arrfp] (bf) to (tf);
\draw[oedge,arrf] (bg) to (tg);
\draw[oedge,arrg] (bh) to (th);
\begin{scope}[on background layer]
\fill[obap] (tl) rectangle (lr);
\fill[oba] (tf) rectangle (lr);
\fill[obb] (tg) rectangle (lr);
\fill[obc] (th) rectangle (lr);
\end{scope}
\end{tikzpicture}

\begin{tikzpicture}[scale=0.38,baseline=($(lf)!0.5!(lr)$)]
\path coordinate (tl) 
+(0,-2) coordinate (lf)
+(2,0) coordinate (tf)
+(4,-2) coordinate (rh)
+(2,-2) coordinate (middle);
\path (4,-4) coordinate (tlx) 
+(0,-2) coordinate (llx) 
+(-4,-0) coordinate (rgx)
+(-1.25,-2) coordinate (bgpx)
+(-2.5,-2) coordinate (bgx)
+(-4,-2) coordinate (lrx);
\draw[oedge,arrh] (bgpx) to[out=90,in=-90] (middle.center);
\draw[oedge,arrg] (bgx) to[out=90, in=0] (rgx);
\begin{scope}[on background layer]
\fill[obb] (bgx) to[out=90,in=0] (rgx) to (lrx) --
cycle;
\fill[obc] (bgpx) to[out=90,in=-90] (middle.center) to (lf) to (rgx) to[out=0,in=90] (bgx) -- cycle;
\fill[obd] (llx) to (bgpx) to[out=90,in=-90] (middle.center) to (rh) -- cycle;
\end{scope}
\node (cut) at (4.75,-4) {$\hcomp$};
\path (5.5,0) coordinate (tl) 
+(0,-6) coordinate (bl)
+(0,-2) coordinate (lf)
+(.5,-1.5) coordinate (A)
+(4,-2) coordinate (rh)
+(0.75,-3.5) coordinate (B)
+(2,-2) coordinate (middle)
+(3.5,-2.5) coordinate (C)
+(2,-6) coordinate (bg)
+(4,-6) coordinate (lr);
\draw[oedge,arrf] (bg) to (middle.center);
\begin{scope}[on background layer]
\fill[obep] (lf) rectangle (lr);
\fill[obd] (bl) to (lf) to (middle.center) to (bg) -- cycle;
\end{scope}
\end{tikzpicture}
\ = \
\begin{tikzpicture}[scale=0.38,baseline=($(tl)!0.5!(llrb)$)]
\path coordinate (x)
+(0,-2) coordinate (tl) 
+(0,-4) coordinate (bl)
+(0,-2) coordinate (lf)
+(.5,-1.5) coordinate (A)
+(4,-2) coordinate (rh)
+(2,-2) coordinate (middle);
\path (4,-4) coordinate (tlx) 
+(0,-2) coordinate (llx) 
+(-4,0) coordinate (rgx)
+(-4,-2) coordinate (lcc)
+(-0.75,-2) coordinate (bgpx)
+(-2.5,-2) coordinate (bgx)
+(-4,-4) coordinate (lrx);
\draw[oedge,arrh] (bgpx) to[out=90,in=-90] (middle);
\draw[oedge,arrg] (bgx) to[out=90, in=0] (rgx);
\begin{scope}[on background layer]
\fill[obb] (bgx) to[out=90,in=0] (rgx) to (lcc) --
cycle;
\end{scope}
\path (3,0) coordinate (tlb) 
+(0,-2) coordinate (lfb)
+(.5,-1.5) coordinate (Ab)
+(1.5,-2) coordinate (middleb)
+(2,-2) coordinate (bgb)
+(4,-2) coordinate (lrb)
+(4,-6) coordinate (llrb)
+(2,-6) coordinate (bbgb)
+(0,-6) coordinate (lllb);
\draw[oedge,arrf] (bbgb) to [out=90,in=-90] (middleb.center);
\begin{scope}[on background layer]
\fill[obep] (lfb) rectangle (llrb);
\fill[obd] (bbgb) to (middleb.center) to (middle.center) to [out=-90,in=90] (bgpx) -- cycle;
\fill[obep] (bbgb.center) to (llrb) to (lrb) to (middleb.center) to [out=-90,in=90] (bbgb) -- cycle;
\fill[obc] (bgpx) to[out=90,in=-90] (middle) to (lf) to (rgx) to[out=0,in=90] (bgx) -- cycle;
\end{scope}
\end{tikzpicture}
\end{mathpar}
We recover in this way the calculus of planar arc diagrams that was used by Dawson, Paré, and Pronk~\cite[\S6]{DPP03} to represent 2-cells in $\Pi_2(\C)$ for the special case that $\C$ is a free category.
More generally, when $\C$ is not free we also need to include vertices representing equations in the base category.
For example, an equation $h = fg$ that is applied implicitly within a sequent proof, as depicted on the left side of Figure~\ref{fig:example-functor-and-derivation}, may be represented by a trivalent vertex in the corresponding string diagrammic proof, as depicted on the right side of that figure.
As also depicted there, we also need to include terminating vertices corresponding to the initial axioms in $\D$.

The graphical calculus that we thus obtain is comparable to the string diagram calculus for proarrow equipments of David Jaz Myers~\cite{DJMyers2018}.

Given the isomorphisms between the three different ways of representing arrows of free bifibrations---as sequent calculus proofs, as stacks of double cells, or as string diagrams---in the sequel we will freely interweave them without comment.

% \subsection{Left action of $\ZZ(\D)$}

% [N: starting to sketch some idea but not sure if it is relevant]

% Define $\lhd : \Z(\D) \pullback{\tgt}{\chi_p} \Bifib{p}  \to \Bifib{p}$:
% \begin{mathpar}
%   \pushpull{\delta} S \lhd (\atom X \oast T) = S \lhd (\atom Y \oast \pullpush{g} \oast T)

%   \atom X \lhd T = T
% \end{mathpar}

% As particular case get $\lhd : \Z(\C) \pullback{\tgt}{\src} \Z(\C) \to \Z(\C)$.

% \subsection{Other properties}

% Describe cartesian monoidal structure on $\Cat^\to \to \Cat$ and duality $(-)^\op$, which restrict to $\BifCat^\to \to \Cat$ (writing $\BifCat^\to$ for subcategory of $\Cat^\to$ whose objects are bifibrations and whose arrows are morphisms of bifibrations).

% The free / forgetful adjunction preserves this structure, in the sense that:

% \[\ZZ(\C\times \D) \cong \ZZ(\C) \times \ZZ(\D)
%   \qquad
%  \ZZ(\C^\op) \cong \ZZ(\C)^\op
% \]

\subsection{Related work}

Since $\Bifib{p}$ and $\Pi_2(\C)$ are determined by their universal properties, a detailed comparison with the constructions of Dawson, Paré, and Pronk~(DPP)~\cite{DPP03} and of Lamarche~\cite{Lam13} feels a bit superfluous, and the reader interested in the details of those constructions is referred to those papers.
Still, a few points are worth highlighting.
Zigzags and morphisms between them are essential in all three constructions, but whereas Lamarche (like us) considers zigzags as not necessarily alternating sequences of signed arrows, DPP restrict to strictly alternating zigzags beginning and ending with a forward arrow.
This is motivated by the fact that they wanted the embedding of $\C$ into $\Pi_2(\C)$ to define a strict functor $\push{(-)} : \C \to \Pi_2(\C)$ rather than a pseudofunctor.
Although the free bifibration is not split and hence $\push{(-)} : \C \to \vcat{\ZZ(\C)}$ defines a proper pseudofunctor, we will also have motivations to consider strictly alternating zigzags beginning in Section~\ref{sec:focusing:weak-focusing}.

All three constructions likewise involve first introducing some notion of ``pre-morphism'' between zigzags and then defining zigzag morphisms as equivalence classes of pre-morphisms modulo an equivalence relation.
Lamarche notes that ``these classes can be quite complex, and there is no hope of finding normal forms in general'' \cite[p.16]{Lam13}.
In the next section we will establish that it is possible to obtain normal forms in the case that the base category is \emph{factorization preordered,} a condition that was also identified by DPP as a sufficient condition for decidability.

Interestingly, categories and double categories of zigzags have also been introduced in other contexts.
A category of zigzags and zigzag morphisms was defined by Heidemann, Reutter, and Vicary \cite{Heidemann+2022} motivated by the formalization of higher-dimensional diagrams and $n$-categories.
Their definition of zigzag map \cite[Def.~2.2]{Heidemann+2022} is nearly identical to DPP's definition of \emph{fence} \cite[Def.~1.2]{DPP03}, without condition (c) which enforces globularity.
However, Heidemann, Reutter, and Vicary do not quotient zigzag maps by permutation equivalence.
Hence their zigzag category is \emph{not} equivalent to $\Z(\C)$, but rather to Lamarche's category $\mathrm{P\!\!e}(\C)$, from which $\mathrm{P}(\C) \simeq \Z(\C)$ is obtained as the quotient by permutation equivalence.
Our zigzag double category $\ZZ(\C)$ is equivalent to the ``weave double category'' by Williams~\cite[Def.~22]{Williams2023phd}, as follows from Williams' definition as a fibered coproduct of arrow and oparrow double categories, which freely add companions and conjoints respectively.
%the universal properties of the two constructions~\cite[Prop.~24]{Williams2023phd}. %, and the definition of the weave double category as a fibered coproduct 
As mentioned, Lamarche first identified the double category structure on $\mathrm{P}(\C) \simeq \Z(\C)$, although it was not emphasized in his papers \cite{Lam13,Lam14}.\footnote{
  According to personal communication from François~Lamarche (November~2025), 
  he noticed the close connection between this double category and the 2-category $\Pi_2(\C)$, discussing it in October~2013 by email with Paré, Dawson, and Pronk.
  Although the originally submitted version of the manuscript~\cite{Lam13} did not mention DPP's work, Lamarche says that this was an oversight and that he had originally intended to include a comparison.
  The manuscript~\cite{Lam13} was submitted for publication but unfortunately ran into some negative referee reports and never made it to print.}
An alternative double categorical perspective on the $\Pi_2(\C)$ construction was also developed by DPP at the end of their paper on the Span construction~\cite{DPP2010Span}.

Perhaps one of the most interesting open questions in double category theory is an explicit construction that freely adds companions and conjoints to a double category $\DD$. For each category $\C$, there is a double category $\mathbb{H}(\C)$ whose objects and tight morphisms are the objects and morphism of $\C$, and whose loose morphisms and cells are identities.
In Theorem~\ref{thm:free-fibrant-double-category}, we established $\Z(\C)$ as the free companion and conjoint completion of the double category $\D = \mathbb{H}(\C)$.
The free cornering of a strict monoidal category, introduced by Nester \cite{Nester2021} (see also Section 1 of \cite{BoisseauNesterRoman2022}) is another example of a free companion and conjoint completion, in this case, of a strict monoidal category viewed as a double category.
Coulembier, Street, and van den Bergh likewise considered the closely related problem of freely adjoining duals to specified objects of a monoidal category~\cite{coulembier2021freely}

%----------------------------------------------------------------------%
\section{Focused normal forms}
\label{sec:focusing}
%----------------------------------------------------------------------%

In Section~\ref{sec:sequent-calculus}, we described how to construct $\Bifib{p}$, the total category of the free bifibration on a functor $p : \D \to \C$, by taking its arrows to be equivalence classes of derivations in a sequent calculus modulo a notion of permutation equivalence, with composition defined by the admissible cut rule.
In this section we will use additional ideas from proof theory---in particular the notion of \emph{focusing} originating in the proof theory of linear logic---to get a more precise characterization of the relative homsets $\Bifib{p}_f(S,T)$, via a series of sequent calculi with progressively more rigid structure.
In the case that $\C$ is \emph{factorization preordered} in the sense of Johnstone~\cite{Johnstone1999}, we derive a normal form theorem that leads to a complete inductive characterization of these relative homsets, without any quotienting.
As a corollary, we obtain a simple procedure for deciding permutation equivalence of derivations, as well as for enumerating  relative homsets without duplicates.

Lurking in the background is the fact that in the general case, permutation equivalence is undecidable.
Dawson, Paré, and Pronk~\cite{DPP03b} established an analogue of this undecidability result for their freely adjoining adjoints construction $\Pi_2(\C)$, and their proof translates readily to our presentation of the zigzag double category $\ZZ(\C)$.
Nevertheless, we will see in Sections~\ref{sec:PN-fibrations} and ~\ref{sec:examples} that the canonicity result has wide applications.

\subsection{Background: invertibility, polarity, and synthetic connectives}
\label{sec:focusing:background}

Focusing, introduced by Andreoli~\cite{Andreoli1992} as an optimization for linear logic proof search and subsequently applied in other settings, combines several observations about the rules of sequent calculus and the way that they determine the behavior of the connectives.
To provide some context and motivation for our approach, we take a moment here to briefly recall these observations in the setting of linear logic \cite{Girard1987}, before explaining how to transfer these ideas to analyze free bifibrations.

We consider a fragment of the sequent calculus for intuitionistic linear logic, containing the inference rules for tensor $\otimes$ (also called \emph{multiplicative conjunction}) and internal hom $\lolli$ (also called \emph{linear implication}):
\begin{mathpar}
\inferrule*[Right={$\Lotimes$}]{A,B,\Gamma \vdash C}{A\otimes B,\Gamma\vdash C}

\inferrule*[Right={$\Rotimes$}]{\Gamma \vdash A \\ \Delta \vdash B}{\Gamma,\Delta\vdash A\otimes B}

\inferrule*[Right={$\Llolli$}]{\Gamma \vdash A \\ B,\Delta\vdash C}{A\lolli B,\Gamma,\Delta \vdash C}

\inferrule*[Right={$\Rlolli$}]{\Gamma,A \vdash B}{\Gamma\vdash A\lolli B}
%\end{mathpar}
\end{mathpar}
Here the basic judgment $\Gamma \vdash B$ takes a single formula $B$ on the right and a list of formulas $\Gamma = A_1,\dots,A_n$ on the left considered up to reordering, and we write a comma for concatenation of lists.
The above inference rules are combined with initial axioms of the form $X \vdash X$ for every atomic formula $X$.
Identity derivations for arbitrary formulas as well as the cut rule 
\begin{mathpar}
\inferrule*[Right={$\Id[A]$}]
        { }
        {A \vdash A}

\inferrule*[Right={$\cut$}]
        {\Gamma \vdash A \\ A,\Delta \vdash B}
        {\Gamma,\Delta \vdash B}
\end{mathpar}
are admissible,
similarly to how we saw in Section~\ref{sec:sequent-calculus}.

A basic observation about the inference rules for the logical connectives is that the rules $\Lotimes$ and $\Rlolli$ are \emph{invertible} in the proof-theoretic sense, meaning that if the conclusion is derivable then the premise is derivable.
Indeed, we can derive inverses of $\Lotimes$ and $\Rlolli$ using the admissible cut and identity rules, combined with instances of $\Rotimes$ and $\Llolli$:

\begin{mathpar}
\inferrule*{\inferrule*{\inferrule*{ }{A \vdash A} \\ \inferrule*{ }{B \vdash B} }{A,B \vdash A\otimes B} \\
A\otimes B,\Gamma\vdash C}
{A,B,\Gamma \vdash C}

\inferrule*{\Gamma\vdash A\lolli B \\ \inferrule*{\inferrule*{ }{A \vdash A} \\ \inferrule*{ }{B \vdash B} }{A\lolli B,A \vdash B}}
{\Gamma,A \vdash B}
\end{mathpar}
One interest of invertible inference rules is that they may be safely applied as \emph{goal transformers} during proof search.
For example, if our goal is to prove a sequent of the form $A\otimes B,\Gamma \vdash C$, we can immediately apply the $\Lotimes$ rule and try to prove $A, B, \Gamma \vdash C$, knowing that if there is any proof of the original goal then there is one that goes via $\Lotimes$.
Moreover, if a goal matches the conclusion of more than one invertible rule, then the order in which these rules are invoked is irrelevant.
For example, a goal of the form $A \otimes B,\Gamma \vdash C \lolli D$ can be transformed into the equivalent goal $A,B,\Gamma,C \vdash D$ by applying the $\Lotimes$ and $\Rlolli$ rules in either order.

On the other hand, the rules $\Rotimes$ and $\Llolli$ are not invertible in this sense.
Intuitively, this is due to the fact that list concatenation is not an invertible operation, meaning that passing from the conclusion to the premises requires finding an appropriate splitting of the formulas on the left-hand side of the turnstile.
As a simple counterexample to invertibility of $\Rotimes$, consider the sequent $X\otimes Y \vdash X\otimes Y$. It is derivable, but attempting to prove it using the rule $\Rotimes$ produces two unprovable premises---no matter how we decide to distribute the unique formula on the left of the sequent:
\begin{mathpar}
  \infer
  {X \otimes Y \vdash X \otimes Y}
  {X \otimes Y \vdash X \\ \cdot \vdash Y}

  \infer
  {X \otimes Y \vdash X \otimes Y}
  {\cdot \vdash X \\ X \otimes Y \vdash Y}
\end{mathpar}
%Hence $\Rotimes$ is not invertible. And a similar counterexample can be constructed for $\Llolli$.

% Thus, for example, if your goal is to prove $A\otimes B,\Delta \vdash C$, you may as well apply the $\Lotimes$ rule to transform your goal to $A,B,\Delta \vdash C$.
% But if your goal is to prove a sequent with $A\otimes B$ on the right, you
%  and similarly, if your goal is to prove $\Delta \vdash A \lolli B$ you can eagerly apply $\Rlolli$ and try to prove $A,\Delta \vdash B$.

Focusing originally arose from the idea that proof search could be structured to alternate between an \emph{inversion phase,} during which invertible rules are applied eagerly as goal transformers, and a \emph{focus phase,} during which non-invertible rules are applied to decompose a single formula and its subformulas.
It was found that this greatly reduces the overall non-determinism of the sequent calculus, leading to more efficient proof search.
Moreover, an interesting duality emerges, with half of the connectives being invertible on the \emph{left} and half being invertible on the \emph{right.}

This left or right bias was referred to as ``polarity''---with $\otimes$ being a \emph{positive} connective and $\lolli$ a \emph{negative} connective---and helped to explain some of the distinctive associativity, distributivity, and non-distributivity properties of linear logic.
For example,
\if0
consider the rules for the additive connectives:
\begin{mathpar}
\inferrule*[Right={$\Lwith_1$}]{\Gamma,A\vdash C}{\Gamma,A\& B \vdash C}

\inferrule*[Right={$\Lwith_2$}]{\Gamma,B\vdash C}{\Gamma,A\& B \vdash C}

\inferrule*[Right={$\Rwith$}]{\Gamma \vdash A \\ \Gamma \vdash B}{\Gamma\vdash A\& B}

\inferrule*[Right={$\Lplus$}]{\Gamma,A\vdash C \\ \Gamma,B\vdash C}{\Gamma,A\oplus B \vdash C}

\inferrule*[Right={$\Rplus_1$}]{\Gamma \vdash A}{\Gamma\vdash A\& B}

\inferrule*[Right={$\Rplus_2$}]{\Gamma \vdash B}{\Gamma\vdash A\& B}
\end{mathpar}
\fi
like tensor, additive disjunction $\oplus$ is also invertible on the left, and distributes through $\otimes$ by the equivalence $A\otimes (B\oplus C) \equiv (A \otimes B)\oplus (A\otimes C)$.
Similarly, additive conjunction $\with$ is invertible on the right, like internal hom, and distributes through the codomain of $\lolli$ by the equivalence $A \lolli (B\with C) \equiv (A \lolli B)\with (A \lolli C)$.

Related to these properties is the possibility of considering certain compositions of the logical connectives as ``synthetic connectives'' \cite{Andreoli2001}.
For example, consider the composite formula $A_1 \lolli (A_2 \lolli B)$.
%For example, consider the composite formula $(A_1 \otimes A_2) \lolli (A_3 \lolli B)$.
By applying the left rule twice and the right rule twice, we can build the following open derivations:

\begin{mathpar}
\inferrule*[Right={$\Llolli$}]{\Gamma_1 \vdash A_1 \\
 \inferrule*[Right={$\Llolli$}]{\Gamma_2 \vdash A_2 \\ B,\Delta \vdash C }{A_2 \lolli B, \Gamma_2, \Delta \vdash C}}{A_1 \lolli (A_2 \lolli B),\Gamma_1,\Gamma_2, \Delta \vdash C}

\inferrule*[Right={$\Rlolli$}]{
  \inferrule*[Right={$\Rlolli$}]{
     \Gamma,A_1, A_2 \vdash B
  }{
    \Gamma,A_1  \vdash A_2\lolli B
  }
}{
  \Gamma \vdash A_1 \lolli (A_2 \lolli B)
}
\end{mathpar}
% \begin{mathpar}
% \inferrule*[Right={$\Llolli$}]{\inferrule*[Right={$\Rotimes$}]{\Gamma_1 \vdash A_1 \\ \Gamma_2 \vdash A_2}
%   {\Gamma_1, \Gamma_2 \vdash A_1 \otimes A_2} \\
%  \inferrule*[Right={$\Llolli$}]{\Gamma_3 \vdash A_3 \\ B,\Delta \vdash C }{A_3 \lolli B, \Gamma_3, \Delta \vdash C}}{(A_1 \otimes A_2) \lolli (A_3 \lolli B),\Gamma_1,\Gamma_2,\Gamma_3, \Delta \vdash C}
%
% \inferrule*[Right={$\Rlolli$}]{
%   \inferrule*[Right={$\Lotimes$}]{
%     \inferrule*[Right={$\Rlolli$}]{
%       \Gamma,A_1, A_2, A_3 \vdash B
%     }{\Gamma, A_1, A_2 \vdash A_3 \lolli B}
%   }{
%     \Gamma,A_1 \otimes A_2 \vdash A_3\lolli B
%   }
% }{
%   \Gamma \vdash (A_1 \otimes A_2) \lolli (A_3 \lolli B)
% }
% \end{mathpar}
If we remove the intermediate steps, then these derivations could be seen as inference rules for a new ternary connective $(A_1,A_2) \lolli' B$:
\begin{mathpar}
\inferrule*[Right={$\Llolli'$}]{
  \Gamma_1 \vdash A_1 \\
  \Gamma_2 \vdash A_2 \\
  B,\Delta \vdash C}{(A_1, A_2) \lolli' B,\Gamma_1,\Gamma_2,\Delta \vdash C}

\inferrule*[Right={$\Rlolli'$}]{\Gamma, A_1, A_2 \vdash B}{\Gamma \vdash (A_1, A_2) \lolli' B}
\end{mathpar}
%
% \begin{mathpar}
% \inferrule*[Right={$\Llolli^3$}]{
%   \Gamma_1 \vdash A_1 \\
%   \Gamma_2 \vdash A_2 \\
%   \Gamma_3 \vdash A_3 \\
%   B,\Delta \vdash C}{(A_1, A_2, A_3) \lolli^3 B,\Gamma_1,\Gamma_2,\Gamma_3,\Delta \vdash C}
%
% \inferrule*[Right={$\Rlolli^3$}]{\Gamma, A_1, A_2, A_3 \vdash B}{\Gamma \vdash (A_1, A_2, A_3) \lolli^3 B}
% \end{mathpar}
%
Adding this new connective and these inference rules preserves all the important properties of the sequent calculus, in particular admissibility of identity and cut.
%
% Notice that in the composite derivation which the rule $\Rlolli^3$ abbreviates, we could have permuted the $\Lotimes$ rule up to the top of the derivation, with the top $\Rlolli$ now below it. We consider that the ordering of invertible rules, here $\Rlolli$ and $\Lotimes$, is irrelevant, and typically consider derivations up to an equivalence relation that allows to permute them, so that the synthetic rule is determined by the formula alone.
%
On the other hand, not every combination of basic connectives can be viewed as a first-class, synthetic connective.
For example, there is no good way to define a right rule for the composite connective $A \lolli (B_1 \otimes B_2)$, intuitively because after applying the invertible rule $\Rlolli$ to decompose the outer implication, we may need to examine the formula $A$ on the left before applying the non-invertible rule $\Rotimes$ to decompose the tensor.

\subsection{Invertibility and polarity in the bifibrational calculus}
\label{sec:focusing:invertibility-bifibrations}
        
Returning to bifibrational logic, we begin by observing that pushforward behaves like a positive connective and pullback like a negative connective, since $\push{f}$ has an invertible left rule and non-invertible right rule, while $\pull{g}$ has an invertible right rule and non-invertible left rule.
We already established the invertibility of $\Lpush$ and $\Rpull[g]$ with Lemma~\ref{lem:eta-expansion-pushpull}, in the strong sense that they define bijections between proofs of their premises and proofs of their conclusions up to permutation equivalence.
Indeed, for any derivation $\alpha : \push{f} S \vdashf{g} T$ there is a permutation equivalent derivation ending in the $\Lpush$ rule, namely the one represented by the term $\uLpush f g (\opcart f S \hcomp \alpha)$, which is $\Lpush$ applied to the derivation of $S \vdashf{fg} T$ obtained by cutting $\alpha$ with the cartesian lifting $\opcart f S : S \vdashf{f} \push{f} S$.
Dually, for any derivation $\beta : S \vdashf{f} \pull{g} T$ there is a permutation equivalent derivation ending in the $\Rpull[g]$ rule, namely $(\beta \hcomp \cart g T) \uRpull f g$.

To see why the $\Rpush$ rule is \emph{not} invertible in general, consider the free bifibration on the functor depicted below
\[
  \begin{tikzcd}
\D\ar[d,"p"] &    X \ar[r,"\delta",bend left] & Y &  \\
\C &    E \ar[r,"e_1",bend left]\ar[r,"e_2"',bend right] & A \ar[r,"f"] & B
  \end{tikzcd}
\]
where the base category $\C$ contains two parallel arrows $e_1,e_2$ and a third arrow $f$ such that $e_1 f = e_2 f = h$, and where $\D$ is generated by a single arrow $\delta$ such that $p(\delta) = e_1$.
Observe that the sequent $X \vdashf{h} \push{f} Y$ is derivable by applying $\Rpush$ to the atomic derivation $\atom{\delta}:X \vdashf{e_1} Y$.
Since $h = e_2 f$ is another valid factorization, if the $\Rpush$ rule were invertible then there should also be some derivation of $X \vdashf{e_2} Y$.
But by conservativity (Prop.~\ref{prop:conservativity}) we can immediately tell that there isn't, contradicting invertibility of $\Rpush$.
Dualizing the example establishes non-invertibility of the $\Lpull[g]$ rule.
Of course these counterexamples do not preclude the possibility of $\Rpush[f]$ and $\Lpull[g]$ being invertible in specific cases---for example, they will be when $\C$ is a groupoid.
Nevertheless, the behavior of the connectives in the general case justifies our adopting the polarity terminology to bifibrational formulas, and from now on we will refer to pushforwards as positive and pullbacks as negative, with atomic formulas having neutral polarity.

Continuing the analogy with linear logic, compositions of like-polarity connectives may be grouped into synthetic connectives, in the sense that $\push g \push f$ can be represented by $\push{(g\circ f)}$ and $\pull f \pull g$ can be represented by $\pull {(f \; g)}$, corresponding to the pseudofunctoriality laws of a bifibration.
We already explained how these laws are witnessed by logical equivalences in the bifibrational calculus (Prop.~\ref{prop:pseudofunctoriality}).
Here we simply point out that, similarly to the linear logic situation discussed in Section~\ref{sec:focusing:background}, the inference rules for the synthetic connectives are operationally equivalent to compositions of basic inference rules, in the sense that, for example, by composing the left rules and right rules for pushforward along $f$ and $g$,
\begin{mathpar} 
  \inferrule* [Right={$\Lpush[g]$}]
        {\inferrule* [Right={$\Lpush[f]$}]{S \vdashf{fgh} T}{\push{f} S\vdashf{gh} T}}
        {\push{g}\push{f} S \vdashf{h} T}

  \inferrule* [Right={$\Rpush[g]$}]
        {\inferrule* [Right={$\Rpush[f]$}]{S' \vdashf{f'} S}{S' \vdashf{f'f} \push{f} S}}
        {S' \vdashf{f'fg} \push{g}\push{f} S}
\end{mathpar}
we obtain derivations with identical premises to the rules for the pushforward along $g\circ f$:
\begin{mathpar} 
  \inferrule* [Right={$\Lpush[(g\circ f)]$}]
        {S \vdashf{fgh} T}
        {\push{(g \circ f)} S \vdashf{h} T}

  \inferrule* [Right={$\Rpush[(g\circ f)]$}]
        {S' \vdashf{f'} S}
        {S' \vdashf{f'fg} \push{(g\circ f)} S}
\end{mathpar}
A difference with the linear logic situation is that such ``synthetic'' connectives are already contained in the logic, since we assume pushforward and pullback along arbitrary arrows, and the arrows of a category are closed under composition.
On the other hand, in general it is not possible to reduce opposite-polarity composites like $\push{f}\pull{g}$ or $\pull{g}\push{f}$ to a single pushforward or pullback, just like alternating quantifiers $\exists\forall$ or $\forall\exists$ in predicate logic cannot be reduced to a single quantifier in general.

\subsection{Strictly alternating formulas, splittings, and weak focusing}
\label{sec:focusing:weak-focusing}

Motivated by the preceding observations, in this section we introduce a restriction of the sequent calculus in which sequences of invertible rules and sequences of non-invertible rules must be performed all at once, on the left or right, but without (yet) placing any restrictions on their interleavings.
This calculus may be said to be ``weakly focused'', % (cf.~\citet[p.33--34]{Simmons2014} for a discussion),
and will serve as an intermediate point before our introduction of a strongly focused calculus in the next section.

\subsubsection{Strictly alternating formulas}

Before defining the calculus itself, we motivate it logically and categorically by observing that every bifibrational formula is canonically isomorphic to a \emph{strictly alternating formula.}
\begin{defi}    
A bifibrational formula is \defin{strictly alternating} if it contains no subformula of the form $\push{g}\push{f} S$ or $\pull{f}\pull{g} T$.
\end{defi}
\noindent
It is easy to see that by repeatedly applying the equivalences $\push{g}\push{f} S \equiv \push{(g\circ f)} S$ and $\pull{f}\pull{g} T \equiv \pull{(f\;g)}T$ (Prop.~\ref{prop:pseudofunctoriality}), any formula $S$ may be normalized to a logically equivalent strictly alternating formula $\collapse S$.
Let us write $\theta_S : S \equiv \collapse{S}$ for this family of equivalences, and define a function on derivations
\[
  \begin{array}{lcccc}
  \collapse - & : & (S \vdashf{f} T) & \to &  (\collapse S \vdashf{f} \collapse T) \\
  \collapse - & \defeq & \alpha & \mapsto & \theta_S^{-1} \hcomp \alpha \hcomp \theta_T \\
  \end{array}
\]
which we call the \emph{strictification} map.
Observe that by the subformula property (Prop.~\ref{prop:subformula-property}), the strictification map transforms any derivation into a derivation involving only strictly alternating formulas, since every subformula of a strictly alternating formula is strictly alternating.

Now let $\saBifib{p}$ denote the full subcategory of $\Bifib{p}$ spanned by the strictly alternating formulas, let $i : \saBifib{p} \to \Bifib{p}$ be the full and faithful inclusion functor, and let us write $\BifibFun{p}|_{sa} = \BifibFun{p} \circ i : \saBifib{p} \to \C$ for the restriction of the domain of the free bifibration to strictly alternating formulas.

\begin{thm}\label{thm:strictify-equivalence}
Strictification extends to a functor $\collapse{-} : \Bifib{p} \to \saBifib{p}$ realizing an equivalence of categories over $\C$:
\[
\begin{tikzcd}
\Bifib{p}\ar[ddr,"\BifibFun{p}"']
\ar[rr,bend left,"\collapse{-}"{name=L}] && \saBifib{p} \ar[ldd,"\BifibFun{p}|_{sa}"] 
\ar[ll,bend left,"i"'{name=R}]
\ar[phantom,from = L, to = R,"\sim"] \\\\
&\C&
\end{tikzcd}
\]
\end{thm}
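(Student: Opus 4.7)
The plan is to verify three things: that $\collapse{-}$ is a functor, that $\collapse{-} \circ i$ equals the identity on $\saBifib{p}$, and that $i \circ \collapse{-}$ is naturally isomorphic to the identity on $\Bifib{p}$, with all of this compatible with the projections to $\C$.

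First I would fix an inductive definition of $\collapse{S}$ on formulas: atoms are unchanged; $\collapse{\push{f} S}$ is $\push{(f \circ g)} S'$ when $\collapse{S} = \push{g} S'$, and otherwise $\push{f}\collapse{S}$; with dual clauses for pullback. By induction every $\collapse{S}$ is strictly alternating, and $\collapse{S} = S$ whenever $S$ is already strictly alternating, so $\collapse{-}$ is idempotent on objects. Correspondingly, I would define vertical isomorphisms $\theta_S : S \vdashf{\Id[A]} \collapse{S}$ inductively, using the pseudofunctoriality equivalences of Proposition~\ref{prop:pseudofunctoriality} to collapse consecutive like-polarity connectives, with $\theta_S \defeq \Id[S]$ in the base cases. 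This yields $\theta_S \hcomp \theta_S^{-1} \permeq \Id[S]$ and $\theta_S^{-1} \hcomp \theta_S \permeq \Id[\collapse{S}]$ in $\Bifib{p}$.

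Functoriality of $\collapse{-}$ on arrows, defined by $\collapse{\alpha} \defeq \theta_S^{-1} \hcomp \alpha \hcomp \theta_T$, then follows by purely formal manipulation: identity preservation from $\theta_S^{-1} \hcomp \theta_S \permeq \Id[\collapse{S}]$, and composition preservation from inserting $\theta_T \hcomp \theta_T^{-1} \permeq \Id[T]$ in the middle of $\collapse{\alpha \hcomp \beta}$. Since each $\theta_S$ lies over $\Id$ in $\C$, the projection $\BifibFun{p}|_{sa}(\collapse{\alpha})$ equals $\BifibFun{p}(\alpha)$, so $\collapse{-}$ commutes with the projections to $\C$. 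The composite $\collapse{-} \circ i$ is literally the identity on $\saBifib{p}$, because $\collapse{S} = S$ and $\theta_S = \Id[S]$ for strictly alternating $S$. For the other composite, the family $\theta = (\theta_S)_S$ provides a natural isomorphism between $\Id_{\Bifib{p}}$ and $i \circ \collapse{-}$: the naturality equation $\alpha \hcomp \theta_T \permeq \theta_S \hcomp \collapse{\alpha}$ reduces, after unfolding $\collapse{\alpha}$, to the tautology $\alpha \hcomp \theta_T \permeq \alpha \hcomp \theta_T$.

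The main obstacle is simply the inductive construction of $\theta_S$, where one must verify carefully that iterating the pseudofunctoriality isomorphisms produces a well-defined invertible derivation with the expected target, handling separately the cases where the outer connective agrees or disagrees in polarity with the head connective of $\collapse{S}$. Once that is pinned down, everything else is formal manipulation using the axioms of a category equipped with chosen vertical isomorphisms, and the equivalence $\Bifib{p} \simeq \saBifib{p}$ over $\C$ follows.
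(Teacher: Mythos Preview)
Your proposal is correct and follows essentially the same approach as the paper: define $\collapse{-}$ on objects by collapsing adjacent like-polarity connectives, exhibit the vertical equivalences $\theta_S$ via pseudofunctoriality, define $\collapse{\alpha} = \theta_S^{-1}\hcomp\alpha\hcomp\theta_T$, and then verify functoriality, the equality $\collapse{-}\circ i = \Id$, and naturality of $\theta$ by the same formal cancellations you describe. The paper's proof is slightly more terse about the construction of $\theta_S$ (deferring explicit formulas to a later subsection), but the structure is identical.
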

\begin{proof}
It is immediate from the definition of the $\collapse{-}$ map on derivations that
\begin{enumerate}
\item it respects permutation equivalence (since, by Lemma~\ref{lem:cut-respects-permeq}, cut does so);
\item it satisfies the functor laws $\collapse{\alpha\hcomp \beta} \permeq \collapse{\alpha}\hcomp\collapse{\beta}$ and $\collapse{\Id[S]} \permeq \Id[\collapse{S}]$ (since the derivations $\theta_S$ and $\theta_S^{-1}$ cancel out);
\item it leaves the underlying objects and arrows in $\C$ unchanged;
\item it is invariant on derivations in the strictly alternating fragment.
\end{enumerate}
Hence strictification defines a functor $\collapse{-} : \Bifib{p} \to \saBifib{p}$ that commutes with the projection functors and such that $\collapse{-}\circ i = \Id[\saBifib{p}]$.
Finally, the family of logical equivalences $\theta_S : S \equiv \collapse{S}$ realize an isomorphism $i \circ \collapse{-} \cong \Id[\Bifib{p}]$ that is natural by construction.
Indeed, it suffices to show that the square
\[
\begin{tikzcd}  
S \arrow[r,"\alpha"]\arrow[d,"\theta_S"'] & T\arrow[d,"\theta_T"] \\
i\collapse{S} \arrow[r,"i\collapse{\alpha}"'] & i\collapse{T} 
\end{tikzcd}
\]
commutes for every arrow $\alpha : S \to T$ in $\Bifib{p}$, which is equivalent to asserting that every derivation $\alpha : S \vdashf{f} T$ may be factored up to permutation equivalence as 
\begin{equation}
\begin{tikzcd}  
S \arrow[r,Rightarrow,"f"',"\alpha"] & T 
\end{tikzcd}
\permeq
\begin{tikzcd}  
S \stackrel{\theta_S}{\equiv} \collapse{S} \arrow[r,Rightarrow,"f"',"\collapse{\alpha}"] & \collapse{T} \stackrel{\theta_T^{-1}}{\equiv} T        
\end{tikzcd}.
\end{equation}
This holds by definition of the strictification map $\collapse -$, as we have
\[
\theta_S \hcomp \collapse \alpha \hcomp \theta_T
\quad=\quad
\theta_S \hcomp (\theta_S^{-1} \hcomp \alpha \hcomp \theta_T^{-1}) \hcomp \theta_T
\quad\permeq\quad
\alpha
\]
\end{proof}
\begin{cor}       
The functor $\BifibFun{p}|_{sa} : \saBifib{p} \to \C$ is a bifibration.
\end{cor}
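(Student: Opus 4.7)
The plan is to transport the bifibrational structure on $\BifibFun{p} : \Bifib{p} \to \C$ (established in Lemma~\ref{lem:bifibfun-bifibration}) along the equivalence of categories over $\C$ given by Theorem~\ref{thm:strictify-equivalence}, relying on the general fact that being a (cloven) bifibration is invariant under equivalence over the base. Concretely, for each arrow $f : A \to B$ in $\C$ and each strictly alternating formula $S \refs A$, I would define the pushforward in $\saBifib{p}$ by $\push{f}^{sa} S \defeq \collapse{\push{f}S}$, with $+$-cartesian lifting $\opcart{f}{S} \hcomp \theta_{\push{f}S} : S \to \collapse{\push{f}S}$, which indeed lies over $f$ since $\theta_{\push{f}S}$ lies over the identity. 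Pullbacks are treated dually, with $\pull{g}^{sa} T \defeq \collapse{\pull{g}T}$.

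To check the universal property of this putative $+$-cartesian lifting, given $g : B \to C$ and $\alpha : S \vdashf{fg} T$ with $T$ strictly alternating, I would invoke the universal property of $\opcart{f}{S}$ in $\Bifib{p}$ to obtain a unique $\beta' : \push{f}S \vdashf{g} T$ with $\opcart{f}{S} \hcomp \beta' \permeq \alpha$, and then set $\beta \defeq \theta_{\push{f}S}^{-1} \hcomp \beta'$. Since both $\collapse{\push{f}S}$ and $T$ are strictly alternating and $i$ is full and faithful, $\beta$ genuinely lives in $\saBifib{p}$; a quick calculation using that $\theta_{\push{f}S}$ and $\theta_{\push{f}S}^{-1}$ are mutually inverse shows that $\beta$ satisfies the required equation, and uniqueness transfers directly from the uniqueness of $\beta'$.

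I do not anticipate any real obstacle: all of the work has effectively been packaged into Theorem~\ref{thm:strictify-equivalence}, and the argument is just the standard observation that a property defined by a universal lifting condition is preserved under equivalence over the base. The only thing to take a bit of care with is to check that the cartesian liftings one constructs in $\saBifib{p}$ do actually project down to the prescribed arrow of $\C$, which is immediate because the logical equivalences $\theta_{-}$ all live over identity arrows.
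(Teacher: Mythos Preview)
Your proposal is correct and matches the paper's implicit argument: the corollary is stated without proof as an immediate consequence of the equivalence over $\C$ from Theorem~\ref{thm:strictify-equivalence}, and the subsequent Remark~\ref{rem:not-quite-split-bifibration} spells out the resulting bifibrational structure by a case analysis on polarity that is exactly what your formula $\push{f}^{sa} S = \collapse{\push{f}S}$ computes. Your explicit verification of the universal property by transport along $\theta_{\push{f}S}$ is the standard way to make this precise and is entirely in line with how the paper intends the corollary to follow.
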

\begin{rem}\label{rem:not-quite-split-bifibration}
The bifibrational structure of $\BifibFun{p}|_{sa}$ may be described explicitly, with the pushforward and pullback of strictly alternating formulas in $\saBifib{p}$ along arrows of $\C$ defined by cases depending on the polarity of the formulas, so as to ensure strict alternation.
For instance, the pushforward of a strictly alternating formula $S \refs A$ along an arrow $f : A \to B$ is defined as follows:
\begin{itemize}
\item If $S$ is negative or atomic, then the pushforward is given by $\push{f} S$ equipped with the ``free'' cartesian lifting  $\opcart f S : S \to \push f S$.
\item If $S$ is positive, i.e., of the form $S = \push{f'} S'$ for some $S' \refs A'$ and $f' : A' \to A$, then $\push{f}S$ is defined as the pushforward along the composite $\push{f}S := \push{(f \circ f')}S'$, equipped with the cartesian arrow given by composing $\opcart f S : \push{f'}S' \to \push{f}\push{f'}S'$ with the logical equivalence $\push{f}\push{f'}S'\equiv \push{(f \circ f')}S'$.
\end{itemize}
The pullback is defined symmetrically. \qed 
\end{rem}
\subsubsection{Aside: splitting bifibrations}\label{sec:focusing:weak-focusing:splitting}

It is worth observing that the bifibrational structure on the subcategory of strictly alternating formulas described in Remark~\ref{rem:not-quite-split-bifibration} is \emph{nearly} split: the pushforward and pullback functors preserve composition on the nose, but identity is only preserved up to isomorphism witnessed by the logical equivalences $\push{\Id} U \equiv U$ and $\pull{\Id} U \equiv U$.
Thus the associated pseudofunctor $\C \to \Adj$ representing the bifibration $\saBifib{p} \to \C$ is not fully strict (it is not ``normalized''), although it is a strict functor of semicategories.

It is clear that a split bifibration could be obtained, if desired, by adjusting the construction to deal explicitly with identity arrows. 
Indeed, it suffices to restrict further to the full subcategory of $\Bifib{p}$ spanned by the formulas that strictly alternate along nonidentity arrows, which we denote $\nsaBifib{p}$.
The equivalence $\Bifib{p} \simeq \nsaBifib{p}$ can be established just like Theorem~\ref{thm:strictify-equivalence}, applying the full suite of pseudofunctoriality laws \eqref{eq:pushpush}--\eqref{eq:pushidpull} to normalize any formula to an equivalent nonidentity strictly alternating formula.
The split bifibrational structure on $\nsaBifib{p}$ is then almost identical to what was described for $\saBifib{p}$ in Remark~\ref{rem:not-quite-split-bifibration}, but taking care to never push or pull along an identity arrow.
For example, when defining the pushforward of $\push{f'}S'$ along $f$, if $f \circ f' = \Id$ then we choose the pushforward object to be $S'$.
We can derive the following as a consequence.
\begin{thm}[{cf.~\cite[Theorem 7.1]{Kock2013fibrations}}]\label{thm:splitting}
Every bifibration is equivalent to a split bifibration.
\end{thm}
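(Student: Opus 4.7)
The plan is to mimic the passage from $\Bifib{p}$ to $\nsaBifib{p}$ just described, but starting from an arbitrary cloven bifibration $q : \E \to \C$ in place of a free bifibration. Concretely, I would build a category $\E'$ whose objects are pairs $(X, Z)$ consisting of an object $X \in \E$ and a strictly alternating zigzag $Z : q(X) \zigzag B$ of nonidentity arrows in $\C$ (including the empty zigzag at $q(X)$). Given such a pair, the cleavage of $q$ produces by iterated push and pull a canonical object $\widehat{X,Z} \in \E$ over $B$; I would then define morphisms $(X, Z) \to (X', Z')$ over $h : B \to B'$ to be arrows $\widehat{X,Z} \to \widehat{X',Z'}$ in $\E$ over $h$, with identities and composition inherited from $\E$. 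The projection $q' : \E' \to \C$ sends $(X, Z)$ to $B$.

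Next I would equip $q'$ with a split bifibrational structure defined exactly as for $\nsaBifib{p}$ in Remark~\ref{rem:not-quite-split-bifibration}: pushforward and pullback act only on the zigzag component, appending the new arrow when the polarity of the last step differs or when $Z$ is empty, composing with the last arrow when polarities agree, and cancelling against identities in $\C$. Cartesian liftings are given by the identity of $\widehat{X,Z}$ in $\E$, which by construction lies over the newly appended or composed arrow in $\C$. Since this manipulation of $Z$ is manifestly strictly functorial---appending the identity is a no-op, and appending $g$ after $f$ agrees with appending $g \circ f$---the resulting bifibrational structure on $q'$ is split.

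Finally, I would exhibit an equivalence $q' \simeq q$ in $\Cat/\C$ via the pair of functors $\Phi : \E' \to \E$, $(X, Z) \mapsto \widehat{X,Z}$, and $\Psi : \E \to \E'$, $X \mapsto (X, \emptyset_{q(X)})$. Both commute with the projections to $\C$ and satisfy $\Phi \circ \Psi = \Id_{\E}$ on the nose, while the cartesian isomorphisms produced by the cleavage assemble into a natural isomorphism $\Psi \circ \Phi \cong \Id_{\E'}$ over $\C$, giving the required equivalence.

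The main obstacle, as in the analogous discussion of $\nsaBifib{p}$, will be the careful bookkeeping required to verify that the case analysis in the definition of pushforward and pullback on $\E'$ really yields strict equalities $\push{\Id_A}(X,Z) = (X,Z)$ and $\push{g \circ f}(X,Z) = \push{g}\push{f}(X,Z)$, rather than the canonical isomorphisms witnessing pseudofunctoriality. All other verifications---functoriality of composition, the universal property of cartesian liftings, and the equivalence with $\E$---reduce in a routine way to the cleaving structure of $q$ together with the strict alternation discipline on zigzags.
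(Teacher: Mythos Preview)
Your proposal is correct and takes essentially the same approach as the paper. The paper phrases the construction as taking the full image of the restriction $\nsaBifib{q} \to \E$ of the canonical morphism $\sem{-}{\E} : \Bifib{q} \to \E$, but unwinding this gives exactly your category $\E'$: the objects of $\nsaBifib{q}$ are precisely your pairs $(X,Z)$, your $\widehat{X,Z}$ is the interpretation $\sem{-}{\E}$, and the full-image morphisms are your arrows $\widehat{X,Z} \to \widehat{X',Z'}$ in $\E$.
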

\begin{proof}
Let $q : \E \to \C$ be a bifibration.
The free bifibration $\BifibFun{q} : \Bifib{q} \to \C$ is equipped with a canonical morphism of bifibrations $\sem{-}{\E} : \Bifib{q} \to \E$, which restricts to a functor $\nsaBifib{q} \to \E$.
Now consider the \emph{full image} of this functor.
This defines a category $\sem{\nsaBifib{q}}{\E}$ with same objects as $\nsaBifib{q}$ (that is, whose objects are nonidentity strictly alternating formulas) and whose arrows are given by arbitrary arrows $\sem{S}{\E} \to \sem{T}{\E}$ in $\E$.
Since every object of $\E$ is equal to its interpretation as an atomic formula $\sem{X}{\E} = X$, the full and faithful functor $\sem{\nsaBifib{q}}{\E} \to \E$ is surjective on objects and hence an equivalence of categories.
Finally, the composite $q \circ \sem{-}{\E} : \sem{\nsaBifib{q}}{\E} \to \C$ may be equipped with the structure of a split bifibration by transferring the structure of the free split bifibration $\BifibFun{q}|_{nsa} : \nsaBifib{q} \to \C$ along the morphism $\sem{-}{\E}$.
\end{proof}
\noindent
The above proof is similar to the proof of the analogous fact for fibrations described by Anders Kock~\cite[Theorem 7.1]{Kock2013fibrations} as Giraud's ``left adjoint splitting'' construction, which is also analyzed (together with the ``right adjoint splitting'') by Emmenegger, Mesiti, Rosolini, and Streicher \cite{EMRStreicher2024}.

In the rest of the paper we are not particularly concerned with obtaining fully split bifibrations, and so unless otherwise stated we do not forbid pushing and pulling along identity arrows.

\subsubsection{A syntax of alternating formulas}

We now introduce a new sequent calculus, whose purpose is to force strict alternation at the level of derivations without restricting the language of formulas.
It does this by performing multiple inference steps at once, in one large step, precisely where the $\collapse{-}$ translation would force them to be grouped together.

Strictly alternating formulas $\collapse S$ admit a simple inductive characterization: a formula is strictly alternating just in case it is either atomic $\atom X$, or the pushforward $\push f \collapse N$ of a non-positive strictly alternating formula, or the pullback $\pull g \collapse P$ of a non-negative strictly alternating formula.

\begin{mathpar}
  \begin{array}{lcl}
  \collapse S & ::= & \collapse P \ \mid\ \collapse N \\
  \collapse P & ::= & X \ \mid\ \push f \collapse N \\
  \collapse N & ::= & X \ \mid\ \pull g \collapse P \\
  \end{array}
\end{mathpar}

Here we use ``non-positive'' and ``non-negative'' in the expected sense, to mean that $N$ is either atomic or a pullback formula, and $P$ is either atomic or a pushforward formula.
Note that our definition of strictly alternating allows for pushing and pulling along identity arrows (cf.~\S\ref{sec:focusing:weak-focusing:splitting}).

In fact, \emph{any} bifibrational formula can be described in a similar alternating way, if instead of pushing or pulling applying a single arrow $f$ or $g$, we push or pull along a \emph{sequence} $\pi$, where $\pi$ denotes a non-empty sequence of composable arrows $(f_0,\dots,f_n)$. We introduce an explicit strictly-alternating grammar for arbitrary formulas $S_{sa}$, non-positive formulas $N_{sa}$ and non-negative formulas $P_{sa}$.

\begin{mathpar}
  \begin{array}{lcl}
  S_{sa} & ::= & P_{sa} \ \mid\ N_{sa} \\
  P_{sa} & ::= & X \ \mid\ \push \pi N_{sa} \\
  N_{sa} & ::= & X \ \mid\ \pull \pi P_{sa} \\
  \end{array}
\end{mathpar}
Formulas described in this way can be interpreted in two different ways, depending on the interpretation of sequences $\pi = (f_0, \dots, f_n)$:
\begin{enumerate}
\item If we interpret $\push \pi$ as an \emph{iterated} push
  $\push {f_n} \cdots \push {f_0}$, a formula $S_{sa}$ in alternating
  syntax is interpreted as an arbitrary formula
  (not strictly alternating) that we denote $\ceil {S_{sa}}$.
\item If we interpret $\push \pi$ as a push along the composite arrow
  $\collapse \pi \defeq f_0 \cdots f_n$, a formula $S_{sa}$ is interpreted as a strictly alternating formula that we denote $\floor {S_{sa}}$.
\end{enumerate}
For example if we take $N_{sa} = \push{(f, g)} \pull{(h,i)} X$, we have $\ceil {N_{sa}} = \push g \push f \pull h \pull i X$ and $\floor {N_{sa}} = \push {(f \; g)} \pull {(h \; i)} X$. All bifibrational formulas are in the image of $\ceil -$, all strictly-alternating bifibrational formulas are in the image of $\floor -$, and for any $S_{sa}$ we have $\floor {S_{sa}} = \collapse {\ceil {S_{sa}}}$.

To keep notations lighter, we write $S$ to mean either a bifibrational formula $S$ or its alternating presentation (the unique $S_{sa}$ determined by $\ceil {S_{sa}} = S$).

\subsubsection{Weak focusing as a syntax of alternating derivations}
\label{subsubsec:weakly-focused-derivations}

We now introduce a weakly focused calculus as a sequent calculus on formulas in the alternating syntax.
We present the inference rules in Figure~\ref{fig:weak-focusing} at the same time as a more compact term syntax for derivations, similar to one that we introduced in \S\ref{sec:sequent-calculus:term-syntax}.
\begin{figure}
  \begin{align*}
  \pi &::= (f_0,\dots,f_n) \\   
  \alpha_w &::=
    \atom{\delta}
    \mid \tLpull \pi \alpha_w
    \mid \uLpush \pi g \alpha_w
    \mid \alpha_w \tRpush \pi
    \mid \alpha_w \uRpull f \pi
  \end{align*}
  
 \begin{mathpar}
  \inferrule*[Right={$\atom \delta$}]
  {\delta : X \to Y \in \D \\ p(\delta) = f}
  {\atom \delta : \atom X \vdashf{f} \atom Y}
\\
  \inferrule*[Right={$\Lpush[\pi]$}]
        {\alpha_w : N \vdashf{\pi g} T}
        {\uLpush \pi g \alpha_w : \push{\pi} N \vdashf{g} T}

  \inferrule*[Right={$\Rpush[\pi]$}]
        {\alpha_w : S \vdashf{f} N}
        {\alpha_w \tRpush{\pi} : S \vdashf{f\pi} \push{\pi} N}
  \\
  \inferrule*[Right={$\Lpull[\pi]$}]
        {\alpha_w : P \vdashf{g} T}
        {\tLpull{\pi} \alpha_w : \pull{\pi} P \vdashf{\pi g} T}

  \inferrule*[Right={$\Rpull[\pi]$}]
        {\alpha_w : S \vdashf{f\pi} P}
        {\alpha_w \uRpull{f}{\pi} : S \vdashf{f} \pull{\pi} P}
\end{mathpar}
\caption{Rules of the weakly focused calculus.}
\label{fig:weak-focusing}
\end{figure}

Just as formulas $S_{sa}$ in alternating syntax admit two natural interpretations, each derivation $\alpha_w : S_{sa} \vdashf{f} T_{sa}$ in the weakly focused calculus admits two additional interpretations besides its formal interpretation as a sequence of applications of the above rules:
\begin{enumerate}
\item As a derivation $\ceil{\alpha_w} : \ceil {S_{sa}} \vdashf{f} \ceil {T_{sa}}$ in the unrestricted sequent calculus, by expanding the ``big-step'' rules $\Lpush[\pi]$, $\Rpush[\pi]$, $\Lpull[\pi]$, and $\Rpull[\pi]$ appropriately in terms of the ``small-step'' rules $\Lpush$, $\Rpush$, $\Lpull[g]$, and $\Rpull[g]$.
This corresponds to translating $\uLpush \pi g \alpha_w$ to $\ceil{\uLpush \pi g \alpha_w} = \uLpush \pi g {\ceil{\alpha_w}}$, where $\uLpush \pi {g} \alpha$ is defined as the iterated division $\uLpush {f_n} {g} \cdots \uLpush {f_0} {f_1\cdots f_n g} \alpha$
translating $\alpha_w \tRpush{\pi}$ to $\ceil{\alpha_w \tRpush{\pi}} = \ceil{\alpha_w}\tRpush{\pi}$, where $\alpha \tRpush {\pi}$ is defined as $\alpha \tRpush {f_0} \cdots \tRpush{f_n}$, and so on.
\item As a derivation $\floor{\alpha_w} : \floor {S_{sa}} \vdashf{f} \floor {T_{sa}}$ in the strictly alternating fragment of the unrestricted sequent calculus, by interpreting each instance of $\Lpush[\pi], \Rpush[\pi]$, etc., as a single instance of $\Lpush,\Rpush$, etc., on the composite arrow $\collapse \pi = f_0 \cdots f_n$.
This corresponds to translating $\uLpush \pi g \alpha_w$ to a single left division $\floor{\uLpush \pi g \alpha_w} = \uLpush {\collapse\pi} g {\floor{\alpha_w}}$, translating $\alpha_w \tRpush{\pi}$ to a single right multiplication
$\floor{\alpha_w \tRpush{\pi}} = \floor{\alpha_w}\tRpush{\collapse\pi}$, and so on.
\end{enumerate}
When we remain ambiguous about whether $S$ denotes an arbitrary bifibrational formula or its alternating syntax, we can write that for any $\alpha_w : S \vdashf{f} T$, we have $\ceil {\alpha_w} : S \vdashf{f} T$ and $\floor {\alpha_w} : \collapse S \vdashf{f} \collapse T$.

\subsubsection{Completeness of weak focusing}

We will use the following properties of iterated divisions and iterated multiplications on unrestricted derivations.

\begin{lem}[Iterated identity]\label{lem:iterated-id}
$\uLpush \pi {} (\Id[S] \tRpush \pi) \permeq \Id[\push \pi S]$ and
$(\tLpull \pi \Id[T]) \uRpull {} \pi {}  \permeq \Id[\pull \pi T]$ for all compatible formulas $S,T$ and sequences of arrows $\pi$.
\end{lem}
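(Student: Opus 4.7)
I would prove both statements by induction on the length of the sequence $\pi = (f_0,\dots,f_n)$, treating the pushforward case in detail and remarking that the pullback case is symmetric.

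For the base case $|\pi| = 1$, say $\pi = (f)$, the iterated operators unfold to their single-arrow versions, so the left-hand side $\uLpush f {\Id} (\Id[S] \tRpush f)$ equals $\Id[\push f S]$ literally by Definition~\ref{defi:identity}, without invoking any permutation.

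For the inductive step, write $\pi = (\pi'', f_n)$ with $\pi'' = (f_0,\dots,f_{n-1})$, so that $\push\pi S = \push{f_n} \push{\pi''} S$ and $\tRpush \pi = \tRpush{\pi''} \tRpush{f_n}$, and unfold
\[
  \uLpush\pi{} (\Id[S] \tRpush\pi)
  \quad=\quad
  \uLpush{f_n}{\Id}\,\uLpush{\pi''}{f_n}\bigl((\Id[S] \tRpush{\pi''}) \tRpush{f_n}\bigr).
\]
The technical heart of the argument is an auxiliary lemma that for any derivation $\beta$ of the appropriate type,
\[
  \uLpush{\pi''}{f_n}(\beta \tRpush{f_n})
  \quad\permeq\quad
  \bigl(\uLpush{\pi''}{\Id}\beta\bigr) \tRpush{f_n},
\]
which follows by $|\pi''|$ successive applications of the permutation rule \eqref{eq:permgen2} peeling one division $\uLpush{f_i}{\cdots}$ outward at a time, each time shifting the base arrow of the division by multiplying on the right by $f_n$. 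Applied to $\beta = \Id[S] \tRpush{\pi''}$, this gives
\[
  \uLpush\pi{}(\Id[S] \tRpush\pi) \;\permeq\;
  \uLpush{f_n}{\Id}\Bigl(\bigl(\uLpush{\pi''}{\Id}(\Id[S]\tRpush{\pi''})\bigr) \tRpush{f_n}\Bigr),
\]
and the inductive hypothesis on $\pi''$ rewrites the inner term to $\Id[\push{\pi''} S]$; the resulting expression is exactly $\Id[\push{f_n} \push{\pi''} S] = \Id[\push\pi S]$ by Definition~\ref{defi:identity}.

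The pullback statement is entirely dual: we split $\pi = (f_0, \pi''')$ with $\pi''' = (f_1,\dots,f_n)$, unfold $\Id[\pull\pi T]$ according to Definition~\ref{defi:identity}, and use permutation rule \eqref{eq:permgen3} in place of \eqref{eq:permgen2} to commute an iterated $\uRpull{}{\pi'''}$ past a $\tLpull{f_0}$. No genuinely new difficulty arises. The only delicate point in the whole argument is keeping track of how the base-arrow subscripts on divisions shift under each permutation step; once a single ``peel'' step is verified, the rest is routine iteration and induction.
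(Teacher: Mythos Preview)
Your proof is correct and follows the same approach as the paper, namely induction on the length of $\pi$; the paper's own proof consists solely of the phrase ``straightforward by induction on $\pi$,'' so your write-up is in fact considerably more detailed than what appears there. In particular, your identification of the auxiliary commutation $\uLpush{\pi''}{f_n}(\beta \tRpush{f_n}) \permeq (\uLpush{\pi''}{\Id}\beta)\tRpush{f_n}$ via iterated applications of \eqref{eq:permgen2}, and the dual use of \eqref{eq:permgen3} for the pullback case, are exactly the permutations needed, and your splitting $\pi = (\pi'',f_n)$ for pushforward versus $\pi = (f_0,\pi''')$ for pullback correctly matches the outermost connective in each iterated formula.
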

\begin{lem}[Iterated cut]\label{lem:iterated-cut}
$(\alpha \tRpush \pi)\hcomp (\uLpush \pi {h} \beta) \permeq \alpha \hcomp \beta$ and
$(\alpha \uRpull {g} \pi)\hcomp (\tLpull \pi \beta) \permeq \alpha \hcomp \beta$
for all compatible derivations $\alpha,\beta$ and sequences of arrows $\pi$.
\end{lem}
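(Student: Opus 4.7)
The plan is to proceed by induction on the length of $\pi = (f_0, \ldots, f_n)$, treating only the first identity in detail; the second follows by the evident push--pull/left--right symmetry of the sequent calculus (or, at the level of the double category, by the dagger involution of Section~\ref{sec:double-categories:dagger}).

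For the base case $\pi = (f)$, both iterated operations collapse to their single-step versions, and the required identity $(\alpha \tRpush f) \hcomp (\uLpush f h \beta) = \alpha \hcomp \beta$ is exactly the first principal cut clause of Definition~\ref{defi:cut}.

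For the inductive step, I would split off the last arrow of $\pi$ by writing $\pi'' = (f_0, \ldots, f_{n-1})$ and unfolding the iterated operations as
\[
  \alpha \tRpush \pi = (\alpha \tRpush{\pi''}) \tRpush{f_n},
  \qquad
  \uLpush \pi h \beta = \uLpush{f_n}{h}(\uLpush{\pi''}{f_n h} \beta).
\]
The outermost rule on the left-hand side of the cut is then $\tRpush{f_n}$ and the outermost rule on the right-hand side is $\uLpush{f_n}{h}$, so by the principal cut clause of Definition~\ref{defi:cut} the composite reduces, on the nose, to $(\alpha \tRpush{\pi''}) \hcomp (\uLpush{\pi''}{f_n h} \beta)$. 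Applying the inductive hypothesis to the shorter sequence $\pi''$ (with the parameter ``$h$'' instantiated to $f_n h$) then delivers $\alpha \hcomp \beta$ up to permutation equivalence, and Lemma~\ref{lem:cut-respects-permeq} propagates the $(\permeq)$ through the outer principal cut step.

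The main obstacle, such as it is, is purely bookkeeping: one has to verify that the iterated operations peel in compatible ways --- that is, both $\tRpush{\pi}$ and $\uLpush{\pi}{h}$ expose the same arrow $f_n$ at their top level --- and that the composite-arrow annotations line up correctly under this peeling, so that the principal cut on $f_n$ really applies and the resulting subexpression again has the shape needed for the inductive hypothesis. Both checks are straightforward unfoldings of the definitions of $\tRpush{\pi}$ and $\uLpush{\pi}{h}$ given in Section~\ref{sec:focusing:weak-focusing}, and no essentially new proof-theoretic difficulty arises beyond the reasoning already used in Lemma~\ref{lem:cut-respects-permeq}.
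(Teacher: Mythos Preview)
Your proposal is correct and follows the same approach as the paper, which simply says ``straightforward by induction on $\pi$.'' Your unfolding of the iterated operators and use of the principal cut clause are exactly right; the final appeal to Lemma~\ref{lem:cut-respects-permeq} is harmless but unnecessary, since the principal cut step is a defining clause (not merely a $\permeq$) and the inductive hypothesis already gives you $\permeq$ directly.
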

\begin{proof}
Both straightforward by induction on $\pi$.
\end{proof}
\noindent
We can now give explicit derivations for the equivalences $\theta_S : S \equiv \collapse S$, by defining for every $S \refs A$ a pair of (unrestricted) derivations
\[
\theta_S : S \vdashf{\Id[A]} \collapse{S}
\qquad
\theta_S^{-1} : \collapse{S} \vdashf{\Id[A]} S
\]
as follows. (We omit the subscript arrow on the division signs below, which is always $\Id[A]$.)
\[
\theta_{\push{\pi}N} = \uLpush \pi {} (\theta_N \tRpush {\collapse{\pi}})
\qquad
\theta^{-1}_{\push{\pi}N} = \uLpush {\collapse\pi} {} (\theta^{-1}_N \tRpush {\pi})
\]
\[
\theta_{\pull{\pi}P} = (\tLpull {\collapse{\pi}} \theta_P) \uRpull {} \pi
\qquad
\theta^{-1}_{\pull{\pi}P} = (\tLpull {\pi} \theta^{-1}_P) \uRpull {} {\collapse\pi}
\]
\[
\theta_X = \theta^{-1}_X = \Id[X]
\]
By appealing to Lemmas~\ref{lem:iterated-id} and \ref{lem:iterated-cut}, we have:

\begin{prop}
  \label{prop:theta-inverses}
  $\theta_S$ and $\theta_S^{-1}$ are inverses up to permutation equivalence.
\end{prop}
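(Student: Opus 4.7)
The plan is to proceed by structural induction on $S$, establishing both equations $\theta_S \hcomp \theta_S^{-1} \permeq \Id[S]$ and $\theta_S^{-1} \hcomp \theta_S \permeq \Id[\collapse S]$ simultaneously. The base case $S = X$ is immediate, since both $\theta_X$ and $\theta_X^{-1}$ are defined to be $\Id[X]$, and identity is neutral for cut (Lemma~\ref{lem:id-neutral}).

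For the inductive case $S = \push{\pi} N$, we compute
\[
\theta_{\push{\pi}N} \hcomp \theta^{-1}_{\push{\pi}N}
\;=\;
\uLpush{\pi}{}(\theta_N \tRpush{\collapse{\pi}}) \,\hcomp\, \uLpush{\collapse{\pi}}{}(\theta_N^{-1} \tRpush{\pi}).
\]
My approach is to apply the commutative cut case for a left-division head on the left derivation, pushing the cut inside the outer $\uLpush{\pi}{}$. The inner expression is then a cut of the shape $(\theta_N \tRpush{\collapse\pi}) \hcomp \uLpush{\collapse\pi}{}(\theta_N^{-1} \tRpush{\pi})$, which is a principal cut on the single arrow $\collapse{\pi}$ and reduces to $\theta_N \hcomp (\theta_N^{-1}\tRpush{\pi})$. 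Next, because the right-hand side begins with a multiplication, another commutative cut lets me rewrite this up to $\permeq$ as $(\theta_N \hcomp \theta_N^{-1}) \tRpush{\pi}$. The induction hypothesis then gives $\theta_N \hcomp \theta_N^{-1} \permeq \Id[N]$, and Lemma~\ref{lem:iterated-id} finishes with $\uLpush{\pi}{}(\Id[N]\tRpush{\pi}) \permeq \Id[\push{\pi}N]$. Composing in the other order is entirely analogous, using Lemma~\ref{lem:iterated-cut} to collapse the matching multiplication--division pair on $\pi$, then a principal cut on $\collapse{\pi}$, the induction hypothesis, and finally the simple identity $\Id[\pull{\Id}{(\push{\collapse\pi}N)}] = \Id[\push{\collapse\pi}N]$. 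The case $S = \pull{\pi} P$ is fully dual, swapping the roles of left/right and push/pull; the same four moves (commutative cut, principal/iterated cut, commutative cut with multiplication, and iterated identity) apply.

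The only real subtlety is bookkeeping: the term formers $\uLpush{\pi}{}$, $\tRpush{\pi}$, etc., are macros for iterated applications, so the ``commutative cut case'' and ``principal cut case'' invoked above are themselves derived equations established inductively on $\pi$. Both are already packaged as Lemmas~\ref{lem:iterated-id} and \ref{lem:iterated-cut}, and the additional commutative cut for right multiplication past a cut is a special case of the commutative cut clause of Definition~\ref{defi:cut}. The chief obstacle, therefore, is not conceptual but notational: one must track carefully which of the $\pi$s in the terms are taken as sequences (in the weakly focused reading) and which have already been collapsed to the single arrow $\collapse{\pi}$, so that the two commutative-cut steps are applied to compatible terms. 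Once this is set up, the calculation proceeds in a handful of rewriting steps on each side.
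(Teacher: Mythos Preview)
Your plan is essentially the paper's own proof: structural induction on $S$, unwinding cut via its commutative and principal clauses together with Lemmas~\ref{lem:iterated-id} and~\ref{lem:iterated-cut}, then applying the inductive hypothesis. One small slip in your description of the reverse composition for $S = \push{\pi}N$: there is no ``principal cut on $\collapse{\pi}$'' and no $\pull{\Id}$ appears---after the outer commutative cut and Lemma~\ref{lem:iterated-cut} you reach $\uLpush{\collapse\pi}{}\bigl((\theta_N^{-1}\hcomp\theta_N)\tRpush{\collapse\pi}\bigr)$, and the final step is simply the definition of $\Id[\push{\collapse\pi}\collapse N]$.
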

\begin{proof}
\label{prf:theta-inverses}
For example in the case $S = \push \pi N$ we have:
\begin{align*}
\theta_{\push{\pi}N} \hcomp \theta_{\push{\pi}N}^{-1}
  &= (\uLpush \pi {} (\theta_N \tRpush {\collapse{\pi}}))\hcomp (\uLpush {\collapse\pi} {} (\theta^{-1}_N \tRpush {\pi})) \\
  &\permeq \uLpush \pi {} (\theta_N \hcomp \theta^{-1}_N \tRpush \pi) & \text{definition of cut} \\
  &\permeq \uLpush \pi {} (\Id[N] \tRpush \pi) & \text{induction hypothesis} \\
  &\permeq \Id[\push \pi N] & \text{Lemma~\ref{lem:iterated-id}}
\end{align*}
\begin{align*}
\theta_{\push{\pi}N}^{-1} \hcomp \theta_{\push{\pi}N}
  &= (\uLpush {\collapse\pi} {} (\theta^{-1}_N \tRpush {\pi}))\hcomp (\uLpush \pi {} (\theta_N \tRpush {\collapse{\pi}})) \\
%  &\permeq \uLpush {\collapse\pi} {} ((\theta_N \tRpush {\pi})\hcomp (\uLpush {\pi} {} (\theta^{-1}_N \tRpush {\collapse\pi}))) & \text{definition of cut} \\
  &\permeq \uLpush {\collapse\pi} {} (\theta_N^{-1} \hcomp \theta_N \tRpush {\collapse\pi}) & \text{definition of cut and Lemma~\ref{lem:iterated-cut}} \\
  &\permeq \uLpush {\collapse\pi} {} (\Id[N] \tRpush {\collapse\pi}) & \text{induction hypothesis} \\
  &\permeq \Id[\push {\collapse\pi} N] & \text{definition of identity}
\end{align*}
\end{proof}
\noindent
On formulas we remarked that we have the relation $\floor S = \collapse {\ceil S}$. There is a similar relation at the level of derivations.

\begin{prop}
\label{prop:floor-ceil-derivation}
For any $\alpha_w : S \vdashf{f} T$ we have $\floor{\alpha_w} \sim \collapse {\ceil{\alpha_w}}$.
\end{prop}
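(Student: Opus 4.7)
The plan is to proceed by induction on the structure of $\alpha_w$. Unfolding definitions, $\collapse{\ceil{\alpha_w}} = \theta_S^{-1} \hcomp \ceil{\alpha_w} \hcomp \theta_T$, so in each case the task is to show that the two wrapper derivations $\theta_S^{-1}$ and $\theta_T$ absorb into $\ceil{\alpha_w}$ to produce precisely $\floor{\alpha_w}$, which differs from $\ceil{\alpha_w}$ only in that each sequence of small-step pushes or pulls along a sequence $\pi$ is collapsed into a single big-step push or pull along the composite $\collapse\pi$.

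The base case $\alpha_w = \atom\delta$ is immediate: the bounding formulas are atomic, $\theta_{\atom X}$ and $\theta_{\atom Y}$ are identity derivations by definition, and identities are neutral for cut (Lemma~\ref{lem:id-neutral}). For the four inductive cases (corresponding to $\Rpush[\pi]$, $\Lpush[\pi]$, $\Rpull[\pi]$, $\Lpull[\pi]$) the strategy is uniform. I will treat $\alpha_w = \alpha'_w \tRpush\pi$ as representative: here $\ceil{\alpha_w} = \ceil{\alpha'_w} \tRpush \pi$, $\floor{\alpha_w} = \floor{\alpha'_w} \tRpush{\collapse\pi}$, and the right-hand wrapper unfolds as $\theta_{\push\pi N} = \uLpush \pi {} (\theta_N \tRpush{\collapse\pi})$. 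The key rearrangement is to apply the iterated principal cut (Lemma~\ref{lem:iterated-cut}) to annihilate the paired $\tRpush\pi$ against $\uLpush\pi{}$, leaving $\ceil{\alpha'_w} \hcomp \theta_N$ with a single outer multiplication $\tRpush{\collapse\pi}$ still attached; a commutative cut then pulls this multiplication outside the two remaining $\theta$-cuts, exposing $\collapse{\ceil{\alpha'_w}} \tRpush{\collapse\pi}$. Applying the induction hypothesis to $\alpha'_w$ closes the case.

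The remaining three cases follow the same template: unfold $\theta$ or $\theta^{-1}$ on the appropriate side using its inductive definition, invoke Lemma~\ref{lem:iterated-cut} to cancel the matching iterated multiplication/division pair introduced on the $\pi$ side, then commute the residual big-step rule on $\collapse\pi$ outward past the remaining cuts and invoke the induction hypothesis. I expect no truly hard step; the main obstacle is bookkeeping across the four symmetric unfoldings and keeping careful track of which cases require Lemma~\ref{lem:iterated-cut} versus Lemma~\ref{lem:iterated-id}, particularly when the outer connective on the side being rewritten has the opposite polarity to the side being analyzed (so that $\theta$ on that side merely contributes an identity wrapper that must be absorbed via neutrality).
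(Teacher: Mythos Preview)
Your proposal is correct and follows essentially the same approach as the paper: induction on $\alpha_w$, unfolding the relevant $\theta$ or $\theta^{-1}$ on the side touched by the outermost rule, applying Lemma~\ref{lem:iterated-cut} to cancel the iterated multiplication/division against its dual, and then invoking the induction hypothesis. The paper illustrates with the $\Lpush[\pi]$ case while you chose $\Rpush[\pi]$, but the argument is the same; note also that only Lemma~\ref{lem:iterated-cut} (not Lemma~\ref{lem:iterated-id}) is actually needed in the inductive steps, since the untouched side's $\theta$-wrapper is simply carried through to the recursive call.
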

\begin{proof}
\label{prf:floor-ceil-derivation}
By induction on the weakly focused derivation $\alpha_w$.
For example, in the case that $\alpha_w$ is of the form $\uLpush \pi g {\alpha_w'} : \push{\pi}N \vdashf{g} T$, we have
\begin{align*}
\collapse{\ceil{\uLpush \pi g {\alpha_w'}}} &= (\uLpush {\collapse{\pi}} {} (\theta_N^{-1}\tRpush \pi)) \hcomp (\uLpush \pi g \ceil{\alpha_w'}) \hcomp \theta_T & \text{by definition of }\collapse{-}\text{ and }\ceil{-}\\
%&= (\uLpush {\collapse{\pi}} {} (\theta_N^{-1}\tRpush \pi \hcomp (\uLpush \pi g \ceil{\alpha_w})) \hcomp \theta_T \\ % comm cut
&\permeq \uLpush {\collapse{\pi}} {} (\theta_N^{-1}\hcomp \ceil{\alpha_w} \hcomp \theta_T) & \text{definition of cut and Lemma~\ref{lem:iterated-cut}}\\ % principal cut
%&\permeq \uLpush {\collapse{\pi}} {} (\theta_N^{-1}\hcomp \ceil{\alpha_w} \hcomp \theta_T) \\
&\permeq \uLpush {\collapse{\pi}} {} \floor{\alpha_w} & \text{induction hypothesis}\\
&= \floor{\uLpush \pi g \alpha_w} & \text{definition of $\floor{-}$}
\end{align*}
\end{proof}

We remarked above that every bifibrational formula is of the form $\ceil {S_{sa}}$, and every strictly alternating formula is of the form $\floor {S_{sa}}$, where $S_{sa}$ is a formula in alternating syntax.
The situation for derivations is different: all strictly alternating derivations are of the form $\floor {\alpha_w}$, but \emph{not} all unrestricted derivations are of the form $\ceil {\alpha_w}$---it is really a more restricted syntax. For example, the derivation on the left below is not in the image of $\ceil -$, but it is permutation equivalent to the derivation on the right, which is:
\begin{mathpar}
  \inferrule* [Right={$\Lpush[g]$}]
        {\inferrule* [Right={$\Rpull[i]$}]
          {\inferrule* [Right={$\Lpush[f]$}]
            {S \vdashf{fghi} T}
            {\push{f} S \vdashf{ghi} T}}
          {\push{f} S\vdashf{gh} \pull{i} T}}
        {\push{g}\push{f} S \vdashf{h} \pull{i} T}

  \inferrule* [Right={$\Lpush[g]$}]
        {\inferrule* [Right={$\Lpush[f]$}]
          {\inferrule* [Right={$\Rpull[i]$}]
            {S \vdashf{fghi} T}
            {S \vdashf{fgh} \pull{i} T}}
          {\push{f} S\vdashf{gh} \pull{i} T}}
        {\push{g}\push{f} S \vdashf{h} \pull{i} T}
\end{mathpar}

\begin{prop}
The mapping $\floor -$ is a one-to-one correspondence between weakly focused derivations of $S_{sa} \vdashf{f} T_{sa}$ and unrestricted derivations of strictly alternating sequents $\floor{S_{sa}} \vdashf{f} \floor{T_{sa}}$.
\end{prop}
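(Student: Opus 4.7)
The plan is to construct an inverse to $\floor{-}$, sending each unrestricted derivation $\beta : \floor{S_{sa}} \vdashf{f} \floor{T_{sa}}$ to a weakly focused derivation of $S_{sa} \vdashf{f} T_{sa}$, and then to verify that the two maps are mutually inverse. The key preliminary observation is that by the subformula property (Proposition~\ref{prop:subformula-property}), every sequent appearing in $\beta$ involves only subformulas of $\floor{S_{sa}}$ and $\floor{T_{sa}}$, which are themselves strictly alternating. Thus $\beta$ lies entirely in the strictly alternating fragment, and the induction stays within the intended language throughout.

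The inverse is then defined by structural induction on $\beta$, threading the alternating-syntax formulas $S_{sa}$ and $T_{sa}$ through the recursion to supply the $\pi$-sequences demanded by the big-step rules. The representative case is $\beta = \uLpush{h}{g}\beta'$ decomposing $\floor{S_{sa}} = \push{h}N'$: the grammar of alternating formulas forces $S_{sa}$ to be of the unique form $\push{\pi}N_{sa}$ with $\collapse{\pi} = h$, $\floor{N_{sa}} = N'$, and $N_{sa}$ non-positive (the last because $\floor{S_{sa}}$ is strictly alternating); we then apply the big-step rule $\Lpush[\pi]$ to the inductively obtained weakly focused derivation of $N_{sa} \vdashf{hg} T_{sa}$. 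The $\Rpush[\pi]$, $\Lpull[\pi]$, and $\Rpull[\pi]$ cases are entirely analogous, and atomic axioms map to atomic axioms.

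Both round-trip identities then follow by a routine induction, each inductive step using $\floor{\push{\pi}N_{sa}} = \push{\collapse{\pi}}\floor{N_{sa}}$ (and its pull counterpart) to line up big-step rules with their small-step images. The only substantive verifications are that the case analysis on $\beta$ is exhaustive—which is clear, as the four logical rules plus the atomic axiom cover all possibilities—and that the alternating-syntax decompositions of $S_{sa}$ and $T_{sa}$ are uniquely determined at each inductive step, which is immediate from the grammars of Section~\ref{subsubsec:weakly-focused-derivations}. I expect no genuine obstacle beyond bookkeeping: the whole point of the weakly focused syntax is precisely to rigidify the alternating fragment so that this correspondence becomes strict rather than merely up to equivalence.
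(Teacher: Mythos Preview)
Your proposal is correct and follows essentially the same approach as the paper: both argue by induction on the unrestricted derivation, using that whenever a rule like $\Lpush[h]$ is applied to $\floor{S_{sa}}$, the alternating grammar forces $S_{sa} = \push{\pi}N_{sa}$ with $\collapse{\pi} = h$, so the matching big-step rule is uniquely determined. The only difference is packaging---the paper proves existence and uniqueness of the $\floor{-}$-preimage in a single induction (phrased as induction on the judgment $(S_{sa},T_{sa})$), whereas you build an explicit inverse and check the two round-trips separately; both are routine.
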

\begin{proof}
  We prove that for any judgment $S_{sa} \vdashf{f} T_{sa}$,
  if $\alpha$ is an unrestricted derivation of $\floor{S_{sa}} \vdashf{f} \floor{T_{sa}}$, then there exists a \emph{unique} weakly focused derivation $\alpha_w$ such that $\alpha = \floor {\alpha_w}$. The proof proceeds by induction on the product ordering of the two sides of the sequent $(S_{sa}, T_{sa})$ with respect to the subformula ordering.
  We will have more occasions to use this style of induction in the rest of this section, which we refer to as \emph{induction on judgments.}
  The associated well-founded ordering on judgments is defined explicitly by
  \[ (S' \vdashf{h'} T') \prec (S \vdashf{h} T) \]
  just in case $S'$ is a subformula of $S$, $T'$ is a subformula of $T$, and either $S'$ is a strict subformula of $S$ or $T'$ is a strict subformula of $T$.
  We now proceed to the proof.

  If $\alpha$ is an axiom rule then it is already a weakly focused proof.

  If $\alpha$ is of the form $\uLpush f g {\alpha'}$:
  \begin{mathpar}
    \inferrule
    {\alpha' : {S'} \vdashf{{f} g} \floor{T_{sa}}}
    {\uLpush {f} g {\alpha'} : \push {f} {S'} \vdashf{g} \floor{T_{sa}}}
  \end{mathpar}
  then from $\push {f} {S'} = \floor S_{sa}$ we can deduce that $S_{sa}$ must be of the form $\push \pi N$, so $f$ is uniquely determined to be $\collapse \pi$ and $S'$ is $\collapse N$. So in fact the derivation of $\alpha$ is of the form
  \begin{mathpar}
    \inferrule
    {\alpha' : {\collapse N} \vdashf{\pi g} \floor {T_{sa}}}
    {\uLpush {\collapse \pi} g {\alpha'} : \push {\collapse \pi} {\collapse N} \vdashf{g} \floor{T_{sa}}}
  \end{mathpar}
By induction hypothesis, there exists a unique $\alpha'_w$ such that $\alpha' = \floor {\alpha'_w}$, so $\alpha$ is necessarily equal to $\uLpush {\collapse \pi} g {\floor {\alpha'_w}}$, and $\alpha_w \defeq \uLpush \pi g {\alpha'_w}$ is uniquely determined.

  The reasoning is symmetric when if $\alpha$ is of the form $\tLpull f {\alpha'}$.
\end{proof}

\begin{cor}[Completeness of weak focusing]
For any derivation $\alpha : S \vdashf{f} T$, there is a weakly focused derivation $\alpha_w : S \vdashf{f} T$ such that $\alpha \permeq \ceil{\alpha_w}$.
\end{cor}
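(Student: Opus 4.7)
The plan is to obtain the witness $\alpha_w$ by a roundtrip through the strictification of $\alpha$. Given an unrestricted derivation $\alpha : S \vdashf{f} T$, I will first strictify it to obtain $\collapse{\alpha} : \collapse{S} \vdashf{f} \collapse{T}$, which lives in the strictly alternating fragment. Writing $S_{sa}$ and $T_{sa}$ for the alternating-syntax formulas such that $\floor{S_{sa}} = \collapse{S}$ and $\floor{T_{sa}} = \collapse{T}$, the preceding proposition supplies a unique weakly focused derivation $\alpha_w : S_{sa} \vdashf{f} T_{sa}$ satisfying $\floor{\alpha_w} = \collapse{\alpha}$. This $\alpha_w$ is the desired witness.

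To verify $\alpha \permeq \ceil{\alpha_w}$, I will combine two results from earlier in the section. From Proposition~\ref{prop:floor-ceil-derivation} I get $\floor{\alpha_w} \permeq \collapse{\ceil{\alpha_w}}$, and hence by the choice of $\alpha_w$ that $\collapse{\alpha} \permeq \collapse{\ceil{\alpha_w}}$. From the naturality square established in the proof of Theorem~\ref{thm:strictify-equivalence}, together with the invertibility of $\theta$ given by Proposition~\ref{prop:theta-inverses}, any derivation $\beta : U \vdashf{f} V$ satisfies $\beta \permeq \theta_U \hcomp \collapse{\beta} \hcomp \theta_V^{-1}$. Applying this twice (once to $\alpha$ and once to $\ceil{\alpha_w}$) and using compatibility of cut with permutation equivalence (Lemma~\ref{lem:cut-respects-permeq}), the chain $\alpha \permeq \theta_S \hcomp \collapse{\alpha} \hcomp \theta_T^{-1} \permeq \theta_S \hcomp \collapse{\ceil{\alpha_w}} \hcomp \theta_T^{-1} \permeq \ceil{\alpha_w}$ yields the result.

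I do not anticipate any real obstacle: the essential work has already been done in establishing the equivalence $\Bifib{p} \simeq \saBifib{p}$ together with its naturality data, and in proving that $\floor{-}$ is a bijection between weakly focused derivations and their strictly alternating small-step counterparts. The corollary simply bolts these ingredients together via the compatibility $\floor{\alpha_w} \permeq \collapse{\ceil{\alpha_w}}$ between the two interpretations of a weakly focused proof.
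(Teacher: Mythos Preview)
Your proposal is correct and follows the same approach as the paper: strictify $\alpha$, invoke the bijection $\floor{-}$ to obtain $\alpha_w$, then use Proposition~\ref{prop:floor-ceil-derivation} together with the equivalence of Theorem~\ref{thm:strictify-equivalence} to conclude. The paper phrases the final step more abstractly (faithfulness of the equivalence $\collapse{-}$), whereas you spell it out concretely via conjugation by $\theta_S$ and $\theta_T^{-1}$, but this is the same argument.
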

\begin{proof}
Let $\collapse \alpha : \collapse S \vdashf{f} \collapse T$ be the strictification of $\alpha$, and let $\alpha_w : S \vdashf{f} T$ be the unique weakly focused derivation such that $\collapse{\alpha} = \floor{\alpha_w}$, which exists by the previous proposition.
In particular, $\collapse \alpha$ and $\collapse {\ceil {\alpha_w}}$ are equal as $\saBifib{p}$ morphisms, so by Theorem~\ref{thm:strictify-equivalence} we have that $\alpha$ and $\ceil {\alpha_w}$ are equal as $\Bifib{p}$ morphisms, that is, $\alpha \permeq \ceil {\alpha_w}$.
\end{proof}

\subsubsection{Equivalence of weakly focused proofs} There are two reasonable notions of equivalence on weakly focused proofs $\alpha_w, \beta_w$:
\begin{itemize}
\item One can consider the relation $\ceil {\alpha_w} \permeq \ceil {\beta_w}$, which views $\alpha_w, \beta_w$ as unrestricted derivations and considers permutation equivalence between them. Such a proof of equivalence permutes inference rules one at a time, and in particular it may contain intermediate steps that are not weakly focused derivations.
\item One can consider the relation $\floor {\alpha_w} \permeq \floor {\beta_w}$, which views $\alpha_w$ and $\beta_w$ as derivations on strictly alternating formulas. Such proofs of equivalence respect the strictly-alternating structure in the sense that they cannot break sequences of pushes or sequence of pulls apart. They can be seen as permutation equivalences on ``synthetic'' connectives.
\end{itemize}

\begin{cor}
  These two notions of equivalence coincide.
\end{cor}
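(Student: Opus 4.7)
The plan is to reduce both implications to the combination of Proposition~\ref{prop:floor-ceil-derivation} (which states $\floor{\alpha_w} \permeq \collapse{\ceil{\alpha_w}}$) with the categorical equivalence of Theorem~\ref{thm:strictify-equivalence}. The point is that once we have identified $\floor{-}$ on weakly focused derivations with $\collapse{-} \circ \ceil{-}$ up to permutation equivalence, the two notions of equivalence on weakly focused proofs correspond exactly to the two sides of a faithful functor.

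For the direction $\ceil{\alpha_w} \permeq \ceil{\beta_w} \Rightarrow \floor{\alpha_w} \permeq \floor{\beta_w}$, I would argue by applying the strictification map $\collapse{-}$ to both sides: since $\collapse{-} = \theta_S^{-1}\hcomp - \hcomp \theta_T$ is defined via pre- and post-composition with fixed derivations $\theta_S, \theta_T$, it respects permutation equivalence (this is just Lemma~\ref{lem:cut-respects-permeq}). Hence $\collapse{\ceil{\alpha_w}} \permeq \collapse{\ceil{\beta_w}}$, and then Proposition~\ref{prop:floor-ceil-derivation} transports this equivalence to $\floor{\alpha_w} \permeq \floor{\beta_w}$.

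For the converse direction $\floor{\alpha_w} \permeq \floor{\beta_w} \Rightarrow \ceil{\alpha_w} \permeq \ceil{\beta_w}$, I would first apply Proposition~\ref{prop:floor-ceil-derivation} in the opposite direction to obtain $\collapse{\ceil{\alpha_w}} \permeq \collapse{\ceil{\beta_w}}$. The key step is then to recover $\ceil{\alpha_w}$ from its strictification by conjugating with $\theta_S$ and $\theta_T^{-1}$: using Proposition~\ref{prop:theta-inverses} (invertibility of $\theta_S$) and associativity of cut (Lemma~\ref{lem:cut-associative}), we get $\theta_S \hcomp \collapse{\ceil{\alpha_w}} \hcomp \theta_T^{-1} = \theta_S \hcomp \theta_S^{-1} \hcomp \ceil{\alpha_w} \hcomp \theta_T \hcomp \theta_T^{-1} \permeq \ceil{\alpha_w}$, and similarly for $\beta_w$. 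Conjugating the equivalence of strictifications by $\theta_S, \theta_T^{-1}$ therefore yields $\ceil{\alpha_w} \permeq \ceil{\beta_w}$.

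This is really just a concrete witness that $\collapse{-}$ is faithful, as follows abstractly from its being part of an equivalence of categories in Theorem~\ref{thm:strictify-equivalence}. I do not expect any genuine obstacle here, since all the required lemmas are already in place; the only care needed is to write the conjugation argument cleanly so that the reader sees how Proposition~\ref{prop:floor-ceil-derivation} mediates between the two notions of equivalence.
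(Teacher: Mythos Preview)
Your proof is correct and follows essentially the same route as the paper: both reduce the coincidence to Proposition~\ref{prop:floor-ceil-derivation} (identifying $\floor{-}$ with $\collapse{\ceil{-}}$ up to $\permeq$) together with the equivalence of Theorem~\ref{thm:strictify-equivalence}. The paper compresses this into a single chain of $\iff$'s, whereas you spell out the conjugation by $\theta_S,\theta_T^{-1}$ explicitly; but that is exactly the content of faithfulness of $\collapse{-}$ coming from the equivalence, as you note yourself.
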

\begin{proof}
  \begin{mathpar}
\floor {\alpha_w} \permeq \floor {\beta_w}

\stackrel{\text{Proposition~\ref{prop:floor-ceil-derivation}}}{\iff}

\collapse {\ceil {\alpha_w}} \permeq \collapse {\ceil {\beta_w}}

\stackrel{\text{Theorem~\ref{thm:strictify-equivalence}}}{\iff}

\ceil {\alpha_w} \permeq \ceil {\beta_w}\qedhere
  \end{mathpar}
\end{proof}

\subsubsection{Presentation} The results of this section can be reformulated as an alternative presentation of the arrows of $\Bifib{p}$ as triples $(S, \hat \alpha_w, T)$, where $\hat {\alpha}_w \in \saBifib{p}(S, T)$ is a permutation-equivalence class of weakly focused derivations $\alpha_w : S \vdashf{f} T$.
Weak focusing removes some of the inessential non-determinism of the sequent calculus, but it is possible to go further and obtain a still more canonical presentation, as we will now examine.

\subsection{Multifocusing}
\label{subsec:multi-focusing}
\subsubsection{Introduction}

The previous calculus was only ``weakly'' focused in the sense that it allows applying non-invertible rules to derive a judgment even where an invertible rule is applicable. It would be more canonical to enforce that invertible rules are applied eagerly. For instance, if we have a positive formula on the left then we can always apply an invertible left rule, and if we have a negative formula on the right then we can always apply an invertible right rule.
In the case of a sequent of the form $\push \pi N \vdashf{f} \pull \rho P$ with \emph{both} a positive formula on the left and a negative formula, it seems we have a useless choice to make of whether to work on the left or right first:
\begin{mathpar}
  \inferrule*[Right={$\Rpull[\rho]$}]
  {\inferrule*[Right={$\Lpush[\pi]$}]
    {N \vdashf{\pi f \rho} P}
    {\push \pi N \vdashf{f \rho} P}}
  {\push \pi N \vdashf{f} \pull \rho P}

  \inferrule*[Right={$\Lpush[\pi]$}]
  {\inferrule*[Right={$\Rpull[\rho]$}]
    {N \vdashf{\pi f \rho} P}
    {N \vdashf{f \rho} \pull \rho P}}
  {\push {\pi} N \vdashf{f} \pull \rho P}
\end{mathpar}
The choice of whether to apply $\Rpull[\pi]$ or $\Lpush[\rho]$ first is irrelevant, since the two derivations above are permutation equivalent.
However, a natural way of addressing this issue is to add a rule that can perform \emph{both} left and right inversion at once.
By combining such a bi-inversion rule with polarity-restricted instances of the left- and right-inversion rules,
\begin{equation}\label{rules:strong-inversion}
  \infer[{\Lpush[\pi]}]{\push{\pi}N \vdashf{f} P}{N \vdashf{\pi f} P}
  \qquad
  \infer[{\Lpush[\pi]\Rpull[\rho]}]{\push{\pi}N \vdashf{f} \pull{\rho}P}{N \vdashf{\pi f \rho} P}
  \qquad
  \infer[{\Rpull[\rho]}]{N \vdashf{f} \pull{\rho}P}{N \vdashf{f \rho} P}
\end{equation}
% The two permutation equivalent derivations above can then be expressed as follows:
% \begin{mathpar}
%   \inferrule*[Right={$\Lpush[(f,g)] \Rpull[i]$}]
%   {\inferrule*[Right={$\atom \epsilon$}]
%     { }
%     {\atom Z \vdashf{f g h i} \atom W}}
%   {\push {(f, g)} \atom Z \vdashf{h} \pull i \atom W}
% \end{mathpar}
we can enforce that non-invertible rules are only used on polarized sequents of the form $N \vdashf{f} P$, with a uniquely determined invertible rule applicable in all other cases. We say that such a system, with a polarity restriction on non-invertible phases, is \emph{strongly} focused.

Once we arrive at a sequent of the form $N \vdashf{f} P$ we have to make a potentially significant choice as to whether to try applying a non-invertible rule on the left or right first.
However, here a similar phenomenon arises in considering permutable applications of non-invertible rules.
% As a concrete example, consider deriving $\pull {(f,g)} \atom X \vdashf{fghi} \push i \atom Y$ from an axiom $\delta : \atom X \vdashf{h} \atom Y$.
% The sequent admits two different weakly focused derivations:
% \begin{mathpar}
%   \inferrule*[Right={$\Rpush[i]$}]
%   {\inferrule*[Right={$\Lpull[(f,g)]$}]
%     {\inferrule*[Right={$\atom \delta$}]
%       { }
%       {\atom X \vdashf{h} \atom Y}}
%     {\pull {(f, g)} \atom X \vdashf{f g h} \atom Y}}
%   {\pull {(f, g)} \atom X \vdashf{f g h i} \push i \atom Y}
%
%   \inferrule*[Right={$\Lpull[(f,g)]$}]
%   {\inferrule*[Right={$\Rpush[i]$}]
%     {\inferrule*[Right={$\atom \delta$}]
%       { }
%       {\atom X \vdashf{h} \atom Y}}
%     {\atom X \vdashf{h i} \push i \atom Y}}
%   {\pull {(f, g)} \atom X \vdashf{f g h i} \push i \atom Y}
% \end{mathpar}
% Although the weak focusing discipline does successfully get rid of some unnecessary choices (namely, the possibility of interleaving $\Rpush[i]$ in between $\Lpull[f]$ and $\Lpull[g]$), it still distinguishes between these two permutation equivalent derivations.
%
Again it is natural to introduce a rule that allows to perform two non-invertible rules in parallel, written in the middle below:
\begin{equation}\label{rules:strong-focus}
  \infer[{\Lpull[\sigma]}]{\pull{\sigma}P \vdashf{\sigma f} Q}{P \vdashf{f} Q}
  \qquad
  \infer[{\Lpull[\sigma]\Rpush[\tau]}]{\pull{\sigma}P \vdashf{\sigma f\tau} \push{\tau}N}{P \vdashf{f} N}
  \qquad
  \infer[{\Rpush[\tau]}]{N \vdashf{f \tau} \push{\tau}M}{N \vdashf{f} M}
\end{equation}
The collection of six polarized rules \eqref{rules:strong-inversion} and \eqref{rules:strong-focus}, together with the ordinary axiom rule, together define a system that we call \emph{strongly multifocused}.

Strong focusing and multi-focusing are standard notions from the literature~\cite{ChaudhuriMillerSaurin2008,Simmons2014}.
Going from a weakly-focused to a strongly-focused system (enforcing that invertible rules are applied eagerly) makes the system more canonical.
On the other hand, introducing multifocusing (the bi-focusing rule ${\Lpull[\sigma]\Rpush[\tau]}$ in our case) does not automatically make the system more canonical, since it actually allows more proofs: every time a bi-focused rule is used, it is also possible to use two mono-focused rules in sequence. In Section~\ref{subsec:maximal-multi-focusing} below, we will explain how to restrict this system by limiting the usage of mono-focused rules, resulting in a calculus that is canonical under an assumption on the base category.

%\subsubsection{Strongly multifocused derivations}
%\label{subsubsec:multi-focused-derivations}

In Figure~\ref{fig:strong-multifocusing} we recapitulate the rules of the strongly multifocused system while introducing a term syntax. % (we write $\alpha_m$ for strongly multifocused derivations).

\begin{figure}
\begin{mathpar}
  \begin{array}{lr lclcl}
  \alpha_m & ::= & \atom{\delta} &&&& \\
    & \mid & \tLpull \sigma \alpha_m
    & \mid & \alpha_m \tRpush \tau
    & \mid & \tLRpullpush \sigma {\alpha_m} \tau
    \\
    & \mid & \uLpush \pi g \alpha_m
    & \mid & \alpha_m \uRpull g \rho
    & \mid & \uLRpushpull \pi g {\alpha_m} \rho
  \end{array}
  \\
  \inferrule*[Right={$\atom \delta$}]
  {\delta : X \to Y \in \D \\ p(\delta) = f}
  {\atom \delta : \atom X \vdashf{f} \atom Y}
  \\
  \inferrule*[Right={$\Lpush[\pi]$}]
        {\alpha_m : N \vdashf{\pi f} P}
        {\uLpush \pi f \alpha_m : \push \pi N \vdashf{f} P}
  \qquad
  \inferrule*[Right={$\Lpush[\pi]\Rpull[\rho]$}]
        {\alpha_m : N \vdashf{\pi f \rho} P}
        {\uLRpushpull \pi f {\alpha_m} \rho : \push \pi N \vdashf{f} \pull \rho P}
  \qquad\qquad
  \inferrule*[Right={$\Rpull[\rho]$}]
        {\alpha_m : N \vdashf{f \rho} P}
        {\alpha_m \uRpull f \rho : N \vdashf{f} \pull \rho P}
\\
  \inferrule*[Right={$\Lpull[\sigma]$}]
        {\alpha_m : P \vdashf{f} Q}
        {\tLpull \sigma \alpha_m : \pull \sigma P \vdashf{\sigma f} Q}
  \qquad
  \inferrule*[Right={$\Lpull[\sigma]\Rpush[\tau]$}]
        {\alpha_m : P \vdashf{f} N}
        {\tLRpullpush \sigma {\alpha_m} \tau : \pull \sigma P \vdashf{\sigma f \tau} \push \tau N}
  \qquad\qquad
  \inferrule*[Right={$\Rpush[\tau]$}]
        {\alpha_m : N \vdashf{f} M}
        {\alpha_m \tRpush \tau : N \vdashf{f \tau} \push \tau M}
\end{mathpar}
\caption{Rules of the strongly multifocused calculus.}
\label{fig:strong-multifocusing}
\end{figure}

\subsubsection{Neutral sequents, bipoles} Following the focusing
tradition, we call \emph{neutral} a sequent of the form
$N \vdashf{f} P$. A proof of a (non-atomic) neutral sequent has to start with
a focusing step, either on the left, right or both sides of the sequent.

Following the same tradition, we call \emph{bipole} a two-step (open) derivation of a neutral sequent
beginning with a focusing step followed by an inversion step.
We use the notation $\biL$ for a bipole of $\push \Lsym \ \pull \Lsym$ rules, $\biR$ for a bipole of $\pull \Rsym \ \push \Rsym$ rules, and $\biLR$ for a bi-focused bipole $({\push \Lsym}{\pull \Rsym}) \ ({\pull \Lsym}{\push \Rsym})$.
In order to avoid having to treat atomic formulas as a special case, we will expand the interpretation of
the $\Lpush[\pi]$, $\Rpull[\rho]$, and $\Lpush[\pi]\Rpull[\rho]$ rules by allowing $\pi$ (respectively, $\rho$) to be empty when the left (respectively right) side of the sequent is atomic.
With that convention, every strongly focused proof of a neutral sequent decomposes as a sequence of $\biL$~/~$\biR$~/~$\biLR$ bipoles ended by an axiom.

\subsubsection{Sequentialization equivalence of strongly multi-focused proofs}
\label{subsubsec:multi-focused-equivalence}

The permutation equivalence that we introduced in Section~\ref{sec:sequent-calculus:permeq} and extended to weakly focused proofs is not directly applicable for strongly focused proofs. Indeed, permutation equivalence breaks the polarity invariants that we described above.
However, there is a natural notion of equivalence for strongly multifocused derivations that arises from considering both possible \emph{sequentializations} of a $\biLR$ bipole:

\begin{equation*}
\begin{tikzcd}
&
\begin{array}{c}
  \inferrule*[Right={$\Lpull[\sigma]\Rpush[\tau]$}]
  {\inferrule*[Right={$\Lpush[\pi]\Rpull[\rho]$}]
    {N \vdashf{\pi f\rho} P}
    {\push{\pi} N\vdashf{f} \pull{\rho}P}}
  {\pull{\sigma}\push{\pi} N \vdashf{\sigma f\tau} \push{\tau}\pull{\rho}P}
\end{array}
\ar[ld, "\seqsym^\Rsym_\Lsym", start anchor=south west, end anchor = north east]
\ar[rd, "\seqsym^\Lsym_\Rsym"', start anchor=south east, end anchor = north west]
&  \\
\begin{array}{c}
\inferrule*[Right={$\Lpull[\sigma]$}]
{\inferrule*[Right={$\Lpush[\pi]$}]
  {\inferrule*[Right={$\Rpush[\tau]$}]
    {\inferrule*[Right={$\Rpull[\rho]$}]
      {N \vdashf{\pi f\rho} P}
      {N \vdashf{\pi f} \pull{\rho}P}}
    {N\vdashf{\pi f \tau} \push{\tau}\pull{\rho}P}}
  {\push{\pi} N\vdashf{f \tau} \push{\tau}\pull{\rho}P}}
{\pull{\sigma}\push{\pi} N \vdashf{\sigma f\tau} \push{\tau}\pull{\rho}P}
\end{array}
& &  
\begin{array}{c}
\inferrule*[Right={$\Rpush[\tau]$}]
{\inferrule*[Right={$\Rpull[\rho]$}]
  {\inferrule*[Right={$\Lpull[\sigma]$}]
    {\inferrule*[Right={$\Lpush[\pi]$}]
      {N \vdashf{\pi f\rho} P}
      {\push{\pi} N\vdashf{f \rho} P}}
    {\pull{\sigma}\push{\pi} N\vdashf{\sigma f \rho} P}}
  {\pull{\sigma}\push{\pi} N\vdashf{\sigma f} \pull{\rho}P}}
{\pull{\sigma}\push{\pi} N \vdashf{\sigma f\tau} \push{\tau}\pull{\rho}P}
\end{array}
\end{tikzcd}
\end{equation*}

\newsavebox\myboxA
\newsavebox\myboxAwstrings
\newsavebox\myboxB
\newsavebox\myboxBwstrings
\newsavebox\myboxC
\newsavebox\myboxCwstrings

\begin{lrbox}{\myboxA}
  \begin{tikzcd}
  |[alias=l0]| \cdot & |[alias=r0]|\cdot \\
  |[alias=l1]| \cdot & |[alias=r1]|\cdot \\
  |[alias=l2]| \cdot & |[alias=r2]|\cdot 
	\arrow["\pi f \rho", from=1-1, to=1-2]
	\arrow["\pi"', from=1-1, to=2-1] \arrow[phantom,from=1-1,to=2-2,"\push \Lsym\pull \Rsym"]
	\arrow["f" description, from=2-1, to=2-2]
	\arrow["\rho"', from=2-2, to=1-2]
	\arrow["\tau", from=2-2, to=3-2]
	\arrow["\sigma", from=3-1, to=2-1] \arrow[phantom,from=3-1,to=2-2,"\pull \Lsym\push \Rsym"]
	\arrow["\sigma f \tau" description, from=3-1, to=3-2]
  \end{tikzcd}
\end{lrbox}

\begin{lrbox}{\myboxAwstrings}
  \begin{tikzcd}[execute at end picture = {
    \draw[arrg, oedge, draw opacity=0.5, out=90,in=0] ($(l2)!0.33!(r2)$) to ($(l1)!0.5!(l2)$);
    \draw[arrf, oedge, draw opacity=0.5, out=90,in=-90] ($(l2)!0.50!(r2)$) to ($(l0)!0.5!(r0)$);
    \draw[arrh, oedge, draw opacity=0.5, out=90,in=180] ($(l2)!0.67!(r2)$) to ($(r1)!0.5!(r2)$);
    \draw[arrgp, oedge, draw opacity=0.5, out=0,in=-90] ($(l0)!0.5!(l1)$) to ($(l0)!0.33!(r0)$);
    \draw[arrhp, oedge, draw opacity=0.5, out=180,in=-90] ($(r0)!0.5!(r1)$) to ($(l0)!0.67!(r0)$);
  }  ]
  |[alias=l0]| \cdot & |[alias=r0]|\cdot \\
  |[alias=l1]| \cdot & |[alias=r1]|\cdot \\
  |[alias=l2]| \cdot & |[alias=r2]|\cdot 
	\arrow["\pi f \rho", from=1-1, to=1-2]
	\arrow["\pi"', from=1-1, to=2-1] \arrow[phantom,from=1-1,to=2-2,"\push \Lsym\pull \Rsym"]
	\arrow["f" description, from=2-1, to=2-2]
	\arrow["\rho"', from=2-2, to=1-2]
	\arrow["\tau", from=2-2, to=3-2]
	\arrow["\sigma", from=3-1, to=2-1] \arrow[phantom,from=3-1,to=2-2,"\pull \Lsym\push \Rsym"]
	\arrow["\sigma f \tau" description, from=3-1, to=3-2]
  \end{tikzcd}
\end{lrbox}

\begin{lrbox}{\myboxB}
  \begin{tikzcd}
    |[alias=l0]| \cdot & |[alias=r0]| \cdot \\
    |[alias=l1]| \cdot & |[alias=r1]| \cdot \\
    |[alias=l2]| \cdot & |[alias=r2]| \cdot \\
    |[alias=l3]| \cdot & |[alias=r3]| \cdot \\
    |[alias=l4]| \cdot & |[alias=r4]| \cdot
        \arrow["\pi f \rho", from=1-1, to=1-2]
	\arrow[equals, from=1-1, to=2-1] \arrow[phantom,from=1-1,to=2-2,"{\pull \Rsym}"]
	\arrow["\pi f" description, from=2-1, to=2-2]
	\arrow["\rho"', from=2-2, to=1-2]
	\arrow["\tau", from=2-2, to=3-2]
	\arrow[equals, from=3-1, to=2-1] \arrow[phantom,from=2-1,to=3-2,"{\push\Rsym}"]
	\arrow["\pi f \tau" description, from=3-1, to=3-2]
	\arrow["\pi"', from=3-1, to=4-1] \arrow[phantom,from=3-1,to=4-2,"\push\Lsym"]
	\arrow[equals, from=3-2, to=4-2]
	\arrow["f\tau" description, from=4-1, to=4-2]
	\arrow[equals, from=4-2, to=5-2]
	\arrow["\sigma", from=5-1, to=4-1] \arrow[phantom,from=4-1,to=5-2,"\pull\Lsym"]
	\arrow["\sigma f\tau"', from=5-1, to=5-2]
  \end{tikzcd}
\end{lrbox}

\begin{lrbox}{\myboxBwstrings}
  \begin{tikzcd}[execute at end picture = {
    \draw[arrg, oedge, draw opacity=0.5, out=90,in=0] ($(l4)!0.33!(r4)$) to ($(l3)!0.5!(l4)$);
    \draw[arrf, oedge, draw opacity=0.5, out=90,in=-90] ($(l4)!0.50!(r4)$) to ($(l0)!0.5!(r0)$);
    \draw[arrh, oedge, draw opacity=0.5, out=90,in=240] ($(l4)!0.67!(r4)$) to ($(r1)!0.5!(r2)$);
    \draw[arrgp, oedge, draw opacity=0.5, out=60,in=-90] ($(l2)!0.5!(l3)$) to ($(l0)!0.33!(r0)$);
    \draw[arrhp, oedge, draw opacity=0.5, out=180,in=-90] ($(r0)!0.5!(r1)$) to ($(l0)!0.67!(r0)$);
  }  ]
    |[alias=l0]| \cdot & |[alias=r0]| \cdot \\
    |[alias=l1]| \cdot & |[alias=r1]| \cdot \\
    |[alias=l2]| \cdot & |[alias=r2]| \cdot \\
    |[alias=l3]| \cdot & |[alias=r3]| \cdot \\
    |[alias=l4]| \cdot & |[alias=r4]| \cdot
        \arrow["\pi f \rho", from=1-1, to=1-2]
	\arrow[equals, from=1-1, to=2-1] \arrow[phantom,from=1-1,to=2-2,"{\pull \Rsym}"]
	\arrow["\pi f" description, from=2-1, to=2-2]
	\arrow["\rho"', from=2-2, to=1-2]
	\arrow["\tau", from=2-2, to=3-2]
	\arrow[equals, from=3-1, to=2-1] \arrow[phantom,from=2-1,to=3-2,"{\push\Rsym}"]
	\arrow["\pi f \tau" description, from=3-1, to=3-2]
	\arrow["\pi"', from=3-1, to=4-1] \arrow[phantom,from=3-1,to=4-2,"\push\Lsym"]
	\arrow[equals, from=3-2, to=4-2]
	\arrow["f\tau" description, from=4-1, to=4-2]
	\arrow[equals, from=4-2, to=5-2]
	\arrow["\sigma", from=5-1, to=4-1] \arrow[phantom,from=4-1,to=5-2,"\pull\Lsym"]
	\arrow["\sigma f\tau"', from=5-1, to=5-2]
  \end{tikzcd}
\end{lrbox}

\begin{lrbox}{\myboxC}
  \begin{tikzcd}
    |[alias=l0]| \cdot & |[alias=r0]| \cdot \\
    |[alias=l1]| \cdot & |[alias=r1]| \cdot \\
    |[alias=l2]| \cdot & |[alias=r2]| \cdot \\
    |[alias=l3]| \cdot & |[alias=r3]| \cdot \\
    |[alias=l4]| \cdot & |[alias=r4]| \cdot
	\arrow["\pi f \rho", from=1-1, to=1-2]
	\arrow["\pi"', from=1-1, to=2-1] \arrow[phantom,from=1-1,to=2-2,"{\push \Lsym}"]
	\arrow["f\rho" description, from=2-1, to=2-2]
	\arrow[equals, from=2-2, to=1-2]
	\arrow[equals, from=2-2, to=3-2]
	\arrow["\sigma", from=3-1, to=2-1] \arrow[phantom,from=2-1,to=3-2,"{\pull\Lsym}"]
	\arrow["\sigma f\rho" description, from=3-1, to=3-2]
	\arrow[equals, from=3-1, to=4-1] \arrow[phantom,from=3-1,to=4-2,"\pull\Rsym"]
	\arrow["\rho"', from=4-2, to=3-2]
	\arrow["\sigma f" description, from=4-1, to=4-2]
	\arrow["\tau", from=4-2, to=5-2]
	\arrow[equals, from=5-1, to=4-1] \arrow[phantom,from=4-1,to=5-2,"\push\Rsym"]
	\arrow["\sigma f\tau"', from=5-1, to=5-2]
  \end{tikzcd}
\end{lrbox}
\begin{lrbox}{\myboxCwstrings}
  \begin{tikzcd}[execute at end picture = {
    \draw[arrg, oedge, draw opacity=0.5, out=90,in=-60] ($(l4)!0.33!(r4)$) to ($(l1)!0.5!(l2)$);
    \draw[arrf, oedge, draw opacity=0.5, out=90,in=-90] ($(l4)!0.50!(r4)$) to ($(l0)!0.5!(r0)$);
    \draw[arrh, oedge, draw opacity=0.5, out=90,in=180] ($(l4)!0.67!(r4)$) to ($(r3)!0.5!(r4)$);
    \draw[arrgp, oedge, draw opacity=0.5, out=0,in=-90] ($(l0)!0.5!(l1)$) to ($(l0)!0.33!(r0)$);
    \draw[arrhp, oedge, draw opacity=0.5, out=120,in=-90] ($(r2)!0.5!(r3)$) to ($(l0)!0.67!(r0)$);
  }  ]
    |[alias=l0]| \cdot & |[alias=r0]| \cdot \\
    |[alias=l1]| \cdot & |[alias=r1]| \cdot \\
    |[alias=l2]| \cdot & |[alias=r2]| \cdot \\
    |[alias=l3]| \cdot & |[alias=r3]| \cdot \\
    |[alias=l4]| \cdot & |[alias=r4]| \cdot
	\arrow["\pi f \rho", from=1-1, to=1-2]
	\arrow["\pi"', from=1-1, to=2-1] \arrow[phantom,from=1-1,to=2-2,"{\push \Lsym}"]
	\arrow["f\rho" description, from=2-1, to=2-2]
	\arrow[equals, from=2-2, to=1-2]
	\arrow[equals, from=2-2, to=3-2]
	\arrow["\sigma", from=3-1, to=2-1] \arrow[phantom,from=2-1,to=3-2,"{\pull\Lsym}"]
	\arrow["\sigma f\rho" description, from=3-1, to=3-2]
	\arrow[equals, from=3-1, to=4-1] \arrow[phantom,from=3-1,to=4-2,"\pull\Rsym"]
	\arrow["\rho"', from=4-2, to=3-2]
	\arrow["\sigma f" description, from=4-1, to=4-2]
	\arrow["\tau", from=4-2, to=5-2]
	\arrow[equals, from=5-1, to=4-1] \arrow[phantom,from=4-1,to=5-2,"\push\Rsym"]
	\arrow["\sigma f\tau"', from=5-1, to=5-2]
  \end{tikzcd}
\end{lrbox}
\noindent
Or equivalently in stack-and-string diagrammatic form:
\begin{equation*}
\begin{tikzpicture}[baseline={([yshift=-.5ex]current bounding box.center)}]
  \node (B) {\usebox\myboxAwstrings};
  \node[left=4em of B] (A) {  \usebox\myboxBwstrings };
  \node[right=4em of B] (C) { \usebox\myboxCwstrings };
  \draw[->] (B.west) -- (A.east) node[midway,below]{$\seqsym^\Rsym_\Lsym$};
  \draw[->] (B.east) -- (C.west) node[midway,below]{$\seqsym^\Lsym_\Rsym$};
\end{tikzpicture}
\end{equation*}
\noindent
Or equivalently again in proof term syntax:
\begin{equation}\label{eq:seqeq-terms}
\begin{tikzcd}
\tLpull \sigma (\uLpush {\pi} {f \tau } ((\beta \uRpull {\pi f} {\rho}) \tRpush \tau))
    & \tLRpullpush \sigma {(\uLRpushpull \pi f \beta \rho)} \tau
    \ar[l, "\seqsym^\Rsym_\Lsym"] \ar[r, "\seqsym^\Lsym_\Rsym"']
    &  
((\tLpull \sigma (\uLpush \pi {f \rho} \beta)) \uRpull {\sigma f} {\rho}) \tRpush \tau
\end{tikzcd}
\end{equation}
Recall that we allow $\pi$ and $\rho$ to be empty, $\pi = \epsilon$ or $\rho = \epsilon$ (in which case, e.g., $\uLpush \epsilon {} \alpha = \alpha$).

Let us write $(\rewtoseq)$ for this sequentialization order, and $(\permeqseq)$ for the corresponding equivalence relation.

% \subsubsection{Defining cut for strongly multifocused derivations}
% In \S\ref{sec:sequent-calculus:cut} we proved cut admissibility for the unrestricted sequent calculus, which was used to define composition in the category $\Bifib{p}$.
% Here we do the same for the strongly multifocused calculus.

% There are essentially four cases to consider:

% \begin{align*}
% P \to A & \to Q \\
% P \to A & \to N \\
% N \to A & \to P \\
% N \to A & \to M
% \end{align*}
% where the  middle formula $A$ has arbitrary polarity. The case $N \to A \to P$ is a principal cut, the other cases are commutative cuts defined by left/right/bi-inversion steps.

% \begin{defi}
% Given $\alpha_m : S \vdashf{f} U$ and $\beta_m : U \vdashf{g} T$, we define $\alpha_m \hcomp \beta_m : S \vdashf{fg} T$ as follows.
% \end{defi}

\subsubsection{Relating the weak-monofocused and strong-multifocused systems.}~

The $(\rewtoseq)$-normal forms of this relation are the strongly \emph{monofocused} proofs---in other words proofs that do not use the bi-focusing rule, although they can use bi-inversion. Such proofs can then be turned into derivations in the weakly focused calculus by selecting an ordering for each bi-inversion rule. We define $\Seq{\alpha_m}$ as the set of weakly monofocused sequentializations of a strongly multifocused proof $\alpha_m$.

\begin{prop}
  \label{prop:sequentialization-preserves-equiv}
If $\alpha_m \permeqseq \alpha'_m$ then for all $\alpha_w \in \Seq{\alpha_m}$ and $\alpha'_w \in \Seq{\alpha'_m}$ we have $\alpha_w \permeq \alpha'_w$.
\end{prop}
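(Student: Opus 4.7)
The strategy is to reduce the proposition to a single application of $\rewtoseq$ in context, together with a ``residual'' claim that all weakly focused proofs in $\Seq{\alpha_m}$ for a fixed $\alpha_m$ are pairwise permutation equivalent. Since $\permeqseq$ is by definition the equivalence relation generated by $\rewtoseq$, a short induction on the length of an equivalence chain will then deliver the proposition.

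First, I would observe that if $\alpha_m \rewtoseq \alpha_m'$, then the rewrite replaces a single bi-focused bipole inside $\alpha_m$ by one of its two sequentializations---a strongly monofocused subderivation---while leaving the rest of the proof term untouched. Consequently every weakly focused sequentialization of $\alpha_m'$ is also a weakly focused sequentialization of $\alpha_m$, obtained by making the matching sequentialization choice at the rewritten bipole (along with whatever choices are made at other bipoles and bi-inversions). In other words $\Seq{\alpha_m'} \subseteq \Seq{\alpha_m}$, so the single-step case reduces to the residual claim.

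For the residual claim, I would proceed by induction on $\alpha_m$, with the nontrivial cases being the two sources of non-determinism in forming an element of $\Seq{\alpha_m}$: a bi-focused bipole and a bi-inversion rule. In both cases the possible sequentializations correspond to swapping a left-side and a right-side operation. Concretely, for a bi-focused bipole $\tLRpullpush{\sigma}{(\uLRpushpull{\pi}{f}{\beta}{\rho})}{\tau}$, the two sequentializations in \eqref{eq:seqeq-terms} differ only in whether the two left-side operations $\tLpull{\sigma}{}$ and $\uLpush{\pi}{}{}$ precede or follow the two right-side operations $\tRpush{}{\tau}$ and $\uRpull{}{}{\rho}$; after unfolding via $\ceil{-}$ to the single-step rules, each of the four adjacent swaps between a left-rule and a right-rule is exactly an instance of one of the basic generators \eqref{eq:permgen1}--\eqref{eq:permgen4} (mult/mult, div/mult, mult/div and div/div respectively). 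Similarly, for a bi-inversion rule the two orderings differ by a single pattern of swaps of $\uLpush$ past $\uRpull$, which is covered by \eqref{eq:permgen4}.

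The main obstacle is combinatorial rather than conceptual: the big-step rules $\uLpush{\pi}{}{}$ and $\uRpull{}{}{\rho}$ expand via $\ceil{-}$ into sequences of single-step rules indexed by the arrows in $\pi$ and $\rho$, so each putative ``swap'' between big-step rules actually involves a whole grid of basic permutations. I would therefore prove a preparatory lemma stating that the iterated analogues of \eqref{eq:permgen1}--\eqref{eq:permgen4} for the big-step multiplications and divisions follow by straightforward induction on sequence lengths, and I would invoke Lemma~\ref{lem:cut-respects-permeq} to lift these local swaps through the surrounding proof context. Once this bookkeeping is in place, the remainder of the argument is routine.
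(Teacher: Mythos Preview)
Your proposal is correct and takes essentially the same approach as the paper: reduce to showing that all elements of $\Seq{\alpha_m}$ are pairwise permutation equivalent, then verify this by checking that the two sequentialization choices at a bi-focused bipole (and the two orderings at a bi-inversion) yield $\permeq$-related weakly focused proofs via the generators \eqref{eq:permgen1}--\eqref{eq:permgen4}. One small correction: to lift the local swaps through the surrounding context you need the congruence closure built into Definition~\ref{defi:permeq}, not Lemma~\ref{lem:cut-respects-permeq}---there are no cuts involved in sequentialization.
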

\begin{proof}
\label{prf:sequentialization-preserves-equiv}
The idea of the proof is that for any non-deterministic choice in the sequentialization process, the two choices will sequentialize into permutation-equivalent derivations.

The first choice comes from the definition of the sequentialization order between multifocused proofs. For each $\alpha$ with two direct sequentializations $\alpha_1$, $\alpha_2$ as depicted in \eqref{eq:seqeq-terms} above, we have that $\Seq{\alpha} = \Seq{\alpha_1} \uplus \Seq{\alpha_2}$, and we prove that for any ${\alpha_1}_w \in \Seq{\alpha_1}$ and ${\alpha_2}_w \in \Seq{\alpha_2}$ we have ${\alpha_1}_w \permeq {\alpha_2}_w$. Indeed, the two sequentializations sets are
\begin{align*}
\Seq{\alpha_1} &=
  \left\{\;
    \tLpull \sigma (\uLpush {\pi} {} ((\beta_w \uRpull f {\rho}) \tRpush \tau))
   \;\mid\; \beta_w \in \Seq{\beta} \;\right\} \\
\Seq{\alpha_2} &=
  \left\{\;
    ((\tLpull \sigma (\uLpush {\pi} f \beta_w)) \uRpull {} {\rho}) \tRpush \tau
   \;\mid\; \beta_w \in \Seq{\beta} \;\right\}
\end{align*}
and for any $\beta_w \in \Seq{\beta}$, we do have as expected:
\begin{mathpar}
  \begin{array}{rl}
    & \tLpull \sigma (\uLpush {\pi} {} ((\beta_w \uRpull f {\rho}) \tRpush \tau))
    \\ \permeq
    & \tLpull \sigma ((\uLpush {\pi} {} (\beta_w \uRpull f {\rho})) \tRpush \tau)
    \\ \permeq
    & (\tLpull \sigma (\uLpush {\pi} {} (\beta_w \uRpull f {\rho}))) \tRpush \tau
    \\ \permeq
    & (\tLpull \sigma ((\uLpush {\pi} {f} \beta_w) \uRpull {} {\rho})) \tRpush \tau
    \\ \permeq
    & ((\tLpull \sigma (\uLpush {\pi} {f} \beta_w)) \uRpull {} {\rho}) \tRpush \tau
  \end{array}
\end{mathpar}

The second choice is in the ordering of inversion rules when sequentializing bi-inversion rules, and the two choices are immediately permutation-equivalent.
\end{proof}

\begin{cor}\label{cor:seq-permeq}
  All elements of $\Seq{\alpha_m}$ are permutation-equivalent: for any $\alpha_m$ we have $\alpha_m \permeqseq \alpha_m$, so $\alpha_w \in \Seq{\alpha_m}$ and $\alpha'_w \in \Seq{\alpha_m}$ imply $\alpha_w \permeq \alpha'_w$.
\end{cor}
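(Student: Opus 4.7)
The plan is to derive this immediately from Proposition~\ref{prop:sequentialization-preserves-equiv} by specializing $\alpha'_m$ to $\alpha_m$. Since $(\permeqseq)$ is the equivalence relation generated by the sequentialization order $(\rewtoseq)$, it is in particular reflexive, so we trivially have $\alpha_m \permeqseq \alpha_m$. Applying Proposition~\ref{prop:sequentialization-preserves-equiv} with $\alpha'_m = \alpha_m$ then yields exactly the desired conclusion: for all $\alpha_w, \alpha'_w \in \Seq{\alpha_m}$, we have $\alpha_w \permeq \alpha'_w$.

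There is essentially no obstacle here, since all the real work has already been carried out in the proof of Proposition~\ref{prop:sequentialization-preserves-equiv}, where the two sources of non-determinism in sequentialization (the choice of which single-focused bipole to expand a bi-focused bipole into, and the choice of ordering between the two inversion rules inside a bi-inversion step) were shown to yield permutation-equivalent results. The corollary simply extracts the special case where the two multifocused proofs being compared coincide.
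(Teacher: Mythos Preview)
Your proposal is correct and takes exactly the same approach as the paper: the paper states this as an immediate corollary whose justification is embedded in the statement itself (reflexivity of $\permeqseq$ followed by an application of Proposition~\ref{prop:sequentialization-preserves-equiv} with $\alpha'_m = \alpha_m$), and your argument spells this out precisely.
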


Conversely, given a weakly focused derivation $\alpha_w : S \vdashf{h} T$, let us write $\Strengthen{\alpha_w}$ for the \emph{strengthening} of $\alpha_w$, a uniquely-determined strongly (mono-)focused proof defined as follows, by induction on judgments:
\begin{mathpar}
  \begin{array}{lll}
    \Strengthen{\alpha : \push {\pi} N \vdashf{f} \pull {\rho} P}
    & :=
    & \uLpush \pi {} {\Strengthen{\opcart \pi N \hcomp \alpha \hcomp \cart \rho P : N \vdashf{\pi f \rho} P}} \uRpull {} \rho
    \\
    \Strengthen{(\tLpull \sigma \alpha) : \pull \sigma P \vdashf{\sigma f} Q}
    & :=
    & \tLpull \sigma {\Strengthen{\alpha : P \vdashf{f} Q}}
    \\
    \Strengthen{(\alpha \tRpush \tau) : N \vdashf{f\tau} \push \tau M}
    & :=
    & \Strengthen{\alpha : N \vdashf{f} M} \tRpush \tau
    \\
    \Strengthen{\atom\delta : \atom X \vdashf{f} \atom Y} &:= & \atom\delta
  \end{array}
\end{mathpar}
In other words, to strengthen $\alpha_w$, whenever we encounter a judgment that admits invertible steps, we do them (both at the same time if applicable), and otherwise we imitate the non-invertible steps.

This strengthening transformation defines a mapping from weakly focused derivations to strongly focused derivations of the same judgment.
We now establish that it sends permutation equivalent weakly focused derivations to sequentially equivalent strongly focused derivations.
\begin{prop}
\label{prop:strengthening-preserves-equiv}
If $\alpha_w \permeq \alpha_w'$ then $\Strengthen{\alpha_w} \permeqseq \Strengthen{\alpha_w'}$.
\end{prop}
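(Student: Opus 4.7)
The proof proceeds by induction on the derivation of $\alpha_w \permeq \alpha_w'$. Since $\permeq$ on weakly focused derivations is the congruence generated by the iterated versions of the four permutation rules of Definition~\ref{defi:permeq}, we handle each of the four generators in the base case, together with the congruence closure under the weakly focused term-formers; reflexivity, symmetry, and transitivity are immediate from $\permeqseq$ being an equivalence.

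The four generators split into three sub-cases, according to the polarity of the conclusion sequent at which the permutation occurs. For the \emph{inversion--inversion} rule between $\uLpush$ and $\uRpull$ (conclusion $\push \pi N \vdashf{f} \pull \rho P$, bi-invertible), both $(\uLpush \pi {f\rho} \alpha)\uRpull f \rho$ and $\uLpush \pi f (\alpha \uRpull {\pi f} \rho)$ strengthen to $\uLpush \pi {} \Strengthen{\beta} \uRpull {} \rho$, where $\beta$ is the inner neutral subderivation extracted by the cartesian cut $\opcart \pi N \hcomp (\cdot) \hcomp \cart \rho P$. A direct calculation using the principal and commutative cut equations of Definition~\ref{defi:cut} together with Lemma~\ref{lem:iterated-cut} shows that both cartesian cuts simplify to the shared inner derivation $\alpha$, so the two strengthenings are literally equal. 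For the \emph{focus--focus} rule between $\tLpull$ and $\tRpush$ (neutral conclusion with both sides focusable), the two sides strengthen to $(\tLpull \pi \Strengthen{\alpha})\tRpush \rho$ and $\tLpull \pi (\Strengthen{\alpha} \tRpush \rho)$, which are precisely the two mono-focused sequentializations of the bi-focused proof $\tLRpullpush \pi {\Strengthen{\alpha}} \rho$ (obtained by specializing the sequentialization diagram with empty bi-inversion sequences), and are therefore $\permeqseq$-equivalent by the defining $\rewtoseq$ relation. For the two remaining \emph{inversion--focus} rules (mono-invertible conclusion), both sides produce literally equal strengthenings, by a similar cartesian cut analysis as in the bi-invertible case.

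For the congruence step, suppose $\alpha \permeq \alpha'$ with $\Strengthen{\alpha} \permeqseq \Strengthen{\alpha'}$ by induction, and let $C[-]$ be any of the weakly focused term-formers. By case analysis on the polarity of the conclusion of $C[\alpha]$, and hence on which clause of the definition of $\Strengthen{-}$ applies, $\Strengthen{C[\alpha]}$ decomposes as a strongly multifocused term-former applied either to $\Strengthen{\alpha}$ directly, or to $\Strengthen{\beta}$ where $\beta$ is obtained from $\alpha$ by pre- or post-composition with cartesian liftings. In either subcase, $\alpha \permeq \alpha'$ implies the corresponding relation on inner subderivations via Lemma~\ref{lem:cut-respects-permeq}, and $\permeqseq$ being a congruence on strongly multifocused derivations yields $\Strengthen{C[\alpha]} \permeqseq \Strengthen{C[\alpha']}$.

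The main obstacle is the bookkeeping in the congruence step when the outer context exposes a bi- or mono-invertible conclusion: here $\Strengthen{C[\alpha]}$ absorbs the outer rule into a cartesian cut, and one must verify that this cut reduces uniformly in $\alpha$ so that permutation-equivalent subderivations yield $\permeqseq$-equivalent strengthenings.
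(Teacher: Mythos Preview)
Your approach by induction on the derivation of $\alpha_w \permeq \alpha_w'$ has a genuine gap in the congruence step, precisely at the place you yourself flag as ``the main obstacle.''  When the outer context $C[-]$ exposes a non-neutral conclusion, you correctly observe that $\Strengthen{C[\alpha]}$ unfolds to a multifocused term-former applied to $\Strengthen{\beta}$, where $\beta$ is $\alpha$ composed with cartesian liftings.  To conclude you then need $\Strengthen{\beta} \permeqseq \Strengthen{\beta'}$, but your induction hypothesis only gives you $\Strengthen{\alpha} \permeqseq \Strengthen{\alpha'}$.  Lemma~\ref{lem:cut-respects-permeq} does yield $\beta \permeq \beta'$, but the derivation of that equivalence is \emph{not} a sub-derivation of the congruence step $C[\alpha] \permeq C[\alpha']$, so you cannot recurse on it.  Your appeal to ``$\permeqseq$ being a congruence'' does not bridge this gap, and your phrase ``reduces uniformly in $\alpha$'' names the desideratum without supplying an argument.  (One could try to recover by proving that the outermost inversion rules of a strengthening are cancellable in $\permeqseq$, so that the IH on $\alpha$ yields information about $\Strengthen{\beta}$, but you do not state or prove such a lemma.)

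The paper avoids this difficulty by inducting on the underlying \emph{judgment} $S \vdashf{f} T$ rather than on the $\permeq$ derivation.  This gives a universally quantified induction hypothesis: for every pair $\beta \permeq \beta'$ at a strictly smaller judgment, $\Strengthen{\beta} \permeqseq \Strengthen{\beta'}$.  At a non-neutral judgment, both $\Strengthen{\alpha_w}$ and $\Strengthen{\alpha_w'}$ unfold via the first clause of the definition, and the inner cartesian cuts land at the strictly smaller neutral judgment $N \vdashf{\pi f \rho} P$; since cut respects $\permeq$, the IH applies directly.  At a neutral judgment the paper then case-analyzes the $\permeq$ derivation, but the analysis is much shorter than yours: generators \eqref{eq:permgen2}--\eqref{eq:permgen4} are \emph{impossible} at a neutral conclusion (each forces a push on the left or a pull on the right), so only generator \eqref{eq:permgen1} survives, handled exactly as you describe.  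The only congruence cases possible at a neutral judgment are $\tLpull \sigma(-)$ and $(-)\tRpush \tau$, and both immediately pass to a strictly smaller judgment.
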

\begin{proof}
\label{prf:strengthening-preserves-equiv}
By induction on the underlying judgment derived by both derivations $\alpha_w \permeq \alpha_w'$.  There are two cases to consider, depending on whether or not the judgment is invertible.
\begin{itemize}
\item Case $\push{\pi} N \vdashf{f} \pull{\rho} P$. (Recall we allow $\pi = \epsilon$ or $\rho = \epsilon$.) We have
\[
\Strengthen{\alpha_w} = \uLpush \pi {} {\Strengthen{\opcart \pi N \hcomp \alpha_w \hcomp \cart \rho P}} \uRpull {} \rho
 \permeqseq \uLpush \pi {} {\Strengthen{\opcart \pi N \hcomp \alpha_w' \hcomp \cart \rho P}} \uRpull {} \rho
 = \Strengthen{\alpha_w'}
\]
where the first and last steps are by definition and the middle step is by induction on
$\opcart \pi N \hcomp \alpha_w \hcomp \cart \rho P \permeq \opcart \pi N \hcomp \alpha_w' \hcomp \cart \rho P$ at the smaller judgment $N \vdashf{\pi f\rho} P$.
\item Case $N \vdashf{h} P$. We consider the derivation of the permutation equivalence $\alpha_w \permeq \alpha_w'$.
\begin{itemize}
\item Subcase $\alpha_w\permeq \alpha_w'$ is derived by the generator \eqref{eq:permgen1}, with $\alpha_w =(\tLpull {f} \beta) \tRpush {h}$ and $\alpha_w' = \tLpull {f} (\beta \tRpush {h})$.  In this case the two strengthenings are sequentializations of the same bi-focused derivation $\tLRpullpush f {\Strengthen \beta} h$.
\item Subcase $\alpha_w\permeq \alpha_w'$ is derived by a generator \eqref{eq:permgen2}--\eqref{eq:permgen4}: impossible at the judgment $N \vdashf{h} P$.
\item Subcase $\alpha_w = \tLpull{f} \beta$ and $\alpha'_w = \tLpull{f} \beta'$ for some $\beta \permeq \beta' : Q \vdashf{fh} P$ where $N = \pull{f} Q$.
Then $\Strengthen{\beta} \permeqseq \Strengthen{\beta'}$ by induction and so $\Strengthen{\alpha_w} = \tLpull{f} \Strengthen{\beta} \permeqseq \tLpull{f} \Strengthen{\beta'} = \Strengthen{\alpha_w'}$.
\item Subcase $\alpha_w = \beta\tRpush{g} $ and $\alpha'_w = \beta'\tRpush{g} $ for some $\beta \permeq \beta' : N \vdashf{hg} M$ where $P = \push{g} M$: same as above.
\end{itemize}

%Since permutation equivalence is defined as the congruence closure of the generating relations of ..., it suffices to show this implication when $\alpha_w \permeq \alpha_w'$ is derived by one of the generators or as a congruence $C[\alpha_w] \permeq C[\alpha_w']$.

\end{itemize}
\end{proof}

% We see the set of derivations between $S, T$ as setoids.
The relation between permutation equivalence and sequentialization equivalence is captured by the following diagram:
\[
\begin{tikzcd}
(\mathsf{WeakMono}(S,f,T), \permeq)
\ar[r,bend left,"\Strengthensym"{name=L}]
\ar[phantom,r,"\sim"]
& (\mathsf{StrongMono}(S,f,T), \permeqseq)
\ar[r,bend left,hook]
\ar[l,spanmap,bend left,"\InvSeqsym"{name=R}]
\ar[phantom,r,"\sim"]
& (\mathsf{StrongMulti}(S,f,T), \permeqseq)
\ar[l,spanmap,bend left,"\FocSeqsym"{name=R}]
\end{tikzcd}
\]
where we have factored sequentialization $\Seq{-}$ in two different nondeterministic / multivalued functions:
$\FocSeq{\alpha_m}$ is the sequentialization of the bi-focused rules of a multifocused proof, and 
$\InvSeq{\alpha_m}$ is the sequentialization of the bi-invertible rules of a strongly monofocused proof.
This diagram expresses two equivalences of setoids (sets equipped with an equivalence relation), in the following sense:
\begin{enumerate}
\item $\Strengthensym$ is a morphism of setoids,
  per Proposition~\ref{prop:strengthening-preserves-equiv}.
  (This gives us completeness and relational completeness of
   strongly focused proof with respect to weakly focused proofs).
\item $\InvSeqsym$ and $\FocSeqsym$ are nondeterministic morphisms of setoids, per Proposition~\ref{prop:sequentialization-preserves-equiv}.
\item The equivalence on the right corresponds to the following statement:
  \begin{mathpar}
    \begin{array}{c}
      \alpha_m \permeq \FocSeq{\alpha_m}
      \\
      \forall \alpha'_m \in \FocSeq{\alpha_m},\ \alpha'_m \permeqseq \alpha_m
    \end{array}
  \end{mathpar}
  It is immediate from the fact that $\FocSeq{\alpha_m}$ is defined as the set of $(\rewtoseq)$-normal forms of $\alpha_m$, and $(\permeqseq)$ as the least congruence containing $(\rewtoseq)$.
\item The equivalence on the left corresponds to the following statements:
  \begin{restatable}{prop}{invseqS}
    \label{prop:invseq-S}
    \begin{mathpar}
      \begin{array}{c}
        \forall \alpha_w, \quad \alpha_w \permeq \InvSeq{\Strengthen{\alpha_w}}
        \\
        \forall \alpha_w,\ \forall \alpha'_w \in \InvSeq{\Strengthen{\alpha_w}},\quad \alpha_w \permeq \alpha'_w
      \end{array}
    \end{mathpar}
  \end{restatable}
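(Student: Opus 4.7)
The plan is to prove both statements of Proposition~\ref{prop:invseq-S} simultaneously by induction on judgments, using the product subformula ordering introduced earlier. The second statement is an immediate consequence of the first: if $\alpha'_w, \alpha''_w \in \InvSeq{\Strengthen{\alpha_w}}$ then the first statement yields $\alpha_w \permeq \alpha'_w$ and $\alpha_w \permeq \alpha''_w$, and $(\permeq)$ is transitive. So the real content is showing that for every weakly focused $\alpha_w : S \vdashf{f} T$ and every $\alpha'_w \in \InvSeq{\Strengthen{\alpha_w}}$, we have $\alpha_w \permeq \alpha'_w$.

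I would split on the shape of the underlying judgment. \textbf{Case 1 (bi-invertible):} the judgment has the form $\push\pi N \vdashf{f} \pull\rho P$ (allowing $\pi = \epsilon$ or $\rho = \epsilon$ only when the corresponding side is not a genuine push/pull, as in the convention adopted for bipoles). By definition $\Strengthen{\alpha_w}$ applies bi-inversion at the outermost step and recurses on $\beta_w := \opcart \pi N \hcomp \alpha_w \hcomp \cart \rho P$ at the strictly smaller judgment $N \vdashf{\pi f \rho} P$. Any element $\alpha'_w \in \InvSeq{\Strengthen{\alpha_w}}$ arises by choosing a sequentialization of the outer bi-inversion and plugging in some $\beta'_w \in \InvSeq{\Strengthen{\beta_w}}$, producing either $\uLpush \pi {} (\beta'_w \uRpull {\pi f} \rho)$ or $(\uLpush \pi {f\rho} \beta'_w) \uRpull f \rho$. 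These two shapes are permutation equivalent by generator \eqref{eq:permgen4}, and by the induction hypothesis $\beta'_w \permeq \beta_w$. So it suffices to show $\alpha_w \permeq \uLpush \pi {} (\beta_w \uRpull {\pi f} \rho)$. Unfolding $\beta_w$ and invoking the two parts of $\eta$-expansion (Lemma~\ref{lem:eta-expansion-pushpull}) --- first to rewrite $\alpha_w$ under the pullback on the right, then under the pushforward on the left --- gives exactly this identity, using that cutting with the cartesian liftings $\opcart \pi N$ and $\cart \rho P$ amounts to iterated cut (Lemma~\ref{lem:iterated-cut}) together with the behaviour of identities on pushes and pulls.

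\textbf{Case 2 (non-bi-invertible):} the judgment is either atomic $\atom X \vdashf{f} \atom Y$, or of the form $\pull \sigma P \vdashf{f} T$ with $T$ not a pull, or $S \vdashf{f} \push \tau M$ with $S$ not a push. The definition of $\Strengthen$ in these cases simply mirrors the outermost focused rule of $\alpha_w$ (axiom, $\tLpull \sigma$, or $\_ \tRpush \tau$) and recurses on a strictly smaller judgment. Since no bi-inversion is introduced at the top, $\InvSeq$ leaves the outer rule untouched and recurses as well. The induction hypothesis on the subderivation, combined with the fact that $(\permeq)$ is a congruence for the weakly focused term-formers, closes this case.

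The main obstacle will be the handling of Case 1, and specifically the rewriting of $\alpha_w$ into the canonical form $\uLpush \pi {} ((\opcart \pi N \hcomp \alpha_w \hcomp \cart \rho P) \uRpull {\pi f} \rho)$ regardless of which weakly focused form $\alpha_w$ actually takes (it may start with a left mono-inversion, a right mono-inversion, or a mix). Two applications of $\eta$-expansion together with the associativity generator \eqref{eq:permgen4} deal with every shape uniformly, but writing out the equational reasoning carefully --- in particular verifying that the iterated cuts with cartesian liftings reduce as expected via Lemma~\ref{lem:iterated-cut} --- will be the most delicate bookkeeping step.
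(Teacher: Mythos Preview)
Your proposal is correct and follows essentially the same approach as the paper: induction on judgments with the same two-case split (invertible versus neutral), the same use of the induction hypothesis on the inner term $\opcart \pi N \hcomp \alpha_w \hcomp \cart \rho P$, and the same appeal to $\eta$-expansion (Lemma~\ref{lem:eta-expansion-pushpull}) to collapse the outer inversions back onto $\alpha_w$. Your extra remarks about generator~\eqref{eq:permgen4} relating the two sequentialization orders and about needing the iterated form of the cartesian-lifting cuts are accurate refinements that the paper leaves implicit.
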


  \begin{restatable}{prop}{Sinvseq}
    \label{prop:S-invseq}
  \begin{mathpar}
    \begin{array}{c}
      \forall \alpha_m, \quad \alpha_m \permeqseq \Strengthen{\InvSeq{\alpha_m}}
      \\
      \forall \alpha_m,\,\forall \alpha_w \in \InvSeq{\alpha_m},\quad \alpha_m \permeqseq \Strengthen{\alpha_w}
    \end{array}
  \end{mathpar}
  \end{restatable}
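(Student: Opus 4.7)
The plan is to prove the second (for-all-$\alpha_w$) statement by strong induction on the structure of the strongly monofocused proof $\alpha_m$; the first statement then follows immediately by reading $\Strengthen{\InvSeq{\alpha_m}}$ as the (nonempty) image of the multivalued composition $\Strengthensym \circ \InvSeqsym$ and universally quantifying over its elements. Since $\permeqseq$ is a congruence closed under all proof term constructors, each inductive step reduces to a local comparison at the outermost rule of $\alpha_m$ combined with the induction hypothesis applied to its strict subderivation.

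The cases where the outermost rule of $\alpha_m$ is the axiom or a single-sided inversion or focus rule (one of $\uLpush$, $\uRpull$, $\tLpull$, or $\tRpush$) are essentially immediate. At such steps $\InvSeq$ commutes with the rule — no bi-inversion is being broken — and $\Strengthensym$ is defined to mirror exactly the same outermost rule, so that every $\Strengthen{\alpha_w}$ has the same outer structure as $\alpha_m$, wrapping some $\Strengthen{\beta_w}$ for a $\beta_w \in \InvSeq{\beta}$. The induction hypothesis applied to $\beta$ gives $\Strengthen{\beta_w} \permeqseq \beta$, and the congruence of $\permeqseq$ under the outer rule yields $\Strengthen{\alpha_w} \permeqseq \alpha_m$.

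The one interesting case is the bi-inversion $\alpha_m = \uLRpushpull \pi f \beta \rho$ at a sequent of shape $\push \pi N \vdashf{f} \pull \rho P$. There $\InvSeq$ nondeterministically produces either $\uLpush \pi {} (\beta_w \uRpull {} \rho)$ or $(\uLpush \pi {} \beta_w) \uRpull {} \rho$, for each choice $\beta_w \in \InvSeq \beta$. Regardless of the choice, the conclusion sequent still has the bi-invertible shape, so the first clause of $\Strengthensym$ is the one that fires, and it returns a proof whose inner strengthening is applied to the double-cut $\opcart \pi N \hcomp \alpha_w \hcomp \cart \rho P$. Using Lemma~\ref{lem:iterated-cut} together with the identity-neutrality of cut (Lemma~\ref{lem:id-neutral}), this double cut simplifies up to $\permeq$ to the inner subderivation $\beta_w$, so that $\Strengthen{\alpha_w}$ becomes the bi-inversion wrapping $\Strengthen{\beta_w}$. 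Applying the induction hypothesis to $\beta$ and closing under congruence of $\permeqseq$ then gives $\Strengthen{\alpha_w} \permeqseq \alpha_m$.

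The main obstacle will be the careful bookkeeping in this bi-inversion case: one must verify that the admissible cuts with cartesian liftings prescribed by the first clause of $\Strengthensym$ absorb the outer invertible steps of $\alpha_w$ uniformly, independently of which of the two sequentializations $\InvSeq$ chose. This is the mirror image of the non-determinism already handled in Proposition~\ref{prop:sequentialization-preserves-equiv}, and once this absorption property is in place the remaining argument is a routine congruence-closure over the inductive structure of strongly monofocused proofs.
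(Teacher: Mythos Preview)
Your overall strategy matches the paper's, but one point needs correction. You treat the single-sided inversion cases (outermost rule $\Lpush[\pi]$ or $\Rpull[\rho]$) as ``essentially immediate,'' claiming that $\Strengthensym$ ``is defined to mirror exactly the same outermost rule.'' This misreads the definition: the first clause of $\Strengthensym$ fires at \emph{every} non-neutral judgment, including singly-invertible ones such as $\push\pi N \vdashf{f} P$ with $P$ a push formula, and it always inserts a cut with a cartesian lifting before recursing. So at $\alpha_m = \uLpush\pi f\beta$ one gets $\Strengthen{\alpha_w} = \uLpush\pi{}{\Strengthen{\opcart\pi N\hcomp\alpha_w}}$, not $\uLpush\pi{f}{\Strengthen{\beta_w}}$ directly, and collapsing the inner cut down to $\beta_w$ is exactly the absorption step (via Lemmas~\ref{lem:iterated-cut} and~\ref{lem:id-neutral}, together with Proposition~\ref{prop:strengthening-preserves-equiv}) that you reserved for bi-inversion. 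In short, the single-sided inversions are not immediate --- they are the degenerate instances, with $\rho=\epsilon$ or $\pi=\epsilon$, of your ``one interesting case.'' The paper avoids this split altogether by inducting on the judgment rather than on the structure of $\alpha_m$, treating all non-neutral judgments uniformly; it also claims the sharper conclusion $\alpha_m = \Strengthen{\alpha_w}$ on the nose, though $\permeqseq$ is all the statement requires.
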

  \noindent For each property, on the first line we wrote the statement with $\InvSeqsym$ seen as a multi-valued function nondeterministically returning a weakly focused proof, and on the second line we wrote it with $\InvSeqsym$ seen directly as a function into sets of weakly focused proofs.
Both statements can be established by induction on derivations; we detail the proofs in Appendix~\ref{prf:invseqS}.
\end{enumerate}

The intermediate system $\mathsf{StrongMono}$ is useful to describe the fine-grained properties of strengthening and sequentialization, but note that the equivalence relation $(\permeqseq)$ has no natural definition that is internal to $\mathsf{StrongMono}$, since the source derivation $\alpha_m$ of a sequentialization step $\alpha_m \rewtoseq \alpha'_m$ uses a bi-focused rule.

\subsubsection{Horizontal composition (cut) of strongly multi-focused proofs}
\label{subsubsec:multi-focused-cut}
 Now that we have defined strengthening, sequentialization and their properties, we can easily define horizontal composition (cut) of strongly multifocused proofs modulo permutation equivalence: $\alpha_m \hcomp \beta_m$ can be defined as $\Strengthen{\alpha_w \hcomp \beta_w}$ for some $\alpha_w \in \Seq{\alpha_m}$, $\beta_w \in \Seq{\beta_m}$. The choice of $\alpha_w$ and $\beta_w$ is irrelevant: they are all permutation equivalent, so the resulting strengthening is unique modulo sequentialization equivalence. This definition is easily seen to be associative (lifted from the associativity of composition of non-focused proofs) and unital---for identity derivations defined as the $\Strengthen{\Id[A]}$. Finally, we also directly get that $\Strengthen{-}$ respects composition and preserves identity derivations.

We can thus define a category of strongly multifocused derivations quotiented by $(\permeqseq)$, with $\Strengthen{-}$ a functor from $\Bifib{p}$. But this category is just $\Bifib{p}$ itself, given that our equivalence of setoids gives an equality of quotiented sets.

It is also possible to define cut directly on strongly multifocused derivations. We detail this construction in Appendix~\ref{app:multi-focused-cut}.

\subsection{Normal forms via rewriting}
\label{sec:focusing:normal-forms}

In this section we introduce a strongly normalizing rewriting relation between multifocused proofs that, under a certain condition on the base category, is locally confluent and thus admits unique normal forms: all proofs in each sequentialization equivalence class eventually reduce to the same normal form.

An intuition is that these normal forms will be ``maximally parallel'', in the sense that $\biL$ and $\biR$ bipoles should be combined into an $\biLR$ bipole whenever possible.
We therefore begin by considering the \emph{parallelization} order $(\rewtopar)$, which is just the opposite of the sequentialization order ($\alpha \rewtopar \beta$ iff $\beta \rewtoseq \alpha$).
Diagrammatically it is defined by the pair of rules $\parsym_\Lsym^\Rsym$ and $\parsym_\Rsym^\Lsym$ shown below:

\begin{equation*}
\begin{tikzpicture}
  \node (B) {\usebox\myboxA};
  \node[left=4em of B] (A) {  \usebox\myboxB };
  \node[right=4em of B] (C) { \usebox\myboxC };
  \begin{scope}[transform canvas={yshift=-1em}]
    \draw[->] (B.west) -- (A.east) node[midway,below]{$\seqsym^\Rsym_\Lsym$};
  \end{scope}
  \begin{scope}[transform canvas={yshift=1em}]
  \draw[->] (A.east) -- (B.west) node[midway,above]{$\parsym^\Rsym_\Lsym$};
  \end{scope}
  \begin{scope}[transform canvas={yshift=-1em}]
  \draw[->] (B.east) -- (C.west) node[midway,below]{$\seqsym^\Lsym_\Rsym$};
  \end{scope}  
  \begin{scope}[transform canvas={yshift=1em}]
  \draw[->] (C.west) -- (B.east) node[midway,above]{$\parsym^\Lsym_\Rsym$};
  \end{scope}  
\end{tikzpicture}
\end{equation*}
Let us now make a few observations about the above rules.
While the sequentialization order is non-deterministic in the sense that a given multifocused proof can have many different sequentializations, each rule $\seqsym^\Rsym_\Lsym$ and $\seqsym^\Lsym_\Rsym$ is deterministic: for a given $\biLR$ bipole $\alpha$, there are unique $\beta$ and $\gamma$ such that $\alpha \rewtoseqRL \beta$ and $\alpha\rewtoseqLR \gamma$.
Dually, the rules $\parsym^\Rsym_\Lsym$ and $\parsym^\Lsym_\Rsym$ are \emph{co-deterministic} in the sense that for any given $\biLR$ bipole $\alpha$ there are unique $\beta$ and $\gamma$ such that $\beta \rewtoparRL \alpha$ and $\gamma \rewtoparLR \alpha$.
On the other hand, in general the parallelization rules are not deterministic.
Indeed, consider a rewriting diagram of the following form:
\newsavebox\myboxAg
\newsavebox\myboxAh
\begin{lrbox}{\myboxAg}
  \begin{tikzcd}
	\cdot & \cdot \\
	\cdot & \cdot \\
	\cdot & \cdot 
	\arrow["\pi f \rho", from=1-1, to=1-2]
	\arrow["\pi"', from=1-1, to=2-1] \arrow[phantom,from=1-1,to=2-2,"\push \Lsym\pull \Rsym"]
	\arrow["g" description, from=2-1, to=2-2]
	\arrow["\rho"', from=2-2, to=1-2]
	\arrow["\tau", from=2-2, to=3-2]
	\arrow["\sigma", from=3-1, to=2-1] \arrow[phantom,from=3-1,to=2-2,"\pull \Lsym\push \Rsym"]
	\arrow["\sigma f \tau" description, from=3-1, to=3-2]
  \end{tikzcd}
\end{lrbox}
\begin{lrbox}{\myboxAh}
  \begin{tikzcd}
	\cdot & \cdot \\
	\cdot & \cdot \\
	\cdot & \cdot 
	\arrow["\pi f \rho", from=1-1, to=1-2]
	\arrow["\pi"', from=1-1, to=2-1] \arrow[phantom,from=1-1,to=2-2,"\push \Lsym\pull \Rsym"]
	\arrow["h" description, from=2-1, to=2-2]
	\arrow["\rho"', from=2-2, to=1-2]
	\arrow["\tau", from=2-2, to=3-2]
	\arrow["\sigma", from=3-1, to=2-1] \arrow[phantom,from=3-1,to=2-2,"\pull \Lsym\push \Rsym"]
	\arrow["\sigma f \tau" description, from=3-1, to=3-2]
  \end{tikzcd}
\end{lrbox}
\begin{equation}
\label{eq:criticalLR}
\begin{tikzpicture}
  \node (B) {\usebox\myboxB};
  \node[left=4em of B] (A) {  \usebox\myboxAg };
  \node[right=4em of B] (C) { \usebox\myboxAh };
  \draw[->] (B.west) -- (A.east) node[midway,above]{$\parsym^\Rsym_\Lsym$};
  \draw[->] (B.east) -- (C.west) node[midway,above]{$\parsym^\Rsym_\Lsym$};
\end{tikzpicture}
\end{equation}
Under the assumption that
$f\tau = g\tau = h\tau$
and 
$\pi f = \pi g = \pi h$, 
both rewrites in \eqref{eq:criticalLR} are valid instances of the $\parsym^\Rsym_\Lsym$ rule.
However, the two resulting bipoles will be distinct unless $g = h$.
The same argument shows that the $\parsym^\Lsym_\Rsym$ rule is not deterministic in general.

A sufficient condition for the parallelization rules to be deterministic is that the underlying base category is \emph{factorization preordered} in the sense of Johnstone~\cite{Johnstone1999}.

\begin{defi}\label{def:FP-condition}
  A category $\C$ is \defin{factorization preordered} if for any diagram of composable arrows of the form
\[
\begin{tikzcd}
  \cdot
  \ar[r, "f"]
  &
  \cdot
  \ar[r, "g", yshift=1ex]
  \ar[r, "h"', yshift=-1ex]
  &
  \cdot
  \ar[r, "k"]
  &
  \cdot
\end{tikzcd}
\]
if both $fg = fh$ and $gk = hk$ then necessarily $g = h$.
Equivalently, $\C$ is factorization preordered just in case every commuting square has at most one diagonal filler:
\[
\begin{tikzcd}[column sep=3em, row sep=3em,baseline=1.8em]
  \cdot\ar[r,"x"]\ar[d,"f"']  & \cdot\ar[d,"k"] \\
  \cdot\ar[r, "y"']\ar[ur,dashed,"g=h" description]  & \cdot
  % NB: "dashed" does not show up as a dashed line in TeXpresso, but it renders in the pdf
\end{tikzcd}
\]
Equivalently again, $\C$ is factorization preordered just in case the \emph{category of factorizations} of every arrow is a preorder, where the category of factorizations of an arrow $w : A \to B$ has objects given by composable pairs of arrows $A \overset u\to X \overset {v}\to B$ such that $w = uv$, and morphisms $(A \overset u\to X \overset v\to B) \to (A \overset s\to Y \overset t\to B)$ given by arrows $h : X \to Y$ such that $uh = s$ and $ht = v$.
\end{defi}
\begin{exa}\label{exa:all-epi-or-mono-FP}
  Any category that has only monic arrows or only epic arrows is factorization preordered.
  In particular, every preorder category is factorization preordered, as is every free category.
\end{exa}
% \begin{exa}[Dawson, Paré, and Pronk~{\cite[p.~418]{DPP03b}}]\label{exa:prod-epi-and-mono-FP}
%   If $\C$ is a category with only monic arrows and $\D$ is a category with only epic arrows then $\C\times\D$ is factorization preordered.
% \end{exa}
\noindent
Let us call ``\FP condition'' the hypothesis that the base category $\C$ of the free bifibration $\BifibFun{p} : \Bifib{p} \to \C$ is factorization preordered.

\begin{prop}
  Under the \FP condition, the parallelization rules $\parsym^\Rsym_\Lsym$ and $\parsym^\Lsym_\Rsym$ are deterministic, and the sequentialization rules $\seqsym^\Rsym_\Lsym$ and $\seqsym^\Lsym_\Rsym$ are co-deterministic.
\end{prop}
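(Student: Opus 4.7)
The plan is to unpack the parallelization rule using the sequentialization formulas from \eqref{eq:seqeq-terms}, identify the equality constraints they place on the middle arrow of the resulting $\biLR$ bipole, and then apply \FP. It suffices to prove determinism of $\parsym^\Rsym_\Lsym$ and $\parsym^\Lsym_\Rsym$, since co-determinism of the corresponding $\seqsym$ rules is the same statement by the defining equivalence $\alpha \rewtopar \beta$ iff $\beta \rewtoseq \alpha$.

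For $\parsym^\Rsym_\Lsym$, I would fix a two-bipole derivation $\alpha$ (a $\biR$ bipole stacked above a $\biL$ bipole) with outer arrows $\pi, \rho, \sigma, \tau$ and intermediate sequent bases $\pi f, \pi f\tau, f\tau$ determined by some core arrow $f$, and consider two hypothetical parallelizations $\alpha \rewtoparRL \beta_g$ and $\alpha \rewtoparRL \beta_h$ yielding $\biLR$ bipoles with middle arrows $g$ and $h$ respectively. Reading off \eqref{eq:seqeq-terms}, sequentializing a $\biLR$ bipole with middle arrow $x$ produces intermediate bases $\pi x$, $\pi x\tau$, and $x\tau$; hence both $\beta_g$ and $\beta_h$ sequentializing back to $\alpha$ forces $\pi g = \pi f = \pi h$ and $g\tau = f\tau = h\tau$. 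The \FP condition applied to the equalities $\pi g = \pi h$ and $g\tau = h\tau$ (viewing $\pi$ and $\tau$ as their composite arrows in $\C$, or as identities when either sequence is empty) then yields $g = h$, and hence $\beta_g = \beta_h$ since the remaining data (outer sequences and inner subproof) are uniquely determined by $\alpha$.

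The case $\parsym^\Lsym_\Rsym$ is entirely symmetric, using the other sequentialization equation in \eqref{eq:seqeq-terms}: the constraints on the middle arrow become $\sigma g = \sigma h$ and $g\rho = h\rho$, and \FP again forces $g = h$. There is no real obstacle in the proof beyond reading off the correct middle-arrow equalities from the proof-term syntax; once those are in hand, \FP closes the argument immediately, with degenerate cases of empty sequences giving uniqueness without even needing the condition.
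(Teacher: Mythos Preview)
Your proposal is correct and follows exactly the reasoning the paper sets up just before stating the proposition (the paper leaves the proof implicit after exhibiting the critical pair \eqref{eq:criticalLR} and introducing the \FP condition). You have correctly extracted the constraints $\pi g = \pi h$, $g\tau = h\tau$ (respectively $\sigma g = \sigma h$, $g\rho = h\rho$) from the sequentialization term equations, and \FP closes each case as you describe.
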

% (Noam: The converse is also true in the case $p = \Id[\C]$, i.e., for the zigzag double category. I guess that in the completely general case it is possible for the rules to be deterministic for trivial reasons, e.g., because there are no atoms and hence no formulas and no derivations.)
\noindent
Determinism is a special case of confluence, ensuring that a single application of a rule does not induce a critical pair with itself.
On the other hand, general confluence fails for the parallelization order, as one can exhibit a divergent critical pair from a stack of three bipoles.
For example, there is a critical pair whose apex is a stack of bipoles
$\biL\ \biR\ \biL$,

\newsavebox\myboxa
\newsavebox\myboxb
\newsavebox\myboxc

\begin{lrbox}{\myboxa}
  \begin{tikzcd}
	\cdot & \cdot \\
	\cdot & \cdot \\
	\cdot & \cdot \\
	\cdot & \cdot \\
	\cdot & \cdot
	\arrow["afc", from=1-1, to=1-2]
	\arrow["a"', from=1-1, to=2-1] \arrow[phantom,from=1-1,to=2-2,"\push \Lsym"]
	\arrow["fc" description, from=2-1, to=2-2]
	\arrow[equals, from=2-2, to=1-2]
	\arrow[equals, from=2-2, to=3-2]
	\arrow["b", from=3-1, to=2-1] \arrow[phantom,from=3-1,to=2-2,"\pull \Lsym"]
	\arrow["bfc = b'gc" description, from=3-1, to=3-2]
	\arrow["b'"', from=3-1, to=4-1] \arrow[phantom,from=3-1,to=4-2,"{\push \Lsym} {\pull \Rsym}"]
	\arrow["g" description, from=4-1, to=4-2]
	\arrow["c"', from=4-2, to=3-2] \arrow[phantom,from=4-1,to=5-2,"{\pull \Lsym} {\push \Rsym}"]
	\arrow["d", from=4-2, to=5-2]
	\arrow["e", from=5-1, to=4-1]
	\arrow["egd"', from=5-1, to=5-2]
%        \arrow[from=3-2, to=5-2, no head,decorate, xshift=1em,decoration={brace}]
  \end{tikzcd}
\end{lrbox}

\begin{lrbox}{\myboxb}
  \begin{tikzcd}
	\cdot & \cdot \\
	\cdot & \cdot \\
	\cdot & \cdot \\
	\cdot & \cdot \\
	\cdot & \cdot \\
	\cdot & \cdot \\
	\cdot & \cdot
	\arrow["afc", from=1-1, to=1-2]
	\arrow["a"', from=1-1, to=2-1] \arrow[phantom,from=1-1,to=2-2,"\push \Lsym"]
	\arrow["fc" description, from=2-1, to=2-2]
	\arrow[equals, from=2-2, to=1-2]
	\arrow[equals, from=2-2, to=3-2]
	\arrow["b", from=3-1, to=2-1] \arrow[phantom,from=2-1,to=3-2,"\pull \Lsym"]
	\arrow["bfc" description, from=3-1, to=3-2]
	\arrow[equals, from=3-1, to=4-1]
	\arrow["bf = b'g" description, from=4-1, to=4-2]
	\arrow["c"', from=4-2, to=3-2] \arrow[phantom,from=3-1,to=4-2,"\pull \Rsym"]
	\arrow["d", from=4-2, to=5-2] \arrow[phantom,from=4-1,to=5-2,"\push \Rsym"]
	\arrow[equals, from=5-1, to=4-1]
	\arrow["b'gd" description, from=5-1, to=5-2]
	\arrow["b'"', from=5-1, to=6-1] \arrow[phantom,from=5-1,to=6-2,"\push \Lsym"]
	\arrow[equals, from=5-2, to=6-2]
	\arrow["gd" description, from=6-1, to=6-2]
	\arrow[equals, from=6-2, to=7-2]
	\arrow["e", from=7-1, to=6-1] \arrow[phantom,from=6-1,to=7-2,"\pull \Lsym"]
	\arrow["egd"', from=7-1, to=7-2]
%	\arrow[from=1-2, to=5-2, no head,decorate, xshift=1em,decoration={brace}]
%	\arrow[from=3-1, to=7-1, no head,decorate, xshift=-1em,decoration={brace,mirror}]
  \end{tikzcd}
\end{lrbox}
\begin{lrbox}{\myboxc}
  \begin{tikzcd}
	\cdot & \cdot \\
	\cdot & \cdot \\
	\cdot & \cdot \\
	\cdot & \cdot \\
	\cdot & \cdot
	\arrow["afc", from=1-1, to=1-2]
	\arrow["a"', from=1-1, to=2-1] \arrow[phantom,from=1-1,to=2-2,"{\push \Lsym}{\pull \Rsym}"]
	\arrow["f" description, from=2-1, to=2-2]
	\arrow["c"', from=2-2, to=1-2]
	\arrow["d", from=2-2, to=3-2]
	\arrow["b", from=3-1, to=2-1] \arrow[phantom,from=2-1,to=3-2,"{\pull\Lsym}{\push\Rsym}"]
	\arrow["bfd = b'gd" description, from=3-1, to=3-2]
	\arrow["b'"', from=3-1, to=4-1] \arrow[phantom,from=3-1,to=4-2,"\push\Lsym"]
	\arrow[equals, from=3-2, to=4-2]
	\arrow["gd" description, from=4-1, to=4-2]
	\arrow[equals, from=4-2, to=5-2]
	\arrow["e", from=5-1, to=4-1] \arrow[phantom,from=4-1,to=5-2,"\pull\Lsym"]
	\arrow["egd"', from=5-1, to=5-2]
%        \arrow[from=1-1, to=3-1, no head,decorate, xshift=-1em,decoration={mirror,brace}]
  \end{tikzcd}
\end{lrbox}

\begin{equation}\label{eq:criticalLRL}
\begin{tikzpicture}
  \node (B) {\usebox\myboxb};
  \node[below left=-5cm and 4em of B] (A) {  \usebox\myboxa };
  \node[below right=-5cm and 4em of B] (C) { \usebox\myboxc };
  \pgfsetfillpattern{north east lines}{blue}
  \draw[->] ([yshift=6.8em]B.south west) -- ([yshift=-3em]A.east) node[midway,below right=5pt and -10pt,fill,nearly transparent,text opacity=1]{$\parsym^\Rsym_\Lsym$};
  \pgfsetfillpattern{north west lines}{red}
  \draw[->] ([yshift=-6.8em]B.north east) -- ([yshift=3em]C.west) node[midway,below left=10pt and -13pt,fill,nearly transparent,text opacity=1]{$\parsym^\Lsym_\Rsym$};
  \pgfsetfillpattern{north west lines}{red}
  \fill[nearly transparent] ([yshift=-0.25em]B.north west) rectangle ([yshift=-3.5em]B.east);
  \pgfsetfillpattern{north east lines}{blue}
  \fill[nearly transparent] ([yshift=3.5em]B.west) rectangle ([yshift=.25em]B.south east);
\end{tikzpicture}
\end{equation}
as well as a symmetric critical pair whose apex is a stack of bipoles $\biR\ \biL\ \biR$.
Here we clarify that the condition that $bf = b'g$ is necessary for the well-formedness of the stack in the middle of \eqref{eq:criticalLRL}, and implies the conditions $bfc = b'gc$ and $bfd = b'gd$ ensuring well-formedness of the stacks on the left and right.

\newsavebox\myboxd
\newsavebox\myboxe

\begin{lrbox}{\myboxe}
  \begin{tikzcd}
	\cdot & \cdot \\
	\cdot & \cdot \\
	\cdot & \cdot \\
	\cdot & \cdot \\
	\cdot & \cdot
	\arrow["afc", from=1-1, to=1-2]
	\arrow[equals, from=1-1, to=2-1] \arrow[phantom,from=1-1,to=2-2,"\pull\Rsym"]
	\arrow["af" description, from=2-1, to=2-2]
	\arrow["c"', from=2-2, to=1-2]
	\arrow["d", from=2-2, to=3-2]
	\arrow[equals, from=3-1, to=2-1] \arrow[phantom,from=3-1,to=2-2,"\push\Rsym"]
	\arrow["afd = agd'" description, from=3-1, to=3-2]
	\arrow["a"', from=3-1, to=4-1] \arrow[phantom,from=3-1,to=4-2,"{\push\Lsym}{\pull\Rsym}"]
	\arrow["g" description, from=4-1, to=4-2]
	\arrow["d'"', from=4-2, to=3-2] \arrow[phantom,from=4-1,to=5-2,"{\pull\Lsym}{\push\Rsym}"]
	\arrow["e", from=4-2, to=5-2]
	\arrow["b", from=5-1, to=4-1]
	\arrow["bge"', from=5-1, to=5-2]
  \end{tikzcd}
\end{lrbox}

\begin{lrbox}{\myboxd}
  \begin{tikzcd}
	\cdot & \cdot \\
	\cdot & \cdot \\
	\cdot & \cdot \\
	\cdot & \cdot \\
	\cdot & \cdot
	\arrow["afc", from=1-1, to=1-2]
	\arrow["a"', from=1-1, to=2-1] \arrow[phantom,from=1-1,to=2-2,"{\push\Lsym}{\pull\Rsym}"]
	\arrow["f" description, from=2-1, to=2-2]
	\arrow["c"', from=2-2, to=1-2]
	\arrow["d", from=2-2, to=3-2]
	\arrow["b", from=3-1, to=2-1] \arrow[phantom,from=2-1,to=3-2,"{\pull\Lsym}{\push\Rsym}"]
	\arrow["bfd = bgd'" description, from=3-1, to=3-2]
	\arrow[equals, from=3-1, to=4-1] \arrow[phantom,from=3-1,to=4-2,"\pull\Rsym"]
	\arrow["d'"', from=4-2, to=3-2]
	\arrow["bg" description, from=4-1, to=4-2]
	\arrow["e", from=4-2, to=5-2]
	\arrow[equals, from=5-1, to=4-1] \arrow[phantom,from=4-1,to=5-2,"\push \Rsym"]
	\arrow["bge"', from=5-1, to=5-2]
  \end{tikzcd}
\end{lrbox}

\newsavebox\myboxf
\begin{lrbox}{\myboxf}
  \begin{tikzcd}
	\cdot & \cdot \\
	\cdot & \cdot \\
	\cdot & \cdot \\
	\cdot & \cdot \\
	\cdot & \cdot \\
	\cdot & \cdot \\
	\cdot & \cdot
	\arrow["afc", from=1-1, to=1-2]
	\arrow["a"', from=1-1, to=2-1] \arrow[phantom,from=1-1,to=2-2,"\push \Lsym \pull \Rsym"]
	\arrow["f" description, from=2-1, to=2-2]
	\arrow["c"', from=2-2, to=1-2]
	\arrow["d", from=2-2, to=3-2]
	\arrow["b", from=3-1, to=2-1] \arrow[phantom,from=2-1,to=3-2,"\pull \Lsym \push \Rsym"]
	\arrow["bfd=b'gd" description, from=3-1, to=3-2]
	\arrow["b'"', from=3-1, to=4-1]
	\arrow["gd" description, from=4-1, to=4-2]
	\arrow[equals, from=4-2, to=3-2] \arrow[phantom,from=3-1,to=4-2,"\push \Lsym"]
	\arrow[equals, from=4-2, to=5-2] \arrow[phantom,from=4-1,to=5-2,"\pull \Lsym"]
	\arrow["e", from=5-1, to=4-1]
	\arrow["egd=egd'" description, from=5-1, to=5-2]
	\arrow[equals, from=5-1, to=6-1] \arrow[phantom,from=5-1,to=6-2,"\pull \Rsym"]
	\arrow["d'"', from=6-2, to=5-2]
	\arrow["eg" description, from=6-1, to=6-2]
	\arrow["i", from=6-2, to=7-2]
	\arrow[equals, from=7-1, to=6-1] \arrow[phantom,from=6-1,to=7-2,"\push \Rsym"]
	\arrow["egi"', from=7-1, to=7-2]
  \end{tikzcd}
\end{lrbox}

\newsavebox\myboxg
\begin{lrbox}{\myboxg}
  \begin{tikzcd}
	\cdot & \cdot \\
	\cdot & \cdot \\
	\cdot & \cdot \\
	\cdot & \cdot \\
	\cdot & \cdot \\
	\cdot & \cdot \\
	\cdot & \cdot
	\arrow["afc", from=1-1, to=1-2]
	\arrow["a"', from=1-1, to=2-1] \arrow[phantom,from=1-1,to=2-2,"\push \Lsym"]
	\arrow["fc" description, from=2-1, to=2-2]
	\arrow[equals, from=2-2, to=1-2]
	\arrow[equals, from=2-2, to=3-2]
	\arrow["b", from=3-1, to=2-1] \arrow[phantom,from=2-1,to=3-2,"\pull \Lsym"]
	\arrow["bfc=b'gc" description, from=3-1, to=3-2]
	\arrow["b'"', from=3-1, to=4-1]
	\arrow["g" description, from=4-1, to=4-2]
	\arrow["c"', from=4-2, to=3-2] \arrow[phantom,from=3-1,to=4-2,"\push \Lsym \pull \Rsym"]
	\arrow["d", from=4-2, to=5-2] \arrow[phantom,from=4-1,to=5-2,"\pull \Lsym \push \Rsym"]
	\arrow["e", from=5-1, to=4-1]
	\arrow["egd=egd'" description, from=5-1, to=5-2]
	\arrow[equals, from=5-1, to=6-1] \arrow[phantom,from=5-1,to=6-2,"\pull \Rsym"]
	\arrow["d'"', from=6-2, to=5-2]
	\arrow["eg" description, from=6-1, to=6-2]
	\arrow["i", from=6-2, to=7-2]
	\arrow[equals, from=7-1, to=6-1] \arrow[phantom,from=6-1,to=7-2,"\push \Rsym"]
	\arrow["egi"', from=7-1, to=7-2]
  \end{tikzcd}
\end{lrbox}

\newsavebox\myboxh
\begin{lrbox}{\myboxh}
  \begin{tikzcd}
	\cdot & \cdot \\
	\cdot & \cdot \\
	\cdot & \cdot \\
	\cdot & \cdot \\
	\cdot & \cdot \\
	\cdot & \cdot \\
	\cdot & \cdot
	\arrow["afc", from=1-1, to=1-2]
	\arrow["a"', from=1-1, to=2-1] \arrow[phantom,from=1-1,to=2-2,"\push \Lsym"]
	\arrow["fc" description, from=2-1, to=2-2]
	\arrow[equals, from=2-2, to=1-2]
	\arrow[equals, from=2-2, to=3-2]
	\arrow["b", from=3-1, to=2-1] \arrow[phantom,from=2-1,to=3-2,"\pull \Lsym"]
	\arrow["bfc=b'gc" description, from=3-1, to=3-2]
	\arrow[equals, from=3-1, to=4-1]
	\arrow["b'g" description, from=4-1, to=4-2]
	\arrow["c"', from=4-2, to=3-2] \arrow[phantom,from=3-1,to=4-2,"\pull \Rsym"]
	\arrow["d", from=4-2, to=5-2] \arrow[phantom,from=4-1,to=5-2,"\push \Rsym"]
	\arrow[equals, from=5-1, to=4-1]
	\arrow["b'gd=b'gd'" description, from=5-1, to=5-2]
	\arrow["b'"', from=5-1, to=6-1] \arrow[phantom,from=5-1,to=6-2,"\push \Lsym \pull \Rsym"]
	\arrow["d'"', from=6-2, to=5-2]
	\arrow["g" description, from=6-1, to=6-2]
	\arrow["i", from=6-2, to=7-2]
	\arrow["e", from=7-1, to=6-1] \arrow[phantom,from=6-1,to=7-2,"\pull \Lsym \push \Rsym"]
	\arrow["egi"', from=7-1, to=7-2]
  \end{tikzcd}
\end{lrbox}

\newsavebox\myboxi
\begin{lrbox}{\myboxi}
  \begin{tikzcd}
	\cdot & \cdot \\
	\cdot & \cdot \\
	\cdot & \cdot \\
	\cdot & \cdot \\
	\cdot & \cdot \\
	\arrow["afc", from=1-1, to=1-2]
	\arrow["a"', from=1-1, to=2-1] \arrow[phantom,from=1-1,to=2-2,"\push \Lsym \pull \Rsym"]
	\arrow["f" description, from=2-1, to=2-2]
	\arrow["c"', from=2-2, to=1-2]
	\arrow["d", from=2-2, to=3-2]
	\arrow["b", from=3-1, to=2-1] \arrow[phantom,from=2-1,to=3-2,"\pull \Lsym \push \Rsym"]
	\arrow["bfd=b'gd'" description, from=3-1, to=3-2]
	\arrow["b'"', from=3-1, to=4-1]
	\arrow["g" description, from=4-1, to=4-2]
	\arrow["d'"', from=4-2, to=3-2] \arrow[phantom,from=3-1,to=4-2,"\push \Lsym \pull \Rsym"]
	\arrow["i", from=4-2, to=5-2] \arrow[phantom,from=4-1,to=5-2,"\pull \Lsym \push \Rsym"]
	\arrow["e", from=5-1, to=4-1]
	\arrow["egi" description, from=5-1, to=5-2]
  \end{tikzcd}
\end{lrbox}

Following the principles of Knuth--Bendix completion \cite{KnuthBendix1983simple}, we can obtain a confluent system by adding two new rewriting rules to orient each of these critical pairs in one direction:
\begin{equation}
  \label{eq:rewrite-gravity}
\begin{tikzpicture}
  \node (A) {  \usebox\myboxa };
  \node[left=4em of A] (C) { \usebox\myboxc };
  \draw[->] (C.east) -- (A.west) node[midway,below right=5pt and -10pt]{$\grasym_\Lsym$} node[midway,above]{\scriptsize$bf=b'g$};;
  \node[right=3em of A] (D) {  \usebox\myboxd };
  \node[right=4em of D] (E) { \usebox\myboxe };
  \draw[->] (D.east) -- (E.west) node[midway,below right=5pt and -10pt]{$\grasym_\Rsym$} node[midway,above]{\scriptsize$fd=gd'$};
\end{tikzpicture}
\end{equation}

\newcommand\criticalLRLR{
\begin{tikzpicture}
  \node (A) {\usebox\myboxf};
  \node[right=3cm of A] (B) {\usebox\myboxg};
  \node[below=3cm of A] (D) {\usebox\myboxi};
  \node[right=3cm of D] (C) {\usebox\myboxh};

  \pgfsetfillpattern{crosshatch dots}{green}
  \draw[->] ([yshift=1.5cm]A.east) -- ([yshift=1.5cm]B.west) node[midway,above,fill,nearly transparent,text opacity=1]{$\grasym_\Lsym$} node[midway,below]{\scriptsize$bf=b'g$};
  \pgfsetfillpattern{crosshatch dots}{violet}
  \draw[->] (B.south) -- (C.north) node[midway,right,fill,nearly transparent,text opacity=1]{$\grasym_\Rsym$} node[midway,left]{\scriptsize$gd=gd'$};
  \pgfsetfillpattern{north west lines}{red}
  \draw[->] (A.south) -- (D.north) node[midway,left,fill,nearly transparent,text opacity=1]{$\parsym^\Lsym_\Rsym$} node[midway,right]{\scriptsize$gd=gd'$};
  \pgfsetfillpattern{north west lines}{red}
  \draw[->] (C.west) -- (D.east) node[midway,below,fill,nearly transparent,text opacity=1]{$\parsym^\Lsym_\Rsym$} node[midway,above]{\scriptsize$bf=b'g$};
  \pgfsetfillpattern{crosshatch dots}{green}
  \fill[nearly transparent] ([yshift=-.25em]A.north west) rectangle ([yshift=-3.5em]A.east);
  \pgfsetfillpattern{north west lines}{red}
  \fill[nearly transparent] ([yshift=3.5em]A.west) rectangle ([yshift=.25em]A.south east);
  \pgfsetfillpattern{crosshatch dots}{violet}
  \fill[nearly transparent] ([yshift=3.5em]B.west) rectangle ([yshift=.25em]B.south east);
  \pgfsetfillpattern{north west lines}{red}
  \fill[nearly transparent] ([yshift=-.25em]C.north west) rectangle ([yshift=-3.5em]C.east);
\end{tikzpicture}
}

\noindent
Here we annotate the rules with the side conditions that are necessary for applying them.
Observe that these conditions, which correspond to the equation appearing in the middle of \eqref{eq:criticalLRL} as well as in the symmetric critical pair, imply that both sides of each rewrite rule are well-formed.

\begin{wrapfigure}[25]{r}{0.41\textwidth}\centering
  \vspace{-1\baselineskip}
  \scalebox{0.60}{\criticalLRLR}\caption{Resolution of a critical pair.}\label{fig:critical-pair}
\end{wrapfigure}
We call these ``gravitation''  rules because they have the effect of pulling bi-focused bipoles down towards the root of the derivation. The choice of moving downards towards the root is arbitrary, and we could have taken the opposite convention to move them up. (In the multifocusing literature the derivations generally have the shape of a tree rather than a stack, and then the convention to move multifocusing steps down towards the root is natural.)

It is clear that the gravitation rules are both deterministic and co-deterministic, without making any assumptions about the base category, since the arrows $a, b, c, d, e, f$ appear in the source and target of each rule.

\begin{restatable}[Local confluence]{thm}{localconfluence}
  \label{thm:local-confluence}
  The rewrite system $(\parsym \cup \grasym)$ is locally confluent: if $\beta_m \leftarrow \alpha_m \rightarrow \beta'_m$ then there exists $\gamma_m$ such that $\beta_m \rightarrow^* \gamma_m \leftarrow^* \beta'_m$.
\end{restatable}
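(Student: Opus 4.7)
The plan is to proceed by the standard Knuth--Bendix style case analysis on critical pairs. Because each of the four rewrite rules $\parsym^\Rsym_\Lsym$, $\parsym^\Lsym_\Rsym$, $\grasym_\Lsym$, $\grasym_\Rsym$ acts on a small contiguous window of the bipole stack (two adjacent bipoles for parallelization, three for gravitation), any two reductions whose windows are disjoint commute trivially, and I can focus on overlapping reductions. The preceding proposition already shows that each parallelization rule is deterministic when applied to a fixed position (using the \FP condition), and the gravitation rules are visibly deterministic and co-deterministic, so no self-overlap produces a genuine critical pair.

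The task is therefore to enumerate the overlapping combinations across distinct rules and resolve each one. I would organize them into three families: (a) a $\parsym^\Rsym_\Lsym$ step overlapping with a $\parsym^\Lsym_\Rsym$ step on a three-bipole window whose apex is of shape $\biL\,\biR\,\biL$ or $\biR\,\biL\,\biR$, as in \eqref{eq:criticalLRL}; (b) a $\parsym$ step overlapping with a $\grasym$ step on a four-bipole window, which is the situation already illustrated in Figure~\ref{fig:critical-pair}; and (c) two $\grasym$ steps whose three-bipole windows overlap (either $\grasym_\Lsym$ against itself, $\grasym_\Rsym$ against itself, or a cross-interaction).

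For family (a) the resolution is by design: the two divergent reducts of the critical pair in \eqref{eq:criticalLRL} are precisely the left- and right-hand sides of a $\grasym_\Lsym$ and a $\grasym_\Rsym$ step respectively, so one further gravitation on each side produces a common reduct. The \FP assumption is used here to deduce, from the coherence equations along the two boundaries of the critical stack, that the middle arrow identifications needed to apply the gravitation rules hold. For family (b), the resolution is exactly the diagram of Figure~\ref{fig:critical-pair} and its three symmetric variants obtained by swapping left/right and top/bottom: a $\parsym$ step and a $\grasym$ step commute by rerunning the gravitation on the opposite side, again using \FP to propagate the equations. For family (c), when two gravitation steps overlap on their shared middle bipole, closure follows by invoking the rules in alternation; when the overlap is only on a boundary bipole, the two steps in fact act on disjoint triples and reduce to the non-overlapping case.

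I expect the main obstacle to be the bookkeeping rather than any genuine subtlety: every critical pair involves a stack of up to five bipoles with several equational side conditions on the base arrows, and verifying well-formedness of all intermediate stacks and of the common reduct, as well as tracing through exactly which instances of \FP are invoked, is tedious. I would present the argument as a tabulated collection of rewriting diagrams, one per critical pair shape, each drawn in the style of Figure~\ref{fig:critical-pair}, with the required arrow equalities annotated on the edges.
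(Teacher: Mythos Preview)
Your overall strategy---enumerate critical pairs by the shape of the bipole stack and resolve each one---is exactly the paper's approach, but several of your structural parameters are off and this distorts the case analysis.

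First, the gravitation rules act on a \emph{two}-bipole window, not three: $\grasym_\Lsym$ rewrites the pattern $\biLR\,\biL$ to $\biL\,\biLR$ (with the side condition $bf=b'g$), and $\grasym_\Rsym$ is symmetric. Consequently every rule source is a two-bipole stack, and all critical pairs live in stacks of at most three bipoles, not four or five. Your ``family (b)'' window is a three-bipole stack $\biLR\,\biL\,\biR$, exactly as in Figure~\ref{fig:critical-pair}, not four.

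Second, your ``family (c)'' is empty. Two $\grasym$ redexes cannot overlap: any three-bipole stack containing two overlapping $\grasym$ sources would have to match both $\biLR\,\biL$ and $\biLR\,\biR$ on intersecting positions, which is impossible. The paper's systematic enumeration of two- and three-bipole stacks makes this evident; once the trivial and non-reducible shapes are discarded, the only genuine overlaps (up to the $\Lsym/\Rsym$ symmetry) are the pair $\biL\,\biR\,\biL$ (two $\parsym$ rules) and the pair $\biLR\,\biL\,\biR$ (a $\grasym$ against a $\parsym$).

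Third, your resolution of ``family (a)'' is not quite right. In the $\biL\,\biR\,\biL$ pair the two reducts are $\biL\,\biLR$ and $\biLR\,\biL$, and these are the \emph{target and source} of a single $\grasym_\Lsym$ step; you do not apply gravitation on each side, you apply one $\grasym_\Lsym$ to the latter and land on the former. No $\grasym_\Rsym$ is involved in this pair (it appears only in the symmetric $\biR\,\biL\,\biR$ case).

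None of this invalidates the strategy, but if you carried out the verification with the wrong window sizes you would be hunting for nonexistent five-bipole critical pairs and mis-stating the resolutions. The paper's enumeration (first all two-bipole shapes, then only those three-bipole extensions of a reducible two-bipole shape) is the right bookkeeping discipline here.
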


\begin{proof}[Proof sketch]
  The source of each rewrite rule must be a stack of two bipoles. Thus
  all critical pairs (where the source of two rewrite rules overlap)
  can be found by considering stacks of two or three bipoles.

  The proof (detailed in Appendix~\ref{prf:local-confluence}) enumerates all
  possible critical pairs, and shows that each of them can be
  resolved.
There are two non-trivial pairs (modulo symmetries), one
  with three bipoles $\biL ~ \biR ~ \biL$ which is exactly the
  critical pair \eqref{eq:criticalLRL} which is resolved by the rule
  $\grasym_\Lsym$, and one with three bipoles $\biLR ~ \biL ~ \biR$
  involving the rules $\grasym_\Lsym$ on one side and
  $\parsym^\Lsym_\Rsym$ on the other, which is resolved by applying
  $\parsym^\Lsym_\Rsym$ on one side and $\grasym_\Rsym$ on the other (see Figure~\ref{fig:critical-pair}).
\end{proof}

\begin{lem}
  $\parsym \cup \grasym$ is terminating: any valid rewriting sequence is finite.
\end{lem}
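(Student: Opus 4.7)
The plan is to exhibit a well-founded measure on strongly multifocused derivations that strictly decreases with every rewrite step, and then conclude by standard well-founded induction. Given the complementary behaviour of the two families of rules, the natural candidate is a lexicographic pair $\mu(\alpha) = (n(\alpha), m(\alpha)) \in \mathbb{N} \times \mathbb{N}$, where $n(\alpha)$ counts the total number of bipoles making up the neutral portion of $\alpha$, and $m(\alpha)$ is the sum, taken over all $\biLR$ bipoles $b$ occurring in $\alpha$, of the number of bipoles sitting strictly below $b$ in the stack (that is, the distance from $b$ to the root of the derivation).

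First I would check that $\mu$ strictly decreases under the parallelization rules. Each of $\parsym^\Rsym_\Lsym$ and $\parsym^\Lsym_\Rsym$ rewrites a pair of adjacent mono-focused bipoles into a single bi-focused bipole, so the total number of bipoles decreases by exactly one and $n(\alpha)$ drops; the behaviour of $m$ is then irrelevant by definition of lexicographic order.

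Next I would check that $\mu$ also strictly decreases under the gravitation rules. Both $\grasym_\Lsym$ and $\grasym_\Rsym$ rewrite an adjacent pair $(\biLR, \text{mono})$ into $(\text{mono}', \biLR)$, where the side conditions $bf = b'g$ (respectively $fd = gd'$) ensure that the rewrite is well-defined in both directions. Because these rewrites are purely local, $n(\alpha)$ is unchanged, the total number of $\biLR$ bipoles in $\alpha$ is unchanged, and the positions of every $\biLR$ bipole other than the one being moved stay fixed. The one $\biLR$ that does move slides exactly one position closer to the root, so $m(\alpha)$ strictly decreases by one.

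I do not anticipate any real obstacle; the only point requiring some care is to articulate the invariant that gravitation does not create or destroy $\biLR$ bipoles and does not move the others, which is immediate from the shape of the rules. Termination then follows at once from well-foundedness of the lexicographic order on $\mathbb{N}^2$.
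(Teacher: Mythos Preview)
Your proof is correct and proceeds along essentially the same intuition as the paper's, but with a different measure. The paper uses a single numeric weight: each bipole at height~$i$ from the root contributes $i$ if mono-focused and $2i$ if bi-focused, and one checks directly that both $\parsym$ and $\grasym$ strictly decrease the total. Your lexicographic pair $(n,m)$ separates the two effects cleanly---parallelization shrinks the stack, gravitation sinks the $\biLR$'s---which makes the argument perhaps more transparent, at the cost of needing a product order rather than a single natural number. Both approaches are equally valid; the paper's weighted sum is slightly more economical, while yours makes the role of each rule family more visible.
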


\begin{proof}
  \label{prf:par-gra-terminating}
  We provide a non-negative measure for derivations that $\parsym \cup \grasym$ decreases strictly.

  We measure each derivation by summing the weight of all bipoles. If a bipole is at position $i$ starting from the bottom, the root of the derivation, we define its weight as $i$ if it is a $\biL$ or $\biR$ bipole, and $2 \times i$ if it is a $\biLR$ bipole: the weight of $\biLR$ is the weight of $\biL$ plus the weight of $\biR$.

  Consider a $\grasym$ rewrite ($\grasym_\Lsym$ for example), which is of the following shape annotated with vertical positions:
  \begin{mathpar}
    \begin{array}{ll}
        \beta & \\ \biLR & (i + 1) \\ \biL & (i) \\ \alpha &
    \end{array}
    \quad
    \rightarrow_{\grasym_\Lsym}
    \quad
    \begin{array}{ll}
      \beta & \\ \biL & (i + 1) \\ \biLR & (i) \\ \alpha
    \end{array}
  \end{mathpar}

  The starting positions of the sub-stacks $\alpha$ and $\beta$ do not matter in this rewrite as their weight is unchanged. The weight of the middle of the derivation decreases strictly from $2 \times (i + 1) + i$ to $(i + 1) + 2 \times i$.

  Consider a $\parsym$ rewrite ($\parsym^\Lsym_\Rsym$ for example), it is of the shape
  \begin{mathpar}
    \begin{array}{ll}
        \beta & (i + 2) \\ \biL & (i + 1) \\ \biR & (i) \\ \alpha &
    \end{array}
    \quad
    \rightarrow_{\parsym^\Lsym_\Rsym}
    \quad
    \begin{array}{ll}
      ~ \\ \beta & (i + 1) \\ \biLR & (i) \\ \alpha
    \end{array}
  \end{mathpar}

  The weight of $\alpha$ is unchanged by the rewrite. The combined weight of the $\biL$ and $\biR$ bipoles decreases from $i + (i + 1)$ to $2 \times i$. The weight of $\beta$ also decreases strictly as all its bipoles are shifted one position down.
\end{proof}

\newcommand{\NF}[1]{\mathsf{NF}(#1)}

\noindent
From local confluence and termination, we  derive our main canonicity theorem.
\begin{thm}\label{thm:fp-unique-normal-forms}
  Under the \FP condition, any multifocused derivation $\alpha_m$ admits a unique normal form for $\parsym \cup \grasym$, which we write $\NF{\alpha_m}$.
Moreover, for any pair of derivations $\alpha_m$ and $\beta_m$ we have
  \begin{mathpar}
    \alpha_m \permeqseq \beta_m

    \iff

    \NF{\alpha_m} = \NF{\beta_m}.
  \end{mathpar}
\end{thm}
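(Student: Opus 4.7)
The plan is to combine termination with local confluence via Newman's lemma to obtain the uniqueness of normal forms, and then to verify that $\permeqseq$-equivalence exactly coincides with having the same normal form by checking each class of rewrite steps individually.

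First I would invoke the termination lemma (just proved before) together with local confluence (Theorem~\ref{thm:local-confluence}). By Newman's lemma, a locally confluent and terminating abstract rewriting system is confluent, and every element has a unique normal form. This gives the first sentence of the theorem: each $\alpha_m$ admits a unique normal form $\NF{\alpha_m}$ under $\parsym \cup \grasym$.

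For the equivalence between $\permeqseq$ and equality of normal forms, I would prove two directions. For $(\Rightarrow)$, it suffices to check that each generator of $\permeqseq$, i.e., each $\rewtoseq$ step (which is just the reverse of a $\parsym$ step), preserves the normal form. But this is immediate: if $\alpha_m \rewtopar \beta_m$ then both terms share the same $\parsym\cup\grasym$-normal form by confluence, so $\NF{\alpha_m} = \NF{\beta_m}$; passing to the symmetric, reflexive, transitive closure yields the implication. For $(\Leftarrow)$, if $\NF{\alpha_m} = \NF{\beta_m} = \gamma$, then $\alpha_m$ and $\beta_m$ are connected to $\gamma$ by sequences of $\parsym$ and $\grasym$ steps; it suffices to show that each single step of either kind preserves $\permeqseq$. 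For $\parsym$ steps this is by definition, since each parallelization is the reverse of a $\rewtoseq$ generator. For the two gravitation rules $\grasym_\Lsym$ and $\grasym_\Rsym$, I would verify $\permeqseq$-preservation by hand, by exhibiting a zigzag of sequentializations relating the source and target of each gravitation rewrite.

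The main obstacle is this last verification for the gravitation rules. Concretely, for $\grasym_\Lsym$ transforming $\biLR \cdot \biL$ into $\biL \cdot \biLR$, I would first apply $\seqsym^\Rsym_\Lsym$ to the $\biLR$ bipoles on both sides to obtain strongly monofocused stacks of the shapes $\biL \cdot \biR \cdot \biL$ and $\biL \cdot \biL \cdot \biR$ respectively; these two stacks share the same underlying weakly monofocused derivations up to a swap of a mono-$\Rsym$ bipole with a mono-$\Lsym$ bipole acting on disjoint parts of the formula, so their $\InvSeqsym$-images are permutation equivalent in the sense of Definition~\ref{defi:permeq}. The argument for $\grasym_\Rsym$ is symmetric. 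Applying Proposition~\ref{prop:strengthening-preserves-equiv} then yields $\permeqseq$ between the source and target of the gravitation rewrite. Once both directions are established, the theorem follows.
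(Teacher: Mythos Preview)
Your overall plan is correct and supplies the details that the paper leaves implicit (it only states that the theorem follows ``from local confluence and termination'').  The use of Newman's lemma for the first sentence and your treatment of the $(\Rightarrow)$ direction are exactly right.

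For the $(\Leftarrow)$ direction, your strategy of checking that each $\parsym$ and $\grasym$ step preserves $\permeqseq$ is also correct, but your verification for the gravitation rules is more convoluted than necessary and the justification ``acting on disjoint parts of the formula'' does not quite stand on its own: swapping a $\biL$ bipole past a $\biR$ bipole in the weakly focused calculus requires each intermediate term to be well-formed, and the permutation generators~\eqref{eq:permgen2} and~\eqref{eq:permgen3} are not freely applicable in both directions (cf.~Remark~\ref{rem:orienting-permeq-rules}).  There is a much more direct argument.  Recall that the gravitation rules were introduced precisely to orient the critical pairs of $\parsym$ (see~\eqref{eq:criticalLRL} and the paragraph introducing~\eqref{eq:rewrite-gravity}): whenever the side condition of $\grasym_\Lsym$ holds, the apex $\biL\;\biR\;\biL$ is a well-formed stack, and both the source and the target of $\grasym_\Lsym$ arise from it by a single $\parsym$ step.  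Since $\parsym$ is the inverse of $\seqsym$ and $\permeqseq$ is the equivalence generated by $\seqsym$, source and target are each $\permeqseq$-equivalent to the apex and hence to one another; the case of $\grasym_\Rsym$ is symmetric.  If you prefer to stick with your route through Proposition~\ref{prop:strengthening-preserves-equiv}, the clean way to see that your two monofocused stacks are $\permeq$-equivalent is not by a direct bipole swap but by noting that one of them \emph{is} the apex (the $\seqsym^\Rsym_\Lsym$-sequentialization of the side containing $\biLR$ at the bottom), while the other is a different sequentialization of the opposite side, whose $\seqsym^\Lsym_\Rsym$-sequentialization is again the apex; then Corollary~\ref{cor:seq-permeq} applies.
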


\subsection{Maximal multifocusing}
\label{subsec:maximal-multi-focusing}

We finally introduce a restriction of the multifocused sequent calculus that precisely captures the normal forms for $\parsym \cup \grasym$. Under the \FP condition, proofs in this restricted calculus give canonical representatives for the arrows of $\Bifib{p}$.

%\]
%\end{wrapfigure}

An intuition is that, when building the proof from the root to the axioms, we only need information about the last invertible rule that was applied to conclude a bipole. In the immediately following bipole, we can use this information to check that the non-invertible rule does not introduce a $\parsym$ or $\grasym$ redex.

Suppose for example that the last invertible rule was $\Lpush[\pi]$ as part of a $\biL$ bipole with intermediate base arrow $f$, and suppose that---assuming $\pi f$ is of the form $g \tau$ for some $g$---we want to continue the proof by applying $\Rpush[\tau]$ as part of a $\biR$ bipole, as depicted on the left:
\begin{center}
\begin{tabular}{c:c}
\begin{tikzcd}[ampersand replacement=\&]
    \cdot \ar[r, "g \dots"] \& \cdot \\
    \cdot \ar[r, "g"] \ar[u, equal] \& \cdot  \ar[d, "\tau"] \ar[u, "\dots"'] \\
    \cdot \ar[r,"g\tau=\pi f"'] \ar[d, "\pi"'] \ar[u, equal] \& \cdot \\
    \cdot \ar[r, "f"'] \& \cdot \ar[u, equal] \\
    \cdot \ar[r,"\dots f"'] \ar[u, "\dots"] \& \cdot \ar[u, equal]
\end{tikzcd} \hspace{1cm} & \hspace{1cm}
\begin{tikzcd}[ampersand replacement=\&]
  \cdot\ar[d,"\pi"']\ar[r,"g"] \& \cdot\ar[d,"\tau"] \\
  \cdot\ar[r,"f"']\ar[ur,dashed,"h" description] \& \cdot
\end{tikzcd}
\end{tabular}
\end{center}
This creates a $\parsym^\Rsym_\Lsym$ redex exactly when there is an arrow $h$ such that $g = \pi h$ and $f = h \tau$, that is when the square to the right has a diagonal filler, corresponding to a morphism $h : (\pi,f) \to (g,\tau)$ in the category of factorizations of the composite arrow $\pi f = g\tau$.

If instead we try to extend the $\biL$ bipole with a $\biLR$ bipole, we should check that we don't create a $\grasym$ redex. Such an extension takes the shape on the left:
\begin{center}
\begin{tabular}{c:c}
\begin{tikzcd}[ampersand replacement=\&]
    \cdot \ar[r, "\dots g \dots"] \ar[d, "\dots"'] \& \cdot \\
    \cdot \ar[r, "g"] \& \cdot  \ar[d, "\tau"] \ar[u, "\dots"'] \\
    \cdot \ar[r,"\sigma g \tau=\pi f"'] \ar[d, "\pi"'] \ar[u, "\sigma"] \& \cdot \\
    \cdot \ar[r, "f"'] \& \cdot \ar[u, equal] \\
    \cdot \ar[r,"\dots f"] \ar[u, "\dots"] \& \cdot \ar[u, equal] 
\end{tikzcd} \hspace{1cm}\hspace{1cm}&\hspace{1cm}
\begin{tikzcd}[ampersand replacement=\&]
  \cdot\ar[d,"\pi"']\ar[r,"\sigma g"] \& \cdot\ar[d,"\tau"] \\
  \cdot\ar[r,"f"']\ar[ur,dashed,"h" description] \& \cdot
\end{tikzcd}
\end{tabular}
\end{center}
This forms a $\grasym$ redex exactly when $f$ is of the form $h\tau$ for some $h$ such that $\pi h = \sigma g$, that is when the square to the right has a diagonal filler corresponding to a morphism $h : (\pi,f) \to (\sigma g,\tau)$ in the category of factorizations of the composite arrow $\pi f = \sigma g\tau$.

Notice that in both cases, in order to decide whether or not there is a redex we need to know the intermediate arrow $f$ of the $\biL$ bipole.
By keeping track of the last rule applied as well as the intermediate arrow, we can thus rule out the creation of any $\parsym$ or $\grasym$ redexes.

In Figure~\ref{fig:maxmultfocseq} we present a refinement of the multifocused sequent calculus that we call \emph{maximally multifocused,} in which judgments $S \vdashl{f}{q} T$ are annotated with a state $q$ that can be either a \emph{pre-inversion state} $q = \istate$ or a \emph{pre-focus state} $q = \fstate$.
The rules are best read operationally as goal transformers from conclusion to premise, following the conventions of proof search.
The idea is that a pre-inversion state forces an inversion rule to be applied in the next step (recall that we allow $\pi$ and $\rho$ to be empty), while a pre-focus state forces one of the focusing rules or an atomic rule to be applied.
After applying a focusing rule, the state transitions to a pre-inversion state of the form $\lockL$, $\lockR$, or $\ilockNone$, indicating the start of a $\biL$-, $\biR$-, or $\biLR$-bipole respectively.
The subsequent inversion rule supplements the state with additional information about the arrows involved, which is then used at the start of the next bipole to rule out any $\parsym$ or $\grasym$ redices: this is done with the side conditions for the focusing rules, which are indicated to the side in a smaller font in Figure~\ref{fig:maxmultfocseq}.
\begin{figure}
\begin{mathpar}
  \begin{array}{rcl}
    \text{pre-inversion states}\quad\istate & ::= & \ilockNone \mid \lockL \mid \lockR \\
    \text{pre-focus states}\quad\fstate & ::= & \flockNone \mid \lockL[\pi,f] \mid \lockR[f,\rho] \\
  \end{array}

  \ilockNone{[\pi,f]} = \flockNone = \ilockNone{[f,\rho]}

  \inferrule*[Right={$\atom \delta$}]
  {\delta : X \to Y \in \D \\ p(\delta) = f}
  {\atom X \vdashl{f}{\fstate} \atom Y}
\\\\

  \inferrule*[Right={$\Lpush[\pi]$}]
  {N \vdashl{\pi f}{\istate{[\pi,f]}} P}
  {\push{\pi}N \vdashl{f}{\istate} P}

  \inferrule*[Right={$\Lpush[\pi]\Rpull[\rho]$}]
  {N \vdashl{\pi f \rho}{\flockNone} P}
  {\push{\pi}N \vdashl{f}{\ilockNone} \pull{\rho}P}

  \inferrule*[Right={$\Rpull[\rho]$}]
  {N \vdashl{f \rho}{\istate{[f,\rho]}} P}
  {N \vdashl{f}{\istate} \pull{\rho}P}
  \\
  
\inferrule*[Left={\tiny$\fstate = \lockR[f,\rho]\Rightarrow (\sigma,g) \not\le (f,\rho)$},Right={$\Lpull[\sigma]$}]
  {P \vdashl{g}{\lockL} Q}
  {\pull{\sigma}P \vdashl{\sigma g}{\fstate} Q}

  \inferrule*[Left={\tiny$\fstate = \lockL[\pi,f] \Rightarrow (\pi,f) \not\le (g,\tau)$},Right={$\Rpush[\tau]$}]
  {N \vdashl{g}{\lockR} M}
  {N \vdashl{g \tau}{\fstate} \push{\tau}M}  
\\

  \inferrule*[Left={\tiny$\begin{array}{c}\fstate = \lockL[\pi,f]\Rightarrow (\pi,f) \not\le (\sigma g,\tau)\\\fstate = \lockR[f,\rho] \Rightarrow  (\sigma,g\tau) \not\le (f,\rho)\end{array}$},Right={$\Lpull[\sigma]\Rpush[\tau]$}]
  {\deduce{P \vdashl{g}{\ilockNone} N}{\deduce{\vspace{1pt}}{\color{persianred}}}}
  {\pull{\sigma}P \vdashl{\sigma g\tau}{\fstate} \push{\tau}N}
  \\
  
\end{mathpar}
\hfill where $(a,b) \le (c,d) \defeq \exists e .\, c = ae \wedge b = ed$.
\caption{Maximally multifocused derivations (factorization preordered base category)}
\label{fig:maxmultfocseq}
\end{figure}%

For example, suppose we are in a ``left-locked'' state $\fstate = \lockL[\pi,f]$.
That means that in the current stage of proof search, we just performed a left-focus rule to initiate a $\biL$-bipole followed by a $\Lpush[\pi]$ rule to terminate it.
We now have the following choices are available:
\begin{itemize}
\item any left-focus rule can be applied freely; 
\item a right-focus rule $\Rpush[\tau]$ beginning a $\biR$ bipole with intermediate arrow $g$ can only be applied if $(\pi,f) \not\le (g,\tau)$ in the factorization order;
\item a bi-focus rule ${\Lpull[\sigma]\Rpush[\tau]}$ beginning a $\biLR$ bipole with intermediate arrow $g$ can only be applied if $(\pi,f) \not\le (\sigma g,\tau)$ in the factorization order.
\end{itemize}
Dual restrictions apply in a ``right-locked'' state $\fstate = \lockR[f,\rho]$.

Our use of lock states is inspired by formulations of focusing and maximal multifocusing using tag annotations by Uustalu, Veltri, and Wan~\cite{UVW2022,Veltri2023}.

By the preceding analysis we immediately have the following result.
\begin{prop}\label{prop:maxmultfoc-nopargra}
  Maximally multifocused proofs of unlocked judgments $S \vdashl{f}{\ilockNone} T$ are in one-to-one correspondence with $(\parsym \cup \grasym)$-normal multifocused proofs of $S \vdashf{f} T$.
\end{prop}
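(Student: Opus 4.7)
The plan is to define a state-erasing forgetful map $U$ from maximally multifocused derivations of $S \vdashl{f}{\ilockNone} T$ to multifocused derivations of $S \vdashf{f} T$, and to show that $U$ establishes the desired bijection onto the $(\parsym \cup \grasym)$-normal derivations. Well-definedness of $U$ is immediate, since erasing the state annotations from any rule in Figure~\ref{fig:maxmultfocseq} yields a valid instance of the corresponding rule in Figure~\ref{fig:strong-multifocusing}.

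First I would show that $U$ is injective: given a fixed multifocused derivation rooted at an unlocked judgment, there is at most one way to annotate it to a maximally multifocused one. Reading the derivation from the root upward starting in state $\ilockNone$, the state transitions are entirely deterministic. Each focus rule $\Lpull[\sigma]$, $\Rpush[\tau]$, $\Lpull[\sigma]\Rpush[\tau]$ pushes the state forward to the pre-inversion state $\lockL$, $\lockR$, $\ilockNone$ respectively; each inversion rule $\Lpush[\pi]$ (resp.\ $\Rpull[\rho]$, $\Lpush[\pi]\Rpull[\rho]$) then specializes the pre-inversion state to the pre-focus state $\istate[\pi,f]$ (resp.\ $\istate[f,\rho]$, $\flockNone$) by reading off the arrows of the rule. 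So the candidate annotation is uniquely determined.

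The core of the argument is to show that the image of $U$ is exactly the set of $(\parsym \cup \grasym)$-normal multifocused derivations. This I would do by matching each of the four local redex types with a corresponding side condition of Figure~\ref{fig:maxmultfocseq}. A $\parsym^\Rsym_\Lsym$-redex---a $\biL$-bipole (with recorded arrows $(\pi,f)$) immediately followed toward the leaves by a $\biR$-bipole (with intermediate arrow $g$ and focus arrow $\tau$)---occurs precisely when $(\pi,f)\le(g,\tau)$, which is exactly what is forbidden by the side condition on $\Rpush[\tau]$ in the pre-focus state $\lockL[\pi,f]$. Symmetrically, $\parsym^\Lsym_\Rsym$-redices correspond to the side condition on $\Lpull[\sigma]$ in state $\lockR[f,\rho]$. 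The two gravitation redices $\grasym_\Lsym$ and $\grasym_\Rsym$ correspond to the two side conditions on the bi-focus rule $\Lpull[\sigma]\Rpush[\tau]$, triggered respectively when the incoming state is $\lockL[\pi,f]$ or $\lockR[f,\rho]$. In each direction, a candidate state annotation satisfies all the maximally multifocused side conditions if and only if no local redex is present in the underlying multifocused derivation, which concludes the bijection.

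The main obstacle is the careful bookkeeping in this central case analysis: one must read off the factorization-order conditions such as $(\pi,f)\le(g,\tau)$ directly from the rewrite rules \eqref{eq:seqeq-terms} and \eqref{eq:rewrite-gravity} and verify they coincide exactly with the negated side conditions of Figure~\ref{fig:maxmultfocseq}, for each of the four redex types. Once the state bookkeeping is spelled out, this matching is local and essentially routine, since the two adjacent bipoles involved in any redex contribute exactly the arrows recorded in the relevant pre-focus state.
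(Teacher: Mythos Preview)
Your proposal is correct and follows essentially the same approach as the paper: the paper's proof likewise argues that the side conditions on $\Lpull[\sigma]$ and $\Rpush[\tau]$ exclude exactly the $\parsym$ redices while the side conditions on $\Lpull[\sigma]\Rpush[\tau]$ exclude exactly the $\grasym$ redices, and conversely that any redex-free multifocused proof can be annotated with states in a uniquely determined way. Your version is more detailed in spelling out the state-erasing map and the four-way case analysis, but the underlying argument is the same.
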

\begin{proof}
  The side conditions on the rules $\Lpull[\sigma]$ and $\Rpush[\tau]$ ensure that they never create a $\parsym$ redex, while the side condition on the $\Lpull[\sigma]\Rpush[\tau]$ rule ensures that it never creates a $\grasym$ redex.
  Conversely, given a multifocused proof with no $\parsym$ or $\grasym$ redex, we can always annotate it with states following the locking discpline of Figure~\ref{fig:maxmultfocseq} in a uniquely determined way, starting from an unlocked state $\istate = \ilockNone$.
\end{proof}
\noindent
Combining Proposition~\ref{prop:maxmultfoc-nopargra} with Theorem~\ref{thm:fp-unique-normal-forms} then implies the following.
\begin{cor}\label{cor:FP=>maxmultfoc-canonical}
  Under the \FP condition, there is a one-to-one correspondence between arrows $\alpha : S \to T$ in $\Bifib{p}$ such that $\BifibFun{p}(\alpha) = f$ and maximally multifocused proofs of $S \vdashl{f}{\ilockNone} T$.
\end{cor}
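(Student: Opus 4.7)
The proof will be a straightforward composition of results already established in the paper. The plan is to chain together three bijections: (i) arrows of $\Bifib{p}$ over $f$ correspond to $(\permeqseq)$-equivalence classes of strongly multifocused proofs of $S \vdashf{f} T$, (ii) under the \FP condition, each such equivalence class has a unique $(\parsym \cup \grasym)$-normal representative, and (iii) these normal forms are in bijection with maximally multifocused proofs of the locked judgment $S \vdashl{f}{\ilockNone} T$.

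First, I would recall from Section~\ref{subsubsec:multi-focused-cut} that the category $\Bifib{p}$ admits a presentation where arrows $\alpha : S \to T$ with $\BifibFun{p}(\alpha) = f$ are $(\permeqseq)$-equivalence classes of strongly multifocused proofs of $S \vdashf{f} T$. This was established by the equivalence of setoids $(\mathsf{WeakMono}(S,f,T),\permeq) \sim (\mathsf{StrongMulti}(S,f,T),\permeqseq)$ combined with the completeness of weak focusing relative to ordinary permutation equivalence.

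Next, I would invoke Theorem~\ref{thm:fp-unique-normal-forms}, which under the \FP condition gives a canonical function $\NF{-}$ from strongly multifocused proofs to their $(\parsym \cup \grasym)$-normal forms, with the property that $\alpha_m \permeqseq \beta_m$ if and only if $\NF{\alpha_m} = \NF{\beta_m}$. This means $(\permeqseq)$-equivalence classes are in bijection with $(\parsym \cup \grasym)$-normal multifocused proofs.

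Finally, I would apply Proposition~\ref{prop:maxmultfoc-nopargra}, which gives a bijection between maximally multifocused proofs of $S \vdashl{f}{\ilockNone} T$ and $(\parsym \cup \grasym)$-normal multifocused proofs of $S \vdashf{f} T$. Composing these three bijections yields the desired correspondence. There is essentially no obstacle here, since all the hard work has been done in Theorem~\ref{thm:fp-unique-normal-forms} (confluence and termination of the rewriting system) and Proposition~\ref{prop:maxmultfoc-nopargra} (the translation between the locking discipline and the absence of $\parsym$/$\grasym$ redices); the corollary is a three-line composition of established bijections.
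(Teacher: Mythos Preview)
Your proposal is correct and follows essentially the same approach as the paper, which simply states that the corollary follows by combining Proposition~\ref{prop:maxmultfoc-nopargra} with Theorem~\ref{thm:fp-unique-normal-forms}. You have merely made explicit the first link in the chain (that arrows of $\Bifib{p}$ over $f$ correspond to $(\permeqseq)$-classes of strongly multifocused proofs), which the paper leaves implicit from the discussion in Section~\ref{subsubsec:multi-focused-cut}.
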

\noindent
This correspondence means that if the base category $\C$ is factorization preordered then maximally multifocused proofs give an equation-free inductive definition of arrows of $\Bifib{p}$.
An important special case is when $\C = \Free\G$ is the free category over a graph $\G$.
In that case, the factorization order reduces to a linear order (a path of length $n$ has $n+1$ linearly ordered factorizations) and the calculus can be simplified somewhat: states only need to keep track of the type of the last bipole and the intermediate arrow $f$ (but not $\pi$ or $\rho$), and the factorization order test can be replaced by a single prefix or suffix test (since for any two factorizations of a word $w = uv = u'v'$, $u$ is a prefix of $u'$ just in case $v'$ is a suffix of $v$).

On the other hand, in both the free case and general \FP case, our definition of cut-elimination or horizontal composition of two multifocused proofs in Section~\ref{subsubsec:multi-focused-cut} does not preserve maximality---to obtain a maximal multifocused proof one needs to renormalize the result of the composition. We leave open the question of whether there is a more direct definition of horizontal composition on maximal derivations.

\subsection{Decidability and enumeration}
\label{subsec:maxmultfoc-decidability}

In studying their free adjoint construction, Dawson, Paré, and Pronk~\cite{DPP03b} established the surprising result that, in general, equality of 2-cells in $\Pi_2(\C)$ is undecidable---or more precisely that there is no decision procedure for determining whether two representatives of 2-cells (what they call ``fences'') denote the same 2-cell.
They gave two proofs of this result, first by exhibiting a very concrete reduction from the halting problem for 2-register machines, and second by giving a somewhat simpler and more abstract reduction from the undecidable problem of determining whether two vertices of an infinite (but locally finite) bipartite graph are connected by a path.
Their latter proof translates cleanly to an analogous undecidability result for sequentialization equivalence.
\begin{defi}
  We say that a category $\C$ has \defin{locally finite factorizations} just in case every commuting square in $\C$ has at most finitely many diagonal fillers, or equivalently if the category of factorizations of every arrow is locally finite (i.e., has finite homsets).
\end{defi}
By a \emph{locally finite bipartite graph,} we mean a span of sets $V \overset{p_1}\leftarrow E \overset{p_2}\rightarrow W$ such that the fibers of $p_1$ and $p_2$ are finite.
(Every edge $e\in E$ connects a vertex $v \in V$ to a vertex $w \in W$, where $v = p_1(e)$ and $w = p_2(e)$. So local finiteness says that every vertex of either class is involved in only finitely many edges, i.e., has finite degree.)
To any locally finite bipartite graph $G$, we can associate a category $\C_G$ with locally finite factorizations, defined as the subcategory of $\Set$ generated by the five sets 
\[ 0, \quad 1,\quad V,\quad W,\quad E\]
(where 0 and 1 are respectively the empty set and singleton set), the projection functions
\[
  \begin{tikzcd}[ampersand replacement=\&]
    V \& E\ar[l,"p_1"']\ar[r,"p_2"] \& W
  \end{tikzcd}
\]
describing the incidence relation of $G$,
the constant functions
\[
  \begin{tikzcd}[ampersand replacement=\&]
    1\ar[r,"v"] \& V \& 1\ar[r,"w"] \& W \& 1\ar[r,"e"] \& E
  \end{tikzcd}
\]
associated to every vertex $v\in V$ or $w\in W$ or edge $e\in E$, and finally the unique functions from the empty set
\[
  \begin{tikzcd}[ampersand replacement=\&]
    0 \ar[r,"a_X"] \& X
  \end{tikzcd}
\]
for $X \in \set{1,V,W,E}$.
To see that local finiteness of $G$ implies that $\C_G$ has locally finite factorizations, first note that the only non-degenerate commuting squares are of the form 
\[
  \begin{tikzcd}[ampersand replacement=\&]
    0 \ar[r,"a_E"]\ar[d,"a_1"'] \& E \ar[d,"p_1"] \\
    1 \ar[r,"v"'] \& V
  \end{tikzcd}
  \qquad
  \begin{tikzcd}[ampersand replacement=\&]
    0 \ar[r,"a_E"]\ar[d,"a_1"'] \& E \ar[d,"p_2"] \\
    1 \ar[r,"w"'] \& W
  \end{tikzcd}
\]
and then observe that diagonal fillers for these squares correspond to edges incident to the respective vertices:
\[
  \begin{tikzcd}[ampersand replacement=\&]
    0 \ar[r,"a_E"]\ar[d,"a_1"'] \& E \ar[d,"p_1"] \\
    1 \ar[r,"v"']\ar[ur,dashed,"e" description] \& V
  \end{tikzcd}
  \qquad
  \begin{tikzcd}[ampersand replacement=\&]
    0 \ar[r,"a_E"]\ar[d,"a_1"'] \& E \ar[d,"p_2"] \\
    1 \ar[r,"w"']\ar[ur,dashed,"e" description] \& W
  \end{tikzcd}
\]
Now, to any vertex or edge $x$ of $G$ let us associate a stack $\alpha_x$ of the zigzag double category $\ZZ(\C_G)$, as follows:

\[
  \alpha_v \quad\defeq \quad
  \begin{tikzcd}
	0 & V \\
	1 & V \\
	0 & V \\
	0 & E \\
	0 & W \\
	\arrow["a_V", from=1-1, to=1-2]
	\arrow["a_1"', from=1-1, to=2-1] \arrow[phantom,from=1-1,to=2-2,"\push \Lsym"]
	\arrow["v" description, from=2-1, to=2-2]
	\arrow[equals, from=2-2, to=1-2]
	\arrow[equals, from=2-2, to=3-2]
	\arrow["a_1", from=3-1, to=2-1] \arrow[phantom,from=2-1,to=3-2,"\pull \Lsym"]
	\arrow["a_V" description, from=3-1, to=3-2]
	\arrow[equals, from=3-1, to=4-1]
	\arrow["a_E" description, from=4-1, to=4-2]
	\arrow["p_1"', from=4-2, to=3-2] \arrow[phantom,from=3-1,to=4-2,"\pull \Rsym"]
	\arrow["p_2", from=4-2, to=5-2] \arrow[phantom,from=4-1,to=5-2,"\push \Rsym"]
	\arrow[equals, from=5-1, to=4-1]
	\arrow["a_W" description, from=5-1, to=5-2]
  \end{tikzcd}
  \qquad
  \alpha_e \quad\defeq \quad
  \begin{tikzcd}
	0 & V \\
	1 & E \\
	0 & W \\
	\arrow["a_V", from=1-1, to=1-2]
	\arrow["a_1"', from=1-1, to=2-1] \arrow[phantom,from=1-1,to=2-2,"\push \Lsym\pull \Rsym"]
	\arrow["e" description, from=2-1, to=2-2]
	\arrow["p_1"', from=2-2, to=1-2]
	\arrow["p_2", from=2-2, to=3-2]
	\arrow["a_1", from=3-1, to=2-1] \arrow[phantom,from=2-1,to=3-2,"\pull \Lsym\push \Rsym"]
	\arrow["a_W" description, from=3-1, to=3-2]
  \end{tikzcd}
  \qquad
  \alpha_w \quad\defeq \quad
  \begin{tikzcd}
	0 & V \\
	0 & E \\
	0 & W \\
	1 & W \\
	0 & W \\
	\arrow["a_V", from=1-1, to=1-2]
	\arrow[equals, from=1-1, to=2-1] \arrow[phantom,from=1-1,to=2-2,"\pull \Rsym"]
	\arrow["a_E" description, from=2-1, to=2-2]
	\arrow["p_1"', from=2-2, to=1-2]
	\arrow["p_2", from=2-2, to=3-2]
	\arrow[equals, from=3-1, to=2-1] \arrow[phantom,from=2-1,to=3-2,"\push \Rsym"]
	\arrow["a_W" description, from=3-1, to=3-2]
	\arrow["a_1"', from=3-1, to=4-1]
	\arrow["w" description, from=4-1, to=4-2]
	\arrow[equals, from=4-2, to=3-2] \arrow[phantom,from=3-1,to=4-2,"\push \Lsym"]
	\arrow[equals, from=4-2, to=5-2] \arrow[phantom,from=4-1,to=5-2,"\pull \Lsym"]
	\arrow["a_1", from=5-1, to=4-1]
	\arrow["a_W" description, from=5-1, to=5-2]
    \end{tikzcd}
\]
Observe that $\alpha_e$ sequentializes to $\alpha_v$ and $\alpha_w$, that is, $\alpha_v \oprewtoseq \alpha_e \rewtoseq \alpha_w$,
just in case $v = p_1(e)$ and $w = p_2(e)$.
\begin{thm}[cf.~Theorems~2 and 3 of Dawson, Paré, and Pronk~\cite{DPP03b}]
  \label{thm:derivation-equivalence-undecidable}
  There is a category $\C$ with locally finite factorizations for which the following equivalent problems are both undecidable:
  \begin{itemize}
  \item given two stacks of double cells in $\ZZ(\C)$, determine whether they vertically compose to the same double cell;
  \item given two proofs in the bifibrational calculus generated by $p = \Id[\C]$, determine whether they are permutation equivalent.
  \end{itemize}
\end{thm}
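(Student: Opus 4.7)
The plan is to reduce from the undecidable connectivity problem for locally finite bipartite graphs, following the approach of Dawson, Paré, and Pronk~\cite{DPP03b}. First observe that the two bullet points of the statement are equivalent: by Definition~\ref{defi:zigzag-double-category} and Proposition~\ref{prop:unique-generator-decomposition}, stacks of generating double cells in $\ZZ(\C)$ are exactly proofs in the bifibrational calculus for $p = \Id[\C]$, with vertical composition corresponding to composition modulo permutation equivalence. Hence it suffices to exhibit an undecidable instance of either problem.

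As input to the reduction, take a locally finite bipartite graph $G = (V \xleftarrow{p_1} E \xrightarrow{p_2} W)$ with decidable edge relation but undecidable vertex connectivity; such a $G$ is constructible by a standard encoding of the halting problem, with vertices coding configurations of a Turing machine and edges coding single computation steps, arranged bipartitely. Given such a $G$, form the category $\C_G$ as in the construction above, which has locally finite factorizations. The already-established property $\alpha_{p_1(e)} \oprewtoseq \alpha_e \rewtoseq \alpha_{p_2(e)}$ immediately yields one direction of the reduction: any $G$-path between vertices $u$ and $u'$ gives a chain of $\rewtoseq$-steps, hence a witness of $\alpha_u \permeqseq \alpha_{u'}$.

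The main obstacle is the converse implication: deducing from $\alpha_u \permeqseq \alpha_{u'}$ that $u$ and $u'$ lie in the same connected component of $G$. The approach rests on the following structural observations about $\C_G$. Each vertex stack $\alpha_v$ or $\alpha_w$ is a $\rewtoseq$-normal form, since it contains no bi-focused bipole; each edge stack $\alpha_e$ has precisely two one-step $\rewtoseq$-reducts, namely $\alpha_{p_1(e)}$ and $\alpha_{p_2(e)}$. So the restriction of the $\rewtoseq$-reduction graph to the family $\{\alpha_x\}_{x \in V \sqcup E \sqcup W}$ is exactly the incidence graph of $G$. To close the argument one must rule out the possibility that some stack outside this family mediates a $\permeqseq$-equivalence between two $\alpha_x$ in distinct connected components---equivalently, one must show that the $\permeqseq$-class of each $\alpha_v$ contains only stacks of the form $\alpha_y$ for $y$ in the connected component of $v$. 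This relies on the sparse combinatorial structure of $\C_G$: the only non-identity arrows are the projections $p_1, p_2$, the unique maps $a_X$ from $0$, and the constants $v, w, e$ from $1$, while the only non-trivial commuting squares correspond exactly to edges of $G$. A careful case analysis on which bipoles can occur in a stack with the same source and target zigzag as $\alpha_v$ should show that every such stack is itself of the form $\alpha_y$, modulo irrelevant sequentialization choices inside its invertible phases.

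Combining both directions yields a computable reduction from $G$-connectivity to the $\permeqseq$-equivalence problem for stacks in $\ZZ(\C_G)$, proving undecidability of the second bullet; by the initial equivalence, the same holds for deciding whether two stacks of double cells vertically compose to the same double cell.
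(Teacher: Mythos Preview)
Your approach is correct and follows the same strategy as the paper, adapting Dawson--Par\'e--Pronk's reduction.  The paper differs only in that it takes the specific bipartite graph $G_M$ of a deterministic universal Turing machine and reduces directly from the halting problem, using determinism of $M$ to argue that any chain of sequentialization equivalences between $\alpha_{(\sigma,0)}$ and $\alpha_{(\tau_{\mathrm{halt}},1)}$ traces out a halting computation.  You instead reduce from abstract bipartite connectivity, which is somewhat cleaner in that it avoids this extra step about $M$.

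Both arguments rest on the structural claim you correctly flag as ``the main obstacle'': that the $\permeqseq$-class of each $\alpha_v$ contains only stacks of the form $\alpha_y$ with $y$ in the connected component of $v$.  The paper leaves this implicit (it only records the forward observation that $\alpha_e$ sequentializes to $\alpha_{p_1(e)}$ and $\alpha_{p_2(e)}$).  Your proposal is honest in naming the gap but does not close it; ``a careful case analysis \dots should show'' is not a proof.  The analysis is short and you should include it: every strongly multifocused proof of the neutral sequent $\pull{a_1}\push{a_1}\atom 0 \vdashf{a_W} \push{p_2}\pull{p_1}\atom V$ begins with either $\Lpull[a_1]$ (forcing a choice of constant $w\colon 1\to W$), $\Rpush[p_2]$ (determined, then a later choice of $v\colon 1\to V$), or bi-focus (forcing a choice of $e\colon 1\to E$), and after the ensuing inversion phase the remaining proof is forced; hence every multifocused proof is some $\alpha_x$, the $\rewtoseq$-steps between them are exactly the incidences of $G$, and $\permeqseq$ coincides with $G$-connectivity.
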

\begin{proof}
  We take $\C = \C_{G_M}$ where $G_M$ is a bipartite version of the configuration graph of some universal Turing machine $M$.
  Without loss of generality we will assume that $M$ is deterministic and has a unique halting configuration $\tau_\text{halt}$.
  Let us write $\Conf{M}$ for the configuration space of $M$, that is, the set of tuples $(q, u, v)$ of a state $q$ and finite strings $u$ and $v$ over the working alphabet denoting the contents of the tape to the left and right of the head.
  We define $G_M$ by taking $V = \Conf{M} \times \set{0}$ and $W = \Conf{M} \times \set{1}$ to be two disjoint copies of the configuration space, and $E \subseteq V\times W$ as the union
\begin{align*}
  E &= \set{((\sigma,0), (\tau,1)) \mid \sigma \to \tau\text{ is a valid transition of }M} \\
   & \cup \set{((\sigma,0), (\sigma,1)) \mid \sigma \in \Conf{M}}
\end{align*}
By the preceding observation, it is clear that any directed chain of configurations
\[
\sigma_1 \to \sigma_2 \to \sigma_3 \to \dots
\]
induces a corresponding chain of sequentialization equivalences:
\[
\alpha_{(\sigma_1,0)} \permeqseq \alpha_{(\sigma_2,1)}\permeqseq \alpha_{(\sigma_2,0)}\permeqseq \alpha_{(\sigma_3,1)}\permeqseq \alpha_{(\sigma_3,0)} \permeqseq \cdots
\]
Conversely, the assumption that $M$ is deterministic and has a unique halting state ensures that any chain of sequentialization equivalences $\alpha_{(\sigma,0)}\permeqseq\cdots\permeqseq \alpha_{(\tau_\text{halt},1)}$ corresponds to a halting execution $\sigma \to^* \tau_\text{halt}$.
We have thus reduced the halting problem for $M$ to deciding sequentialization equivalence $\alpha_{(\sigma,0)} \permeqseq \alpha_{(\tau_\text{halt},1)}$, or equivalently permutation equivalence $\alpha_{(\sigma,0)} \permeq \alpha_{(\tau_\text{halt},1)}$.
\end{proof}

Dawson, Paré, and Pronk proved decidability of (fence equivalence for) the $\Pi_2(\C)$ construction separately in the cases that $\C$ is locally finite or factorization preordered.
We now prove analogues of these two results for deciding equality of morphisms in $\Bifib{p}$, or more precisely for deciding whether two proofs of the same judgment represent the same morphism.
The first is already a consequence of the results of Section~\ref{sec:sequent-calculus}, while the second is a corollary of our normal form theorem.

\begin{thm}[cf.~Propositions 2 and 3 of Dawson, Paré, and Pronk~\cite{DPP03b}]
  \label{thm:derivation-equivalence-decidable}
  Let $p:\D\to\C$ and suppose that $\C$ satisfies either of the following conditions:
  \begin{enumerate}
  \item $\C$ is locally finite, \underline{or}
  \item $\C$ is factorization preordered;
  \end{enumerate}
  then for any two proofs $\alpha_1,\alpha_2 : S \vdashf{f} T$ of the bifibrational calculus generated by $p$, it is decidable whether $\alpha_1 \permeq \alpha_2$.
\end{thm}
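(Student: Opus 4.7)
The plan is to handle the two cases separately, using different structural results established earlier in the paper.

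For case (2), the factorization preordered case, the result is essentially a restatement of Corollary~\ref{cor:FP=>maxmultfoc-canonical}, which says that under \FP, arrows of $\Bifib{p}$ are in bijection with maximally multifocused proofs. To decide $\alpha_1 \permeq \alpha_2$ I would compute a canonical maximally multifocused representative for each side: first weakly focus $\alpha_i$ (Section~\ref{sec:focusing:weak-focusing}), then strengthen to a strongly multifocused proof $\Strengthen{\alpha_{i,w}}$, then normalize under the rewrite system $\parsym \cup \grasym$, which by Theorem~\ref{thm:fp-unique-normal-forms} is terminating and locally confluent, producing a unique normal form. Finally one compares the two maximally multifocused normal forms syntactically. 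The correspondences between $\permeq$, $\permeqseq$, and equality of normal forms established in Section~\ref{subsec:multi-focusing} ensure this correctly decides $\permeq$. The procedure is effective provided that equality of arrows in $\C$ is decidable and that the factorization-existence side-conditions of the rewrite rules can be tested---standing computability assumptions implicit in treating $\C$ as a concrete object.

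For case (1), the locally finite case, I would argue by direct enumeration, appealing only to the results of Section~\ref{sec:sequent-calculus}. The subformula property (Proposition~\ref{prop:subformula-property}) restricts every subderivation of a proof of $S \vdashf{f} T$ to the finite set of subformulas of $S$ and $T$, each refining a specific object of $\C$. Since each inference rule strictly decomposes one connective, every proof of $S \vdashf{f} T$ has the same length, namely $|S|+|T|$ inference steps. At each step the non-invertible rules $\Rpush$ and $\Lpull[g]$, read from conclusion to premise, require choosing an intermediate morphism in a $\C$-homset whose source \emph{and} target are fixed by the current sequent (the intermediate object is determined by the formula being decomposed), so local finiteness of $\C$ makes this choice finite; likewise initial axioms are chosen among arrows of $\D$ in a fixed homset lying over a fixed arrow of $\C$, which is finite under an implicit finite-fiber assumption on $p$. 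Hence the set $\mathcal{P}$ of all proofs of $S \vdashf{f} T$ is finite and effectively enumerable. One then computes the equivalence classes of $\mathcal{P}$ under $\permeq$ by iteratively closing under the four generators \eqref{eq:permgen1}--\eqref{eq:permgen4} applied at any position of any proof---a terminating procedure, since $\permeq$-steps preserve the underlying judgment and therefore stay within $\mathcal{P}$---and tests whether $\alpha_1$ and $\alpha_2$ fall in the same class.

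Beyond assembling existing pieces the argument is purely bookkeeping; the principal subtlety, easy to miss on first reading, is the observation in case (1) that the factorizations needed by $\Rpush$ and $\Lpush$ are choices in a $\C$-homset with \emph{both} endpoints fixed, so that hom-finiteness alone suffices for finite branching and no separate assumption of finite categories of factorizations is needed.
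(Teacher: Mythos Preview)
Your treatment of case~(2) matches the paper's exactly: normalize both proofs to maximally multifocused form via Theorem~\ref{thm:fp-unique-normal-forms} and compare.

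In case~(1), however, there is a genuine gap. You aim to show that the set $\mathcal{P}$ of \emph{all} proofs of $S \vdashf{f} T$ is finite, but this is not a consequence of local finiteness of $\C$ alone. As you yourself note, the axiom step requires choosing $\delta \in \D_f(X,Y)$, and finiteness of this set is an additional ``finite-fiber assumption on~$p$'' that is \emph{not} part of the theorem's hypotheses. If $\D$ has an infinite relative homset, your enumeration of $\mathcal{P}$ does not terminate.

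The paper sidesteps this by observing that the four permutation generators \eqref{eq:permgen1}--\eqref{eq:permgen4} leave invariant not only the length and the multiset of left/right rules used, but also the terminating axiom $\delta$. Hence one never needs all of $\mathcal{P}$: it suffices to compute the closure $[\alpha_1]$ of the single proof $\alpha_1$ under the generators. Every proof in $[\alpha_1]$ shares the same axiom and the same sequence of rule types, differing only in the intermediate base arrows, and those lie in fixed $\C$-homsets which are finite by hypothesis. Thus $[\alpha_1]$ is finite and one tests $\alpha_2 \in [\alpha_1]$. Your argument is easily repaired along these lines---e.g., first compare $\chi_p(\alpha_1)$ with $\chi_p(\alpha_2)$ and reject if unequal---but as written it relies on an unwarranted assumption.
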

\begin{proof}
  \begin{enumerate}
  \item Two proofs are equivalent just in case there is some sequence of permutations transforming one to the other.
    Any such permutation leaves invariant the length of the proof, the choice of terminating axiom $\delta : X \to Y$ in $\D$, and the selection of left and right rules applied, merely changing the order of the rules and the intermediate base arrows that annotate the derivation.
    Since $\C$ is locally finite, there are therefore only finitely many proofs permutation equivalent to any given proof, and we can reduce $\alpha_1 \permeq \alpha_2$ to a finite test $\alpha_2 \in [\alpha_1]$.
    \item This is a corollary of our normal formal theorem, Theorem~\ref{thm:fp-unique-normal-forms}, which states that each multifocused proof has a unique maximal normal form, under the \FP condition.
We can thus decide whether two proofs denote the same morphism of $\Bifib{p}$ simply by computing their normal forms and checking that these are equal.
  \end{enumerate}
\end{proof}
\noindent
\begin{rem}
  Technically, we need $\C$ to satisfy computable versions of either local finiteness or factorization preorder to get a decision procedure.
  For instance, we need to be able to effectively enumerate the finite homset $\C(A,B)$ between any pair of objects.
  Or, for any given square, we need to be able to decide whether that square is empty or else compute a filler.
  We also need the equality of morphisms in $\C$ and $\D$ to be decidable to check that two proofs are identical. 
  We omit these precisions from the statement of the theorem, since it is clear from context that the conditions should be interpreted constructively.
\end{rem}

Besides entailing decidability of the word problem, the normal form theorem has the equally important application of allowing us to \emph{enumerate} homsets of free bifibrations without duplicates, assuming the \FP condition.
We are most interested in the situation that $\Bifib{p}$ is relatively locally finite in the following sense.

We say that a category $\D$ equipped with a functor $p : \D \to \C$ is \defin{relatively locally finite} (relative to $p$) if the \emph{relative homsets} over $f$
\[
  \D_f(X,Y) \defeq \set{\delta \mid \delta : X \to Y, p(\delta) = f}
\]
are finite, for every arrow $f : A \to B$ of $\C$ and pair of objects $X$ and $Y$ of $\D$ lying above $A$ and $B$ respectively. (Thus a category $\C$ is locally finite iff the unique functor $!_C : \C \to \One$ is relatively locally finite.)
\begin{thm}
  \label{thm:relative-local-finiteness}
  Let $p : \D \to \C$ and suppose $\C$ satisfies the \FP condition.
  If $\D$ is relatively locally finite (relative to $p$) and $\C$ has locally finite slices and coslices, then $\Bifib{p}$ is relatively locally finite (relative to $\BifibFun{p}$).
  In particular, the fiber category $\Bifib{p}_A$ of every object $A \in \C$ is locally finite.
\end{thm}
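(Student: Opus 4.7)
The plan is to apply Corollary~\ref{cor:FP=>maxmultfoc-canonical} to replace equivalence classes of arbitrary derivations by maximally multifocused ones, and then show by induction on judgments (using the well-founded product subformula order on $(S,T)$ from Subsection~\ref{subsubsec:weakly-focused-derivations}) that for every $f : A \to B$ in $\C$, every $S \refs A$, $T \refs B$, and every state $q$, the set of maximally multifocused derivations of $S \vdashl{f}{q} T$ is finite.

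First I would case-split on the root rule of the derivation, as presented in Figure~\ref{fig:maxmultfocseq}. In the axiom case $S = \atom{X}$, $T = \atom{Y}$, $q = \fstate$, the set of proofs is in bijection with $\D_f(X,Y)$, which is finite by relative local finiteness of $\D$. For the inversion rules $\Lpush[\pi]$, $\Rpull[\rho]$, and $\Lpush[\pi]\Rpull[\rho]$, the arrows $\pi$ and $\rho$ are forced by the shape of the conclusion's formulas, and so the rule is fully determined by $(S,T,f)$; its unique premise has a strictly smaller pair of formulas and a determined base arrow, so the induction hypothesis yields a finite bound.

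The focusing rules constitute the main obstacle and are where the slice and coslice hypotheses are used. In the $\Lpull[\sigma]$ case, $\sigma$ is already pinned down by $S = \pull{\sigma}P$, so applicable instances correspond bijectively to arrows $g$ with $f = \sigma g$; these are precisely the morphisms from $\sigma$ to $f$ in the coslice $A/\C$, hence finite in number, and for each such $g$ the premise $P \vdashl{g}{\lockL} Q$ has a strictly smaller left-hand side so the induction hypothesis applies. The $\Rpush[\tau]$ case is dual, using local finiteness of the slice $\C/B$, and the bi-focus case $\Lpull[\sigma]\Rpush[\tau]$ reduces to a two-step factorization $f = \sigma g \tau$ whose number of solutions is bounded by the product of the two hom-cardinalities (and is in fact at most one under \FP, though we do not need this). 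Summing finitely many finite contributions yields the finiteness claim.

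The final assertion is then immediate: since $\Bifib{p}_A$ is by definition the subcategory spanned by arrows over $\Id[A]$, its homset $\Bifib{p}_A(S,T)$ equals $\Bifib{p}_{\Id[A]}(S,T)$, finite by the main statement. The subtle point above is that the arrows $\sigma$ and $\tau$ featured in each focusing rule are read off from the formula on the corresponding side, so the only residual choice in focusing is that of a single factorization bounded by the slice/coslice assumption; this is what allows the induction to go through without requiring the full local finiteness of $\C$.
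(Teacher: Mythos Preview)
Your proof is correct and follows essentially the same approach as the paper's: both reduce to counting maximally multifocused derivations via Corollary~\ref{cor:FP=>maxmultfoc-canonical}, and both observe that proof search is finitely branching because the focus rules' only free choice is a (co)slice hom-set, with the axiom rule bounded by relative local finiteness of~$\D$. One small caveat: your parenthetical claim that the bi-focus factorization $f = \sigma g\tau$ has at most one solution under \FP{} is not in general correct (the \FP{} condition only forces $g_1 = g_2$ once you know \emph{both} $\sigma g_1 = \sigma g_2$ and $g_1\tau = g_2\tau$, not merely $\sigma g_1\tau = \sigma g_2\tau$), but as you note this is not needed for the argument.
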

\begin{proof}
  Suppose given an arrow $f : A \to B$ of $\C$ and a pair of bifibrational formulas $S\refs A$ and $T \refs B$.
  By Corollary~\ref{cor:FP=>maxmultfoc-canonical}, we can enumerate canonical representatives of arrows $\alpha \in \Bifib{p}_f(S,T)$ using the sequent calculus of Figure~\ref{fig:maxmultfocseq} to search for maximally multifocused proofs of $S \vdashl{f}{\ilockNone} T$.
  There are only three potential sources of nondeterministism:
\begin{enumerate}
  \item the choice of whether to initiate a $\biL$, $\biR$, or $\biLR$ bipole when proving a neutral sequent;
  \item the choice of an arrow $g$ in the premise of a left-/right-/bi-focus rule;
  \item the choice of an axiom $\delta : X \to Y \in \D$ to terminate the proof.
\end{enumerate}
But each of these choice points are finite branching:
\begin{enumerate}
\item at most three rules to try (some may be forbidden by the lock state $\fstate$ or if one side of the sequent is atomic);
\item finite by the assumption that $\C$ has locally finite slice and coslice categories (for example, to derive a sequent of the form $\pull{\sigma}P \vdashl{h}{q} Q$ using the $\Lpull[\sigma]$ rule, we must pick an arrow $g$ such that $h=\sigma g$; observe that such an arrow corresponds to a morphism $\sigma \to h$ in the coslice category $A/\C$, where $A = \dom \sigma = \dom h$);
\item finite by the assumption that $\D$ is relatively locally finite.
\end{enumerate}
We conclude that there are only finitely many maximally multifocused proofs of any given judgment, and hence that the relative homsets are finite.
In particular, for every object $A\in \C$, the homset of the fiber category $\Bifib{p}_A$, corresponding to the relative homset over $\Id[A]$, is finite.
\end{proof}

\begin{cor}
  Under the assumptions of Theorem~\ref{thm:relative-local-finiteness}, logical equivalence of bifibrational formulas is decidable.
\end{cor}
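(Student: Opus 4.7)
The goal is to exhibit an algorithm that, given two bifibrational formulas $S_1, S_2 \refs A$, decides whether $S_1 \equiv_A S_2$. Unfolding Definition~\ref{defi:logical-equivalence}, this amounts to deciding whether there exists a pair of derivations $\alpha : S_1 \vdashf{\Id[A]} S_2$ and $\beta : S_2 \vdashf{\Id[A]} S_1$ such that $\alpha \hcomp \beta \permeq \Id[S_1]$ and $\beta \hcomp \alpha \permeq \Id[S_2]$, that is, whether $S_1$ and $S_2$ are isomorphic in the fiber category $\Bifib{p}_A$ (Proposition~\ref{prop:equivalence-is-vertical-iso}).

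The plan is to reduce the question to a finite search combined with a decision procedure for proof equivalence. First, by Theorem~\ref{thm:relative-local-finiteness}, under the stated assumptions the fiber category $\Bifib{p}_A$ is locally finite, so the sets $\Bifib{p}_{\Id[A]}(S_1,S_2)$ and $\Bifib{p}_{\Id[A]}(S_2,S_1)$ are finite. Moreover, the proof of that theorem exhibits an \emph{effective} enumeration of these sets: their elements are in bijection with maximally multifocused derivations of $S_1 \vdashl{\Id[A]}{\ilockNone} S_2$ and $S_2 \vdashl{\Id[A]}{\ilockNone} S_1$ (Corollary~\ref{cor:FP=>maxmultfoc-canonical}), and the branching at each search step is finite and computable from the given data (three rule choices, morphisms in a slice/coslice of $\C$, and arrows in a relative homset of $\D$).

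Second, for any candidate pair $(\alpha,\beta)$ produced by this enumeration, I would form the compositions $\alpha \hcomp \beta$ and $\beta \hcomp \alpha$ using cut-elimination (Definition~\ref{defi:cut}) and test whether they are permutation equivalent to $\Id[S_1]$ and $\Id[S_2]$ respectively. This test is decidable by Theorem~\ref{thm:derivation-equivalence-decidable} (case~2): since $\C$ is factorization preordered, one can compute the unique $(\parsym\cup\grasym)$-normal form $\NF{-}$ of each derivation via Theorem~\ref{thm:fp-unique-normal-forms} and simply compare the normal forms for syntactic equality. Logical equivalence $S_1 \equiv_A S_2$ then holds if and only if at least one pair $(\alpha,\beta)$ among the finitely many pairs enumerated passes both equality tests.

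There is no real obstacle; the work has all been done by the preceding results. The only point worth double-checking is that the computability hypotheses implicit in the statement—namely, that one can enumerate $\C(A,B)$ and $\D_f(X,Y)$, compute diagonal fillers in $\C$, and test equality of morphisms in $\C$ and $\D$—propagate cleanly through both the enumeration of maximally multifocused proofs and the normalization procedure for $(\parsym\cup\grasym)$, as already noted in the remark following Theorem~\ref{thm:derivation-equivalence-decidable}.
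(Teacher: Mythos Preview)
Your proposal is correct and follows essentially the same approach as the paper's own proof: enumerate the finitely many candidate pairs $(\alpha,\beta)$ in the relative homsets using Theorem~\ref{thm:relative-local-finiteness}, compute their cuts, and decide permutation equivalence with the identities via Theorem~\ref{thm:derivation-equivalence-decidable}. Your write-up is slightly more explicit about the effectiveness of the enumeration and the underlying computability assumptions, but the argument is the same.
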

\begin{proof}
  Recall that by definition, two formulas $S_1,S_2 \refs A$ are logically equivalent if there exist derivations $\alpha : S_1\vdashf{\Id[A]} S_2$ and $\beta : S_2 \vdashf{\Id[A]} S_1$ whose compositions are permutation equivalent to identities (Defn.~\ref{defi:logical-equivalence}), or equivalently just in case they are isomorphic in the fiber category $\Bifib{p}_A$ (Prop.~\ref{prop:equivalence-is-vertical-iso}).
  It suffices to enumerate canonical representatives of all such derivations $\alpha$ and $\beta$ using Theorem~\ref{thm:relative-local-finiteness}, compute their cuts $\alpha\hcomp\beta$ and $\beta\hcomp\alpha$, and decide whether $\alpha\hcomp\beta \permeq \Id[S_1]$ and $\beta\hcomp\alpha \permeq \Id[S_2]$ using Theorem~\ref{thm:derivation-equivalence-decidable}.
\end{proof}
\noindent
As already remarked, free categories satisfy the \FP condition because their factorization categories are linear orders.
Moreover, although free categories $\Free\G$ are not locally finite unless the underlying graph $\G$ is acyclic, their slices and coslices are posetal and hence trivially locally finite.
Thus the enumeration and decision procedures described above are applicable (and even become slightly simpler since the sequent calculus of Figure~\ref{fig:maxmultfocseq} can be simplified a bit, as remarked at the end of Section~\ref{subsec:maximal-multi-focusing}).

Finally, Theorem~\ref{thm:relative-local-finiteness} can clearly be generalized to derive any infinite cardinality bound on the relative homsets of $\Bifib{p}$, assuming the same bound on the relative homsets of $\D$ and the homsets of the slices and coslices of $\C$.
We can also use the sequent calculus to effectively enumerate the (potentially infinite) relative homsets of $\Bifib{p}$, given procedures for doing the same for $\D$ and for the slices and coslices of $\C$.

\section{Free $(\P,\N)$-fibrations}
\label{sec:PN-fibrations}

As previously mentioned, Dawson, Paré, and Pronk's $\Pi_2(\C)$ construction may be seen as a weaker version of the free groupoid construction, which turns a category into a groupoid by freely adding an inverse to every arrow.
More often, one only wants to add formal inverses to a class of arrows $\W$, a process known as \emph{localization} (see, e.g.,~Borceaux~\cite[Ch.5]{BorceauxHCA1}), which freely builds a category $\C[\W^{-1}]$ equipped with a functor $\C \to \C[\W^{-1}]$ sending every arrow $f \in \W$ to an isomorphism.
DPP~\cite[\S4.1]{DPP03} discuss how to adapt their construction of $\Pi_2(\C)$ to define a 2-category $\C[\W^*]$ in which every arrow $f \in \W$ is freely equipped with a right adjoint.
In this section we quickly explain how our construction of the free bifibration on a functor $p : \D \to \C$ may be similarly adapted when the operations of pushing and pulling are restricted to certain classes of arrows of $\C$.
\begin{defi}
  Let $\P \subset \C$ and $\N \subset \C$ be two subsets of arrows (not necessarily subcategories) of $\C$.
  We say that a functor $\pi : \E \to \C$ is a \defin{$(\P,\N)$-fibration} if it has $+$-cartesian liftings of all arrows in $\P$ and $-$-cartesian liftings of all arrows in $\N$.
\end{defi}
\noindent
As special cases, a $(\emptyset,\C)$-fibration is a Grothendieck fibration in the ordinary sense, a $(\C,\emptyset)$-fibration is an opfibration, and a $(\C,\C)$-fibration is a bifibration.

A case of particular interest is when $(\P,\N)$ constitute a factorization system for the base category.
Such $(\P,\N)$-fibrations relative to a factorization system have been termed \emph{ambifibrations} by Joachim Kock and André Joyal.
One interesting property of ambifibrations, noted by Kock~\cite{Kock2010}, is that they give rise to ternary factorization systems in which every arrow of the total category factors as a $+$-cartesian arrow, followed by an arrow lying over an identity, followed by a $-$-cartesian arrow:
\[
  \begin{tikzcd}
    \E \ar[dd,"\pi"] \\\\
    \C
  \end{tikzcd}
  \qquad
  \begin{tikzcd}[row sep=2ex]
    S \ar[rrr,"\alpha"]\ar[rd,"\opcart e S"'] &&& T \\
  &\push{e}S \ar[r,"\uLpush {e} {} {\alpha} \uRpull {} {m}"'] & \pull{m}T\ar[ru,"\cart m T"']& \\\\
   A \ar[rrr,"f"]\ar[two heads,rd,"e\in\P"'] &&& B \\
   &\cdot\ar[r,equals] & \cdot \ar[tail,ru,"m\in\N"']&
  \end{tikzcd}
\]
We will consider an example of a free ambifibration in Section~\ref{sec:examples:ambi}.

Unless otherwise stated we do not assume anything about $\P$ and $\N$ in this section.
We write $\BifibFun{p,\P,\N} : \Bifib{p,\P,\N} \to \C$ for the free $(\P,\N)$-fibration on a functor $p : \D \to \C$.

\paragraph{Sequent calculus}

The presentation of the free bifibration that we gave via the sequent calculus in Section~\ref{sec:sequent-calculus} adapts directly to a presentation of the free $(\P,\N)$-bifibration.
For this one simply has to restrict the formation rules on bifibrational formulas:
\begin{mathpar}
  \inferrule {S \refs A \\ f : A \to B \in \P}
             {\push{f}S \refs B}

  \inferrule {g : B \to C\in \N \\ T \refs C}
             {\pull{g}T \refs B}
\end{mathpar}
Otherwise, everything remains the same: the definition of the inference rules, permutation equivalence, cut-elimination, et cetera.
This presentation makes clear that $\Bifib{p,\P,\N}$ is a full subcategory of $\Bifib{p}$.

\paragraph{The zigzag double category}
In Section~\ref{sec:double-categories} we defined the double category of zigzags $\ZZ(\C)$ by first constructing the free bifibration on the identity functor $\Id[\C]$.
We can similarly define a \emph{double category of restricted zigzags} $\ZZ(\C,\P,\N)$, whose vertical arrows are zigzags moving positively along arrows in $\P$ and negatively along arrows in $\N$.
In particular, we can recover DPP's $\C[\W^*]$ in this way as the vertical 2-category of $\ZZ(\C,\C,\W)$.

\paragraph{Weak focusing}
The syntax of weakly focused derivations given in Section~\ref{sec:focusing:weak-focusing} works just as well for $(\P,\N)$-fibrations, although the proof of correctness based on the equivalence $\Bifib{p} \simeq \saBifib{p}$ has to be slightly adapted.
The equivalence with strictly alternating formulas relied on the pseudofunctoriality laws $\push{(g\circ f)}S \equiv \push g \push f S$ and $\pull {(f \; g)} T \equiv \pull f \pull g T$, whereas $\P$ and $\N$ are not necessarily closed under composition.
However, Theorem~\ref{thm:strictify-equivalence} can be directly adapted to prove the following.
\begin{prop}\label{prop:PNsaBifib}
  If $\P$ and $\N$ are closed under binary composition, then $\Bifib{p,\P,\N} \simeq \saBifib{p,\P,\N}$.
\end{prop}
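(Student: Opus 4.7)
The plan is to adapt the proof of Theorem~\ref{thm:strictify-equivalence} to the restricted $(\P,\N)$-fibration setting, since the sequent calculus construction of $\Bifib{p,\P,\N}$ is, as noted just before the statement, identical to that of $\Bifib{p}$ except for the side conditions $f \in \P$ and $g \in \N$ on the formation rules for $\push{f}S$ and $\pull{g}T$. The full subcategory $\saBifib{p,\P,\N}$ consists of those $(\P,\N)$-formulas that are strictly alternating, and the inclusion $i : \saBifib{p,\P,\N} \to \Bifib{p,\P,\N}$ is full and faithful by construction.

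First I would define the strictification map $\collapse{-}$ exactly as in the unrestricted case, normalizing any formula $S$ to a strictly alternating formula $\collapse{S}$ by repeatedly applying the pseudofunctoriality equivalences $\push{g}\push{f}S \equiv \push{(g\circ f)}S$ and $\pull{f}\pull{g}T \equiv \pull{(f\;g)}T$ (Prop.~\ref{prop:pseudofunctoriality}). The crucial observation, and the only real point where the restricted setting interacts non-trivially with the construction, is that when $S$ is a $(\P,\N)$-formula the normalized formula $\collapse{S}$ remains a $(\P,\N)$-formula: this is exactly where the hypothesis that $\P$ and $\N$ are closed under binary composition is used, since it ensures that whenever $f, g \in \P$ (resp.\ $\N$) the composite $g \circ f$ (resp.\ $f \; g$) is still in $\P$ (resp.\ $\N$), so that $\push{(g\circ f)}$ and $\pull{(f \; g)}$ are legal $(\P,\N)$-connectives.

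Next I would verify that the witnessing derivations $\theta_S : S \vdashf{\Id} \collapse{S}$ and $\theta_S^{-1} : \collapse{S} \vdashf{\Id} S$ from Proposition~\ref{prop:theta-inverses}, constructed inductively, are derivable in the restricted sequent calculus. Inspecting the definition, each $\theta_S$ and $\theta_S^{-1}$ only applies inference rules along arrows that are either already present in $S$ or are composites thereof, all of which belong to $\P$ or $\N$ by closure. Hence strictification lifts to a map $\collapse{-} : \Bifib{p,\P,\N} \to \saBifib{p,\P,\N}$ commuting with the projection to $\C$, via the same formula $\collapse{\alpha} \defeq \theta_S^{-1} \hcomp \alpha \hcomp \theta_T$.

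The remainder of the argument is then identical to that of Theorem~\ref{thm:strictify-equivalence}: strictification respects permutation equivalence by Lemma~\ref{lem:cut-respects-permeq}, satisfies the functor laws because the $\theta$'s cancel, is invariant on strictly alternating derivations, and the equivalences $\theta_S$ assemble into a natural isomorphism $i \circ \collapse{-} \cong \Id[\Bifib{p,\P,\N}]$ witnessing one direction of the equivalence, with $\collapse{-} \circ i = \Id[\saBifib{p,\P,\N}]$ on the nose. I expect no real obstacle here beyond bookkeeping: the one step that would fail without the closure hypothesis—namely confirming that $\collapse{S}$ and the derivations $\theta_S, \theta_S^{-1}$ stay inside the $(\P,\N)$-fragment—is precisely what the hypothesis is designed to supply, and everything else transfers verbatim from the unrestricted case.
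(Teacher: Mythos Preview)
Your proposal is correct and follows exactly the approach indicated by the paper, which simply states that Theorem~\ref{thm:strictify-equivalence} can be directly adapted and does not spell out the details. You have correctly identified the one place where the closure hypothesis is needed---ensuring that $\collapse{S}$ and the derivations $\theta_S,\theta_S^{-1}$ remain in the $(\P,\N)$-fragment---and the rest indeed transfers verbatim.
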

\noindent
The restrictions on $\P$ and $\N$ are not problematic, because we can always close them under finite compositions and obtain an equivalent free $(\P,\N)$-fibration.
Given a subset of arrows $\W \subset \C$, let us write $\W^*$ for the subcategory of $\C$ generated by adding identity arows $\Id[A]$ and $\Id[B]$ for every $f : A \to B \in \W$, and by closing under binary composition.
The pseudofunctoriality laws immediately imply the following.
\begin{prop}\label{prop:freePNstar}
  $\Bifib{p,\P,\N} \simeq \Bifib{p,\P^*,\N^*}$.
\end{prop}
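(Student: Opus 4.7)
The plan is to deduce the equivalence directly from the universal properties of both free constructions, once we observe that $(\P,\N)$-fibrations and $(\P^*,\N^*)$-fibrations are, in a precise sense, the same thing.

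The first step is to show that every $(\P,\N)$-fibration $q : \E \to \C$ carries a canonical $(\P^*,\N^*)$-fibration structure. The key ingredients are classical properties of cartesian arrows: (i) every identity arrow $\Id[S]$ is simultaneously a $+$- and $-$-cartesian lifting of the identity $\Id[q(S)]$; and (ii) if $f_1 : A \to B$ and $f_2 : B \to C$ both lie in $\P$, then the composite $\opcart{f_1}{S} \hcomp \opcart{f_2}{\push{f_1}S}$ is a $+$-cartesian lifting of $f_2 \circ f_1$ (and symmetrically for $\N$). Iterating, we obtain a canonical choice of cartesian liftings for every arrow in $\P^*$ and $\N^*$. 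Conversely, any $(\P^*,\N^*)$-fibration is trivially a $(\P,\N)$-fibration by restriction, since $\P \subseteq \P^*$ and $\N \subseteq \N^*$. Next, I would check that a functor $F : \E \to \E'$ commuting with the projections to $\C$ is a morphism of $(\P,\N)$-fibrations if and only if it is a morphism of the induced $(\P^*,\N^*)$-fibrations: one direction is immediate by restriction, and the other follows because the induced $\P^*$- and $\N^*$-cartesian liftings are built functorially from those in $\P$ and $\N$ together with identities, and any functor automatically preserves these compositions.

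With this established, the equivalence is formal. The canonical unit $\eta_p : \D \to \Bifib{p,\P,\N}$ commutes with $p$, and $\BifibFun{p,\P^*,\N^*} : \Bifib{p,\P^*,\N^*} \to \C$, viewed as a $(\P,\N)$-fibration by forgetting structure, yields by the universal property of $\Bifib{p,\P,\N}$ a unique morphism of $(\P,\N)$-fibrations
\[
F : \Bifib{p,\P,\N} \longrightarrow \Bifib{p,\P^*,\N^*}
\]
such that $F \circ \eta_p = \eta_p'$. Symmetrically, $\BifibFun{p,\P,\N}$, equipped with its canonical $(\P^*,\N^*)$-fibration structure, yields by the universal property of $\Bifib{p,\P^*,\N^*}$ a unique morphism
\[
G : \Bifib{p,\P^*,\N^*} \longrightarrow \Bifib{p,\P,\N}
\]
such that $G \circ \eta_p' = \eta_p$. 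Both composites $G \circ F$ and $F \circ G$ satisfy the defining universal property of the corresponding identity functor (as morphisms commuting with the projections and with the canonical unit), hence by uniqueness coincide with the identity. Thus $F$ and $G$ are mutually inverse, giving in fact an isomorphism $\Bifib{p,\P,\N} \cong \Bifib{p,\P^*,\N^*}$ over $\C$, and in particular the claimed equivalence.

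The main (minor) obstacle is the bookkeeping needed to specify compatible cleavages on both sides and to verify that the canonically induced $(\P^*,\N^*)$-structure on a $(\P,\N)$-fibration is genuinely preserved by morphisms of $(\P,\N)$-fibrations; once this is unpacked, the rest is a purely formal consequence of the two universal properties.
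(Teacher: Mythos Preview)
Your argument has a genuine gap in the final step, and the claimed isomorphism is in fact false --- only an equivalence holds.

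The problem is with $F \circ G = \Id$. You want to invoke uniqueness from the universal property of $\Bifib{p,\P^*,\N^*}$, which requires $F \circ G$ to be a morphism of $(\P^*,\N^*)$-fibrations with respect to the \emph{original} cleavage on $\Bifib{p,\P^*,\N^*}$. But $F$ is only a morphism of $(\P,\N)$-fibrations; your step~2 upgrades it to a morphism of $(\P^*,\N^*)$-fibrations only with respect to the \emph{induced} cleavages on both sides. On the codomain $\Bifib{p,\P^*,\N^*}$, the induced cleavage (restrict to $\P,\N$, then close back up by composing chosen liftings) does \emph{not} agree with the original one: for $f = f_1 f_2 \in \P^*$ with $f_1,f_2\in\P$, the original chosen pushforward is the formula $\push{f}S$, while the induced one is $\push{f_2}\push{f_1}S$. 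Concretely, $F\circ G$ sends $\push{f}S$ to $\push{f_2}\push{f_1}S$, so $F\circ G \neq \Id$. (Your direction $G\circ F = \Id$ does go through, since $F$ is literally the full-subcategory inclusion.)

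The paper's one-line proof sidesteps all of this. Both $\Bifib{p,\P,\N}$ and $\Bifib{p,\P^*,\N^*}$ are full subcategories of $\Bifib{p}$, and the pseudofunctoriality equivalences $\push{(g\circ f)}S \equiv \push{g}\push{f}S$, $\push{\Id}U \equiv U$ (and dually for pulls) show that every object of the larger subcategory is logically equivalent to one in the smaller. Hence the inclusion $F$ is essentially surjective, and being full and faithful already, it is an equivalence. If you want to rescue your approach, this essential-surjectivity step is exactly what is missing once you have $G\circ F = \Id$; but at that point you are really carrying out the paper's argument.
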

\begin{cor}
  If $\C = \Free\G/{\sim}$ is freely generated by some graph $\G$ modulo some equations, then $\Bifib{p} \simeq \Bifib{p,\G,\G}$.
\end{cor}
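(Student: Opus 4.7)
The plan is to apply Proposition~\ref{prop:freePNstar} with $\P = \N = \G$, yielding the equivalence
\[
  \Bifib{p,\G,\G} \;\simeq\; \Bifib{p,\G^*,\G^*}.
\]
It then remains to exhibit an equivalence $\Bifib{p,\G^*,\G^*} \simeq \Bifib{p} = \Bifib{p,\C,\C}$ over $\C$.

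First I would verify that $\G^*$ already contains essentially every arrow of $\C$. Under the hypothesis $\C = \Free\G/{\sim}$, every morphism of $\C$ is represented by a (possibly empty) path of edges of $\G$ modulo the congruence $\sim$. A non-empty path is by definition a binary composite of edges, so it lies in $\G^*$ by the closure property. Identity arrows $\Id[v]$ at vertices $v$ incident to some edge are explicitly included in $\G^*$ by its definition. The only arrows of $\C$ that can fail to belong to $\G^*$ are therefore identity arrows $\Id[v]$ at vertices $v$ that are isolated in $\G$.

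Second, I would argue that this residual discrepancy does not obstruct the equivalence $\Bifib{p,\G^*,\G^*} \simeq \Bifib{p}$. The inclusion $\Bifib{p,\G^*,\G^*} \hookrightarrow \Bifib{p}$ is a full subcategory inclusion by construction, since restricting $\P$ and $\N$ only restricts which formulas can be formed and does not alter morphisms between formulas that are already formable. Essential surjectivity then follows from the pseudofunctoriality law $\push{\Id} U \equiv U \equiv \pull{\Id} U$ of Proposition~\ref{prop:pseudofunctoriality}: any formula of $\Bifib{p}$ may be normalized by successively erasing every outermost occurrence of $\push{\Id[v]}$ or $\pull{\Id[v]}$ with $v$ isolated in $\G$, producing a logically equivalent formula that lies in $\Bifib{p,\G^*,\G^*}$.

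The main obstacle, such as it is, is simply the bookkeeping for isolated vertices of $\G$; if one restricts attention to graphs in which every vertex is incident to at least one edge, then $\G^* = \C$ as subcategories and the corollary reduces to an immediate application of Proposition~\ref{prop:freePNstar}. Alternatively, this edge case can be absorbed by passing through the equivalent split bifibration described in Section~\ref{sec:focusing:weak-focusing:splitting}, in which pushforward and pullback along identity arrows are never formed at all, eliminating the need for the normalization argument above.
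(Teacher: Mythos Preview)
Your proof is correct and follows the same approach as the paper, which simply states the corollary without proof as an immediate consequence of Proposition~\ref{prop:freePNstar} and the pseudofunctoriality laws. Your treatment is in fact more careful than the paper's: you explicitly identify and resolve the edge case of isolated vertices of $\G$, where $\G^*$ fails to contain $\Id[v]$ and hence $\G^* \subsetneq \C$; the paper does not address this, but as you observe, the law $\push{\Id} U \equiv U \equiv \pull{\Id} U$ from Proposition~\ref{prop:pseudofunctoriality} absorbs the discrepancy.
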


\paragraph{Maximal multifocusing}

Finally, the results of Sections~\ref{sec:focusing:normal-forms}--\ref{subsec:maxmultfoc-decidability} provided a strong notion of normal form for free bifibrations as well as decidability results in the case that the base category is factorization preordered (Definition~\ref{def:FP-condition}).
One benefit of considering free $(\P,\N)$-fibrations is that we can relax this requirement to the a priori weaker condition that $\C$ is both $\P$-factorization preordered and $\N$-factorization preordered.

\begin{defi}\label{def:WFP-condition}
  Let $\W \subset \C$ be a subset of arrows of a category $\C$.
  We say that $\C$ is \defin{$\W$-factorization preordered} (or $\W$-\FP) if for any diagram of the form
\[
\begin{tikzcd}
  \cdot
  \ar[r, "f\in\W"]
  &
  \cdot
  \ar[r, "g", yshift=1ex]
  \ar[r, "h"', yshift=-1ex]
  &
  \cdot
  \ar[r, "k\in\W"]
  &
  \cdot
\end{tikzcd}
\]
if both $fg = fh$ and $gk = hk$ then necessarily $g = h$.
Equivalently, $\C$ is factorization preordered just in case every commuting square of the following form has at most one diagonal filler:
\[
\begin{tikzcd}[column sep=3em, row sep=3em]
  \cdot\ar[r,"x"]\ar[d,"f\in\W"']  & \cdot\ar[d,"k\in\W"] \\
  \cdot\ar[r, "y"']\ar[ur,dashed,"g=h" description]  & \cdot
  % NB: "dashed" does not show up as a dashed line in TeXpresso, but it renders in the pdf
\end{tikzcd}
\]
\end{defi}
\begin{prop}Every category $\C$ is both $\C_{\mathsf{epi}}$-\FP and $\C_{\mathsf{mono}}$-\FP, where $\C_{\mathsf{epi}}$ and $\C_{\mathsf{mono}}$ consist respectively of the epic arrows and the monic arrows of $\C$.
\end{prop}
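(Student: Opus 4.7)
The proof is essentially immediate, so my plan is to unpack the definitions and observe that one of the two hypotheses in Definition~\ref{def:WFP-condition} is already sufficient by itself in each case.

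For the $\C_{\mathsf{epi}}$-\FP case, suppose I have a diagram as in Definition~\ref{def:WFP-condition} with $f, k \in \C_{\mathsf{epi}}$ and parallel arrows $g,h$ satisfying $fg = fh$ and $gk = hk$. The first equation $fg = fh$ together with $f$ being an epimorphism already forces $g = h$ directly from the defining property of epis. The second hypothesis $gk = hk$ and the assumption $k \in \C_{\mathsf{epi}}$ are not needed. Dually, for the $\C_{\mathsf{mono}}$-\FP case, the equation $gk = hk$ with $k$ monic gives $g = h$ outright, and the hypotheses on $f$ play no role.

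So the plan is just to write these two one-line arguments, perhaps noting that the $\W$-\FP condition is a genuine weakening of the (unrestricted) \FP condition of Definition~\ref{def:FP-condition}, which combines the two equations to rule out non-preorder behavior on arbitrary arrows. I do not anticipate any obstacle; the only thing worth emphasizing in the write-up is that the restriction that the outer arrows of the configuration lie in $\W$ lets one use the cancellation property of $\W$-arrows on only one side, which is exactly the content of being epic or monic respectively.
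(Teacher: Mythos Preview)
Your proposal is correct and is exactly the intended argument: the paper states this proposition without proof, treating it as immediate from the definitions of epimorphism and monomorphism, which is precisely what you have spelled out.
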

\noindent
The analysis of Sections~\ref{sec:focusing:normal-forms}--\ref{subsec:maxmultfoc-decidability} carries over and we obtain the following.
\begin{thm}
\label{thm:PNFP=>maxmultfoc-canonical+}
If $\C$ is both $\P$-\FP and $\N$-\FP, then arrows of $\Bifib{p,\P,\N}$ are canonically represented by maximally multifocused derivations, and permutation equivalence is decidable.
If moreover $\D$ is relatively locally finite and $\C$ has locally finite slices and coslices, then $\Bifib{p,\P,\N}$ is relatively locally finite and logical equivalence is decidable.
\end{thm}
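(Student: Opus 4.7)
The proof transcribes the development of Sections~\ref{sec:focusing:normal-forms}--\ref{subsec:maxmultfoc-decidability} to the $(\P,\N)$-restricted calculus. As sketched in the preceding paragraphs of this section, the sequent calculus, permutation equivalence, the strongly and maximally multifocused presentations, and the rewriting relations $\parsym$ and $\grasym$ are all defined by restricting each push to an arrow of $\P$ and each pull to an arrow of $\N$. In particular, in the maximally multifocused calculus of Figure~\ref{fig:maxmultfocseq}, every intermediate arrow $\pi,\tau$ underlying a push lies in $\P$ and every $\sigma,\rho$ underlying a pull lies in $\N$. The termination argument of Lemma~\ref{prf:par-gra-terminating} is unaffected, as it uses only positional weights of bipoles.

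The crux is that the non-determinism of the parallelization rules is polarity-segregated. The critical pair \eqref{eq:criticalLR} obstructing determinism of $\parsym^\Rsym_\Lsym$ involves a square whose two vertical arrows $\pi,\tau$ both lie in $\P$, while the symmetric obstruction for $\parsym^\Lsym_\Rsym$ involves vertical arrows in $\N$. Since the gravitation rules are already deterministic with no hypothesis, the two assumptions $\P$-\FP and $\N$-\FP together suffice to make $\parsym \cup \grasym$ deterministic. The critical pair analysis of Theorem~\ref{thm:local-confluence}---including the mixed three-bipole case of Figure~\ref{fig:critical-pair}---then transcribes unchanged, since each critical pair is confined to a single polarity and is resolved using only the corresponding \FP property. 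Together with termination this yields unique $\parsym \cup \grasym$-normal forms, identified via the analogue of Proposition~\ref{prop:maxmultfoc-nopargra} with maximally multifocused derivations. Canonicity of this presentation and decidability of permutation equivalence are then immediate, by transcribing Corollary~\ref{cor:FP=>maxmultfoc-canonical} and Theorem~\ref{thm:derivation-equivalence-decidable}: two proofs represent the same arrow iff their maximally multifocused normal forms coincide.

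For the second sentence of the theorem, relative local finiteness of $\Bifib{p,\P,\N}$ is proven by transcribing Theorem~\ref{thm:relative-local-finiteness}: proof search in the restricted maximally multifocused calculus is finitely branching at each choice point (at most three rule choices; intermediate arrows chosen from finite slice or coslice categories of $\C$; terminating axioms from finite relative homsets of $\D$). Decidability of logical equivalence then follows by enumerating candidate inverse pairs $\alpha : S_1 \vdashf{\Id[A]} S_2$ and $\beta : S_2 \vdashf{\Id[A]} S_1$ and verifying the identity laws via the permutation equivalence decision procedure.

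I do not expect a substantive new obstacle, since the whole argument is a direct transcription modulo the polarity restriction. The one point requiring care is that, if one uses the alternating syntax of Section~\ref{sec:focusing:weak-focusing}, the sequences $\pi$ and $\sigma$ may be composites of arrows in $\P$ and $\N$ respectively, so that the side conditions of the rewrite rules involve factorizations whose vertical arrows lie in $\P^*$ or $\N^*$. The necessary uniqueness of diagonal fillers still follows from $\P$-\FP and $\N$-\FP by an easy induction on factorization length, decomposing a square with vertical arrows in $\P^*$ into a grid of unit squares with vertical arrows in $\P$ and propagating uniqueness cell by cell.
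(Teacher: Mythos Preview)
Your proposal is correct and matches the paper's approach, which in fact gives no detailed proof at all: the paper simply asserts that ``the analysis of Sections~\ref{sec:focusing:normal-forms}--\ref{subsec:maxmultfoc-decidability} carries over'' and states the theorem. Your sketch is therefore more explicit than the paper's, and you have correctly isolated the essential observation: determinism of $\parsym^\Rsym_\Lsym$ depends only on a square whose bounding arrows $\pi,\tau$ lie in $\P$, while determinism of $\parsym^\Lsym_\Rsym$ depends only on a square with bounding arrows $\sigma,\rho$ in $\N$, so the two hypotheses $\P$-\FP and $\N$-\FP suffice independently.

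One small imprecision: the phrase ``each critical pair is confined to a single polarity'' is not quite accurate---case~(2.a) of the confluence proof involves both $\parsym^\Rsym_\Lsym$ and $\parsym^\Lsym_\Rsym$ simultaneously. What is confined to a single polarity is the \emph{determinism requirement of each individual rule}, not the critical pairs. Once both $\P$-\FP and $\N$-\FP hold, all four rules are deterministic and the critical-pair resolutions (which themselves use only $\grasym$ rules, needing no \FP hypothesis) go through verbatim. This does not affect the correctness of your argument. Your final paragraph on extending $\P$-\FP to $\P^*$-\FP by induction on factorization length is a useful observation that the paper leaves entirely implicit.
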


\begin{rem}
  As mentioned, fibrations and opfibrations arise as particular cases of $(\P,\N)$-fibration, taking $\P = \emptyset, \N = \C$ for fibrations and $\P=\C, \N=\emptyset$ for opfibrations.
  Even if $\C$ is not factorization preordered, strongly focused derivations do provide normal forms for the arrows of $\Bifib{p,\emptyset,\C}$ or $\Bifib{p,\C,\emptyset}$, for the simple reason that there are no $\biLR$-bipoles and so sequentialization equivalence reduces to discrete equality.
  The usual constructions of the free fibration and free opfibration as comma categories may be recovered in this way if one restricts to strictly alternating formulas of depth 1, avoiding formulas of depth 0 so as to obtain a split fibration (cf.~Remark~\ref{rem:not-quite-split-bifibration}).
\end{rem}

%----------------------------------------------------------------------%
\section{The combinatorics of free bifibrations and ambifibrations}
\label{sec:examples}

We conclude with a deeper analysis of the specific examples of free bifibrations and ambifibrations that were mentioned in the Introduction.
In all of these examples, some rich combinatorial structure emerges out of very simple generating data, and the exploration of this structure is facilitated by the bifibrational calculus.

\subsection{Reconstructing the simplex category}
\label{sec:examples:simplices}

Recall that we write $\Delta$ for the category of finite ordinals and order-preserving maps, which is sometimes referred to as the \emph{augmented} simplex category to emphasize that it includes the empty ordinal $\ord{0}$, but which we will simply call the simplex category.
We write $\Delta_\bot$ for the category of non-empty finite ordinals and order-and-least-element preserving maps.
We write $L : \Delta \to \Delta_\bot$ for the functor sending an ordinal $\ord{n}$ to a non-empty ordinal $\ord{n}'$ by adding a $\bot$ element, and an order-preserving map $\theta : \ord{m} \to \ord{n}$ to the order-and-least-element-preserving map $\theta' : \ord{m}' \to \ord{n}'$ defined by $\theta'(\bot) = \bot$ and $\theta'(x) = \theta(x)$ for $x > \bot$.
This functor has a right adjoint given by the forgetful functor $R : \Delta_\bot \to \Delta$, which interprets $\ord{n}'$ as $\ord{1{+}n}$ and forgets that a map is $\bot$-preserving.

\begin{prop}\label{prop:simplex-category-free-bifibration}
  Let $F : \Two \to \Adj$ and $p_\Two : \One \to \Two$ be the functors shown below:
  \[ F0 = \Delta \quad F1 = \Delta_\bot \qquad Ff =
    \begin{tikzcd}[column sep=large]
    \Delta \ar[r,"L"{name=0},bend left] & \Delta_\bot \ar[l,"R"{name=1},bend left]
    \ar[from=0,to=1,phantom,"\bot"]
    \end{tikzcd}.
  \]
 \[
\begin{tikzcd}
  \One \ar[d,"p_\Two"'] & \ast & \\
  \Two  & 0 \ar[r,"f"] & 1
\end{tikzcd}
\]
Then the unique morphism of bifibrations $\Bifib{p_\Two} \to \int_\Two F$ sending the atomic formula $\atom\ast$ to the empty ordinal $\ord{0}$ is an equivalence of categories.
In particular, $\Delta$ is equivalent to $\Bifib{p_\Two}_0$.
\end{prop}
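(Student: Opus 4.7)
My plan is to construct the comparison functor $\Phi$ using the universal property of the free bifibration, compute its action on objects to obtain essential surjectivity, and establish full faithfulness using the canonical representation of morphisms by maximally multifocused proofs.

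First I will take $\theta : \One \to \int_\Two F$ sending $\ast$ to $(0, \ord{0})$, so that $\pi_F \circ \theta = p_\Two$, where $\pi_F$ is the projection of the Grothendieck construction (which is a bifibration by general principles). By Theorem~\ref{thm:bif-is-free} there is a unique morphism of bifibrations $\Phi \defeq \sem{-}{\theta} : \Bifib{p_\Two} \to \int_\Two F$ with $\Phi(\atom{\ast}) = (0, \ord{0})$. Since $\Phi$ preserves specified cartesian liftings, a routine induction on $n$ yields $\Phi(\ord{n}) \cong (0, (RL)^n \ord{0})$ and $\Phi(\ord{n}') \cong (1, L(RL)^n\ord{0})$. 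A second easy induction identifies these with the $n$-element ordinal of $\Delta$ and the $(n{+}1)$-element non-empty ordinal of $\Delta_\bot$, respectively. Since every object of $\int_\Two F$ is of one of these forms, $\Phi$ is essentially surjective.

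To establish full faithfulness, I note that $\Two$ is a linear order and hence factorization preordered by Example~\ref{exa:all-epi-or-mono-FP}, so Corollary~\ref{cor:FP=>maxmultfoc-canonical} represents each morphism of $\Bifib{p_\Two}$ uniquely as a maximally multifocused proof. Since every object of $\Bifib{p_\Two}$ is isomorphic to some $\ord{n}$ or $\ord{n}'$, it suffices to exhibit bijections between maximally multifocused proofs of the sequents $\ord{m} \vdashl{\Id_0}{\ilockNone} \ord{n}$, $\ord{m}' \vdashl{\Id_1}{\ilockNone} \ord{n}'$, and $\ord{m} \vdashl{f}{\ilockNone} \ord{n}'$ and the corresponding hom-sets of $\int_\Two F$, namely order-preserving maps $\ord{m} \to \ord{n}$ in $\Delta$, $\bot$-preserving order-preserving maps $\ord{m{+}1} \to \ord{n{+}1}$ in $\Delta_\bot$, and order-preserving maps $\ord{m} \to \ord{n{+}1}$ in $\Delta$, respectively. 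Via the string-diagrammatic reading of Section~\ref{sec:double-categories:string-diagrams}, each such proof corresponds to a planar non-crossing system of $f$-labeled strands between left and right endpoints, and reading off the right endpoint matched with each left endpoint produces the required order-preserving map; a check that the resulting assignment is bijective and respects horizontal composition (via strand concatenation) will finish the argument.

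The hard part will be verifying that the side conditions of Figure~\ref{fig:maxmultfocseq} cut the space of strong multifocused proofs down to exactly these combinatorial objects: the $\parsym$-redex side conditions must forbid precisely the configurations where two mono-focused bipoles could fuse, while the $\grasym$-redex conditions must forbid precisely the ``redundant splittings'', and together these constraints must coincide with the monotone and non-crossing conditions defining an order-preserving map. Although the string-diagram calculus makes the bijection geometrically transparent, making it fully precise and checking naturality with respect to cut is the substantive combinatorial step, and it is where the machinery of Section~\ref{sec:focusing} pays off.
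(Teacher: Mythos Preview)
Your approach is correct in outline and matches the paper's informal sketch, but you misidentify where the work lies.  In $\Two$ the only non-identity arrow is $f:0\to1$, so there is no arrow $1\to0$.  This has two consequences you overlook.  First, the bi-focusing rule $\Lpull[\sigma]\Rpush[\tau]$ and the bi-inversion rule $\Lpush[\pi]\Rpull[\rho]$ can \emph{never} fire on strictly alternating ordinal formulas: both would require a base arrow with domain $1$ and codomain $0$.  Second, the lock side-conditions of Figure~\ref{fig:maxmultfocseq} are all vacuously satisfied, essentially because a factorization witness $e$ as in $(a,b)\le(c,d)$ would again have to invert $f$.  So the ``hard part'' you flag simply evaporates: the maximally multifocused proofs of $\ord{m}\vdashl{\Id_0}{\ilockNone}\ord{n}$ are exactly the strongly monofocused proofs, which reduce to a $\pull\Rsym$ step, then a free shuffle of $m$ $\biL$-bipoles with $n{-}1$ $\biR$-bipoles, then a closing $\push\Rsym$.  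Your string-diagram bijection with order-preserving maps $\ord{m}\to\ord{n}$ is then immediate; there are no $\grasym$ or $\parsym$ constraints to check.

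The paper takes a different formal route: it deduces the proposition as the height-$1$ special case of Theorem~\ref{thm:tree-category-free-bifibration} on plane trees, whose proof constructs the bijections $\psi_k$ by an inductive decomposition of tree morphisms rather than by invoking the maximal multifocusing machinery directly.  That route handles functoriality (compatibility with cut) via a ``relief map'' argument about connectedness of layers.  Your direct approach is shorter for the $\Two$ case but does still owe a verification that the shuffle-to-map bijection is natural in the sense of respecting horizontal composition; this is the one step where you should be more explicit, since it is what ultimately distinguishes an equivalence of categories from a mere bijection on hom-sets.
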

\noindent
Proposition~\ref{prop:simplex-category-free-bifibration} is a corollary of Theorem~\ref{thm:tree-category-free-bifibration} below, so we do not give a detailed proof here, but we sketch the correspondence.
Under the equivalence, each ordinal $\ord{n}$ in $\Delta$ is interpreted as the bifibrational formula
\[
\enc{\ord{n}} \defeq \underbrace{\pull{f}\push{f}\dots\pull{f}\push{f}}_{n\text{ times}}\ast \refs 0
\]
while each non-empty ordinal $\ord{n}'$ in $\Delta_\bot$ is interpreted as the bifibrational formula
\[
\enc{\ord{n}'} \defeq \push{f}\underbrace{\pull{f}\push{f}\dots\pull{f}\push{f}}_{n\text{ times}}\ast \refs 1\ .
\]
Conversely, since $\ast$ is the only atom and $f$ is the only non-identity arrow, every bifibrational formula on $p_\Two$ is equivalent to one of these two forms.
To be precise, we note that pushing or pulling along an identity arrow $\Id[0]$ or $\Id[1]$ is also possible, but logically equivalent to doing nothing (Prop.~\ref{prop:pseudofunctoriality})---hence why we only state an equivalence $\int_\Two F \equiv \Bifib{p_\Two}$ rather than an isomorphism of categories.
Clearly we obtain an isomorphism if we restrict to the full subcategory $\Bifib{p_\Two,f,f} \subset \Bifib{p_\Two}$.

Observe that ordinal formulas $\enc{\ord{n}}$ and $\enc{\ord{n}'}$ are strictly alternating.
The sequent calculus of Figure~\ref{fig:maxmultfocseq} becomes particularly simple when restricted to such ordinal formulas: since there are no non-trivial factorizations in the base category, the bi-focusing and bi-inversion rules can be removed (they are impossible to apply), as can the lock conditions on the left- and right-focusing rules (they are always validated).
Thus any morphism in $\Bifib{p_\Two}$ may be represented canonically as a strongly monofocused derivation, which in turn is essentially just a shuffle of $\biL$ and $\biR$ bipoles.

As an illustration, the three order-preserving maps $\ord{2} \to \ord{2}$ depicted on the left correspond to the following three derivations of $\pull{f} \push{f}\pull{f}\push{f}\ast \vdashf{\Id[0]} \pull{f} \push{f}\pull{f}\push{f}\ast$ depicted on the right (drawn as stacks with overlaid string diagrams):

\[
\begin{array}{c}
\begin{tikzcd}[arrows={-},ampersand replacement=\&]
  \bullet \ar[r] \&  \bullet \\
  \bullet \ar[ru] \& \bullet
\end{tikzcd} \\[2cm]
\begin{tikzcd}[arrows={-},ampersand replacement=\&]
  \bullet \ar[r] \& \bullet \\
  \bullet \ar[r] \& \bullet
\end{tikzcd} \\[2cm]
\begin{tikzcd}[arrows={-},ampersand replacement=\&]
  \bullet \ar[rd] \& \bullet \\
  \bullet \ar[r] \& \bullet
\end{tikzcd}
\end{array} \hspace{3cm}
\scalebox{0.8}{\begin{tikzcd}[ampersand replacement=\&,row sep=0.75cm,execute at end picture = { 
\draw[draw=violet, draw opacity=0.5, line width=1mm,out=90,in=180] ($(l1)!0.50!(r1)$) to ($(r1)!0.50!(r0)$);
\draw[draw=violet, draw opacity=0.5, line width=1mm,out=0,in=-90] ($(l2)!0.50!(l1)$) to ($(l1)!0.50!(r1)$);
\draw[draw=violet, draw opacity=0.5, line width=1mm,out=90,in=0] ($(l3)!0.50!(r3)$) to ($(l3)!0.50!(l2)$);
\draw[draw=violet, draw opacity=0.5, line width=1mm,out=0,in=-90] ($(l4)!0.50!(l3)$) to ($(l3)!0.50!(r3)$);
\draw[draw=violet, draw opacity=0.5, line width=1mm,out=90,in=0] ($(l5)!0.50!(r5)$) to ($(l5)!0.50!(l4)$);
\draw[draw=violet, draw opacity=0.5, line width=1mm,out=180,in=-90] ($(r6)!0.50!(r5)$) to ($(l5)!0.50!(r5)$);
\draw[draw=violet, draw opacity=0.5, line width=1mm,out=90,in=180] ($(l7)!0.50!(r7)$) to ($(r7)!0.50!(r6)$);
\draw[draw=violet, draw opacity=0.5, line width=1mm,out=180,in=-90] ($(r8)!0.50!(r7)$) to ($(l7)!0.50!(r7)$);
}
 ]
|[alias=l0]| 0 \ar[r,equals] \& |[alias=r0]| 0 
\\ |[alias=l1]|0\ar[u,equals]\ar[r] \& |[alias=r1]|1\ar[u,<-]
\\ |[alias=l2]|1\ar[u,<-]\ar[r,equals] \& |[alias=r2]|1\ar[u,equals]
\\ |[alias=l3]|0\ar[u]\ar[r] \& |[alias=r3]|1\ar[u,equals]
\\ |[alias=l4]|1\ar[u,<-]\ar[r,equals] \& |[alias=r4]|1\ar[u,equals]
\\ |[alias=l5]|0\ar[u]\ar[r] \& |[alias=r5]|1\ar[u,equals]
\\ |[alias=l6]|0\ar[u,equals]\ar[r,equals] \& |[alias=r6]|0\ar[u]
\\ |[alias=l7]|0\ar[u,equals]\ar[r] \& |[alias=r7]|1\ar[u,<-]
\\ |[alias=l8]|0\ar[u,equals]\ar[r,equals] \& |[alias=r8]|0\ar[u]
\arrow[gray,phantom,from=1-1,to=2-2,"\push \Rsym"]
\arrow[gray,phantom,from=2-1,to=3-2,"\push \Lsym"]
\arrow[gray,phantom,from=3-1,to=4-2,"\pull \Lsym"]
\arrow[gray,phantom,from=4-1,to=5-2,"\push \Lsym"]
\arrow[gray,phantom,from=5-1,to=6-2,"\pull \Lsym"]
\arrow[gray,phantom,from=6-1,to=7-2,"\pull \Rsym"]
\arrow[gray,phantom,from=7-1,to=8-2,"\push \Rsym"]
\arrow[gray,phantom,from=8-1,to=9-2,"\pull \Rsym"]
\end{tikzcd}
}
\qquad\qquad
\scalebox{0.8}{\begin{tikzcd}[ampersand replacement=\&,row sep=0.75cm,execute at end picture = { 
\draw[draw=violet, draw opacity=0.5, line width=1mm,out=90,in=180] ($(l1)!0.50!(r1)$) to ($(r1)!0.50!(r0)$);
\draw[draw=violet, draw opacity=0.5, line width=1mm,out=0,in=-90] ($(l2)!0.50!(l1)$) to ($(l1)!0.50!(r1)$);
\draw[draw=violet, draw opacity=0.5, line width=1mm,out=90,in=0] ($(l3)!0.50!(r3)$) to ($(l3)!0.50!(l2)$);
\draw[draw=violet, draw opacity=0.5, line width=1mm,out=180,in=-90] ($(r4)!0.50!(r3)$) to ($(l3)!0.50!(r3)$);
\draw[draw=violet, draw opacity=0.5, line width=1mm,out=90,in=180] ($(l5)!0.50!(r5)$) to ($(r5)!0.50!(r4)$);
\draw[draw=violet, draw opacity=0.5, line width=1mm,out=0,in=-90] ($(l6)!0.50!(l5)$) to ($(l5)!0.50!(r5)$);
\draw[draw=violet, draw opacity=0.5, line width=1mm,out=90,in=0] ($(l7)!0.50!(r7)$) to ($(l7)!0.50!(l6)$);
\draw[draw=violet, draw opacity=0.5, line width=1mm,out=180,in=-90] ($(r8)!0.50!(r7)$) to ($(l7)!0.50!(r7)$);
}
 ]
|[alias=l0]| 0 \ar[r,equals] \& |[alias=r0]| 0 
\\ |[alias=l1]|0\ar[u,equals]\ar[r] \& |[alias=r1]|1\ar[u,<-]
\\ |[alias=l2]|1\ar[u,<-]\ar[r,equals] \& |[alias=r2]|1\ar[u,equals]
\\ |[alias=l3]|0\ar[u]\ar[r] \& |[alias=r3]|1\ar[u,equals]
\\ |[alias=l4]|0\ar[u,equals]\ar[r,equals] \& |[alias=r4]|0\ar[u]
\\ |[alias=l5]|0\ar[u,equals]\ar[r] \& |[alias=r5]|1\ar[u,<-]
\\ |[alias=l6]|1\ar[u,<-]\ar[r,equals] \& |[alias=r6]|1\ar[u,equals]
\\ |[alias=l7]|0\ar[u]\ar[r] \& |[alias=r7]|1\ar[u,equals]
\\ |[alias=l8]|0\ar[u,equals]\ar[r,equals] \& |[alias=r8]|0\ar[u]
\arrow[gray,phantom,from=1-1,to=2-2,"\push \Rsym"]
\arrow[gray,phantom,from=2-1,to=3-2,"\push \Lsym"]
\arrow[gray,phantom,from=3-1,to=4-2,"\pull \Lsym"]
\arrow[gray,phantom,from=4-1,to=5-2,"\pull \Rsym"]
\arrow[gray,phantom,from=5-1,to=6-2,"\push \Rsym"]
\arrow[gray,phantom,from=6-1,to=7-2,"\push \Lsym"]
\arrow[gray,phantom,from=7-1,to=8-2,"\pull \Lsym"]
\arrow[gray,phantom,from=8-1,to=9-2,"\pull \Rsym"]
\end{tikzcd}
}
\qquad\qquad
\scalebox{0.8}{\begin{tikzcd}[ampersand replacement=\&,row sep=0.75cm,execute at end picture = { 
\draw[draw=violet, draw opacity=0.5, line width=1mm,out=90,in=180] ($(l1)!0.50!(r1)$) to ($(r1)!0.50!(r0)$);
\draw[draw=violet, draw opacity=0.5, line width=1mm,out=180,in=-90] ($(r2)!0.50!(r1)$) to ($(l1)!0.50!(r1)$);
\draw[draw=violet, draw opacity=0.5, line width=1mm,out=90,in=180] ($(l3)!0.50!(r3)$) to ($(r3)!0.50!(r2)$);
\draw[draw=violet, draw opacity=0.5, line width=1mm,out=0,in=-90] ($(l4)!0.50!(l3)$) to ($(l3)!0.50!(r3)$);
\draw[draw=violet, draw opacity=0.5, line width=1mm,out=90,in=0] ($(l5)!0.50!(r5)$) to ($(l5)!0.50!(l4)$);
\draw[draw=violet, draw opacity=0.5, line width=1mm,out=0,in=-90] ($(l6)!0.50!(l5)$) to ($(l5)!0.50!(r5)$);
\draw[draw=violet, draw opacity=0.5, line width=1mm,out=90,in=0] ($(l7)!0.50!(r7)$) to ($(l7)!0.50!(l6)$);
\draw[draw=violet, draw opacity=0.5, line width=1mm,out=180,in=-90] ($(r8)!0.50!(r7)$) to ($(l7)!0.50!(r7)$);
}
 ]
|[alias=l0]| 0 \ar[r,equals] \& |[alias=r0]| 0 
\\ |[alias=l1]|0\ar[u,equals]\ar[r] \& |[alias=r1]|1\ar[u,<-]
\\ |[alias=l2]|0\ar[u,equals]\ar[r,equals] \& |[alias=r2]|0\ar[u]
\\ |[alias=l3]|0\ar[u,equals]\ar[r] \& |[alias=r3]|1\ar[u,<-]
\\ |[alias=l4]|1\ar[u,<-]\ar[r,equals] \& |[alias=r4]|1\ar[u,equals]
\\ |[alias=l5]|0\ar[u]\ar[r] \& |[alias=r5]|1\ar[u,equals]
\\ |[alias=l6]|1\ar[u,<-]\ar[r,equals] \& |[alias=r6]|1\ar[u,equals]
\\ |[alias=l7]|0\ar[u]\ar[r] \& |[alias=r7]|1\ar[u,equals]
\\ |[alias=l8]|0\ar[u,equals]\ar[r,equals] \& |[alias=r8]|0\ar[u]
\arrow[gray,phantom,from=1-1,to=2-2,"\push \Rsym"]
\arrow[gray,phantom,from=2-1,to=3-2,"\pull \Rsym"]
\arrow[gray,phantom,from=3-1,to=4-2,"\push \Rsym"]
\arrow[gray,phantom,from=4-1,to=5-2,"\push \Lsym"]
\arrow[gray,phantom,from=5-1,to=6-2,"\pull \Lsym"]
\arrow[gray,phantom,from=6-1,to=7-2,"\push \Lsym"]
\arrow[gray,phantom,from=7-1,to=8-2,"\pull \Lsym"]
\arrow[gray,phantom,from=8-1,to=9-2,"\pull \Rsym"]
\end{tikzcd}
}
\]
The graphical correspondence should be suggestive.
In general, any morphism $\enc{\ord{m}} \to \enc{\ord{n}}$ in $\Bifib{p_\Two}_0$ can be canonically represented as a derivation that opens (reading from the bottom) with a $\pull{\Rsym}$ rule, followed by an arbitrary shuffle of $m$ $\biL$ bipoles with $n-1$ $\biR$ bipoles, followed by a closing $\push{\Rsym}$ rule.
Since there are $\binom{m+n-1}{m}$ such shuffles, we thus derive the well-known formula for the number of order-preserving maps.
\begin{cor}
There are $\frac{(m+n-1)!}{m!(n-1)!}$ order-preserving maps $\ord{m} \to \ord{n}$.
\end{cor}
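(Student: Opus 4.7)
The plan is to read off the count directly from the canonical maximally multifocused representation of morphisms. First, I would apply Proposition~\ref{prop:simplex-category-free-bifibration} to translate the problem: counting order-preserving maps $\ord{m} \to \ord{n}$ is the same as counting arrows $\enc{\ord{m}} \to \enc{\ord{n}}$ in $\Bifib{p_\Two}_0$, where $\enc{\ord{k}} = (\pull{f}\push{f})^k \atom{\ast}$. Since $\Two$ is a preorder, it is factorization preordered (Example~\ref{exa:all-epi-or-mono-FP}), so by Corollary~\ref{cor:FP=>maxmultfoc-canonical} each such arrow is represented uniquely by a maximally multifocused derivation of the judgment $\enc{\ord{m}} \vdashl{\Id[0]}{\ilockNone} \enc{\ord{n}}$.

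Next I would analyze the possible shape of such a derivation, assuming first $m, n \geq 1$ (the edge cases $m = 0$ or $n = 0$ give a single derivation each and are handled separately). The initial judgment has a negative formula on the right, so the only applicable rule is the invertible $\Rpull[f]$, yielding a neutral sequent $\enc{\ord{m}} \vdashl{f}{\ilockNone} \push{f}\enc{\ord{n-1}}$. Now I would observe that a bi-focused bipole $\biLR$ is structurally impossible here: such a step would require factoring the current base arrow $f$ as $\sigma g \tau$ with $\sigma = \tau = f$, forcing the existence of an arrow $1 \to 0$ in $\Two$, which there is none. Thus every focus step is a pure $\biL$ or $\biR$ bipole. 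A $\biL$ bipole ($\Lpull[f]$ followed by the mandatory invertible $\Lpush[f]$) strips one $\pull{f}\push{f}$ pair from the left; a $\biR$ bipole ($\Rpush[f]$ followed by invertible $\Rpull[f]$) introduces one $\pull{f}\push{f}$ pair on the right. The derivation terminates with a closing $\Rpush[f]$ followed by the atomic axiom on $\atom\ast$.

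The next step is to verify that the lock conditions of Figure~\ref{fig:maxmultfocseq} impose no restriction in this setting: consecutive $\biL$ and $\biR$ bipoles carry data $(\pi, f) = (f, \Id[1])$ and $(g, \tau) = (\Id[1], f)$ (or the mirror), and comparing them in the factorization order again requires a nonexistent arrow $1 \to 0$, so the two factorizations are always incomparable. Consequently, a maximally multifocused derivation is completely determined by an arbitrary interleaving of $m$ $\biL$-bipoles and $n-1$ $\biR$-bipoles between the fixed opening $\Rpull[f]$ and closing $\Rpush[f]$.

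Finally, counting such interleavings is straightforward: the number of shuffles of an $m$-sequence with an $(n-1)$-sequence is $\binom{m+n-1}{m} = \frac{(m+n-1)!}{m!(n-1)!}$, as claimed. The main place requiring care is the bookkeeping of the base arrows through the derivation to confirm that the only source of nondeterminism is the shuffle order, and that the degenerate cases $m = 0$ or $n = 0$ (where the formula $\enc{\ord{0}} = \atom\ast$ is atomic and one of the outer invertible rules disappears) still give the correct count $\binom{n-1}{0} = 1$ and $\binom{m}{m} = 1$ respectively, in agreement with the formula.
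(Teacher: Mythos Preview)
Your proposal is correct and follows essentially the same approach as the paper: translate via Proposition~\ref{prop:simplex-category-free-bifibration}, observe that bi-focused bipoles and the lock conditions are vacuous over $\Two$ (the paper phrases this as ``there are no non-trivial factorizations in the base category''), and count the resulting shuffles of $m$ $\biL$ bipoles with $n-1$ $\biR$ bipoles bracketed by an opening $\Rpull$ and closing $\Rpush$. One small slip: in your edge-case remark, the case $n=0$ with $m \ge 1$ yields \emph{zero} derivations (not one), since $\Lpull[f]$ would require an arrow $1 \to 0$; this is consistent with $\binom{m-1}{m}=0$, not $\binom{m}{m}=1$.
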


\subsection{Categories of trees and walks}
\label{sec:examples:trees}

A natural generalization of the previous example is obtained by considering the free bifibration generated by the functor $p_\omega$ illustrated below which maps the point to the initial object of $\omega$, the totally ordered set of natural numbers considered as a category.
\[
\begin{tikzcd}
  \One \ar[d,"p_\omega"'] & \ast & \\
  \omega & 0 \ar[r,"f_0"] & 1 \ar[r,"f_1"] & 2 \ar[r,"f_2"] & \cdots
\end{tikzcd}
\]

As we observed in the introduction, up to isomorphism the objects of $\Bifib{p_\omega}_0$ may be read as Dyck words, which are in bijection with rooted plane trees, that is, trees embedded in the plane with one vertex marked as the root, considered up to planar isotopy.
We draw such trees growing upwards from the root, as depicted on the left:
\begin{mathpar}
  \begin{tikzcd}[column sep=1em, arrows={-}]
    \bullet\ar[d]    &    &  & & & \\
   \bullet\ar[dr] && \bullet\ar[dl] & & &\bullet\ar[d] \\
   &\bullet\ar[drr] && \bullet\ar[d] & &\bullet\ar[dll] \\
     &&      & \bullet & &
  \end{tikzcd}

  \begin{tikzcd}[column sep=1em, arrows={-}]
    \bullet\ar[d]    &    &  & & & \\
        \bullet\ar[dr] && \bullet\ar[dl] & & &
                \bullet\ar[d]
                \ar[d,<-,xshift=4pt,shorten=2pt,purple]
                \ar[d,->,xshift=-4pt,shorten=2pt,forestgreen] \\
        &\bullet\ar[drr] && \bullet\ar[d] & &\bullet
          \ar[dll]
            \ar[dll,<-,xshift=4pt,yshift=-4pt,shorten=2pt,blue]
            \ar[dll,->,xshift=-4pt,yshift=4pt,shorten=2pt,orange] \\
     &&      & \bullet & &
  \end{tikzcd}
\end{mathpar}
To encode a plane tree by a Dyck word, a canonical procedure is to traverse the tree (say) from right to left starting from the root, recording a $+1$ every time you move up towards a child, and a $-1$ every time you move down back to a parent.
We represent the beginning of this traversal, visually, on the right. (We will use these same colors below to visualize the traversal order.)

The corresponding bifibrational formula may be read off directly:
\[\pull{f}\pull{f}\pull{f}\push{f}\push{f}\pull{f}\push{f}\push{f}\pull{f}\push{f}
  \pull{\textcolor{orange}{f}}
  \pull{\textcolor{forestgreen}{f}}
  \push{\textcolor{purple}{f}}
  \push{\textcolor{blue}{f}}
  \atom\ast\]
Here to keep the notation simple we have elided the indices from the $f_i$'s, which are uniquely determined.
Such a word may also be interpreted as a walk on the upper plane, starting at the $x$-axis (at the point marked $\ast$ below), constantly moving up or down while moving leftwards and eventually terminating back at the $x$-axis:
\[
\begin{tikzpicture}
  \draw[very thin,color=gray] (-0.25,-0.25) grid (14.25,3.25);
  \draw [->] plot coordinates {(14,0) (13,1) (12,2) (11,1) (10,0) (9,1) (8,0) (7,1) (6,2) (5,1) (4,2) (3,3) (2,2) (1,1) (0,0)};
  \draw [-,blue,thick] (14, 0) edge (13, 1);
  \draw [-,purple,thick] (13, 1) edge (12, 2);
  \draw [-,forestgreen,thick] (12, 2) edge (11, 1);
  \draw [-,orange,thick] (11, 1) edge (10, 0);
  \node at (14,0) {$\ast$};
\end{tikzpicture}
\]
Observe that all of the information about the walk is contained in its peaks and valleys, and a tree may also therefore be represented more concisely as a strictly alternating zigzag:
\[
\begin{tikzcd} 0 \ar[r] & 3 \ar[r,<-] & 1\ar[r] & 2\ar[r,<-] & 0\ar[r] & 1\ar[r,<-]
  & |[alias=A]| 0 \ar[r, from=A, to=B, phantom, ""{name=AB}]
      \ar[from=A,to=AB.center, orange]
      \ar[from=AB.center, to=B, forestgreen]
  & |[alias=B]| 2 \ar[r, from=B, to=C, phantom, ""{name=BC}]
      \ar[from=BC.center,to=B, purple]
      \ar[from=C, to=BC.center, blue]
  & |[alias=C]| 0
\end{tikzcd}
\]
or again as a formula in strictly alternating syntax: $\pull[3]{f}\push[2]{f}\pull[1]{f}\push[2]{f}\pull[1]{f}\push[1]{f}\pull[2]{f}\push[2]{f}\ast$.

Next we recall how plane trees admit an alternative representation as functors.
(This idea goes back at least to Joyal~\cite{Joyal1997}, but may be earlier folklore.)
Define the \emph{height} of a vertex as its distance from the root, and the height of a tree as the maximum height of any of its vertices.
Let $\ord{[k]} = \ord{k{+}1}$ be the linear order $\set{ 0 < \dots < k}$ seen as a posetal category.
It is clear from the definitions that any rooted plane tree of height $k$ may be represented by a functor $T : \ord{[k]}^\op \to \Delta$ such that $T(0) = \ord{1}$: each linearly ordered set $T(i)$ defines the vertices of height $i$, and the parents of the non-root vertices are specified by the functions $T(i+1) \to T(i)$, which must be order-preserving by the constraints of planarity.
For instance, the height-3 tree given in the example above corresponds to the functor $T : \ord{[3]} \to \Delta$ depicted below.
\[
  \begin{tikzcd}[column sep=1em, arrows={-}]
T(3) \ar[d,->] \rlap{$ = \ord 1  = \set{0}$} &\hspace*{3cm}&   0\ar[d]     &    &  & & &  \\
T(2) \ar[d,->] \rlap{$ = \ord 3 = \set{0 < 1 < 2}$} &&   0\ar[dr] && 1\ar[dl] & & &2\ar[d] \\
T(1) \ar[d,->] \rlap{$ = \ord 3 = \set{0 < 1 < 2}$} &&    &0\ar[drr] && 1\ar[d] & &2\ar[dll]\\
T(0) \rlap{$ = \ord 1 = \set{0}$}  &&     &&      & 0 & & 
  \end{tikzcd}
\]
This tree also has \emph{width} 3, defined as the maximum number of vertices at any given height.
More generally, a tree of unbounded height and width may be represented as a functor $T : \omega^\op \to \Delta$ such that $T(0) = \ord{1}$.
In other words, the only requirement is that $T$ sends the initial object of $\omega$ to the terminal object of $\Delta$.
Finiteness is ensured if we moreover ask that there is some natural number $k$ such that $T(k+1) = \ord 0$ (and hence $T(j) = \ord 0$ for all $j > k$).

We define $\PTree$ as the full subcategory of $[\omega^\op,\Delta]$ spanned by the finite rooted plane trees, that is, the category whose objects are functors $T : \omega^\op \to \Delta$ such that $T(0) = \ord{1}$ and $T(k+1) = \ord 0$ for some $k$, and all natural transformations between them.
For example, below we depict two different morphisms of plane trees $S \to T$ between the same pair of trees, using different color lines to indicate the mappings $S(k) \to T(k)$ of the natural transformations at each height $k$. 
\begin{equation}\label{eq:tree-morphisms}
\begin{tikzcd}[column sep=1em, arrows={-}]
S(3) = \ord 0\ar[d,->]\ar[r,->] & T(3) = \ord 1\ar[d,->] &\qquad
 &&&\qquad&
 0\ar[d] & & &          \\
S(2) = \ord 3\ar[d,->]\ar[r,->] & T(2) = \ord 3\ar[d,->] &
  0\ar[d]\ar[rrrr,bend left=15,line width=3pt,opacity=0.5,frenchblue]\ar[rrrrrr,bend right=15,line width=2pt,opacity=0.5,persianred] & 1\ar[rd]\ar[rrrrrrrr,bend left=15,line width=3pt,opacity=0.5,frenchblue]\ar[rrrrr,bend right=15,line width=2pt,opacity=0.5,persianred] & 2\ar[d] \ar[rrrrrrr,bend left=15,line width=3pt,opacity=0.5,frenchblue]\ar[rrrr,bend right=15,line width=2pt,opacity=0.5,persianred] &&
  0\ar[dr] && 1\ar[dl] & & &2\ar[d] \\
%S(1) = \ord 2\ar[d,->]\ar[r,->] & T(1) = \ord 3\ar[d,->] &
S(1) = \ord 3\ar[d,->]\ar[r,->] & T(1) = \ord 3\ar[d,->] &
 0\ar[dr]\ar[rrrrr,bend left=15,line width=3pt,opacity=0.5,frenchblue]\ar[rrrrr,bend right=15,line width=2pt,opacity=0.5,persianred] & 1\ar[d]\ar[rrrr,bend left=15,line width=3pt,opacity=0.5,frenchblue]\ar[rrrr,bend right=15,line width=2pt,opacity=0.5,persianred]  & 2\ar[dl]\ar[rrrrrrr,bend left=15,line width=3pt,opacity=0.5,frenchblue]\ar[rrr,bend right=15,line width=2pt,opacity=0.5,persianred] &&
 & 0\ar[drr] && 1\ar[d] & &2\ar[dll]\\
S(0) = \ord 1\ar[r,->] & T(0) = \ord 1 &
  &0\ar[rrrrrr,bend left=15,line width=3pt,opacity=0.5,frenchblue]\ar[rrrrrr,bend right=8,line width=2pt,opacity=0.5,persianred] &&& 
  &&      & 0 & & &               
\end{tikzcd}
\end{equation}
Readers familiar with the ``topos of trees'' $[\omega^\op,\Set]$ may recognize that $\PTree$ is defined in essentially the same way, but replacing $\Set$ by $\Delta$ to reflect planarity, and taking a full subcategory of $[\omega^\op,\Delta]$ in order to restrict to finite trees rather than potentially infinite forests.
The combination of order-preservation and naturality imposes strong constraints on the shape of plane tree morphisms.
The reader may check that there are exactly 11 morphisms $S \to T$ between the trees $S$ and $T$ shown above, including the two in \eqref{eq:tree-morphisms}.

We should note that $\PTree$ is strongly related to but different from a category of plane trees defined by Joyal~\cite{Joyal1997}.
Joyal considered a more liberal notion of morphism $S \to T$ between plane trees encoded in the same way as functors $S,T : \omega^\op \to \Delta$.
Instead of considering natural transformations in $[\omega^\op,\Delta]$, he took more general natural transformations between the underlying functors in $[\omega^\op,\Set]$ so long as they respect the linear orders on the \emph{fibers} of the maps $S(n+1) \to S(n)$. (In other words, respect the ordering of siblings, but not necessarily of cousins.)
Every morphism in $\PTree$ is a morphism in Joyal's category of plane trees (i.e., it is a wide subcategory), but the converse is not true as the following example illustrates:
\begin{equation}\label{eq:joyal-morphism}
\begin{tikzcd}[column sep=1em, arrows={-}]
S(2) = \ord 2\ar[d,->]\ar[r,->] & T(2) = \ord 2\ar[d,->] &
  0\ar[d]\ar[rrrrrrr,bend left=15,line width=3pt,opacity=0.5,gray] && 1\ar[d]\ar[rrr,bend left=15,line width=3pt,opacity=0.5,gray] &&
  & 0\ar[dr] && 1\ar[dl] & & & \\
S(1) = \ord 2\ar[d,->]\ar[r,->] & T(1) = \ord 1\ar[d,->] &
 0\ar[dr]\ar[rrrrrr,bend left=15,line width=3pt,opacity=0.5,gray] & & 1\ar[dl]\ar[rrrr,bend left=15,line width=3pt,opacity=0.5,gray] &&
 & &0\ar[d] &&  & &\\
S(0) = \ord 1\ar[r,->] & T(0) = \ord 1 &
  &0\ar[rrrrr,bend left=15,line width=3pt,opacity=0.5,gray] &&& 
  &      & 0 & & &               
\end{tikzcd}
\end{equation}
Joyal used this more relaxed definition of tree morphism in order to axiomatize a notion of $\omega$-category, and his category of trees, which is equivalent to the category $\Omega$ studied by Batanin and Street~\cite{BataninStreet2000universal}, satisfies some important universal properties (see Theorems~1~and~2 of~\cite{BataninStreet2000universal}).
Despite being a non-full subcategory of the Joyal-Batanin-Street category of trees, the category of plane trees $\PTree$ defined above as a full subcategory of $[\omega^\op,\Delta]$ also seems to us very natural.

Let us now prove that $\PTree$ corresponds to the fiber over $0$ of the free bifibration generated by $p_\omega$.
In order to prove this we give a more general characterization of the fiber categories $\Bifib{p_\omega}_k$ for arbitrary $k \ge 0$.
We write $\PTree_{\bot(k)}$ for the category whose objects are rooted plane trees with a marked vertex of height $k$ \emph{on their leftmost branch,} and whose arrows are morphisms of plane trees preserving the mark.
It is clear that $\PTree \cong \PTree_{\bot(0)}$, since every tree morphism $S \to T$ sends the root of $S$ to the root of $T$, and also that every tree in $\PTree_{\bot(k)}$ must have height at least $k$.
But not every morphism between trees of height $\ge k$ is a morphism in $\PTree_{\bot(k)}$.
For example, one of the two tree morphisms $S \to T$ depicted in \eqref{eq:tree-morphisms} lifts to a morphism in $\PTree_{\bot(2)}$ (the one in blue), but the other (in red) only lifts to $\PTree_{\bot(1)}$.
% \[
% \begin{tikzcd}[column sep=1em, arrows={-}]
%  &&&\qquad&
%  \bullet\ar[d] & & &          \\
%  \circ\ar[d]\ar[rrrr,bend left=15,dashed,gray] & \bullet \ar[rd]\ar[rrrrrrrr,bend left=15,dashed,gray] & \bullet\ar[d]\ar[rrrrrrr,bend left=15,dashed,gray] &&
%   \circ\ar[dr] && \bullet\ar[dl] & & &\bullet\ar[d] \\
%  \bullet\ar[dr]\ar[rrrrr,bend left=15,dashed,gray] & \bullet\ar[d]\ar[rrrr,bend left=15,dashed,gray] & \bullet\ar[dl]\ar[rrrrrrr,bend left=15,dashed,gray] &&
%  & \bullet \ar[drr] && \bullet\ar[d] & &\bullet\ar[dll]\\
%   &\bullet\ar[rrrrrr,bend left=15,dashed,gray] &&& 
%   &&      & \bullet & & &               
% \end{tikzcd}
% \]

Objects of $\Bifib{p_\omega}_k$ may be interpreted as left-marked trees: the idea is that a bifibrational formula in $S \refs k$, corresponding to a walk that starts at height 0 and ends at height $k$, may be read as describing a walk from right to left along the tree that stops at the marked node.
Under this correspondence, extending a walk by an up-step $k \to k+1$ corresponds to growing a fresh marked leaf as a left child of the marked node, while extending a walk by a down-step $k+1 \to k$ corresponds to moving the marked node down towards the root.
We call these respective operations $L_k$ and $R_k$, and illustrate them below on an example for the case $k=2$:
\[
\begin{tikzcd}[column sep=.5em, row sep=.75em,arrows={-}]
\circ\ar[dr] & \bullet\ar[d] & & &          \\
&  \bullet\ar[dr] && \bullet\ar[dl] & & &\bullet\ar[d] \\
& & \bullet \ar[drr] && \bullet\ar[d] & &\bullet\ar[dll]\\
&  &&      & \bullet & & &               
\end{tikzcd}
\quad\overset{L_2}\longmapsfrom\quad
\begin{tikzcd}[column sep=.5em, row sep=.75em,arrows={-}]
 \bullet\ar[d] & & &          \\
  \circ\ar[dr] && \bullet\ar[dl] & & &\bullet\ar[d] \\
 & \bullet \ar[drr] && \bullet\ar[d] & &\bullet\ar[dll]\\
  &&      & \bullet & & &               
\end{tikzcd}
\quad\overset{R_2}\longmapsto\quad
\begin{tikzcd}[column sep=.5em, row sep=.75em,arrows={-}]
 \bullet\ar[d] & & &          \\
  \bullet\ar[dr] && \bullet\ar[dl] & & &\bullet\ar[d] \\
 & \circ \ar[drr] && \bullet\ar[d] & &\bullet\ar[dll]\\
  &&      & \bullet & & &               
\end{tikzcd}
\]
It is easy to see that $L_k$ and $R_k$ extend to functors defining a family of adjunctions
\[
    \begin{tikzcd}[column sep=large]
    \PTree_{\bot(k)} \ar[r,"L_k"{name=0},bend left] & \PTree_{\bot(k+1)} \ar[l,"R_k"{name=1},bend left]
    \ar[from=0,to=1,phantom,"\bot"]
    \end{tikzcd}
\]
between the categories of left-marked plane trees.
\begin{thm}\label{thm:tree-category-free-bifibration}
  Let $F : \omega \to \Adj$ be the functor sending $k$ to $\PTree_{\bot(k)}$ and each arrow $f_k : k \to k+1$ to the adjunction $L_k \dashv R_k$ defined above.
  Let $p_\omega : \One \to \omega$ be the functor mapping $\atom\ast$ to 0.
  Then the unique morphism of bifibrations $\Bifib{p_\omega} \to \int_\omega F$ sending the atomic formula $\atom\ast$ to the one-vertex tree $\bullet$ is an equivalence of categories.
  In particular, $\PTree$ is equivalent to $\Bifib{p_\omega}_0$.
\end{thm}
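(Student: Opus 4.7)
\emph{Plan.} By the universal property of $\BifibFun{p_\omega} : \Bifib{p_\omega} \to \omega$ (Theorem~\ref{thm:bif-is-free}), there is a unique morphism of bifibrations $\Phi : \Bifib{p_\omega} \to \int_\omega F$ over $\omega$ sending $\atom{\ast}$ to the one-vertex tree $\bullet$, obtained by interpreting $\push{f_k}$ as the functor $L_k$ and $\pull{f_k}$ as the functor $R_k$. Since $\Phi$ commutes with the projection to $\omega$ and restricts to a functor $\Phi_k : \Bifib{p_\omega}_k \to \PTree_{\bot(k)}$ on each fiber, it suffices to show that every $\Phi_k$ is an equivalence of categories; the final clause of the theorem is then obtained as the special case $k=0$.

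For essential surjectivity, I would argue that every left-marked finite plane tree $T \in \PTree_{\bot(k)}$ admits a \emph{walk encoding} $\enc{T}$, defined by traversing the tree from right to left starting at the marked node, recording a $\pull{f}$ for each step up away from the leftmost branch and a $\push{f}$ for each step down towards it, as already sketched in the discussion preceding the theorem. A direct induction on the number of vertices of $T$ then shows $\Phi_k(\enc{T}) \cong T$: the base case is $\Phi_0(\atom{\ast}) = \bullet$, and the inductive step exploits the fact that the outermost connective of $\enc{T}$ corresponds exactly to the last step of the walk, i.e., to one application of $L_k$ or $R_{k-1}$ to a strictly smaller tree.

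For full faithfulness, I would exploit the canonical form results of Section~\ref{sec:focusing}. The base category $\omega$ is a poset, hence factorization preordered (Example~\ref{exa:all-epi-or-mono-FP}), so by Corollary~\ref{cor:FP=>maxmultfoc-canonical} each arrow of $\Bifib{p_\omega}_{\Id[k]}(S,T)$ is uniquely represented by a maximally multifocused derivation of $S \vdashl{\Id[k]}{\ilockNone} T$. The calculus of Figure~\ref{fig:maxmultfocseq} also simplifies substantially in this setting: between any two objects of $\omega$ there is at most one arrow, so the side conditions on the focusing rules reduce to simple prefix/suffix tests as remarked at the end of Section~\ref{subsec:maximal-multi-focusing}. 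I would then prove, by simultaneous induction on $S$ and $T$, that these maximally multifocused derivations are in natural bijection with morphisms $\Phi_k(S) \to \Phi_k(T)$ in $\PTree_{\bot(k)}$, by matching the case analysis on the outermost bipole of a derivation with the case analysis on where a natural transformation sends the leftmost branch at each height.

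The main obstacle will be in this last bijection: I need to verify that the lock conditions of Figure~\ref{fig:maxmultfocseq}, which forbid $\parsym$ and $\grasym$ redices between adjacent bipoles, correspond precisely to the naturality squares constraining a morphism in $[\omega^{\op},\Delta] \supset \PTree$. Intuitively, a $\biLR$-bipole $\Lpull[\sigma]\Rpush[\tau]$ encodes ``matching a marked subtree of the source against a subtree of the target of the same height,'' while a successive pair $\biL \biR$ encodes ``dropping the marked subtree of the source before installing a fresh leftmost branch in the target''; the maximality requirement (which forces parallelization whenever compatible) reflects the fact that a morphism in $\PTree$ is determined by its behavior on individual levels, with no freedom to re-sequence moves that affect different branches. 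Once this dictionary is set up carefully, essential surjectivity and full faithfulness combine to give the required equivalence $\Phi_k : \Bifib{p_\omega}_k \stackrel{\sim}{\to} \PTree_{\bot(k)}$, and the theorem follows.
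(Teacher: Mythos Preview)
Your overall strategy (universal property for the functor, walk encodings for essential surjectivity, then full faithfulness fiberwise) matches the paper's, but the heart of the argument---full faithfulness---takes a genuinely different route.

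The paper does \emph{not} work bipole-by-bipole through the maximally multifocused calculus. Instead it introduces an explicit inductive grammar for tree and forest morphisms (forest-to-tree rules, forest-to-forest rules, and marked versions for $k>0$; see rules~\eqref{eq:tree-morphism-rules} and~\eqref{eq:marked-tree-morphism-rules}), and shows that zigzag morphisms admit a matching recursive decomposition: one splits the stack along its $\begin{tikzcd}[column sep=1em]0\ar[r,equals]&0\end{tikzcd}$ boundaries, strips the outermost chords, and recurses at height~1. This decomposition is orthogonal to the bipole structure---a single split may cut across several bipoles---and the resulting recursion is by tree \emph{height}, not by derivation length. Functoriality is then argued geometrically, reading each proof as a ``relief map'' and observing that connectedness at each height level composes correctly.

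Your plan to match bipoles directly against tree-morphism data is plausible but underdeveloped. The intuitions you offer (a $\biLR$-bipole ``matches subtrees of the same height'', a $\biL\,\biR$ pair ``drops then installs a leftmost branch'') do not obviously correspond to the actual effect of those bipoles on the underlying walks, and the lock-state dependence means a plain induction on $S$ and $T$ will not suffice---you would need to carry the lock state through the induction hypothesis and explain what it means on the $\PTree$ side. More seriously, you have no target structure on the tree side analogous to the paper's forest grammar: without something like it, there is no obvious inductive invariant that the bipole case analysis would preserve. The paper's decomposition by height is what makes the bijection tractable; recovering it from the linear bipole stack would require essentially rediscovering that grammar.
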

\begin{proof}
  Since $\pi_F(\bullet) = 0 = p_\omega(\ast)$, the universal property of the free bifibration determines a unique morphism of bifibrations $\sem{-}{\bullet} : \Bifib{p_\omega} \to \int_\omega F$ mapping $\atom \ast$ to $\bullet$.

  On objects, $\sem{-}\bullet$ interprets a bifibrational formula $T$ as the left-marked tree $\sem{T}\bullet$ encoded by the corresponding walk. This mapping is surjective since every such tree is described by a unique walk.  (It is many-to-one, since many formulas isomorphic by the pseudofunctoriality laws represent the same walk.)
  
  As a morphism of bifibrations, the mapping on arrows is determined by its action on the homsets of the fiber categories.
We now give an explicit description of a family of bijections
\[ (\psi_k)_{S,T}: \Bifib{p_\omega}_k(S,T) \overset\sim\longrightarrow \PTree_{\bot(k)}(\sem{S}\bullet,\sem{T}\bullet)\]
mapping zigzag morphisms to plane tree morphisms.
Since these bijections are natural in $S$ and $T$, they uniquely determine $\sem{-}\bullet : \Bifib{p_\omega} \to \int_\omega F$ as a morphism of bifibrations.
We conclude that $\sem{-}\bullet$ is both surjective on objects and fully faithful, hence an equivalence.

To define $\psi_k$, we rely on an inductive characterization of plane tree morphisms that we show can be reflected in both $\PTree_{\bot(k)}$ and $\Bifib{p_\omega}_k$.
Let us begin by illustrating this bijection in the case $k=0$, before explaining how to generalize it to arbitrary $k\ge 0$.

\paragraph{Bijection for the case $k=0$}
A plane tree can be described inductively as a root node with a list of plane trees as children. In other words, a tree is built by taking a (possibly empty) forest of trees, and attaching the root of each tree in the forest to a new root vertex.
The following grammar summarizes this inductive structure, where $T$ ranges over trees and $\Phi$ over forests:
\begin{equation}\label{eq:tree-formulas}
  T ::= \bullet \Phi
  \qquad
  \Phi ::= \cdot \mid T,\Phi'
\end{equation}
Order-preserving morphisms of plane trees can similarly be defined inductively.
We find it convenient to introduce two auxiliary judgments $\Phi \to T$ and $\Phi' \to \Phi$, describing forest-to-tree morphisms and forest-to-forest morphisms, respectively; we then consider tree morphisms as a special case of forest-to-tree morphisms with a singleton forest as input.
The two kinds of morphisms can be built inductively using the following rules:
\begin{equation}\label{eq:tree-morphism-rules}
    \infer{\bullet\Phi'_1,\dots,\bullet\Phi'_i \to \bullet\Phi}{\Phi'_1,\dots,\Phi'_i \to \Phi}
    \qquad
    \infer{\Phi'_1,\dots,\Phi'_n \to T_1,\dots,T_n}{\Phi'_1 \to T_1 & \cdots & \Phi'_n \to T_n}
\end{equation}
The forest-to-tree rule (on the left) expresses that the root nodes of the input forest are necessarily mapped to the unique root of the output tree, so it suffices to provide a forest-to-forest morphism at height 1.
The forest-to-forest rule (on the right) expresses that a morphism into a forest with $n$ trees is determined by an ordered partition of the input forest into $n$ subforests together with a list of forest-to-tree morphisms.

It is clear that morphisms in $\PTree$ may be constructed inductively in this way---to see this, it is helpful to begin by observing that there is an isomorphism 
\[
  \begin{tikzcd}
    \PTree \ar[r,bend left,"T(-) \mapsto T(1+-)"]\ar[r,phantom,"\sim"] & \PForest \ar[l,bend left,"\Phi \mapsto\bullet\Phi"]
  \end{tikzcd}
\]
where $\PForest$ is the category of (finite rooted) plane forests, that is, the full subcategory of $[\omega^\op,\Delta]$ spanned by the functors $\Phi : \omega^\op \to \Delta$ that eventually vanish $\Phi(k) = \ord{0}$ (with no constraint on $\Phi(0)$).
Indeed, any natural transformation $\theta : S \to T$ in $\PTree$ is uniquely determined at height 0 (since $S(0) = T(0) = \ord{1}$) and induces a natural transformation $\theta': \Phi_S \to \Phi_T$ in $\PForest$ between the underlying forests, where $S = \bullet\Phi_S$ and $T = \bullet\Phi_T$.
The height 0 component of such a natural transformation $\theta'_0 : \Phi_S(0) \to \Phi_T(0)$ determines an ordered partition of the roots of $\Phi_S$ into $n$ components, where $\Phi_T = T_1,\dots,T_n$.
By naturality of $\theta'$, this extends to an ordered partition of the forest itself $\Phi_S = \Phi'_1,\dots,\Phi'_n$, together with a list of natural transformations $\theta_1 : \Phi'_1 \to T_1, \dots, \theta_n : \Phi'_n \to T_n$.

Let us now explain how to similarly decompose morphisms in $\Bifib{p_\omega}_0$, illustrating with an example.
Consider the maximally multifocused proof below, which we will see corresponds to the blue morphism in \eqref{eq:tree-morphisms}:
\[
\rotatebox{-90}{\resizebox{!}{7cm}{\begin{tikzcd}[ampersand replacement=\&,row sep=1cm,execute at end picture = { 
\draw[draw=frenchblue, draw opacity=0.5, line width=1mm,out=90,in=180] ($(l1)!0.67!(r1)$) to ($(r1)!0.33!(r0)$);
\draw[draw=frenchblue, draw opacity=0.5, line width=1mm,out=90,in=180] ($(l1)!0.33!(r1)$) to ($(r1)!0.67!(r0)$);
\draw[draw=frenchblue, draw opacity=0.5, line width=1mm,out=0,in=-90] ($(l2)!0.67!(l1)$) to ($(l1)!0.33!(r1)$);
\draw[draw=frenchblue, draw opacity=0.5, line width=1mm,out=0,in=-90] ($(l2)!0.33!(l1)$) to ($(l1)!0.67!(r1)$);
\draw[draw=frenchblue, draw opacity=0.5, line width=1mm,out=90,in=0] ($(l3)!0.50!(r3)$) to ($(l3)!0.50!(l2)$);
\draw[draw=frenchblue, draw opacity=0.5, line width=1mm,out=0,in=-90] ($(l4)!0.50!(l3)$) to ($(l3)!0.50!(r3)$);
\draw[draw=frenchblue, draw opacity=0.5, line width=1mm,out=90,in=0] ($(l5)!0.33!(r5)$) to ($(l5)!0.33!(l4)$);
\draw[draw=frenchblue, draw opacity=0.5, line width=1mm,out=90,in=0] ($(l5)!0.67!(r5)$) to ($(l5)!0.67!(l4)$);
\draw[draw=frenchblue, draw opacity=0.5, line width=1mm,out=180,in=-90] ($(r6)!0.67!(r5)$) to ($(l5)!0.67!(r5)$);
\draw[draw=frenchblue, draw opacity=0.5, line width=1mm,out=180,in=-90] ($(r6)!0.33!(r5)$) to ($(l5)!0.33!(r5)$);
\draw[draw=frenchblue, draw opacity=0.5, line width=1mm,out=90,in=180] ($(l7)!0.50!(r7)$) to ($(r7)!0.50!(r6)$);
\draw[draw=frenchblue, draw opacity=0.5, line width=1mm,out=180,in=-90] ($(r8)!0.50!(r7)$) to ($(l7)!0.50!(r7)$);
\draw[draw=frenchblue, draw opacity=0.5, line width=1mm,out=90,in=180] ($(l9)!0.67!(r9)$) to ($(r9)!0.33!(r8)$);
\draw[draw=frenchblue, draw opacity=0.5, line width=1mm,out=90,in=180] ($(l9)!0.33!(r9)$) to ($(r9)!0.67!(r8)$);
\draw[draw=frenchblue, draw opacity=0.5, line width=1mm,out=0,in=-90] ($(l10)!0.50!(l9)$) to ($(l9)!0.33!(r9)$);
\draw[draw=frenchblue, draw opacity=0.5, line width=1mm,out=180,in=-90] ($(r10)!0.50!(r9)$) to ($(l9)!0.67!(r9)$);
\draw[draw=frenchblue, draw opacity=0.5, line width=1mm,out=90,in=0] ($(l11)!0.25!(r11)$) to ($(l11)!0.50!(l10)$);
\draw[draw=frenchblue, draw opacity=0.5, line width=1mm,out=90,in=180] ($(l11)!0.75!(r11)$) to ($(r11)!0.33!(r10)$);
\draw[draw=frenchblue, draw opacity=0.5, line width=1mm,out=90,in=180] ($(l11)!0.50!(r11)$) to ($(r11)!0.67!(r10)$);
\draw[draw=frenchblue, draw opacity=0.5, line width=1mm,out=0,in=-90] ($(l12)!0.67!(l11)$) to ($(l11)!0.25!(r11)$);
\draw[draw=frenchblue, draw opacity=0.5, line width=1mm,out=0,in=-90] ($(l12)!0.33!(l11)$) to ($(l11)!0.50!(r11)$);
\draw[draw=frenchblue, draw opacity=0.5, line width=1mm,out=90,in=-90] ($(l12)!0.50!(r12)$) to ($(l11)!0.75!(r11)$);
\draw[draw=frenchblue, draw opacity=0.5, line width=1mm,out=90,in=0] ($(l13)!0.25!(r13)$) to ($(l13)!0.33!(l12)$);
\draw[draw=frenchblue, draw opacity=0.5, line width=1mm,out=90,in=0] ($(l13)!0.50!(r13)$) to ($(l13)!0.67!(l12)$);
\draw[draw=frenchblue, draw opacity=0.5, line width=1mm,out=90,in=-90] ($(l13)!0.75!(r13)$) to ($(l12)!0.50!(r12)$);
\draw[draw=frenchblue, draw opacity=0.5, line width=1mm,out=180,in=-90] ($(r14)!0.75!(r13)$) to ($(l13)!0.75!(r13)$);
\draw[draw=frenchblue, draw opacity=0.5, line width=1mm,out=180,in=-90] ($(r14)!0.50!(r13)$) to ($(l13)!0.50!(r13)$);
\draw[draw=frenchblue, draw opacity=0.5, line width=1mm,out=180,in=-90] ($(r14)!0.25!(r13)$) to ($(l13)!0.25!(r13)$);
}
 ]
|[alias=l0]| 0 \ar[r,equals] \& |[alias=r0]| 0 
\\ |[alias=l1]|0\ar[u,equals]\ar[r] \& |[alias=r1]|2\ar[u,<-]
\\ |[alias=l2]|2\ar[u,<-]\ar[r,equals] \& |[alias=r2]|2\ar[u,equals]
\\ |[alias=l3]|1\ar[u]\ar[r] \& |[alias=r3]|2\ar[u,equals]
\\ |[alias=l4]|2\ar[u,<-]\ar[r,equals] \& |[alias=r4]|2\ar[u,equals]
\\ |[alias=l5]|0\ar[u]\ar[r] \& |[alias=r5]|2\ar[u,equals]
\\ |[alias=l6]|0\ar[u,equals]\ar[r,equals] \& |[alias=r6]|0\ar[u]
\\ |[alias=l7]|0\ar[u,equals]\ar[r] \& |[alias=r7]|1\ar[u,<-]
\\ |[alias=l8]|0\ar[u,equals]\ar[r,equals] \& |[alias=r8]|0\ar[u]
\\ |[alias=l9]|0\ar[u,equals]\ar[r] \& |[alias=r9]|2\ar[u,<-]
\\ |[alias=l10]|1\ar[u,<-]\ar[r,equals] \& |[alias=r10]|1\ar[u]
\\ |[alias=l11]|0\ar[u]\ar[r] \& |[alias=r11]|3\ar[u,<-]
\\ |[alias=l12]|2\ar[u,<-]\ar[r] \& |[alias=r12]|3\ar[u,equals]
\\ |[alias=l13]|0\ar[u]\ar[r] \& |[alias=r13]|3\ar[u,equals]
\\ |[alias=l14]|0\ar[u,equals]\ar[r,equals] \& |[alias=r14]|0\ar[u]
\arrow[gray,phantom,from=1-1,to=2-2,"\push \Rsym"]
\arrow[gray,phantom,from=2-1,to=3-2,"\push \Lsym"]
\arrow[gray,phantom,from=3-1,to=4-2,"\pull \Lsym"]
\arrow[gray,phantom,from=4-1,to=5-2,"\push \Lsym"]
\arrow[gray,phantom,from=5-1,to=6-2,"\pull \Lsym"]
\arrow[gray,phantom,from=6-1,to=7-2,"\pull \Rsym"]
\arrow[gray,phantom,from=7-1,to=8-2,"\push \Rsym"]
\arrow[gray,phantom,from=8-1,to=9-2,"\pull \Rsym"]
\arrow[gray,phantom,from=9-1,to=10-2,"\push \Rsym"]
\arrow[gray,phantom,from=10-1,to=11-2,"\push \Lsym\pull \Rsym"]
\arrow[gray,phantom,from=11-1,to=12-2,"\pull \Lsym \push \Rsym"]
\arrow[gray,phantom,from=12-1,to=13-2,"\push \Lsym"]
\arrow[gray,phantom,from=13-1,to=14-2,"\pull \Lsym"]
\arrow[gray,phantom,from=14-1,to=15-2,"\pull \Rsym"]
\end{tikzcd}}}
\]
We have rotated the proof 90 degrees clockwise to better fit on the page. It should be read as a morphism $S \to T$ from the zigzag $S$ at the top to the zigzag $T$ at the bottom.
Remember that each zigzag describes a walk along the contour of a tree.

The idea is to begin by splitting the proof along \begin{tikzcd}[column sep=1em]0\ar[r,equals] & 0\end{tikzcd} boundaries:
\[
\rotatebox{-90}{\resizebox{!}{7cm}{\begin{tikzcd}[ampersand replacement=\&,row sep=1cm,execute at end picture = { 
\draw[draw=frenchblue, draw opacity=0.5, line width=1mm,out=90,in=180] ($(l1)!0.67!(r1)$) to ($(r1)!0.33!(r0)$);
\draw[draw=frenchblue, draw opacity=0.5, line width=1mm,out=90,in=180] ($(l1)!0.33!(r1)$) to ($(r1)!0.67!(r0)$);
\draw[draw=frenchblue, draw opacity=0.5, line width=1mm,out=0,in=-90] ($(l2)!0.67!(l1)$) to ($(l1)!0.33!(r1)$);
\draw[draw=frenchblue, draw opacity=0.5, line width=1mm,out=0,in=-90] ($(l2)!0.33!(l1)$) to ($(l1)!0.67!(r1)$);
\draw[draw=frenchblue, draw opacity=0.5, line width=1mm,out=90,in=0] ($(l3)!0.50!(r3)$) to ($(l3)!0.50!(l2)$);
\draw[draw=frenchblue, draw opacity=0.5, line width=1mm,out=0,in=-90] ($(l4)!0.50!(l3)$) to ($(l3)!0.50!(r3)$);
\draw[draw=frenchblue, draw opacity=0.5, line width=1mm,out=90,in=0] ($(l5)!0.33!(r5)$) to ($(l5)!0.33!(l4)$);
\draw[draw=frenchblue, draw opacity=0.5, line width=1mm,out=90,in=0] ($(l5)!0.67!(r5)$) to ($(l5)!0.67!(l4)$);
\draw[draw=frenchblue, draw opacity=0.5, line width=1mm,out=180,in=-90] ($(r61)!0.67!(r5)$) to ($(l5)!0.67!(r5)$);
\draw[draw=frenchblue, draw opacity=0.5, line width=1mm,out=180,in=-90] ($(r61)!0.33!(r5)$) to ($(l5)!0.33!(r5)$);
\draw[draw=frenchblue, draw opacity=0.5, line width=1mm,out=90,in=180] ($(l7)!0.50!(r7)$) to ($(r7)!0.50!(r62)$);
\draw[draw=frenchblue, draw opacity=0.5, line width=1mm,out=180,in=-90] ($(r81)!0.50!(r7)$) to ($(l7)!0.50!(r7)$);
\draw[draw=frenchblue, draw opacity=0.5, line width=1mm,out=90,in=180] ($(l9)!0.67!(r9)$) to ($(r9)!0.33!(r82)$);
\draw[draw=frenchblue, draw opacity=0.5, line width=1mm,out=90,in=180] ($(l9)!0.33!(r9)$) to ($(r9)!0.67!(r82)$);
\draw[draw=frenchblue, draw opacity=0.5, line width=1mm,out=0,in=-90] ($(l10)!0.50!(l9)$) to ($(l9)!0.33!(r9)$);
\draw[draw=frenchblue, draw opacity=0.5, line width=1mm,out=180,in=-90] ($(r10)!0.50!(r9)$) to ($(l9)!0.67!(r9)$);
\draw[draw=frenchblue, draw opacity=0.5, line width=1mm,out=90,in=0] ($(l11)!0.25!(r11)$) to ($(l11)!0.50!(l10)$);
\draw[draw=frenchblue, draw opacity=0.5, line width=1mm,out=90,in=180] ($(l11)!0.75!(r11)$) to ($(r11)!0.33!(r10)$);
\draw[draw=frenchblue, draw opacity=0.5, line width=1mm,out=90,in=180] ($(l11)!0.50!(r11)$) to ($(r11)!0.67!(r10)$);
\draw[draw=frenchblue, draw opacity=0.5, line width=1mm,out=0,in=-90] ($(l12)!0.67!(l11)$) to ($(l11)!0.25!(r11)$);
\draw[draw=frenchblue, draw opacity=0.5, line width=1mm,out=0,in=-90] ($(l12)!0.33!(l11)$) to ($(l11)!0.50!(r11)$);
\draw[draw=frenchblue, draw opacity=0.5, line width=1mm,out=90,in=-90] ($(l12)!0.50!(r12)$) to ($(l11)!0.75!(r11)$);
\draw[draw=frenchblue, draw opacity=0.5, line width=1mm,out=90,in=0] ($(l13)!0.25!(r13)$) to ($(l13)!0.33!(l12)$);
\draw[draw=frenchblue, draw opacity=0.5, line width=1mm,out=90,in=0] ($(l13)!0.50!(r13)$) to ($(l13)!0.67!(l12)$);
\draw[draw=frenchblue, draw opacity=0.5, line width=1mm,out=90,in=-90] ($(l13)!0.75!(r13)$) to ($(l12)!0.50!(r12)$);
\draw[draw=frenchblue, draw opacity=0.5, line width=1mm,out=180,in=-90] ($(r14)!0.75!(r13)$) to ($(l13)!0.75!(r13)$);
\draw[draw=frenchblue, draw opacity=0.5, line width=1mm,out=180,in=-90] ($(r14)!0.50!(r13)$) to ($(l13)!0.50!(r13)$);
\draw[draw=frenchblue, draw opacity=0.5, line width=1mm,out=180,in=-90] ($(r14)!0.25!(r13)$) to ($(l13)!0.25!(r13)$);
}
 ]
|[alias=l0]| 0 \ar[r,equals] \& |[alias=r0]| 0 
\\ |[alias=l1]|0\ar[u,equals]\ar[r] \& |[alias=r1]|2\ar[u,<-]
\\ |[alias=l2]|2\ar[u,<-]\ar[r,equals] \& |[alias=r2]|2\ar[u,equals]
\\ |[alias=l3]|1\ar[u]\ar[r] \& |[alias=r3]|2\ar[u,equals]
\\ |[alias=l4]|2\ar[u,<-]\ar[r,equals] \& |[alias=r4]|2\ar[u,equals]
\\ |[alias=l5]|0\ar[u]\ar[r] \& |[alias=r5]|2\ar[u,equals]
\\ |[alias=l61]|0\ar[u,equals]\ar[r,equals] \& |[alias=r61]|0\ar[u]
\\ |[alias=l62]|0\ar[r,equals] \& |[alias=r62]|0
\\ |[alias=l7]|0\ar[u,equals]\ar[r] \& |[alias=r7]|1\ar[u,<-]
\\ |[alias=l81]|0\ar[u,equals]\ar[r,equals] \& |[alias=r81]|0\ar[u]
\\ |[alias=l82]|0\ar[r,equals] \& |[alias=r82]|0
\\ |[alias=l9]|0\ar[u,equals]\ar[r] \& |[alias=r9]|2\ar[u,<-]
\\ |[alias=l10]|1\ar[u,<-]\ar[r,equals] \& |[alias=r10]|1\ar[u]
\\ |[alias=l11]|0\ar[u]\ar[r] \& |[alias=r11]|3\ar[u,<-]
\\ |[alias=l12]|2\ar[u,<-]\ar[r] \& |[alias=r12]|3\ar[u,equals]
\\ |[alias=l13]|0\ar[u]\ar[r] \& |[alias=r13]|3\ar[u,equals]
\\ |[alias=l14]|0\ar[u,equals]\ar[r,equals] \& |[alias=r14]|0\ar[u]
\arrow[gray,phantom,from=1-1,to=2-2,"\push \Rsym"]
\arrow[gray,phantom,from=2-1,to=3-2,"\push \Lsym"]
\arrow[gray,phantom,from=3-1,to=4-2,"\pull \Lsym"]
\arrow[gray,phantom,from=4-1,to=5-2,"\push \Lsym"]
\arrow[gray,phantom,from=5-1,to=6-2,"\pull \Lsym"]
\arrow[gray,phantom,from=6-1,to=7-2,"\pull \Rsym"]
\arrow[gray,phantom,from=8-1,to=9-2,"\push \Rsym"]
\arrow[gray,phantom,from=9-1,to=10-2,"\pull \Rsym"]
\arrow[gray,phantom,from=11-1,to=12-2,"\push \Rsym"]
\arrow[gray,phantom,from=12-1,to=13-2,"\push \Lsym\pull \Rsym"]
\arrow[gray,phantom,from=13-1,to=14-2,"\pull \Lsym \push \Rsym"]
\arrow[gray,phantom,from=14-1,to=15-2,"\push \Lsym"]
\arrow[gray,phantom,from=15-1,to=16-2,"\pull \Lsym"]
\arrow[gray,phantom,from=16-1,to=17-2,"\pull \Rsym"]
\end{tikzcd}}}
\]
This splits the proof into exactly $n$ components, where $T = \bullet(T_1,\dots,T_n)$.
Next, in each of these components, we ``gray out'' the chords that are incident to 0:
\[
\rotatebox{-90}{\resizebox{!}{7cm}{\begin{tikzcd}[ampersand replacement=\&,row sep=1cm,execute at end picture = { 
\draw[draw=frenchblue, draw opacity=0.5, line width=1mm,out=90,in=180] ($(l1)!0.67!(r1)$) to ($(r1)!0.33!(r0)$);
\draw[draw=lightgray, draw opacity=0.5, line width=1mm,out=90,in=180] ($(l1)!0.33!(r1)$) to ($(r1)!0.67!(r0)$);
\draw[draw=lightgray, draw opacity=0.5, line width=1mm,out=0,in=-90] ($(l2)!0.67!(l1)$) to ($(l1)!0.33!(r1)$);
\draw[draw=frenchblue, draw opacity=0.5, line width=1mm,out=0,in=-90] ($(l2)!0.33!(l1)$) to ($(l1)!0.67!(r1)$);
\draw[draw=frenchblue, draw opacity=0.5, line width=1mm,out=90,in=0] ($(l3)!0.50!(r3)$) to ($(l3)!0.50!(l2)$);
\draw[draw=frenchblue, draw opacity=0.5, line width=1mm,out=0,in=-90] ($(l4)!0.50!(l3)$) to ($(l3)!0.50!(r3)$);
\draw[draw=lightgray, draw opacity=0.5, line width=1mm,out=90,in=0] ($(l5)!0.33!(r5)$) to ($(l5)!0.33!(l4)$);
\draw[draw=frenchblue, draw opacity=0.5, line width=1mm,out=90,in=0] ($(l5)!0.67!(r5)$) to ($(l5)!0.67!(l4)$);
\draw[draw=frenchblue, draw opacity=0.5, line width=1mm,out=180,in=-90] ($(r61)!0.67!(r5)$) to ($(l5)!0.67!(r5)$);
\draw[draw=lightgray, draw opacity=0.5, line width=1mm,out=180,in=-90] ($(r61)!0.33!(r5)$) to ($(l5)!0.33!(r5)$);
\draw[draw=lightgray, draw opacity=0.5, line width=1mm,out=90,in=180] ($(l7)!0.50!(r7)$) to ($(r7)!0.50!(r62)$);
\draw[draw=lightgray, draw opacity=0.5, line width=1mm,out=180,in=-90] ($(r81)!0.50!(r7)$) to ($(l7)!0.50!(r7)$);
\draw[draw=frenchblue, draw opacity=0.5, line width=1mm,out=90,in=180] ($(l9)!0.67!(r9)$) to ($(r9)!0.33!(r82)$);
\draw[draw=lightgray, draw opacity=0.5, line width=1mm,out=90,in=180] ($(l9)!0.33!(r9)$) to ($(r9)!0.67!(r82)$);
\draw[draw=lightgray, draw opacity=0.5, line width=1mm,out=0,in=-90] ($(l10)!0.50!(l9)$) to ($(l9)!0.33!(r9)$);
\draw[draw=frenchblue, draw opacity=0.5, line width=1mm,out=180,in=-90] ($(r10)!0.50!(r9)$) to ($(l9)!0.67!(r9)$);
\draw[draw=lightgray, draw opacity=0.5, line width=1mm,out=90,in=0] ($(l11)!0.25!(r11)$) to ($(l11)!0.50!(l10)$);
\draw[draw=frenchblue, draw opacity=0.5, line width=1mm,out=90,in=180] ($(l11)!0.75!(r11)$) to ($(r11)!0.33!(r10)$);
\draw[draw=frenchblue, draw opacity=0.5, line width=1mm,out=90,in=180] ($(l11)!0.50!(r11)$) to ($(r11)!0.67!(r10)$);
\draw[draw=lightgray, draw opacity=0.5, line width=1mm,out=0,in=-90] ($(l12)!0.67!(l11)$) to ($(l11)!0.25!(r11)$);
\draw[draw=frenchblue, draw opacity=0.5, line width=1mm,out=0,in=-90] ($(l12)!0.33!(l11)$) to ($(l11)!0.50!(r11)$);
\draw[draw=frenchblue, draw opacity=0.5, line width=1mm,out=90,in=-90] ($(l12)!0.50!(r12)$) to ($(l11)!0.75!(r11)$);
\draw[draw=lightgray, draw opacity=0.5, line width=1mm,out=90,in=0] ($(l13)!0.25!(r13)$) to ($(l13)!0.33!(l12)$);
\draw[draw=frenchblue, draw opacity=0.5, line width=1mm,out=90,in=0] ($(l13)!0.50!(r13)$) to ($(l13)!0.67!(l12)$);
\draw[draw=frenchblue, draw opacity=0.5, line width=1mm,out=90,in=-90] ($(l13)!0.75!(r13)$) to ($(l12)!0.50!(r12)$);
\draw[draw=frenchblue, draw opacity=0.5, line width=1mm,out=180,in=-90] ($(r14)!0.75!(r13)$) to ($(l13)!0.75!(r13)$);
\draw[draw=frenchblue, draw opacity=0.5, line width=1mm,out=180,in=-90] ($(r14)!0.50!(r13)$) to ($(l13)!0.50!(r13)$);
\draw[draw=lightgray, draw opacity=0.5, line width=1mm,out=180,in=-90] ($(r14)!0.25!(r13)$) to ($(l13)!0.25!(r13)$);
}
 ]
|[alias=l0]| 0 \ar[r,equals] \& |[alias=r0]| 0 
\\ |[alias=l1]|0\ar[u,equals]\ar[r] \& |[alias=r1]|2\ar[u,<-]
\\ |[alias=l2]|2\ar[u,<-]\ar[r,equals] \& |[alias=r2]|2\ar[u,equals]
\\ |[alias=l3]|1\ar[u]\ar[r] \& |[alias=r3]|2\ar[u,equals]
\\ |[alias=l4]|2\ar[u,<-]\ar[r,equals] \& |[alias=r4]|2\ar[u,equals]
\\ |[alias=l5]|0\ar[u]\ar[r] \& |[alias=r5]|2\ar[u,equals]
\\ |[alias=l61]|0\ar[u,equals]\ar[r,equals] \& |[alias=r61]|0\ar[u]
\\ |[alias=l62]|0\ar[r,equals] \& |[alias=r62]|0
\\ |[alias=l7]|0\ar[u,equals]\ar[r] \& |[alias=r7]|1\ar[u,<-]
\\ |[alias=l81]|0\ar[u,equals]\ar[r,equals] \& |[alias=r81]|0\ar[u]
\\ |[alias=l82]|0\ar[r,equals] \& |[alias=r82]|0
\\ |[alias=l9]|0\ar[u,equals]\ar[r] \& |[alias=r9]|2\ar[u,<-]
\\ |[alias=l10]|1\ar[u,<-]\ar[r,equals] \& |[alias=r10]|1\ar[u]
\\ |[alias=l11]|0\ar[u]\ar[r] \& |[alias=r11]|3\ar[u,<-]
\\ |[alias=l12]|2\ar[u,<-]\ar[r] \& |[alias=r12]|3\ar[u,equals]
\\ |[alias=l13]|0\ar[u]\ar[r] \& |[alias=r13]|3\ar[u,equals]
\\ |[alias=l14]|0\ar[u,equals]\ar[r,equals] \& |[alias=r14]|0\ar[u]
\arrow[gray,phantom,from=1-1,to=2-2,"\push \Rsym"]
\arrow[gray,phantom,from=2-1,to=3-2,"\push \Lsym"]
\arrow[gray,phantom,from=3-1,to=4-2,"\pull \Lsym"]
\arrow[gray,phantom,from=4-1,to=5-2,"\push \Lsym"]
\arrow[gray,phantom,from=5-1,to=6-2,"\pull \Lsym"]
\arrow[gray,phantom,from=6-1,to=7-2,"\pull \Rsym"]
\arrow[gray,phantom,from=8-1,to=9-2,"\push \Rsym"]
\arrow[gray,phantom,from=9-1,to=10-2,"\pull \Rsym"]
\arrow[gray,phantom,from=11-1,to=12-2,"\push \Rsym"]
\arrow[gray,phantom,from=12-1,to=13-2,"\push \Lsym\pull \Rsym"]
\arrow[gray,phantom,from=13-1,to=14-2,"\pull \Lsym \push \Rsym"]
\arrow[gray,phantom,from=14-1,to=15-2,"\push \Lsym"]
\arrow[gray,phantom,from=15-1,to=16-2,"\pull \Lsym"]
\arrow[gray,phantom,from=16-1,to=17-2,"\pull \Rsym"]
\end{tikzcd}}}
\]
Each component now implicitly describes a forest-to-tree morphism $\Phi_i \to T_i$, where $S = \bullet\Phi_S$ and $\Phi_1,\dots,\Phi_n$ is an ordered partition of $\Phi_S$.
If we remove the grayed-out chords and shift levels---in terms of the sequent calculus, this corresponds to removing the pair of $\pull\Rsym$ and $\push\Rsym$ steps bracketing each component, as well as a list of pairs of $\pull\Lsym$ and $\push\Lsym$ steps---then we can iterate the process on smaller proofs.
Eventually we obtain a full recursive decomposition of the original zigzag morphism $S \to T$ using the inductive rules in \eqref{eq:tree-morphism-rules}, which we can play back as a morphism $\sem{S}\bullet \to \sem{T}\bullet$ in $\PTree$.
This defines the bijection $\psi_k$ for the case $k=0$.

The bijection may be better visualized by imagining the string diagram for the zigzag morphism as a kind of topographic relief map:
\[
\rotatebox{-90}{\resizebox{!}{7cm}{\begin{tikzcd}[ampersand replacement=\&,row sep=1cm,execute at end picture = { 
\draw[draw=forestgreen, draw opacity=0.5, line width=1mm,out=90,in=180] ($(l1)!0.67!(r1)$) to ($(r1)!0.33!(r0)$);
\draw[draw=goldenpoppy, draw opacity=0.5, line width=1mm,out=90,in=180] ($(l1)!0.33!(r1)$) to ($(r1)!0.67!(r0)$);
\draw[draw=goldenpoppy, draw opacity=0.5, line width=1mm,out=0,in=-90] ($(l2)!0.67!(l1)$) to ($(l1)!0.33!(r1)$);
\draw[draw=forestgreen, draw opacity=0.5, line width=1mm,out=0,in=-90] ($(l2)!0.33!(l1)$) to ($(l1)!0.67!(r1)$);
\draw[draw=forestgreen, draw opacity=0.5, line width=1mm,out=90,in=0] ($(l3)!0.50!(r3)$) to ($(l3)!0.50!(l2)$);
\draw[draw=forestgreen, draw opacity=0.5, line width=1mm,out=0,in=-90] ($(l4)!0.50!(l3)$) to ($(l3)!0.50!(r3)$);
\draw[draw=goldenpoppy, draw opacity=0.5, line width=1mm,out=90,in=0] ($(l5)!0.33!(r5)$) to ($(l5)!0.33!(l4)$);
\draw[draw=forestgreen, draw opacity=0.5, line width=1mm,out=90,in=0] ($(l5)!0.67!(r5)$) to ($(l5)!0.67!(l4)$);
\draw[draw=forestgreen, draw opacity=0.5, line width=1mm,out=180,in=-90] ($(r6)!0.67!(r5)$) to ($(l5)!0.67!(r5)$);
\draw[draw=goldenpoppy, draw opacity=0.5, line width=1mm,out=180,in=-90] ($(r6)!0.33!(r5)$) to ($(l5)!0.33!(r5)$);
\draw[draw=goldenpoppy, draw opacity=0.5, line width=1mm,out=90,in=180] ($(l7)!0.50!(r7)$) to ($(r7)!0.50!(r6)$);
\draw[draw=goldenpoppy, draw opacity=0.5, line width=1mm,out=180,in=-90] ($(r8)!0.50!(r7)$) to ($(l7)!0.50!(r7)$);
\draw[draw=forestgreen, draw opacity=0.5, line width=1mm,out=90,in=180] ($(l9)!0.67!(r9)$) to ($(r9)!0.33!(r8)$);
\draw[draw=goldenpoppy, draw opacity=0.5, line width=1mm,out=90,in=180] ($(l9)!0.33!(r9)$) to ($(r9)!0.67!(r8)$);
\draw[draw=goldenpoppy, draw opacity=0.5, line width=1mm,out=0,in=-90] ($(l10)!0.50!(l9)$) to ($(l9)!0.33!(r9)$);
\draw[draw=forestgreen, draw opacity=0.5, line width=1mm,out=180,in=-90] ($(r10)!0.50!(r9)$) to ($(l9)!0.67!(r9)$);
\draw[draw=goldenpoppy, draw opacity=0.5, line width=1mm,out=90,in=0] ($(l11)!0.25!(r11)$) to ($(l11)!0.50!(l10)$);
\draw[draw=goldenbrown, draw opacity=0.5, line width=1mm,out=90,in=180] ($(l11)!0.75!(r11)$) to ($(r11)!0.33!(r10)$);
\draw[draw=forestgreen, draw opacity=0.5, line width=1mm,out=90,in=180] ($(l11)!0.50!(r11)$) to ($(r11)!0.67!(r10)$);
\draw[draw=goldenpoppy, draw opacity=0.5, line width=1mm,out=0,in=-90] ($(l12)!0.67!(l11)$) to ($(l11)!0.25!(r11)$);
\draw[draw=forestgreen, draw opacity=0.5, line width=1mm,out=0,in=-90] ($(l12)!0.33!(l11)$) to ($(l11)!0.50!(r11)$);
\draw[draw=goldenbrown, draw opacity=0.5, line width=1mm,out=90,in=-90] ($(l12)!0.50!(r12)$) to ($(l11)!0.75!(r11)$);
\draw[draw=goldenpoppy, draw opacity=0.5, line width=1mm,out=90,in=0] ($(l13)!0.25!(r13)$) to ($(l13)!0.33!(l12)$);
\draw[draw=forestgreen, draw opacity=0.5, line width=1mm,out=90,in=0] ($(l13)!0.50!(r13)$) to ($(l13)!0.67!(l12)$);
\draw[draw=goldenbrown, draw opacity=0.5, line width=1mm,out=90,in=-90] ($(l13)!0.75!(r13)$) to ($(l12)!0.50!(r12)$);
\draw[draw=goldenbrown, draw opacity=0.5, line width=1mm,out=180,in=-90] ($(r14)!0.75!(r13)$) to ($(l13)!0.75!(r13)$);
\draw[draw=forestgreen, draw opacity=0.5, line width=1mm,out=180,in=-90] ($(r14)!0.50!(r13)$) to ($(l13)!0.50!(r13)$);
\draw[draw=goldenpoppy, draw opacity=0.5, line width=1mm,out=180,in=-90] ($(r14)!0.25!(r13)$) to ($(l13)!0.25!(r13)$);
\begin{scope}[on background layer]
\fill[height0] (r14.center) to (l14.center) to ($(l13)!0.33!(l12)$) to[out=0,in=90] ($(l13)!0.25!(r13)$) to[out=-90,in=180] ($(r14)!0.25!(r13)$) -- cycle;
\fill[height0] ($(l12)!0.67!(l11)$) to[out=0,in=-90] ($(l11)!0.25!(r11)$) to[out=90,in=0] ($(l11)!0.5!(l10)$) -- cycle;
\fill[height0] ($(l10)!0.5!(l9)$) to[out=0,in=-90] ($(l9)!0.33!(r9)$) to[out=90,in=180] ($(r9)!0.67!(r8)$) to ($(r8)!0.5!(r7)$) to[out=180,in=-90] ($(l7)!0.5!(r7)$) to[out=90,in=0] ($(r7)!0.5!(r6)$) to ($(r6)!0.33!(r5)$) to[out=180,in=-90] ($(l5)!0.33!(r5)$) to[out=90,in=0] ($(l5)!0.33!(l4)$) -- cycle;
\fill[height0] (r0.center) to (l0.center) to ($(l2)!0.67!(l1)$) to[out=0,in=-90] ($(l1)!0.33!(r1)$) to[out=90,in=180] ($(r1)!0.67!(r0)$) -- cycle;
\fill[height1] ($(r14)!0.25!(r13)$) to[out=180,in=-90] ($(l13)!0.25!(r13)$) to[out=90,in=0] ($(l13)!0.33!(l12)$) to ($(l13)!0.67!(l12)$) to[out=0,in=90] ($(l13)!0.5!(r13)$) to[out=-90,in=180] ($(r14)!0.5!(r13)$) -- cycle;
\fill[height1] ($(r11)!0.67!(r10)$) to[out=180,in=90] ($(l11)!0.5!(r11)$) to ($(l11)!0.45!(r11)$) to [in=-90,out=0] ($(l12)!0.33!(l11)$) to ($(l12)!0.67!(l11)$) to [out=0,in=-90] ($(l11)!0.25!(r11)$) to [out=90,in=0] ($(l11)!0.5!(l10)$) to ($(l10)!0.5!(l9)$) to [out=0,in=-90] ($(l9)!0.33!(r9)$) to[out=90,in=-180] ($(r9)!0.67!(r8)$) to ($(r9)!0.33!(r8)$) to[out=180,in=90] ($(l9)!0.67!(r9)$) to [out=-90,in=180] ($(r10)!0.5!(r9)$) -- cycle;
\fill[height1] ($(r8)!0.5!(r7)$) to [out=180,in=-90] ($(l7)!0.5!(r7)$) to[out=90,in=180] ($(r7)!0.5!(r6)$) -- cycle;
\fill[height1] ($(r6)!0.33!(r5)$) to [out=180,in=-90] ($(l5)!0.33!(r5)$) to[out=90,in=0] ($(l5)!0.33!(l4)$) to ($(l5)!0.67!(l4)$) to [out=0,in=90] ($(l5)!0.67!(r5)$) to [out=-90,in=180] ($(r6)!0.67!(r5)$) -- cycle;
\fill[height1] ($(l4)!0.5!(l3)$) to [out=0,in=-90] ($(l3)!0.5!(r3)$) to[out=90,in=0] ($(l3)!0.5!(l2)$) -- cycle;
\fill[height1] ($(r1)!0.33!(r0)$) to [out=180,in=90] ($(l1)!0.67!(r1)$) to[out=-90,in=0] ($(l2)!0.33!(l1)$) to ($(l2)!0.67!(l1)$) to [out=0,in=-90] ($(l1)!0.33!(r1)$) to [out=90,in=180] ($(r1)!0.67!(r0)$) -- cycle;
\fill[height2] ($(r14)!0.5!(r13)$) to[out=180,in=-90] ($(l13)!0.5!(r13)$) to[out=90,in=0] ($(l13)!0.67!(l12)$) to ($(l12)!0.33!(l11)$) to[out=0,in=-90] ($(l11)!0.5!(r11)$) to[out=90,in=-180] ($(r11)!0.67!(r10)$) to ($(r11)!0.33!(r10)$) to[out=180,in=90] ($(l11)!0.75!(r11)$) to[out=-90,in=90] ($(l12)!0.5!(r12)$) to[out=-90,in=90] ($(l13)!0.75!(r13)$) to [out=180,in=-90] ($(r14)!0.75!(r13)$) -- cycle;
\fill[height2] ($(r10)!0.5!(r9)$) to [out=180,in=-90] ($(l9)!0.67!(r9)$) to[out=90,in=180] ($(r9)!0.33!(r8)$) -- cycle;
\fill[height2] ($(r6)!0.67!(r5)$) to [out=180,in=-90] ($(l5)!0.67!(r5)$) to[out=90,in=0] ($(l5)!0.67!(l4)$) to ($(l4)!0.5!(l3)$) to [out=0,in=-90] ($(l3)!0.5!(r3)$) to [out=90,in=0] ($(l3)!0.5!(l2)$) to ($(l2)!0.33!(l1)$) to [out=0,in=-90] ($(l1)!0.67!(r1)$) to [out=90,in=180] ($(r1)!0.33!(r0)$) -- cycle;
\fill[height3] ($(r14)!0.75!(r13)$) to[out=180,in=-90] ($(l13)!0.75!(r13)$) to[out=90,in=-90] ($(l12)!0.5!(r12)$) to[out=90,in=-90] ($(l11)!0.75!(r11)$) to [out=90,in=-180] ($(r10)!0.67!(r11)$) -- cycle;
\end{scope}
}
 ]
|[alias=l0]| 0 \ar[r,equals]
\& |[alias=r0]| 0 
\\ |[alias=l1]|0\ar[u,equals]%\ar[r]
\& |[alias=r1]|2\ar[u,<-]
\\ |[alias=l2]|2\ar[u,<-]%\ar[r,equals]
\& |[alias=r2]|2\ar[u,equals]
\\ |[alias=l3]|1\ar[u]%\ar[r]
\& |[alias=r3]|2\ar[u,equals]
\\ |[alias=l4]|2\ar[u,<-]%\ar[r,equals]
\& |[alias=r4]|2\ar[u,equals]
\\ |[alias=l5]|0\ar[u]%\ar[r]
\& |[alias=r5]|2\ar[u,equals]
\\ |[alias=l6]|0\ar[u,equals]%\ar[r,equals]
\& |[alias=r6]|0\ar[u]
\\ |[alias=l7]|0\ar[u,equals]%\ar[r]
\& |[alias=r7]|1\ar[u,<-]
\\ |[alias=l8]|0\ar[u,equals]%\ar[r,equals]
\& |[alias=r8]|0\ar[u]
\\ |[alias=l9]|0\ar[u,equals]%\ar[r]
\& |[alias=r9]|2\ar[u,<-]
\\ |[alias=l10]|1\ar[u,<-]%\ar[r,equals]
\& |[alias=r10]|1\ar[u]
\\ |[alias=l11]|0\ar[u]%\ar[r]
\& |[alias=r11]|3\ar[u,<-]
\\ |[alias=l12]|2\ar[u,<-]%\ar[r]
\& |[alias=r12]|3\ar[u,equals]
\\ |[alias=l13]|0\ar[u]%\ar[r]
\& |[alias=r13]|3\ar[u,equals]
\\ |[alias=l14]|0\ar[u,equals]\ar[r,equals] \& |[alias=r14]|0\ar[u]
% \arrow[gray,phantom,from=1-1,to=2-2,"\push \Rsym"]
% \arrow[gray,phantom,from=2-1,to=3-2,"\push \Lsym"]
% \arrow[gray,phantom,from=3-1,to=4-2,"\pull \Lsym"]
% \arrow[gray,phantom,from=4-1,to=5-2,"\push \Lsym"]
% \arrow[gray,phantom,from=5-1,to=6-2,"\pull \Lsym"]
% \arrow[gray,phantom,from=6-1,to=7-2,"\pull \Rsym"]
% \arrow[gray,phantom,from=7-1,to=8-2,"\push \Rsym"]
% \arrow[gray,phantom,from=8-1,to=9-2,"\pull \Rsym"]
% \arrow[gray,phantom,from=9-1,to=10-2,"\push \Rsym"]
% \arrow[gray,phantom,from=10-1,to=11-2,"\push \Lsym\pull \Rsym"]
% \arrow[gray,phantom,from=11-1,to=12-2,"\pull \Lsym \push \Rsym"]
% \arrow[gray,phantom,from=12-1,to=13-2,"\push \Lsym"]
% \arrow[gray,phantom,from=13-1,to=14-2,"\pull \Lsym"]
% \arrow[gray,phantom,from=14-1,to=15-2,"\pull \Rsym"]
\end{tikzcd}}}
\]
The source and target trees are rooted at the North and South ends of this map, with the morphism exhibiting a continuous deformation between them.
\[\raisebox{-.5\height}{\includegraphics[width=9cm]{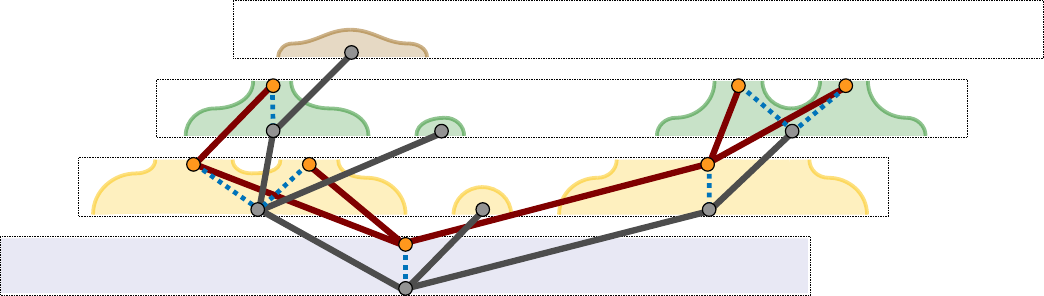}}\quad
\scalebox{0.6}{\begin{tikzcd}[column sep=1em, arrows={-}, ampersand replacement=\&]
 \& \& \& \& \color{nodeT}\bullet\ar[d] \& \& \&          \\
 \color{nodeS}\bullet \ar[d]\ar[rrrr,bend left=15,line width=3pt,opacity=0.5,frenchblue] \& \color{nodeS}\bullet\ar[rd]\ar[rrrrrrrr,bend left=15,line width=3pt,opacity=0.5,frenchblue] \& \color{nodeS}\bullet\ar[d] \ar[rrrrrrr,bend left=15,line width=3pt,opacity=0.5,frenchblue] \&\&
  \color{nodeT}\bullet\ar[dr] \&\& \color{nodeT}\bullet\ar[dl] \& \& \&\color{nodeT}\bullet\ar[d] \\
 \color{nodeS}\bullet\ar[dr]\ar[rrrrr,bend left=15,line width=3pt,opacity=0.5,frenchblue] \& \color{nodeS}\bullet\ar[d]\ar[rrrr,bend left=15,line width=3pt,opacity=0.5,frenchblue]  \& \color{nodeS}\bullet\ar[dl]\ar[rrrrrrr,bend left=15,line width=3pt,opacity=0.5,frenchblue] \&\&
 \& \color{nodeT}\bullet\ar[drr] \&\& \color{nodeT}\bullet\ar[d] \& \&\color{nodeT}\bullet\ar[dll]\\
  \&\color{nodeS}\bullet\ar[rrrrrr,bend left=15,line width=3pt,opacity=0.5,frenchblue] \&\&\& 
  \&\&      \& \color{nodeT}\bullet \& \& \& 
\end{tikzcd}}
\]

\paragraph{Bijection in the general case}
The inductive procedure we have sketched may be adapted to the general case $k \ge 0$ by extending the rules \eqref{eq:tree-formulas} and \eqref{eq:tree-morphism-rules} to represent trees with a marked node on their leftmost branch.
We begin by introducing a grammar of formulas for marked trees ${}^\circ T$ and marked forests ${}^\circ\Phi$:
\begin{equation}\label{eq:marked-tree-formulas}
  {}^\circ T ::= \circ \Phi \mid \bullet {}^\circ\Phi
  \qquad
  {}^\circ \Phi ::= {}^\circ T,\Phi'  
\end{equation}
A marked forest ${}^\circ\Phi$ starts with a marked tree ${}^\circ T$ followed by an unmarked forest $\Phi'$.
A marked tree ${}^\circ T$ can be built either by taking an unmarked forest $\Phi$ and attaching the root of each tree to a marked root vertex, $\circ \Phi$, or by taking a marked forest ${}^\circ\Phi$ and attaching each of its roots to a new unmarked root vertex, $\bullet {}^\circ \Phi$.
We introduce two additional judgments ${}^\circ\Phi \to_k {}^\circ T$ and ${}^\circ \Phi' \to_k {}^\circ \Phi$ defining marked-forest-to-marked-tree and marked-forest-to-marked-forest morphisms, where the index $k$ specifies the height of the mark:
\begin{equation}\label{eq:marked-tree-morphism-rules}
    \infer{\circ \Phi'_0,\bullet\Phi'_1,\dots,\bullet\Phi'_i \to_0 \circ\Phi}{\Phi'_0,\Phi'_1,\dots,\Phi'_i \to \Phi}
    \qquad
    \infer{\bullet ({}^\circ \Phi'_0),\bullet\Phi'_1,\dots,\bullet\Phi'_i \to_{k+1} \bullet{}^\circ\Phi}{{}^\circ \Phi'_0,\Phi'_1,\dots,\Phi'_i \to_k \Phi}
    \qquad
    \infer{{}^\circ\Phi_0,\Phi \to_k {}^\circ T,\Phi'}{{}^\circ\Phi_0 \to_k {}^\circ T & \Phi \to \Phi'}  
\end{equation}
The key invariant enforced by these rules is that the marked node of the forest on the left is sent to the marked node of the tree or forest on the right.
%Observe that when $k=0$ the marked judgments $\to_k$ reduce to the unmarked judgments $\to$.
Just as we argued for $\PTree$, it is clear that any morphism in $\PTree_{\bot(k)}$ may be built inductively using the above rules.

Similarly, for $k > 0$, the string diagrams of morphisms in $\Bifib{p_\omega}_k$ may be decomposed recursively by first splitting along \begin{tikzcd}[column sep=1em]0\ar[r,equals] & 0\end{tikzcd} boundaries essentially as before, with the difference that the bottom boundary (corresponding to the left side in our rotated diagrams) is of the form \begin{tikzcd}[column sep=1em]k\ar[r,equals] & k\end{tikzcd}.
For example, the following diagram represents a morphism in $\Bifib{p_\omega}_2$:
\[
\rotatebox{-90}{\resizebox{!}{7cm}{\begin{tikzcd}[ampersand replacement=\&,row sep=1cm,execute at end picture = { 
\draw[draw=frenchblue, draw opacity=0.5, line width=1mm,out=90,in=180] ($(l1)!0.67!(r1)$) to ($(r1)!0.33!(r0)$);
\draw[draw=frenchblue, draw opacity=0.5, line width=1mm,out=90,in=180] ($(l1)!0.33!(r1)$) to ($(r1)!0.67!(r0)$);
\draw[draw=frenchblue, draw opacity=0.5, line width=1mm,out=0,in=-90] ($(l2)!0.67!(l1)$) to ($(l1)!0.33!(r1)$);
\draw[draw=frenchblue, draw opacity=0.5, line width=1mm,out=0,in=-90] ($(l2)!0.33!(l1)$) to ($(l1)!0.67!(r1)$);
\draw[draw=frenchblue, draw opacity=0.5, line width=1mm,out=90,in=0] ($(l3)!0.50!(r3)$) to ($(l3)!0.50!(l2)$);
\draw[draw=frenchblue, draw opacity=0.5, line width=1mm,out=0,in=-90] ($(l4)!0.50!(l3)$) to ($(l3)!0.50!(r3)$);
\draw[draw=frenchblue, draw opacity=0.5, line width=1mm,out=90,in=0] ($(l5)!0.33!(r5)$) to ($(l5)!0.33!(l4)$);
\draw[draw=frenchblue, draw opacity=0.5, line width=1mm,out=90,in=0] ($(l5)!0.67!(r5)$) to ($(l5)!0.67!(l4)$);
\draw[draw=frenchblue, draw opacity=0.5, line width=1mm,out=180,in=-90] ($(r6)!0.67!(r5)$) to ($(l5)!0.67!(r5)$);
\draw[draw=frenchblue, draw opacity=0.5, line width=1mm,out=180,in=-90] ($(r6)!0.33!(r5)$) to ($(l5)!0.33!(r5)$);
\draw[draw=frenchblue, draw opacity=0.5, line width=1mm,out=90,in=180] ($(l7)!0.50!(r7)$) to ($(r7)!0.50!(r6)$);
\draw[draw=frenchblue, draw opacity=0.5, line width=1mm,out=180,in=-90] ($(r8)!0.50!(r7)$) to ($(l7)!0.50!(r7)$);
\draw[draw=frenchblue, draw opacity=0.5, line width=1mm,out=90,in=180] ($(l9)!0.67!(r9)$) to ($(r9)!0.33!(r8)$);
\draw[draw=frenchblue, draw opacity=0.5, line width=1mm,out=90,in=180] ($(l9)!0.33!(r9)$) to ($(r9)!0.67!(r8)$);
\draw[draw=frenchblue, draw opacity=0.5, line width=1mm,out=0,in=-90] ($(l10)!0.50!(l9)$) to ($(l9)!0.33!(r9)$);
\draw[draw=frenchblue, draw opacity=0.5, line width=1mm,out=180,in=-90] ($(r10)!0.50!(r9)$) to ($(l9)!0.67!(r9)$);
\draw[draw=frenchblue, draw opacity=0.5, line width=1mm,out=90,in=0] ($(l11)!0.25!(r11)$) to ($(l11)!0.50!(l10)$);
\draw[draw=frenchblue, draw opacity=0.5, line width=1mm,out=90,in=180] ($(l11)!0.75!(r11)$) to ($(r11)!0.33!(r10)$);
\draw[draw=frenchblue, draw opacity=0.5, line width=1mm,out=90,in=180] ($(l11)!0.50!(r11)$) to ($(r11)!0.67!(r10)$);
\draw[draw=frenchblue, draw opacity=0.5, line width=1mm,out=0,in=-90] ($(l14)!0.67!(l11)$) to ($(l11)!0.25!(r11)$);
\draw[draw=frenchblue, draw opacity=0.5, line width=1mm,out=0,in=-90] ($(l14)!0.33!(l11)$) to ($(l11)!0.50!(r11)$);
\draw[draw=frenchblue, draw opacity=0.5, line width=1mm,out=180,in=-90] ($(r14)!0.50!(r11)$) to ($(l11)!0.75!(r11)$);
%draw[draw=frenchblue, draw opacity=0.5, line width=1mm,out=90,in=0] ($(l13)!0.25!(r13)$) to ($(l13)!0.33!(l12)$);
%draw[draw=frenchblue, draw opacity=0.5, line width=1mm,out=90,in=0] ($(l13)!0.50!(r13)$) to ($(l13)!0.67!(l12)$);
%\draw[draw=frenchblue, draw opacity=0.5, line width=1mm,out=90,in=-90] ($(l13)!0.5!(r13)$) to ($(l12)!0.50!(r12)$);
%%\draw[draw=frenchblue, draw opacity=0.5, line width=1mm,out=180,in=-90] ($(r14)!0.5!(r12)$) to ($(l12)!0.5!(r12)$);
%draw[draw=frenchblue, draw opacity=0.5, line width=1mm,out=180,in=-90] ($(r14)!0.50!(r13)$) to ($(l13)!0.50!(r13)$);
%draw[draw=frenchblue, draw opacity=0.5, line width=1mm,out=180,in=-90] ($(r14)!0.25!(r13)$) to ($(l13)!0.25!(r13)$);
}
 ]
|[alias=l0]| 0 \ar[r,equals] \& |[alias=r0]| 0 
\\ |[alias=l1]|0\ar[u,equals]\ar[r] \& |[alias=r1]|2\ar[u,<-]
\\ |[alias=l2]|2\ar[u,<-]\ar[r,equals] \& |[alias=r2]|2\ar[u,equals]
\\ |[alias=l3]|1\ar[u]\ar[r] \& |[alias=r3]|2\ar[u,equals]
\\ |[alias=l4]|2\ar[u,<-]\ar[r,equals] \& |[alias=r4]|2\ar[u,equals]
\\ |[alias=l5]|0\ar[u]\ar[r] \& |[alias=r5]|2\ar[u,equals]
\\ |[alias=l6]|0\ar[u,equals]\ar[r,equals] \& |[alias=r6]|0\ar[u]
\\ |[alias=l7]|0\ar[u,equals]\ar[r] \& |[alias=r7]|1\ar[u,<-]
\\ |[alias=l8]|0\ar[u,equals]\ar[r,equals] \& |[alias=r8]|0\ar[u]
\\ |[alias=l9]|0\ar[u,equals]\ar[r] \& |[alias=r9]|2\ar[u,<-]
\\ |[alias=l10]|1\ar[u,<-]\ar[r,equals] \& |[alias=r10]|1\ar[u]
\\ |[alias=l11]|0\ar[u]\ar[r] \& |[alias=r11]|3\ar[u,<-]
%\\ |[alias=l12]|2\ar[u,<-]\ar[r] \& |[alias=r12]|3\ar[u,equals]
%\\ |[alias=l13]|2\ar[u,equals]\ar[r] \& |[alias=r13]|3\ar[u,equals]
\\ |[alias=l14]|2\ar[u,equals]\ar[r,equals] \& |[alias=r14]|2\ar[u]
\arrow[gray,phantom,from=1-1,to=2-2,"\push \Rsym"]
\arrow[gray,phantom,from=2-1,to=3-2,"\push \Lsym"]
\arrow[gray,phantom,from=3-1,to=4-2,"\pull \Lsym"]
\arrow[gray,phantom,from=4-1,to=5-2,"\push \Lsym"]
\arrow[gray,phantom,from=5-1,to=6-2,"\pull \Lsym"]
\arrow[gray,phantom,from=6-1,to=7-2,"\pull \Rsym"]
\arrow[gray,phantom,from=7-1,to=8-2,"\push \Rsym"]
\arrow[gray,phantom,from=8-1,to=9-2,"\pull \Rsym"]
\arrow[gray,phantom,from=9-1,to=10-2,"\push \Rsym"]
\arrow[gray,phantom,from=10-1,to=11-2,"\push \Lsym\pull \Rsym"]
\arrow[gray,phantom,from=11-1,to=12-2,"\pull \Lsym \push \Rsym"]
\arrow[gray,phantom,from=12-1,to=13-2,"\push\Lsym\pull \Rsym"]
\end{tikzcd}
}}
\]
After splitting along \begin{tikzcd}[column sep=1em]0\ar[r,equals] & 0\end{tikzcd} boundaries, we obtain as before a list of components that may be interpreted as morphisms in $\Bifib{p_\omega}_0$, except for the leftmost component which may be reduced to a morphism in $\Bifib{p_\omega}_1$ after removing the chords incident to 0 and shifting levels.
We recognize the inductive structure of marked morphisms of \eqref{eq:marked-tree-morphism-rules}.

\noindent
\begin{minipage}{0.84\textwidth}
\paragraph{Naturality}
Finally we need to show that the bijections $(\psi_k)_{S,T}$ are natural in $S$ and $T$, which is equivalent to functoriality: %the $\PTree$ morphism corresponding to a horizontal composition $(\alpha \hcomp \beta)$ of morphisms in $\Bifib{p_\omega}$ is the composition of the $\PTree$ morphisms corresponding to $\alpha$ and $\beta$.
$\psi_k(\alpha\hcomp\beta) = \psi_k(\alpha)\hcomp \psi_k(\beta)$ for every pair of composable zigzag morphisms $\alpha$ and $\beta$.

For example, consider our above example restricted to a prefix for clarity, along with a second morphism which is composable along the middle zigzag formula, as represented on the right.

These morphisms can be seen as relief maps: the tree morphism in $T \to T'$ is determined by the order-preserving functions $T(n) \to T'(n)$ between nodes of height $n$, which can be read from the $n$-th layer of the relief map.
\end{minipage}
\begin{minipage}{0.16\textwidth}
  $\quad$
  \resizebox{!}{2.5cm}{\begin{tikzcd}[ampersand replacement=\&,row sep=1cm,execute at end picture = { 
\draw[draw=forestgreen, draw opacity=0.5, line width=1mm,out=90,in=180] ($(l9)!0.67!(r9)$) to ($(r9)!0.50!(r8)$);
\draw[draw=goldenpoppy, draw opacity=0.5, line width=1mm,out=90,in=180] ($(l9)!0.33!(r9)$) to ($(r9)!0.67!(r8)$);
\draw[draw=goldenpoppy, draw opacity=0.5, line width=1mm,out=0,in=-90] ($(l10)!0.50!(l9)$) to ($(l9)!0.33!(r9)$);
\draw[draw=forestgreen, draw opacity=0.5, line width=1mm,out=180,in=-90] ($(r10)!0.50!(r9)$) to ($(l9)!0.67!(r9)$);
\draw[draw=goldenpoppy, draw opacity=0.5, line width=1mm,out=90,in=0] ($(l11)!0.25!(r11)$) to ($(l11)!0.50!(l10)$);
\draw[draw=goldenbrown, draw opacity=0.5, line width=1mm,out=90,in=180] ($(l11)!0.75!(r11)$) to ($(r11)!0.33!(r10)$);
\draw[draw=forestgreen, draw opacity=0.5, line width=1mm,out=90,in=180] ($(l11)!0.50!(r11)$) to ($(r11)!0.67!(r10)$);
\draw[draw=goldenpoppy, draw opacity=0.5, line width=1mm,out=0,in=-90] ($(l12)!0.67!(l11)$) to ($(l11)!0.25!(r11)$);
\draw[draw=forestgreen, draw opacity=0.5, line width=1mm,out=0,in=-90] ($(l12)!0.33!(l11)$) to ($(l11)!0.50!(r11)$);
\draw[draw=goldenbrown, draw opacity=0.5, line width=1mm,out=90,in=-90] ($(l12)!0.50!(r12)$) to ($(l11)!0.75!(r11)$);
\draw[draw=goldenpoppy, draw opacity=0.5, line width=1mm,out=90,in=0] ($(l13)!0.25!(r13)$) to ($(l13)!0.33!(l12)$);
\draw[draw=forestgreen, draw opacity=0.5, line width=1mm,out=90,in=0] ($(l13)!0.50!(r13)$) to ($(l13)!0.67!(l12)$);
\draw[draw=goldenbrown, draw opacity=0.5, line width=1mm,out=90,in=-90] ($(l13)!0.75!(r13)$) to ($(l12)!0.50!(r12)$);
\draw[draw=goldenbrown, draw opacity=0.5, line width=1mm,out=180,in=-90] ($(r14)!0.75!(r13)$) to ($(l13)!0.75!(r13)$);
\draw[draw=forestgreen, draw opacity=0.5, line width=1mm,out=180,in=-90] ($(r14)!0.50!(r13)$) to ($(l13)!0.50!(r13)$);
\draw[draw=goldenpoppy, draw opacity=0.5, line width=1mm,out=180,in=-90] ($(r14)!0.25!(r13)$) to ($(l13)!0.25!(r13)$);
\begin{scope}[on background layer]
\fill[height0] (r14.center) to (l14.center) to (l8.center) to (r8.center) -- cycle;
\fill[height1] ($(r14)!0.25!(r13)$) to[out=180,in=-90] ($(l13)!0.25!(r13)$) to[out=90,in=0] ($(l13)!0.33!(l12)$) to ($(l12)!0.67!(l11)$) to[out=0,in=0] ($(l11)!0.5!(l10)$) to ($(l10)!0.5!(l9)$) to[in=-90,out=0] ($(l9)!0.33!(r9)$) to[out=90,in=180] ($(r9)!0.67!(r8)$) -- cycle;
\fill[height2] ($(r14)!0.5!(r13)$) to[out=180,in=-90] ($(l13)!0.5!(r13)$) to[out=90,in=0] ($(l13)!0.67!(l12)$) to ($(l12)!0.33!(l11)$) to[out=0,in=-90] ($(l11)!0.5!(r11)$) to[out=90,in=-180] ($(r11)!0.67!(r10)$) -- cycle;
\fill[height2] ($(r10)!0.5!(r9)$) to [out=180,in=-90] ($(l9)!0.67!(r9)$) to[out=90,in=180] ($(r9)!0.50!(r8)$) -- cycle;
\fill[height3] ($(r14)!0.75!(r13)$) to[out=180,in=-90] ($(l13)!0.75!(r13)$) to[out=90,in=-90] ($(l12)!0.5!(r12)$) to[out=90,in=-90] ($(l11)!0.75!(r11)$) to [out=90,in=-180] ($(r10)!0.67!(r11)$) -- cycle;
\end{scope}
}
 ]
   |[alias=l7]|\phantom{0}
\& |[alias=r7]|\phantom{0}
\\ |[alias=l8]|0\ar[r,equals]
\& |[alias=r8]|0%\ar[u,equals]
\\ |[alias=l9]|0\ar[u,equals]%\ar[r]
\& |[alias=r9]|2\ar[u,<-]
\\ |[alias=l10]|1\ar[u,<-]%\ar[r,equals]
\& |[alias=r10]|1\ar[u]
\\ |[alias=l11]|0\ar[u]%\ar[r]
\& |[alias=r11]|3\ar[u,<-]
\\ |[alias=l12]|2\ar[u,<-]%\ar[r]
\& |[alias=r12]|\phantom{3}
\\ |[alias=l13]|0\ar[u]%\ar[r]
\& |[alias=r13]|3\ar[uu,equals]
\\ |[alias=l14]|0\ar[u,equals]\ar[r,equals] \& |[alias=r14]|0\ar[u]
\end{tikzcd}}
  \resizebox{!}{2.5cm}{\begin{tikzcd}[ampersand replacement=\&,row sep=1cm,execute at end picture = { 
\draw[draw=forestgreen, draw opacity=0.5, line width=1mm,out=180,in=0] ($(R8)!0.50!(R7)$) to[in=90,out=180] ($(R8)!0.5!(L8)$) to[out=-90] ($(L9)!0.50!(L8)$);
\draw[draw=goldenpoppy, draw opacity=0.5, line width=1mm,out=180] ($(R8)!0.67!(R7)$) to[in=90] ($(R8)!0.67!(L8)$) to[in=0,out=-90] ($(L9)!0.67!(L8)$);
\draw[draw=goldenbrown, draw opacity=0.5, line width=1mm] ($(R9)!0.50!(R8)$) to [out=180,in=180] ($(R8)!0.33!(R7)$);
\draw[draw=forestgreen, draw opacity=0.5, line width=1mm,out=0,in=0] ($(L10)!0.50!(L9)$) to ($(L10)!0.33!(L11)$);
\draw[draw=goldenbrown, draw opacity=0.5, line width=1mm,out=0,in=180] ($(L11)!0.33!(L10)$) to ($(R11)!0.50!(R10)$);
\draw[draw=goldenbrown, draw opacity=0.5, line width=1mm,out=0,in=180] ($(L14)!0.75!(L13)$) to ($(R14)!0.75!(R13)$);
\draw[draw=forestgreen, draw opacity=0.5, line width=1mm,out=0,in=180] ($(L14)!0.50!(L13)$) to ($(R14)!0.50!(R13)$);
\draw[draw=goldenpoppy, draw opacity=0.5, line width=1mm,out=0,in=180] ($(L14)!0.25!(L13)$) to ($(R14)!0.25!(R13)$);
\begin{scope}[on background layer]
\fill[height0] (R14.center) to (L14.center) to (L7.center) to (R7.center) -- cycle;
\fill[height1] ($(R14)!0.25!(R13)$) to ($(L14)!0.25!(L13)$) to ($(L9)!0.67!(L8)$) to[in=0,out=-90] ($(L8)!0.33!(R8)$) to[out=90,in=180] ($(R8)!0.67!(R7)$) -- cycle;
\fill[height2] ($(R14)!0.50!(R13)$) to ($(L14)!0.50!(L13)$) to ($(L11)!0.67!(L10)$) to [out=0,in=0] ($(L10)!0.50!(L9)$) to ($(L9)!0.50!(L8)$) to[out=0,in=-90] ($(L8)!0.5!(R8)$) to[out=90,in=180] ($(R8)!0.5!(R7)$) -- cycle;
\fill[height3] ($(R14)!0.75!(R13)$) to ($(R11)!0.50!(R10)$) to [in=0,out=180] ($(L11)!0.33!(L10)$) to ($(L14)!0.75!(L13)$) -- cycle;
\fill[height3] ($(R9)!0.50!(R8)$) to [out=180,in=180] ($(R8)!0.33!(R7)$) -- cycle;
\end{scope}
}
 ]
   |[alias=L7]|0\ar[r,equals]
\& |[alias=R7]|0
\\ |[alias=L8]|0\ar[u,equals]
\& |[alias=R8]|3\ar[u,<-]
\\ |[alias=L9]|2\ar[u,<-]
\& |[alias=R9]|2\ar[u,<-]
\\ |[alias=L10]|1\ar[u]%\ar[r,equals]
\& |[alias=R10]|2\ar[u,equals]
\\ |[alias=L11]|3\ar[u,<-]%\ar[r]
\& |[alias=R11]|3\ar[u]
\\ |[alias=L12]|\phantom{3}
\& |[alias=R12]|\phantom{3}
\\ |[alias=L13]|3\ar[uu,equals]%\ar[r]
\& |[alias=R13]|3\ar[uu,equals]
\\ |[alias=L14]|0\ar[u]\ar[r,equals]
\& |[alias=R14]|0\ar[u]
\end{tikzcd}}
\end{minipage}

If the two diagrams are composable, then each layer of the relief map is composable:
\newcommand{\hcomparray}[2]{#1 \hcomp #2}
\begin{mathpar}
\hcomparray{
\rotatebox{0}{\resizebox{!}{2.5cm}{\begin{tikzcd}[ampersand replacement=\&,row sep=1cm,execute at end picture = { 
\draw[draw=goldenpoppy, draw opacity=0.5, line width=1mm,out=90,in=180] ($(l9)!0.33!(r9)$) to ($(r9)!0.67!(r8)$);
\draw[draw=goldenpoppy, draw opacity=0.5, line width=1mm,out=0,in=-90] ($(l10)!0.50!(l9)$) to ($(l9)!0.33!(r9)$);
\draw[draw=goldenpoppy, draw opacity=0.5, line width=1mm,out=90,in=0] ($(l11)!0.25!(r11)$) to ($(l11)!0.50!(l10)$);
\draw[draw=goldenpoppy, draw opacity=0.5, line width=1mm,out=0,in=-90] ($(l12)!0.67!(l11)$) to ($(l11)!0.25!(r11)$);
\draw[draw=goldenpoppy, draw opacity=0.5, line width=1mm,out=90,in=0] ($(l13)!0.25!(r13)$) to ($(l13)!0.33!(l12)$);
\draw[draw=goldenpoppy, draw opacity=0.5, line width=1mm,out=180,in=-90] ($(r14)!0.25!(r13)$) to ($(l13)!0.25!(r13)$);
\begin{scope}[on background layer]
\fill[height0] (r14.center) to (l14.center) to (l8.center) to (r8.center) -- cycle;
\fill[height1] ($(r14)!0.25!(r13)$) to[out=180,in=-90] ($(l13)!0.25!(r13)$) to[out=90,in=0] ($(l13)!0.33!(l12)$) to ($(l12)!0.67!(l11)$) to[out=0,in=0] ($(l11)!0.5!(l10)$) to ($(l10)!0.5!(l9)$) to[in=-90,out=0] ($(l9)!0.33!(r9)$) to[out=90,in=180] ($(r9)!0.67!(r8)$) -- cycle;
\end{scope}
}
 ]
   |[alias=l7]|\phantom{0}
\& |[alias=r7]|\phantom{0}
\\ |[alias=l8]|0\ar[r,equals]
\& |[alias=r8]|0%\ar[u,equals]
\\ |[alias=l9]|0\ar[u,equals]%\ar[r]
\& |[alias=r9]|1\ar[u,<-]
\\ |[alias=l10]|1\ar[u,<-]%\ar[r,equals]
\& |[alias=r10]|
\\ |[alias=l11]|0\ar[u]%\ar[r]
\& |[alias=r11]|
\\ |[alias=l12]|1\ar[u,<-]%\ar[r]
\& |[alias=r12]|
\\ |[alias=l13]|0\ar[u]%\ar[r]
\& |[alias=r13]|1\ar[uuuu,equals]
\\ |[alias=l14]|0\ar[u,equals]\ar[r,equals] \& |[alias=r14]|0\ar[u]
\end{tikzcd}}}
}{
\rotatebox{0}{\resizebox{!}{2.5cm}{\begin{tikzcd}[ampersand replacement=\&,row sep=1cm,execute at end picture = { 
\draw[draw=goldenpoppy, draw opacity=0.5, line width=1mm,out=180] ($(R8)!0.67!(R7)$) to[in=90] ($(R8)!0.67!(L8)$) to[in=0,out=-90] ($(L9)!0.67!(L8)$);
\draw[draw=goldenpoppy, draw opacity=0.5, line width=1mm,out=0,in=180] ($(L14)!0.25!(L13)$) to ($(R14)!0.25!(R13)$);
\begin{scope}[on background layer]
\fill[height0] (R14.center) to (L14.center) to (L7.center) to (R7.center) -- cycle;
\fill[height1] ($(R14)!0.25!(R13)$) to ($(L14)!0.25!(L13)$) to ($(L9)!0.67!(L8)$) to[in=0,out=-90] ($(L8)!0.33!(R8)$) to[out=90,in=180] ($(R8)!0.67!(R7)$) -- cycle;
\end{scope}
}
 ]
   |[alias=L7]|0\ar[r,equals]
\& |[alias=R7]|0
\\ |[alias=L8]|0\ar[u,equals]
\& |[alias=R8]|1\ar[u,<-]
\\ |[alias=L9]|1\ar[u,<-]
\& |[alias=R9]|\phantom{1}
\\ |[alias=L10]|\phantom{1}
\& |[alias=R10]|\phantom{1}
\\ |[alias=L11]|\phantom{1}
\& |[alias=R11]|\phantom{1}
\\ |[alias=L12]|\phantom{1}
\& |[alias=R12]|\phantom{1}
\\ |[alias=L13]|1\ar[uuuu,equals]
\& |[alias=R13]|1\ar[uuuuu,equals]
\\ |[alias=L14]|0\ar[u]\ar[r,equals]
\& |[alias=R14]|0\ar[u]
\end{tikzcd}
}}
}

\hcomparray{
\rotatebox{0}{\resizebox{!}{2.5cm}{\begin{tikzcd}[ampersand replacement=\&,row sep=1cm,execute at end picture = { 
\draw[draw=forestgreen, draw opacity=0.5, line width=1mm,out=90,in=180] ($(l9)!0.67!(r9)$) to ($(r9)!0.50!(r8)$);
\draw[draw=forestgreen, draw opacity=0.5, line width=1mm,out=180,in=-90] ($(r10)!0.50!(r9)$) to ($(l9)!0.67!(r9)$);
\draw[draw=forestgreen, draw opacity=0.5, line width=1mm,out=90,in=180] ($(l11)!0.50!(r11)$) to ($(r11)!0.67!(r10)$);
\draw[draw=forestgreen, draw opacity=0.5, line width=1mm,out=0,in=-90] ($(l12)!0.33!(l11)$) to ($(l11)!0.50!(r11)$);
\draw[draw=forestgreen, draw opacity=0.5, line width=1mm,out=90,in=0] ($(l13)!0.50!(r13)$) to ($(l13)!0.67!(l12)$);
\draw[draw=forestgreen, draw opacity=0.5, line width=1mm,out=180,in=-90] ($(r14)!0.50!(r13)$) to ($(l13)!0.50!(r13)$);
\begin{scope}[on background layer]
\fill[height0] (r14.center) to (l14.center) to (l8.center) to (r8.center) -- cycle;
\fill[height2] ($(r14)!0.5!(r13)$) to[out=180,in=-90] ($(l13)!0.5!(r13)$) to[out=90,in=0] ($(l13)!0.67!(l12)$) to ($(l12)!0.33!(l11)$) to[out=0,in=-90] ($(l11)!0.5!(r11)$) to[out=90,in=-180] ($(r11)!0.67!(r10)$) -- cycle;
\fill[height2] ($(r10)!0.5!(r9)$) to [out=180,in=-90] ($(l9)!0.67!(r9)$) to[out=90,in=180] ($(r9)!0.50!(r8)$) -- cycle;
\end{scope}
}
 ]
   |[alias=l7]|\phantom{0}
\& |[alias=r7]|\phantom{0}
\\ |[alias=l8]|1\ar[r,equals]
\& |[alias=r8]|1%\ar[u,equals]
\\ |[alias=l9]|\phantom{1}
\& |[alias=r9]|2\ar[u,<-]
\\ |[alias=l10]|\phantom{1}
\& |[alias=r10]|1\ar[u]
\\ |[alias=l11]|1\ar[uuu,equals]
\& |[alias=r11]|2\ar[u,<-]
\\ |[alias=l12]|2\ar[u,<-]%\ar[r]
\& |[alias=r12]|\phantom{2}
\\ |[alias=l13]|1\ar[u]%\ar[r]
\& |[alias=r13]|2\ar[uu,equals]
\\ |[alias=l14]|1\ar[u,equals]\ar[r,equals]
\& |[alias=r14]|1\ar[u]
\end{tikzcd}}}
}{
\rotatebox{0}{\resizebox{!}{2.5cm}{\begin{tikzcd}[ampersand replacement=\&,row sep=1cm,execute at end picture = { 
\draw[draw=forestgreen, draw opacity=0.5, line width=1mm,out=180,in=0] ($(R8)!0.50!(R7)$) to[in=90,out=180] ($(R8)!0.5!(L8)$) to[out=-90] ($(L9)!0.50!(L8)$);
\draw[draw=forestgreen, draw opacity=0.5, line width=1mm,out=0,in=0] ($(L10)!0.50!(L9)$) to ($(L10)!0.33!(L11)$);
\draw[draw=forestgreen, draw opacity=0.5, line width=1mm,out=0,in=180] ($(L14)!0.50!(L13)$) to ($(R14)!0.50!(R13)$);
\begin{scope}[on background layer]
\fill[height0] (R14.center) to (L14.center) to (L7.center) to (R7.center) -- cycle;
\fill[height2] ($(R14)!0.50!(R13)$) to ($(L14)!0.50!(L13)$) to ($(L11)!0.67!(L10)$) to [out=0,in=0] ($(L10)!0.50!(L9)$) to ($(L9)!0.50!(L8)$) to[out=0,in=-90] ($(L8)!0.5!(R8)$) to[out=90,in=180] ($(R8)!0.5!(R7)$) -- cycle;
\end{scope}
}
 ]
   |[alias=L7]|1\ar[r,equals]
\& |[alias=R7]|1
\\ |[alias=L8]|1\ar[u,equals]
\& |[alias=R8]|2\ar[u,<-]
\\ |[alias=L9]|2\ar[u,<-]
\& |[alias=R9]|\phantom{2}
\\ |[alias=L10]|1\ar[u]%\ar[r,equals]
\& |[alias=R10]|\phantom{2}
\\ |[alias=L11]|2\ar[u,<-]%\ar[r]
\& |[alias=R11]|\phantom{2}
\\ |[alias=L12]|\phantom{2}
\& |[alias=R12]|\phantom{2}
\\ |[alias=L13]|2\ar[uu,equals]%\ar[r]
\& |[alias=R13]|2\ar[uuuuu,equals]
\\ |[alias=L14]|1\ar[u]\ar[r,equals]
\& |[alias=R14]|1\ar[u]
\end{tikzcd}
}}
}

\hcomparray{
\rotatebox{0}{\resizebox{!}{2.5cm}{\begin{tikzcd}[ampersand replacement=\&,row sep=1cm,execute at end picture = { 
\draw[draw=goldenbrown, draw opacity=0.5, line width=1mm,out=90,in=180] ($(l11)!0.75!(r11)$) to ($(r11)!0.33!(r10)$);
\draw[draw=goldenbrown, draw opacity=0.5, line width=1mm,out=90,in=-90] ($(l12)!0.50!(r12)$) to ($(l11)!0.75!(r11)$);
\draw[draw=goldenbrown, draw opacity=0.5, line width=1mm,out=90,in=-90] ($(l13)!0.75!(r13)$) to ($(l12)!0.50!(r12)$);
\draw[draw=goldenbrown, draw opacity=0.5, line width=1mm,out=180,in=-90] ($(r14)!0.75!(r13)$) to ($(l13)!0.75!(r13)$);
\begin{scope}[on background layer]
\fill[height0] (r14.center) to (l14.center) to (l8.center) to (r8.center) -- cycle;
\fill[height3] ($(r14)!0.75!(r13)$) to[out=180,in=-90] ($(l13)!0.75!(r13)$) to[out=90,in=-90] ($(l12)!0.5!(r12)$) to[out=90,in=-90] ($(l11)!0.75!(r11)$) to [out=90,in=-180] ($(r10)!0.67!(r11)$) -- cycle;
\end{scope}
}
 ]
   |[alias=l7]|\phantom{2}
\& |[alias=r7]|\phantom{2}
\\ |[alias=l8]|2\ar[r,equals]
\& |[alias=r8]|2
\\ |[alias=l9]|\phantom{2}
\& |[alias=r9]|\phantom{2}
\\ |[alias=l10]|\phantom{2}
\& |[alias=r10]|2\ar[uu,equals]
\\ |[alias=l11]|\phantom{2}
\& |[alias=r11]|3\ar[u,<-]
\\ |[alias=l12]|\phantom{2}
\& |[alias=r12]|\phantom{3}
\\ |[alias=l13]|\phantom{2}
\& |[alias=r13]|3\ar[uu,equals]
\\ |[alias=l14]|2\ar[uuuuuu,equals]\ar[r,equals]
\& |[alias=r14]|2\ar[u]
\end{tikzcd}
}}
}{
\rotatebox{0}{\resizebox{!}{2.5cm}{\begin{tikzcd}[ampersand replacement=\&,row sep=1cm,execute at end picture = { 
\draw[draw=goldenbrown, draw opacity=0.5, line width=1mm] ($(R9)!0.50!(R8)$) to [out=180,in=180] ($(R8)!0.33!(R7)$);
\draw[draw=goldenbrown, draw opacity=0.5, line width=1mm,out=0,in=180] ($(L11)!0.33!(L10)$) to ($(R11)!0.50!(R10)$);
\draw[draw=goldenbrown, draw opacity=0.5, line width=1mm,out=0,in=180] ($(L14)!0.75!(L13)$) to ($(R14)!0.75!(R13)$);
\begin{scope}[on background layer]
\fill[height0] (R14.center) to (L14.center) to (L7.center) to (R7.center) -- cycle;
\fill[height3] ($(R14)!0.75!(R13)$) to ($(R11)!0.50!(R10)$) to [in=0,out=180] ($(L11)!0.33!(L10)$) to ($(L14)!0.75!(L13)$) -- cycle;
\fill[height3] ($(R9)!0.50!(R8)$) to [out=180,in=180] ($(R8)!0.33!(R7)$) -- cycle;
\end{scope}
}
 ]
   |[alias=L7]|2\ar[r,equals]
\& |[alias=R7]|2
\\ |[alias=L8]|\phantom{2}
\& |[alias=R8]|3\ar[u,<-]
\\ |[alias=L9]|\phantom{2}
\& |[alias=R9]|2\ar[u,<-]
\\ |[alias=L10]|2\ar[uuu,equals]
\& |[alias=R10]|2\ar[u,equals]
\\ |[alias=L11]|3\ar[u,<-]
\& |[alias=R11]|3\ar[u]
\\ |[alias=L12]|\phantom{2}
\& |[alias=R12]|\phantom{2}
\\ |[alias=L13]|3\ar[uu,equals]
\& |[alias=R13]|3\ar[uu,equals]
\\ |[alias=L14]|2\ar[u]\ar[r,equals]
\& |[alias=R14]|2\ar[u]
\end{tikzcd}
}}
}
\end{mathpar}
 
For each layer of the relief map, the order-preserving function is determined by connectedness in the diagram: each node in the input tree is related to the (unique) node in the output tree that belongs to the same connected component. This property of connectedness is preserved by diagrammatic concatenation: the connectedness relation of the composite diagram (corresponding to a layer of $\alpha \hcomp \beta$) is the composition of the connectedness relations of the two diagrams.
\end{proof}

\begin{figure}[t]
  \centering

\resizebox{!}{7cm}{\begin{tikzcd}[ampersand replacement=\&,row sep=1cm,execute at end picture = { 
\draw[draw=lightgray, draw opacity=0.5, line width=1mm,out=90,in=180] ($(l1)!0.67!(r1)$) to ($(r1)!0.33!(r0)$);
\draw[draw=lightgray, draw opacity=0.5, line width=1mm,out=90,in=180] ($(l1)!0.33!(r1)$) to ($(r1)!0.67!(r0)$);
\draw[draw=lightgray, draw opacity=0.5, line width=1mm,out=180,in=-90] ($(r2)!0.67!(r1)$) to ($(l1)!0.67!(r1)$);
\draw[draw=lightgray, draw opacity=0.5, line width=1mm,out=180,in=-90] ($(r2)!0.33!(r1)$) to ($(l1)!0.33!(r1)$);
\draw[draw=lightgray, draw opacity=0.5, line width=1mm,out=90,in=180] ($(l3)!0.50!(r3)$) to ($(r3)!0.50!(r2)$);
\draw[draw=lightgray, draw opacity=0.5, line width=1mm,out=180,in=-90] ($(r4)!0.50!(r3)$) to ($(l3)!0.50!(r3)$);
\draw[draw=lightgray, draw opacity=0.5, line width=1mm,out=90,in=180] ($(l5)!0.67!(r5)$) to ($(r5)!0.33!(r4)$);
\draw[draw=lightgray, draw opacity=0.5, line width=1mm,out=90,in=180] ($(l5)!0.33!(r5)$) to ($(r5)!0.67!(r4)$);
\draw[draw=lightgray, draw opacity=0.5, line width=1mm,out=0,in=-90] ($(l6)!0.67!(l5)$) to ($(l5)!0.33!(r5)$);
\draw[draw=lightgray, draw opacity=0.5, line width=1mm,out=0,in=-90] ($(l6)!0.33!(l5)$) to ($(l5)!0.67!(r5)$);
\draw[draw=lightgray, draw opacity=0.5, line width=1mm,out=90,in=0] ($(l7)!0.50!(r7)$) to ($(l7)!0.50!(l6)$);
\draw[draw=lightgray, draw opacity=0.5, line width=1mm,out=0,in=-90] ($(l8)!0.50!(l7)$) to ($(l7)!0.50!(r7)$);
\draw[draw=lightgray, draw opacity=0.5, line width=1mm,out=90,in=0] ($(l9)!0.33!(r9)$) to ($(l9)!0.33!(l8)$);
\draw[draw=lightgray, draw opacity=0.5, line width=1mm,out=90,in=0] ($(l9)!0.67!(r9)$) to ($(l9)!0.67!(l8)$);
\draw[draw=lightgray, draw opacity=0.5, line width=1mm,out=0,in=-90] ($(l10)!0.50!(l9)$) to ($(l9)!0.33!(r9)$);
\draw[draw=lightgray, draw opacity=0.5, line width=1mm,out=180,in=-90] ($(r10)!0.50!(r9)$) to ($(l9)!0.67!(r9)$);
\draw[draw=lightgray, draw opacity=0.5, line width=1mm,out=90,in=0] ($(l11)!0.25!(r11)$) to ($(l11)!0.50!(l10)$);
\draw[draw=lightgray, draw opacity=0.5, line width=1mm,out=90,in=180] ($(l11)!0.75!(r11)$) to ($(r11)!0.33!(r10)$);
\draw[draw=lightgray, draw opacity=0.5, line width=1mm,out=90,in=180] ($(l11)!0.50!(r11)$) to ($(r11)!0.67!(r10)$);
\draw[draw=lightgray, draw opacity=0.5, line width=1mm,out=0,in=-90] ($(l12)!0.67!(l11)$) to ($(l11)!0.25!(r11)$);
\draw[draw=lightgray, draw opacity=0.5, line width=1mm,out=0,in=-90] ($(l12)!0.33!(l11)$) to ($(l11)!0.50!(r11)$);
\draw[draw=lightgray, draw opacity=0.5, line width=1mm,out=90,in=-90] ($(l12)!0.50!(r12)$) to ($(l11)!0.75!(r11)$);
\draw[draw=lightgray, draw opacity=0.5, line width=1mm,out=90,in=0] ($(l13)!0.25!(r13)$) to ($(l13)!0.33!(l12)$);
\draw[draw=lightgray, draw opacity=0.5, line width=1mm,out=90,in=0] ($(l13)!0.50!(r13)$) to ($(l13)!0.67!(l12)$);
\draw[draw=lightgray, draw opacity=0.5, line width=1mm,out=90,in=-90] ($(l13)!0.75!(r13)$) to ($(l12)!0.50!(r12)$);
\draw[draw=lightgray, draw opacity=0.5, line width=1mm,out=180,in=-90] ($(r14)!0.75!(r13)$) to ($(l13)!0.75!(r13)$);
\draw[draw=lightgray, draw opacity=0.5, line width=1mm,out=180,in=-90] ($(r14)!0.50!(r13)$) to ($(l13)!0.50!(r13)$);
\draw[draw=lightgray, draw opacity=0.5, line width=1mm,out=180,in=-90] ($(r14)!0.25!(r13)$) to ($(l13)!0.25!(r13)$);
}
 ]
|[alias=l0]| 0 \ar[r,equals] \& |[alias=r0]| 0 
\\ |[alias=l1]|0\ar[u,equals]\ar[r] \& |[alias=r1]|2\ar[u,<-]
\\ |[alias=l2]|0\ar[u,equals]\ar[r,equals] \& |[alias=r2]|0\ar[u]
\\ |[alias=l3]|0\ar[u,equals]\ar[r] \& |[alias=r3]|1\ar[u,<-]
\\ |[alias=l4]|0\ar[u,equals]\ar[r,equals] \& |[alias=r4]|0\ar[u]
\\ |[alias=l5]|0\ar[u,equals]\ar[r] \& |[alias=r5]|2\ar[u,<-]
\\ |[alias=l6]|2\ar[u,<-]\ar[r,equals] \& |[alias=r6]|2\ar[u,equals]
\\ |[alias=l7]|1\ar[u]\ar[r] \& |[alias=r7]|2\ar[u,equals]
\\ |[alias=l8]|2\ar[u,<-]\ar[r,equals] \& |[alias=r8]|2\ar[u,equals]
\\ |[alias=l9]|0\ar[u]\ar[r] \& |[alias=r9]|2\ar[u,equals]
\\ |[alias=l10]|1\ar[u,<-]\ar[r,equals] \& |[alias=r10]|1\ar[u]
\\ |[alias=l11]|0\ar[u]\ar[r] \& |[alias=r11]|3\ar[u,<-]
\\ |[alias=l12]|2\ar[u,<-]\ar[r] \& |[alias=r12]|3\ar[u,equals]
\\ |[alias=l13]|0\ar[u]\ar[r] \& |[alias=r13]|3\ar[u,equals]
\\ |[alias=l14]|0\ar[u,equals]\ar[r,equals] \& |[alias=r14]|0\ar[u]
\arrow[gray,phantom,from=1-1,to=2-2,"\push \Rsym"]
\arrow[gray,phantom,from=2-1,to=3-2,"\pull \Rsym"]
\arrow[gray,phantom,from=3-1,to=4-2,"\push \Rsym"]
\arrow[gray,phantom,from=4-1,to=5-2,"\pull \Rsym"]
\arrow[gray,phantom,from=5-1,to=6-2,"\push \Rsym"]
\arrow[gray,phantom,from=6-1,to=7-2,"\push \Lsym"]
\arrow[gray,phantom,from=7-1,to=8-2,"\pull \Lsym"]
\arrow[gray,phantom,from=8-1,to=9-2,"\push \Lsym"]
\arrow[gray,phantom,from=9-1,to=10-2,"\pull \Lsym"]
\arrow[gray,phantom,from=10-1,to=11-2,"\push \Lsym\pull \Rsym"]
\arrow[gray,phantom,from=11-1,to=12-2,"\pull \Lsym \push \Rsym"]
\arrow[gray,phantom,from=12-1,to=13-2,"\push \Lsym"]
\arrow[gray,phantom,from=13-1,to=14-2,"\pull \Lsym"]
\arrow[gray,phantom,from=14-1,to=15-2,"\pull \Rsym"]
\end{tikzcd}}
\resizebox{!}{7cm}{\begin{tikzcd}[ampersand replacement=\&,row sep=1cm,execute at end picture = { 
\draw[draw=frenchblue, draw opacity=0.5, line width=1mm,out=90,in=180] ($(l1)!0.67!(r1)$) to ($(r1)!0.33!(r0)$);
\draw[draw=frenchblue, draw opacity=0.5, line width=1mm,out=90,in=180] ($(l1)!0.33!(r1)$) to ($(r1)!0.67!(r0)$);
\draw[draw=frenchblue, draw opacity=0.5, line width=1mm,out=0,in=-90] ($(l2)!0.67!(l1)$) to ($(l1)!0.33!(r1)$);
\draw[draw=frenchblue, draw opacity=0.5, line width=1mm,out=0,in=-90] ($(l2)!0.33!(l1)$) to ($(l1)!0.67!(r1)$);
\draw[draw=frenchblue, draw opacity=0.5, line width=1mm,out=90,in=0] ($(l3)!0.50!(r3)$) to ($(l3)!0.50!(l2)$);
\draw[draw=frenchblue, draw opacity=0.5, line width=1mm,out=0,in=-90] ($(l4)!0.50!(l3)$) to ($(l3)!0.50!(r3)$);
\draw[draw=frenchblue, draw opacity=0.5, line width=1mm,out=90,in=0] ($(l5)!0.33!(r5)$) to ($(l5)!0.33!(l4)$);
\draw[draw=frenchblue, draw opacity=0.5, line width=1mm,out=90,in=0] ($(l5)!0.67!(r5)$) to ($(l5)!0.67!(l4)$);
\draw[draw=frenchblue, draw opacity=0.5, line width=1mm,out=180,in=-90] ($(r6)!0.67!(r5)$) to ($(l5)!0.67!(r5)$);
\draw[draw=frenchblue, draw opacity=0.5, line width=1mm,out=180,in=-90] ($(r6)!0.33!(r5)$) to ($(l5)!0.33!(r5)$);
\draw[draw=frenchblue, draw opacity=0.5, line width=1mm,out=90,in=180] ($(l7)!0.50!(r7)$) to ($(r7)!0.50!(r6)$);
\draw[draw=frenchblue, draw opacity=0.5, line width=1mm,out=180,in=-90] ($(r8)!0.50!(r7)$) to ($(l7)!0.50!(r7)$);
\draw[draw=frenchblue, draw opacity=0.5, line width=1mm,out=90,in=180] ($(l9)!0.67!(r9)$) to ($(r9)!0.33!(r8)$);
\draw[draw=frenchblue, draw opacity=0.5, line width=1mm,out=90,in=180] ($(l9)!0.33!(r9)$) to ($(r9)!0.67!(r8)$);
\draw[draw=frenchblue, draw opacity=0.5, line width=1mm,out=0,in=-90] ($(l10)!0.50!(l9)$) to ($(l9)!0.33!(r9)$);
\draw[draw=frenchblue, draw opacity=0.5, line width=1mm,out=180,in=-90] ($(r10)!0.50!(r9)$) to ($(l9)!0.67!(r9)$);
\draw[draw=frenchblue, draw opacity=0.5, line width=1mm,out=90,in=0] ($(l11)!0.25!(r11)$) to ($(l11)!0.50!(l10)$);
\draw[draw=frenchblue, draw opacity=0.5, line width=1mm,out=90,in=180] ($(l11)!0.75!(r11)$) to ($(r11)!0.33!(r10)$);
\draw[draw=frenchblue, draw opacity=0.5, line width=1mm,out=90,in=180] ($(l11)!0.50!(r11)$) to ($(r11)!0.67!(r10)$);
\draw[draw=frenchblue, draw opacity=0.5, line width=1mm,out=0,in=-90] ($(l12)!0.67!(l11)$) to ($(l11)!0.25!(r11)$);
\draw[draw=frenchblue, draw opacity=0.5, line width=1mm,out=0,in=-90] ($(l12)!0.33!(l11)$) to ($(l11)!0.50!(r11)$);
\draw[draw=frenchblue, draw opacity=0.5, line width=1mm,out=90,in=-90] ($(l12)!0.50!(r12)$) to ($(l11)!0.75!(r11)$);
\draw[draw=frenchblue, draw opacity=0.5, line width=1mm,out=90,in=0] ($(l13)!0.25!(r13)$) to ($(l13)!0.33!(l12)$);
\draw[draw=frenchblue, draw opacity=0.5, line width=1mm,out=90,in=0] ($(l13)!0.50!(r13)$) to ($(l13)!0.67!(l12)$);
\draw[draw=frenchblue, draw opacity=0.5, line width=1mm,out=90,in=-90] ($(l13)!0.75!(r13)$) to ($(l12)!0.50!(r12)$);
\draw[draw=frenchblue, draw opacity=0.5, line width=1mm,out=180,in=-90] ($(r14)!0.75!(r13)$) to ($(l13)!0.75!(r13)$);
\draw[draw=frenchblue, draw opacity=0.5, line width=1mm,out=180,in=-90] ($(r14)!0.50!(r13)$) to ($(l13)!0.50!(r13)$);
\draw[draw=frenchblue, draw opacity=0.5, line width=1mm,out=180,in=-90] ($(r14)!0.25!(r13)$) to ($(l13)!0.25!(r13)$);
}
 ]
|[alias=l0]| 0 \ar[r,equals] \& |[alias=r0]| 0 
\\ |[alias=l1]|0\ar[u,equals]\ar[r] \& |[alias=r1]|2\ar[u,<-]
\\ |[alias=l2]|2\ar[u,<-]\ar[r,equals] \& |[alias=r2]|2\ar[u,equals]
\\ |[alias=l3]|1\ar[u]\ar[r] \& |[alias=r3]|2\ar[u,equals]
\\ |[alias=l4]|2\ar[u,<-]\ar[r,equals] \& |[alias=r4]|2\ar[u,equals]
\\ |[alias=l5]|0\ar[u]\ar[r] \& |[alias=r5]|2\ar[u,equals]
\\ |[alias=l6]|0\ar[u,equals]\ar[r,equals] \& |[alias=r6]|0\ar[u]
\\ |[alias=l7]|0\ar[u,equals]\ar[r] \& |[alias=r7]|1\ar[u,<-]
\\ |[alias=l8]|0\ar[u,equals]\ar[r,equals] \& |[alias=r8]|0\ar[u]
\\ |[alias=l9]|0\ar[u,equals]\ar[r] \& |[alias=r9]|2\ar[u,<-]
\\ |[alias=l10]|1\ar[u,<-]\ar[r,equals] \& |[alias=r10]|1\ar[u]
\\ |[alias=l11]|0\ar[u]\ar[r] \& |[alias=r11]|3\ar[u,<-]
\\ |[alias=l12]|2\ar[u,<-]\ar[r] \& |[alias=r12]|3\ar[u,equals]
\\ |[alias=l13]|0\ar[u]\ar[r] \& |[alias=r13]|3\ar[u,equals]
\\ |[alias=l14]|0\ar[u,equals]\ar[r,equals] \& |[alias=r14]|0\ar[u]
\arrow[gray,phantom,from=1-1,to=2-2,"\push \Rsym"]
\arrow[gray,phantom,from=2-1,to=3-2,"\push \Lsym"]
\arrow[gray,phantom,from=3-1,to=4-2,"\pull \Lsym"]
\arrow[gray,phantom,from=4-1,to=5-2,"\push \Lsym"]
\arrow[gray,phantom,from=5-1,to=6-2,"\pull \Lsym"]
\arrow[gray,phantom,from=6-1,to=7-2,"\pull \Rsym"]
\arrow[gray,phantom,from=7-1,to=8-2,"\push \Rsym"]
\arrow[gray,phantom,from=8-1,to=9-2,"\pull \Rsym"]
\arrow[gray,phantom,from=9-1,to=10-2,"\push \Rsym"]
\arrow[gray,phantom,from=10-1,to=11-2,"\push \Lsym\pull \Rsym"]
\arrow[gray,phantom,from=11-1,to=12-2,"\pull \Lsym \push \Rsym"]
\arrow[gray,phantom,from=12-1,to=13-2,"\push \Lsym"]
\arrow[gray,phantom,from=13-1,to=14-2,"\pull \Lsym"]
\arrow[gray,phantom,from=14-1,to=15-2,"\pull \Rsym"]
\end{tikzcd}}
\resizebox{!}{7cm}{\begin{tikzcd}[ampersand replacement=\&,row sep=1cm,execute at end picture = { 
\draw[draw=lightgray, draw opacity=0.5, line width=1mm,out=90,in=180] ($(l1)!0.67!(r1)$) to ($(r1)!0.33!(r0)$);
\draw[draw=lightgray, draw opacity=0.5, line width=1mm,out=90,in=180] ($(l1)!0.33!(r1)$) to ($(r1)!0.67!(r0)$);
\draw[draw=lightgray, draw opacity=0.5, line width=1mm,out=180,in=-90] ($(r2)!0.67!(r1)$) to ($(l1)!0.67!(r1)$);
\draw[draw=lightgray, draw opacity=0.5, line width=1mm,out=180,in=-90] ($(r2)!0.33!(r1)$) to ($(l1)!0.33!(r1)$);
\draw[draw=lightgray, draw opacity=0.5, line width=1mm,out=90,in=180] ($(l3)!0.50!(r3)$) to ($(r3)!0.50!(r2)$);
\draw[draw=lightgray, draw opacity=0.5, line width=1mm,out=180,in=-90] ($(r4)!0.50!(r3)$) to ($(l3)!0.50!(r3)$);
\draw[draw=lightgray, draw opacity=0.5, line width=1mm,out=90,in=180] ($(l5)!0.67!(r5)$) to ($(r5)!0.33!(r4)$);
\draw[draw=lightgray, draw opacity=0.5, line width=1mm,out=90,in=180] ($(l5)!0.33!(r5)$) to ($(r5)!0.67!(r4)$);
\draw[draw=lightgray, draw opacity=0.5, line width=1mm,out=90,in=-90] ($(l6)!0.50!(r6)$) to ($(l5)!0.33!(r5)$);
\draw[draw=lightgray, draw opacity=0.5, line width=1mm,out=180,in=-90] ($(r6)!0.50!(r5)$) to ($(l5)!0.67!(r5)$);
\draw[draw=lightgray, draw opacity=0.5, line width=1mm,out=90,in=-90] ($(l7)!0.25!(r7)$) to ($(l6)!0.50!(r6)$);
\draw[draw=lightgray, draw opacity=0.5, line width=1mm,out=90,in=180] ($(l7)!0.75!(r7)$) to ($(r7)!0.33!(r6)$);
\draw[draw=lightgray, draw opacity=0.5, line width=1mm,out=90,in=180] ($(l7)!0.50!(r7)$) to ($(r7)!0.67!(r6)$);
\draw[draw=lightgray, draw opacity=0.5, line width=1mm,out=0,in=-90] ($(l8)!0.67!(l7)$) to ($(l7)!0.25!(r7)$);
\draw[draw=lightgray, draw opacity=0.5, line width=1mm,out=0,in=-90] ($(l8)!0.33!(l7)$) to ($(l7)!0.50!(r7)$);
\draw[draw=lightgray, draw opacity=0.5, line width=1mm,out=90,in=-90] ($(l8)!0.50!(r8)$) to ($(l7)!0.75!(r7)$);
\draw[draw=lightgray, draw opacity=0.5, line width=1mm,out=90,in=0] ($(l9)!0.33!(r9)$) to ($(l9)!0.50!(l8)$);
\draw[draw=lightgray, draw opacity=0.5, line width=1mm,out=90,in=-90] ($(l9)!0.67!(r9)$) to ($(l8)!0.50!(r8)$);
\draw[draw=lightgray, draw opacity=0.5, line width=1mm,out=0,in=-90] ($(l10)!0.50!(l9)$) to ($(l9)!0.33!(r9)$);
\draw[draw=lightgray, draw opacity=0.5, line width=1mm,out=90,in=-90] ($(l10)!0.50!(r10)$) to ($(l9)!0.67!(r9)$);
\draw[draw=lightgray, draw opacity=0.5, line width=1mm,out=90,in=0] ($(l11)!0.25!(r11)$) to ($(l11)!0.33!(l10)$);
\draw[draw=lightgray, draw opacity=0.5, line width=1mm,out=90,in=0] ($(l11)!0.50!(r11)$) to ($(l11)!0.67!(l10)$);
\draw[draw=lightgray, draw opacity=0.5, line width=1mm,out=90,in=-90] ($(l11)!0.75!(r11)$) to ($(l10)!0.50!(r10)$);
\draw[draw=lightgray, draw opacity=0.5, line width=1mm,out=0,in=-90] ($(l12)!0.50!(l11)$) to ($(l11)!0.25!(r11)$);
\draw[draw=lightgray, draw opacity=0.5, line width=1mm,out=90,in=-90] ($(l12)!0.33!(r12)$) to ($(l11)!0.50!(r11)$);
\draw[draw=lightgray, draw opacity=0.5, line width=1mm,out=90,in=-90] ($(l12)!0.67!(r12)$) to ($(l11)!0.75!(r11)$);
\draw[draw=lightgray, draw opacity=0.5, line width=1mm,out=90,in=0] ($(l13)!0.25!(r13)$) to ($(l13)!0.50!(l12)$);
\draw[draw=lightgray, draw opacity=0.5, line width=1mm,out=90,in=-90] ($(l13)!0.50!(r13)$) to ($(l12)!0.33!(r12)$);
\draw[draw=lightgray, draw opacity=0.5, line width=1mm,out=90,in=-90] ($(l13)!0.75!(r13)$) to ($(l12)!0.67!(r12)$);
\draw[draw=lightgray, draw opacity=0.5, line width=1mm,out=0,in=-90] ($(l14)!0.67!(l13)$) to ($(l13)!0.25!(r13)$);
\draw[draw=lightgray, draw opacity=0.5, line width=1mm,out=0,in=-90] ($(l14)!0.33!(l13)$) to ($(l13)!0.50!(r13)$);
\draw[draw=lightgray, draw opacity=0.5, line width=1mm,out=90,in=-90] ($(l14)!0.50!(r14)$) to ($(l13)!0.75!(r13)$);
\draw[draw=lightgray, draw opacity=0.5, line width=1mm,out=90,in=0] ($(l15)!0.25!(r15)$) to ($(l15)!0.33!(l14)$);
\draw[draw=lightgray, draw opacity=0.5, line width=1mm,out=90,in=0] ($(l15)!0.50!(r15)$) to ($(l15)!0.67!(l14)$);
\draw[draw=lightgray, draw opacity=0.5, line width=1mm,out=90,in=-90] ($(l15)!0.75!(r15)$) to ($(l14)!0.50!(r14)$);
\draw[draw=lightgray, draw opacity=0.5, line width=1mm,out=180,in=-90] ($(r16)!0.75!(r15)$) to ($(l15)!0.75!(r15)$);
\draw[draw=lightgray, draw opacity=0.5, line width=1mm,out=180,in=-90] ($(r16)!0.50!(r15)$) to ($(l15)!0.50!(r15)$);
\draw[draw=lightgray, draw opacity=0.5, line width=1mm,out=180,in=-90] ($(r16)!0.25!(r15)$) to ($(l15)!0.25!(r15)$);
}
 ]
|[alias=l0]| 0 \ar[r,equals] \& |[alias=r0]| 0 
\\ |[alias=l1]|0\ar[u,equals]\ar[r] \& |[alias=r1]|2\ar[u,<-]
\\ |[alias=l2]|0\ar[u,equals]\ar[r,equals] \& |[alias=r2]|0\ar[u]
\\ |[alias=l3]|0\ar[u,equals]\ar[r] \& |[alias=r3]|1\ar[u,<-]
\\ |[alias=l4]|0\ar[u,equals]\ar[r,equals] \& |[alias=r4]|0\ar[u]
\\ |[alias=l5]|0\ar[u,equals]\ar[r] \& |[alias=r5]|2\ar[u,<-]
\\ |[alias=l6]|0\ar[u,equals]\ar[r] \& |[alias=r6]|1\ar[u]
\\ |[alias=l7]|0\ar[u,equals]\ar[r] \& |[alias=r7]|3\ar[u,<-]
\\ |[alias=l8]|2\ar[u,<-]\ar[r] \& |[alias=r8]|3\ar[u,equals]
\\ |[alias=l9]|1\ar[u]\ar[r] \& |[alias=r9]|3\ar[u,equals]
\\ |[alias=l10]|2\ar[u,<-]\ar[r] \& |[alias=r10]|3\ar[u,equals]
\\ |[alias=l11]|0\ar[u]\ar[r] \& |[alias=r11]|3\ar[u,equals]
\\ |[alias=l12]|1\ar[u,<-]\ar[r] \& |[alias=r12]|3\ar[u,equals]
\\ |[alias=l13]|0\ar[u]\ar[r] \& |[alias=r13]|3\ar[u,equals]
\\ |[alias=l14]|2\ar[u,<-]\ar[r] \& |[alias=r14]|3\ar[u,equals]
\\ |[alias=l15]|0\ar[u]\ar[r] \& |[alias=r15]|3\ar[u,equals]
\\ |[alias=l16]|0\ar[u,equals]\ar[r,equals] \& |[alias=r16]|0\ar[u]
\arrow[gray,phantom,from=1-1,to=2-2,"\push \Rsym"]
\arrow[gray,phantom,from=2-1,to=3-2,"\pull \Rsym"]
\arrow[gray,phantom,from=3-1,to=4-2,"\push \Rsym"]
\arrow[gray,phantom,from=4-1,to=5-2,"\pull \Rsym"]
\arrow[gray,phantom,from=5-1,to=6-2,"\push \Rsym"]
\arrow[gray,phantom,from=6-1,to=7-2,"\pull \Rsym"]
\arrow[gray,phantom,from=7-1,to=8-2,"\push \Rsym"]
\arrow[gray,phantom,from=8-1,to=9-2,"\push \Lsym"]
\arrow[gray,phantom,from=9-1,to=10-2,"\pull \Lsym"]
\arrow[gray,phantom,from=10-1,to=11-2,"\push \Lsym"]
\arrow[gray,phantom,from=11-1,to=12-2,"\pull \Lsym"]
\arrow[gray,phantom,from=12-1,to=13-2,"\push \Lsym"]
\arrow[gray,phantom,from=13-1,to=14-2,"\pull \Lsym"]
\arrow[gray,phantom,from=14-1,to=15-2,"\push \Lsym"]
\arrow[gray,phantom,from=15-1,to=16-2,"\pull \Lsym"]
\arrow[gray,phantom,from=16-1,to=17-2,"\pull \Rsym"]
\end{tikzcd}}
\resizebox{!}{7cm}{\begin{tikzcd}[ampersand replacement=\&,row sep=1cm,execute at end picture = { 
\draw[draw=lightgray, draw opacity=0.5, line width=1mm,out=90,in=180] ($(l1)!0.67!(r1)$) to ($(r1)!0.33!(r0)$);
\draw[draw=lightgray, draw opacity=0.5, line width=1mm,out=90,in=180] ($(l1)!0.33!(r1)$) to ($(r1)!0.67!(r0)$);
\draw[draw=lightgray, draw opacity=0.5, line width=1mm,out=180,in=-90] ($(r2)!0.67!(r1)$) to ($(l1)!0.67!(r1)$);
\draw[draw=lightgray, draw opacity=0.5, line width=1mm,out=180,in=-90] ($(r2)!0.33!(r1)$) to ($(l1)!0.33!(r1)$);
\draw[draw=lightgray, draw opacity=0.5, line width=1mm,out=90,in=180] ($(l3)!0.50!(r3)$) to ($(r3)!0.50!(r2)$);
\draw[draw=lightgray, draw opacity=0.5, line width=1mm,out=180,in=-90] ($(r4)!0.50!(r3)$) to ($(l3)!0.50!(r3)$);
\draw[draw=lightgray, draw opacity=0.5, line width=1mm,out=90,in=180] ($(l5)!0.67!(r5)$) to ($(r5)!0.33!(r4)$);
\draw[draw=lightgray, draw opacity=0.5, line width=1mm,out=90,in=180] ($(l5)!0.33!(r5)$) to ($(r5)!0.67!(r4)$);
\draw[draw=lightgray, draw opacity=0.5, line width=1mm,out=0,in=-90] ($(l6)!0.67!(l5)$) to ($(l5)!0.33!(r5)$);
\draw[draw=lightgray, draw opacity=0.5, line width=1mm,out=0,in=-90] ($(l6)!0.33!(l5)$) to ($(l5)!0.67!(r5)$);
\draw[draw=lightgray, draw opacity=0.5, line width=1mm,out=90,in=0] ($(l7)!0.50!(r7)$) to ($(l7)!0.50!(l6)$);
\draw[draw=lightgray, draw opacity=0.5, line width=1mm,out=180,in=-90] ($(r8)!0.50!(r7)$) to ($(l7)!0.50!(r7)$);
\draw[draw=lightgray, draw opacity=0.5, line width=1mm,out=90,in=180] ($(l9)!0.67!(r9)$) to ($(r9)!0.33!(r8)$);
\draw[draw=lightgray, draw opacity=0.5, line width=1mm,out=90,in=180] ($(l9)!0.33!(r9)$) to ($(r9)!0.67!(r8)$);
\draw[draw=lightgray, draw opacity=0.5, line width=1mm,out=0,in=-90] ($(l10)!0.50!(l9)$) to ($(l9)!0.33!(r9)$);
\draw[draw=lightgray, draw opacity=0.5, line width=1mm,out=90,in=-90] ($(l10)!0.50!(r10)$) to ($(l9)!0.67!(r9)$);
\draw[draw=lightgray, draw opacity=0.5, line width=1mm,out=90,in=0] ($(l11)!0.25!(r11)$) to ($(l11)!0.33!(l10)$);
\draw[draw=lightgray, draw opacity=0.5, line width=1mm,out=90,in=0] ($(l11)!0.50!(r11)$) to ($(l11)!0.67!(l10)$);
\draw[draw=lightgray, draw opacity=0.5, line width=1mm,out=90,in=-90] ($(l11)!0.75!(r11)$) to ($(l10)!0.50!(r10)$);
\draw[draw=lightgray, draw opacity=0.5, line width=1mm,out=0,in=-90] ($(l12)!0.50!(l11)$) to ($(l11)!0.25!(r11)$);
\draw[draw=lightgray, draw opacity=0.5, line width=1mm,out=90,in=-90] ($(l12)!0.33!(r12)$) to ($(l11)!0.50!(r11)$);
\draw[draw=lightgray, draw opacity=0.5, line width=1mm,out=90,in=-90] ($(l12)!0.67!(r12)$) to ($(l11)!0.75!(r11)$);
\draw[draw=lightgray, draw opacity=0.5, line width=1mm,out=90,in=0] ($(l13)!0.25!(r13)$) to ($(l13)!0.50!(l12)$);
\draw[draw=lightgray, draw opacity=0.5, line width=1mm,out=90,in=-90] ($(l13)!0.50!(r13)$) to ($(l12)!0.33!(r12)$);
\draw[draw=lightgray, draw opacity=0.5, line width=1mm,out=90,in=-90] ($(l13)!0.75!(r13)$) to ($(l12)!0.67!(r12)$);
\draw[draw=lightgray, draw opacity=0.5, line width=1mm,out=0,in=-90] ($(l14)!0.67!(l13)$) to ($(l13)!0.25!(r13)$);
\draw[draw=lightgray, draw opacity=0.5, line width=1mm,out=0,in=-90] ($(l14)!0.33!(l13)$) to ($(l13)!0.50!(r13)$);
\draw[draw=lightgray, draw opacity=0.5, line width=1mm,out=90,in=-90] ($(l14)!0.50!(r14)$) to ($(l13)!0.75!(r13)$);
\draw[draw=lightgray, draw opacity=0.5, line width=1mm,out=90,in=0] ($(l15)!0.25!(r15)$) to ($(l15)!0.33!(l14)$);
\draw[draw=lightgray, draw opacity=0.5, line width=1mm,out=90,in=0] ($(l15)!0.50!(r15)$) to ($(l15)!0.67!(l14)$);
\draw[draw=lightgray, draw opacity=0.5, line width=1mm,out=90,in=-90] ($(l15)!0.75!(r15)$) to ($(l14)!0.50!(r14)$);
\draw[draw=lightgray, draw opacity=0.5, line width=1mm,out=180,in=-90] ($(r16)!0.75!(r15)$) to ($(l15)!0.75!(r15)$);
\draw[draw=lightgray, draw opacity=0.5, line width=1mm,out=180,in=-90] ($(r16)!0.50!(r15)$) to ($(l15)!0.50!(r15)$);
\draw[draw=lightgray, draw opacity=0.5, line width=1mm,out=180,in=-90] ($(r16)!0.25!(r15)$) to ($(l15)!0.25!(r15)$);
}
 ]
|[alias=l0]| 0 \ar[r,equals] \& |[alias=r0]| 0 
\\ |[alias=l1]|0\ar[u,equals]\ar[r] \& |[alias=r1]|2\ar[u,<-]
\\ |[alias=l2]|0\ar[u,equals]\ar[r,equals] \& |[alias=r2]|0\ar[u]
\\ |[alias=l3]|0\ar[u,equals]\ar[r] \& |[alias=r3]|1\ar[u,<-]
\\ |[alias=l4]|0\ar[u,equals]\ar[r,equals] \& |[alias=r4]|0\ar[u]
\\ |[alias=l5]|0\ar[u,equals]\ar[r] \& |[alias=r5]|2\ar[u,<-]
\\ |[alias=l6]|2\ar[u,<-]\ar[r,equals] \& |[alias=r6]|2\ar[u,equals]
\\ |[alias=l7]|1\ar[u]\ar[r] \& |[alias=r7]|2\ar[u,equals]
\\ |[alias=l8]|1\ar[u,equals]\ar[r,equals] \& |[alias=r8]|1\ar[u]
\\ |[alias=l9]|1\ar[u,equals]\ar[r] \& |[alias=r9]|3\ar[u,<-]
\\ |[alias=l10]|2\ar[u,<-]\ar[r] \& |[alias=r10]|3\ar[u,equals]
\\ |[alias=l11]|0\ar[u]\ar[r] \& |[alias=r11]|3\ar[u,equals]
\\ |[alias=l12]|1\ar[u,<-]\ar[r] \& |[alias=r12]|3\ar[u,equals]
\\ |[alias=l13]|0\ar[u]\ar[r] \& |[alias=r13]|3\ar[u,equals]
\\ |[alias=l14]|2\ar[u,<-]\ar[r] \& |[alias=r14]|3\ar[u,equals]
\\ |[alias=l15]|0\ar[u]\ar[r] \& |[alias=r15]|3\ar[u,equals]
\\ |[alias=l16]|0\ar[u,equals]\ar[r,equals] \& |[alias=r16]|0\ar[u]
\arrow[gray,phantom,from=1-1,to=2-2,"\push \Rsym"]
\arrow[gray,phantom,from=2-1,to=3-2,"\pull \Rsym"]
\arrow[gray,phantom,from=3-1,to=4-2,"\push \Rsym"]
\arrow[gray,phantom,from=4-1,to=5-2,"\pull \Rsym"]
\arrow[gray,phantom,from=5-1,to=6-2,"\push \Rsym"]
\arrow[gray,phantom,from=6-1,to=7-2,"\push \Lsym"]
\arrow[gray,phantom,from=7-1,to=8-2,"\pull \Lsym"]
\arrow[gray,phantom,from=8-1,to=9-2,"\pull \Rsym"]
\arrow[gray,phantom,from=9-1,to=10-2,"\push \Rsym"]
\arrow[gray,phantom,from=10-1,to=11-2,"\push \Lsym"]
\arrow[gray,phantom,from=11-1,to=12-2,"\pull \Lsym"]
\arrow[gray,phantom,from=12-1,to=13-2,"\push \Lsym"]
\arrow[gray,phantom,from=13-1,to=14-2,"\pull \Lsym"]
\arrow[gray,phantom,from=14-1,to=15-2,"\push \Lsym"]
\arrow[gray,phantom,from=15-1,to=16-2,"\pull \Lsym"]
\arrow[gray,phantom,from=16-1,to=17-2,"\pull \Rsym"]
\end{tikzcd}}
\resizebox{!}{7cm}{\begin{tikzcd}[ampersand replacement=\&,row sep=1cm,execute at end picture = { 
\draw[draw=lightgray, draw opacity=0.5, line width=1mm,out=90,in=180] ($(l1)!0.67!(r1)$) to ($(r1)!0.33!(r0)$);
\draw[draw=lightgray, draw opacity=0.5, line width=1mm,out=90,in=180] ($(l1)!0.33!(r1)$) to ($(r1)!0.67!(r0)$);
\draw[draw=lightgray, draw opacity=0.5, line width=1mm,out=0,in=-90] ($(l2)!0.67!(l1)$) to ($(l1)!0.33!(r1)$);
\draw[draw=lightgray, draw opacity=0.5, line width=1mm,out=0,in=-90] ($(l2)!0.33!(l1)$) to ($(l1)!0.67!(r1)$);
\draw[draw=lightgray, draw opacity=0.5, line width=1mm,out=90,in=0] ($(l3)!0.50!(r3)$) to ($(l3)!0.50!(l2)$);
\draw[draw=lightgray, draw opacity=0.5, line width=1mm,out=0,in=-90] ($(l4)!0.50!(l3)$) to ($(l3)!0.50!(r3)$);
\draw[draw=lightgray, draw opacity=0.5, line width=1mm,out=90,in=0] ($(l5)!0.33!(r5)$) to ($(l5)!0.33!(l4)$);
\draw[draw=lightgray, draw opacity=0.5, line width=1mm,out=90,in=0] ($(l5)!0.67!(r5)$) to ($(l5)!0.67!(l4)$);
\draw[draw=lightgray, draw opacity=0.5, line width=1mm,out=180,in=-90] ($(r6)!0.67!(r5)$) to ($(l5)!0.67!(r5)$);
\draw[draw=lightgray, draw opacity=0.5, line width=1mm,out=180,in=-90] ($(r6)!0.33!(r5)$) to ($(l5)!0.33!(r5)$);
\draw[draw=lightgray, draw opacity=0.5, line width=1mm,out=90,in=180] ($(l7)!0.50!(r7)$) to ($(r7)!0.50!(r6)$);
\draw[draw=lightgray, draw opacity=0.5, line width=1mm,out=0,in=-90] ($(l8)!0.50!(l7)$) to ($(l7)!0.50!(r7)$);
\draw[draw=lightgray, draw opacity=0.5, line width=1mm,out=90,in=0] ($(l9)!0.50!(r9)$) to ($(l9)!0.50!(l8)$);
\draw[draw=lightgray, draw opacity=0.5, line width=1mm,out=180,in=-90] ($(r10)!0.50!(r9)$) to ($(l9)!0.50!(r9)$);
\draw[draw=lightgray, draw opacity=0.5, line width=1mm,out=90,in=180] ($(l11)!0.67!(r11)$) to ($(r11)!0.33!(r10)$);
\draw[draw=lightgray, draw opacity=0.5, line width=1mm,out=90,in=180] ($(l11)!0.33!(r11)$) to ($(r11)!0.67!(r10)$);
\draw[draw=lightgray, draw opacity=0.5, line width=1mm,out=90,in=-90] ($(l12)!0.50!(r12)$) to ($(l11)!0.33!(r11)$);
\draw[draw=lightgray, draw opacity=0.5, line width=1mm,out=180,in=-90] ($(r12)!0.50!(r11)$) to ($(l11)!0.67!(r11)$);
\draw[draw=lightgray, draw opacity=0.5, line width=1mm,out=90,in=-90] ($(l13)!0.25!(r13)$) to ($(l12)!0.50!(r12)$);
\draw[draw=lightgray, draw opacity=0.5, line width=1mm,out=90,in=180] ($(l13)!0.75!(r13)$) to ($(r13)!0.33!(r12)$);
\draw[draw=lightgray, draw opacity=0.5, line width=1mm,out=90,in=180] ($(l13)!0.50!(r13)$) to ($(r13)!0.67!(r12)$);
\draw[draw=lightgray, draw opacity=0.5, line width=1mm,out=0,in=-90] ($(l14)!0.67!(l13)$) to ($(l13)!0.25!(r13)$);
\draw[draw=lightgray, draw opacity=0.5, line width=1mm,out=0,in=-90] ($(l14)!0.33!(l13)$) to ($(l13)!0.50!(r13)$);
\draw[draw=lightgray, draw opacity=0.5, line width=1mm,out=90,in=-90] ($(l14)!0.50!(r14)$) to ($(l13)!0.75!(r13)$);
\draw[draw=lightgray, draw opacity=0.5, line width=1mm,out=90,in=0] ($(l15)!0.25!(r15)$) to ($(l15)!0.33!(l14)$);
\draw[draw=lightgray, draw opacity=0.5, line width=1mm,out=90,in=0] ($(l15)!0.50!(r15)$) to ($(l15)!0.67!(l14)$);
\draw[draw=lightgray, draw opacity=0.5, line width=1mm,out=90,in=-90] ($(l15)!0.75!(r15)$) to ($(l14)!0.50!(r14)$);
\draw[draw=lightgray, draw opacity=0.5, line width=1mm,out=180,in=-90] ($(r16)!0.75!(r15)$) to ($(l15)!0.75!(r15)$);
\draw[draw=lightgray, draw opacity=0.5, line width=1mm,out=180,in=-90] ($(r16)!0.50!(r15)$) to ($(l15)!0.50!(r15)$);
\draw[draw=lightgray, draw opacity=0.5, line width=1mm,out=180,in=-90] ($(r16)!0.25!(r15)$) to ($(l15)!0.25!(r15)$);
}
 ]
|[alias=l0]| 0 \ar[r,equals] \& |[alias=r0]| 0 
\\ |[alias=l1]|0\ar[u,equals]\ar[r] \& |[alias=r1]|2\ar[u,<-]
\\ |[alias=l2]|2\ar[u,<-]\ar[r,equals] \& |[alias=r2]|2\ar[u,equals]
\\ |[alias=l3]|1\ar[u]\ar[r] \& |[alias=r3]|2\ar[u,equals]
\\ |[alias=l4]|2\ar[u,<-]\ar[r,equals] \& |[alias=r4]|2\ar[u,equals]
\\ |[alias=l5]|0\ar[u]\ar[r] \& |[alias=r5]|2\ar[u,equals]
\\ |[alias=l6]|0\ar[u,equals]\ar[r,equals] \& |[alias=r6]|0\ar[u]
\\ |[alias=l7]|0\ar[u,equals]\ar[r] \& |[alias=r7]|1\ar[u,<-]
\\ |[alias=l8]|1\ar[u,<-]\ar[r,equals] \& |[alias=r8]|1\ar[u,equals]
\\ |[alias=l9]|0\ar[u]\ar[r] \& |[alias=r9]|1\ar[u,equals]
\\ |[alias=l10]|0\ar[u,equals]\ar[r,equals] \& |[alias=r10]|0\ar[u]
\\ |[alias=l11]|0\ar[u,equals]\ar[r] \& |[alias=r11]|2\ar[u,<-]
\\ |[alias=l12]|0\ar[u,equals]\ar[r] \& |[alias=r12]|1\ar[u]
\\ |[alias=l13]|0\ar[u,equals]\ar[r] \& |[alias=r13]|3\ar[u,<-]
\\ |[alias=l14]|2\ar[u,<-]\ar[r] \& |[alias=r14]|3\ar[u,equals]
\\ |[alias=l15]|0\ar[u]\ar[r] \& |[alias=r15]|3\ar[u,equals]
\\ |[alias=l16]|0\ar[u,equals]\ar[r,equals] \& |[alias=r16]|0\ar[u]
\arrow[gray,phantom,from=1-1,to=2-2,"\push \Rsym"]
\arrow[gray,phantom,from=2-1,to=3-2,"\push \Lsym"]
\arrow[gray,phantom,from=3-1,to=4-2,"\pull \Lsym"]
\arrow[gray,phantom,from=4-1,to=5-2,"\push \Lsym"]
\arrow[gray,phantom,from=5-1,to=6-2,"\pull \Lsym"]
\arrow[gray,phantom,from=6-1,to=7-2,"\pull \Rsym"]
\arrow[gray,phantom,from=7-1,to=8-2,"\push \Rsym"]
\arrow[gray,phantom,from=8-1,to=9-2,"\push \Lsym"]
\arrow[gray,phantom,from=9-1,to=10-2,"\pull \Lsym"]
\arrow[gray,phantom,from=10-1,to=11-2,"\pull \Rsym"]
\arrow[gray,phantom,from=11-1,to=12-2,"\push \Rsym"]
\arrow[gray,phantom,from=12-1,to=13-2,"\pull \Rsym"]
\arrow[gray,phantom,from=13-1,to=14-2,"\push \Rsym"]
\arrow[gray,phantom,from=14-1,to=15-2,"\push \Lsym"]
\arrow[gray,phantom,from=15-1,to=16-2,"\pull \Lsym"]
\arrow[gray,phantom,from=16-1,to=17-2,"\pull \Rsym"]
\end{tikzcd}}
\resizebox{!}{7cm}{\begin{tikzcd}[ampersand replacement=\&,row sep=1cm,execute at end picture = { 
\draw[draw=lightgray, draw opacity=0.5, line width=1mm,out=90,in=180] ($(l1)!0.67!(r1)$) to ($(r1)!0.33!(r0)$);
\draw[draw=lightgray, draw opacity=0.5, line width=1mm,out=90,in=180] ($(l1)!0.33!(r1)$) to ($(r1)!0.67!(r0)$);
\draw[draw=lightgray, draw opacity=0.5, line width=1mm,out=0,in=-90] ($(l2)!0.67!(l1)$) to ($(l1)!0.33!(r1)$);
\draw[draw=lightgray, draw opacity=0.5, line width=1mm,out=0,in=-90] ($(l2)!0.33!(l1)$) to ($(l1)!0.67!(r1)$);
\draw[draw=lightgray, draw opacity=0.5, line width=1mm,out=90,in=0] ($(l3)!0.50!(r3)$) to ($(l3)!0.50!(l2)$);
\draw[draw=lightgray, draw opacity=0.5, line width=1mm,out=0,in=-90] ($(l4)!0.50!(l3)$) to ($(l3)!0.50!(r3)$);
\draw[draw=lightgray, draw opacity=0.5, line width=1mm,out=90,in=0] ($(l5)!0.33!(r5)$) to ($(l5)!0.33!(l4)$);
\draw[draw=lightgray, draw opacity=0.5, line width=1mm,out=90,in=0] ($(l5)!0.67!(r5)$) to ($(l5)!0.67!(l4)$);
\draw[draw=lightgray, draw opacity=0.5, line width=1mm,out=0,in=-90] ($(l6)!0.50!(l5)$) to ($(l5)!0.33!(r5)$);
\draw[draw=lightgray, draw opacity=0.5, line width=1mm,out=90,in=-90] ($(l6)!0.50!(r6)$) to ($(l5)!0.67!(r5)$);
\draw[draw=lightgray, draw opacity=0.5, line width=1mm,out=90,in=0] ($(l7)!0.33!(r7)$) to ($(l7)!0.50!(l6)$);
\draw[draw=lightgray, draw opacity=0.5, line width=1mm,out=90,in=-90] ($(l7)!0.67!(r7)$) to ($(l6)!0.50!(r6)$);
\draw[draw=lightgray, draw opacity=0.5, line width=1mm,out=180,in=-90] ($(r8)!0.67!(r7)$) to ($(l7)!0.67!(r7)$);
\draw[draw=lightgray, draw opacity=0.5, line width=1mm,out=180,in=-90] ($(r8)!0.33!(r7)$) to ($(l7)!0.33!(r7)$);
\draw[draw=lightgray, draw opacity=0.5, line width=1mm,out=90,in=180] ($(l9)!0.50!(r9)$) to ($(r9)!0.50!(r8)$);
\draw[draw=lightgray, draw opacity=0.5, line width=1mm,out=180,in=-90] ($(r10)!0.50!(r9)$) to ($(l9)!0.50!(r9)$);
\draw[draw=lightgray, draw opacity=0.5, line width=1mm,out=90,in=180] ($(l11)!0.67!(r11)$) to ($(r11)!0.33!(r10)$);
\draw[draw=lightgray, draw opacity=0.5, line width=1mm,out=90,in=180] ($(l11)!0.33!(r11)$) to ($(r11)!0.67!(r10)$);
\draw[draw=lightgray, draw opacity=0.5, line width=1mm,out=90,in=-90] ($(l12)!0.50!(r12)$) to ($(l11)!0.33!(r11)$);
\draw[draw=lightgray, draw opacity=0.5, line width=1mm,out=180,in=-90] ($(r12)!0.50!(r11)$) to ($(l11)!0.67!(r11)$);
\draw[draw=lightgray, draw opacity=0.5, line width=1mm,out=90,in=-90] ($(l13)!0.25!(r13)$) to ($(l12)!0.50!(r12)$);
\draw[draw=lightgray, draw opacity=0.5, line width=1mm,out=90,in=180] ($(l13)!0.75!(r13)$) to ($(r13)!0.33!(r12)$);
\draw[draw=lightgray, draw opacity=0.5, line width=1mm,out=90,in=180] ($(l13)!0.50!(r13)$) to ($(r13)!0.67!(r12)$);
\draw[draw=lightgray, draw opacity=0.5, line width=1mm,out=0,in=-90] ($(l14)!0.67!(l13)$) to ($(l13)!0.25!(r13)$);
\draw[draw=lightgray, draw opacity=0.5, line width=1mm,out=0,in=-90] ($(l14)!0.33!(l13)$) to ($(l13)!0.50!(r13)$);
\draw[draw=lightgray, draw opacity=0.5, line width=1mm,out=90,in=-90] ($(l14)!0.50!(r14)$) to ($(l13)!0.75!(r13)$);
\draw[draw=lightgray, draw opacity=0.5, line width=1mm,out=90,in=0] ($(l15)!0.25!(r15)$) to ($(l15)!0.33!(l14)$);
\draw[draw=lightgray, draw opacity=0.5, line width=1mm,out=90,in=0] ($(l15)!0.50!(r15)$) to ($(l15)!0.67!(l14)$);
\draw[draw=lightgray, draw opacity=0.5, line width=1mm,out=90,in=-90] ($(l15)!0.75!(r15)$) to ($(l14)!0.50!(r14)$);
\draw[draw=lightgray, draw opacity=0.5, line width=1mm,out=180,in=-90] ($(r16)!0.75!(r15)$) to ($(l15)!0.75!(r15)$);
\draw[draw=lightgray, draw opacity=0.5, line width=1mm,out=180,in=-90] ($(r16)!0.50!(r15)$) to ($(l15)!0.50!(r15)$);
\draw[draw=lightgray, draw opacity=0.5, line width=1mm,out=180,in=-90] ($(r16)!0.25!(r15)$) to ($(l15)!0.25!(r15)$);
}
 ]
|[alias=l0]| 0 \ar[r,equals] \& |[alias=r0]| 0 
\\ |[alias=l1]|0\ar[u,equals]\ar[r] \& |[alias=r1]|2\ar[u,<-]
\\ |[alias=l2]|2\ar[u,<-]\ar[r,equals] \& |[alias=r2]|2\ar[u,equals]
\\ |[alias=l3]|1\ar[u]\ar[r] \& |[alias=r3]|2\ar[u,equals]
\\ |[alias=l4]|2\ar[u,<-]\ar[r,equals] \& |[alias=r4]|2\ar[u,equals]
\\ |[alias=l5]|0\ar[u]\ar[r] \& |[alias=r5]|2\ar[u,equals]
\\ |[alias=l6]|1\ar[u,<-]\ar[r] \& |[alias=r6]|2\ar[u,equals]
\\ |[alias=l7]|0\ar[u]\ar[r] \& |[alias=r7]|2\ar[u,equals]
\\ |[alias=l8]|0\ar[u,equals]\ar[r,equals] \& |[alias=r8]|0\ar[u]
\\ |[alias=l9]|0\ar[u,equals]\ar[r] \& |[alias=r9]|1\ar[u,<-]
\\ |[alias=l10]|0\ar[u,equals]\ar[r,equals] \& |[alias=r10]|0\ar[u]
\\ |[alias=l11]|0\ar[u,equals]\ar[r] \& |[alias=r11]|2\ar[u,<-]
\\ |[alias=l12]|0\ar[u,equals]\ar[r] \& |[alias=r12]|1\ar[u]
\\ |[alias=l13]|0\ar[u,equals]\ar[r] \& |[alias=r13]|3\ar[u,<-]
\\ |[alias=l14]|2\ar[u,<-]\ar[r] \& |[alias=r14]|3\ar[u,equals]
\\ |[alias=l15]|0\ar[u]\ar[r] \& |[alias=r15]|3\ar[u,equals]
\\ |[alias=l16]|0\ar[u,equals]\ar[r,equals] \& |[alias=r16]|0\ar[u]
\arrow[gray,phantom,from=1-1,to=2-2,"\push \Rsym"]
\arrow[gray,phantom,from=2-1,to=3-2,"\push \Lsym"]
\arrow[gray,phantom,from=3-1,to=4-2,"\pull \Lsym"]
\arrow[gray,phantom,from=4-1,to=5-2,"\push \Lsym"]
\arrow[gray,phantom,from=5-1,to=6-2,"\pull \Lsym"]
\arrow[gray,phantom,from=6-1,to=7-2,"\push \Lsym"]
\arrow[gray,phantom,from=7-1,to=8-2,"\pull \Lsym"]
\arrow[gray,phantom,from=8-1,to=9-2,"\pull \Rsym"]
\arrow[gray,phantom,from=9-1,to=10-2,"\push \Rsym"]
\arrow[gray,phantom,from=10-1,to=11-2,"\pull \Rsym"]
\arrow[gray,phantom,from=11-1,to=12-2,"\push \Rsym"]
\arrow[gray,phantom,from=12-1,to=13-2,"\pull \Rsym"]
\arrow[gray,phantom,from=13-1,to=14-2,"\push \Rsym"]
\arrow[gray,phantom,from=14-1,to=15-2,"\push \Lsym"]
\arrow[gray,phantom,from=15-1,to=16-2,"\pull \Lsym"]
\arrow[gray,phantom,from=16-1,to=17-2,"\pull \Rsym"]
\end{tikzcd}}
\resizebox{!}{7cm}{\begin{tikzcd}[ampersand replacement=\&,row sep=1cm,execute at end picture = { 
\draw[draw=persianred, draw opacity=0.5, line width=1mm,out=90,in=180] ($(l1)!0.67!(r1)$) to ($(r1)!0.33!(r0)$);
\draw[draw=persianred, draw opacity=0.5, line width=1mm,out=90,in=180] ($(l1)!0.33!(r1)$) to ($(r1)!0.67!(r0)$);
\draw[draw=persianred, draw opacity=0.5, line width=1mm,out=180,in=-90] ($(r2)!0.67!(r1)$) to ($(l1)!0.67!(r1)$);
\draw[draw=persianred, draw opacity=0.5, line width=1mm,out=180,in=-90] ($(r2)!0.33!(r1)$) to ($(l1)!0.33!(r1)$);
\draw[draw=persianred, draw opacity=0.5, line width=1mm,out=90,in=180] ($(l3)!0.50!(r3)$) to ($(r3)!0.50!(r2)$);
\draw[draw=persianred, draw opacity=0.5, line width=1mm,out=180,in=-90] ($(r4)!0.50!(r3)$) to ($(l3)!0.50!(r3)$);
\draw[draw=persianred, draw opacity=0.5, line width=1mm,out=90,in=180] ($(l5)!0.67!(r5)$) to ($(r5)!0.33!(r4)$);
\draw[draw=persianred, draw opacity=0.5, line width=1mm,out=90,in=180] ($(l5)!0.33!(r5)$) to ($(r5)!0.67!(r4)$);
\draw[draw=persianred, draw opacity=0.5, line width=1mm,out=0,in=-90] ($(l6)!0.67!(l5)$) to ($(l5)!0.33!(r5)$);
\draw[draw=persianred, draw opacity=0.5, line width=1mm,out=0,in=-90] ($(l6)!0.33!(l5)$) to ($(l5)!0.67!(r5)$);
\draw[draw=persianred, draw opacity=0.5, line width=1mm,out=90,in=0] ($(l7)!0.50!(r7)$) to ($(l7)!0.50!(l6)$);
\draw[draw=persianred, draw opacity=0.5, line width=1mm,out=0,in=-90] ($(l8)!0.50!(l7)$) to ($(l7)!0.50!(r7)$);
\draw[draw=persianred, draw opacity=0.5, line width=1mm,out=90,in=0] ($(l9)!0.33!(r9)$) to ($(l9)!0.33!(l8)$);
\draw[draw=persianred, draw opacity=0.5, line width=1mm,out=90,in=0] ($(l9)!0.67!(r9)$) to ($(l9)!0.67!(l8)$);
\draw[draw=persianred, draw opacity=0.5, line width=1mm,out=0,in=-90] ($(l10)!0.50!(l9)$) to ($(l9)!0.33!(r9)$);
\draw[draw=persianred, draw opacity=0.5, line width=1mm,out=90,in=-90] ($(l10)!0.50!(r10)$) to ($(l9)!0.67!(r9)$);
\draw[draw=persianred, draw opacity=0.5, line width=1mm,out=90,in=0] ($(l11)!0.33!(r11)$) to ($(l11)!0.50!(l10)$);
\draw[draw=persianred, draw opacity=0.5, line width=1mm,out=90,in=-90] ($(l11)!0.67!(r11)$) to ($(l10)!0.50!(r10)$);
\draw[draw=persianred, draw opacity=0.5, line width=1mm,out=0,in=-90] ($(l12)!0.67!(l11)$) to ($(l11)!0.33!(r11)$);
\draw[draw=persianred, draw opacity=0.5, line width=1mm,out=0,in=-90] ($(l12)!0.33!(l11)$) to ($(l11)!0.67!(r11)$);
\draw[draw=persianred, draw opacity=0.5, line width=1mm,out=90,in=0] ($(l13)!0.33!(r13)$) to ($(l13)!0.33!(l12)$);
\draw[draw=persianred, draw opacity=0.5, line width=1mm,out=90,in=0] ($(l13)!0.67!(r13)$) to ($(l13)!0.67!(l12)$);
\draw[draw=persianred, draw opacity=0.5, line width=1mm,out=90,in=-90] ($(l14)!0.50!(r14)$) to ($(l13)!0.33!(r13)$);
\draw[draw=persianred, draw opacity=0.5, line width=1mm,out=180,in=-90] ($(r14)!0.50!(r13)$) to ($(l13)!0.67!(r13)$);
\draw[draw=persianred, draw opacity=0.5, line width=1mm,out=90,in=-90] ($(l15)!0.25!(r15)$) to ($(l14)!0.50!(r14)$);
\draw[draw=persianred, draw opacity=0.5, line width=1mm,out=90,in=180] ($(l15)!0.75!(r15)$) to ($(r15)!0.33!(r14)$);
\draw[draw=persianred, draw opacity=0.5, line width=1mm,out=90,in=180] ($(l15)!0.50!(r15)$) to ($(r15)!0.67!(r14)$);
\draw[draw=persianred, draw opacity=0.5, line width=1mm,out=180,in=-90] ($(r16)!0.75!(r15)$) to ($(l15)!0.75!(r15)$);
\draw[draw=persianred, draw opacity=0.5, line width=1mm,out=180,in=-90] ($(r16)!0.50!(r15)$) to ($(l15)!0.50!(r15)$);
\draw[draw=persianred, draw opacity=0.5, line width=1mm,out=180,in=-90] ($(r16)!0.25!(r15)$) to ($(l15)!0.25!(r15)$);
}
 ]
|[alias=l0]| 0 \ar[r,equals] \& |[alias=r0]| 0 
\\ |[alias=l1]|0\ar[u,equals]\ar[r] \& |[alias=r1]|2\ar[u,<-]
\\ |[alias=l2]|0\ar[u,equals]\ar[r,equals] \& |[alias=r2]|0\ar[u]
\\ |[alias=l3]|0\ar[u,equals]\ar[r] \& |[alias=r3]|1\ar[u,<-]
\\ |[alias=l4]|0\ar[u,equals]\ar[r,equals] \& |[alias=r4]|0\ar[u]
\\ |[alias=l5]|0\ar[u,equals]\ar[r] \& |[alias=r5]|2\ar[u,<-]
\\ |[alias=l6]|2\ar[u,<-]\ar[r,equals] \& |[alias=r6]|2\ar[u,equals]
\\ |[alias=l7]|1\ar[u]\ar[r] \& |[alias=r7]|2\ar[u,equals]
\\ |[alias=l8]|2\ar[u,<-]\ar[r,equals] \& |[alias=r8]|2\ar[u,equals]
\\ |[alias=l9]|0\ar[u]\ar[r] \& |[alias=r9]|2\ar[u,equals]
\\ |[alias=l10]|1\ar[u,<-]\ar[r] \& |[alias=r10]|2\ar[u,equals]
\\ |[alias=l11]|0\ar[u]\ar[r] \& |[alias=r11]|2\ar[u,equals]
\\ |[alias=l12]|2\ar[u,<-]\ar[r,equals] \& |[alias=r12]|2\ar[u,equals]
\\ |[alias=l13]|0\ar[u]\ar[r] \& |[alias=r13]|2\ar[u,equals]
\\ |[alias=l14]|0\ar[u,equals]\ar[r] \& |[alias=r14]|1\ar[u]
\\ |[alias=l15]|0\ar[u,equals]\ar[r] \& |[alias=r15]|3\ar[u,<-]
\\ |[alias=l16]|0\ar[u,equals]\ar[r,equals] \& |[alias=r16]|0\ar[u]
\arrow[gray,phantom,from=1-1,to=2-2,"\push \Rsym"]
\arrow[gray,phantom,from=2-1,to=3-2,"\pull \Rsym"]
\arrow[gray,phantom,from=3-1,to=4-2,"\push \Rsym"]
\arrow[gray,phantom,from=4-1,to=5-2,"\pull \Rsym"]
\arrow[gray,phantom,from=5-1,to=6-2,"\push \Rsym"]
\arrow[gray,phantom,from=6-1,to=7-2,"\push \Lsym"]
\arrow[gray,phantom,from=7-1,to=8-2,"\pull \Lsym"]
\arrow[gray,phantom,from=8-1,to=9-2,"\push \Lsym"]
\arrow[gray,phantom,from=9-1,to=10-2,"\pull \Lsym"]
\arrow[gray,phantom,from=10-1,to=11-2,"\push \Lsym"]
\arrow[gray,phantom,from=11-1,to=12-2,"\pull \Lsym"]
\arrow[gray,phantom,from=12-1,to=13-2,"\push \Lsym"]
\arrow[gray,phantom,from=13-1,to=14-2,"\pull \Lsym"]
\arrow[gray,phantom,from=14-1,to=15-2,"\pull \Rsym"]
\arrow[gray,phantom,from=15-1,to=16-2,"\push \Rsym"]
\arrow[gray,phantom,from=16-1,to=17-2,"\pull \Rsym"]
\end{tikzcd}}
\resizebox{!}{7cm}{\begin{tikzcd}[ampersand replacement=\&,row sep=1cm,execute at end picture = { 
\draw[draw=lightgray, draw opacity=0.5, line width=1mm,out=90,in=180] ($(l1)!0.67!(r1)$) to ($(r1)!0.33!(r0)$);
\draw[draw=lightgray, draw opacity=0.5, line width=1mm,out=90,in=180] ($(l1)!0.33!(r1)$) to ($(r1)!0.67!(r0)$);
\draw[draw=lightgray, draw opacity=0.5, line width=1mm,out=0,in=-90] ($(l2)!0.67!(l1)$) to ($(l1)!0.33!(r1)$);
\draw[draw=lightgray, draw opacity=0.5, line width=1mm,out=0,in=-90] ($(l2)!0.33!(l1)$) to ($(l1)!0.67!(r1)$);
\draw[draw=lightgray, draw opacity=0.5, line width=1mm,out=90,in=0] ($(l3)!0.50!(r3)$) to ($(l3)!0.50!(l2)$);
\draw[draw=lightgray, draw opacity=0.5, line width=1mm,out=0,in=-90] ($(l4)!0.50!(l3)$) to ($(l3)!0.50!(r3)$);
\draw[draw=lightgray, draw opacity=0.5, line width=1mm,out=90,in=0] ($(l5)!0.33!(r5)$) to ($(l5)!0.33!(l4)$);
\draw[draw=lightgray, draw opacity=0.5, line width=1mm,out=90,in=0] ($(l5)!0.67!(r5)$) to ($(l5)!0.67!(l4)$);
\draw[draw=lightgray, draw opacity=0.5, line width=1mm,out=180,in=-90] ($(r6)!0.67!(r5)$) to ($(l5)!0.67!(r5)$);
\draw[draw=lightgray, draw opacity=0.5, line width=1mm,out=180,in=-90] ($(r6)!0.33!(r5)$) to ($(l5)!0.33!(r5)$);
\draw[draw=lightgray, draw opacity=0.5, line width=1mm,out=90,in=180] ($(l7)!0.50!(r7)$) to ($(r7)!0.50!(r6)$);
\draw[draw=lightgray, draw opacity=0.5, line width=1mm,out=180,in=-90] ($(r8)!0.50!(r7)$) to ($(l7)!0.50!(r7)$);
\draw[draw=lightgray, draw opacity=0.5, line width=1mm,out=90,in=180] ($(l9)!0.67!(r9)$) to ($(r9)!0.33!(r8)$);
\draw[draw=lightgray, draw opacity=0.5, line width=1mm,out=90,in=180] ($(l9)!0.33!(r9)$) to ($(r9)!0.67!(r8)$);
\draw[draw=lightgray, draw opacity=0.5, line width=1mm,out=0,in=-90] ($(l10)!0.50!(l9)$) to ($(l9)!0.33!(r9)$);
\draw[draw=lightgray, draw opacity=0.5, line width=1mm,out=90,in=-90] ($(l10)!0.50!(r10)$) to ($(l9)!0.67!(r9)$);
\draw[draw=lightgray, draw opacity=0.5, line width=1mm,out=90,in=0] ($(l11)!0.33!(r11)$) to ($(l11)!0.50!(l10)$);
\draw[draw=lightgray, draw opacity=0.5, line width=1mm,out=90,in=-90] ($(l11)!0.67!(r11)$) to ($(l10)!0.50!(r10)$);
\draw[draw=lightgray, draw opacity=0.5, line width=1mm,out=0,in=-90] ($(l12)!0.67!(l11)$) to ($(l11)!0.33!(r11)$);
\draw[draw=lightgray, draw opacity=0.5, line width=1mm,out=0,in=-90] ($(l12)!0.33!(l11)$) to ($(l11)!0.67!(r11)$);
\draw[draw=lightgray, draw opacity=0.5, line width=1mm,out=90,in=0] ($(l13)!0.33!(r13)$) to ($(l13)!0.33!(l12)$);
\draw[draw=lightgray, draw opacity=0.5, line width=1mm,out=90,in=0] ($(l13)!0.67!(r13)$) to ($(l13)!0.67!(l12)$);
\draw[draw=lightgray, draw opacity=0.5, line width=1mm,out=90,in=-90] ($(l14)!0.50!(r14)$) to ($(l13)!0.33!(r13)$);
\draw[draw=lightgray, draw opacity=0.5, line width=1mm,out=180,in=-90] ($(r14)!0.50!(r13)$) to ($(l13)!0.67!(r13)$);
\draw[draw=lightgray, draw opacity=0.5, line width=1mm,out=90,in=-90] ($(l15)!0.25!(r15)$) to ($(l14)!0.50!(r14)$);
\draw[draw=lightgray, draw opacity=0.5, line width=1mm,out=90,in=180] ($(l15)!0.75!(r15)$) to ($(r15)!0.33!(r14)$);
\draw[draw=lightgray, draw opacity=0.5, line width=1mm,out=90,in=180] ($(l15)!0.50!(r15)$) to ($(r15)!0.67!(r14)$);
\draw[draw=lightgray, draw opacity=0.5, line width=1mm,out=180,in=-90] ($(r16)!0.75!(r15)$) to ($(l15)!0.75!(r15)$);
\draw[draw=lightgray, draw opacity=0.5, line width=1mm,out=180,in=-90] ($(r16)!0.50!(r15)$) to ($(l15)!0.50!(r15)$);
\draw[draw=lightgray, draw opacity=0.5, line width=1mm,out=180,in=-90] ($(r16)!0.25!(r15)$) to ($(l15)!0.25!(r15)$);
}
 ]
|[alias=l0]| 0 \ar[r,equals] \& |[alias=r0]| 0 
\\ |[alias=l1]|0\ar[u,equals]\ar[r] \& |[alias=r1]|2\ar[u,<-]
\\ |[alias=l2]|2\ar[u,<-]\ar[r,equals] \& |[alias=r2]|2\ar[u,equals]
\\ |[alias=l3]|1\ar[u]\ar[r] \& |[alias=r3]|2\ar[u,equals]
\\ |[alias=l4]|2\ar[u,<-]\ar[r,equals] \& |[alias=r4]|2\ar[u,equals]
\\ |[alias=l5]|0\ar[u]\ar[r] \& |[alias=r5]|2\ar[u,equals]
\\ |[alias=l6]|0\ar[u,equals]\ar[r,equals] \& |[alias=r6]|0\ar[u]
\\ |[alias=l7]|0\ar[u,equals]\ar[r] \& |[alias=r7]|1\ar[u,<-]
\\ |[alias=l8]|0\ar[u,equals]\ar[r,equals] \& |[alias=r8]|0\ar[u]
\\ |[alias=l9]|0\ar[u,equals]\ar[r] \& |[alias=r9]|2\ar[u,<-]
\\ |[alias=l10]|1\ar[u,<-]\ar[r] \& |[alias=r10]|2\ar[u,equals]
\\ |[alias=l11]|0\ar[u]\ar[r] \& |[alias=r11]|2\ar[u,equals]
\\ |[alias=l12]|2\ar[u,<-]\ar[r,equals] \& |[alias=r12]|2\ar[u,equals]
\\ |[alias=l13]|0\ar[u]\ar[r] \& |[alias=r13]|2\ar[u,equals]
\\ |[alias=l14]|0\ar[u,equals]\ar[r] \& |[alias=r14]|1\ar[u]
\\ |[alias=l15]|0\ar[u,equals]\ar[r] \& |[alias=r15]|3\ar[u,<-]
\\ |[alias=l16]|0\ar[u,equals]\ar[r,equals] \& |[alias=r16]|0\ar[u]
\arrow[gray,phantom,from=1-1,to=2-2,"\push \Rsym"]
\arrow[gray,phantom,from=2-1,to=3-2,"\push \Lsym"]
\arrow[gray,phantom,from=3-1,to=4-2,"\pull \Lsym"]
\arrow[gray,phantom,from=4-1,to=5-2,"\push \Lsym"]
\arrow[gray,phantom,from=5-1,to=6-2,"\pull \Lsym"]
\arrow[gray,phantom,from=6-1,to=7-2,"\pull \Rsym"]
\arrow[gray,phantom,from=7-1,to=8-2,"\push \Rsym"]
\arrow[gray,phantom,from=8-1,to=9-2,"\pull \Rsym"]
\arrow[gray,phantom,from=9-1,to=10-2,"\push \Rsym"]
\arrow[gray,phantom,from=10-1,to=11-2,"\push \Lsym"]
\arrow[gray,phantom,from=11-1,to=12-2,"\pull \Lsym"]
\arrow[gray,phantom,from=12-1,to=13-2,"\push \Lsym"]
\arrow[gray,phantom,from=13-1,to=14-2,"\pull \Lsym"]
\arrow[gray,phantom,from=14-1,to=15-2,"\pull \Rsym"]
\arrow[gray,phantom,from=15-1,to=16-2,"\push \Rsym"]
\arrow[gray,phantom,from=16-1,to=17-2,"\pull \Rsym"]
\end{tikzcd}}
\resizebox{!}{7cm}{\begin{tikzcd}[ampersand replacement=\&,row sep=1cm,execute at end picture = { 
\draw[draw=lightgray, draw opacity=0.5, line width=1mm,out=90,in=180] ($(l1)!0.67!(r1)$) to ($(r1)!0.33!(r0)$);
\draw[draw=lightgray, draw opacity=0.5, line width=1mm,out=90,in=180] ($(l1)!0.33!(r1)$) to ($(r1)!0.67!(r0)$);
\draw[draw=lightgray, draw opacity=0.5, line width=1mm,out=0,in=-90] ($(l2)!0.67!(l1)$) to ($(l1)!0.33!(r1)$);
\draw[draw=lightgray, draw opacity=0.5, line width=1mm,out=0,in=-90] ($(l2)!0.33!(l1)$) to ($(l1)!0.67!(r1)$);
\draw[draw=lightgray, draw opacity=0.5, line width=1mm,out=90,in=0] ($(l3)!0.50!(r3)$) to ($(l3)!0.50!(l2)$);
\draw[draw=lightgray, draw opacity=0.5, line width=1mm,out=0,in=-90] ($(l4)!0.50!(l3)$) to ($(l3)!0.50!(r3)$);
\draw[draw=lightgray, draw opacity=0.5, line width=1mm,out=90,in=0] ($(l5)!0.33!(r5)$) to ($(l5)!0.33!(l4)$);
\draw[draw=lightgray, draw opacity=0.5, line width=1mm,out=90,in=0] ($(l5)!0.67!(r5)$) to ($(l5)!0.67!(l4)$);
\draw[draw=lightgray, draw opacity=0.5, line width=1mm,out=180,in=-90] ($(r6)!0.67!(r5)$) to ($(l5)!0.67!(r5)$);
\draw[draw=lightgray, draw opacity=0.5, line width=1mm,out=180,in=-90] ($(r6)!0.33!(r5)$) to ($(l5)!0.33!(r5)$);
\draw[draw=lightgray, draw opacity=0.5, line width=1mm,out=90,in=180] ($(l7)!0.50!(r7)$) to ($(r7)!0.50!(r6)$);
\draw[draw=lightgray, draw opacity=0.5, line width=1mm,out=0,in=-90] ($(l8)!0.50!(l7)$) to ($(l7)!0.50!(r7)$);
\draw[draw=lightgray, draw opacity=0.5, line width=1mm,out=90,in=0] ($(l9)!0.50!(r9)$) to ($(l9)!0.50!(l8)$);
\draw[draw=lightgray, draw opacity=0.5, line width=1mm,out=180,in=-90] ($(r10)!0.50!(r9)$) to ($(l9)!0.50!(r9)$);
\draw[draw=lightgray, draw opacity=0.5, line width=1mm,out=90,in=180] ($(l11)!0.67!(r11)$) to ($(r11)!0.33!(r10)$);
\draw[draw=lightgray, draw opacity=0.5, line width=1mm,out=90,in=180] ($(l11)!0.33!(r11)$) to ($(r11)!0.67!(r10)$);
\draw[draw=lightgray, draw opacity=0.5, line width=1mm,out=0,in=-90] ($(l12)!0.67!(l11)$) to ($(l11)!0.33!(r11)$);
\draw[draw=lightgray, draw opacity=0.5, line width=1mm,out=0,in=-90] ($(l12)!0.33!(l11)$) to ($(l11)!0.67!(r11)$);
\draw[draw=lightgray, draw opacity=0.5, line width=1mm,out=90,in=0] ($(l13)!0.33!(r13)$) to ($(l13)!0.33!(l12)$);
\draw[draw=lightgray, draw opacity=0.5, line width=1mm,out=90,in=0] ($(l13)!0.67!(r13)$) to ($(l13)!0.67!(l12)$);
\draw[draw=lightgray, draw opacity=0.5, line width=1mm,out=90,in=-90] ($(l14)!0.50!(r14)$) to ($(l13)!0.33!(r13)$);
\draw[draw=lightgray, draw opacity=0.5, line width=1mm,out=180,in=-90] ($(r14)!0.50!(r13)$) to ($(l13)!0.67!(r13)$);
\draw[draw=lightgray, draw opacity=0.5, line width=1mm,out=90,in=-90] ($(l15)!0.25!(r15)$) to ($(l14)!0.50!(r14)$);
\draw[draw=lightgray, draw opacity=0.5, line width=1mm,out=90,in=180] ($(l15)!0.75!(r15)$) to ($(r15)!0.33!(r14)$);
\draw[draw=lightgray, draw opacity=0.5, line width=1mm,out=90,in=180] ($(l15)!0.50!(r15)$) to ($(r15)!0.67!(r14)$);
\draw[draw=lightgray, draw opacity=0.5, line width=1mm,out=180,in=-90] ($(r16)!0.75!(r15)$) to ($(l15)!0.75!(r15)$);
\draw[draw=lightgray, draw opacity=0.5, line width=1mm,out=180,in=-90] ($(r16)!0.50!(r15)$) to ($(l15)!0.50!(r15)$);
\draw[draw=lightgray, draw opacity=0.5, line width=1mm,out=180,in=-90] ($(r16)!0.25!(r15)$) to ($(l15)!0.25!(r15)$);
}
 ]
|[alias=l0]| 0 \ar[r,equals] \& |[alias=r0]| 0 
\\ |[alias=l1]|0\ar[u,equals]\ar[r] \& |[alias=r1]|2\ar[u,<-]
\\ |[alias=l2]|2\ar[u,<-]\ar[r,equals] \& |[alias=r2]|2\ar[u,equals]
\\ |[alias=l3]|1\ar[u]\ar[r] \& |[alias=r3]|2\ar[u,equals]
\\ |[alias=l4]|2\ar[u,<-]\ar[r,equals] \& |[alias=r4]|2\ar[u,equals]
\\ |[alias=l5]|0\ar[u]\ar[r] \& |[alias=r5]|2\ar[u,equals]
\\ |[alias=l6]|0\ar[u,equals]\ar[r,equals] \& |[alias=r6]|0\ar[u]
\\ |[alias=l7]|0\ar[u,equals]\ar[r] \& |[alias=r7]|1\ar[u,<-]
\\ |[alias=l8]|1\ar[u,<-]\ar[r,equals] \& |[alias=r8]|1\ar[u,equals]
\\ |[alias=l9]|0\ar[u]\ar[r] \& |[alias=r9]|1\ar[u,equals]
\\ |[alias=l10]|0\ar[u,equals]\ar[r,equals] \& |[alias=r10]|0\ar[u]
\\ |[alias=l11]|0\ar[u,equals]\ar[r] \& |[alias=r11]|2\ar[u,<-]
\\ |[alias=l12]|2\ar[u,<-]\ar[r,equals] \& |[alias=r12]|2\ar[u,equals]
\\ |[alias=l13]|0\ar[u]\ar[r] \& |[alias=r13]|2\ar[u,equals]
\\ |[alias=l14]|0\ar[u,equals]\ar[r] \& |[alias=r14]|1\ar[u]
\\ |[alias=l15]|0\ar[u,equals]\ar[r] \& |[alias=r15]|3\ar[u,<-]
\\ |[alias=l16]|0\ar[u,equals]\ar[r,equals] \& |[alias=r16]|0\ar[u]
\arrow[gray,phantom,from=1-1,to=2-2,"\push \Rsym"]
\arrow[gray,phantom,from=2-1,to=3-2,"\push \Lsym"]
\arrow[gray,phantom,from=3-1,to=4-2,"\pull \Lsym"]
\arrow[gray,phantom,from=4-1,to=5-2,"\push \Lsym"]
\arrow[gray,phantom,from=5-1,to=6-2,"\pull \Lsym"]
\arrow[gray,phantom,from=6-1,to=7-2,"\pull \Rsym"]
\arrow[gray,phantom,from=7-1,to=8-2,"\push \Rsym"]
\arrow[gray,phantom,from=8-1,to=9-2,"\push \Lsym"]
\arrow[gray,phantom,from=9-1,to=10-2,"\pull \Lsym"]
\arrow[gray,phantom,from=10-1,to=11-2,"\pull \Rsym"]
\arrow[gray,phantom,from=11-1,to=12-2,"\push \Rsym"]
\arrow[gray,phantom,from=12-1,to=13-2,"\push \Lsym"]
\arrow[gray,phantom,from=13-1,to=14-2,"\pull \Lsym"]
\arrow[gray,phantom,from=14-1,to=15-2,"\pull \Rsym"]
\arrow[gray,phantom,from=15-1,to=16-2,"\push \Rsym"]
\arrow[gray,phantom,from=16-1,to=17-2,"\pull \Rsym"]
\end{tikzcd}}
\resizebox{!}{7cm}{\begin{tikzcd}[ampersand replacement=\&,row sep=1cm,execute at end picture = { 
\draw[draw=lightgray, draw opacity=0.5, line width=1mm,out=90,in=180] ($(l1)!0.67!(r1)$) to ($(r1)!0.33!(r0)$);
\draw[draw=lightgray, draw opacity=0.5, line width=1mm,out=90,in=180] ($(l1)!0.33!(r1)$) to ($(r1)!0.67!(r0)$);
\draw[draw=lightgray, draw opacity=0.5, line width=1mm,out=0,in=-90] ($(l2)!0.67!(l1)$) to ($(l1)!0.33!(r1)$);
\draw[draw=lightgray, draw opacity=0.5, line width=1mm,out=0,in=-90] ($(l2)!0.33!(l1)$) to ($(l1)!0.67!(r1)$);
\draw[draw=lightgray, draw opacity=0.5, line width=1mm,out=90,in=0] ($(l3)!0.50!(r3)$) to ($(l3)!0.50!(l2)$);
\draw[draw=lightgray, draw opacity=0.5, line width=1mm,out=0,in=-90] ($(l4)!0.50!(l3)$) to ($(l3)!0.50!(r3)$);
\draw[draw=lightgray, draw opacity=0.5, line width=1mm,out=90,in=0] ($(l5)!0.33!(r5)$) to ($(l5)!0.33!(l4)$);
\draw[draw=lightgray, draw opacity=0.5, line width=1mm,out=90,in=0] ($(l5)!0.67!(r5)$) to ($(l5)!0.67!(l4)$);
\draw[draw=lightgray, draw opacity=0.5, line width=1mm,out=0,in=-90] ($(l6)!0.50!(l5)$) to ($(l5)!0.33!(r5)$);
\draw[draw=lightgray, draw opacity=0.5, line width=1mm,out=90,in=-90] ($(l6)!0.50!(r6)$) to ($(l5)!0.67!(r5)$);
\draw[draw=lightgray, draw opacity=0.5, line width=1mm,out=90,in=0] ($(l7)!0.33!(r7)$) to ($(l7)!0.50!(l6)$);
\draw[draw=lightgray, draw opacity=0.5, line width=1mm,out=90,in=-90] ($(l7)!0.67!(r7)$) to ($(l6)!0.50!(r6)$);
\draw[draw=lightgray, draw opacity=0.5, line width=1mm,out=180,in=-90] ($(r8)!0.67!(r7)$) to ($(l7)!0.67!(r7)$);
\draw[draw=lightgray, draw opacity=0.5, line width=1mm,out=180,in=-90] ($(r8)!0.33!(r7)$) to ($(l7)!0.33!(r7)$);
\draw[draw=lightgray, draw opacity=0.5, line width=1mm,out=90,in=180] ($(l9)!0.50!(r9)$) to ($(r9)!0.50!(r8)$);
\draw[draw=lightgray, draw opacity=0.5, line width=1mm,out=180,in=-90] ($(r10)!0.50!(r9)$) to ($(l9)!0.50!(r9)$);
\draw[draw=lightgray, draw opacity=0.5, line width=1mm,out=90,in=180] ($(l11)!0.67!(r11)$) to ($(r11)!0.33!(r10)$);
\draw[draw=lightgray, draw opacity=0.5, line width=1mm,out=90,in=180] ($(l11)!0.33!(r11)$) to ($(r11)!0.67!(r10)$);
\draw[draw=lightgray, draw opacity=0.5, line width=1mm,out=0,in=-90] ($(l12)!0.67!(l11)$) to ($(l11)!0.33!(r11)$);
\draw[draw=lightgray, draw opacity=0.5, line width=1mm,out=0,in=-90] ($(l12)!0.33!(l11)$) to ($(l11)!0.67!(r11)$);
\draw[draw=lightgray, draw opacity=0.5, line width=1mm,out=90,in=0] ($(l13)!0.33!(r13)$) to ($(l13)!0.33!(l12)$);
\draw[draw=lightgray, draw opacity=0.5, line width=1mm,out=90,in=0] ($(l13)!0.67!(r13)$) to ($(l13)!0.67!(l12)$);
\draw[draw=lightgray, draw opacity=0.5, line width=1mm,out=90,in=-90] ($(l14)!0.50!(r14)$) to ($(l13)!0.33!(r13)$);
\draw[draw=lightgray, draw opacity=0.5, line width=1mm,out=180,in=-90] ($(r14)!0.50!(r13)$) to ($(l13)!0.67!(r13)$);
\draw[draw=lightgray, draw opacity=0.5, line width=1mm,out=90,in=-90] ($(l15)!0.25!(r15)$) to ($(l14)!0.50!(r14)$);
\draw[draw=lightgray, draw opacity=0.5, line width=1mm,out=90,in=180] ($(l15)!0.75!(r15)$) to ($(r15)!0.33!(r14)$);
\draw[draw=lightgray, draw opacity=0.5, line width=1mm,out=90,in=180] ($(l15)!0.50!(r15)$) to ($(r15)!0.67!(r14)$);
\draw[draw=lightgray, draw opacity=0.5, line width=1mm,out=180,in=-90] ($(r16)!0.75!(r15)$) to ($(l15)!0.75!(r15)$);
\draw[draw=lightgray, draw opacity=0.5, line width=1mm,out=180,in=-90] ($(r16)!0.50!(r15)$) to ($(l15)!0.50!(r15)$);
\draw[draw=lightgray, draw opacity=0.5, line width=1mm,out=180,in=-90] ($(r16)!0.25!(r15)$) to ($(l15)!0.25!(r15)$);
}
 ]
|[alias=l0]| 0 \ar[r,equals] \& |[alias=r0]| 0 
\\ |[alias=l1]|0\ar[u,equals]\ar[r] \& |[alias=r1]|2\ar[u,<-]
\\ |[alias=l2]|2\ar[u,<-]\ar[r,equals] \& |[alias=r2]|2\ar[u,equals]
\\ |[alias=l3]|1\ar[u]\ar[r] \& |[alias=r3]|2\ar[u,equals]
\\ |[alias=l4]|2\ar[u,<-]\ar[r,equals] \& |[alias=r4]|2\ar[u,equals]
\\ |[alias=l5]|0\ar[u]\ar[r] \& |[alias=r5]|2\ar[u,equals]
\\ |[alias=l6]|1\ar[u,<-]\ar[r] \& |[alias=r6]|2\ar[u,equals]
\\ |[alias=l7]|0\ar[u]\ar[r] \& |[alias=r7]|2\ar[u,equals]
\\ |[alias=l8]|0\ar[u,equals]\ar[r,equals] \& |[alias=r8]|0\ar[u]
\\ |[alias=l9]|0\ar[u,equals]\ar[r] \& |[alias=r9]|1\ar[u,<-]
\\ |[alias=l10]|0\ar[u,equals]\ar[r,equals] \& |[alias=r10]|0\ar[u]
\\ |[alias=l11]|0\ar[u,equals]\ar[r] \& |[alias=r11]|2\ar[u,<-]
\\ |[alias=l12]|2\ar[u,<-]\ar[r,equals] \& |[alias=r12]|2\ar[u,equals]
\\ |[alias=l13]|0\ar[u]\ar[r] \& |[alias=r13]|2\ar[u,equals]
\\ |[alias=l14]|0\ar[u,equals]\ar[r] \& |[alias=r14]|1\ar[u]
\\ |[alias=l15]|0\ar[u,equals]\ar[r] \& |[alias=r15]|3\ar[u,<-]
\\ |[alias=l16]|0\ar[u,equals]\ar[r,equals] \& |[alias=r16]|0\ar[u]
\arrow[gray,phantom,from=1-1,to=2-2,"\push \Rsym"]
\arrow[gray,phantom,from=2-1,to=3-2,"\push \Lsym"]
\arrow[gray,phantom,from=3-1,to=4-2,"\pull \Lsym"]
\arrow[gray,phantom,from=4-1,to=5-2,"\push \Lsym"]
\arrow[gray,phantom,from=5-1,to=6-2,"\pull \Lsym"]
\arrow[gray,phantom,from=6-1,to=7-2,"\push \Lsym"]
\arrow[gray,phantom,from=7-1,to=8-2,"\pull \Lsym"]
\arrow[gray,phantom,from=8-1,to=9-2,"\pull \Rsym"]
\arrow[gray,phantom,from=9-1,to=10-2,"\push \Rsym"]
\arrow[gray,phantom,from=10-1,to=11-2,"\pull \Rsym"]
\arrow[gray,phantom,from=11-1,to=12-2,"\push \Rsym"]
\arrow[gray,phantom,from=12-1,to=13-2,"\push \Lsym"]
\arrow[gray,phantom,from=13-1,to=14-2,"\pull \Lsym"]
\arrow[gray,phantom,from=14-1,to=15-2,"\pull \Rsym"]
\arrow[gray,phantom,from=15-1,to=16-2,"\push \Rsym"]
\arrow[gray,phantom,from=16-1,to=17-2,"\pull \Rsym"]
\end{tikzcd}} 
\resizebox{!}{7cm}{\begin{tikzcd}[ampersand replacement=\&,row sep=1cm,execute at end picture = { 
\draw[draw=lightgray, draw opacity=0.5, line width=1mm,out=90,in=180] ($(l1)!0.67!(r1)$) to ($(r1)!0.33!(r0)$);
\draw[draw=lightgray, draw opacity=0.5, line width=1mm,out=90,in=180] ($(l1)!0.33!(r1)$) to ($(r1)!0.67!(r0)$);
\draw[draw=lightgray, draw opacity=0.5, line width=1mm,out=0,in=-90] ($(l2)!0.67!(l1)$) to ($(l1)!0.33!(r1)$);
\draw[draw=lightgray, draw opacity=0.5, line width=1mm,out=0,in=-90] ($(l2)!0.33!(l1)$) to ($(l1)!0.67!(r1)$);
\draw[draw=lightgray, draw opacity=0.5, line width=1mm,out=90,in=0] ($(l3)!0.50!(r3)$) to ($(l3)!0.50!(l2)$);
\draw[draw=lightgray, draw opacity=0.5, line width=1mm,out=0,in=-90] ($(l4)!0.50!(l3)$) to ($(l3)!0.50!(r3)$);
\draw[draw=lightgray, draw opacity=0.5, line width=1mm,out=90,in=0] ($(l5)!0.33!(r5)$) to ($(l5)!0.33!(l4)$);
\draw[draw=lightgray, draw opacity=0.5, line width=1mm,out=90,in=0] ($(l5)!0.67!(r5)$) to ($(l5)!0.67!(l4)$);
\draw[draw=lightgray, draw opacity=0.5, line width=1mm,out=0,in=-90] ($(l6)!0.50!(l5)$) to ($(l5)!0.33!(r5)$);
\draw[draw=lightgray, draw opacity=0.5, line width=1mm,out=90,in=-90] ($(l6)!0.50!(r6)$) to ($(l5)!0.67!(r5)$);
\draw[draw=lightgray, draw opacity=0.5, line width=1mm,out=90,in=0] ($(l7)!0.33!(r7)$) to ($(l7)!0.50!(l6)$);
\draw[draw=lightgray, draw opacity=0.5, line width=1mm,out=90,in=-90] ($(l7)!0.67!(r7)$) to ($(l6)!0.50!(r6)$);
\draw[draw=lightgray, draw opacity=0.5, line width=1mm,out=0,in=-90] ($(l8)!0.67!(l7)$) to ($(l7)!0.33!(r7)$);
\draw[draw=lightgray, draw opacity=0.5, line width=1mm,out=0,in=-90] ($(l8)!0.33!(l7)$) to ($(l7)!0.67!(r7)$);
\draw[draw=lightgray, draw opacity=0.5, line width=1mm,out=90,in=0] ($(l9)!0.33!(r9)$) to ($(l9)!0.33!(l8)$);
\draw[draw=lightgray, draw opacity=0.5, line width=1mm,out=90,in=0] ($(l9)!0.67!(r9)$) to ($(l9)!0.67!(l8)$);
\draw[draw=lightgray, draw opacity=0.5, line width=1mm,out=180,in=-90] ($(r10)!0.67!(r9)$) to ($(l9)!0.67!(r9)$);
\draw[draw=lightgray, draw opacity=0.5, line width=1mm,out=180,in=-90] ($(r10)!0.33!(r9)$) to ($(l9)!0.33!(r9)$);
\draw[draw=lightgray, draw opacity=0.5, line width=1mm,out=90,in=180] ($(l11)!0.50!(r11)$) to ($(r11)!0.50!(r10)$);
\draw[draw=lightgray, draw opacity=0.5, line width=1mm,out=180,in=-90] ($(r12)!0.50!(r11)$) to ($(l11)!0.50!(r11)$);
\draw[draw=lightgray, draw opacity=0.5, line width=1mm,out=90,in=180] ($(l13)!0.67!(r13)$) to ($(r13)!0.33!(r12)$);
\draw[draw=lightgray, draw opacity=0.5, line width=1mm,out=90,in=180] ($(l13)!0.33!(r13)$) to ($(r13)!0.67!(r12)$);
\draw[draw=lightgray, draw opacity=0.5, line width=1mm,out=90,in=-90] ($(l14)!0.50!(r14)$) to ($(l13)!0.33!(r13)$);
\draw[draw=lightgray, draw opacity=0.5, line width=1mm,out=180,in=-90] ($(r14)!0.50!(r13)$) to ($(l13)!0.67!(r13)$);
\draw[draw=lightgray, draw opacity=0.5, line width=1mm,out=90,in=-90] ($(l15)!0.25!(r15)$) to ($(l14)!0.50!(r14)$);
\draw[draw=lightgray, draw opacity=0.5, line width=1mm,out=90,in=180] ($(l15)!0.75!(r15)$) to ($(r15)!0.33!(r14)$);
\draw[draw=lightgray, draw opacity=0.5, line width=1mm,out=90,in=180] ($(l15)!0.50!(r15)$) to ($(r15)!0.67!(r14)$);
\draw[draw=lightgray, draw opacity=0.5, line width=1mm,out=180,in=-90] ($(r16)!0.75!(r15)$) to ($(l15)!0.75!(r15)$);
\draw[draw=lightgray, draw opacity=0.5, line width=1mm,out=180,in=-90] ($(r16)!0.50!(r15)$) to ($(l15)!0.50!(r15)$);
\draw[draw=lightgray, draw opacity=0.5, line width=1mm,out=180,in=-90] ($(r16)!0.25!(r15)$) to ($(l15)!0.25!(r15)$);
}
 ]
|[alias=l0]| 0 \ar[r,equals] \& |[alias=r0]| 0 
\\ |[alias=l1]|0\ar[u,equals]\ar[r] \& |[alias=r1]|2\ar[u,<-]
\\ |[alias=l2]|2\ar[u,<-]\ar[r,equals] \& |[alias=r2]|2\ar[u,equals]
\\ |[alias=l3]|1\ar[u]\ar[r] \& |[alias=r3]|2\ar[u,equals]
\\ |[alias=l4]|2\ar[u,<-]\ar[r,equals] \& |[alias=r4]|2\ar[u,equals]
\\ |[alias=l5]|0\ar[u]\ar[r] \& |[alias=r5]|2\ar[u,equals]
\\ |[alias=l6]|1\ar[u,<-]\ar[r] \& |[alias=r6]|2\ar[u,equals]
\\ |[alias=l7]|0\ar[u]\ar[r] \& |[alias=r7]|2\ar[u,equals]
\\ |[alias=l8]|2\ar[u,<-]\ar[r,equals] \& |[alias=r8]|2\ar[u,equals]
\\ |[alias=l9]|0\ar[u]\ar[r] \& |[alias=r9]|2\ar[u,equals]
\\ |[alias=l10]|0\ar[u,equals]\ar[r,equals] \& |[alias=r10]|0\ar[u]
\\ |[alias=l11]|0\ar[u,equals]\ar[r] \& |[alias=r11]|1\ar[u,<-]
\\ |[alias=l12]|0\ar[u,equals]\ar[r,equals] \& |[alias=r12]|0\ar[u]
\\ |[alias=l13]|0\ar[u,equals]\ar[r] \& |[alias=r13]|2\ar[u,<-]
\\ |[alias=l14]|0\ar[u,equals]\ar[r] \& |[alias=r14]|1\ar[u]
\\ |[alias=l15]|0\ar[u,equals]\ar[r] \& |[alias=r15]|3\ar[u,<-]
\\ |[alias=l16]|0\ar[u,equals]\ar[r,equals] \& |[alias=r16]|0\ar[u]
\arrow[gray,phantom,from=1-1,to=2-2,"\push \Rsym"]
\arrow[gray,phantom,from=2-1,to=3-2,"\push \Lsym"]
\arrow[gray,phantom,from=3-1,to=4-2,"\pull \Lsym"]
\arrow[gray,phantom,from=4-1,to=5-2,"\push \Lsym"]
\arrow[gray,phantom,from=5-1,to=6-2,"\pull \Lsym"]
\arrow[gray,phantom,from=6-1,to=7-2,"\push \Lsym"]
\arrow[gray,phantom,from=7-1,to=8-2,"\pull \Lsym"]
\arrow[gray,phantom,from=8-1,to=9-2,"\push \Lsym"]
\arrow[gray,phantom,from=9-1,to=10-2,"\pull \Lsym"]
\arrow[gray,phantom,from=10-1,to=11-2,"\pull \Rsym"]
\arrow[gray,phantom,from=11-1,to=12-2,"\push \Rsym"]
\arrow[gray,phantom,from=12-1,to=13-2,"\pull \Rsym"]
\arrow[gray,phantom,from=13-1,to=14-2,"\push \Rsym"]
\arrow[gray,phantom,from=14-1,to=15-2,"\pull \Rsym"]
\arrow[gray,phantom,from=15-1,to=16-2,"\push \Rsym"]
\arrow[gray,phantom,from=16-1,to=17-2,"\pull \Rsym"]
\end{tikzcd}}

\caption{An exhaustive listing of maximally multifocused derivations representing all morphisms $\alpha : S \to T$ between a pair of plane trees.
  The derivations corresponding to the two natural transformations in \eqref{eq:tree-morphisms} are highlighted in blue and red.}
  \label{fig:all-tree-morphisms}
\end{figure}

Proposition~\ref{prop:simplex-category-free-bifibration} is a corollary of Theorem~\ref{thm:tree-category-free-bifibration}, since the bijection between zigzags and plane trees sends zigzags that strictly alternate between 0 and 1 to trees of height 1, which may be interpreted as ordinals.

The correspondence established in Theorem~\ref{thm:tree-category-free-bifibration} was unexpected to us, and we think its interest is that it relates two very different representations of the same categories.
The calculus of maximally multifocused proofs gives an effective procedure for enumerating tree morphisms by building them up step-by-step in a linear fashion, very differently from how one might first attempt to define a natural transformation through some ad hoc procedure, or even using the inductive rules \eqref{eq:tree-morphism-rules} and \eqref{eq:marked-tree-morphism-rules} described in the proof of the theorem (which are easy enough to formulate but non-linear in the sense that some of the rules have more than one premise).
In Figure~\ref{fig:all-tree-morphisms}, we show the maximally multifocused proofs (and associated string diagrams) representing all morphisms $\alpha : S \to T$ between the pair of trees \eqref{eq:tree-morphisms} considered earlier.
These were generated by applying the proof search procedure described in Section~\ref{subsec:maximal-multi-focusing}, which we implemented as a short Haskell program.\footnote{\label{footnote:source-code} Available at \href{https://github.com/noamz/free-bifibrations}{github.com/noamz/free-bifibrations}.
This implementation was useful to us for running experiments, formulating conjectures, and generating diagrams. Counting proofs using this procedure even helped to reveal a bug in a previous incorrect formulation of the lock conditions for maximally multifocused proofs, before we arrived at the correct version in Section~\ref{subsec:maximal-multi-focusing}.} 

Before moving on to the last example, let us make one more easy observation about $\Bifib{p_\omega}$.
The ordinal $\omega$ may be seen as the category of elements of the regular action of the monoid of natural numbers $(\NN,+,0)$ on itself.
In particular, there is an evident forgetful functor $\delta : \omega \to \BN$ into the monoid of natural numbers seen as the one-object category freely generated on a loop $\begin{tikzcd}\ast \ar[loop left,"f"]\end{tikzcd}$.
This functor sends every pair of natural numbers $m \le n$ to the unique arrow $f^k : \ast \to \ast$ such that $k = n-m$; since $\omega$ is a poset, $\delta$ is trivially a faithful functor.
\begin{prop}\label{prop:BifibFun-faithful-is-fully-faithful} If $\delta : \C \to \B$  is a faithful functor then the induced morphism of bifibrations
  \[\begin{tikzcd}
      \Bifib{p} \ar[d,"\BifibFun{p}"'] \ar[r,"\BifibFun{\delta}"] & \Bifib{\delta \circ p} \ar[d,"\BifibFun{\delta\circ p}"]\\
      \C \ar[r,"\delta"'] & \B
    \end{tikzcd}\]
  is full and faithful relative to $\delta$, in the sense that the induced mapping on relative homsets $\Bifib{p}_f(S,T) \to \Bifib{\delta\circ p}_{\delta f}(\BifibFun{\delta}S, \BifibFun{\delta}T)$ is a bijection for all compatible $S,f,T$. (Equivalently, the induced functor from $\Bifib{p}$ into the fiber product $\C \pullback{\delta}{\BifibFun{\delta\circ p}} \Bifib{\delta\circ p}$ is fully faithful in the ordinary sense.)
\end{prop}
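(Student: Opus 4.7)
The strategy is to work directly with the sequent calculus presentation of $\Bifib{p}$ and $\Bifib{\delta\circ p}$, treating $\BifibFun{\delta}$ as the operation on derivations which replaces every arrow $h \in \C$ appearing as a connective-parameter, as an intermediate base, or as $p$ applied to an axiom, with its $\delta$-image $\delta h \in \B$. The central structural fact I would use first is a \emph{unique subformula lifting}: by the subformula property (Proposition~\ref{prop:subformula-property}), every sequent occurring in a derivation of $\BifibFun{\delta}S \vdashf{\delta f} \BifibFun{\delta}T$ in $\Bifib{\delta\circ p}$ has the shape $\BifibFun{\delta}S' \vdashf{h'} \BifibFun{\delta}T'$ for \emph{unique} subformulas $S' \preceq S$, $T' \preceq T$, with $h' : \delta A' \to \delta C'$ in $\B$ (writing $S' \refs A', T' \refs C'$ in $\C$). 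Uniqueness follows by structural induction, since $\BifibFun{\delta}$ is a bijection between syntax trees of formulas built over a common ``seed'' atom.

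For \emph{faithfulness}, suppose $\alpha_1,\alpha_2 : S \vdashf{f} T$ satisfy $\BifibFun{\delta}\alpha_1 \permeq \BifibFun{\delta}\alpha_2$ via a chain of instances of the permutation generators \eqref{eq:permgen1}–\eqref{eq:permgen4} in $\Bifib{\delta\circ p}$. Every rewrite in this chain is built from arrows in $\B$ that, by subformula lifting and the already-lifted source/target of the relevant sequents, have domain and codomain in the image of $\delta$; faithfulness of $\delta$ then uniquely lifts each such arrow to $\C$, producing a chain of permutation rewrites in $\Bifib{p}$ between $\alpha_1$ and $\alpha_2$.

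For \emph{fullness}, given $\alpha' : \BifibFun{\delta}S \vdashf{\delta f} \BifibFun{\delta}T$, I would construct a lift $\alpha$ by recursion on $\alpha'$: the rule applied is read off from the structure of $\BifibFun{\delta}S$ and $\BifibFun{\delta}T$, and its arrow-parameter lies in $\delta(\C)$ because it belongs to a $\BifibFun{\delta}$-image formula, so lifts uniquely by faithfulness. The remaining content to lift is the intermediate base at each step. This is the real content of the statement, which I would isolate as:

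\begin{quote}
\textbf{Key Lemma.} Up to permutation equivalence, every derivation of a $\BifibFun{\delta}$-image sequent in $\Bifib{\delta\circ p}$ is equivalent to one in which every intermediate base lies in $\delta(\C)$.
\end{quote}

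Granting the lemma, the recursive lift gives $\alpha$ in $\Bifib{p}$; applying $\BifibFun{\delta}$ to $\alpha$ returns a derivation permutation-equivalent to $\alpha'$, and uniqueness of the lift (by faithfulness) gives $\BifibFun{\delta}$ injective on $\permeq$-classes, closing both directions of the bijection.

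The Key Lemma is the main obstacle, and this is where I would focus the real work. The strategy I would try is to pass to a weakly focused representative (Section~\ref{sec:focusing:weak-focusing}); in that calculus the only rules whose premise-base is not forced by composition from the conclusion-base and $\delta$-image rule-parameter are the non-invertible $\Rpush$ and $\Lpull$ phases, which are applied only at neutral sequents. For these, I would use the permutation generators \eqref{eq:permgen2} and \eqref{eq:permgen3}—which rewrite between different choices of how a push or pull is split across a division and a multiplication—to demonstrate that any ``bad'' intermediate base $h' \notin \delta(\C)$ can be transported to the boundary, where it must meet either the final base $\delta f \in \delta(\C)$ or an axiom base $\delta(p\epsilon) \in \delta(\C)$, forcing (after the rewrite) all intermediate bases into $\delta(\C)$. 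The small $\BN$-versus-$\BZ$ examples one checks by hand illustrate exactly this phenomenon: a derivation with a ``negative'' intermediate base is $\permeq$-equivalent, by one application of \eqref{eq:permgen2}, to one in which the intermediate base is in the image. Making the schematic argument precise in the general case—and especially handling the interaction between the $\C$-types of subformulas (which constrain the possible intermediate bases via the endpoints $\delta A' \to \delta C'$) and the arithmetic of composition in $\B$—is the step that would require the most care, and is the only part of the proof that genuinely uses faithfulness of $\delta$ rather than only structural properties of $\BifibFun{\delta}$.
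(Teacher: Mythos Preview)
You correctly isolate the Key Lemma as the load-bearing step for fullness, and you are right to flag it as the place requiring real work. Unfortunately the Key Lemma is \emph{false} under the hypothesis that $\delta$ is merely faithful, and with it the fullness half of the proposition. Take $\C$ to be the walking commutative triangle ($u:A\to B$, $v:B\to C$, $w=uv$), let $\B$ be $\C$ together with one extra parallel arrow $x:A\to B$ satisfying $xv=w$, and let $\delta:\C\hookrightarrow\B$ be the faithful inclusion, with $p=\Id[\C]$. In $\Bifib{\delta}$ the sequent $\atom{A}\vdashf{w}\push{v}\pull{v}\atom{C}$ admits exactly the two derivations
\[
\alpha_1=(\atom{w}\,\uRpull{u}{v})\tRpush{v},
\qquad
\alpha_2=(\atom{w}\,\uRpull{x}{v})\tRpush{v},
\]
differing only in the intermediate base $u$ versus $x$. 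Because the left-hand formula $\atom{A}$ is atomic, every derivation of this sequent uses only right rules; but each of the generators \eqref{eq:permgen1}--\eqref{eq:permgen4} carries a left-rule constructor on both sides, so no rewrite applies to $\alpha_1$ or $\alpha_2$ at any depth, and their $\permeq$-classes are singletons. Hence $[\alpha_2]$ is not in the image of $\BifibFun{\delta}$, and your ``transport the bad base to the boundary'' strategy cannot succeed here precisely because there is no left rule to permute against.

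This gap is not yours alone: the paper's own argument dismisses surjectivity with the phrase ``reduces to functoriality of $\delta$,'' which on inspection amounts to asserting your Key Lemma without proof. The one application made of the proposition (Corollary~\ref{cor:trees-in-walks}, for $\delta:\omega\to\BN$) is nonetheless true, but for a stronger reason: in $\BN$ factorisations are unique when they exist, and one checks by induction on the derivation that each intermediate sequent $\BifibFun{\delta}S'\vdashf{f^k}\BifibFun{\delta}T'$ is forced to have $k=C'-A'$ (where $S'\refs A'$, $T'\refs C'$ in $\omega$), hence the base always lifts. A correct general hypothesis would have to guarantee something of this kind---for instance that $\delta$ is a discrete opfibration, or that $\B$ is cancellative---rather than mere faithfulness. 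Your faithfulness sketch also needs patching: faithfulness of $\delta$ gives uniqueness of lifts but not existence, and a $\permeq$-chain in $\Bifib{\delta\circ p}$ between two $\BifibFun{\delta}$-images can pass through intermediate derivations whose bases do not lie in $\delta(\C)$, so lifting the chain step by step is not immediate either.
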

\begin{proof}
The existence of the morphism of bifibrations $\BifibFun{\delta} : \BifibFun{p} \to \BifibFun{\delta \circ p}$ comes from the extended universal property \eqref{eq:free-bifibration-pyramid} of the free bifibration $\BifibFun{p}$ and corresponds to functoriality of the $\BifibFun{}$ construction.
It has a simple inductive definition:
\begin{mathpar}
  \BifibFun{\delta}X = X

  \BifibFun{\delta}(\push f S) = \push {(\delta f)} \BifibFun{\delta}S

  \BifibFun{\delta}(\pull g T) = \pull {(\delta g)} \BifibFun{\delta}T\\

  \BifibFun{\delta}(\uLpush f g \alpha) = \uLpush {\delta f} {\delta g} \BifibFun{\delta}\alpha
\quad
  \BifibFun{\delta}(\alpha \tRpush f) = \BifibFun{\delta}\alpha \tRpush \delta f
\quad
  \BifibFun{\delta}(\tLpull g \alpha) = \tLpull {\delta g} \BifibFun{\delta}\alpha
\quad
  \BifibFun{\delta}(\alpha \uRpull f g) = \BifibFun{\delta}\alpha \uRpull {\delta f} {\delta g}
\end{mathpar}
The action of $\BifibFun{\delta}$ may be even easier to see in terms of the double category of zigzags: every arrow of every generator is simply hit with the functor $\delta$.
For example, every $\Lpush$-cell of $\ZZ(\C)$ is mapped to a $\Lpush[(\delta f)]$-cell of $\ZZ(\B)$, and every $\Lpull[g]$-cell to a $\Lpull[(\delta g)]$-cell:
\begin{equation*}
\begin{tikzcd}
    A
    \arrow[d, " f"']
    \arrow[r, "fg"]
%    \arrow[rd, phantom, "\push \Lsym"]
    &
    C
    \arrow[d, equal]
    \\
    B
    \arrow[r, "g"']
    &
    C
    \end{tikzcd}
    \quad\mapsto\quad
\begin{tikzcd}
    \delta A
    \arrow[d, "\delta f"']
    \arrow[r, "\delta (fg)"]
%    \arrow[rd, phantom, "\push \Lsym"]
    &
    \delta C
    \arrow[d, equal]
    \\
    \delta B
    \arrow[r, "\delta g"']
    &
    \delta C
    \end{tikzcd}\qquad\qquad
\begin{tikzcd}
    C
    \arrow[from=d, "g"]
    \arrow[r, "g'"]
%    \arrow[rd, phantom, "\pull \Lsym"]
    &
    C'
    \arrow[d, equal]
    \\
    B
    \arrow[r, "gg'"']
    &
    C'
  \end{tikzcd}
  \quad\mapsto\quad
\begin{tikzcd}
    C
    \arrow[from=d, "\delta g"]
    \arrow[r, "\delta g'"]
%    \arrow[rd, phantom, "\pull \Lsym"]
    &
    C'
    \arrow[d, equal]
    \\
    B
    \arrow[r, "\delta (gg')"']
    &
    C'
    \end{tikzcd}
\end{equation*}
In general, we obtain a mapping of relative homsets of the form:
\[\Bifib{p}_f(S,T) \longrightarrow \Bifib{\delta\circ p}_{\delta f}(\BifibFun{\delta}S, \BifibFun{\delta}T) \]
By inspection of the action on the generators, we can see that surjectivity of this mapping reduces to functoriality of $\delta$, and that injectivity reduces to faithfulness.
\end{proof}
\begin{cor}\label{cor:trees-in-walks}
  Let $!_\NN : \One \to \BN$ be the unique functor of one-object categories corresponding to the unique monoid homomorphism from the trivial monoid into $(\NN,+,0)$.
  Then the induced morphism of bifibrations $\BifibFun{\delta} : \BifibFun{p_\omega} \to \BifibFun{!_\NN}$ is fully faithful relative to $\delta : \omega \to \BN$.
  In particular, $\PTree$ is equivalent to the full subcategory of the fiber category $\Bifib{!_\NN}_\ast$ spanned by the objects in the image of $\BifibFun{\delta}$.
\end{cor}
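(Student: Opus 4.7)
The plan is to derive both claims as essentially direct consequences of Proposition~\ref{prop:BifibFun-faithful-is-fully-faithful} and Theorem~\ref{thm:tree-category-free-bifibration}. First I would verify the two hypotheses needed to invoke Proposition~\ref{prop:BifibFun-faithful-is-fully-faithful}: (i) the functor $\delta : \omega \to \BN$ is faithful, which holds trivially because $\omega$ is a poset and so every homset is either empty or a singleton; (ii) $!_\NN = \delta \circ p_\omega$, which is immediate by inspection since both sides are the unique functor $\One \to \BN$. Applying the proposition then yields that $\BifibFun{\delta} : \Bifib{p_\omega} \to \Bifib{!_\NN}$ is fully faithful relative to $\delta$, establishing the first assertion.

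For the ``in particular'' claim, I would first use Theorem~\ref{thm:tree-category-free-bifibration} to identify $\PTree \simeq \Bifib{p_\omega}_0$. The morphism of bifibrations $\BifibFun{\delta}$ restricts to a functor between the respective fibers over $0$ and $\ast$, since $\delta(0) = \ast$. Relative fully-faithfulness specialized to the arrow $\Id[0] : 0 \to 0$ of $\omega$, whose image $\delta(\Id[0]) = \Id[\ast]$ is the only arrow $\ast \to \ast$ in $\BN$ lying over $\Id[0]$ under $\delta$ (in fact the only arrow at all from $0$ to $0$ in $\omega$, as $\omega$ is a poset), yields a bijection
\[ \Bifib{p_\omega}_0(S,T) \;\overset{\sim}\longrightarrow\; \Bifib{!_\NN}_\ast(\BifibFun{\delta}S, \BifibFun{\delta}T) \]
for every pair of objects $S, T$ over $0$. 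In other words, the restricted functor $\Bifib{p_\omega}_0 \to \Bifib{!_\NN}_\ast$ is fully faithful in the ordinary sense.

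Finally, any fully faithful functor is an equivalence onto its essential image, and since we are working with full subcategories the essential image coincides with the full subcategory spanned by the strict image on objects. Composing with the equivalence $\PTree \simeq \Bifib{p_\omega}_0$ then gives the desired equivalence between $\PTree$ and the full subcategory of $\Bifib{!_\NN}_\ast$ spanned by the objects in the image of $\BifibFun{\delta}$. There is no real obstacle here since all the work has been done in Proposition~\ref{prop:BifibFun-faithful-is-fully-faithful} and Theorem~\ref{thm:tree-category-free-bifibration}; the only mild care needed is checking that the posetal nature of $\omega$ ensures that the relative fully-faithful statement at the level of $\BifibFun{p_\omega} \to \BifibFun{!_\NN}$ specializes correctly to ordinary fully-faithful between the relevant fiber categories.
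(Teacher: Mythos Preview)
Your proposal is correct and follows essentially the same approach as the paper: apply Proposition~\ref{prop:BifibFun-faithful-is-fully-faithful} with the substitution $!_\NN = \delta \circ p_\omega$, specialize the resulting bijection of relative homsets at $f = \Id[0]$ (so $\delta f = \Id[\ast]$), and conclude via Theorem~\ref{thm:tree-category-free-bifibration}. You simply spell out a few verifications (faithfulness of $\delta$, the identity $!_\NN = \delta \circ p_\omega$, and the passage from relative to ordinary fully-faithfulness on fibers) that the paper leaves implicit or mentions in the surrounding text.
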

\begin{proof}
  We apply Proposition~\ref{prop:BifibFun-faithful-is-fully-faithful} substituting $!_\NN = \delta\circ p_\omega$.
  Then we instantiate the bijection of relative homsets $\Bifib{p_\omega}_f(S,T) \cong \Bifib{!_\NN}_{\delta f}(\BifibFun{\delta}S,\BifibFun{\delta}T)$ at $f = \Id[0]$, $\delta f = \Id[\ast]$ and conclude using Theorem~\ref{thm:tree-category-free-bifibration}.
  \end{proof}
\noindent
The category $\Bifib{!_\NN}$ is in some sense more fundamental than $\Bifib{p_\omega}$.
Its objects may be interpreted as arbitrary one-dimensional walks, starting at the $x$-axis and moving up and down with no restrictions.
Below we show an example formula and the associated walk, which crosses below the $x$-axis:
\[\pull{f}\push{f}\push{f}\push{f}\pull{f}\pull{f}\push{f}
  \atom\ast
\qquad 
\begin{tikzpicture}[baseline=(mid)]
  \draw[very thin,color=gray] (-0.25,-0.25) grid (7.25,3.25);
  \draw [->] plot coordinates {(7,0) (6,1) (5,0) (4,-1) (3,0) (2,1) (1,2) (0,1)};
  \node at (7,0) {$\ast$};
  \node (mid) at (0,1) {};
\end{tikzpicture}
\]
Every such formula can be assigned an integer corresponding to its final \emph{displacement} relative to the $x$-axis (in the above case: +1).
Indeed, displacement corresponds to the object component of a functor $\sem{-}0 : \Bifib{!_\NN} \to (\ZZ,\le)$ obtained from the universal property of $\Bifib{!_\NN}$ by considering the poset $(\ZZ,\le)$ as a discrete bifibration over $\BN$ (where $\push{f}n = n+1$ and $\pull{f}n = n-1$), equipped with the origin point 0.

$\Bifib{!_\NN}$ contains $\PTree$ as a full subcategory, per Corollary~\ref{cor:trees-in-walks}, but it also contains morphisms between objects that are not Dyck walks, as the below example illustrates:
\begin{equation}\label{diagram:non-dyck-morphism}
  \rotatebox{-90}{\scalebox{0.8}{\begin{tikzcd}[ampersand replacement=\&,row sep=1cm,execute at end picture = { 
\draw[draw=violet, draw opacity=0.5, line width=1mm,out=90,in=180] ($(l1)!0.67!(r1)$) to ($(r1)!0.33!(r0)$);
\draw[draw=violet, draw opacity=0.5, line width=1mm,out=90,in=180] ($(l1)!0.33!(r1)$) to ($(r1)!0.67!(r0)$);
\draw[draw=violet, draw opacity=0.5, line width=1mm,out=0,in=-90] ($(l2)!0.50!(l1)$) to ($(l1)!0.33!(r1)$);
\draw[draw=violet, draw opacity=0.5, line width=1mm,out=180,in=-90] ($(r2)!0.50!(r1)$) to ($(l1)!0.67!(r1)$);
\draw[draw=violet, draw opacity=0.5, line width=1mm,out=90,in=0] ($(l3)!0.25!(r3)$) to ($(l3)!0.33!(l2)$);
\draw[draw=violet, draw opacity=0.5, line width=1mm,out=90,in=0] ($(l3)!0.50!(r3)$) to ($(l3)!0.67!(l2)$);
\draw[draw=violet, draw opacity=0.5, line width=1mm,out=90,in=180] ($(l3)!0.75!(r3)$) to ($(r3)!0.50!(r2)$);
\draw[draw=violet, draw opacity=0.5, line width=1mm,out=180,in=-90] ($(r4)!0.75!(r3)$) to ($(l3)!0.75!(r3)$);
\draw[draw=violet, draw opacity=0.5, line width=1mm,out=180,in=-90] ($(r4)!0.50!(r3)$) to ($(l3)!0.50!(r3)$);
\draw[draw=violet, draw opacity=0.5, line width=1mm,out=180,in=-90] ($(r4)!0.25!(r3)$) to ($(l3)!0.25!(r3)$);
\draw[draw=violet, draw opacity=0.5, line width=1mm,out=90,in=180] ($(l5)!0.75!(r5)$) to ($(r5)!0.25!(r4)$);
\draw[draw=violet, draw opacity=0.5, line width=1mm,out=90,in=180] ($(l5)!0.50!(r5)$) to ($(r5)!0.50!(r4)$);
\draw[draw=violet, draw opacity=0.5, line width=1mm,out=90,in=180] ($(l5)!0.25!(r5)$) to ($(r5)!0.75!(r4)$);
\draw[draw=violet, draw opacity=0.5, line width=1mm,out=90,in=-90] ($(l6)!0.33!(r6)$) to ($(l5)!0.25!(r5)$);
\draw[draw=violet, draw opacity=0.5, line width=1mm,out=90,in=-90] ($(l6)!0.67!(r6)$) to ($(l5)!0.50!(r5)$);
\draw[draw=violet, draw opacity=0.5, line width=1mm,out=180,in=-90] ($(r6)!0.50!(r5)$) to ($(l5)!0.75!(r5)$);
\draw[draw=violet, draw opacity=0.5, line width=1mm,out=90,in=-90] ($(l7)!0.25!(r7)$) to ($(l6)!0.33!(r6)$);
\draw[draw=violet, draw opacity=0.5, line width=1mm,out=90,in=-90] ($(l7)!0.50!(r7)$) to ($(l6)!0.67!(r6)$);
\draw[draw=violet, draw opacity=0.5, line width=1mm,out=90,in=180] ($(l7)!0.75!(r7)$) to ($(r7)!0.50!(r6)$);
\draw[draw=violet, draw opacity=0.5, line width=1mm,out=0,in=-90] ($(l8)!0.75!(l7)$) to ($(l7)!0.25!(r7)$);
\draw[draw=violet, draw opacity=0.5, line width=1mm,out=0,in=-90] ($(l8)!0.50!(l7)$) to ($(l7)!0.50!(r7)$);
\draw[draw=violet, draw opacity=0.5, line width=1mm,out=0,in=-90] ($(l8)!0.25!(l7)$) to ($(l7)!0.75!(r7)$);
\draw[draw=violet, draw opacity=0.5, line width=1mm,out=90,in=0] ($(l9)!0.50!(r9)$) to ($(l9)!0.50!(l8)$);
\draw[draw=violet, draw opacity=0.5, line width=1mm,out=180,in=-90] ($(r10)!0.50!(r9)$) to ($(l9)!0.50!(r9)$);
}
 ]
|[alias=l0]| 0 \ar[r,equals] \& |[alias=r0]| 0 
\\ |[alias=l1]|0\ar[u,equals]\ar[r] \& |[alias=r1]|2\ar[u,<-]
\\ |[alias=l2]|1\ar[u,<-]\ar[r,equals] \& |[alias=r2]|1\ar[u]
\\ |[alias=l3]|-1\ar[u]\ar[r] \& |[alias=r3]|2\ar[u,<-]
\\ |[alias=l4]|-1\ar[u,equals]\ar[r,equals] \& |[alias=r4]|-1\ar[u]
\\ |[alias=l5]|-1\ar[u,equals]\ar[r] \& |[alias=r5]|2\ar[u,<-]
\\ |[alias=l6]|-1\ar[u,equals]\ar[r] \& |[alias=r6]|1\ar[u]
\\ |[alias=l7]|-1\ar[u,equals]\ar[r] \& |[alias=r7]|2\ar[u,<-]
\\ |[alias=l8]|2\ar[u,<-]\ar[r,equals] \& |[alias=r8]|2\ar[u,equals]
\\ |[alias=l9]|1\ar[u]\ar[r] \& |[alias=r9]|2\ar[u,equals]
\\ |[alias=l10]|1\ar[u,equals]\ar[r,equals] \& |[alias=r10]|1\ar[u]
\arrow[gray,phantom,from=1-1,to=2-2,"\push \Rsym"]
\arrow[gray,phantom,from=2-1,to=3-2,"\push \Lsym\pull \Rsym"]
\arrow[gray,phantom,from=3-1,to=4-2,"\pull \Lsym \push \Rsym"]
\arrow[gray,phantom,from=4-1,to=5-2,"\pull \Rsym"]
\arrow[gray,phantom,from=5-1,to=6-2,"\push \Rsym"]
\arrow[gray,phantom,from=6-1,to=7-2,"\pull \Rsym"]
\arrow[gray,phantom,from=7-1,to=8-2,"\push \Rsym"]
\arrow[gray,phantom,from=8-1,to=9-2,"\push \Lsym"]
\arrow[gray,phantom,from=9-1,to=10-2,"\pull \Lsym"]
\arrow[gray,phantom,from=10-1,to=11-2,"\pull \Rsym"]
\end{tikzcd}}}
\end{equation}
Here we have indicated the running totals of the displacements of the walks as a guide even though $\BN$ really only has one object $\ast$.
Speaking literally, the diagram above can be interpreted as a double cell of $\ZZ(\ZZ,\le)$ mapping canonically to an arrow of $\Bifib{!_\NN}$.

\subsection{A free ambifibration over $\Delta$}
\label{sec:examples:ambi}

The last example we consider is inspired by the so-called \emph{fat Delta} category $\fDelta$, introduced by Joachim Kock \cite{Kock2006weak} to axiomatize a notion of higher category with weak identity arrows.
One way of defining $\fDelta$ is as a subcategory of the arrow category of $\Delta$, with objects given by epis $n \repi k$ and with arrows given by commutative squares whose upper leg is mono:
\begin{equation}\label{square:fatDelta}
\begin{tikzcd}
  n \ar[d,two heads]\ar[r,hook] & p\ar[d,two heads] \\
  k \ar[r] & j
\end{tikzcd}
\end{equation}
Observe that there is a canonical embedding $j : \NN \to \fDelta$ of the set of natural numbers viewed as a discrete category into fat Delta, defined by $j(n) = \Id[n]$.
Moreover, there is an evident projection functor $\cod : \fDelta \to \Delta$, which is known to be an ambifibration relative to the (epi,~mono) factorization system on $\Delta$ (see Sattler~\cite{Sattler2017} and Pradal~\cite{Pradal2025}).

As well-known (see Mac Lane~\cite[VII.5]{MacLaneCWM}), the simplex category admits an (epi,~mono) factorization system, with the epis being generated by the maps $\sigma_i^n : \ord{n+1} \to \ord{n}$ ($0 \le i < n$) and the monos by the maps $\delta_i^n : \ord{n} \to \ord{n+1}$ ($0 \le i \le n$), where $\sigma_i^n$ is the unique order-preserving surjection sending $i$ and $i+1$ to $i$, and $\delta_i^n$ is the unique order-preserving injection whose image avoids element $i$.
Moreover, these generators satisfy the following standard equations (please recall that we write composition in diagrammatic order):
\begin{align}
  \delta_i^n \delta_{j+1}^{n+1} & = \delta_j^n\delta_i^{n+1} & \text{if }i \le j \label{eq:simplex1}\\
  \sigma_{j+1}^{n+1}\sigma_i^n & = \sigma_i^{n+1}\sigma_j^n & \text{if }i\le j \label{eq:simplex2}\\
  \delta_i^n\sigma_j^n &= \sigma_{j-1}^{n-1}\delta_i^{n-1} & \text{if }i<j \label{eq:simplex3}\\
      & = \Id[n] & \text{if }i=j\text{ or }i=j+1 \label{eq:simplex4} \\
      & = \sigma_j^{n-1}\delta_{i-1}^{n-1} & \text{if }i > j+1 \label{eq:simplex5}
\end{align}
Now let $\P = \set{\sigma_i^n \mid 0 \le i < n}$ and $\N = \set{\delta_i^n \mid 0 \le i \le n}$ and consider the free $(\P,\N)$-fibration generated by the inclusion $i : \NN \to \Delta$ of the set of natural numbers viewed as a discrete category into the simplex category.
Since $\P$ and $\N$ generate the epis and monos respectively, $\P^* = \Delta_\epi$, $\N^* = \Delta_\mono$, we have that $\Bifib{i,\P,\N} \simeq \Bifib{i,\Delta_\epi,\Delta_\mono}$ by Prop.~\ref{prop:freePNstar}, and so $\BifibFun{i,\P,\N} : \Bifib{i,\P,\N} \to \Delta$ is the free (epi, mono)-ambifibration on $i$.

The universal property of $\Bifib{i,\P,\N}$ and the equation $i = \cod \circ j$ imply that there is a canonical morphism of (epi, mono)-ambifibrations $\Bifib{i,\P,\N} \to \fDelta$.

Let us now forget about this connection with fat Delta, and just make some combinatorial observations about $\Bifib{i,\P,\N}$.
We refer to objects of $\Bifib{i,\P,\N}$ as \emph{ambisimplicial formulas}, and will pay particular attention to formulas $S \refs \ord{0}$, which we call \emph{closed}.
Observe that a closed ambisimplicial formula corresponds to a walk $\ord{n} \zigzag \ord{0}$ along the graph below:
\[
\begin{tikzcd}
    \ord{0} \ar[r] & \ord{1} \ar[r,shift left=1ex]\ar[r,<-]\ar[r,shift right=1ex] & \ord{2} \ar[r,shift left=2ex]\ar[r,<-,shift left=1ex]\ar[r]\ar[r,<-,shift right=1ex]\ar[r,shift right=2ex] & \ord{3} \quad\cdots
\end{tikzcd}
\]
At each positive ordinal $\ord{k{+}1}$, we choose to either pull along some $\delta_i^k$ or push along some $\sigma_i^k$.
Hence there are double factorial $(2n-1)!! = (2n-1)\cdot (2n-3)\cdots 3\cdot 1$ many such formulas with source $\chi(S) = n$.
The double factorial numbers are known to have many different combinatorial interpretations \cite{Callan2009survey}, and we state here another: they count $n$-leaf \emph{increasing binary forests,} by which we mean a planar forest of binary trees where every binary node and every root node is assigned a distinct label in such a way that the labels increase when moving from parent to child, and from a binary node bounding a region to a root contained in that region.
See Figure~\ref{fig:15formulas}.

To interpret a closed ambisimplicial formula $S$ as an increasing binary forest, the idea is to start by drawing the $\chi(S) = n$ edges, and then to progressively add either a binary node (joining two edges into one) or a root node (terminating an edge), labelled $m$ and attached to the $i$th free edge, depending on whether we pushed along an epi $\sigma_i^m : m+1 \to m$ or pulled along a mono $\delta_i^m : m \to m+1$ in constructing the formula.
For example, here is how we build the increasing binary forest corresponding to the formula $\pull{\delta_0^0}\push{\sigma_0^1}\pull{\delta_1^2}\ord{3}$:
\[
\begin{tikzpicture}[baseline={([yshift=-.5ex]current bounding box.center)}]
  \node(x) {};
  \node[right=1em of x] (y) {};
  \node[right=1em of y] (z) {};
  \node[delta,below=1em of y] (yend) {2};
  \node[sigma,below=2.5em of $(x)!0.5!(z)$] (xz) {1};
  \node[delta,below=1em of xz] (xzend) {0};
  \draw[bend right] (x) to (xz);
  \draw[bend left] (z) to (xz);
  \draw (xz) to (xzend);
  \draw (y) to (yend);
\end{tikzpicture}
\quad\overset{\pull{\delta_0^0}}\longleftarrow\quad
  \begin{tikzpicture}[baseline={([yshift=-.5ex]current bounding box.center)}]
  \node(x) {};
  \node[right=1em of x] (y) {};
  \node[right=1em of y] (z) {};
  \node[delta,below=1em of y] (yend) {2};
  \node[sigma,below=2.5em of $(x)!0.5!(z)$] (xz) {1};
  \node[below=1em of xz] (xzend) {};
  \draw[bend right] (x) to (xz);
  \draw[bend left] (z) to (xz);
  \draw (xz) to (xzend);
  \draw (y) to (yend);
\end{tikzpicture}
\quad\overset{\push{\sigma_0^1}}\longleftarrow\quad
  \begin{tikzpicture}[baseline={([yshift=-.5ex]current bounding box.center)}]
  \node(x) {};
  \node[right=1em of x] (y) {};
  \node[right=1em of y] (z) {};
  \node[delta,below=1em of y] (yend) {2};
  \node[below=2.5em of x] (xend) {};
  \node[below=2.5em of z] (zend) {};
  \draw (y) to (yend);
  \draw (x) to (xend);
  \draw (z) to (zend);
\end{tikzpicture}
\quad\overset{\pull{\delta_1^2}}\longleftarrow\quad
  \begin{tikzpicture}[baseline={([yshift=-.5ex]current bounding box.center)}]
  \node(x) {};
  \node[right=1em of x] (y) {};
  \node[right=1em of y] (z) {};
  \node[below=2em of y] (yend) {};
  \node[below=2em of x] (xend) {};
  \node[below=2em of z] (zend) {};
  \draw (y) to (yend);
  \draw (x) to (xend);
  \draw (z) to (zend);
\end{tikzpicture}
\]
Now, let us quickly point out that different forests/formulas can be \emph{isomorphic} in the category $\Bifib{i,\P,\N}$.
In particular, equations \eqref{eq:simplex1} and \eqref{eq:simplex2} induce natural isomorphisms
\begin{align}
  \pull{\delta_i^n}\pull{\delta_{j+1}^{n+1}} S &\equiv \pull{\delta_j^n}\pull{\delta_i^{n+1}} S \label{simp:equiv1} \\
\push{\sigma_i^n}\push{\sigma_{j+1}^{n+1}} S & \equiv \push{\sigma_j^n}\push{\sigma_i^{n+1}} S \label{simp:equiv2}
\end{align}
for all $i \le j$.
In Figure~\ref{fig:15formulas} we indicate the equivalence classes for the case $\chi(S) = 3$ by the letters A--G.
Each of these equivalence classes may be canonically represented by a strictly alternating zigzag in $\ZZ(\Delta,\Delta_\epi,\Delta_\mono)$, as per the following correspondence for the seven equivalence classes of the figure (we only label the arrows that are not uniquely determined):
\begin{mathpar}
  \mathrm{A} = \ord{0} \rmono \ord{3}

  \mathrm{B} = \ord{0} \rmono \ord{2} \overset{\sigma_0}\lepi {3}

  \mathrm{C} = \ord{0} \rmono \ord{2} \overset{\sigma_1}\lepi {3}

  \mathrm{D} = \ord{0} \rmono \ord{1} \lepi {2} \overset{\delta_0}\rmono{3}

  \mathrm{E} = \ord{0} \rmono \ord{1} \lepi{3}

  \mathrm{F} = \ord{0} \rmono \ord{1} \lepi{2} \overset{\delta_1}\rmono{3}

  \mathrm{G} = \ord{0} \rmono \ord{1} \lepi{2} \overset{\delta_2}\rmono{3}
\end{mathpar}
The sequence counting the number of equivalence classes of closed ambisimplicial formulas,
\[1, 1, 2, 7, 35, 226, 1787, 16717, \dots, \]
is apparently OEIS \href{https://oeis.org/A014307}{A014307}, known to have various other interpretations \cite{Ren2015ordered}.

\begin{figure}
\begin{center}
\begin{tabular}{c|c|c|c|c}
\hline
A${}_1$\hspace{-5pt}
\begin{tikzpicture}
  \node(x) {};
  \node[right=1em of x] (y) {};
  \node[right=1em of y] (z) {};
  \node[delta,below=1em of x] (xend) {2};
  \node[delta,below=2em of y] (yend) {1};
  \node[delta,below=3em of z] (zend) {0};
  \draw (x) to (xend);
  \draw (y) to (yend);
  \draw (z) to (zend);
  \node[above=-.25em of y] (frm) {$\pull{\delta_0^0}\pull{\delta_0^1}\pull{\delta_0^2}\ord{3}$};
\end{tikzpicture}
  &
B${}_1$\hspace{-5pt}\begin{tikzpicture}
  \node(x) {};
  \node[right=1em of x] (y) {};
  \node[right=1em of y] (z) {};
  \node[sigma,below=1.5em of $(x)!0.5!(y)$] (xy) {2};
  \node[delta,below=.5em of xy] (xyend) {1};
  \node[delta,below=3em of z] (zend) {0};
  \draw[bend right] (x) to (xy);
  \draw[bend left] (y) to (xy);
  \draw (xy) to (xyend);
  \draw (z) to (zend);
  \node[above=-.25em of y] (frm) {$\pull{\delta_0^0}\pull{\delta_0^1}\push{\sigma_0^2}\ord{3}$};
\end{tikzpicture}
  &
A${}_2$\hspace{-5pt}\begin{tikzpicture}
  \node(x) {};
  \node[right=1em of x] (y) {};
  \node[right=1em of y] (z) {};
  \node[delta,below=2em of x] (xend) {1};
  \node[delta,below=1em of y] (yend) {2};
  \node[delta,below=3em of z] (zend) {0};
  \draw (x) to (xend);
  \draw (y) to (yend);
  \draw (z) to (zend);
  \node[above=-.25em of y] (frm) {$\pull{\delta_0^0}\pull{\delta_0^1}\pull{\delta_1^2}\ord{3}$};
\end{tikzpicture}
  &
C${}_1$\hspace{-5pt}\begin{tikzpicture}
  \node(x) {};
  \node[right=1em of x] (y) {};
  \node[right=1em of y] (z) {};
  \node[sigma,below=1.5em of $(y)!0.5!(z)$] (yz) {2};
  \node[delta,below=1.5em of x] (xend) {1};
  \node[delta,below=1.15em of yz] (yzend) {0};
  \draw[bend right] (y) to (yz);
  \draw[bend left] (z) to (yz);
  \draw (yz) to (yzend);
  \draw (x) to (xend);
  \node[above=-.25em of y] (frm) {$\pull{\delta_0^0}\pull{\delta_0^1}\push{\sigma_1^2}\ord{3}$};
\end{tikzpicture}
  &
A${}_3$\hspace{-5pt}\begin{tikzpicture}
  \node(x) {};
  \node[right=1em of x] (y) {};
  \node[right=1em of y] (z) {};
  \node[delta,below=2em of x] (xend) {1};
  \node[delta,below=3em of y] (yend) {0};
  \node[delta,below=1em of z] (zend) {2};
  \draw (x) to (xend);
  \draw (y) to (yend);
  \draw (z) to (zend);
  \node[above=-.25em of y] (frm) {$\pull{\delta_0^0}\pull{\delta_0^1}\pull{\delta_2^2}\ord{3}$};
\end{tikzpicture} \\
\hline
A${}_4$\hspace{-5pt}\begin{tikzpicture}
  \node(x) {};
  \node[right=1em of x] (y) {};
  \node[right=1em of y] (z) {};
  \node[delta,below=1em of x] (xend) {2};
  \node[delta,below=3em of y] (yend) {0};
  \node[delta,below=2em of z] (zend) {1};
  \draw (x) to (xend);
  \draw (y) to (yend);
  \draw (z) to (zend);
  \node[above=-.25em of y] (frm) {$\pull{\delta_0^0}\pull{\delta_1^1}\pull{\delta_0^2}\ord{3}$};
\end{tikzpicture}
  &
B${}_2$\hspace{-5pt}\begin{tikzpicture}
  \node(x) {};
  \node[right=1em of x] (y) {};
  \node[right=1em of y] (z) {};
  \node[sigma,below=1.5em of $(x)!0.5!(y)$] (xy) {2};
  \node[delta,below=1.15em of xy] (xyend) {0};
  \node[delta,below=1.5em of z] (zend) {1};
  \draw[bend right] (x) to (xy);
  \draw[bend left] (y) to (xy);
  \draw (xy) to (xyend);
  \draw (z) to (zend);
  \node[above=-.25em of y] (frm) {$\pull{\delta_0^0}\pull{\delta_1^1}\push{\sigma_0^2}\ord{3}$};
\end{tikzpicture}
  &
A${}_5$\hspace{-5pt}\begin{tikzpicture}
  \node(x) {};
  \node[right=1em of x] (y) {};
  \node[right=1em of y] (z) {};
  \node[delta,below=3em of x] (xend) {0};
  \node[delta,below=1em of y] (yend) {2};
  \node[delta,below=2em of z] (zend) {1};
  \draw (x) to (xend);
  \draw (y) to (yend);
  \draw (z) to (zend);
  \node[above=-.25em of y] (frm) {$\pull{\delta_0^0}\pull{\delta_1^1}\pull{\delta_1^2}\ord{3}$};
\end{tikzpicture}
  &
C${}_2$\hspace{-5pt}\begin{tikzpicture}
  \node(x) {};
  \node[right=1em of x] (y) {};
  \node[right=1em of y] (z) {};
  \node[sigma,below=1.5em of $(y)!0.5!(z)$] (yz) {2};
  \node[delta,below=3em of x] (xend) {0};
  \node[delta,below=.5em of yz] (yzend) {1};
  \draw[bend right] (y) to (yz);
  \draw[bend left] (z) to (yz);
  \draw (yz) to (yzend);
  \draw (x) to (xend);
  \node[above=-.25em of y] (frm) {$\pull{\delta_0^0}\pull{\delta_1^1}\push{\sigma_1^2}\ord{3}$};
\end{tikzpicture}
  &
A${}_6$\hspace{-5pt}\begin{tikzpicture}
  \node(x) {};
  \node[right=1em of x] (y) {};
  \node[right=1em of y] (z) {};
  \node[delta,below=3em of x] (xend) {0};
  \node[delta,below=2em of y] (yend) {1};
  \node[delta,below=1em of z] (zend) {2};
  \draw (x) to (xend);
  \draw (y) to (yend);
  \draw (z) to (zend);
  \node[above=-.25em of y] (frm) {$\pull{\delta_0^0}\pull{\delta_1^1}\pull{\delta_2^2}\ord{3}$};
\end{tikzpicture} \\
\hline
D${}_1$\hspace{-5pt}\begin{tikzpicture}
  \node(x) {};
  \node[right=1em of x] (y) {};
  \node[right=1em of y] (z) {};
  \node[sigma,below=2.7em of $(y)!0.5!(z)$] (yz) {1};
  \node[delta,below=1em of x] (xend) {2};
  \node[delta,below=.7em of yz] (yzend) {0};
  \draw[bend right] (y) to (yz);
  \draw[bend left] (z) to (yz);
  \draw (yz) to (yzend);
  \draw (x) to (xend);
  \node[above=-.25em of y] (frm) {$\pull{\delta_0^0}\push{\sigma_0^1}\pull{\delta_0^2}\ord{3}$};
\end{tikzpicture}
  &
E${}_1$\hspace{-5pt}\begin{tikzpicture}
  \node(x) {};
  \node[right=1em of x] (y) {};
  \node[right=1em of y] (z) {};
  \node[sigma,below=1.5em of $(x)!0.5!(y)$] (xy) {2};
  \node[sigma,below=1.5em of $(xy)!0.5!(z)$] (xyz) {1};
  \node[delta,below=1em of xyz] (xyzend) {0};
  \draw[bend right] (x) to (xy);
  \draw[bend left] (y) to (xy);
  \draw[bend right] (xy) to (xyz);
  \draw[bend left] (z) to (xyz);
  \draw (xyz) to (xyzend);
  \node[above=-.25em of y] (frm) {$\pull{\delta_0^0}\push{\sigma_0^1}\push{\sigma_0^2}\ord{3}$};
\end{tikzpicture}
  &
F${}_1$\hspace{-5pt}\begin{tikzpicture}
  \node(x) {};
  \node[right=1em of x] (y) {};
  \node[right=1em of y] (z) {};
  \node[delta,below=1em of y] (yend) {2};
  \node[sigma,below=2.5em of $(x)!0.5!(z)$] (xz) {1};
  \node[delta,below=1em of xz] (xzend) {0};
  \draw[bend right] (x) to (xz);
  \draw[bend left] (z) to (xz);
  \draw (xz) to (xzend);
  \draw (y) to (yend);
  \node[above=-.25em of y] (frm) {$\pull{\delta_0^0}\push{\sigma_0^1}\pull{\delta_1^2}\ord{3}$};
\end{tikzpicture}
  &
E${}_2$\hspace{-5pt}\begin{tikzpicture}
  \node(x) {};
  \node[right=1em of x] (y) {};
  \node[right=1em of y] (z) {};
  \node[sigma,below=1.7em of $(y)!0.5!(z)$] (yz) {2};
  \node[sigma,below=1.7em of $(x)!0.5!(yz)$] (xyz) {1};
  \node[delta,below=.7em of xyz] (xyzend) {0};
  \draw[bend right] (y) to (yz);
  \draw[bend left] (z) to (yz);
  \draw[bend right] (x) to (xyz);
  \draw[bend left] (yz) to (xyz);
  \draw (xyz) to (xyzend);
  \node[above=-.25em of y] (frm) {$\pull{\delta_0^0}\push{\sigma_0^1}\push{\sigma_1^2}\ord{3}$};
\end{tikzpicture}
  &
G${}_1$\hspace{-5pt}\begin{tikzpicture}
  \node(x) {};
  \node[right=1em of x] (y) {};
  \node[right=1em of y] (z) {};
  \node[sigma,below=2.5em of $(x)!0.5!(y)$] (xy) {1};
  \node[delta,below=1em of xy] (xyend) {0};
  \node[delta,below=1em of z] (zend) {2};
  \draw[bend right] (x) to (xy);
  \draw[bend left] (y) to (xy);
  \draw (xy) to (xyend);
  \draw (z) to (zend);
  \node[above=-.25em of y] (frm) {$\pull{\delta_0^0}\push{\sigma_0^1}\pull{\delta_2^2}\ord{3}$};
\end{tikzpicture}\\
  \hline
\end{tabular}
\end{center}
\caption{Depiction of all 15 closed ambisimplicial formulas with $\chi(S) = 3$ as increasing binary forests. The labels A--G indicate the split into seven equivalence classes.}
\label{fig:15formulas}
\end{figure}

Perhaps most interestingly, the remaining equations of the simplex category \eqref{eq:simplex3}--\eqref{eq:simplex5}, which mix  $\delta$ and $\sigma$, induce non-invertible \emph{logical entailments} between formulas:
  \begin{align}
    \push{\sigma_{j-1}^{n-1}}\pull{\delta_i^n} S &\vdashf{} \pull{\delta_i^{n-1}}\push{\sigma_j^n} S & \text{if }i<j \label{ordrel1}\\
    \pull{\delta_i^n} S &\vdashf{} \push{\sigma_j^n} S & \text{if }i=j\text{ or }i=j+1 \label{ordrel2}\\
    \push{\sigma_j^{n-1}}\pull{\delta_i^n} S &\vdashf{} \pull{\delta_{i-1}^{n-1}}\push{\sigma_j^n} S & \text{if }i > j+1 \label{ordrel3}
  \end{align}
The middle entailment is obtained as the composite of cartesian arrows $\cart {\delta_i} S \hcomp \opcart {\sigma_j} S$, which lies over the identity by equation \eqref{eq:simplex4}.
For the first and third, recall that in general, any commutative square $ab = cd$ in the base of a bifibration (or in the base of a $(\P,\N)$-fibration with $b,c\in\P$ and $a,d\in \N$) induces a canonical natural transformation $\push{c}\pull{a} S \vdashf{} \pull{d}\push{b} S$:
\begin{equation}\label{der:abcd}\tag{abcd}
\inferrule*[fraction={---},Right={$\Lpush[c]\Rpull[d]$}]
{\inferrule*[fraction={===}]
  {\inferrule*[fraction={---},Right={$\Lpull[a]\Rpush[b]$}]{\inferrule*[Right={$\Id[S]$}]{ }{S \vdashf{} S} }
{\pull{a} S \vdashf{ab} \push{b} S} }
{\pull{a} S \vdashf{cd} \push{b} S}}
{\push{c}\pull{a} S \vdashf{} \pull{d}\push{b} S}
\end{equation}
The Beck-Chevalley condition asks that this natural transformation be invertible whenever $ab = cd$ is a pullback square, but we are in a free setting where the BC condition fails.

All of these non-invertible entailments have simple graphical interpretations, either as moving a binary node upwards relative to a root node somewhere on its right/left, %  (relations \eqref{ordrel1} and \eqref{ordrel3})
or as joining a root node to an edge on its immediate left/right to form a binary node, as drawn below % (relation \eqref{ordrel2}).
(we only show one version of each type of transformation, eliding the mirror image):
\begin{equation}\label{eq:ordrel-diagrams}
\begin{tikzpicture}[baseline={([yshift=-.5ex]current bounding box.center)},scale=0.75]
  \path (0,0) coordinate (x0)
  +(1,0) coordinate (x1)
  +(2,0) coordinate (x2)
  +(0.5,-1.5) coordinate (x01s)
  +(2,-1) coordinate (x2d)
  +(0.5,-2) coordinate (x01end)
  ;
  \draw[bend right] (x0) to (x01s);
  \draw[bend left] (x1) to (x01s);
  \draw (x01s) to (x01end);
  \node[sigma] at (x01s) {};
  \draw (x2) to (x2d);
  \node[delta] at (x2d) {};
  \node[below=1.5em of x1] (ymid) [right] {\dots};
  \end{tikzpicture}
  \quad\Longrightarrow \quad
\begin{tikzpicture}[baseline={([yshift=-.5ex]current bounding box.center)},scale=0.75]
  \path (0,0) coordinate (x0)
  +(1,0) coordinate (x1)
  +(2,0) coordinate (x2)
  +(0.5,-1) coordinate (x01s)
  +(2,-1.5) coordinate (x2d)
  +(0.5,-2) coordinate (x01end)
  ;
  \draw[bend right] (x0) to (x01s);
  \draw[bend left] (x1) to (x01s);
  \draw (x01s) to (x01end);
  \node[sigma] at (x01s) {};
  \draw (x2) to (x2d);
  \node[delta] at (x2d) {};
  \node[below=1.5em of x1] (ymid) [right] {\dots};
  \end{tikzpicture}
  \qquad\qquad\qquad
  \begin{tikzpicture}[baseline={([yshift=-.5ex]current bounding box.center)},scale=0.75]
  \path (0,0) coordinate (x0)
  +(1,0) coordinate (x1)
  +(1,-1) coordinate (x1d)
  +(0,-2) coordinate (x0end)
  ;
  \draw (x0) to (x0end);
  \draw (x1) to (x1d);
  \node[delta] at (x1d) {};
  \end{tikzpicture}
  \quad\Longrightarrow \quad
  \begin{tikzpicture}[baseline={([yshift=-.5ex]current bounding box.center)},scale=0.75]
  \path (0,0) coordinate (x0)
  +(1,0) coordinate (x1)
  +(0.5,-1) coordinate (x01s)
  +(0.5,-2) coordinate (x01end)
  ;
  \draw [bend right] (x0) to (x01s);
  \draw [bend left] (x1) to (x01s);
  \draw (x01s) to (x01end);
  \node[sigma] at (x01s) {};
  \end{tikzpicture}
\end{equation}
Now define a family of posets $(F_{k,n})_{k,n\in \NN}$ as follows.
The carrier of $F_{k,n}$ is the set of ambisimplicial formulas $S \refs \ord{k}$ such that $\chi(S) = n$ (i.e., forests with $k$ open roots and $n$ open leaves), quotiented by \eqref{simp:equiv1} and \eqref{simp:equiv2}.
The order relation is the least congruence generated by \eqref{ordrel1}--\eqref{ordrel3}, where by congruence we mean that $S \le T$ implies $\push{e} S \le \push{e}T$ and $\pull{m} S \le \pull{m} T$.
For example, here is the Hasse diagram of $F_{0,3}$ using the same labels for the equivalence classes from Figure~\ref{fig:15formulas}:
\[
\begin{tikzcd}[row sep=1.5em,column sep=2em,arrows={draw,dash}]
&E& \\
C\ar[ur]&&B\ar[ul]\\
& F\ar[uu] & \\
D\ar[uu]&&G\ar[uu]& \\
& A\ar[ul]\ar[uu]\ar[ur] &
\end{tikzcd}
\]
\begin{thm}
  $\Bifib{i,\P,\N}_{\ord{k}} \simeq \coprod_{n\in\NN} F_{k,n}$.
\end{thm}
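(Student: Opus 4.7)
The plan is to establish the equivalence in three stages: decompose $\Bifib{i,\P,\N}_{\ord{k}}$ along the axiom functor, build a forward functor from $F_{k,n}$ using canonical derivations of the simplicial identities, and verify it is an equivalence via the maximally multifocused representation.

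First I would adapt Proposition~\ref{prop:axiom-functor} to the $(\P,\N)$-setting: since $\pi_2 : \NN \times \Delta \to \Delta$ is trivially a $(\P,\N)$-fibration for any $\P$ and $\N$, the universal property of $\Bifib{i,\P,\N}$ produces a left inverse $\chi_i : \Bifib{i,\P,\N} \to \NN$ of the unit, and $(\chi_i, \BifibFun{i,\P,\N})$ is a morphism of $(\P,\N)$-fibrations. Because $\NN$ is discrete, any morphism of $\Bifib{i,\P,\N}_{\ord k}$ must preserve $\chi_i$, which gives the decomposition $\Bifib{i,\P,\N}_{\ord k} \simeq \coprod_n \mathcal{C}_{k,n}$ where $\mathcal{C}_{k,n}$ is the full subcategory on those $S \refs \ord k$ with $\chi_i(S) = n$. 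It then suffices to exhibit an equivalence $F_n : F_{k,n} \simeq \mathcal{C}_{k,n}$ for each $n$.

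Next I would define $F_n$ on objects by sending $[S]$ to any representative $S$; well-definedness up to canonical isomorphism follows because the equivalences \eqref{simp:equiv1}--\eqref{simp:equiv2} are instances of pseudofunctoriality \eqref{eq:pushpush}--\eqref{eq:pullpull} applied to the simplicial identities \eqref{eq:simplex1}--\eqref{eq:simplex2}. On the ordering, I would send each generator \eqref{ordrel1}--\eqref{ordrel3} to the canonical \eqref{der:abcd} derivation associated with the corresponding commutative square from \eqref{eq:simplex3} or \eqref{eq:simplex5} in $\Delta$, and extend to the congruence closure by cut (for composition) and by whiskering along pushforward/pullback using the open derivations \eqref{eq:functor-derivations}. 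Functoriality of $F_n$ is automatic from the fact that $\BifibFun{i,\P,\N}$ is a functor.

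Essential surjectivity of $F_n$ is immediate. For fullness I would invoke the canonicity theorem: by Proposition~\ref{prop:freePNstar} we may work with $(\Delta_\epi, \Delta_\mono)$ in place of $(\P,\N)$, and since $\Delta_\epi$ consists only of epis and $\Delta_\mono$ only of monos, $\Delta$ is both $\Delta_\epi$-\FP and $\Delta_\mono$-\FP, so Theorem~\ref{thm:PNFP=>maxmultfoc-canonical+} canonically represents every morphism $\alpha : S \to T$ in $\mathcal{C}_{k,n}$ by a maximally multifocused derivation, itself a stack of $\biL$/$\biR$/$\biLR$ bipoles. A case analysis of the inference rules of Figure~\ref{fig:maxmultfocseq} applied to ambisimplicial formulas shows that each bipole, read in terms of the simplicial arrows it involves, corresponds either to an invertible pseudofunctoriality reorganization (from \eqref{eq:simplex1}, \eqref{eq:simplex2}, or the trivial \eqref{eq:simplex4} case) or to one of the noninvertible generators \eqref{ordrel1}--\eqref{ordrel3} (from \eqref{eq:simplex3} or \eqref{eq:simplex5}); reading the stack from bottom to top then exhibits a chain $S \le \cdots \le T$ in $F_{k,n}$ whose image under $F_n$ is $\alpha$. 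Faithfulness finally reduces to showing $F_{k,n}$ is a genuine poset, and in particular antisymmetric: this I would establish by exhibiting a well-founded measure on the associated increasing binary forests (e.g.\ the multiset of depths of binary nodes) that is invariant under \eqref{simp:equiv1}--\eqref{simp:equiv2} and strictly decreases under each generator \eqref{ordrel1}--\eqref{ordrel3}, as is visible geometrically in the diagrams of \eqref{eq:ordrel-diagrams}.

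The hardest step will be the case analysis in the fullness argument: matching each maximally multifocused bipole to the appropriate simplicial equation, with due care for the lock conditions of Figure~\ref{fig:maxmultfocseq} governing when bi-focusing and bi-inversion can be initiated on formulas built from $\delta$-pullbacks and $\sigma$-pushforwards. The combinatorics of the epi-mono factorization in $\Delta$ makes this intricate but self-contained, and the expected upshot is that the multifocused calculus mirrors the simplicial relations essentially on the nose.
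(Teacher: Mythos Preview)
Your argument has a real gap at the faithfulness step, and the same gap undermines the well-definedness of $F_n$ on morphisms. The heart of the theorem is that the fiber category $\Bifib{i,\P,\N}_{\ord k}$ is itself a \emph{preorder}: between any two ambisimplicial formulas there is at most one arrow. You never establish this. Antisymmetry of $F_{k,n}$ is the wrong target: since $F_{k,n}$ is by construction thin, any functor out of it is automatically faithful, so your measure argument proves something that is not needed. The problem lies on the other side. Your $F_n$ is specified on generating inequalities and then ``extended to the congruence closure,'' but two different chains of generators witnessing the same relation $[S]\le[T]$ could a priori map to distinct arrows $S\to T$ in $\mathcal C_{k,n}$, so $F_n$ is not even a well-defined functor out of the poset unless the target is already known to be thin. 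Similarly, your fullness argument produces for each $\alpha$ \emph{some} chain of generators, but does not show that $\alpha$ coincides with the image of the unique morphism $[S]\le[T]$; that identification again requires the target to be thin. So the circularity is: well-definedness of $F_n$ needs the fiber to be a preorder, which is precisely the content you are trying to extract from full-faithfulness of $F_n$.

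The paper closes the gap directly, bypassing the multifocused calculus. It observes that in an (epi,\,mono)-ambifibration, left division by an epi $e$ and right division by a mono $m$ are \emph{deterministic} (by cancellation: $f=eg$ and $f=hm$ each have at most one solution) and \emph{independent} (by orthogonal lifting: if $f$ is divisible by $e$ on the left and by $m$ on the right, the diagonal filler of the square $eg=hm$ supplies a simultaneous division). These two properties force any two derivations of the same sequent to be permutation equivalent, so the entire total category $\Bifib{i,\P,\N}$ is a preorder. For the converse direction the paper also avoids your bipole case analysis, instead proving by induction on raw derivations that any $\alpha:S\vdashf{f}T$ with epi--mono factorisation $f=em$ yields $\push{e}S\le\pull{m}T$ in $F_{k,n}$, again using orthogonal lifting to handle the non-invertible rules. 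Your multifocused route for that part is plausible, but the case analysis you anticipate as ``the hardest step'' is not where the actual difficulty lies.
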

\begin{proof}[Proof sketch]
  We have already established that the laws \eqref{simp:equiv1}--\eqref{ordrel3} hold in the free ambifibration, which means that the $F_{k,n}$ embed into $\Bifib{i,\P,\N}_{\ord{k}}$.
  
  Conversely, we will begin by showing more generally that if $\alpha : S \vdashf{f} T$ is an arrow of $\Bifib{i,\P,\N}$ and $f = em$ is the epi-mono factorization of the underlying arrow in $\Delta$, then $\push{e} S \le \pull{m}T$ in the appropriate poset $F_{k,n}$ with $n = \chi(S) = \chi(T)$ and $\ord{k} = \cod(e) = \dom(m)$.
  We prove this by induction on derivations.
  The interesting case is when $\alpha$ ends in a non-invertible rule, for example suppose it is of the form $\alpha = \tLpull g \alpha' : \pull{g}S \vdashf{gh} T$ for some $\alpha' : S \vdashf{h} T$ and $g \in \N\subset \Delta_\mono$.
  By the induction hypothesis, we have that $\push{e}S \le \pull{m}T$ where $h = em$, $e\in\Delta_\epi$, $m \in \Delta_\mono$.
  Let $f = dk$, $d\in \Delta_\epi$, $k\in \Delta_\mono$ be the epi-mono factorization of $f=gh$.
  Thus we have $dk = gem$.
  By the orthogonal lifting property there is a unique mono $\ell$ making the diagram below commute:
  \[
    \begin{tikzcd}
     .\ar[d,two heads,"d"']\ar[r,"g",tail] &. \ar[r,two heads,"e"] & .\ar[d,tail,"m"] & . \\
      . \ar[rr,tail,"k"']\ar[rru,tail,dashed,"\ell" description] && .
    \end{tikzcd}
  \]
  Hence $\push{d}\pull{g}S \le \pull{\ell}\push{e} S \le \pull{\ell} \pull{m} T \equiv \pull{k}T$, where the first entailment is an instance of the \eqref{der:abcd} scheme on the square $ge = d\ell$.
  (Here we need a lemma that every instance of the scheme is valid, $\push{c}\pull{a}S \le \pull{d}\push{b} S$, for $a,b \in \Delta_\epi$ and $c,d \in \Delta_\mono$, which we can prove by induction on the length of the $\P$-factorization of $c$.)

  Finally, to establish the equivalence $\Bifib{i,\P,\N}_{\ord{k}} \simeq \coprod_{n\in\NN} F_{k,n}$ we need to verify that the fiber categories are preorders.
  In fact, we claim that the total category $\Bifib{i,\P,\N}$ is itself a preorder: for any pair of ambisimplicial formulas $S\refs \ord{n},T \refs \ord{m}$ there exists at most one arrow $\alpha : S \vdashf{f} T$ lying over a unique order-preserving function $f : \ord{n} \to \ord{m}$.
  Intuitively, the reason why is that left division by an epi~$e$ and right division by a mono~$m$ are \emph{deterministic} and \emph{independent} operations:
  \begin{itemize}
  \item determinism:   given an arrow $f$, there is at most one $g$ such that $f = eg$ and at most one $h$ such that $f = hm$, so given any derivation $\alpha$ with underlying base arrow $f$, we can write
  $\uLpush e {} \alpha$ and
  $\alpha \uRpull {} m$ rather than  
  $\uLpush e {g} \alpha$ and
  $\alpha \uRpull {h} m$ without risk of ambiguity;
  \item independence: if $\alpha$ is divisible on the left by $e$ and divisible on the right by $m$ (i.e., both $\uLpush e {} \alpha$ and $\alpha \uRpull {} m$ are defined), then it is simultaneously divisible on the left and right (i.e., $\uLpush e {} \alpha \uRpull {} m$ is defined), by the orthogonal lifting property:
  \[
    \begin{tikzcd}
     .\ar[d,two heads,"e"']\ar[r,"h"] & .\ar[d,tail,"m"] \\
      . \ar[r,"g"']\ar[ru,dashed,"i" description] & .
    \end{tikzcd}
  \]
  \end{itemize}
  Determinism and independence imply that any two derivations of arrows $\alpha_1,\alpha_2 : S \to T$ are permutation equivalent: they necessarily perform the same series of multiplication and division operations, although not necessarily in the same order, but if we examine the first place that they differ --- say that $\alpha_1$ performs a left division $\uLpush e {} \beta$ and $\alpha_2$ a right division $\beta \uRpull {} m$ --- then we can always replace that by a bi-division $\uLpush e {} \beta \uRpull {} m$ and continue unifying the rest of the proofs.  
\end{proof}
\noindent
We remark that since $\Delta$ is both $\P$-\FP and $\N$-\FP (like any category relative to a class of epis and monos), every entailment $S \le T$ in $F_{k,n}$ is witnessed by a unique maximally multifocused proof $\alpha : S \vdashf{} T$.
For example, below on the right we show the maximally multifocused proof corresponding to the forest entailment on the left (we work with strictly alternating zigzags, using graphical conventions for the corresponding forests that should be clear, and we notate order-preserving functions $f : \ord{n} \to \ord{m}$ by tuples of numbers  $a_0,\dots,a_{m-1}$ that indicate the cardinalities of the fibers $a_i = |f^{-1}(i)|$):
\[
\begin{tikzpicture}[baseline={([yshift=-.5ex]current bounding box.center)},scale=0.4]
\path (0,0) coordinate (x0)
+(1,0) coordinate (x1)
+(2,0) coordinate (x2)
+(3,0) coordinate (x3)
+(4,0) coordinate (x4)
+(5,0) coordinate (x5)
+(6,0) coordinate (x6)
+(7,0) coordinate (x7)
+(3,-1) coordinate (x3d)
+(5,-1) coordinate (x5d)
+(3,-2) coordinate (x24s)
+(3,-3) coordinate (x24d)
+(0.5,-4) coordinate (x01s)
+(0.5,-5) coordinate (x01d)
+(6.5,-4) coordinate (x67s)
+(6.5,-5) coordinate (x67d)
;
\draw (x5) to (x5d);
\draw (x3) to (x3d);
\draw[bend left] (x4) to (x24s);
\draw[bend right] (x2) to (x24s);
\draw (x24s) to (x24d);
\draw[bend right] (x0) to (x01s);
\draw[bend left] (x1) to (x01s);
\draw[bend right] (x6) to (x67s);
\draw[bend left] (x7) to (x67s);
\draw (x01s) to (x01d);
\draw (x67s) to (x67d);
\node[delta] at (x3d) {};
\node[delta] at (x5d) {};
\node[sigma] at (x24s) {};
\node[delta] at (x24d) {};
\node[sigma] at (x01s) {};
\node[sigma] at (x67s) {};
\node[delta] at (x01d) {};
\node[delta] at (x67d) {};
\end{tikzpicture}\quad\Longrightarrow\quad
\begin{tikzpicture}[baseline={([yshift=-.5ex]current bounding box.center)},scale=0.4]
\path (0,0) coordinate (x0)
+(1,0) coordinate (x1)
+(2,0) coordinate (x2)
+(3,0) coordinate (x3)
+(4,0) coordinate (x4)
+(5,0) coordinate (x5)
+(6,0) coordinate (x6)
+(7,0) coordinate (x7)
+(5,-2) coordinate (x5d)
+(2,-3) coordinate (x01234s)
+(2,-4) coordinate (x01234d)
+(6.5,-1) coordinate (x67s)
+(6.5,-4) coordinate (x67d)
;
\draw (x5) to (x5d);
\draw[bend right] (x0) to (x01234s);
\draw[bend right] (x1) to (x01234s);
\draw (x2) to (x01234s);
\draw[bend left] (x3) to (x01234s);
\draw[bend left] (x4) to (x01234s);
\draw (x01234s) to (x01234d);
\draw[bend right] (x6) to (x67s);
\draw[bend left] (x7) to (x67s);
\draw (x67s) to (x67d);
\node[delta] at (x5d) {};
\node[sigma] at (x01234s) {};
\node[delta] at (x01234d) {};
\node[sigma] at (x67s) {};
\node[delta] at (x67d) {};
\end{tikzpicture}
\qquad
% generated with: tikzcdProofs True (goal BotQ (Pull [0,0] $ Push [2,2] $ Pull [1,1,0,1,1] $ Push [1,1,2,1,1] $ Pull [1,1,1,0,1,0,1,1] $ NS 8) [] (Pull [0,0] $ Push [5,1] $ Pull [1,1,1,1,1,0,1] $ Push [1,1,1,1,1,1,2] $ NS 8)) "../LMCS/diagrams/simpx.tex"
\begin{tikzcd}[ampersand replacement=\&,column sep=2.5cm]
|[alias=l0]| 8 \ar[r,equals] \& |[alias=r0]| 8 
\\ |[alias=l1]|6\ar[u,tail,"{1,1,1,0,1,0,1,1}"]\ar[r,"{1,1,1,0,1,0,2}" description] \& |[alias=r1]|7\ar[u,<-,twoheadleftarrow,"{1,1,1,1,1,1,2}"']
\\ |[alias=l2]|6\ar[u,equals]\ar[r,"{1,1,1,0,1,2}" description] \& |[alias=r2]|6\ar[u,tail,"{1,1,1,1,1,0,1}"']
\\ |[alias=l3]|6\ar[u,equals]\ar[r,"{4,2}" description] \& |[alias=r3]|2\ar[u,<-,twoheadleftarrow,"{5,1}"']
\\ |[alias=l4]|5\ar[u,<-,twoheadleftarrow,"{1,1,2,1,1}"]\ar[r,"{3,2}" description] \& |[alias=r4]|2\ar[u,equals]
\\ |[alias=l5]|4\ar[u,tail,"{1,1,0,1,1}"]\ar[r,"{2,2}" description] \& |[alias=r5]|2\ar[u,equals]
\\ |[alias=l6]|2\ar[u,<-,twoheadleftarrow,"{2,2}"]\ar[r,equals] \& |[alias=r6]|2\ar[u,equals]
\\ |[alias=l7]|0\ar[u,tail,"{0,0}"]\ar[r,"{0,0}" description] \& |[alias=r7]|2\ar[u,equals]
\\ |[alias=l8]|0\ar[u,equals]\ar[r,equals] \& |[alias=r8]|0\ar[u,tail,"{0,0}"']
\arrow[gray,phantom,from=1-1,to=2-2,"\pull \Lsym \push \Rsym"]
\arrow[gray,phantom,from=2-1,to=3-2,"\pull \Rsym"]
\arrow[gray,phantom,from=3-1,to=4-2,"\push \Rsym"]
\arrow[gray,phantom,from=4-1,to=5-2,"\push \Lsym"]
\arrow[gray,phantom,from=5-1,to=6-2,"\pull \Lsym"]
\arrow[gray,phantom,from=6-1,to=7-2,"\push \Lsym"]
\arrow[gray,phantom,from=7-1,to=8-2,"\pull \Lsym"]
\arrow[gray,phantom,from=8-1,to=9-2,"\pull \Rsym"]
\end{tikzcd}
  \]

The $(F_{k,n})_{n\in \NN}$ seem to define an interesting family of posets.
For example, we observe that $F_{0,n}$ contains the \emph{lattice of noncrossing partitions} as a quotient.
A noncrossing partition of order $n$ is a partition of the ordinal $\ord{n} = \set{0,\dots,n-1}$ such that the diagram defined by drawing the elements in order and then connecting all of the elements within the same block by a corolla is planar.
For example, of the ${4 \brace 2} = 7$ partitions of $\ord{4}$ into two blocks, all but one are noncrossing:
\begin{mathpar}
\begin{tikzpicture}
\path coordinate[label=above:0] (1) 
+(.5,0) coordinate[label=above:1] (2)
+(1,0) coordinate[label=above:2] (3)
+(1.5,0) coordinate[label=above:3] (4)
+(.25,-.4) coordinate (12)
+(1.25,-.4) coordinate (34)
;
\draw[out=180,in=-90] (12) to (1);
\draw[out=0,in=-90] (12) to (2);
\draw[out=180,in=-90] (34) to (3);
\draw[out=0,in=-90] (34) to (4);
\node[kappa] (x) at (12) {};
\node[kappa] (y) at (34) {};
\end{tikzpicture}

\begin{tikzpicture}
\path coordinate[label=above:0] (1) 
+(.5,0) coordinate[label=above:1] (2)
+(1,0) coordinate[label=above:2] (3)
+(1.5,0) coordinate[label=above:3] (4)
+(.5,-.4) coordinate (13)
+(1,-.4) coordinate (24)
;
\draw[out=180,in=-90] (13) to (1);
\draw (13) to (3);
\draw (24) to (2);
\draw[out=0,in=-90] (24) to (4);
\node[kappa] (x) at (13) {};
\node[kappa] (y) at (24) {};
\end{tikzpicture}

\begin{tikzpicture}
\path coordinate[label=above:0] (1) 
+(.5,0) coordinate[label=above:1] (2)
+(1,0) coordinate[label=above:2] (3)
+(1.5,0) coordinate[label=above:3] (4)
+(.75,-.4) coordinate (14)
+(.75,-.15) coordinate (23)
;
\draw[out=180,in=-90] (14) to (1);
\draw[out=0,in=-90] (14) to (4);
\draw[out=180,in=-90] (23) to (2);
\draw[out=0,in=-90] (23) to (3);
\node[kappa] (x) at (14) {};
\node[kappa] (y) at (23) {};
\end{tikzpicture}

\begin{tikzpicture}
\path coordinate[label=above:0] (1) 
+(.5,0) coordinate[label=above:1] (2)
+(1,0) coordinate[label=above:2] (3)
+(1.5,0) coordinate[label=above:3] (4)
+(.5,-.4) coordinate (123)
+(1.5,-.4) coordinate (44)
;
\draw[out=180,in=-90] (123) to (1);
\draw (123) to (2);
\draw[out=0,in=-90] (123) to (3);
\draw (44) to (4);
\node[kappa] (x) at (123) {};
\node[kappa] (y) at (44) {};
\end{tikzpicture}

\begin{tikzpicture}
\path coordinate[label=above:0] (1) 
+(.5,0) coordinate[label=above:1] (2)
+(1,0) coordinate[label=above:2] (3)
+(1.5,0) coordinate[label=above:3] (4)
+(.5,-.4) coordinate (124)
+(1,-.2) coordinate (33)
;
\draw[out=180,in=-90] (124) to (1);
\draw (123) to (2);
\draw[out=0,in=-90] (124) to (4);
\draw (33) to (3);
\node[kappa] (x) at (124) {};
\node[kappa] (y) at (33) {};
\end{tikzpicture}

\begin{tikzpicture}
\path coordinate[label=above:0] (1) 
+(.5,0) coordinate[label=above:1] (2)
+(1,0) coordinate[label=above:2] (3)
+(1.5,0) coordinate[label=above:3] (4)
+(1,-.4) coordinate (134)
+(.5,-.2) coordinate (22)
;
\draw[out=180,in=-90] (134) to (1);
\draw (134) to (3);
\draw[out=0,in=-90] (134) to (4);
\draw (22) to (2);
\node[kappa] (x) at (134) {};
\node[kappa] (y) at (22) {};
\end{tikzpicture}

\begin{tikzpicture}
\path coordinate[label=above:0] (1) 
+(.5,0) coordinate[label=above:1] (2)
+(1,0) coordinate[label=above:2] (3)
+(1.5,0) coordinate[label=above:3] (4)
+(1,-.4) coordinate (234)
+(0,-.4) coordinate (11)
;
\draw[out=180,in=-90] (234) to (2);
\draw (234) to (3);
\draw[out=0,in=-90] (234) to (4);
\draw (11) to (1);
\node[kappa] (x) at (234) {};
\node[kappa] (y) at (11) {};
\end{tikzpicture}
\end{mathpar}
The restriction of the usual refinement ordering to noncrossing partitions of order $n$ defines a poset $K_n$ known as the \emph{Kreweras lattice}.
\begin{prop}\label{prop:Kn-is-a-quotient-of-Fn}
$K_n$ is the quotient of $F_{0,n}$ obtained by inverting \eqref{ordrel1} and \eqref{ordrel3}, or equivalently by imposing the Beck-Chevalley condition for bicartesian squares.
\end{prop}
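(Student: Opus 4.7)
I would establish the claim by constructing an order isomorphism $\overline{\phi} : F_{0,n}/{\sim} \xrightarrow{\cong} K_n$, where $\phi : F_{0,n} \to K_n$ sends a closed ambisimplicial formula, interpreted as an increasing binary forest with $n$ leaves, to the noncrossing partition whose blocks are the leaf-sets of its connected trees; planarity of the forest guarantees noncrossingness. Well-definedness on $F_{0,n}$ holds because neither \eqref{simp:equiv1}, \eqref{simp:equiv2} nor the pseudofunctoriality isomorphisms of Prop.~\ref{prop:pseudofunctoriality} change the connectivity of the forest, and surjectivity is clear since any noncrossing partition can be realised block-by-block. Pictorially (see \eqref{eq:ordrel-diagrams}), the relations \eqref{ordrel1} and \eqref{ordrel3} merely slide a binary node past a root on a disjoint branch, which preserves connectivity, so $\phi$ descends to the quotient; meanwhile \eqref{ordrel2} replaces an open root by a merger of its branch with an adjacent one, which is a covering step in the refinement order on $K_n$, so $\overline{\phi}$ is monotone.

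The heart of the argument is that $\overline{\phi}$ is an order isomorphism. For injectivity, given two forests $S_1, S_2$ with $\phi(S_1) = \phi(S_2) = \pi$, I would first use pseudofunctoriality \eqref{eq:pushpush}, \eqref{eq:pullpull} to put each in a normal form where every block of $\pi$ is built by exactly one composite $\push{\sigma}$ step (collecting all of its leaves via a single epi) followed immediately by one $\pull{\delta}$ step terminating it; in this normal form, the only remaining information is a linear order of the per-block $\push$/$\pull$ events. Same-polarity adjacent swaps are handled directly by \eqref{simp:equiv1}, \eqref{simp:equiv2}, while opposite-polarity swaps are handled by \eqref{ordrel1}, \eqref{ordrel3} (now equivalences), whose index conditions $i < j$ or $i > j+1$ match exactly the disjoint-branch condition: if a $\push$ merging positions $j,j+1$ sits next to a $\pull$ terminating position $i$, the two events act on disjoint branches precisely when $i \notin \{j, j+1\}$, so one of the two relations applies; the borderline case $i \in \{j, j+1\}$ is exactly where \eqref{ordrel2} applies and, crucially, is also the only case in which the swap would change the underlying partition, so it never need be performed between representatives of the same $\pi$. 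Hence $S_1 \sim S_2$. For inverse monotonicity, each Kreweras covering $\pi \lessdot \pi'$ merges two blocks $B_1, B_2$ of $\pi$ that must be ``visible neighbours'' in the planar arrangement (else $\pi'$ would have a crossing); the same height-shuffling freedom lets us choose a representative of $\pi$ in which the roots of $B_1$ and $B_2$ appear at adjacent positions at some common height, after which a single \eqref{ordrel2} step produces a representative of $\pi'$, witnessing $S \le T$ in the quotient.

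The equivalent formulation via Beck--Chevalley follows from the standard description of pullback squares in $\Delta$: among commutative squares with the mono-epi shape required by the scheme \eqref{der:abcd} in an ambifibration, the pullback squares between generators are precisely the instances of \eqref{eq:simplex3} (yielding \eqref{ordrel1}) and \eqref{eq:simplex5} (yielding \eqref{ordrel3}), while the degenerate identity squares \eqref{eq:simplex4} (yielding \eqref{ordrel2}) are not pullbacks. Thus imposing BC on $\BifibFun{i,\P,\N}$ inverts exactly \eqref{ordrel1} and \eqref{ordrel3}. The main technical obstacle in the above plan is the index-condition matching for injectivity: one must verify carefully that in a normalised representative every pair of opposite-polarity events that one wishes to transpose lies on disjoint branches, hence that the integers arising really do satisfy $i < j$ or $i > j+1$ and never slip into the borderline case $i \in \{j, j+1\}$ which would correspond to a genuinely non-invertible \eqref{ordrel2} entailment.
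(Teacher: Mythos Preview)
Your proposal is correct and follows essentially the same approach as the paper: map a forest to the noncrossing partition given by its connected components, observe that the BC-relevant pullback squares are precisely \eqref{eq:simplex3} and \eqref{eq:simplex5}, and that the remaining relation \eqref{ordrel2} encodes the Kreweras covering. The paper's proof is considerably terser---it simply asserts that inverting \eqref{ordrel1} and \eqref{ordrel3} ``has the effect of forgetting the order in which nodes are created and reducing the forest down to its connected components,'' without the normalization and transposition argument you outline; your flagged technical obstacle (the index-condition matching for opposite-polarity swaps) is exactly the detail the paper elides.
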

\begin{proof}
The noncrossing partition associated to a closed ambisimplicial formula is visually obvious if one looks at the corresponding increasing forest, since imposing \eqref{ordrel1} and \eqref{ordrel3} as equivalences in addition to \eqref{simp:equiv1} and \eqref{simp:equiv2} has the effect of forgetting the order in which nodes are created and reducing the forest down to its connected components.
For instance, formulas A${}_1$--A${}_6$ from Figure~\ref{fig:15formulas} correspond to the partition $\set{\{0\},\{1\},\{2\}}$, while formulas C${}_1$, C${}_2$, and D${}_1$ correspond to the partition $\set{\{0\},\{1,2\}}$.
The only remaining law \eqref{ordrel2} (see right side of \eqref{eq:ordrel-diagrams} for its diagrammatic depiction) exactly describes the covering relation for the refinement order on noncrossing partitions. Thus we recover $K_n$ as a quotient of $F_{0,n}$.
Finally, we establish that inverting \eqref{ordrel1} and \eqref{ordrel3} is equivalent to asking for the Beck-Chevalley condition on \emph{bicartesian} squares, that is, squares
\begin{equation*}
    \begin{tikzcd}
     .\ar[d,tail,"a"']\ar[r,two heads,"c"] & .\ar[d,tail,"d"] \\
      . \ar[r,two heads,"b"']\ar[ru,phantom,"\text{BC}"] & .
    \end{tikzcd}%\qquad \push{c}\pull{a} \equiv \pull{d}\push{b}
\end{equation*}
  in $\Delta$ that are both pullback and pushout squares.
(Here we can fortuitously write ``BC'' as an abbreviation for both bicartesian and Beck-Chevalley.)
One can verify that the squares \eqref{eq:simplex3} and \eqref{eq:simplex5} are bicartesian, and hence that imposing the Beck-Chevalley condition for such squares strengthens \eqref{ordrel1} and \eqref{ordrel3} to equivalences:
  \begin{align}
    \push{\sigma_{j-1}^{n-1}} \pull{\delta_i^n} S &\equiv \pull{\delta_i^{n-1}}\push{\sigma_j^n} S & \text{if }i<j \label{eqrel1}\\
    \push{\sigma_j^{n-1}}\pull{\delta_i^n} S &\equiv \pull{\delta_{i-1}^{n-1}}\push{\sigma_j^n} S & \text{if }i > j+1 \label{eqrel3}
  \end{align}
  Moreover, we claim that all bicartesian squares mixing epi and mono legs in $\Delta$ are generated as pastings of \eqref{eq:simplex3} and \eqref{eq:simplex5}.
  Indeed, this follows from a characterization of pushout squares in $\Delta$ by Constantin, Fritz, Perrone, and Shapiro \cite[Theorem 3.10]{constantin2023weak}, since \eqref{eq:simplex3} and \eqref{eq:simplex5} are exactly the ``basic'' pushout squares mixing epis and monos that are also pullback~squares.
\end{proof}
\noindent
For example, $K_3$ is obtained from $F_{0,3}$ by identifying two pairs of equivalence classes $C\sim D$ and $B\sim G$.
% more citations: \cite[V1:515, V2:226]{StanleyEC1+2}
Proposition~\ref{prop:Kn-is-a-quotient-of-Fn} and the seminal results of Kreweras~\cite{Kreweras1971} on the lattice of noncrossing partitions as well as related results for other Catalan lattices \cite{BeBo2009} motivate us to ask two open questions.\footnote{Soon after the original version of this paper was posted, we found out that the answer to Question~\ref{qu:lattice} is negative: already $F_{0,4}$ is not a lattice. We thank Wenjie~Fang for determining this using SageMath from data provided by our decision procedure for entailment (see Footnote~\ref{footnote:source-code}).}
\begin{qu}\label{qu:lattice}
$K_n$ is a lattice for all $n$: is $F_{0,n}$ (and more generally $F_{k,n}$) likewise a lattice?
\end{qu}
\begin{qu}\label{qu:intervals}
The number of intervals $I[K_n] = \set{(x,y) \mid x \le y}$ in $K_n$ is given by a simple formula $|I[K_n]| = \frac{1}{2n+1}\binom{3n}{n}$: is there likewise a nice formula for $|I[F_{0,n}]|$ (and more generally for $|I[F_{k,n}]|)$?
\end{qu}
\noindent
It is known that the lattice of noncrossing partitions is self-dual $K_n \cong K_n^\op$, and even \emph{locally self-dual} in the sense that for every pair of noncrossing partitions $x,y \in K_n$, the restriction of $K_n$ to the interval $[x,y]$ is self-dual \cite[V2:465]{StanleyEC1+2}.
It may be interesting to study this duality from a fibrational perspective, in particular relating it to the duality between $\Delta$ and the category of finite strict intervals and order-and-boundary-preserving maps \cite[\S1.1]{Joyal1997}.

%----------------------------------------------------------------------%
\section{Conclusion and future directions}
\label{sec:conclusion}
%----------------------------------------------------------------------%

This work originally began as an exercise in categorical proof theory, with our hope being that a systematic understanding of the construction of the free bifibration on a functor of categories $p : \D \to \C$ could eventually help in analyzing more complex fibrational situations that arise in logic and type theory.
This ``easy'' exercise proved more challenging than we expected (particularly the question of characterizing normal forms to obtain a canonicity result), but was also rewarding as we realized the implicit complexity and richness contained in the problem.
We were encouraged by the close connection between the sequent calculus presentation of the free bifibration and the presentation of the double category of zigzags by generators and relations, showing the unity of purpose between two basic tools in the proof-theoretic and category-theoretic repertoires; and we were excited to discover the examples of free bifibrations and ambifibrations in which the combinatorial complexity emerged from much simpler generating data.
We think there are many natural paths for further study.

Perhaps the most obvious next step would be to generalize the constructions beyond functors of categories.
As mentioned in the Introduction, we were originally motivated by phenomena that arise when considering bifibrations of monoidal closed categories and of generalized multicategories and polycategories.
We expect that the construction of the free bifibration on a functor should generalize to functors of polycategories, but the details are by no means obvious, especially the appropriate generalization of the \FP condition and whether there is a reasonable notion of maximal multifocusing.
Another natural generalization would be to incorporate higher cells in the base category, such as by considering functors of 2-categories or double categories.
Indeed, we already somewhat implicitly treated the base category as a 2-category in our conventions for drawing string diagrams where we represented equalities by vertices, as in Figure~\ref{fig:example-functor-and-derivation}; taking the base to be an actual 2-category would give this convention a more formal status, and may also make it possible to relax the \FP condition while retaining canonicity.
It should be noted that Licata, Shulman, and Riley~\cite{LSR2017} considered bifibrations of cartesian 2-multicategories, and that 2-cells were used to good effect in their notion of ``mode theory''.

We used string diagrams in this paper mainly for their evocative power (particularly in the proof of Theorem~\ref{thm:tree-category-free-bifibration}), but we believe there is value in taking them seriously as geometric objects and developing a purely topological or purely combinatorial description of free bifibrations.
Dawson, Paré, and Pronk did exactly that in their description of $\Pi_2(\C)$ for a free category, defining a bijection between 2-cells in $\Pi_2(\C)$ and isotopy classes of planar diagrams presented by matchings on a set of oriented points.
It could be worthwhile to extend such a correspondence for free bifibrations beyond the case of a free base category, to general categories or 2-categories.

Although we did not explore the direction in this paper, there is clearly a link to be developed between bifibrations or $(\P,\N)$-fibrations and game semantics~\cite{hirschowitz2007theory,Mellies2012game}.
It is noteworthy that the free bifibration on $p_\Two : \One \to \Two$ appeared incognito in a paper on the categorical combinatorics of innocent strategies \cite{HMM2007}, with the total category $\Bifib{p_\Two}$ of the free bifibration being equivalent to the authors' \emph{category of schedules} $\Upsilon$.

The surprising connection between the free bifibration on $p_\omega : \One \to \omega$ and the Joyal-Batanin-Street category of plane trees---which contains $\Bifib{p_\omega}_0$ as a wide subcategory but is not equivalent to it---is still somewhat mysterious to us, and we are not sure whether it is a mere coincidence.
Similarly, we do not know whether the free ambifibration on $i : \NN \to \Delta$ may (like the fat Delta ambifibration) be useful for defining weak higher categories, but it appears to be an interesting object of study in its own right.

\subsection*{Acknowledgments}
We warmly thank François Lamarche for stimulating discussions about his work on bifibrations~\cite{Lam13,Lam14} and for his enthusiasm about the project.
We also thank Nathanael~Arkor, Jonas~Frey, Tom~Hirshowitz, Paul-André~Melliès, Dorette Pronk, and Gilles~Schaeffer for discussions and pointers.
In particular, we are grateful to Jonas for discussions about the Beck-Chevalley condition, which made us realize that our original statement of Proposition~\ref{prop:Kn-is-a-quotient-of-Fn} was incorrect and that the BC condition should be restricted to bicartesian squares.

This work began while the three authors were at LIX and the first author was on a postdoc supported by the ANR project Repro (ANR-20-CE48-0016).
The third author was partially supported by the ANR project LambdaComb (ANR-21-CE48-0017).

%----------------------------------------------------------------------%
%\newpage %% Just temporary - delete in final version
\bibliographystyle{plainurl}
\bibliography{references.bib}
%----------------------------------------------------------------------%

\newpage

\appendix

\section{Some more detailed proofs}
\label{app:proofs}

\subsection{Section~\ref{sec:sequent-calculus}}

% Proof in main document.
% \semrespectspermeq

\cutrespectspermeq*

\begin{proof}
  \label{prf:cut-respects-permeq}
  By symmetry it suffices to prove that
  $\alpha_1 \hcomp \beta \permeq \alpha_2 \hcomp \beta$. The proof is
  by case analysis on the proof of $\alpha_1 \permeq \alpha_2$: the
  equivalence and congruence cases are direct induction, the more delicate cases are the generating equations, which require an induction on $\beta$.

  \paragraph{Case $\tLpull f (\alpha \tRpush h) \permeq (\tLpull f \alpha) \tRpush h$}
  To reason about
  $((\tLpull f \alpha) \tRpush h) \hcomp \beta$,
  we perform a case analysis on $\beta$, depending on whether
  it starts with a left rule or a right rule. (It cannot be an atom as its domain is of the form $\push h U$.)

  If $\beta$ starts with a left rule and its domain is of the form $\push h U$, we have $\beta = \uLpush h i {\beta'}$ and
  \begin{mathpar}
    \begin{array}{cl@{\qquad}l}
      & (\tLpull f (\alpha \tRpush h)) \hcomp \beta
      & \\ \permeq
      & \tLpull f ((\alpha \tRpush h) \hcomp \beta)
      & \\ =
      & \tLpull f ((\alpha \tRpush h) \hcomp (\uLpush h i \beta'))
      & \\ =
      & \tLpull f (\alpha \hcomp \beta')
      & \\ \permeq
      & (\tLpull f \alpha) \hcomp \beta'
      & \\ =
      & ((\tLpull f \alpha) \tRpush h) \hcomp (\uLpush h i \beta')
      & \\ =
      & ((\tLpull f \alpha) \tRpush h) \hcomp \beta
      &
    \end{array}
  \end{mathpar}

  If $\beta$ starts with a right rule, let us assume that it is
  a division $\beta = \beta' \uRpull i j$---the multiplication case is similar. Then we have
  \begin{mathpar}
    \begin{array}{cl@{\qquad}l}
      & (\tLpull f (\alpha \tRpush h)) \hcomp \beta
      & \\ =
      & (\tLpull f (\alpha \tRpush h)) \hcomp (\beta' \uRpull i j)
      & \\ \permeq
      & ((\tLpull f (\alpha \tRpush h)) \hcomp \beta') \uRpull {fghi} j
      & \\ \permeq
      & (((\tLpull f \alpha) \tRpush h) \hcomp \beta') \uRpull {fghi} j
      & \text{by induction hypothesis ($\beta' < \beta$)}
      \\ =
      & ((\tLpull f \alpha) \tRpush h) \hcomp (\beta' \uRpull i j)
      & \\ =
      & ((\tLpull f \alpha) \tRpush h) \hcomp \beta
    \end{array}
  \end{mathpar}

  \paragraph{Case $ (\uLpush f g \alpha) \tRpush {h}  \permeq \uLpush f {gh} (\alpha \tRpush {h})$}
  This proof is in fact identical to the previous one: the proof above only depends on the fact $\tLpull f \alpha \tRpush h$ has a multiplication on the right, not on the operation performed on the left.

  \paragraph{Case $ (\tLpull {f} \alpha) \uRpull {fg} h \permeq \tLpull {f} (\alpha \uRpull g h)$} To reason about $(\alpha \uRpull g h) \hcomp \beta$, we again reason by case analysis on $\beta$, depending on whether it is a multiplication by $h$ on the left or a right-rule.

  If $\beta = \tLpull h \beta'$, then we have
  \begin{mathpar}
    \begin{array}{cl@{\qquad}l}
      & ((\tLpull {f} \alpha) \uRpull {fg} h) \hcomp \beta
      & \\ =
      & ((\tLpull {f} \alpha) \uRpull {fg} h) \hcomp (\tLpull h \beta')
      & \\ =
      & (\tLpull {f} \alpha) \hcomp \beta'
      & \\ \permeq
      & \tLpull {f} (\alpha \hcomp \beta')
      & \\ =
      & \tLpull {f} ((\alpha \uRpull g h) \hcomp (\tLpull h \beta'))
      & \\ =
      & \tLpull {f} ((\alpha \uRpull g h) \hcomp \beta)
      & \\ \permeq
      & (\tLpull {f} (\alpha \uRpull g h)) \hcomp \beta
    \end{array}
  \end{mathpar}

  If $\beta$ starts with a right rule, for example $\beta = \beta' \uRpull i j$, then we have
  \begin{mathpar}
    \begin{array}{cl@{\qquad}l}
      & ((\tLpull {f} \alpha) \uRpull {fg} h) \hcomp \beta
      & \\ =
      & ((\tLpull {f} \alpha) \uRpull {fg} h) \hcomp (\beta' \uRpull i j)
      & \\ =
      & (((\tLpull {f} \alpha) \uRpull {fg} h) \hcomp \beta') \uRpull {fgi} j
      & \\ \permeq
      & (\tLpull {f} (\alpha \uRpull g h)) \hcomp \beta') \uRpull {fgi} j
      & \text{by induction hypothesis ($\beta' < \beta$)}
      \\ \permeq
      & (\tLpull {f} (\alpha \uRpull g h)) \hcomp (\beta' \uRpull i j)
      \\ =
      & (\tLpull {f} (\alpha \uRpull g h)) \hcomp \beta
    \end{array}
  \end{mathpar}

  \paragraph{Case $(\uLpush {f} {gh} \alpha) \uRpull g h \permeq \uLpush {f} {g} (\alpha \uRpull {fg} h)$} Again, this proof is identical to the previous case, as they both have a division on the right of $\alpha$.
\end{proof}

\etaexpansionpushpull*

The proof of this equations is included in the proof of the following lemma.

\idneutral*

\begin{proof}
  \label{prf:id-neutral}
  \label{prf:eta-expansion-pushpull}
  By induction on $S$.
  We prove the left neutrality case, right neutrality is symmetric.

  \paragraph{Case $\atom X$} If $\alpha$ has an atomic domain or
  codomain, it must be an atomic derivation $\atom \delta$:

  \begin{mathpar}
    \Id[\atom X] \hcomp \alpha
    =
    \atom {(\Id[X])} \hcomp \atom \delta
    =
    \atom {(\Id[X] \; \delta)}
    =
    \atom \delta
    =
    \alpha
  \end{mathpar}

  \paragraph{Case $\push f S$}

  \begin{mathpar}
    \begin{array}{cl@{\qquad}l}
      & \Id[\push f S] \hcomp \alpha
      & \\ =
      & (\uLpush f {\Id[B]} \opcart f S)
        \hcomp
        \alpha
      & \\ \permeq
      & \uLpush f g
        (\opcart f S \hcomp \alpha)
    \end{array}
  \end{mathpar}

  We now prove that any $\alpha$ is permutation equivalent to
  $\uLpush f g
        (\opcart f S \hcomp \alpha)
 $
 by induction on $\alpha$.

  If $\alpha$ starts with a left rule, we must have
  $\alpha = \uLpush f g {\alpha'}$, and then:

  \begin{mathpar}
    \begin{array}{cl@{\qquad}l}
      & \uLpush f g
        (\opcart f S \hcomp \alpha)
      & \\ =
      & \uLpush f g
        ((\Id[S] \tRpush f) \hcomp (\uLpush f g {\alpha'}))
      & \\ =
      & \uLpush f g (\Id[S] \hcomp \alpha')
      & \\ \permeq
      & \uLpush f g \alpha'
      & \text{by induction hypothesis on $S$}
      \\ =
      & \alpha
    \end{array}
  \end{mathpar}

  If $\alpha$ starts with a right rule, for example
  $\alpha = \alpha' \uRpull g h$, we have

  \begin{mathpar}
    \begin{array}{cl@{\qquad}l}
      & \uLpush f g
        (\opcart f S \hcomp \alpha)
      & \\ =
      & \uLpush f g
        (\opcart f S \hcomp (\alpha' \uRpull g h)
      & \\ =
      & \uLpush f g
        ((\opcart f S \hcomp \alpha') \uRpull g h)
      & \\ \permeq
      & (\uLpush f g
        (\opcart f S \hcomp \alpha')) \uRpull g h
      & \\ \permeq
      & \alpha' \uRpull g h
      & \text{by induction hypothesis on $\alpha'$}
      \\ =
      & \alpha
    \end{array}
  \end{mathpar}

  \paragraph{Case $\pull g T$}

  \begin{mathpar}
    \begin{array}{cl@{\qquad}l}
      & \Id[\pull g T] \hcomp \alpha
      & \\ =
      & (\cart g T \uRpull {\Id[B]} g)
        \hcomp
        \alpha
    \end{array}
  \end{mathpar}

  We prove that any $\alpha$ is permutation equivalent to
  $(\cart g T \uRpull {\Id[B]} g)
  \hcomp
  \alpha$
  by induction on $\alpha$.

  If $\alpha$ starts with a left rule, we must have
  $\alpha = \tLpull g \alpha'$, and so
  \begin{mathpar}
    \begin{array}{cl@{\qquad}l}
      & (\cart g T \uRpull {\Id[B]} g)
        \hcomp
        \alpha
      & \\ =
      & ((\tLpull g \Id[T]) \uRpull {\Id[B]} g)
        \hcomp
        (\tLpull g \alpha')
      & \\ =
      & (\tLpull g \Id[T]) \hcomp \alpha'
      & \\ \permeq
      & \tLpull g (\Id[T] \hcomp \alpha')
      & \\ \permeq
      & \tLpull g \alpha'
      & \text{by induction hypothesis on $T$}
      \\ =
      & \alpha
    \end{array}
  \end{mathpar}

  If $\alpha$ starts with a right rule, for example
  $\alpha = \alpha' \uRpull h i$, we have
  \begin{mathpar}
    \begin{array}{cl@{\qquad}l}
      & (\cart g T \uRpull {\Id[B]} g)
        \hcomp
        \alpha
      & \\ =
      & (\cart g T \uRpull {\Id[B]} g)
        \hcomp
        (\alpha' \uRpull h i)
      & \\ =
      & ((\cart g T \uRpull {\Id[B]} g)
        \hcomp
        \alpha') \uRpull h i
      \\ \permeq
      & \alpha' \uRpull h i
      & \text{by induction on $\alpha'$}
      \\ =
      & \alpha
    \end{array}
  \end{mathpar}
\end{proof}

\cutassociative*

\begin{proof}
  \label{prf:cut-associative}
  The proof is by induction on $\alpha$, $\beta$, $\gamma$.

  If $\alpha$ is an atomic derivation, then $\beta$ and $\gamma$ must be as well---and conversely---and then associativity is immediate:
  \begin{mathpar}
    (\atom \delta \hcomp \atom \epsilon) \hcomp \atom \zeta
    =
    \atom {(\delta \epsilon \zeta)}
    =
    \atom \delta \hcomp (\atom \epsilon \hcomp \atom \zeta)
  \end{mathpar}

  If $\alpha$ starts with a left rule, then associativity is
  immediate. For example if $\alpha = \tLpull f \alpha'$, we have
  \begin{mathpar}
    \begin{array}{cl@{\qquad}l}
      & (\alpha \hcomp \beta) \hcomp \gamma
      & \\ =
      & ((\tLpull f \alpha') \hcomp \beta) \hcomp \gamma
      & \\ \permeq
      & (\tLpull f (\alpha' \hcomp \beta)) \hcomp \gamma
      & \\ \permeq
      & \tLpull f ((\alpha' \hcomp \beta) \hcomp \gamma)
      & \\ \permeq
      & \tLpull f (\alpha' \hcomp (\beta \hcomp \gamma))
      & \text{by induction hypothesis ($\alpha' < \alpha$)}
      \\ \permeq
      & (\tLpull f \alpha') \hcomp (\beta \hcomp \gamma)
      & \\ =
      & \alpha \hcomp (\beta \hcomp \gamma)
    \end{array}
  \end{mathpar}

  Symmetrically, associativy is immediate if $\gamma$ starts with a right rule. When neither of those two easy cases apply, we know that $\alpha$ starts with a right rule and $\gamma$ with a left rule. We then reason by case analysis on $\beta$.

  If $\beta$ starts with a multiplication on the left, $\beta = \tLpull g {\beta'}$, then the right rule of $\alpha$ must end a matching division, $\alpha = \alpha' \uRpull f g$, and we have
  \begin{mathpar}
    \begin{array}{cl@{\qquad}l}
      & (\alpha \hcomp \beta) \hcomp \gamma
      & \\ =
      & ((\alpha' \uRpull f g) \hcomp (\tLpull g {\beta'})) \hcomp \gamma
      & \\ \permeq
      & (\alpha' \hcomp \beta') \hcomp \gamma
      & \\ \permeq
      & \alpha' \hcomp (\beta' \hcomp \gamma)
      & \text{by induction hypothesis ($\alpha' < \alpha$, $\beta' < \beta$)}
      \\ \permeq
      & (\alpha' \uRpull f g) \hcomp (\tLpull g (\beta' \hcomp \gamma))
      \\ \permeq
      & (\alpha' \uRpull f g) \hcomp ((\tLpull g  \beta') \hcomp \gamma)
      \\ =
      & \alpha \hcomp (\beta \hcomp \gamma)
    \end{array}
  \end{mathpar}

  The proof is similar if $\beta$ starts with a division on the left,
  $\beta = \uLpush g h \beta'$ and $\alpha = \alpha' \tRpush g$:
  \begin{mathpar}
    \begin{array}{cl@{\qquad}l}
      & (\alpha \hcomp \beta) \hcomp \gamma
      & \\ =
      & ((\alpha' \tRpush g) \hcomp (\uLpush g h {\beta'})) \hcomp \gamma
      & \\ \permeq
      & \alpha' \hcomp (\beta' \hcomp \gamma)
      & \text{by induction hypothesis ($\alpha' < \alpha$, $\beta' < \beta$)}
      \\ \permeq
      & (\alpha' \tRpush g) \hcomp (\uLpush g {hi} (\beta' \hcomp \gamma))
      \\ \permeq
      & (\alpha' \tRpush g) \hcomp ((\uLpush g h {\beta'}) \hcomp \gamma)
      \\ =
      & \alpha \hcomp (\beta \hcomp \gamma)
    \end{array}
  \end{mathpar}

  The cases where $\beta$ starts with a right rule are symmetrical.
\end{proof}

% Proof in main document
%bifibgisacategory

% Proof in main document
%\bifibfunbifibration*

\subsection{Section~\ref{sec:focusing}}

\invseqS*

\begin{proof}
\label{prf:invseqS}
By induction on the underlying judgment of $\alpha_w$.

\begin{itemize}
\item If $\alpha_w$ has a non-neutral judgment
  $\alpha_w : \push {\pi} N \vdashf{f} \pull {\rho} P$,
  we have
  \begin{align*}
    \Strengthen{\alpha_w}
    & \defeq
      \uLRpushpull \pi {} {\Strengthen{\opcart \pi N \hcomp \alpha_w \hcomp \cart \rho P}} \rho
  \end{align*}
  For any $\alpha'_w \in \InvSeq{\Strengthen{\alpha_w}}$, $\alpha'_w$ is one of the two sequentializations of the bi-inversion $\uLRpushpull \pi {} {\beta_w} \rho$ for some $\beta_w \in \InvSeq{\Strengthen{\opcart \pi N \hcomp \alpha_w \hcomp \cart \rho P : N \vdashf{\pi f \rho} P}}$. Let us show the case where $\alpha'_w \defeq \uLpush \pi {} {({\beta_w} \uRpull {} \rho)}$, the other case is similar. By induction hypothesis on the smaller judgment $N \vdashf{\pi f \rho} P$, we have $\beta_w \permeq (\opcart \pi N \hcomp \alpha_w \hcomp \cart \rho P)$, and therefore
  \begin{align*}
    \alpha'_w
    & =
    \uLpush \pi {} {({\beta_w} \uRpull {} \rho)}
    \\ & \permeq
    \uLpush \pi {} {((\opcart \pi N \hcomp \alpha_w \hcomp \cart \rho P) \uRpull {} \rho)}
    \\ & \permeq
    \uLpush \pi {} {(\opcart \pi N \hcomp \alpha_w)}
    \\ & \permeq \alpha_w
  \end{align*}
where the last two equivalences come from Lemma~\ref{lem:eta-expansion-pushpull}.

\item Otherwise $\alpha_w$ has a neutral judgment $\alpha_w : N \vdashf{f} P$, and we reason as in the definition of $\Strengthen{\alpha_w}$ by case analysis on $\alpha_w$, which can be of the form $\tLpull \sigma \beta_w$ or $\beta_w \tRpush \tau$. For example in the case $\tLpull \sigma \beta_w$ (the other case is symmetric), $N$ is of the form $\pull \sigma Q$ and we have
  \[
    \InvSeq{\Strengthen{\alpha_w}} = \InvSeq{\Strengthen{\tLpull \sigma \beta_w}} = \InvSeq{\tLpull \sigma {\Strengthen{\beta_w}}}
  \]
  For any $\alpha'_w \in \InvSeq{\Strengthen{\alpha_w}}$, $\alpha'_w$ is of the form $\tLpull \sigma {\beta'_w}$ for some $\beta'_w \in \InvSeq{\Strengthen{\beta_w}}$. We have $\beta'_w \permeq \beta_w$ by induction hypothesis on the smaller judgment $Q \vdashf{} P$, and therefore
  \begin{align*}
    \alpha'_w
     =
    \tLpull \sigma {\beta'_w}
    \permeq
    \tLpull \sigma {\beta_w}
    = \alpha_w
  \end{align*}
\end{itemize}
\end{proof}

\Sinvseq*

\begin{proof}
  \label{prf:Sinvseq}
  The statement can in fact be strengthened into the following:
  \[ \forall \alpha_w \in \InvSeq{\alpha_m},
     \ \alpha_m = \Strengthen{\alpha_w} \]
  This is proved by induction on the judgment of $\alpha_w$ and $\alpha_m$:
  \begin{itemize}
  \item At a non-neutral judgment $\push {\pi} N \vdashf{f} \pull {\rho} P$,
    $\alpha_m$ must be of the form $
    \uLRpushpull \pi {} {\beta_m} \rho
    $ with $\beta_m : N \vdashf{\pi f\rho} P$, and $\alpha_w \in \InvSeq{\alpha_m}$ must be a sequentialization of the bi-inversion $\uLRpushpull \pi {} {\beta_w} \rho$ for some $\beta_w \in \InvSeq{\beta_m}$. By induction hypothesis we have $\Strengthen{\beta_w} = \beta_m$. If $\alpha_w$ is $\uLpush \pi {} {(\beta_w \uRpull {} \rho)}$ (the other case is similar), we have
\begin{align*}
  & \Strengthen{\alpha_w}
  \\ =\ & \uLRpushpull \pi {} {\Strengthen{\opcart \pi N \hcomp \alpha_w \hcomp \cart \rho P}} \rho
  \\ =\ & \uLRpushpull \pi {} {\Strengthen{\opcart \pi N \hcomp {\uLpush \pi {} {(\beta_w \uRpull {} \rho)}} \hcomp \cart \rho P}} \rho
  \\ =\ & \uLRpushpull \pi {} {\Strengthen{{\beta_w \uRpull {} \rho} \hcomp \cart \rho P}} \rho
  \\ =\ & \uLRpushpull \pi {} {\Strengthen{\beta_w}} \rho
  \\ =\ & \uLRpushpull \pi {} {\beta_m} \rho
  \\ =\ & \alpha_m
\end{align*}
\item At a neutral judgment $N \vdashf{f} P$, we reason by case analysis on $\alpha_m$. For example if it is $\tLpull \sigma {\beta_m}$ (the other case $\beta_m \tRpush \tau$ is similar), we have $\alpha_w = \tLpull \sigma {\beta_w}$ for $\beta_w \in \InvSeq{\beta_m}$. We have $\Strengthen{\beta_w} = \beta_m$ by induction hypothesis, so \[
  S(\alpha_w)
= S(\tLpull \sigma {\beta_w})
= \tLpull \sigma S({\beta_w})
= \tLpull \sigma {\beta_m}
= \alpha_m
\]
  \end{itemize}
\end{proof}

\localconfluence*

\begin{proof}
  \label{prf:local-confluence}
  The source of each rewrite rule must be a stack of two bipoles. Thus
  all critical pairs (where the source of two rewrite rules overlap)
  can be found by considering stacks of two or three bipoles.
  Let us first enumerate all such stack shapes to determine which may contain critical pairs.

  \paragraph{Enumerating all possible critical pairs}

  We exploit a symmetry that permutes $\Lsym$ and $\Rsym$ rules to reduce the number of cases: for example, the reduction behavior of $\biL ~ \biR$ stacks is symmetric to the behavior of $\biR ~ \biL$ stacks.

  Let us consider all stacks of two bipoles. They have one of the following shapes:
  \begin{enumerate}
  \item $\biL ~ \biL$: no reduction. % none of our reduction rules can be applied to a stack of this shape.
  \item $\biL ~ \biR$: \textit{\textbf{a $\parsym^\Lsym_\Rsym$ rule may be applied.}}
  \item $\biL ~ \biLR$: no reduction. % neither $\parsym$ nor $\grasym$ may be applied (only $\seqsym$).
  \item $\biR ~ \biL$: symmetric to (2).
  \item $\biR ~ \biR$: symmetric to (1), no reduction.
  \item $\biR ~ \biLR$: symmetric to (3), no reduction.
  \item $\biLR ~ \biL$: \textit{\textbf{a $\grasym^\Lsym$ rule may be applied.}}
  \item $\biLR ~ \biR$: symmetric to (7).
  \item $\biLR ~ \biLR$: no reduction.
  \end{enumerate}

  We next enumerate the stacks of three bipoles.
  Notice that if a 2-bipole stack has no reduction, then a 3-bipole stack formed by extending the 2-stack with a new bipole above or below cannot form a new critical pair not already in the above enumeration.
  For example, there is no reduction from $\biL ~ \biL$, so extending it into $\biL ~ \biL ~ \biR$ (or $\biL ~ \biL ~ \biLR$, or $\biR ~ \biL ~ \biL$ for example) does not produce critical pairs involving all three bipoles. Any critical pair in an extension is a critical pair of 2-bipole stacks, which we have already considered.

  In other words, we only need to consider extensions of the reducible 2-bipole stacks, modulo symmetry, to wit (2) $\biL ~ \biR$ and (7) $\biLR ~ \biL$ above:

  \begin{enumerate}
  \item[(2.a)] $\biL ~ \biR ~ \biL$: \textit{\textbf{two overlapping $\parsym$ rules may be applied.}}
  \item[(2.b)] $\biL ~ \biR ~ \biR$: extension of no-reduction (5).
  \item[(2.c)] $\biL ~ \biR ~ \biLR$: extension of no-reduction (6).
  \item[(2.d)] $\biL ~ \biL ~ \biR$: extension of no-reduction (1).
  \item[(2.e)] $\biR ~ \biL ~ \biR$: symmetric to (2.a).
  \item[(2.f)] $\biLR ~ \biL ~ \biR$: \textit{\textbf{a $\grasym_\Lsym$ rule may be applied above, and a $\parsym^\Lsym_\Rsym$ below.}}
  \\
  \item[(7.a)] $\biLR ~ \biL ~ \biL$: extension of no-reduction (1).
  \item[(7.b)] $\biLR ~ \biL ~ \biR$: same as (2.f).
  \item[(7.c)] $\biLR ~ \biL ~ \biLR$: extension of no-reduction (3).
  \item[(7.d)] $\biL ~ \biLR ~ \biL$: extension of no-reduction (3).
  \item[(7.e)] $\biR ~ \biLR ~ \biL$: extension of no-reduction (6).
  \item[(7.f)] $\biLR ~ \biLR ~ \biL$: extension of no-reduction (9).
  \end{enumerate}

  \paragraph{Resolving the critical pairs.} We now consider each of the above cases in turn.

  \begin{enumerate}
  \item[(2)] [$\biL ~ \biR$] Two reductions from stacks of this shape must be two applications of the rule $\parsym^\Lsym_\Rsym$, which is deterministic under the \FP condition, so the two right-hand-sides are equal.
  \item[(7)] [$\biLR ~ \biL$] As above, the critical pair is trivially resolved because $\grasym_\Lsym$ is deterministic, under no assumptions.
  \item[(2.a)] [$\biL ~ \biR ~ \biL$] This is exactly the critical pair \eqref{eq:criticalLRL} that we resolved by introducing the gravity rules, in this case $\grasym_\Lsym$:
\[
\begin{tikzpicture}
  \node (B) {\usebox\myboxb};
  \node[below left=-2cm and 6em of B] (A) {  \usebox\myboxa };
  \node[below right=-2cm and 6em of B] (C) { \usebox\myboxc };
  \pgfsetfillpattern{north east lines}{blue}
  \draw[->] ([yshift=6.8em]B.south west) -- (A.east) node[midway,above left,fill,nearly transparent,text opacity=1]{~$\parsym^\Rsym_\Lsym$};
  \pgfsetfillpattern{north west lines}{red}
  \draw[->] ([yshift=-6.8em]B.north east) -- (C.west) node[midway,above right,fill,nearly transparent,text opacity=1]{$\parsym^\Lsym_\Rsym~$};
  \pgfsetfillpattern{crosshatch dots}{green}
  \draw[->] ([xshift=-1em]C.west) -- ([xshift=1em]A.east) node[midway, below=2pt,fill,nearly transparent,text opacity=1]{$\grasym_\Lsym$};
  \pgfsetfillpattern{north west lines}{red}
  \fill[nearly transparent] ([yshift=-0.25em]B.north west) rectangle ([yshift=-3.5em]B.east);
  \pgfsetfillpattern{north east lines}{blue}
  \fill[nearly transparent] ([yshift=3.5em]B.west) rectangle ([yshift=.25em]B.south east);
  \pgfsetfillpattern{crosshatch dots}{green}
  \fill[nearly transparent] ([yshift=-0.25em]C.north west) rectangle ([yshift=.25em]C.south east);
\end{tikzpicture}
\]
\item[(2.f)] [$\biLR ~ \biL ~ \biR$] This critical pair between $\parsym^\Lsym_\Rsym$ and $\grasym_\Lsym$ is resolved by composing the latter with $\grasym_\Rsym$ followed by $\parsym^\Lsym_\Rsym$:

\[ \criticalLRLR \]

  \end{enumerate}
\end{proof}

\section{Cut on multi-focused derivations, directly}
\label{app:multi-focused-cut}

In Section~\ref{subsubsec:multi-focused-cut} we defined horizontal composition of strongly multifocused proofs, and this definition is modulo permutation equivalence. We can also give a second, more syntactic definition of cut, directly on strongly multifocused derivations, and show that it coincides (modulo permutation equivalence) with the first definition.

Let us first define the operations $(\opcartcomp \pi N {\alpha_m})$, $(\cartcomp {\alpha_m} \rho P)$, which are equivalent to compositions $\opcart \pi N \hcomp \alpha_m$, $\alpha_m \hcomp \cart \rho P$ but can be defined by a simple case analysis---we use a colon to emphasize that this is a more primitive operation:
\begin{mathpar}
  \begin{array}{lll}
  (\cartcomp {-} \rho P)
    & :
    & (S \vdashf{f} \pull \rho P) \to (S \vdashf{f\rho} P)
  \\
  \cartcomp {(\beta \uRpull f \rho)} \rho P
    & \defeq
    & \beta
  \\
  \cartcomp {(\uLRpushpull \pi f \beta \rho)} \rho P
    & \defeq
    & \uLpush \pi {f\rho} \beta
  \\[2em]

  (\opcartcomp \pi N {-})
    & :
    & (\push \pi N \vdashf{f} T) \to (N \vdashf{\pi f} T)
  \\
  \opcartcomp \pi N {(\uLpush \pi f \beta)}
    & \defeq
    & \beta
  \\
  \opcartcomp \pi N {(\uLRpushpull \pi f \beta \rho)}
    & \defeq
    & \beta \uRpull {\pi f} \rho
  \end{array}
\end{mathpar}

In the case of $(\cartcomp {\alpha_m} \rho P)$ for example, the definition is by case analysis, but we know by inspection of the judgment that a strongly multifocused derivation of $\alpha_m : S \vdashf{f} \pull \rho P$ can only start with an inversion or bi-inversion rule.

We can now define the general cut of two strongly multifocused formulas, from $S \vdashf{f} U$ and $U \vdashf{f} T$ to $S \vdashf{fg} T$.
The definition is first by case analysis on $S$ and $T$, with a single case if $S$ is positive or $T$ is negative, and then when $S \vdashf{fg} T$ is a neutral judgment $N \vdashf{fg} P$ we reason by case analysis on $\alpha_m$ and $\beta_m$.

\begin{mathpar}
  \begin{array}{lll}
  (- \hcomp -)
    & :
    & (\push {\pi} N \vdashf{f} U)
      \times (U \vdashf{g} \pull {\rho} P)
      \to (\push {\pi} N \vdashf{fg} \pull {\rho} P)
  \\
  \alpha_m \hcomp \beta_m
    & \defeq
    & \uLRpushpull \pi {fg} {((\opcartcomp \pi N {\alpha_m}) \hcomp (\cartcomp {\beta_m} \rho P))} \rho
  \\[2em]

  (- \hcomp -)
    & :
    & (N \vdashf{f} U)
      \times (U \vdashf{g} P)
      \to (N \vdashf{fg} P)
  \\[1em]
  (\tLpull \sigma {\alpha'_m}) \hcomp (\beta'_m \tRpush \tau)
  & \defeq
  & \tLpull \sigma (\alpha'_m \hcomp \beta'm) \tRpush \tau
  \\
  (\tLpull \sigma {\alpha'_m}) \hcomp \beta_m
    & \defeq
    & \tLpull \sigma {(\alpha'_m \hcomp \beta_m)}
      \qquad\qquad{\beta_m \neq \beta'_m \tRpush \tau}
  \\
  \alpha_m \hcomp (\beta'_m \tRpush \tau)
    & \defeq
    & (\alpha_m \hcomp \beta'_m) \tRpush \tau
      \qquad\qquad{\alpha_m \neq \tLpull \sigma \alpha'_m}
  \\[1em]
  (\alpha'_m \tRpush \pi) \hcomp (\uLpush \pi g \beta'_m)
    & \defeq
    & \alpha'_m \hcomp \beta'_m
  \\
  (\tLRpullpush \sigma {\alpha'_m} \pi) \hcomp (\uLpush \pi g \beta'_m)
    & \defeq
    & (\tLpull \sigma \alpha'_m) \hcomp \beta'_m
  \\[1em]
  (\alpha'_m \uRpull f \rho) \hcomp (\tLpull \rho \beta'_m)
    & \defeq
    & \alpha'_m \hcomp \beta'_m
  \\
  (\alpha'_m \uRpull f \rho) \hcomp (\tLRpullpush \rho {\beta'_m} \tau)
    & \defeq
    & \alpha'_m \hcomp (\beta'_m \tRpush \tau)
    \\[1em]
  \atom\delta\hcomp \atom \epsilon
    &\defeq
    & \atom{\delta\, \epsilon}
  \end{array}
\end{mathpar}
For example, if $\alpha_m$ starts with a left-focusing rule, we propagate the left-focusing rule in the output and recursively compute a cut at a strictly simpler judgment. If $\alpha_m$ is a right-focusing or bi-focusing rule, then necessarily $\beta_m$ starts with a matching invertible rule (thanks to strong focusing) and we have a principal cut. If $\alpha_m$ starts with a left-inversion or bi-inversion rule, then its source formula $S$ is positive and we are in the first case of a non-neutral judgment. Finally, if $\alpha_m$ starts with a right-inversion rule, then necessarily $\beta_m$ uses a focusing rule.

\begin{prop}
  The explicit/intensional definition of $\alpha_m \hcomp \beta_m$ above agrees with the extensional definition we gave earlier modulo $(\permeqseq)$ as $\Strengthen{\Seq{\alpha_m} \hcomp \Seq{\beta_m}}$.
\end{prop}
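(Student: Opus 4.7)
The plan is to proceed by well-founded induction on the lexicographic pair of sizes of $\alpha_m$ and $\beta_m$, showing case by case that the explicit recursive definition matches $\Strengthen{\alpha_w \hcomp \beta_w}$ modulo $\permeqseq$ for any choice of $\alpha_w \in \Seq{\alpha_m}$ and $\beta_w \in \Seq{\beta_m}$. Since $\Seq{-}$ returns permutation-equivalent proofs (Corollary~\ref{cor:seq-permeq}), cut preserves permutation equivalence (Lemma~\ref{lem:cut-respects-permeq}), and $\Strengthen{-}$ preserves $\permeqseq$ (Proposition~\ref{prop:strengthening-preserves-equiv}), this suffices. A preliminary observation is that $\Seq{-}$ distributes over the term-formers in a predictable way: when $\alpha_m = \tLpull \sigma \alpha'_m$ then $\Seq{\alpha_m} = \{\tLpull \sigma \alpha'_w \mid \alpha'_w \in \Seq{\alpha'_m}\}$, when $\alpha_m = \alpha'_m \tRpush \tau$ then analogously, and when $\alpha_m = \uLRpushpull \pi f {\beta} \rho$ the set $\Seq{\alpha_m}$ consists of the two orderings $\uLpush \pi {f\rho}(\beta_w \uRpull{}\rho)$ and $(\uLpush \pi f \beta_w)\uRpull{}\rho$ sequentializing the bi-inversion (with $\beta_w \in \Seq \beta$).

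For the non-neutral case $\push \pi N \vdashf{f} U$ and $U \vdashf{g} \pull \rho P$, I would pick the sequentializations so that $\alpha_w$ begins with a $\uLpush \pi{}$ (possibly followed by a $\uRpull{}\rho'$ coming from a bi-inversion in $\alpha_m$) and $\beta_w$ ends symmetrically with a $\uRpull{}\rho$. Using the commutative cut clauses for $\Lpush$ on the left and $\Rpull$ on the right, $\alpha_w \hcomp \beta_w$ is permutation-equivalent to $\uLpush \pi{} ((\alpha_w \text{ inner} \hcomp \beta_w \text{ inner}) \uRpull{} \rho)$. Strengthening now introduces the bi-inversion $\uLRpushpull$ on the outer layer and calls $\Strengthen{-}$ on the composition of the cartesian-ified pieces $\opcart \pi N \hcomp \alpha_w \hcomp \beta_w \hcomp \cart \rho P$; by the inductive hypothesis applied to a strictly smaller pair of multifocused proofs, this inner strengthening agrees with the explicit cut $(\opcartcomp \pi N {\alpha_m}) \hcomp (\cartcomp{\beta_m} \rho P)$, matching the explicit clause exactly.

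The neutral cases split along how $\alpha_m$ and $\beta_m$ begin and end. For a commutative case like $\alpha_m = \tLpull \sigma \alpha'_m$ against arbitrary $\beta_m$ not ending in a right multiplication, the sequentialization $\alpha_w = \tLpull \sigma \alpha'_w$ triggers the commutative cut clause $\tLpull \sigma (\alpha'_w \hcomp \beta_w)$; strengthening commutes with $\tLpull \sigma$ by definition, and the induction hypothesis yields $\Strengthen{\alpha'_w \hcomp \beta_w} \permeqseq \alpha'_m \hcomp \beta_m$. The double commutative case $\tLpull \sigma \alpha'_m \hcomp \beta'_m \tRpush \tau$ works analogously, with the strengthening of the resulting derivation collapsing the two commutations into a single bi-inverted shell $\tLpull \sigma (\alpha'_m \hcomp \beta'_m) \tRpush \tau$ by the very construction of $\Strengthen{-}$ on non-neutral judgments. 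For the principal cut cases such as $(\alpha'_m \tRpush \pi) \hcomp (\uLpush \pi g \beta'_m)$, any sequentialization produces a weakly focused principal cut which collapses to $\alpha'_w \hcomp \beta'_w$, and strengthening returns $\alpha'_m \hcomp \beta'_m$ by the inductive hypothesis. The atomic base case is immediate from $\delta \, \epsilon$.

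The main obstacle is handling the cases where one of the proofs begins or ends with a bi-focusing/bi-inversion rule, since the cut then absorbs one half of the bi-rule (principal) while leaving the other half dangling as a single focus or inversion. Concretely, in $(\tLRpullpush \sigma {\alpha'_m} \pi) \hcomp (\uLpush \pi g \beta'_m)$ we need to verify that any sequentialization of the bi-focusing---either $(\tLpull \sigma \alpha'_w) \tRpush \pi$ followed by $\uLpush \pi g \beta'_w$, or $\tLpull \sigma (\alpha'_w \tRpush \pi)$ followed by the same---reduces via the principal cut on $\tRpush \pi / \uLpush \pi g$ to $(\tLpull \sigma \alpha'_w) \hcomp \beta'_w$, and that the subsequent strengthening matches the explicit clause. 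This requires combining Proposition~\ref{prop:sequentialization-preserves-equiv} with a careful bookkeeping of which intermediate reductions are forced to be principal versus commutative, and showing the resulting strengthened forms are $\permeqseq$-equivalent even though the intermediate weakly focused derivations may differ up to $\permeq$.
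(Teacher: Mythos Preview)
Your proposal is correct and follows essentially the same approach as the paper's proof: both proceed by induction over the clauses of the intensional cut, choosing convenient sequentializations and verifying that the weak cut followed by strengthening reproduces the intensional right-hand side modulo $\permeqseq$. The paper is slightly terser (it only says ``for each equation we check that there is a choice of sequentialization that gives the expected result'' and works out the non-neutral case and one bi-focusing case in detail), while you are more explicit about why \emph{any} sequentialization suffices via Corollary~\ref{cor:seq-permeq} and Proposition~\ref{prop:strengthening-preserves-equiv}; one small slip is that in the double commutative case the outer shell $\tLpull \sigma (-) \tRpush \tau$ is bi-\emph{focused}, not bi-inverted, and the conclusion $\pull\sigma P \vdashf{} \push\tau M$ is neutral.
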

\begin{proof}
  \label{prf:definitions-hcomp-agree}
  For each equation that defines $\alpha_m \hcomp \beta_m$, we check that there is a choice of sequentialization of $\alpha_m$, $\beta_m$ that gives the expected result.

  For example, in the non-neutral case, $\alpha_m : \push \pi N \vdashf{f} U$ must be of the form $\uLpush \pi f {\alpha'_m}$ or $\uLRpushpull \pi f {\alpha'_m} \tau$. If we have the latter form, we choose to sequentialize it into $\uLpush \pi f {(\alpha' \uRpull {\pi f} \tau)}$, so that both cases can be sequentialized into a term of the form $\uLpush \pi f {\alpha'_w}$, where $\alpha'_w$ is a sequentialization of $\opcartcomp \pi N {\alpha_m}$. By the same reasoning $\beta_m$ is sequentialized into a term of the form $\beta'_w \uRpull g \rho$, with $\beta'_w$ a sequentialization of $\cartcomp {\beta_m} \rho P$. Their composition as weakly focused proofs is $(\uLpush \pi f {\alpha'_w}) \hcomp ({\beta'_w} \uRpull g \rho)$, which is equal to either of the two permutation-equivalent proofs $\uLpush \pi {fg} {((\alpha'_w \hcomp \beta'_w) \uRpull {\pi fg} \rho)}$ or ${(\uLpush {\pi} {fg\rho} (\alpha'_w \hcomp \beta'_w))} \uRpull {fg} \rho$. Both choices strengthen into $\uLRpushpull \pi {fg} {\Strengthen{\alpha'_w \hcomp \beta'_w}} \rho$, which by induction hypothesis is $\uLRpushpull \pi {fg} {(\opcartcomp \pi N {\alpha_m}) \hcomp {(\cartcomp {\beta_m} \rho P)}} \rho$ as desired.

  In a representative neutral case,
  $
  (\tLRpullpush \sigma {\alpha'_m} \pi) \hcomp (\uLpush \pi g \beta'_m)
  $, $\alpha'_m$ starts with a bi-inversion rule, let us assume that it is of the form $\uLRpushpull {\tau} {h} {\alpha''_m} {\rho}$. We sequentialize $\alpha_m$ into \[
  ({(\tLpull \sigma {(\uLpush {\tau} {h\rho} {\alpha''_w})})} \uRpull {\sigma h} {\rho}) \tRpush \pi
\] where $\alpha''_w$ is a sequentialization of $\alpha''_m$, and we have
\begin{align*}
&
(({(\tLpull \sigma {(\uLpush {\tau} {h\rho} {\alpha''_w})})} \uRpull {\sigma h} {\rho}) \tRpush \pi)
\hcomp (\uLpush \pi h {\beta'_w})
\\ =\ &
({(\tLpull \sigma {(\uLpush {\tau} {h\rho} {\alpha''_w})})} \uRpull {\sigma h} {\rho})
\hcomp {\beta'_w}
\\ \permeq\ &
(\tLpull \sigma ({(\uLpush {\tau} {h\rho} {\alpha''_w}) \uRpull {h} {\rho}}))
\hcomp {\beta'_w}
\\ \permeq\ &
\tLpull \sigma (
  ({(\uLpush {\tau} {h\rho} {\alpha''_w}) \uRpull {h} {\rho}})
  \hcomp {\beta'_w}
)
\end{align*}
where $({(\uLpush {\tau} {h\rho} {\alpha''_w}) \uRpull {h} {\rho}})$ is a valid sequentialization of $\alpha'_m$, and so this is permutation equivalent to a sequentialization of $\tLpull \sigma (\alpha_m \hcomp \beta'_m)$ as desired.
\end{proof}

% table of contents at the end, for ease of navigation
% (possibly remove this for journal submission, or not...)
\newpage
\tableofcontents

\end{document}